\crefname{equation}{Figure}{Figure}
\title{Separability in homotopical algebra}
\author{Maxime Ramzi}
\date{}
\newtheorem{thm}{Theorem}[section]
\newtheorem{lm}[thm]{Lemma}
\newtheorem{prop}[thm]{Proposition}
\newtheorem{cor}[thm]{Corollary}
\newtheorem*{thm*}{Theorem}
\theoremstyle{definition}
\newtheorem{defn}[thm]{Definition}
\newtheorem{cons}[thm]{Construction}
\newtheorem{assu}[thm]{Assumption}
\newtheorem{nota}[thm]{Notation}
\newtheorem{conv}[thm]{Convention}
\newtheorem{ex}[thm]{Example}
\newtheorem{rmk}[thm]{Remark}
\newtheorem{ques}[thm]{Question}
\newtheorem{conj}[thm]{Conjecture}
\newtheorem{warn}[thm]{Warning}
\newtheorem{obs}[thm]{Observation}
\newtheorem{var}[thm]{Variant}
\newtheorem{thmx}{Theorem}
\newtheorem{corx}[thmx]{Corollary}
\newcommand{\op}{^{\mathrm{op}}}
\newcommand{\cat}{\mathbf}
\newcommand{\Cat}{\cat{Cat}}
\newcommand{\on}{\operatorname}
\newcommand{\id}{\mathrm{id}}
\newcommand{\Fun}{\on{Fun}}
\newcommand{\map}{\on{map}}
\newcommand{\Map}{\on{Map}}
\newcommand{\Sph}{\mathbb S}
\newcommand{\Set}{\cat{Set}}
\newcommand{\Ab}{\cat{Ab}}
\newcommand{\Ss}{\cat S}
\newcommand{\Sp}{\cat{Sp}}
\newcommand{\Fin}{\mathrm{Fin}}
\newcommand{\PrL}{\cat{Pr}^\mathrm{L} }
\newcommand{\Alg}{\mathrm{Alg}}
\newcommand{\CAlg}{\mathrm{CAlg}}
\newcommand{\LMod}{\cat{LMod}}
\newcommand{\RMod}{\cat{RMod}}
\newcommand{\Mod}{\cat{Mod}}
\newcommand{\BiMod}{\cat{BiMod}}
\newcommand{\idem}{\mathrm{Idem}}
\newcommand{\HH}{\mathrm{HH}}
\newcommand{\tr}{\mathrm{tr}}
\newcommand{\THH}{\mathrm{THH}}
\newcommand{\Perf}{\mathbf{Perf}}
\newcommand{\Ind}{\mathrm{Ind}}
\newcommand{\End}{\mathrm{End}}
\newcommand{\Idem}{\mathrm{Idem}}
\newcommand{\pt}{\mathrm{pt}}
\newcommand{\colim}{\mathrm{colim}}
\newcommand{\heart}{\heartsuit}
\newcommand{\CSep}{\mathrm{CSep}}
\newcommand{\KO}{\mathrm{KO}}
\newcommand{\KU}{\mathrm{KU}}
\newcommand{\Span}{\mathrm{Span}}
\newcommand{\Corr}{\mathrm{Corr}}
\newcommand{\ho}{\mathrm{ho}}
\newcommand{\C}{\cat{C}}
\newcommand{\D}{\cat{D}}
\newcommand{\E}{\cat{E}}
\newcommand{\M}{\cat{M}}
\newcommand{\one}{\mathbf{1}}
\newcommand{\Aut}{\mathrm{Aut}}
\newcommand{\Syn}{\mathrm{Syn}}
\newcommand{\Proj}{\mathrm{Proj}}
\newcommand{\Oo}{\mathcal{O}}
\newcommand{\Projsep}{\mathrm{ProjSep}}
\newcommand{\cn}{\mathrm{cn}}
\newcommand{\QCoh}{\mathbf{QCoh}}
\newcommand{\Mil}{\mathrm{Mil}}
\newcommand{\coMod}{\mathbf{coMod}}
\newcommand{\Grp}{\mathrm{Grp}}
\newcommand{\Mon}{\mathrm{Mon}}
\newcommand{\Rep}{\mathbf{Rep}}
\newcommand{\Spec}{\mathrm{Spec}}
\newcommand{\Pic}{\mathrm{Pic}}
\newcommand{\Tel}{\mathrm{Tel}}
\newcommand{\MU}{\mathrm{MU}}
\newcommand{\category}{$\infty$-category}
\newcommand{\categories}{$\infty$-categories}
\newcommand{\operad}{$\infty$-operad}
\newcommand{\operads}{$\infty$-operads}
\DeclareFontFamily{U}{min}{}
\DeclareFontShape{U}{min}{m}{n}{<-> udmj30}{}
\newcommand{\pend}{\unskip\nobreak\hfill$\triangleleft$}
\begin{document}

\maketitle
\begin{abstract}
We study the notion of \emph{separable algebras} in the context of symmetric monoidal stable $\infty$-categories. In the first part of this paper, we compare this context to that of tensor-triangulated categories and show that separable algebras and their modules in a symmetric monoidal stable $\infty$-category are, in large parts, controlled by the (tensor-triangulated) homotopy category. We also study a variant of this notion, which we call ind-separability. Among other things, this provides a partially new proof of the Goerss--Hopkins--Miller theorem about the uniqueness of $\mathbb E_\infty$-structures on Morava $E$-theory.

We later initiate a study of separable algebras \textit{à la} Auslander-Goldman by relating them to Azumaya algebras, and prove in some restrictive cases that centers of separable algebras are separable. Finally, we study the Hochschild homology of separable algebras and prove some descent results in topological Hochschild homology.
\end{abstract}
\tableofcontents
\setcounter{secnumdepth}{0}
\section{Introduction}
\subsection{Overview}
\subsubsection*{Separable algebras}
In classical algebra, separable algebras, introduced by Auslander and Goldman in \cite{AuslanderGoldman}, are a generalization to arbitrary commutative rings of the classical notion of separable field extensions. They are $R$-algebras $A$ for which the multiplication map $A\otimes_R A\op\to A$ admits an $(A,A)$-bimodule section. Commutative separable algebras are closely related to étale algebras, while separable algebras whose center is the base commutative ring are also known as Azumaya algebras, introduced in the context of the Brauer group.

A typical trend in homotopical algebra is the attempt to mirror constructions and notions from classical algebra to ``derived'' contexts, and see what parts of the theory carry over, and what changes. For example, in \cite{balmerseparability}, Balmer initiated the study of separable algebras in tensor-triangulated categories (henceforth, tt-categories). A surprising feature of these algebras in this context is that they admit a good notion of module categories, even at the unstructured level of triangulated categories. In many situations, these module categories recover the ``expected'' result. For example, if $A$ is an étale $R$-algebra, then modules over $A$ in the derived category of $R$ recover the derived category of $A$, a surprising result which is known to be wrong when $A$ is a general $R$-algebra. 

A natural source of tt-categories (to some extent, the only source of ``natural'' tt-categories) is stably symmetric monoidal $\infty$-categories: given any such gadget $\C$, its homotopy category $\ho(\C)$ has a natural structure of a tt-category (and in fact, all the ``enhancements'' that appear in \cite[Section 5]{balmerseparability} - which, from this perspective, are trying to encode as extra structure on $\ho(\C)$ the homotopical data contained in $\C$ - arise in this way). From this point of view, it makes sense to wonder what parts of the tensor-triangulated story extend to the $\infty$-categorical case, and also, what parts of the tt-story are (at least morally) \emph{explained} by the $\infty$-story.

This project started out with a desire to answer these questions, and to clarify the connection between separable algebras in $\C$, and separable algebras in $\ho(\C)$. The first half of this paper is devoted to exactly this; there, we argue that separable algebras in $\C$ and their modules are mostly controlled by the homotopy category $\ho(\C)$, and even more so in the commutative setting. In particular, we answer a folk question by proving that the distinction between separable algebras and homotopy separable homotopy algebras is mild in the associative case, and inexistent in the commutative case (\Cref{thm : e1lift} and \Cref{thm : commlift}). 

We further introduce a variant of separability that works in the commutative case in more ``infinitary'' situations, which we call ind-separability, and prove similar results about this variant; among other things leading to a somewhat new proof of the Goerss--Hopkins--Miller theorem (\Cref{cor:GHM} - see \Cref{rmk:isitanewproof?} for a discussion of the sense of the word ``new''). 

\subsubsection*{The higher algebra of separable algebras}
Beyond their nice behaviour with respect to homotopy categories (or more generally, tt-categories), separable algebras are interesting in their own right: as we mentioned before, separability can be seen as an analogue of étale-ness. For example, Balmer proves in \cite{balmerNT} that étale maps of schemes induce separable algebras, and Neeman proves in \cite{Neeman} that over a noetherian scheme, this is not far from an exhaustive list of commutative separable algebras. See also the recent work of Naumann and Pol \cite{NikoLuca} for another comparison of separable commutative algebras and another notion of ``(finite) étale'' due to Mathew \cite{Akhilgalois}. 

Now, classically, separable algebras can be neatly organized in the following way: if $A$ is a separable algebra over $R$, its center $C$ is separable over $R$ and commutative, hence ``étale'', and $A$ is separable over $C$ and central, and hence Azumaya. Thus, separable algebras can be studied by studying separately the commutative, slightly more geometric case, and the central case, closely related to Brauer groups. In a later part of this paper, we try to replicate this story, originally due to Auslander and Goldman \cite{AuslanderGoldman} in the case of ring spectra. Along the way, we correct a mistake in \cite{BRS}, namely we prove that not all Azumaya algebras are separable by giving a number of examples, and formulate a criterion for when a given Azumaya algebra \emph{is} separable. 

\subsubsection*{The Hochschild homology of separable algebras}
Separable algebras are defined in terms that have a clear connection to (topological) Hochschild homology. This connection was already explored to some extent in \cite[Section 9]{rognes} and \cite[Section 1]{BRS}. We close this paper by studying the absolute Hochschild homology of separable extensions under a strengthening of the separability condition (see \Cref{defn:abssep}), which allows us to study some descent results in topological Hochschild homology, as many Galois extensions satisfy this strengthening of separability. 
 
\subsection{Main results}
We now describe our main results in more detail.

As explained earlier, our first goal is to relate separable algebras in $\C$ and in $\ho(\C)$. In the associative case, we prove the following, which we state as a single theorem, but is actually spread across \Cref{section:homotopycat}:
\begin{thmx}\label{thmx:assoc}[{\Cref{thm : e1lift}, \Cref{prop:homsepimpliessep}, \Cref{thm : homod=mod}, and \Cref{thm : morsep}}]
    Let $\C$ be an additively symmetric monoidal $\infty$-category. Given any algebra $A$ in $\ho(\C)$, which is separable as an algebra in $\ho(\C)$, the moduli space $\Alg(\C)^\simeq\times_{\Alg(\ho(\C))^\simeq}\{A\}$ is simply-connected, and any algebra $\tilde A$ lifting $A$ is separable in $\C$.

    Furthermore, in this situation, the canonical functor $\ho(\LMod_{\tilde A}(\C))\to \LMod_A(\ho(\C))$ is an equivalence. 

    Finally, given another algebra $R$ in $\C$, the canonical map $\map_{\Alg(\C)}(\tilde A,R)\to \hom_{\Alg(\ho(\C))}(A,R)$ is a $\pi_0$-isomorphism. 
\end{thmx}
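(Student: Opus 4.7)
The plan is to prove the four assertions in an order that respects their logical dependencies, with the construction of a single lift $\tilde A$ being the main technical hurdle, and the remaining claims following by a combination of retract-uniqueness and monadicity arguments. The common mechanism is that a separability section $\sigma: A \to A \otimes A$ provides a retract structure $A \to A \otimes A \to A$ in bimodules, and this retract structure is rigid enough to lift from $\ho(\C)$ to $\C$ without loss of information.

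I would begin by constructing a lift $\tilde A$. The strategy is to exhibit $\tilde A$ in $\C$ as the splitting of the retract datum determined by separability. Concretely, I would lift $\sigma$ and the multiplication $m: A \otimes A \to A$ to maps in $\C$ (possible since $\pi_0\,\map = \hom_{\ho(\C)}$), choose a homotopy $m\sigma \simeq \id_A$, and argue that the resulting retract datum in $\C$ canonically upgrades to an $E_1$-algebra structure on $\tilde A$. The obstacle is that the natural section in $\ho(\C)$ is a bimodule map, but the bimodule category in $\C$ is not yet available since it requires $\tilde A$ as input. I would circumvent this with a recognition principle tailored to separable algebras: the datum of a separable $E_1$-algebra in $\C$ should be equivalent to the data of an object with a multiplication, unit, and coherent bimodule section, reducing the coherence problem to a contractible one once $\sigma$ is present. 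The heart of the argument will be assembling the higher associativity and unit coherences from the splitting.

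With $\tilde A$ in hand, I would prove the module category equivalence $\ho(\LMod_{\tilde A}(\C)) \simeq \LMod_A(\ho(\C))$ by monadicity: the monad $\tilde A \otimes -$ on $\C$ and its image on $\ho(\C)$ agree up to the same retract-rigidity principle, and separability makes the monadicity Barr--Beck conditions transparent at the level of $\ho(\C)$. Once this equivalence is in place, separability of $\tilde A$ in $\C$ follows by lifting the bimodule section to a section in $\C$, using that the relevant bimodule mapping spaces surject onto their $\pi_0$, which now agrees with the $\ho(\C)$-bimodule Hom.

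Finally, both the simple-connectedness of the moduli of lifts and the $\pi_0$-isomorphism on algebra mapping spaces follow from the uniqueness, up to contractible choice, of the retract splitting. Any two lifts are splittings of the same lifted retract and are therefore connected through a contractible space; similarly, $\map_{\Alg(\C)}(\tilde A, R)$ and $\hom_{\Alg(\ho(\C))}(A, R)$ both classify compatible $\tilde A$-module structures on $R$, and these agree on $\pi_0$ by the module category equivalence. I expect the first step — assembling the $E_1$-structure from the splitting — to be the main obstacle, as it is there that higher homotopical coherences must be produced from a priori discrete input.
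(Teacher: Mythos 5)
There is a genuine gap at the heart of your construction of the lift $\tilde A$. The ``recognition principle'' you invoke --- that a separable $\mathbb E_1$-algebra in $\C$ is equivalent data to an object with multiplication, unit, and a coherent bimodule section --- is not constructed, and as you yourself note it is circular: a \emph{coherent} bimodule section only makes sense once $\tilde A$ is already a coherent algebra, since the bimodule category requires $\tilde A$ as input. Producing the infinite tower of associativity coherences from a splitting that exists only in $\ho(\C)$ is precisely the hard part, and no mechanism is offered for it. The paper's route is entirely different: it writes $\C\simeq\lim_n\ho_{\leq n}(\C)$ and lifts the algebra one Postnikov stage at a time, using the square of \Cref{lm:Postnikovcats} (built from Hopkins--Lurie-style synthetic categories) together with \Cref{thm : morsep} to glue the two images of $A_n$ in the corner category; separability enters only through that mapping-space statement, never through a splitting of the algebra object itself. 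Relatedly, your deduction of separability of $\tilde A$ from the module-category equivalence is circular in its ordering: the equivalence $\ho(\BiMod_{\tilde A}(\C))\simeq\BiMod_{hA}(\ho(\C))$ already requires $\tilde A\otimes\tilde A\op$ to be separable. The paper (\Cref{prop:homsepimpliessep}) breaks this circle by lifting the idempotent $s\circ\mu$ on the \emph{free} bimodule $A\otimes A\op$, where \Cref{lm : locff} applies unconditionally, and splitting it coherently using additivity.

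Your argument for simple-connectedness is also incorrect as stated: ``any two lifts are splittings of the same lifted retract and are therefore connected through a contractible space'' would prove the moduli space is \emph{contractible}, which is false --- \Cref{ex:modulinotcontr} exhibits $M_n(\mathbb Q[t])$ with a simply connected but non-contractible moduli of lifts, whose double loop space is $\map_{A\otimes A\op}(\Sigma L_A,A)$. The correct argument (\Cref{obs:onlynonempty}) is that \Cref{thm : morsep} together with conservativity of $\Alg(\C)\to\Alg(\ho(\C))$ makes $\Alg(\C)^\simeq\to\Alg(\ho(\C))^\simeq$ injective on $\pi_0$ and an isomorphism on $\pi_1$, while the target is a $1$-groupoid, so the fiber is simply connected as soon as it is nonempty. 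Your treatment of the module-category equivalence and of the $\pi_0$-statement on mapping spaces is in the right spirit (every module is a retract of a free one, fully faithfulness on free modules, algebra maps as module structures) and broadly matches the paper, though the essential-surjectivity step needs the coherent splitting of idempotents in additive $\infty$-categories rather than a Barr--Beck argument.
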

\begin{rmk}
    The simple-connectedness of the moduli space of lifts cannot be improved to a contractibility statement, cf. \Cref{ex:modulinotcontr}. Similarly, the final statement about $\pi_0$ cannot be improved to a space-level statement, even if the source is commutative and the target separable, cf. \Cref{ex:spacenotequiv}. See below for the case where the target is (homotopy) commutative. 
\pend \end{rmk}
We further prove that in the commutative case, the obstructions to contractibility vanish, namely:
\begin{thmx}\label{thmx:comm}[{\Cref{prop : e1comm}, \Cref{thm : commlift}, and \Cref{cor:hocommtargetdiscrete}}]
    Let $\C$ be an additively symmetric monoidal $\infty$-category, and let $A$ be an algebra in $\ho(\C)$ which is separable therein, and homotopy commutative. In this case, the moduli space of lifts to an associative algebra in $\C$ is contractible. 

    More generally, for any $d\geq 1$, the moduli space $\Alg_{\mathbb E_d}(\C)^\simeq\times_{\Alg(\ho(\C))^\simeq}\{A\}$ is contractible - including for $d=\infty$. In particular, $A$ admits an essentially unique lift $\tilde A$ to a commutative algebra in $\C$.  

    Furthermore, for any algebra $R$ in $\C$ which is homotopy commutative, the canonical map $\map_{\Alg(\C)}(\tilde A,R)\to \hom_{\Alg(\ho(\C))}(A,R)$ is an equivalence - the source is discrete. If $R$ is a commutative algebra, then these two spaces are also equivalent (via the canonical map) to $\map_{\CAlg(\C)}(\tilde A,R)$. 
\end{thmx}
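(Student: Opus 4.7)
The strategy is to establish the mapping-space statement (the third claim) first, from which the contractibility of the various moduli spaces will follow together with Theorem~A. By Theorem~A applied to $\C$, the algebra $A$ admits an essentially unique $\mathbb{E}_1$-lift $\tilde A$ with simply-connected moduli of lifts, and the canonical map $\map_{\Alg(\C)}(\tilde A, R) \to \hom_{\Alg(\ho(\C))}(A, R)$ is always a $\pi_0$-isomorphism. Thus to prove the third claim it suffices to show that $\map_{\Alg(\C)}(\tilde A, R)$ is discrete whenever $R$ is homotopy commutative. Granting this, applying it to $R = \tilde A$ (which is homotopy commutative since it lifts the commutative $A$) shows that $\Aut_{\Alg(\C)}(\tilde A)$ is discrete, upgrading the simply-connectedness of the $\mathbb{E}_1$-moduli to contractibility and giving the first claim.

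The technical heart is the discreteness of $\map_{\Alg(\C)}(\tilde A, R)$. My plan is to fix $f \colon \tilde A \to R$ and identify the loop space $\Omega_f \map_{\Alg(\C)}(\tilde A, R)$ with a space of $\tilde A$-bimodule maps into $R$ carrying a multiplicativity constraint, using the separability section $\sigma \colon \tilde A \to \tilde A \otimes \tilde A^{\op}$ to rigidify the correspondence. Invoking the bimodule analogue of the equivalence $\ho(\LMod_{\tilde A}(\C)) \simeq \LMod_A(\ho(\C))$ from Theorem~A should then identify this bimodule mapping space with a discrete set of bimodule maps in $\ho(\C)$, so the loop space collapses to a point. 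Homotopy commutativity of $R$ enters here by identifying the two $\tilde A$-module structures on $R$ induced by $f$ up to the swap equivalence, which is needed to translate the multiplicativity constraint symmetrically. Carefully orchestrating the interplay of the separability section, the swap on $R$, and the bimodule equivalence is where I expect the main difficulty to lie.

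For the $\mathbb{E}_d$-moduli with $d \geq 2$, I would iterate via Dunn additivity $\Alg_{\mathbb{E}_d}(\C) \simeq \Alg(\Alg_{\mathbb{E}_{d-1}}(\C))$, checking at each stage that $\tilde A$ remains separable in the iterated algebra category and that its multiplication there is again homotopy commutative, so the previous analysis applies. The case $d = \infty$ then follows by the ensuing inverse limit. Finally, for $R$ commutative, once the essentially unique commutative lift $\tilde A$ is in hand, the forgetful map $\map_{\CAlg(\C)}(\tilde A, R) \to \map_{\Alg(\C)}(\tilde A, R)$ becomes a map of discrete spaces that is a bijection on $\pi_0$ (each side agrees with $\hom_{\Alg(\ho(\C))}(A, R)$ via Theorem~A applied at the corresponding level of coherence), and is therefore an equivalence.
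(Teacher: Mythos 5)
Your overall architecture for the $\mathbb E_1$-part and the mapping-space part is sound and close to the paper's. The discreteness of $\map_{\Alg(\C)}(\tilde A,R)$ for homotopy commutative $R$ is indeed the technical heart; the paper proves it (Proposition \ref{prop:discretecentralhocomm}) by identifying $\Omega_f\map_{\Alg(\C)}(\tilde A,R)$ with the fiber of $\map_{R\otimes \tilde A\op}(R,R)\to\map_R(R,R)$, getting injectivity on all homotopy groups from the separability splitting of $R\otimes \tilde A\to R$, and surjectivity by descending to the homotopy category (via \Cref{cor : sepff} and \Cref{lm : locff}) where homotopy centrality of $f$ forces every left $hR$-linear map $R\to\Omega^nR$ to be right $hA$-linear. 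Your sketch points at exactly these ingredients, so that step is the right plan even if unexecuted. Your bootstrap from discreteness of $\map_{\Alg(\C)}(\tilde A,\tilde A)$ to contractibility of the $\mathbb E_1$-moduli is also valid, and is a genuine (mild) variant: the paper's \Cref{prop : e1comm} instead computes the double loop space as $\map_{\tilde A\otimes\tilde A\op}(\Sigma L_{\tilde A},\tilde A)$ and kills it using \Cref{cor : commvanishingcotangent} (the multiplication is a localization at an idempotent, so $\mu_!L_{\tilde A}=0$). Either route works.

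The genuine gap is in your treatment of $d\geq 2$. Iterating Dunn additivity by ``checking at each stage that $\tilde A$ remains separable in the iterated algebra category'' does not work: $\Alg_{\mathbb E_{d-1}}(\C)$ is not additively symmetric monoidal (it has no biproducts, and its coproduct is not a direct sum), so none of the machinery you are invoking -- separability detected in the homotopy category, $\ho(\LMod)\simeq\LMod(\ho)$, Theorem A -- applies to it; moreover the relevant moduli space at stage $d$ compares against $\Alg(\ho(\C))$, not against $\ho(\Alg_{\mathbb E_{d-1}}(\C))$, so the inductive statement you would need is not the one you would be proving. The missing idea is the one the paper uses in \Cref{thm : opdlift}: apply your mapping-space result with source $A^{\otimes n}$ (which is again separable and homotopy commutative) and target $A$ to conclude that \emph{all} the spaces $\map_{\Alg(\C)}(A^{\otimes n},A)$ are discrete and agree with $\hom_{\Alg(\ho(\C))}(hA^{\otimes n},hA)$. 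This makes the forgetful functor $\Alg(\C)\to\Alg(\ho(\C))$ a symmetric monoidal equivalence on the full subcategories spanned by tensor powers of $A$, so for any one-colored operad $\Oo$ the space of $\Oo\otimes\mathbb E_1$-structures on $\tilde A$ extending its $\mathbb E_1$-structure is identified with the space of $\Oo$-structures on $hA$ in the $1$-category $\Alg(\ho(\C))$; since $hA$ is commutative and $\CAlg(\ho(\C))$ is cocartesian, that space is contractible for $\Oo$ weakly reduced, and Dunn additivity is only used at the very end to identify $\mathbb E_{d-1}\otimes\mathbb E_1\simeq\mathbb E_d$. The same mechanism (the paper's \Cref{lm:ffalgebra}) is what legitimately upgrades the $\mathbb E_1$-mapping-space statement to the $\CAlg$-mapping-space statement at the end; ``Theorem A applied at the corresponding level of coherence'' is not by itself a proof of that.
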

As a sample application of these results, in \Cref{subsection:SW}, we use Ravenel and Wilson's computations from \cite{ravenelwilson} to prove (a corrected version of) Sati and Westerland's main results from \cite{satiwesterland}. In some sense, our proof is simpler as it does not involve any obstruction theory. 
\newline 

Although Morava $E$-theory is not separable in $K(n)$-local spectra, we prove that it is close enough to being separable that some of our results still apply to it. In more detail, we introduce the notion of an (homotopy) \emph{ind-separable} algebra in \Cref{section:indsep}, and prove an analogue of \Cref{thmx:comm} for ind-separable algebras. We further prove, using as only input a computation of $\pi_*(L_{K(n)}(E\otimes E))$, that Morava $E$-theory is homotopy ind-separable, and we thus recover the Goerss--Hopkins--Miller theorem (we prove a more precise version, also for morphisms, cf. \Cref{cor:GHM}):
\begin{corx}
Let $E=E(k,\mathbf G)$ be a Morava $E$-theory, where $k$ is a perfect field of characteristic $p$ and $\mathbf G$ a formal group over $k$. For any $d\geq 1$, the moduli space $\Alg_{\mathbb E_d}(\Sp)^\simeq\times_{\Alg(\ho(\Sp))^\simeq}\{E\}$ is contractible. 
\end{corx}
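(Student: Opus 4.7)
The plan is to combine three ingredients: a reduction from $\Sp$ to the $K(n)$-local subcategory, verification that $E$ is homotopy ind-separable there, and the ind-separable analogue of \Cref{thmx:comm} developed in \Cref{section:indsep}.

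First, I would reduce the problem to $\Sp_{K(n)}$. Since $E$ is $K(n)$-local and $L_{K(n)}$ is symmetric monoidal, any $k$-ary multiplication $E^{\otimes k}\to E$ factors uniquely through $L_{K(n)}(E^{\otimes k})$, which is the $k$-fold tensor power of $E$ taken inside $\Sp_{K(n)}$. Consequently the moduli space $\Alg_{\mathbb E_d}(\Sp)^\simeq\times_{\Alg(\ho(\Sp))^\simeq}\{E\}$ is naturally equivalent to the analogous moduli space computed inside $\Sp_{K(n)}$, and similarly at the level of homotopy categories.

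Second, I would verify that $E$, viewed as an algebra in $\ho(\Sp_{K(n)})$, is homotopy ind-separable. The only ingredient used here is the Strickland--Devinatz--Hopkins computation
\[
\pi_* L_{K(n)}(E\otimes E)\cong \on{map}_{\mathrm{cts}}(G,\pi_* E),
\]
where $G$ is the (extended) Morava stabilizer group attached to $(k,\mathbf G)$. This exhibits $L_{K(n)}(E\otimes E)$ as a cofiltered limit, along open normal subgroups $U\leq G$, of finite products $\prod_{G/U}E$; each such finite product is a visibly separable $E$-algebra, and the multiplication $E\otimes_{L_{K(n)}\Sph}E\to E$ admits compatible $(E,E)$-bimodule sections, one for each $U$, given by the ``identity coset''. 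This is precisely the content of homotopy ind-separability in the sense of \Cref{section:indsep}.

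Third, I would invoke the ind-separable variant of \Cref{thmx:comm} proved in \Cref{section:indsep}: since $E$ is homotopy commutative (its homotopy is a Lubin--Tate ring) and now homotopy ind-separable, the moduli space of $\mathbb E_d$-lifts of $E$ is contractible for every $d\geq 1$, including $d=\infty$. The main obstacle is the second step: one has to match the pro-system coming from the Devinatz--Hopkins-type decomposition against the formal definition of homotopy ind-separability, and check that the section of the multiplication assembled from the ``identity coset'' sections genuinely satisfies the bimodule/compatibility conditions that the ind-separable formalism requires. The profiniteness of $G$ is exactly what prevents $E$ from being honestly homotopy separable and forces one to pass to the ind-variant; once that identification is set up, the corollary is a formal consequence of the rigidity established in the ind-separable commutative case.
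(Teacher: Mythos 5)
Your overall strategy---reduce to $\Sp_{K(n)}$, prove homotopy ind-separability of $E$ from the computation of $\pi_*L_{K(n)}(E\otimes E)$, and feed this into the ind-separable rigidity machinery---is exactly the route the paper takes. But as written the proposal has a genuine gap at the final step. The ind-separable analogue of \Cref{thmx:comm} that controls the moduli space over $\Alg(\ho(\Sp_{K(n)}))^\simeq$ (as opposed to over $\Alg(\Syn_\C^\heart)^\simeq$, i.e.\ over multiplicative cohomology theories on compact objects) is \Cref{thm:indsepBrown}, and it carries two hypotheses beyond homotopy ind-separability that you never verify: (i) $\Sp_{K(n)}$ must satisfy the Brown-representability condition of \Cref{assu:Brown}, which the paper checks in \Cref{lm:K(n)Brown} via countability of $\pi_*\Map(L_{K(n)}X,L_{K(n)}X)$ for $X$ finite of type $n$; and (ii) $E$ must receive no nonzero phantom maps from its tensor powers, which the paper deduces from Landweber exactness (\Cref{lm:landweberphantom} and \Cref{lm:phantomK(n)}). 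Without (i) and (ii) the obstruction theory only produces an algebra structure whose multiplication agrees with the given one up to phantom maps, which does not identify the moduli space in the statement; the paper explicitly warns that an earlier version was missing the phantom-map hypothesis, so this is not a formality.

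Two further points on your second step. The decomposition you describe is a filtered colimit $C(\Gamma,E_*)\cong\colim_U C(\Gamma/U,E_*)$, not a cofiltered limit; and identifying the multiplication as a localization at the indicator idempotents of the open subgroups (\Cref{lm:locindicator}) is only half the work---one must still compute that the $K(n)$-localization of the resulting telescope of $E\hat\otimes E$ has homotopy $E_*$, which the paper does using evenness and regularity of $(p,u_1,\dots,u_{n-1})$ so that $K(n)$-localization becomes $\mathfrak m$-adic completion on homotopy groups. Finally, be careful to use only the homotopy-group computation of the cooperations (Hovey, after Hopkins--Ravenel and Baker) and not the Devinatz--Hopkins pro-Galois description: the latter presupposes the Goerss--Hopkins--Miller theorem and would make the argument circular (cf.\ \Cref{ex:proGalois}).
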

\begin{rmk}
    When $d=1$, this is the Hopkins-Miller theorem, and when $d=\infty$, this is its extension to the Goerss-Hopkins-Miller theorem. This result, for intermediary values of $d$, is well-known to experts, but does not seem to have been recorded in the literature. 
\pend \end{rmk}
\begin{rmk}
    Our proof of this theorem is also based on obstruction theory - we refer to \Cref{rmk:isitanewproof?} for a discussion of the difference between our proof and previous proofs.
\pend \end{rmk}
We then study Auslander-Goldman theory. In this direction, our results are only partial. A special case of what we prove is:
\begin{thmx}[{\Cref{thm:Brauer} and \Cref{prop:Azsep}}]
    Let $R$ be a commutative ring spectrum satisfying the assumptions of \Cref{thm:Brauer}. In this case, any dualizable central separable algebra over $R$ is Azumaya. 

    Conversely, in any additive presentably symmetric monoidal $\infty$-category $\C$, an Azumaya algebra $A$ is separable if and only if its unit $\one_\C\to A$ admits a retraction. 
\end{thmx}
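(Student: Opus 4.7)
The theorem has two parts, which I address in reverse order: the iff-statement admits a self-contained Morita-theoretic proof, while the implication ``central separable dualizable implies Azumaya'' builds on this together with the hypotheses of \Cref{thm:Brauer}.

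For the characterization of separability among Azumaya algebras, the plan is to exploit the Morita equivalence $F\colon \LMod_{A \otimes A\op}(\C) \xrightarrow{\sim} \C$ afforded by the Azumaya hypothesis (namely $A \otimes A\op \simeq \End_R(A)$ together with dualizability of $A$), under which $A$ as a bimodule over itself is sent to $\one_\C$. A bimodule section of $\mu\colon A \otimes A\op \to A$ is then sent by $F$ to a section of $F(\mu)$ in $\C$, so the separability of $A$ is equivalent to $F(\mu)$ being a split epimorphism in $\C$.

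The crux is the identification of $F(\mu)$ as a map in $\C$. Using dualizability of $A$ together with Morita, one computes $F(A \otimes A\op) \simeq A^\vee$, the $R$-linear dual of $A$, and $F(\mu)\colon A^\vee \to \one_\C$ agrees with the ``evaluation at the unit'' map sending $\phi \in A^\vee \simeq \Hom_\C(A, \one_\C)$ to $\phi \circ \eta \in \End_\C(\one_\C)$. A section of $F(\mu)$ is therefore exactly the data of a map $\phi\colon A \to \one_\C$ with $\phi \circ \eta = \id_{\one_\C}$, i.e., a retraction of the unit. I expect the main technical work to be this identification of $F(\mu)$ with precomposition by $\eta$, which should follow from a careful unwinding of the Morita correspondence, using that under it the bimodule $A$ and the unit $\eta\colon \one_\C \to A$ are related by duality.

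For the first assertion, given a dualizable central separable $A$ over a commutative ring spectrum $R$ satisfying the hypotheses of \Cref{thm:Brauer}, the plan is to show that the canonical map $A \otimes_R A\op \to \End_R(A)$ is an equivalence (which together with dualizability is the Azumaya condition), by reducing to a more classical check via those hypotheses --- for example, through base change to residue fields or descent along a suitable cover, where the classical Auslander--Goldman theorem applies. Central separability provides the bimodule retract of $A$ inside $A \otimes_R A\op$ together with the identification $Z(A) \simeq \one_\C$, and I expect the \Cref{thm:Brauer} hypotheses to supply the additional geometric input needed to upgrade this to an honest equivalence of algebras; this will be the main obstacle, since it depends on the precise nature of those hypotheses.
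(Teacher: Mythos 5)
Your treatment of the second assertion is correct and is essentially the paper's own argument: the paper likewise reduces everything to identifying the multiplication $A\otimes A\op\simeq \End(A)\to A$ with $A\otimes(A^\vee\xrightarrow{ev_\eta}\one)$, i.e.\ with the image under the Morita equivalence of the map dual to the unit $\eta$. The only cosmetic differences are that the paper argues with bimodule sections of $A\otimes(ev_\eta)$ directly rather than transporting all the way down to $\C$, and that for the converse it instead invokes that a separable algebra retracts onto its center $Z(A)$, which is $\one$ by \Cref{lm:aziscentral}; your single Morita-theoretic equivalence handles both directions at once, which is fine.

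The first assertion is where the proposal has a genuine gap: you state the strategy (reduce to residue fields, apply a classical argument) but none of its substance, and both halves of that strategy are exactly where the content of \Cref{thm:Brauer} lies. For the reduction, the relevant mechanism is not descent along a cover but conservativity: for a dualizable \emph{full} algebra, being Azumaya is equivalent to the single map $A\otimes A\op\to\End(A)$ being an equivalence, so it can be tested along any jointly conservative family of symmetric monoidal functors (\Cref{lm:Brbootstrap}); fullness is free here because a central separable algebra retracts onto $Z(A)\simeq\one$. The three hypotheses of \Cref{thm:Brauer} are precisely what produce such a family with tractable targets: Postnikov towers and classical residue fields in the Deligne--Mumford case, the nilpotence theorem together with the chromatic Nullstellensatz to reach Morava $E$-theories when $R\otimes\mathbb F_p=0$, and Mathew's residue fields $K(\mathfrak p)$ in the even-periodic regular noetherian case. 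For the base case, classical Auslander--Goldman does not literally apply: one lands in graded vector spaces over a graded field (or, at the prime $2$, in Milnor modules, since Morava $K$-theory is not homotopy commutative there), and the argument is that separability plus semisimplicity forces every two-sided ideal to split off as an algebra factor (\Cref{lm:sepimpliessemisimp}), so centrality of $A\otimes A\op$ makes it simple, whence $A\otimes A\op\to\End(A)$ is injective and then an isomorphism by a dimension count. Without these ingredients the first assertion remains unproved.
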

\begin{rmk}
    The assumptions of \Cref{thm:Brauer} cover all connective ring spectra, and all ring spectra ``coming from chromatic homotopy theory'', but they are nonetheless a bit restrictive. 
\end{rmk}
We also study the question of whether centers of separable algebras are separable, although we only reach results in more restricted generality:
\begin{thmx}[{\Cref{thm:center}}]
    Let $R$ be a connective commutative ring spectrum and let $A$ be an almost perfect $R$-algebra. If $A$ is separable, then so is its center.

    The same holds for separable algebras in $K(n)$-local $E$-modules, where $E$ is Morava $E$-theory, and for separable algebras in $K(n)$-local spectra.
\end{thmx}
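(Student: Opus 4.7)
The plan is to leverage classical Auslander--Goldman at the level of $\pi_0$ together with the commutative lifting theorem (\Cref{thmx:comm}), in the same spirit as the earlier comparisons between separability in $\C$ and in $\ho(\C)$. First I would observe that separability in $\C$ passes to classical separability at the level of $\pi_0$: applying $\pi_0$ to a bimodule section of $\mu : A\otimes_R A\op \to A$ gives a $\pi_0 A$-bimodule section for the classical multiplication $\pi_0 A \otimes_{\pi_0 R} \pi_0 A\op \to \pi_0 A$, so $\pi_0 A$ is a classically separable $\pi_0 R$-algebra. Classical Auslander--Goldman then yields that the classical center $Z(\pi_0 A)$ is itself classically separable and commutative over $\pi_0 R$.

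Next, I would apply \Cref{thmx:comm} to the homotopy separable commutative algebra $Z(\pi_0 A)$ in $\ho(\Mod_R)$ to obtain an essentially unique lift $\tilde C$ to a commutative $R$-algebra, which is moreover separable in $\Mod_R$. This reduces the theorem to identifying the derived center $Z(A) = \map_{A \otimes_R A\op}(A,A)$ with $\tilde C$ as (commutative) algebras in $\Mod_R$.

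The key input for this identification is that separability of $A$ makes $A$ a retract of $A \otimes_R A\op$ as an $A$-bimodule; applying $\map_{A\otimes_R A\op}(-,A)$ shows $Z(A)$ is a retract of $\map_{A\otimes_R A\op}(A\otimes_R A\op,A)\simeq A$ as $R$-modules, so in particular $Z(A)$ is connective and almost perfect, and $\pi_0 Z(A)\cong Z(\pi_0 A)$. Then, the universal property of the center (as the endomorphism $\mathbb E_2$-algebra of the unit in $A$-bimodules) gives a canonical map of algebras $\tilde C \to Z(A)$ inducing the identity on $\pi_0$; checking that this map is an equivalence then uses almost perfectness via a Postnikov-filtration comparison, where both Postnikov towers are controlled by the same classical étale data $Z(\pi_0 A)/\pi_0 R$, one by virtue of $\tilde C$ being the lift of an étale algebra, and the other by virtue of the retract identification $Z(A) \hookrightarrow A$ together with the separability of $A$. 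Alternatively, one can invoke the uniqueness-of-lifts part of \Cref{thmx:comm} directly, given that $Z(A)$ is already homotopy commutative (being $\mathbb E_2$) and its homotopy-level structure is forced by $\pi_0 Z(A)$ in the presence of almost perfectness.

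For the two $K(n)$-local cases, I would run the same outline with minor substitutions: replace the classical Auslander--Goldman input by its appropriate version (using that in $K(n)$-local $E$-modules, separable algebras are controlled by their underlying $\pi_\ast$, and in $K(n)$-local spectra by Mathew's classification via finite étale extensions), replace ``almost perfect'' by $K(n)$-local dualizability (or finiteness of $\pi_\ast$ over $\pi_\ast E$), and replace \Cref{thmx:comm} by its ind-separable analogue proven in \Cref{section:indsep}. I expect the main obstacle throughout to be the identification step $\tilde C \simeq Z(A)$: the $\pi_0$-level data is handled cleanly by Auslander--Goldman, but matching the higher homotopy of $Z(A)$ to that of the classical étale lift $\tilde C$ is exactly where the finiteness hypothesis (almost perfectness, resp.\ $K(n)$-local dualizability) is doing the real work, and presumably explains why the theorem has to be stated under these restrictive assumptions rather than in full generality.
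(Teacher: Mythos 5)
Your strategy---pass to $\pi_0$, invoke classical Auslander--Goldman there, lift the resulting classical separable algebra to $\tilde C$, and identify $\tilde C$ with $Z(A)$---is genuinely different from the paper's, but it has gaps at exactly the points where the real work happens. The central one is the identification $Z(A)\simeq \tilde C$. There is no canonical map $\tilde C\to Z(A)$: $\tilde C$ is produced abstractly as a lift of $Z(\pi_0A)$ and carries no action on $A$, so the universal property of the $\mathbb E_2$-center gives you nothing to map out of $\tilde C$. The ``alternative'' via the uniqueness-of-lifts in \Cref{thmx:comm} is circular: that statement applies to homotopy \emph{separable} homotopy commutative algebras, and homotopy separability of $hZ(A)$ is, by \Cref{prop:homsepimpliessep}, essentially the conclusion you are trying to reach. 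Even the input $\pi_0Z(A)\cong Z(\pi_0A)$ is unjustified: $\pi_0\map_{A\otimes_RA\op}(A,A)$ computes bimodule endomorphisms of $A$ in the homotopy category, i.e.\ a degree-zero piece of a \emph{graded} center of $\pi_*A$ over $\pi_*R$, which need not agree with the classical center of $\pi_0A$. And the assertion that the Postnikov tower of $Z(A)$ is ``controlled by the classical étale data'' is precisely the flatness statement $\pi_*Z(A)\cong\pi_0Z(A)\otimes_{\pi_0R}\pi_*R$ that constitutes the hard part of the theorem; the retract $Z(A)\to A$ does not give it. Relatedly, to lift $Z(\pi_0A)$ via \Cref{thmx:comm} at all you need it separable in the \emph{derived} category $\Mod_{\pi_0R}$, i.e.\ flat, not merely separable in $\Mod_{\pi_0R}^\heart$ as classical Auslander--Goldman provides.

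The paper supplies exactly these missing ingredients by a different mechanism: \Cref{cor:centerabsolute} shows that for separable $A$ the center commutes with \emph{every} symmetric monoidal functor, hence $Z(A)\otimes_Rk\simeq Z(A\otimes_Rk)$ for every residue field $k$; the field case (\Cref{lm:centerfield}) is settled by a semi-simplicity argument in graded vector spaces using \Cref{lm:sepimpliessemisimp}; and flatness, finite presentation, and étaleness of $Z(A)$ over a discrete $R$ are then checked fiberwise using almost perfectness, with \Cref{prop:etalesep} converting étale into separable (the reduction from connective to discrete $R$ goes through Postnikov towers and \Cref{prop:centerdescent}, not through a single application of $\pi_0$). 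No comparison with a pre-fabricated lift $\tilde C$ is needed. Your sketch of the $K(n)$-local cases inherits the same issues in amplified form, and silently adds a dualizability hypothesis that is not in the statement; the paper instead descends through synthetic/Milnor-module categories to an abelian situation where the semi-simplicity argument applies.
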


We finally study the Hochschild homology of separable algebras. Under a technical assumption which strenghtens separability in a relative context, we obtain descent results in \Cref{section:descent}. Special cases worth recording are:
\begin{corx}[{\Cref{cor:Galoisdescent}}]
    The $C_2$-Galois extension $\KO\to \KU$ and the $(\mathbb Z/p)^\times$-Galois extension $L_p\to \KU_p$ (the inclusion of the Adams-summand in $p$-completed $K$-theory) satisfy descent in topological Hochschild homology ($p$-completed for the latter). 
\end{corx}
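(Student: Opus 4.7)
The plan is to derive this corollary as a direct application of the general descent theorem of \Cref{section:descent}: that result asserts that any algebra extension satisfying the strengthening of separability given in \Cref{defn:abssep} satisfies descent in topological Hochschild homology. The task therefore reduces to verifying this strengthened condition for the two Galois extensions at hand. Since Galois extensions are already known to be separable, the work consists in promoting the known bimodule splitting of the multiplication to the additional compatibility demanded by absolute separability.

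For the $(\mathbb Z/p)^\times$-extension $L_p \to \KU_p$, the Galois group has order $p-1$, which is invertible in the $p$-complete sphere and hence in $L_p$. The standard averaging idempotent $\frac{1}{p-1}\sum_{g}g\otimes g^{-1}\in \KU_p\otimes_{L_p}\KU_p$ then produces a bimodule section of the multiplication map in the expected way. The absolute strengthening is then checked using the standard identification $\KU_p\otimes_{L_p}\KU_p\simeq\prod_{(\mathbb Z/p)^\times}\KU_p$ coming from faithful Galois descent, under which the averaging idempotent becomes the projection onto the identity-component: this description is manifestly compatible with the absolute bimodule structure, so the hypothesis of the descent theorem is satisfied.

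For $\KO\to\KU$ the order of the Galois group $C_2$ is not invertible in $\KO$, so the averaging trick is unavailable. Here I would instead exploit the concrete identification $\KU\otimes_{\KO}\KU\simeq F(C_{2+},\KU)\simeq \KU\times\KU$ that is characteristic of faithful Galois extensions: the projection onto the identity-factor is already a bimodule section of the multiplication, and the verification of the absolute condition can be done by hand, using the cofiber sequence $\Sigma\KO\xrightarrow{\eta}\KO\to\KU$ in $\KO$-modules (equivalently, Wood's theorem $\KU\simeq\KO\otimes C\eta$) to control the interaction between the $\KO$- and $\Sph$-relative tensor products.

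The main obstacle, and the step requiring the most care, is the $\KO\to\KU$ case: the absence of an invertible Galois order rules out the textbook averaging argument and forces the verification of \Cref{defn:abssep} to rely on structural features specific to real and complex $K$-theory — in particular on the precise description of $\KO$ as sitting inside $\KU$ via the Wood cofiber sequence — rather than on a general group-theoretic identity.
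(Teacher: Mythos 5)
Your overall strategy --- reduce both cases to the absolute separability condition of \Cref{defn:abssep} and then invoke the general descent result \Cref{cor:HHdescent} --- is exactly the paper's, and your outline for $\KO\to\KU$ matches \Cref{ex:KOKU} in substance: one projects onto the identity factor of $\KU\otimes_{\KO}\KU\simeq\prod_{C_2}\KU$ and uses the Wood cofiber sequence $\Sigma\KO\xrightarrow{\eta}\KO\to\KU$ (equivalently $\KU\simeq\KO\otimes\Sigma^{-2}\mathbb{CP}^2$) to show that the idempotent $(1,0)$ is hit by $\pi_0(\KU\otimes\KU)\to\pi_0(\KU\otimes_{\KO}\KU)$. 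You correctly identify this as the hard step and the right tool, but you defer the actual verification, which is where all the content of that example lies.

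The genuine gap is in the Adams-summand case. First, the averaging element $\tfrac{1}{p-1}\sum_g g\otimes g^{-1}$ does not literally make sense in $\pi_0(\KU_p\otimes_{L_p}\KU_p)$: the $g$ are Adams operations, i.e.\ automorphisms of $\KU_p$, not elements of $\pi_0\KU_p$. You are importing the Maschke-type idempotent for group \emph{algebras}, whereas for a Galois extension separability is automatic with no invertibility hypothesis --- under $\KU_p\otimes_{L_p}\KU_p\simeq\prod_{(\mathbb Z/p)^\times}\KU_p$ the separability idempotent is simply $(1,0,\dots,0)$. Second, and more seriously, absolute separability is precisely the assertion that this idempotent lifts along $\KU_p\otimes\KU_p\to\KU_p\otimes_{L_p}\KU_p$, and nothing in your argument addresses that lift: the identification of the \emph{relative} tensor product with $\prod_{(\mathbb Z/p)^\times}\KU_p$ carries no information about the \emph{absolute} one, so ``manifestly compatible with the absolute bimodule structure'' is an assertion, not a proof. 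The paper's \Cref{ex:Adamssummand} supplies exactly the missing input: the idempotent arises from the Adams splitting of $\KU_{(p)}$ into suspended copies of the Adams summand, i.e.\ from a stable operation already living in $\pi_0(\KU_p\otimes\KU_p)$, which is the required factorization. (A minor further point: \Cref{cor:HHdescent} also needs $A\to B$ to be descendable or faithful Galois; this holds here but should be stated.)
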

\subsection{Outline}
Let us describe the contents of this paper linearly. 

In \Cref{section:generalities}, we start by gathering generalities about separable algebras and their module categories. 

In \Cref{section:homotopycat}, we study in more depth the relation between separable algebras in $\C$ and in $\ho(\C)$ in the associative case. This is where we prove \Cref{thmx:assoc}. Most of the proof is relatively elementary, except for the proof that the moduli space of lifts is non-empty, where we use some deformation theory of \categories. In \Cref{section:comm}, we move on to the commutative case, where we prove \Cref{thmx:comm} - in contrast to the previous section, there is no deformation theory in these proofs, and they are relatively elementary. In this section, we also study the analogy between separable and étale algebras in the sense of Lurie.  

In \Cref{section:indsep}, we study a variant of separability, namely ind-separability, and use it to recover the Goerss--Hopkins--Miller theorem. 

In \Cref{section:examples}, we gather a number of examples of separable algebras, to indicate the wealth of examples despite how strong this condition is. 

\Cref{section:brauer} is where we investigate Auslander-Goldman theory in homotopical algebra. We raise a number of questions we were not able to answer in this direction. 

Finally, in \Cref{section:descent}, we study the Hochschild homology of separable algebras and related descent properties. 

We have two short appendices: in \Cref{app:cyctrace}, we give a proof that the trace pairing of a dualizable algebra is $C_2$-equivariant in a coherent sense (this is used for a single result in the paper, namely in \Cref{prop:cohstrongsep}); and in \Cref{app:epi} we compare the condition that a map of algebras be an epimorphism to a condition involving tensor products (this is \emph{not} used in the paper, but is related to \Cref{item:epi} in \Cref{lm:general}).
\begin{rmk}
    We emphasize that \Cref{section:generalities,section:homotopycat,section:comm} are mostly elementary. The only part requiring some nontrivial technology is \Cref{thm : e1lift}, and in turn this relies only on deformation theory of \categories{} in the guise of \Cref{lm:Postnikovcats}. In other words, all of the ``rigidification'' results are completely elementary if we start with an $\mathbb E_1$-algebra, and not obstruction-theoretic. 

    On the other hand, the version involving ind-separabiltiy is more involved and we prove it using the obstruction theory fromm \cite{PVK}. We recommend the reader have a look at that paper if they want to go in depth in the proof of \Cref{thm:indsepBrown}. 
\end{rmk}
\subsection{Conventions}
We freely use the language of \categories{} as extensively developed by Lurie in \cite{HTT,HA}. We view ordinary categories as \categories{} via the nerve, and typically suppress the nerve from the notation. When we want to stress that a \category{} is (the nerve of) an ordinary category, we say ``$1$-category''. Unless explicitly specified, all our categorical notions such as co/limits, adjunctions, etc. are to be understood in the sense of \categories. 
\begin{enumerate}
\item $\Ss,\Sp$ denote, respectively, the \categories{} of spaces\footnote{aka $\infty$-groupoids, homotopy types or anima.} and of spectra.
    \item Throughout this paper, $\C$ will denote a symmetric monoidal \category, satisfying various extra conditions. When we say that an \category{} is ``(semi)additively'' (resp. ``stably'', ``presentably'') symmetric monoidal, we mean that it is a symmetric monoidal \category{} whose underlying \category{} is (semi)additive (resp. stable, presentable), and where the tensor product is compatible with this structure, that is, commutes with coproducts in each variable (resp. finite colimits, all colimits). 
    \item If there is no specified \operad{}, the word ``algebra'' (resp. the notation $\Alg$) means ``associative or equivalently $\mathbb E_1$-algebra'' (resp. denotes the \category{} of associative algebras). 
    \item We use $\ho(C)$ to denote the homotopy category of an \category. When $X$ is an object of $C$ (possibly with some extra structure), we write $hX$ for the same object viewed as an object of $\ho(C)$ (with the appropriate extra structure, in $\ho(C)$). We append the word ``homotopy'' to a type of structure to mean ``that type of structure, considered in $\ho(C)$''. For example, a ``homotopy algebra'' is an algebra in $\ho(C)$. 
    \item We say ``geometric realization'' for ``(homotopy) colimit indexed by $\Delta\op$''.
    \item We write $\map$ for mapping spaces, $\Map$ for mapping spectra in stable \categories, and $\hom$ for hom-sets in $1$-categories. We also use $\hom$ when talking about internal homs of closed symmetric monoidal \categories. 
    \item A common trick consists in embedding a small \category{} (possibly with some extra structure) in its presheaf \category{} to reduce to proving statements about presentable \categories{}, or simply \categories{} with suitable colimits. This is usually compatible with multiplicative structures, essentially by \cite[Section 4.8.1]{HA} (see, e.g., \cite[Proposition 4.8.1.10]{HA}). We will usually simply say ``up to adding enough colimits'' to mean ``without loss of generality, assume $\C$ has these colimits'', i.e., to refer to this trick.
    \item Modules are usually left modules. For commutative algebras, we simply write $\Mod$, while for associative algebras we always specify and write $\LMod$ or $\RMod$. For an algebra $A$, we shorten ``$(A,A)$-bimodule'' to ``$A$-bimodule'' and write $\BiMod_A$, while for two algebras $A$ and $B$ we write $_A\BiMod_B$ for $(A,B)$-bimodule (a left $A$-action and a right $B$-action). If $\C$ is clear from context, we sometimes write $\LMod_A$ for $\LMod_A(\C)$. 
    \item By default, everything is derived. If we write $\Mod_R$, for an ordinary ring $R$, we mean the \category{} modules \emph{in} $\Sp$, equivalently the derived \category{} of $R$. We use $\heart$ to denote hearts of t-structures, so $\Mod_R^\heart$ denotes the abelian category of discrete $R$-modules - we also use it for variants, such as $\CAlg_R^\heart$, which means $\CAlg(\Mod_R^\heart)$. 
\end{enumerate}
\subsection{Acknowledgements}
It is my pleasure to acknowledge the help of my advisors, Jesper Grodal and Markus Land, for their support in my projects, their helpful feedback and comments, and finally their listening to my numerous rants about separable algebras for more than a year.
A big chunk of this project would not have been possible without the help of Robert Burklund, who explained to me what synthetic spectra were. He is also responsible for my not giving up on Morava $E$-theory once I realized it was not separable. I heartily thank him for his help and patience. I also wish to thank Piotr Pstrągowski for patiently answering many questions.  

I thank Haldun \"Ozg\"ur Bay{\i}nd{\i}r for suggesting I think about \Cref{thm:Moravaindsepperf}, and for asking whether I could prove the results from \Cref{subsection:SW} with the tools developed in this paper. 

 I had several interesting discussions related to this project with Anish Chedalavada and Luca Pol, and I also want to thank Luca Pol and Niko Naumann for sharing a draft of their paper \cite{NikoLuca} which led to improvements in \Cref{section:etale,section:center}. While reading eachother's drafts, we realized that there was some overlap between our preliminaries on separable algebras, and I have tried to record it accordingly. 

Jan Steinebrunner pointed out the simple proof of \Cref{prop:C2trace} that now features in the appendix, which meant I did not have to go through too many hoops to include its consequence, \Cref{prop:cohstrongsep}. He also helped me with the drawings from \Cref{app:cyctrace}. 

Finally, I have had helpful conversations related to this project with Itamar Mor and Andy Baker . 

This work was supported by the Danish National Research Foundation through the Copenhagen Centre for Geometry and Topology (DNRF151), and a portion of this work was completed while the author was in residence at the Institut Mittag-Leffler in Djursholm, Sweden in 2022 as part of the program ‘Higher algebraic structures in algebra, topology and geometry’ supported by the Swedish Research Council under grant no. 2016-06596. 
\setcounter{secnumdepth}{2}
\section{Generalities}\label{section:generalities}
The goal of this section is to set the stage: we define separable algebras, and gather some of their basic properties. 
\begin{nota}
Throughout this section, $\C$ is a symmetric monoidal \category, with unit $\one$ and tensor product denoted by $\otimes$. 
\pend \end{nota}
Following \cite{balmerseparability}, we define: 
\begin{defn}\label{defn:sep}
An algebra $A\in \Alg(\C)$ is said to be \emph{separable} if the multiplication map, $A\otimes A\op\to A$ admits a section $s$, as a map of $A$-bimodules. 

In this case, we call the composite $\one \to A\xrightarrow{s}A\otimes A\op$, or sometimes the section $s$ itself, a \emph{separability idempotent}. Equivalently, this is an $A\otimes A\op$-linear idempotent map $A\otimes A\op\to A\otimes A\op$. 
\pend \end{defn}

\begin{var}
An algebra $A\in \Alg(\C)$ is said to be \emph{homotopy separable} if it is separable, as an algebra in $\ho(\C)$. 

If we start with an algebra $A\in \Alg(\ho(\C))$ directly, we will say that we have a \emph{homotopy separable homotopy algebra}. 
\end{var}
\begin{var}
Suppose $\C$ admits geometric realizations which are compatible with the tensor product. In particular, $\C$ admits relative tensor products. 

Let $R\in \CAlg(\C)$ be a commutative algebra, and $A\in \Alg(\Mod_R(\C))$ be an $R$-algebra. We say $A$ is separable over $R$ if $A$ is separable as an algebra in $\Mod_R(\C)$. If $A\in\CAlg(\Mod_R(\C))$, we will say that it is a \emph{separable extension} of $R$.
\end{var}

\begin{rmk}
Recall that an algebra is said to be \emph{smooth} if $A$ is right or left dualizable over $A\otimes A\op$ \cite[Definition 4.6.4.13.]{HA}. If $\C$ is idempotent-complete, dualizable objects are closed under retracts, and so a separable algebra is smooth. 

One can therefore think of separability as a strenghtening of smoothness. 
\pend \end{rmk}
\subsection{Basic properties}
Separable algebras enjoy a number of closure properties:
\begin{lm}\label{lm:general}
Let $A,B\in \Alg(\C)$ be algebras. 
\begin{enumerate}
    \item The unit of $\C$, $\one$, is separable; more generally if $A$ is an idempotent algebra \cite[Definition 4.8.2.8.]{HA}, then it is separable.

    \item\label{item:epi} Suppose $\C$ admits geometric realizations, compatible with the tensor product, and suppose that the map $B\otimes_A B\to B$ is an equivalence. If $A$ is separable, then so is $B$. 
    \item If $A$ and $B$ are separable, then so is $A\otimes B$.
    \item Suppose $\C$ is semiadditively symmetric monoidal. The product $A\times B$ is separable if and only if both $A$ and $B$ are.
    \item If there is a retraction $A\to B\to A$ in $\Alg(\C)$, and $B$ is separable, then so is $A$. 
    \item If $A$ is separable, then so is $A\op$.
    \item\label{item : symmonsep} If $f: \C\to \D$ is a symmetric monoidal functor, and $A$ is separable, then so is $f(A)$.
\end{enumerate}
\end{lm}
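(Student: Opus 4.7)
The plan is to work directly with the definition of separability: produce and manipulate $A$-bimodule sections $s : A \to A \otimes A\op$ of the multiplication map $\mu_A$. All items except (2) admit short formal arguments; (2) requires some care with relative tensor products.

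Items (1), (3), (6), (7) are essentially tautological. For (1), $\mu_\one$ is the unit isomorphism, hence invertible; for an idempotent algebra, $\mu_A$ is an equivalence by definition, so its inverse serves as the section. For (3), combine the sections via $s_A \otimes s_B$ and reorder factors using the symmetry to land in $(A \otimes B) \otimes (A \otimes B)\op$; since $\mu_{A \otimes B}$ agrees with $\mu_A \otimes \mu_B$ after reordering, the composition with multiplication is the identity. For (6), the swap $A \otimes A\op \simeq A\op \otimes (A\op)\op$ transports $s$ to a section for $A\op$. For (7), apply $f$ termwise, using that a symmetric monoidal functor preserves units, tensor products, and bimodule structures.

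For (5), given a retract $A \xrightarrow{i} B \xrightarrow{r} A$ in $\Alg(\C)$, the composite $A \xrightarrow{i} B \xrightarrow{s_B} B \otimes B\op \xrightarrow{r \otimes r\op} A \otimes A\op$ is the required section: composing with $\mu_A$ and using that $r$ is an algebra map, one gets $r \circ \mu_B \circ s_B \circ i = r \circ i = \id_A$, while the $A$-bimodule equivariance is automatic from $r \circ i = \id$. Item (4) then follows by combining (5) with a direct construction. In the forward direction, the semiadditive structure splits $(A \times B) \otimes (A \times B)\op$ as an $(A\times B)$-bimodule into four summands, and $s_A$ and $s_B$ combine, via the diagonal inclusions into $A \otimes A\op$ and $B \otimes B\op$, to give a bimodule section of $\mu_{A \times B}$. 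Conversely, given a separability section of $A \times B$, restricting to the $A$-summand of $A \times B$ and projecting to the $A \otimes A\op$-summand of $(A \times B)^{\otimes 2}$ yields an $A$-bimodule section of $\mu_A$, using that the $(A \times B)$-action on $A$ factors through the algebra projection $A \times B \to A$.

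The main obstacle is item (2). The strategy is to factor $\mu_B$ as $B \otimes B\op \to B \otimes_A B \xrightarrow{\sim} B$, where the second map is the assumed equivalence, and then to split the first map as $B$-bimodules. Pushing $s_A$ forward via $B \otimes_A (-) \otimes_A B$ yields a $B$-bimodule map $B \otimes_A B \to B \otimes B\op$, using the identifications $B \otimes_A A \otimes_A B \simeq B \otimes_A B$ on the source and $B \otimes_A (A \otimes A\op) \otimes_A B \simeq B \otimes B\op$ on the target. The identity $\mu_A \circ s_A = \id_A$ base-changes to show that this splits $B \otimes B\op \to B \otimes_A B$, which via the assumed equivalence yields the desired section of $\mu_B$. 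The delicate point is checking that these identifications, and in particular the bimodule equivariance, are canonical; this is precisely where the hypothesis that geometric realizations are compatible with $\otimes$ enters, ensuring that the relative tensor products behave well enough for these manipulations.
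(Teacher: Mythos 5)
Your argument is correct and follows essentially the same route as the paper: each item is handled by transporting the separability section along a suitable functor of bimodule categories (base change $B\otimes_A(-)\otimes_A B$ for (2), the external product $\BiMod_A\times\BiMod_B\to\BiMod_{A\otimes B}$ for (3), restriction along the retraction for (5)). The only real deviation is the converse of (4), where you split off the $A\otimes A\op$-summand directly rather than invoking item (2) for the projection $A\times B\to A$; this works, but upgrading your composite from an $(A\times B)$-bimodule map to an $A$-bimodule map silently uses that restriction along $(A\times B)\otimes (A\times B)\op\to A\otimes A\op$ is fully faithful (the projection being a localization at an idempotent), which is precisely the mechanism the paper packages into item (2) via its notion of strong epimorphism.
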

\begin{proof}
1. is clear, as the multiplication map $A\otimes A\op\to A$ of an idempotent algebra is an equivalence (on underlying objects, and hence as bimodules).

For 2., observe that basechange along $f$ induces a functor $\BiMod_A\to \BiMod_B$ that sends $A\otimes A\op$ to $B\otimes B\op$ by design, and the bimodule $A$ to the bimodule $B\otimes_A A\otimes_A B \simeq B\otimes_A B\simeq B$, where the last equivalence is by assumption. Further, the multiplication map is sent to the multiplication map, and the existence of a section in the source guarantees the existence of a section in the target.

3. follows from the existence of a functor $\BiMod_A\times\BiMod_B\to \BiMod_{A\otimes B}$ sending $(A\otimes A\op, B\otimes B\op)$ to $(A\otimes B)\otimes (A\otimes B)\op$ and $(A,B)$ to $A\otimes B$.  Similarly as above, the multiplication map is sent to the multiplication map, and the existence of a section in the source guarantees the existence of a section in the target. 

For one direction of 4., we note that the multiplication map $(A\times B)\otimes (A\times B)\op\to A\times B$ factors as $(A\times B)\otimes (A\times B)\op \to (A\otimes A\op)\times (B\otimes B\op)\to A\times B$. If $A,B$ are separable, then the second map has a bimodule section, and so the claim follows from the fact that, in the semiadditive case, with no assumption on $A,B$, the first map also has a bimodule section. 

Conversely,  notice that in the semiadditive case, the projections $A\times B\to A$ (resp. $B$) are strong epimorphisms, so that by 2., if $A\times B$ is separable, then so are $A$ and $B$.
 
For 5., let $A\xrightarrow{i}B\xrightarrow{r}A$ denote a retraction diagram. We view $B$-bimodules as $A$-bimodules via restriction along $i$. Then, $A\xrightarrow{i}B\xrightarrow{s}B\otimes B\op \xrightarrow{r\otimes r\op} A\otimes A\op$ is an $A$-bimodule map, where $s$ is the separability idempotent of $B$. Furthermore, the composite $$A\xrightarrow{i}B\xrightarrow{s}B\otimes B\op \xrightarrow{r\otimes r\op} A\otimes A\op\to A$$ is equivalent to 
$$A\xrightarrow{i}B\xrightarrow{s}B\otimes B\op\to B \xrightarrow{r}A$$
because $r$ is an algebra map, and thus, because $s$ is a separability idempotent, to $A\to B\to A$ and thus, because we started with a retraction diagram, to $\id_A$, and so we are done.

6. and 7 are clear (in fact, 6. follows from 7. as the equivalence $\C\simeq \C^{\mathrm{rev}}$ sends $A$ to $A\op$). 
\end{proof}
\begin{rmk}
It follows from 1. and 3. that if $A$ is separable, then so is $A\otimes A\op$. 
\pend \end{rmk}
\begin{rmk}
\Cref{item : symmonsep} really requires a symmetric monoidal functor, and not just a lax symmetric monoidal one. 
\pend \end{rmk}
\begin{rmk}
We will see in \Cref{prop:centerdescent}, that \Cref{item : symmonsep} has a form of converse in the commutative case, if we assume that $f$ is more than conservative, rather part of a limit decomposition of $\C$. 
\pend \end{rmk}
\begin{rmk}
    The condition in \Cref{item:epi} implies that $A\to B$ is an epimorphism in $\Alg(\C)$. We do not know whether being an epimorphism is sufficient. Note that if $\C$ is furthermore stable, then this condition is \emph{equivalent} to being an epimorphism.  
\pend \end{rmk}
\begin{lm}
Suppose $\C$ is semi-additively symmetric monoidal. Then an algebra $A\in\Alg(\C)$ is separable if and only if $A$ is projective as an $A\otimes A\op$-module, i.e. if and only if there exists some finite $n$ and a retraction of $(A\otimes A\op)^n$ onto $A$.
\end{lm}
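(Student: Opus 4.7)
The ``only if'' direction is immediate: a separability idempotent $\sigma : A \to A \otimes A\op$, together with $\mu$, already exhibits $A$ as a retract of $(A \otimes A\op)^1$.

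For the converse, suppose we have bimodule maps $s : A \to B^n$ and $\pi : B^n \to A$ with $\pi \circ s = \id_A$, where $B := A \otimes A\op$. The plan is to lift $\pi$ along the multiplication $\mu : B \to A$ to a bimodule map $\tau : B^n \to B$ satisfying $\mu \circ \tau = \pi$; setting $\sigma := \tau \circ s$ will then yield a bimodule section of $\mu$ automatically, since $\mu \circ \sigma = \pi \circ s = \id_A$, and hence exhibit $A$ as separable.

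To construct $\tau$, I will use semi-additivity to decompose $\pi = \sum_i \pi_i \circ \on{pr}_i$ with each $\pi_i : B \to A$ a bimodule map, and correspondingly assemble $\tau$ from component maps $\tau_i : B \to B$ with $\mu \circ \tau_i = \pi_i$. The key point is that since $B = A \otimes A\op$ is the free $A$-bimodule on $\one$, the functor $\map_{\BiMod_A(\C)}(B, -)$ is naturally identified with $\map_\C(\one, -)$, and under this identification the action of $\mu$ by postcomposition becomes postcomposition with $\mu : B \to A$ in $\C$. So lifting $\pi_i$ to some $\tau_i : B \to B$ reduces to lifting its classifying map $p_i : \one \to A$ along $\mu$ in $\C$. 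Such a lift is provided canonically by $p_i \otimes u_A : \one \simeq \one \otimes \one \to A \otimes A\op$, where $u_A : \one \to A$ is the unit: indeed $\mu \circ (p_i \otimes u_A) = p_i$ by the unit axiom.

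The proof is essentially formal; the one observation required is that, although $\mu$ need not admit a bimodule section --- this is the separability condition itself --- it always admits a section of underlying $\C$-maps (for instance $\id_A \otimes u_A$), and this is sufficient to produce bimodule lifts out of the free bimodule $B$. This is exactly what allows a bimodule retraction through some $B^n$ to be converted into a bimodule section of $\mu$ itself.
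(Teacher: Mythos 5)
Your proof is correct and follows essentially the same route as the paper: decompose the retraction $(A\otimes A\op)^n\to A$ into components, use the freeness of $A\otimes A\op$ as a bimodule to reduce lifting each component along $\mu$ to lifting an element $\one\to A$ in $\C$, and observe that such a lift always exists via the unit. The only difference is that you spell out the adjunction and the explicit lift $p_i\otimes u_A$, which the paper leaves implicit.
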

\begin{proof}
Clearly separability implies the projectivity condition, with $n=1$. 

For the converse, fix a retraction diagram $A\xrightarrow{i} (A\otimes A\op)^n \xrightarrow{p}A$. Write $p$ as $(p_i)_i$, where each $p_i: A\otimes A\op\to A$ is $p$ on the $i$th summand.

Observe that the unit map $\one\to A$ lifts through $A\otimes A\op\xrightarrow{\mu}A$, so that each $p_i: A\otimes A\op\to A$ lifts as well, now as a bimodule map. Fix a lift $\tilde p_i$ to get $\tilde p: (A\otimes A\op)^n\to A\otimes A\op$. Then $\tilde p\circ i$ is a section of $\mu$. 
\end{proof}
\begin{rmk}\label{rmk : notMorita}
A consequence of this characterization is that in the setting of classical algebra, as projectivity is Morita invariant, separability also is. 

This is \emph{wrong} in the generality that we are in. The following counterexample was pointed out to me by Robert Burklund: one can show that if $X$ is a finite spectrum which generates $\Sp^\omega$ as a thick subcategory, then $\End(X)$ is separable if and only if the unit map $\Sph\to \End(X)$ splits (cf. \Cref{prop:Azsep}), while $\End(X)$ is always Morita equivalent to the sphere spectrum $\Sph$, which is of course separable. Yet there are such spectra such that the unit map does \emph{not} split, such as $X = \Sph/\eta$, the cone of $\eta\in\pi_1(\Sph)$.  

One can analyze this example and make it more general - in particular, one can make a similar example in some category of representations of some group over $\mathbb Q$, and so have such examples in characteristic $0$. 

One could instead formulate a notion of ``projective Morita equivalence'', and prove that separability is projective-Morita invariant.  
\pend \end{rmk}
\subsection{Modules over separable algebras}
We now move on to discussing modules over separable algebras. The main observation in this realm is the following: 
\begin{prop}
Let $A$ be an algebra in $\C$, and consider the free-forgetful adjunction $A\otimes -: \C\rightleftarrows \LMod_A(\C): U$. 

$A$ is separable if and only if the co-unit $A\otimes U(-) \to \id_{\LMod_A(\C)}$ admits a natural $\C$-linear section. 
\end{prop}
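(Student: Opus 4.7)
The plan is to decode both conditions in terms of an explicit piece of data on $A$ and show they agree. The bridge is that a $\C$-linear natural transformation between the $\C$-linear endofunctors $\id$ and $A \otimes U(-)$ of $\LMod_A(\C)$ is determined by its value at $A$, together with the bimodule structure that $\C$-linearity enforces. I will build $\sigma$ from a separability section $s$ in the forward direction, and recover $s$ as $\sigma_A$ in the backward direction.

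Forward direction. Suppose $A$ is separable, let $s \colon A \to A \otimes A\op$ be an $A$-bimodule section of $\mu$, and let $e \colon \one \to A \otimes A\op$ be the associated separability idempotent. For $M \in \LMod_A(\C)$ with action $\alpha_M$, define
\[
\sigma_M \colon M \simeq \one \otimes M \xrightarrow{e \otimes M} A \otimes A \otimes M \xrightarrow{A \otimes \alpha_M} A \otimes M.
\]
Using $\mu \circ e = u$ and associativity of the action, one checks $\alpha_M \circ \sigma_M = \id_M$. Naturality in morphisms $f \colon M \to N$ of $\LMod_A(\C)$ is immediate from $f \circ \alpha_M = \alpha_N \circ (A \otimes f)$, and $\C$-linearity (compatibility with $X \otimes -$ for $X \in \C$) is a diagram chase using naturality of the symmetry.

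Backward direction. Conversely, suppose $\sigma$ is a $\C$-linear natural section of the counit. Evaluation at $M = A$ gives a left $A$-module section $\sigma_A \colon A \to A \otimes A$ of $\mu$; I claim it is automatically right $A$-linear, and hence a bimodule section witnessing separability. Applying naturality of $\sigma$ to the left $A$-module map $\mu \colon A \otimes A \to A$ yields
\[
\sigma_A \circ \mu = (A \otimes \mu) \circ \sigma_{A \otimes A}.
\]
By $\C$-linearity, identify $\sigma_{A \otimes A}$ with $\sigma_A \otimes A$ (using that $A \otimes A$ is the free $A$-module on $A \in \C$). The resulting equation is exactly the right $A$-linearity of $\sigma_A$. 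Setting $s \coloneqq \sigma_A$ then provides the required bimodule section of $\mu$.

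The main technical point is the precise meaning and use of $\C$-linearity in the $\infty$-categorical setting: both the forward-direction $\C$-linearity check and the backward-direction identification $\sigma_{A \otimes A} \simeq \sigma_A \otimes A$ involve higher coherences most cleanly packaged through the Eilenberg--Watts--style identification of $\C$-linear colimit-preserving endofunctors of $\LMod_A(\C)$ with $\BiMod_A(\C)$, under which $\id$, $A \otimes U$, and the counit correspond to $A$, $A \otimes A\op$, and $\mu$ respectively; natural sections then correspond tautologically to bimodule sections. To invoke this correspondence with only mild assumptions on $\C$, one embeds $\C$ into a cocomplete category where relative tensor products exist, as elsewhere in the paper.
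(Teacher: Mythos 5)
Your proposal is correct and follows essentially the same route as the paper: both directions are ultimately funneled through the evaluation-at-$A$ functor $\Fun_\C(\LMod_A(\C),\LMod_A(\C))\to \BiMod_A(\C)$, upgraded to the Eilenberg--Watts equivalence after embedding $\C$ into a category with geometric realizations, which is exactly how the paper handles the coherence issues you flag. Your explicit formula $(A\otimes\alpha_M)\circ(e\otimes M)$ agrees with the paper's concrete description $M\simeq A\otimes_A M\xrightarrow{s\otimes_A M}(A\otimes A\op)\otimes_A M\simeq A\otimes M$ of the section.
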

\begin{proof}
There is a functor $\Fun_\C(\LMod_A(\C),\LMod_A(\C))\to \BiMod_A(\C)$ given informally by evaluation at the object $A\in \LMod_A(\C)$ \cite[Remark 4.6.2.9., Theorem 4.8.4.1.]{HA}\footnote{Under relatively mild hypotheses on $\C$, this can be made into an equivalence by restricting the domain a little: if $\C$ admits geometric realizations compatible with the tensor product, then $\C$-linear endofunctors of $\LMod_A(\C)$ that commute with geometric realizations are exactly given by bimodules \cite[Theorem 4.8.4.1.]{HA}. For this part of the proof, we do not need an equivalence.  }.

This functor sends $A\otimes U(-)$ to $A\otimes A\op$ as an $A$-bimodule, and $\id_{\Mod_A(\C)}$ to $A$ itself, with its canonical $A$-bimodule structure. In particular, the existence of a natural section as indicated implies the existence of a bimodule section. 

Conversely, suppose that $A$ is separable. Up to embedding $\C$ in a symmetric monoidal \category{} admitting geometric realizations compatible with the tensor product, we may assume that $\C$ has these properties. In that case, the above restriction functor induces an equivalence $\Fun^\Delta_\C(\LMod_A(\C),\LMod_A(\C))\simeq \BiMod_A(\C)$, and so we can reverse the argument from above.

Concretely, the section is described as follows : $$M\xleftarrow{\simeq} A\otimes_A M\xrightarrow{s\otimes_A M}(A\otimes A\op)\otimes_A M \simeq A\otimes M$$
\end{proof}
\begin{cor}\label{cor : retractfree}
If $A$ is a separable algebra in $\C$, then any $A$-module $M$ is a retract of the free $A$-module $A\otimes M$. 
\end{cor}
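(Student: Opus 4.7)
The plan is to read this off directly from the preceding proposition; nothing further is really needed. Since $A$ is separable, that proposition provides a natural $\C$-linear section $\sigma$ of the counit $\varepsilon: A\otimes U(-)\to \id_{\LMod_A(\C)}$ of the free-forgetful adjunction. Concretely, $\sigma$ is a morphism in the $\C$-linear functor category $\Fun_\C(\LMod_A(\C),\LMod_A(\C))$, and so for any individual $A$-module $M$, the component $\sigma_M: M\to A\otimes U(M)$ is already a morphism of $A$-modules.

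Since $\varepsilon_M$ is by construction the action map $A\otimes M\to M$, which is $A$-linear, and $\varepsilon_M\circ \sigma_M = \id_M$ by naturality of the section, this exhibits $M$ as a retract of the free $A$-module $A\otimes M$ inside $\LMod_A(\C)$, as desired. For completeness, $\sigma_M$ can be unpacked in terms of the bimodule section $s: A\to A\otimes A\op$ of the multiplication as the composite
\[ M \simeq A\otimes_A M \xrightarrow{s\otimes_A M} (A\otimes A\op)\otimes_A M \simeq A\otimes M, \]
exactly as recorded at the end of the previous proof. There is no real obstacle here; the only thing to notice is that ``natural $\C$-linear section'' really means a morphism in the $\C$-linear functor category, whose components are by definition morphisms of $A$-modules, so no extra $A$-linearity argument is required when evaluating at a specific $M$.
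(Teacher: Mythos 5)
Your proof is correct and is exactly how the paper derives this corollary: it is stated as an immediate consequence of the preceding proposition, whose natural $\C$-linear section of the counit, evaluated at $M$, is precisely the $A$-linear splitting $M\to A\otimes M$ with the concrete description you record. Nothing further is needed.
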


It will be convenient to have a generalization of this observation in the following direction: if $\M$ is equipped with a coherent tensoring by $\C$, i.e. $\M$ is a $\C$-module, the notion of $A$-module \emph{in} $\M$ makes sense. We have:
\begin{cor}\label{cor : retractfreeinmodule}
Let $\M$ be a $\C$-module, $A$ an algebra in $\C$, and consider the free-forgetful adjunction $A\otimes- : \M\leftrightarrows \LMod_A(\M): U$. If $A$ is separable, then the co-unit $A\otimes U(-)\to \id_{\LMod_A(\M)}$ admits a natural $\C$-linear section. 

In particular, any $A$-module in $\M$, $M$, is a retract of the free $A$-module $A\otimes M$. 
\end{cor}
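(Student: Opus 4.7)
The plan is to imitate the proof of the preceding proposition, replacing $\C$ by the more general $\C$-module $\M$. Since the desired section is a $\C$-linear natural transformation between functors on $\LMod_A(\M)$, and the statement is insensitive to enlarging $\M$ along a fully faithful $\C$-linear embedding, I would first reduce to the case in which $\M$ admits geometric realizations compatible with the $\C$-action. This is the familiar ``up to adding enough colimits'' move: embed $\M$ $\C$-linearly and fully faithfully into a presentable $\C$-module, for instance via a $\C$-linear Yoneda-type embedding into $\C$-linear presheaves on $\M$, as in \cite[Section 4.8.1]{HA}.

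Once relative tensor products $N \otimes_A M$ are available in $\M$ for $A$-bimodules $N$ and $M \in \LMod_A(\M)$, the action of $A$-bimodules on $\LMod_A(\M)$ provides a $\C$-linear functor
$$\BiMod_A(\C) \longrightarrow \Fun^{\Delta}_\C\bigl(\LMod_A(\M), \LMod_A(\M)\bigr), \qquad N \longmapsto N \otimes_A (-).$$
This functor sends $A \otimes A\op$ (with its canonical bimodule structure) to $A \otimes U(-)$, sends $A$ (with its canonical bimodule structure) to $\id_{\LMod_A(\M)}$, and sends the multiplication map $\mu: A \otimes A\op \to A$ to the co-unit of the free-forgetful adjunction. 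Therefore, applying it to a separability idempotent $s: A \to A \otimes A\op$ yields a $\C$-linear natural section of the co-unit, described on objects by
$$M \xleftarrow{\simeq} A \otimes_A M \xrightarrow{\,s \otimes_A M\,} (A \otimes A\op) \otimes_A M \simeq A \otimes M.$$
The ``in particular'' clause follows immediately by evaluating this section at any fixed $M$.

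The only step that deserves even a moment of thought is the initial reduction to a $\C$-module with enough colimits, but this is a routine application of the $\C$-linear presheaf construction and compatibility of symmetric monoidal structures with presheaves from \cite[Section 4.8.1]{HA}. Everything else is bookkeeping parallel to the proof of the preceding proposition.
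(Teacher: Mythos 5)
Your proof is correct and follows essentially the same route as the paper: reduce to the case where $\M$ has geometric realizations compatible with the $\C$-action, then let the separability idempotent act via $\otimes_A$ to produce the natural $\C$-linear section, with the same concrete formula $M \simeq A\otimes_A M \to (A\otimes A\op)\otimes_A M\simeq A\otimes M$. The only cosmetic difference is that the paper packages the second step as applying $-\otimes_\C\M$ to the already-established $\M=\C$ case via the identification $\LMod_A(\M)\simeq\LMod_A(\C)\otimes_\C\M$, whereas you rerun the bimodule-action argument directly in $\M$; both amount to the same thing.
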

\begin{proof}
    As in the previous proof - by embedding $\C,\M$ in categories that have geometric realizations compatible with the tensor product (resp. the tensoring of $\C$), we may assume that they have these properties. 

    In this case, $\LMod_A(\M)\simeq \LMod_A(\C)\otimes_\C \M$, and the free-forgetful adjunction for $\M$ is identified with $-\otimes_\C \M$ applied to the free forgetful adjunction for $\C$. The result follows. 

We note that the section has the same concrete description as in the case of $\M=\C$. 
\end{proof}
\begin{rmk}
    We will have several results that hold for an arbitrary $\C$-module $\M$. While this always implies the result for the special case $\M= \C$, we will typically state this special case explicitly, to help with intuition. 
\pend \end{rmk}
Thus, separability allows us to deduce things about $A$-modules based on underlying properties. For instance:
\begin{cor}\label{cor:sectionunderlying}
Let $\M$ be a $\C$-module and let $A$ be a separable algebra in $\C$. Consider a map $M\to N$ of $A$-modules in $\M$. If it has a retraction in $\M$ (resp. a section), then it does so in $\Mod_A(\M)$ as well.
\end{cor}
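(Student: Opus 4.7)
The plan is to directly apply Corollary \ref{cor : retractfreeinmodule}, which provides a natural $\C$-linear section $s : \id \to A \otimes U(-)$ of the counit $\epsilon : A \otimes U(-) \to \id$ of the free-forgetful adjunction. Crucially, both $\epsilon$ and $s$ are natural transformations between endofunctors of $\LMod_A(\M)$, so each component $s_X : X \to A \otimes X$ is itself $A$-linear, and naturality yields $(A \otimes g) \circ s_X = s_Y \circ g$ for any $A$-linear map $g : X \to Y$ (and dually for $\epsilon$).

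Given an $A$-linear map $f : M \to N$ with a retraction $r : N \to M$ in $\M$, I would take the candidate $A$-linear retraction to be
\[
\tilde r : N \xrightarrow{s_N} A \otimes N \xrightarrow{A \otimes r} A \otimes M \xrightarrow{\epsilon_M} M.
\]
This composite lives in $\LMod_A(\M)$: $s_N$ and $\epsilon_M$ are $A$-linear by the previous paragraph, while $A \otimes r$ is $A$-linear as the free functor applied to $r \in \M$. To verify $\tilde r \circ f = \id_M$, I would invoke naturality of $s$ at $f$, giving $s_N \circ f = (A \otimes f) \circ s_M$, which lets me rewrite $\tilde r \circ f$ as $\epsilon_M \circ (A \otimes (r \circ f)) \circ s_M = \epsilon_M \circ s_M = \id_M$, using $r \circ f = \id_M$ and the section-counit identity.

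The section case is entirely dual: given a section $\sigma : N \to M$ of $f$ in $\M$, set $\tilde \sigma := \epsilon_M \circ (A \otimes \sigma) \circ s_N$ and apply naturality of $\epsilon$ at $f$, together with $f \circ \sigma = \id_N$, to conclude $f \circ \tilde \sigma = \id_N$.

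No real obstacle is expected: the argument is a formal consequence of the natural $\C$-linear section produced by separability. The only conceptual point to keep in mind is that a natural $\C$-linear transformation between endofunctors of $\LMod_A(\M)$ has components that are $A$-linear (not merely $\C$-linear), and this is exactly what allows the composites $\tilde r$ and $\tilde \sigma$ to land in $\LMod_A(\M)$ rather than just in $\M$.
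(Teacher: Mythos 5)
Your proof is correct and is essentially the paper's own argument made explicit: the paper observes that $f\colon M\to N$ is a retract (in the arrow category of $\LMod_A(\M)$) of $A\otimes f$ via the counit and the natural section from \Cref{cor : retractfreeinmodule}, and that having a section/retraction is closed under retracts; unwinding that closure property produces exactly your composite $\epsilon_M\circ(A\otimes r)\circ s_N$ and the naturality computation you carry out. The point you flag — that the components of the natural $\C$-linear section are $A$-linear because it is a transformation of endofunctors of $\LMod_A(\M)$ — is correctly identified and is indeed what makes the argument work.
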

\begin{proof}
    The map $M\to N$ is a retract of the map $A\otimes M\to A\otimes N$, and the property of having a section (resp. a retraction) is closed under retracts.
\end{proof}
Similarly, we have:
\begin{cor}\label{cor:nullunderlying}
Let $\C$ be a pointed symmetric monoidal category in which $\otimes$ preserves the zero object, and $A\in \Alg(\C)$ a separable algebra. Let $f: M\to N$ be a morphism in $\LMod_A(\C)$, whose underlying map in $\C$ is nullhomotopic, i.e. factors through $0$. 

In this case, $f$ is nullhomotopic in $\LMod_A(\C)$. 

The same holds for morphisms in $\LMod_A(\M)$ whose underlying morphism in $\M$ is nullhomotopic, for any $\C$-module $\M$. 
\end{cor}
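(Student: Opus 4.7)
The plan is to deduce this directly from \Cref{cor : retractfreeinmodule}, exactly as \Cref{cor:sectionunderlying} did for sections and retractions, using that "being nullhomotopic" is, like those properties, closed under retracts in the arrow category.

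First, I would observe that the free functor $A\otimes -: \M\to \LMod_A(\M)$ preserves the zero object, because $A\otimes 0\simeq 0$ in $\M$ by the hypothesis on $\otimes$, and the zero object of $\LMod_A(\M)$ sits over the zero object of $\M$. Hence if $f: M\to N$ is nullhomotopic in $\M$, then $A\otimes f: A\otimes M\to A\otimes N$, viewed as a morphism in $\LMod_A(\M)$, factors through $A\otimes 0\simeq 0$, and is therefore nullhomotopic in $\LMod_A(\M)$.

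Next, by \Cref{cor : retractfreeinmodule}, the counit $A\otimes U(-)\to \id_{\LMod_A(\M)}$ admits a natural $\C$-linear section $\sigma$. Naturality of $\sigma$ and of the counit $\epsilon$ applied to $f$ gives a commutative diagram
\[
\begin{tikzcd}
M \ar[r,"\sigma_M"]\ar[d,"f"'] & A\otimes M \ar[r,"\epsilon_M"]\ar[d,"A\otimes f"] & M\ar[d,"f"] \\
N \ar[r,"\sigma_N"'] & A\otimes N \ar[r,"\epsilon_N"'] & N
\end{tikzcd}
\]
in $\LMod_A(\M)$, whose horizontal composites are the identity. This exhibits $f$ as a retract of $A\otimes f$ in the arrow \category{} of $\LMod_A(\M)$. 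Since $A\otimes f$ is nullhomotopic, so is $f$: indeed, a null-homotopy for $A\otimes f$ gives rise, by pre- and post-composition with $\sigma_M$ and $\epsilon_N$, to a factorization of $f$ through $\epsilon_N\circ (A\otimes f)\circ \sigma_M$, which factors through $0$.

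There is no real obstacle here: the only thing to be careful about is to invoke the right form of \Cref{cor : retractfreeinmodule}, namely the statement about the counit admitting a \emph{natural} section (rather than just the retract statement on individual objects), so that naturality gives the retract diagram for $f$ itself. The case $\M=\C$ is of course a special case of the statement about arbitrary $\C$-modules, so a single argument handles both.
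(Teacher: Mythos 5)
Your proof is correct and follows exactly the paper's route: the paper proves this by observing that $f$ is a retract of $A\otimes f$ (via the natural section of the counit from \Cref{cor : retractfreeinmodule}) and that retracts of nullhomotopic maps are nullhomotopic, which is precisely the argument you spell out. The only thing you add is the explicit check that $A\otimes f$ is nullhomotopic because $\otimes$ preserves the zero object, which the paper leaves implicit.
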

\begin{proof}
The proof is the same: retracts of nullhomotopic maps are nullhomotopic. 
\end{proof}
\begin{rmk}
Note that this fact is famously not true in general \categories{} if we do not assume separability. For example, let $A= \End_\mathbb Z(\mathbb Z/p)$, and view $\mathbb Z/p$ as an $A$-module. Then $p : \mathbb Z/p\to \mathbb Z/p$ is not zero as an $A$-module map, but its underlying map is $0$. 
\pend \end{rmk}

\Cref{cor : retractfree}, as well its extension to \Cref{cor : retractfreeinmodule} will be crucial in the next section, where we analyze the relation of separable algebras to homotopy categories, but we can already make good use of it to analyze relative tensor products and internal homs. 

We recall that, for a right (resp. left) $A$-module $M$ (resp. $N$) in $\C$, we can form a simplicial object $\mathrm{Bar}(M,A,N)_\bullet: \Delta\op\to \C$, compatibly with symmetric monoidal functors $\C\to \D$, and that its colimit, if it exists, is the relative tensor product $M\otimes_A N$, cf. \cite[Section 4.4.2.]{HA}. Given a symmetric monoidal functor $f:\C\to \D$, we have a canonical equivalence $f\circ \mathrm{Bar}(M,A,N)_\bullet \simeq \mathrm{Bar}(f(M),f(A),f(N))_\bullet$. 
\begin{defn}
    Let $f:\C\to\D$ be a symmetric monoidal functor, $A\in\Alg(\C)$ an algebra in $\C$, $M$ (resp. $N$) a right (resp. left) $A$-module. 

    We say that $f$ \emph{preserves} the relative tensor product $M\otimes_A N$ if it exists in $\C$, and $f$ preserves the colimit $\colim_{\Delta\op}\mathrm{Bar}(M,A,N)_\bullet$. 
\pend \end{defn}
We can then state:
\begin{prop}\label{prop : tensorsep}
Assume $\C$ admits geometric realizations which are compatible with the tensor product. Let $A\in\Alg(\C)$ be a separable algebra, and $M,N$ be a right $A$-module and a left $A$-module respectively. 

In this case, the relative tensor product $M\otimes_A N$ is a (natural, $\C$-linearly on both sides) retract of $M\otimes N$. 

In particular, if we now remove the assumption that $\C$ admits geometric realizations and replace it with $\C$ being idempotent complete, then $\C$ still admits relative tensor products of $A$-modules; and they are preserved by any symmetric monoidal functor $\C\to \E$. 
\end{prop}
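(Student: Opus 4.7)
The plan is to construct the splitting directly from the $\C$-linear section of the free-module counit provided by separability, avoiding any analysis of the Bar simplicial object beyond its degree zero term.

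First, by \Cref{cor : retractfreeinmodule} applied to the left $A$-module $N$, I obtain a natural, $\C$-linear, left-$A$-module section $\sigma_N: N \to A \otimes N$ of the action map $\mu_N: A \otimes N \to N$. Because $\sigma_N$ is a morphism of left $A$-modules, the functor $M \otimes_A -$ produces a map $M \otimes_A N \to M \otimes_A (A \otimes N)$, which I identify with a map $M\otimes_A N \to M \otimes N$ using the projection-formula equivalence $M\otimes_A (A\otimes X)\simeq M\otimes X$ valid for any $X \in \C$ (which is the statement that $A\otimes-$ is left adjoint to the forgetful functor $\LMod_A(\C)\to \C$). Under that same equivalence, the canonical map $M\otimes N \to M\otimes_A N$ is $M\otimes_A \mu_N$, and therefore the composite
\[
M\otimes_A N \xrightarrow{M\otimes_A \sigma_N} M\otimes N \longrightarrow M\otimes_A N
\]
equals $M\otimes_A (\mu_N \circ \sigma_N) = \id_{M\otimes_A N}$. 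This realizes $M\otimes_A N$ as a retract of $M\otimes N$; naturality and $\C$-bilinearity are inherited from the corresponding properties of $\sigma_N$.

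For the second statement, I would embed $\C$ symmetric-monoidally into an enlargement $\tilde\C$ admitting geometric realizations compatible with $\otimes$ (for instance via Day convolution on presheaves, cf.\ \cite[Proposition 4.8.1.10]{HA}), and apply the first part inside $\tilde\C$. The retraction diagram exhibits $M \otimes_A N$ as the splitting of an idempotent $e: M\otimes N \to M\otimes N$ whose definition refers only to $\mu_N$, $\sigma_N$, and the multiplication of $A$; in particular, $e$ is already a morphism in $\C$. Idempotent-completeness of $\C$ then guarantees that this splitting exists in $\C$ itself. For a symmetric monoidal functor $f:\C\to \E$, the building blocks of $e$ are all preserved by $f$ (using \Cref{item : symmonsep} of \Cref{lm:general} for $\sigma_N$), so $f(e)$ is the analogous idempotent for $f(A)$ acting on $f(M)\otimes f(N)$, and $f$ sends the splitting of $e$ to the splitting of $f(e)$, which is precisely $f(M)\otimes_{f(A)}f(N)$.

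The main obstacle is purely conceptual: one must correctly identify the canonical map $M\otimes N \to M\otimes_A N$ with $M\otimes_A \mu_N$ through the projection-formula equivalence $M\otimes_A(A\otimes N)\simeq M\otimes N$. Once that bookkeeping is in place, everything else is formal and depends on no machinery beyond the $\C$-linear section furnished by \Cref{cor : retractfreeinmodule}.
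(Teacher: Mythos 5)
Your proof is correct and follows essentially the same route as the paper's: the paper likewise takes the natural $\C$-linear retraction of $N$ onto $A\otimes N$ furnished by separability, applies $M\otimes_A-$, identifies $M\otimes_A(A\otimes N)\simeq M\otimes N$, and then handles the idempotent-complete case by embedding into an enlargement with geometric realizations and splitting the resulting idempotent in $\C$. The only difference is that you spell out the bookkeeping (the identification of the canonical map with $M\otimes_A\mu_N$ and the explicit idempotent $e$) that the paper leaves implicit.
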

\begin{proof}
The second part can be deduced from the first as follows: freely add geometric realizations to $\C$ to obtain a fully faithful symmetric monoidal functor $\C\to \D$ where $\D$ satisfies the hypotheses of the first part. The image of $A$ in $\D$ is still separable, and so the tensor product $M\otimes_A N$, computed in $\D$, lives in $\C$, because $M\otimes N$ does and $\C$ is idempotent complete (here, we use the first part). Therefore, this colimit of the bar construction is a colimit in $\C$ as well. 

For the first part, we note that $N$ is a (natural, $\C$-linear) retract of $A\otimes N$, so that $M\otimes_A N$ is a (natural, $\C$-linearly on both sides) retract of $M\otimes_A (A\otimes N)\simeq M\otimes N$, as was claimed. 

From the proof, it is clear that these relative tensor products are preserved by any symmetric monoidal functor, because retractions are; this proves the final part. 
\end{proof}
\begin{rmk}
One could instead phrase this, and the next proof, in terms of the canonical resolution of the left $A$-module $N$, namely a simplicial object which looks like  $[n]\mapsto A^{\otimes n+1}\otimes N$. In those terms, the statement would be that the corresponding colimit diagram $N\simeq \colim_{\Delta\op}A^{\otimes n+1}\otimes N$ is an absolute colimit diagram, as it is a retract of the corresponding diagram for $A\otimes N$, which is split augmented and hence an absolute colimit diagram. See the proof of \cite[Lemma 4.7]{NikoLuca} for an argument in this direction.  
\pend \end{rmk}
We now move on to hom objects. Given two left $A$-modules $M,N$, the hom-object from $M$ to $N$, $\hom_A(M,N)$, is the object of $\C$ equipped with a map of left $A$-modules $ev:M\otimes\hom_A(M,N) \to N$ which, if it exists, satisfies the following universal property: restriction along $ev$ induces an equivalence $\map_\C(c,\hom_A(M,N))\simeq \map_{\LMod_A(\C)}(c\otimes M,N)$. 

If $f:\C\to \D$ is a symmetric monoidal functor, and if $\hom_A(M,N)$ exists, then we obtain a map of left $f(A)$-modules $f(M)\otimes f(\hom_A(M,N)) \to f(N)$. 
\begin{defn}
    Let $f:\C\to \D$ be a symmetric monoidal functor, $A\in\Alg(\C)$ an algebra in $\C$ and $M,N$ left $A$-modules. We say that $f$ preserves the internal hom $\hom_A(M,N)$ if it exists, and the induced map $f(M)\otimes f(\hom_A(M,N))\to f(N)$ exhibits $f(\hom_A(M,N))$ as a hom object from $f(M)$ to $f(N)$. 
\pend \end{defn}
We begin with a well-known lemma: 
\begin{lm}\label{lm:hominmod}
    Let $\C$ be symmetric monoidal, and assume it admits totalizations of cosimplicial objects as well as internal homs. 

    In this case, for any algebra $A\in\Alg(\C)$, the right-$\C$-module $\LMod_A(\C)$ admits hom-objects in $\C$. 
\end{lm}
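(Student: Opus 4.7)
The plan is to construct $\hom_A(M,N)$ as the totalization of an explicit cosimplicial object in $\C$, and then to verify the required universal property. The cleanest way to arrange all the coherences is to embed $\C$ symmetric monoidally into an enlarged $\infty$-category $\widehat\C := \Psh(\C)$ (with Day convolution), which has all colimits and so admits bar constructions and relative tensor products automatically. In $\widehat\C$, the standard bar resolution $\mathrm{Bar}(A,A,M)_\bullet$, with $n$-simplices $A^{\otimes n+1}\otimes M$, resolves $M$ by free left $A$-modules. Applying $\hom_A(-,N)$ levelwise and using the free--forgetful identification $\hom_A(A\otimes X, N)\simeq \hom(X,N)$ yields a cosimplicial object $C^\bullet$ which in fact lives in $\C$, with $C^n = \hom(A^{\otimes n}\otimes M, N)$. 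By hypothesis $\C$ admits totalizations, so I can define $\hom_A(M,N) := \mathrm{Tot}\, C^\bullet \in \C$.

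To verify the universal property, I would compute both sides and show they agree. Since $\map_\C(c,-)$ preserves totalizations and $\hom$ is by definition a right adjoint to $c \otimes -$,
\[ \map_\C(c,\hom_A(M,N)) \simeq \mathrm{Tot}\, \map_\C\bigl(c,\hom(A^{\otimes \bullet}\otimes M, N)\bigr) \simeq \mathrm{Tot}\, \map_\C(c\otimes A^{\otimes \bullet}\otimes M, N). \]
Conversely, in $\widehat\C$ the left $A$-module $c\otimes M$ is the geometric realization of $c\otimes \mathrm{Bar}(A,A,M)_\bullet$ (tensoring with $c$ preserves this colimit), so the free--forgetful adjunction gives
\[ \map_{\LMod_A(\widehat\C)}(c\otimes M, N) \simeq \mathrm{Tot}\, \map_{\LMod_A(\widehat\C)}(c\otimes A^{\otimes \bullet+1}\otimes M, N) \simeq \mathrm{Tot}\, \map_{\widehat\C}(c\otimes A^{\otimes \bullet}\otimes M, N). \]
Since $\C\hookrightarrow\widehat\C$ is fully faithful and both $c\otimes M$ and $N$ come from $\C$, the last expression equals $\map_{\LMod_A(\C)}(c\otimes M, N)$, and the two sides match. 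The evaluation map $M\otimes \hom_A(M,N)\to N$ in $\LMod_A(\C)$ is then produced via Yoneda, as the adjoint of $\id_{\hom_A(M,N)}$.

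The main obstacle is the purely formal one of making the cosimplicial object $C^\bullet$ precise, together with its structure maps and coherences; the passage to $\widehat\C$ is what lets us outsource this to the bar construction in a presentable setting, where it is standard. As an alternative route, one could appeal directly to the cobar description of mapping spaces in module $\infty$-categories, namely
\[ \map_{\LMod_A(\C)}(P,Q) \simeq \mathrm{Tot}_n\, \map_\C(A^{\otimes n}\otimes P, Q), \]
which is available whenever the relevant totalizations exist in $\C$; this would reduce the proof almost entirely to an application of Yoneda's lemma.
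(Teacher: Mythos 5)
Your proof is correct and is essentially the paper's argument: both rest on the bar resolution of $M$ by free modules, the identification $\hom_A(A\otimes X,N)\simeq\hom(X,N)$, and the fact that $\C$ has the totalizations needed to glue — the paper just packages this abstractly as ``the class of $X$ for which $\hom_A(X,Y)$ exists contains free modules and is closed under geometric realizations,'' whereas you write out the cobar totalization explicitly. The detour through $\Psh(\C)$ is harmless but not needed: $\mathrm{Bar}(A,A,M)_\bullet$ is split after forgetting to $\C$, so its colimit already exists in $\LMod_A(\C)$ (this is \cite[Proposition 4.7.3.14]{HA}, which the paper cites).
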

\begin{proof}
    We note that because $\C$ admits internal homs, its tensor product is compatible with any colimit that exists in $\C$.

    Now, given $Y\in\LMod_A(\C)$, we note the following two things: first, if $\C$ admits $I\op$-shaped limits, then the property that $\hom_A(X,Y)$ exist is closed under $I$-shaped colimits in $X$, and second, for any $X\in \C$, $\hom_A(A\otimes X,Y)$ exists. 

    For the first one, we note that indeed, the condition that $\hom_A(X,Y)$ exists is by definition the condition that $c\mapsto\map_A(X\otimes c,Y)$ be a representable functor $\C\op\to \Ss$. Representable functors are closed under $I\op$-shaped limits by assumption, and $-\otimes c$ preserves any colimits that exist in $\C$, so the claim follows at once.

    For the second one, we note that $\map_A(A\otimes X\otimes c, Y)\simeq \map(X\otimes c,Y)\simeq \map(c,\hom(X,Y))$, so $\hom_A(A\otimes X,Y)$ exists and is equivalent to $\hom(X,Y)$. 

    With these two things in hand, we can conclude: any $A$-module is the colimit of a $\Delta\op$-shaped diagram, all of whose terms are of the form $A\otimes X$ for some $X$ \cite[Proposition 4.7.3.14]{HA}. 
\end{proof}
\begin{prop}\label{prop : homsep}
Assume $\C$ admits totalizations of cosimplicial objects, and internal homs. Let $A\in\Alg(\C)$ be a separable algebra, and $M,N\in \LMod_A(\C)$. 
In this case, $\hom_A(M,N)\in \C$ exists, and is a retract of $\hom(M,N)$. Furthermore, any symmetric monoidal functor $\C\to \E$ which is also \emph{closed}, or more generally, which preserves $\hom(M,N)$, preserves $\hom_A(M,N)$. 

If we remove the assumption that $\C$ admits totalizations, while keeping the existence of $\hom(M,N)$ and we assume that $\C$ is idempotent complete, then we get the same conclusion about $\hom_A(M,N)$. 
\end{prop}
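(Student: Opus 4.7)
The plan is to reduce everything to the case of free $A$-modules, where the internal hom is computed by the tensor-hom adjunction in $\C$.

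First, I would invoke \Cref{cor : retractfreeinmodule} with $\M = \C$ to see that every $M \in \LMod_A(\C)$ is naturally a $\C$-linear retract of the free module $A \otimes M$. The free-forgetful adjunction then gives natural equivalences
\[
\map_{\LMod_A(\C)}(c \otimes (A \otimes M), N) \simeq \map_\C(c \otimes M, N) \simeq \map_\C(c, \hom(M,N))
\]
so that $\hom_A(A \otimes M, N)$ exists and is canonically equivalent to $\hom(M,N)$, which is assumed to exist in $\C$. Applying $\map_{\LMod_A(\C)}(c \otimes (-), N)$ to the retract $M \to A \otimes M \to M$ exhibits $c \mapsto \map_{\LMod_A(\C)}(c \otimes M, N)$ as a natural retract of the representable functor $\map_\C(-, \hom(M,N))$. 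Since $\C$ admits totalizations it is idempotent complete, so the induced natural idempotent on $\hom(M,N)$ splits, providing the representing object $\hom_A(M,N) \in \C$ as a retract of $\hom(M,N)$.

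Next, for a symmetric monoidal functor $f : \C \to \E$ preserving $\hom(M,N)$, I would apply $f$ to the retract diagram and check that the canonical comparison map $f(\hom_A(M,N)) \to \hom_{f(A)}(f(M), f(N))$ is an equivalence. The key point is that this comparison map is the retract of the analogous comparison for $A \otimes M$ in place of $M$; tracing through the free-forgetful identification in $\E$, that analogous map becomes $f(\hom(M,N)) \to \hom_\E(f(M), f(N))$, which is an equivalence by hypothesis. A retract of an equivalence being an equivalence, the comparison map for $M$ is also an equivalence, and in particular $\hom_{f(A)}(f(M), f(N))$ exists in $\E$. Closed symmetric monoidal functors preserve all internal homs, covering the explicit case in the statement.

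Finally, for the variant where $\C$ is only idempotent complete but does not admit totalizations, I would freely add totalizations via a symmetric monoidal fully faithful embedding $\C \hookrightarrow \D$ that preserves $\hom(M,N)$ (using the Yoneda-type trick indicated in the paper's Conventions). Applying the first part in $\D$ produces $\hom_A(M,N)$ as a retract of $\hom(M,N)$, which still lies in $\C$ by idempotent completeness; fully faithfulness of the embedding transfers the universal property back to $\C$, and preservation under a further symmetric monoidal functor follows from the previous step applied in $\D$. The main obstacle will be the careful bookkeeping in the second step — verifying that the comparison map for $M$ really is a retract of the comparison map for $A \otimes M$ — which ultimately boils down to the compatibility of $f$ with the free-forgetful adjunction, guaranteed by symmetric monoidality.
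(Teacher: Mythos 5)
Your overall strategy is the right one and, apart from one step, matches the paper's: identify $\hom_A(A\otimes M,N)\simeq \hom(M,N)$ via the free--forgetful adjunction, use \Cref{cor : retractfreeinmodule} to exhibit the presheaf $c\mapsto \map_A(c\otimes M,N)$ as a retract of the representable presheaf of $\hom(M,N)$, observe that retracts are preserved by any functor to get the preservation statement, and handle the idempotent-complete variant by a Yoneda-type embedding. The problem is the step where you produce the representing object in the first case: you assert that ``since $\C$ admits totalizations it is idempotent complete.'' This is not a standard fact and I do not believe it is justified. The results in the literature deduce idempotent completeness from sequential or filtered (co)limits, via the cofinality of $\mathbb N\to\idem$ (\cite[4.4.5.15]{HTT}); no such reduction is available for $\Delta$-indexed limits, since one can check directly that the only functor $\Delta\op\to\idem$ is the constant one (the simplicial identities $d_is_i=\id$ force every structure map to be the identity), so the splitting of a coherent idempotent cannot be rewritten as a totalization by cofinality. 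As written, your argument therefore never actually uses the totalization hypothesis for anything it can legitimately deliver, which is a sign that the hypothesis is meant to do different work.

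The correct use of totalizations is the paper's \Cref{lm:hominmod}: every $A$-module is a geometric realization of the bar resolution $A^{\otimes\bullet+1}\otimes M$, whose terms are free; $\hom_A(A^{\otimes k+1}\otimes M,N)\simeq\hom(A^{\otimes k}\otimes M,N)$ exists since $\C$ has internal homs; and the class of $X$ for which $\hom_A(X,N)$ exists is closed under $\Delta\op$-indexed colimits because representable presheaves are closed under limits that exist in $\C$, and $\C$ has $\Delta$-indexed limits. This establishes existence of $\hom_A(M,N)$ directly; it is then a retract of $\hom(M,N)$ exactly as you say, since $M$ is a retract of $A\otimes M$, and the rest of your argument (preservation, and the idempotent-complete variant, where your idempotent-splitting reasoning is legitimate because presheaf categories are idempotent complete) goes through.
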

\begin{proof}
We begin under the assumption that $\C$ admits totalizations and internal homs. 

In this case, by \Cref{lm:hominmod}, $\hom_A(M,N)$ exists and is a retract of of $\hom_A(A\otimes M,N)\simeq \hom(M,N)$. It is clear that this is preserved by any symmetric monoidal functor which preserves $\hom(M,N)$. 

Now, we go back to a general idempotent-complete $\C$. There is a symmetric monoidal embedding $\C\to \E$ where $\E$ admits totalizations, and which preserves all homs that exist in $\C$ : in fact, the Yoneda embedding into the Day convolution monoidal structure on presheaves has this property. In particular, $\hom_A(M,N)$ in $\E$ is a retract of $\hom(M,N)$ in $\C$, and thus is in $\C$ by idempotent-completeness. The conclusion about preservation follows similarly. 
\end{proof}
An internal hom of specific interest is the center of $A$:
\begin{cor}\label{cor:centerabsolute}
Let $\C$ be an idempotent complete symmetric monoidal \category, and $A\in\Alg(\C)$ a separable algebra. In this case, $Z(A)=\hom_{A\otimes A\op}(A,A)$ exists and is a retract of $A$.  

Furthermore, it is preserved by any symmetric monoidal functor $\C\to \E$. 
\end{cor}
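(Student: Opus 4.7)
The plan is to use the separability of $A$ to realize $A$ as a retract of the free $A$-bimodule $A\otimes A\op$, and then to read off $Z(A)$ from the trivial identification of homs out of a free bimodule. By \Cref{defn:sep}, the multiplication $\mu: A\otimes A\op\to A$ admits a bimodule section $s$, exhibiting $A$ as a retract of $A\otimes A\op$ in $\BiMod_A(\C) = \LMod_{A\otimes A\op}(\C)$. Moreover, for any bimodule $N$, the universal property of free modules yields $\map_{\BiMod_A}((A\otimes A\op)\otimes c, N)\simeq \map_\C(c,N)$ for all $c\in\C$, so $\hom_{A\otimes A\op}(A\otimes A\op, N)$ exists and is canonically equivalent to $N$; in particular $\hom_{A\otimes A\op}(A\otimes A\op, A)\simeq A$.

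Next, tensoring the bimodule retract $A\to A\otimes A\op \to A$ with $c\in \C$ (via the canonical right $\C$-action on $\BiMod_A(\C)$) and applying $\map_{\BiMod_A}(-, A)$ exhibits the functor $c\mapsto \map_{\BiMod_A}(A\otimes c, A)$ as a retract of the representable functor $\map_\C(-,A)$. By Yoneda, this retraction is classified by an idempotent endomorphism of $A$ in $\C$; since $\C$ is idempotent complete, this idempotent splits to produce an object $Z(A)\in \C$ which represents $c\mapsto\map_{\BiMod_A}(A\otimes c, A)$ and comes equipped with a retraction $A\to Z(A)\to A$. By definition, this representing object is the desired internal hom $\hom_{A\otimes A\op}(A,A)$, which is therefore a retract of $A$ in $\C$.

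For the preservation assertion, any symmetric monoidal functor $f: \C\to\E$ sends the separability datum $(\mu, s)$ of $A$ to a separability datum for $f(A)$ (by \Cref{item : symmonsep} of \Cref{lm:general}), hence sends the bimodule retract $A\to A\otimes A\op \to A$ to the analogous retract for $f(A)$; re-running the previous argument in $\E$ identifies $f(Z(A))$ with $Z(f(A))$. There is no real obstacle here: once we recognize that separability is precisely the statement that $A$ is a bimodule retract of the free bimodule $A\otimes A\op$, the whole argument is formal and uses only idempotent completeness of $\C$.
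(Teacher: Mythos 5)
Your proof is correct and follows essentially the same route as the paper: separability exhibits $A$ as a bimodule retract of the free bimodule $A\otimes A\op$ (free on the unit $\one$), so $\hom_{A\otimes A\op}(A,A)$ is a retract of $\hom_{A\otimes A\op}(A\otimes A\op,A)\simeq A$, with existence supplied by idempotent-completeness and preservation by the fact that symmetric monoidal functors preserve the separability datum, free modules, and split idempotents. The only cosmetic difference is that you split the Yoneda-classified idempotent directly where the paper routes the existence step through \Cref{prop : homsep} via an embedding into presheaves; the underlying idea is identical to \Cref{rmk:centerretractlinear}.
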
 
\begin{nota}\label{nota:center}
    We introduce here the notation $Z(A)=\hom_{A\otimes A\op}(A,A)$ - this is the $\mathbb E_1$-center of $A$, which is an $\mathbb E_2$-algebra \cite[Section 5.3.]{HA}. 
\pend \end{nota}
\begin{rmk}\label{rmk:centerretractlinear}
    Note that the retraction $A\to Z(A)$ is given by precomposition by $s: A\to A\otimes A\op$: $$A\simeq \hom_{A\otimes A\op}(A\otimes A\op, A)\to \hom_{A\otimes A\op}(A,A)=Z(A)$$
    In particular, it has a canonical left $Z(A)$-linear structure. 
\pend \end{rmk}
We conclude this section with the following classical fact:

\begin{prop}\label{prop : towersep}
Assume $\C$ is idempotent-complete, and let $R\in\CAlg(\C)$ be separable. In this case, relative tensor products over $R$ exist and so $\Mod_R(\C)$ is symmetric monoidal. 

Let $A\in \Alg(\Mod_R(\C))$. If $A$ is separable over $R$, then it is separable.
\end{prop}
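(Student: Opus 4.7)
The first assertion is essentially automatic from what has been established. Since $R$ is a separable commutative algebra in the idempotent-complete category $\C$, \Cref{prop : tensorsep} (applied with $A$ there equal to $R$ here) guarantees that relative tensor products $M\otimes_R N$ of $R$-modules exist in $\C$ and are preserved by symmetric monoidal functors out of $\C$. Since $R$ is commutative, this gives $\Mod_R(\C)$ the structure of a symmetric monoidal \category{} with unit $R$ and tensor product $\otimes_R$. (Strictly speaking, to get symmetric monoidal coherence one may embed $\C$ symmetric monoidally into a sufficiently cocomplete $\D$, equip $\Mod_R(\D)$ with its canonical symmetric monoidal structure, and then restrict to the full subcategory $\Mod_R(\C)\subseteq \Mod_R(\D)$, which is closed under $\otimes_R$ by the first statement.)

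\textbf{Building the section of $\mu_\C$.} Suppose $A$ is separable over $R$, with $R$-linear $(A,A)$-bimodule section $t_R\colon A\to A\otimes_R A\op$ of the relative multiplication $\mu_R$. The absolute multiplication factors as
\[\mu_\C\colon A\otimes A\op\xrightarrow{\pi} A\otimes_R A\op\xrightarrow{\mu_R} A,\]
where $\pi$ is the canonical map to the relative tensor product. By \Cref{prop : tensorsep} applied to the separable commutative algebra $R$, the map $\pi$ admits a canonical $\C$-linear section $\sigma\colon A\otimes_R A\op\to A\otimes A\op$, constructed (in the proof of \Cref{prop : tensorsep}) from the separability idempotent $s_R\colon R\to R\otimes R$ via the formula $A\otimes_R A\op\simeq A\otimes_R R\otimes_R A\op\xrightarrow{A\otimes_R s_R\otimes_R A\op}A\otimes_R(R\otimes R)\otimes_R A\op\simeq A\otimes A\op$. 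I propose to define
\[t:=\sigma\circ t_R\colon A\longrightarrow A\otimes A\op\]
and to show this is an $(A,A)$-bimodule section of $\mu_\C$ in $\C$.

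\textbf{Verifications.} That $t$ is a section is immediate: $\mu_\C\circ t=\mu_R\circ\pi\circ\sigma\circ t_R=\mu_R\circ t_R=\id_A$. The slightly less formal point is bimodule linearity. The map $t_R$ is an $A$-bimodule map in $\Mod_R(\C)$, and this property is preserved by the forgetful functor $\Mod_R(\C)\to\C$, so $t_R$ is $A$-bilinear as a map in $\C$. For $\sigma$, the construction is natural in $A$ and $A\op$ as $(R,R)$-bimodules; in particular, applying naturality to the left multiplication map $A\otimes A\to A$ (which is a map of $(R,R)$-bimodules because $R$ is central in $A$) produces a commutative square showing that $\sigma$ is left $A$-linear, and the symmetric argument gives right $A$-linearity. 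Hence $t$ is $A$-bilinear, and $A$ is separable in $\C$.

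\textbf{Expected obstacle.} The only mildly subtle step is the linearity of $\sigma$ with respect to the canonical $A$-bimodule structures on source and target; once one is comfortable with the naturality of the section produced by \Cref{prop : tensorsep}, the rest is a short diagram chase. No deformation-theoretic or homotopy-categorical input is required beyond the already-established \Cref{prop : tensorsep}.
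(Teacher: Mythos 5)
Your proof is correct and follows essentially the same route as the paper: the paper also composes the relative section $A\to A\otimes_R A\op$ (which is $A\otimes_R A\op$-linear, hence $A\otimes A\op$-linear) with the $A\otimes A\op$-linear section of $A\otimes A\op\to A\otimes_R A\op$ supplied by \Cref{prop : tensorsep} via the separability of $R$. Your explicit verification of the bimodule linearity of $\sigma$ by naturality is exactly the content the paper compresses into the phrase ``$A\otimes A\op$-linearly a retract''.
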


\begin{proof}
Suppose $A$ is separable over $R$. We then have an $A\otimes_R A\op$-linear, and hence $A\otimes A\op$-linear section $A\to A\otimes_R A\op$. But now, because $R$ is separable, the latter is $A\otimes A\op$-linearly a retract of $A\otimes A\op$. Composing the two retraction gives the claim. 
\end{proof}
We prove the converse in the case of an additive \category{} in \Cref{prop:conversetowersep}. 
\section{Separable algebras and homotopy categories}\label{section:homotopycat}
In this section, we explain how separable algebras in $\C$ are controlled by the homotopy category $\ho(\C)$. This suggests that a big chunk of the study of separable algebras can be performed in the homotopy category, and thus explains morally why separable algebras work so well in tt-categories. This also allows to \emph{lift} many results about separable algebras in the tt-setting to the stable $\infty$-setting.
\subsection{Modules, separability and algebras}
The key property that will drive our analysis is \Cref{cor : retractfree}, which, recall, states that over a separable algebra, any module is a retract of a free module of the form $A\otimes M$. We will use it together with the following general fact: 
\begin{lm}\label{lm : locff}
Let $A\in\Alg(\C)$ be any algebra, and let $X\in \LMod_A$ be a retract of a module of the form $A\otimes M$.

For any $N\in \LMod_A$, the functor $\ho(\LMod_A(\C)) \to \LMod_{hA}(\ho(\C))$ induces an isomorphim $$\pi_0\map_{\LMod_A(\C)}(X,N)\xrightarrow{\cong} \hom_{\LMod_{hA}(\ho(\C))}(hX,hN)$$

More generally, if $\M$ is a $\C$-module, and $X\in\LMod_A(\M)$ is a retract of some module of the form $A\otimes M$, then for any $N\in\LMod_A(\M)$, the functor $\ho(\LMod_A(\M))\to \LMod_{hA}(\ho(\M))$ induces an isomorphism 
$$\pi_0\map_{\LMod_A(\M)}(X,N)\xrightarrow{\cong} \hom_{\LMod_{hA}(\ho(\M))}(hX,hN)$$
\end{lm}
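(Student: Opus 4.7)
The plan is to establish the comparison first for free modules $X=A\otimes M$ by a direct adjunction computation, and then to deduce the general case—where $X$ is merely a retract of such a free module—by a formal retract argument in the arrow category of sets. The generalization to an arbitrary $\C$-module $\M$ will introduce no new difficulty.

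For the free case, I would apply the free--forgetful adjunction $A\otimes -\dashv U$ in $\C$ together with its counterpart in $\ho(\C)$: since $\ho(\C)$ inherits a symmetric monoidal structure from $\C$, the functor $hA\otimes -\colon \ho(\C)\to \LMod_{hA}(\ho(\C))$ is left adjoint to the forgetful functor. The first adjunction gives
\[\pi_0\map_{\LMod_A(\C)}(A\otimes M, N)\cong \pi_0\map_\C(M, U(N))=\hom_{\ho(\C)}(hM, hU(N)),\]
while the second gives
\[\hom_{\LMod_{hA}(\ho(\C))}(hA\otimes hM, hN)\cong \hom_{\ho(\C)}(hM, hU(N)),\]
using that the forgetful functor $\LMod_{hA}(\ho(\C))\to \ho(\C)$ applied to $hN$ is just $hU(N)$. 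The point to check is that the comparison map of the lemma intertwines these two identifications; this is essentially formal from the facts that the canonical functor $\C\to \ho(\C)$ is symmetric monoidal, commutes with the forgetful functors, and carries the unit of the upstairs adjunction to the unit of the downstairs one.

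For a general $X$ exhibited as a retract $X\xrightarrow{i}A\otimes M\xrightarrow{r}X$ in $\LMod_A(\C)$, applying the contravariant functors $\pi_0\map_{\LMod_A(\C)}(-,N)$ and $\hom_{\LMod_{hA}(\ho(\C))}(h-, hN)$ to this diagram exhibits the comparison map at $X$ as a retract of the comparison map at $A\otimes M$ in the arrow category of sets, by naturality of the comparison map. Since retracts of isomorphisms are isomorphisms, the conclusion for $X$ follows from the free case. The generalization to a $\C$-module $\M$ proceeds verbatim after replacing $\C$ with $\M$ and using the free--forgetful adjunction $A\otimes -\colon \M\rightleftarrows \LMod_A(\M)\colon U$ in place of the one in $\C$. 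The main (and really only) obstacle I anticipate is bookkeeping the compatibility of the comparison map with the two adjunction identifications in the free case; this is formal but relies in an essential way on the naturality of the homotopy category construction with respect to tensor products and modules.
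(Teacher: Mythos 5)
Your proposal is correct and follows essentially the same route as the paper: the paper also reduces to the free case by closure of the isomorphism property under retracts, and then handles $X=A\otimes M$ by observing that $hX$ is free on $hM$ with the same unit map, which is precisely your two-adjunction comparison. The only difference is expository (you prove the free case first and then pass to retracts, while the paper states the reduction first), so nothing further is needed.
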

\begin{proof}
The collection of $X$'s for which this map is an isomorphism is clearly closed under retract, so we may assume that $X$ is free on some $M$. But then $hX$ is free on the same $hM$, with the same unit map, from which the claim follows. 
\end{proof}

\begin{cor}\label{cor : sepff}
Let $A\in\Alg(\C)$ be a separable algebra. The functor $\ho(\LMod_A(\C))\to \LMod_{hA}(\ho(\C))$ is fully faithful.

More generally, if $\M$ is a $\C$-module, then $\ho(\LMod_A(\M))\to \LMod_{hA}(\ho(\M))$ is fully faithful.
\end{cor}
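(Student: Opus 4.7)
The plan is to combine the two results that precede the statement directly. \Cref{cor : retractfreeinmodule} tells us that when $A$ is separable, every object $X \in \LMod_A(\M)$ (and in particular every object of $\LMod_A(\C)$, taking $\M = \C$) is a retract of the free module $A \otimes U(X)$. On the other hand, \Cref{lm : locff} asserts that for any such retract $X$ and any target $N \in \LMod_A(\M)$, the comparison map
\[
\pi_0\map_{\LMod_A(\M)}(X,N) \to \hom_{\LMod_{hA}(\ho(\M))}(hX,hN)
\]
is a bijection. These two facts together immediately give that on every pair of objects, the functor $\ho(\LMod_A(\M)) \to \LMod_{hA}(\ho(\M))$ induces a bijection on hom sets, which is the definition of fully faithful for a functor between $1$-categories (both source and target being $1$-categories since $\ho$ and $\LMod_{hA}(\ho(\M))$ are).

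Concretely, I would write: apply \Cref{cor : retractfreeinmodule} to the source object $X$ to put it in the form required by \Cref{lm : locff}, and invoke the conclusion of that lemma for the pair $(X,N)$. Since $N$ is arbitrary, and the hom set on the left is by definition $\hom_{\ho(\LMod_A(\M))}(X,N)$, fully faithfulness follows. The statement for $\C$ itself is the special case $\M = \C$, but stating both is useful because later applications will need the general form.

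There is no real obstacle: this is a two-line deduction, and both ingredients have been assembled just above. The only mild subtlety is notational — making sure the reader sees that $\pi_0\map_{\LMod_A(\M)}(X,N)$ is the same as the hom set in $\ho(\LMod_A(\M))$, which is immediate from the definition of the homotopy category. One could additionally remark that the functor is essentially surjective onto the full subcategory of retracts of free $hA$-modules inside $\LMod_{hA}(\ho(\M))$, but this is not claimed in the statement and would belong to a subsequent result (e.g., \Cref{thm : homod=mod} referenced in the introduction).
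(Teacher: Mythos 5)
Your proof is correct and is exactly the paper's argument: combine \Cref{cor : retractfreeinmodule} (every $A$-module is a retract of a free one) with \Cref{lm : locff} (the comparison map on hom sets is a bijection whenever the source is such a retract). Nothing further is needed.
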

\begin{proof}
    This follows from the previous lemma together with \Cref{cor : retractfree} (resp. \Cref{cor : retractfreeinmodule}). 
\end{proof}
\begin{conv}
In the rest of this paper, $\C$ will be assumed to be additive. We will however repeat it in the statements of results for self-containedness. 
\end{conv}
It is not clear to the author whether this condition is necessary, and where, but we use it in some key instances, so it is certainly necessary \emph{for our proofs}, if not the results. Note that additivity is a place where both \categories{} and their homotopy categories can live, so it is a suitable inbetween between \categories{} and $1$-categories. We use this assumption together with the following lemma:
\begin{lm}\label{lm : idemadd}
Let $\C$ be an additive category and $e: X\to X$ an idempotent in $\ho(\C)$. There exists a coherent idempotent $\idem \to\C$ which lifts $e$ (cf. \cite[Section 4.4.5.]{HTT}). 

In particular, if $\C$ is idempotent-complete, then so is $\ho(\C)$. 
\end{lm}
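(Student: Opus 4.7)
The plan is to reduce the problem to a setting with enough limits, where the standard machinery of \cite[Section 4.4.5]{HTT} directly applies. Concretely, I would embed $\C$ fully faithfully into $\D := \Psh(\C)$ via the Yoneda embedding (or, if size is an issue, into a suitable enlargement which is still idempotent-complete and admits sequential limits). Since $\D$ admits all small limits and colimits, it is idempotent-complete, and moreover every weak idempotent in $\ho(\D)$ lifts to a coherent idempotent: the standard argument uses the ``image'' telescope $Y := \lim(\ldots \xrightarrow{e} X \xrightarrow{e} X)$ in $\D$, which produces a retract diagram $X \to Y \to X$ whose induced idempotent on $X$ lifts $e$.

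Applied to our weak idempotent $e : X \to X$, viewed in $\ho(\D)$ via Yoneda, this yields a coherent idempotent $F : \idem \to \D$ lifting $e$. Now $F$ sends the unique object of $\idem$ to $X \in \C \subset \D$, and all higher coherence data for $F$ lives in iterated mapping spaces of the form $\Map_\D(X,X) \simeq \Map_\C(X,X)$, the equivalence coming from full faithfulness of the Yoneda embedding. Therefore $F$ automatically factors through the inclusion $\C \hookrightarrow \D$, producing the required coherent idempotent $\idem \to \C$.

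The main obstacle is thus the lifting step performed inside $\D$, which I would handle by citing HTT rather than redeveloping it from scratch. A purely intrinsic alternative, using additivity, would be to form the complementary idempotent $f := \id_X - e \in \pi_0 \End_\C(X)$ (well-defined since mapping spaces in an additive \category{} are grouplike), note the relations $e^2 = e$, $f^2 = f$, $ef = fe = 0$, $e + f = \id_X$ in $\pi_0$, and upgrade $e + f = \id_X$ to a coherent retract datum $X \xrightarrow{(e,f)} X \oplus X \xrightarrow{\nabla} X$ by using that in a semiadditive \category{} addition of parallel morphisms is defined via the biproduct. One would then extract the desired coherent idempotent from the resulting functor $\idem^+ \to \C$ together with a transfer along this retract.

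For the ``in particular'' part: suppose $\C$ is idempotent-complete and let $e$ be a weak idempotent in $\ho(\C)$. By the first part, $e$ lifts to a coherent idempotent $\idem \to \C$; by idempotent-completeness of $\C$, this extends to a functor $\idem^+ \to \C$, giving an object $Y \in \C$ together with a retract diagram $Y \to X \to Y$ whose composition is $\id_Y$ and whose ``conjugate'' idempotent on $X$ is a lift of $e$. Passing to $\ho(\C)$ yields a splitting of $e$, showing that $\ho(\C)$ is idempotent-complete.
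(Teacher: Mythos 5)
There is a genuine gap, and it sits at the heart of your main argument. You claim that after embedding $\C$ into $\D=\Psh(\C)$, ``every weak idempotent in $\ho(\D)$ lifts to a coherent idempotent'' because $\D$ has enough limits, via the telescope $Y=\lim(\cdots\xrightarrow{e}X\xrightarrow{e}X)$. This is false: the existence of (co)limits guarantees that \emph{coherent} idempotents split (\cite[Corollary 4.4.5.16]{HTT}), but it does \emph{not} allow you to promote an idempotent in the homotopy category to a coherent one. That promotion is obstructed in general --- this is the Freyd--Heller phenomenon, and Lurie warns about it explicitly in \cite[Section 4.4.5]{HTT} and \cite[Warning 1.2.4.8]{HA}; the \category{} of spaces has all limits and colimits and nevertheless contains non-liftable homotopy idempotents. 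Concretely, your telescope argument breaks because the cone map $X\to Y$ requires a compatible system of homotopies $e^n\simeq e$ for all $n$, which is exactly the infinite tower of coherence data you are trying to construct. Note also that your argument never uses additivity, which should be a red flag: the hypothesis is essential, since it is precisely what kills the Freyd--Heller obstructions (they live in $\pi_1(\map_\C(X,X),e)$, and in the additive case one can modify the chosen homotopy $e^2\simeq e$ to make them vanish). Your ``intrinsic alternative'' at least gestures at additivity, but as written it does not close the gap either: the retract diagram $X\xrightarrow{(e,1-e)}X\oplus X\xrightarrow{\nabla}X$ produces a coherent idempotent on $X\oplus X$ whose splitting is $X$ itself, not a coherent lift of $e$ on $X$, and the promised ``transfer'' is not supplied.

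For comparison, the paper's proof is a citation: \cite[Lemma 1.2.4.6, Remark 1.2.4.9]{HA} proves exactly this lifting statement for stable \categories{}, and the remark observes that the proof only uses additivity; alternatively one embeds $\C$ additively and fully faithfully into a \emph{stable} \category{} (e.g.\ product-preserving presheaves of spectra), applies the stable case there, and uses full faithfulness to see that the resulting coherent idempotent factors through $\C$ --- that last step is the one part of your write-up that is correct and matches the paper. If you want a self-contained argument rather than a citation, the work you cannot avoid is the obstruction-theoretic content of \cite[Lemma 1.2.4.6]{HA}; no amount of ambient (co)completeness substitutes for it.
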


\begin{proof}
This is \cite[Lemma 1.2.4.6., Remark 1.2.4.9.]{HA} - note that as stated, the assumption is that $\C$ is stable, but the proof works just as well if $\C$ is additive. Alternatively, one can deduce the additive case from the stable case by embedding any additive category in a stable one. 
\end{proof}
With all of this, we can show: 
\begin{thm}\label{thm : homod=mod}
Suppose $\C$ is additively symmetric monoidal, and let $A\in \Alg(\C)$ be a separable algebra. The forgetful functor $\ho(\LMod_A(\C))\to \LMod_{hA}(\ho(\C))$ is an equivalence.

More generally, if $\M$ is a $\C$-module, then the forgetful functor $\ho(\LMod_A(\M))\to \LMod_{hA}(\ho(\M))$ is an equivalence. 
\end{thm}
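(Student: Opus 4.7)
By \Cref{cor : sepff}, the functor is already known to be fully faithful, so the task reduces to essential surjectivity. My plan is to exploit separability to realize an arbitrary $M \in \LMod_{hA}(\ho(\C))$ as the split of an idempotent on a \emph{free} module, then lift this idempotent coherently. First I would reduce to the idempotent complete case by embedding $\C \hookrightarrow \C^{\mathrm{idem}}$: this symmetric monoidal fully faithful embedding preserves separability of $A$ by \Cref{item : symmonsep} of \Cref{lm:general}, and any lift $\tilde M \in \LMod_A(\C^{\mathrm{idem}})$ whose underlying object is in $\C$ automatically has all its structure maps in $\C$, since the tensor powers $A^{\otimes n} \otimes \tilde M$ live in $\C$ and $\C \hookrightarrow \C^{\mathrm{idem}}$ is fully faithful.

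Assuming $\C$ is idempotent complete, the image $hA$ is separable in $\ho(\C)$ by applying \Cref{item : symmonsep} of \Cref{lm:general} to the symmetric monoidal functor $\C\to\ho(\C)$. Thus \Cref{cor : retractfree} exhibits $M$ as a retract of the free $hA$-module $hA \otimes M$, yielding an $hA$-linear idempotent $e$ on $hA \otimes M$ in $\LMod_{hA}(\ho(\C))$. The object $hA \otimes M$ is the image of the free module $A \otimes M \in \LMod_A(\C)$ under the forgetful functor, and by \Cref{lm : locff} applied to $X = N = A \otimes M$, we have a bijection between endomorphisms of $A \otimes M$ in $\ho(\LMod_A(\C))$ and endomorphisms of $hA \otimes M$ in $\LMod_{hA}(\ho(\C))$, so $e$ lifts uniquely to an idempotent $\tilde e$ on $A \otimes M$ in $\ho(\LMod_A(\C))$.

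Note that $\LMod_A(\C)$ is additive (inherited from $\C$) and idempotent complete: an $A$-linear idempotent on any module splits in $\C$, and because the splitting is $A$-equivariant, the retract carries a canonical $A$-module structure. By \Cref{lm : idemadd}, $\tilde e$ lifts to a coherent idempotent in $\LMod_A(\C)$, which splits to yield $\tilde M \in \LMod_A(\C)$. Its image $h\tilde M$ is a retract of $hA \otimes M$ in $\LMod_{hA}(\ho(\C))$ under the same idempotent $e$, hence canonically isomorphic to $M$ by uniqueness of splittings of an idempotent in a $1$-category. The $\C$-module variant with $\M$ in place of $\C$ follows by an identical argument, using the $\M$-variants of \Cref{cor : retractfreeinmodule} and \Cref{lm : locff}. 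I expect the bookkeeping in the idempotent completion reduction --- particularly the verification that all structure transports correctly --- to be the main technical care required; the remaining steps are direct applications of separability and the cited lemmas.
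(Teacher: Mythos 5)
Your proof is correct and follows essentially the same route as the paper's: reduce to the idempotent-complete case, use separability to exhibit $M$ as a retract of the free module $hA\otimes M$, and use \Cref{lm : idemadd} to lift and split the resulting idempotent in $\LMod_A(\C)$. The only difference is cosmetic — the paper invokes the general fact that the essential image of a fully faithful functor out of an idempotent-complete category is closed under retracts, whereas you inline that argument by explicitly lifting the idempotent through \Cref{lm : locff}.
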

In the stable case, and for $\C=\M$, this is also a consequence of the main theorem of \cite{dellambrogiosanders} (together with \cite{balmerseparability}).
\begin{proof}
We have already shown it is fully faithful, so we are left with proving that it is essentially surjective. Note that we can assume without loss of generality that $\M$ is idempotent-complete: indeed, assume for a second the claim holds for idempotent complete categories, and let $\M\to \M'$ be the idempotent-completion of $\M$, which is in particular a fully faithful functor.

Let $M\in \LMod_{hA}(\ho(\M))\subset\LMod_{hA}(\ho(\M'))$. By the idempotent complete case, this can be lifted to an $A$-module in $\M'$. But the underlying object of $M$ is in $\ho(\M)$, and therefore the underlying object of this lift is in $\M$, which proves the claim. 

So we now assume $\M$ is idempotent complete. It follows that $\LMod_A(\M)$ is also idempotent-complete, cf. \cite[Corollary 4.2.3.3.]{HA} and \cite[Remark 4.4.5.13.]{HTT}. It is also additive and therefore by \Cref{lm : idemadd}, $\ho(\LMod_A(\M))$ is also idempotent complete. So to prove that a fully faithful functor $\ho(\LMod_A(\M))\to \D$ is essentially surjective, it suffices to show that any object in $\D$ is a retract of some object in the image; but here any object of $\LMod_{hA}(\ho(\M))$ is a retract of some $hA\otimes N$, by separability, and $hA\otimes N$ is the image of $A\otimes N$, so we are done.  
\end{proof}
This is the first instance of how separable algebras behave nicely with respect to homotopy categories.

\begin{rmk}
By \Cref{prop : tensorsep} and \Cref{prop : homsep} applied to the symmetric monoidal functor $\C\to \ho(\C)$, the functor $\LMod_A(\C)\to \LMod_{hA}(\ho(\C))$ is compatible with relative tensor products over $A$ and internal homs over $A$ that is, the tensor product $M\otimes_A N$ is the coequalizer in $\ho(\C)$ over the two maps $M\otimes A\otimes N\rightrightarrows M\otimes N$, and the internal hom $\hom_A(M,N)$ is the equalizer in $\ho(\C)$ of $\hom(M,N)\rightrightarrows \hom(A\otimes M,N)$.  

This is therefore compatible with Balmer's construction in the triangulated setting \cite[Section 1]{balmerdegree}. 

Because the functor $\ho(\LMod_A(\C))\to\LMod_{hA}(\ho(\C))$ is also an equivalence, this gives an explanation, at least in the case of tensor triangulated categories which are homotopy categories of symmetric monoidal stable \categories, of the fact that module categories over separable algebras are still triangulated, and why their tensor product behaves nicely. 

We note that this is also discussed in the recent work of Naumann and Pol, see \cite[Lemma 4.7, Remark 4.9]{NikoLuca}. 
\pend \end{rmk}
\begin{rmk}
The previous remark, as well as the equivalence $\ho(\LMod_A(\C))\simeq \LMod_{hA}(\ho(\C))$ applied to $A\otimes B\op$, for separable algebras $A$ and $B$, shows that for separable algebras, there is a reasonable notion of Morita equivalence at the level of the homotopy category, that can be phrased in terms of bimodules in the ``naive'' way. See also \Cref{thm : morsep} for an application of this idea to \emph{morphisms} between separable algebras.  
\pend \end{rmk}

We next show that the picture is even more rigid: separability is detected at the level of the homotopy category, more precisely: 
\begin{prop}\label{prop:homsepimpliessep}
Suppose $\C$ is additively symmetric monoidal and let $A\in\Alg(\C)$ be a homotopy separable algebra. In this case, $A$ is separable. 
\end{prop}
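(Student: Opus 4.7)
The plan is to package the homotopy-separability datum as a coherent bimodule idempotent on $A\otimes A\op$ in $\C$, split it after passing to an idempotent completion, and recognize the splitting as $A$ itself. Concretely, homotopy separability of $A$ provides a bimodule section $s:hA\to h(A\otimes A\op)$ of $h\mu$ in $\BiMod_{hA}(\ho(\C))$, and hence a bimodule idempotent $e:=s\circ h\mu$ on $h(A\otimes A\op)$. Viewing bimodules as left modules over the algebra $A\otimes A\op$ in $\C$, the object $A\otimes A\op$ is free on $\one$, so \Cref{lm : locff} applied to the algebra $A\otimes A\op$ with $X=N=A\otimes A\op$ yields a bijection
\[
\pi_0\map_{\BiMod_A(\C)}(A\otimes A\op,A\otimes A\op)\xrightarrow{\cong}\hom_{\BiMod_{hA}(\ho(\C))}(h(A\otimes A\op),h(A\otimes A\op)).
\]
In particular, $e$ is the image of a unique idempotent $\tilde e$ in the $1$-category $\ho(\BiMod_A(\C))$. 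Since $\C$ is additive, so is $\BiMod_A(\C)$, and \Cref{lm : idemadd} lifts $\tilde e$ to a coherent idempotent in $\BiMod_A(\C)$.

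Next, I would embed $\C$ fully faithfully and symmetric monoidally into an idempotent-complete additively symmetric monoidal \category{} $\D$, e.g.\ via the Yoneda embedding into $\Psh(\C)$ with Day convolution. The coherent idempotent then splits in $\BiMod_A(\D)$ as $A\otimes A\op\xrightarrow{\pi}Y\xrightarrow{\iota}A\otimes A\op$ with $\pi\circ\iota\simeq\id_Y$. In the $1$-category $\BiMod_{hA}(\ho(\D))$, the idempotent $e$ now admits two splittings: the original one through $hA$ with data $(s,h\mu)$, and a new one through $hY$ with data $(h\iota,h\pi)$. Uniqueness of idempotent splittings in a $1$-category forces $h(\mu\circ\iota):hY\to hA$ to be an isomorphism; since the forgetful to $\ho(\D)$ is conservative and equivalences in an \category{} are detected on the homotopy category, $\mu\circ\iota:Y\to A$ is then an equivalence of bimodules in $\D$.

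Setting $\tilde s:=\iota\circ(\mu\circ\iota)^{-1}:A\to A\otimes A\op$ yields a bimodule map satisfying $\mu\circ\tilde s\simeq\id_A$, which is the desired separability section. Since $A$ and $A\otimes A\op$ both lie in $\C$, full faithfulness of $\C\hookrightarrow\D$ places $\tilde s$ in $\BiMod_A(\C)$, proving $A$ is separable in $\C$. The main conceptual point is the combination of \Cref{lm : locff} (rigidity of endomorphisms of free bimodules) with additive idempotent lifting (\Cref{lm : idemadd}), which together promote a strict $1$-categorical idempotent to coherent data in $\C$; the only technical nuisance is the mild comparison between $\BiMod_A(\C)$ and $\LMod_{A\otimes A\op}(\C)$, which is harmless here since only the free module $A\otimes A\op$ plays a role and the embedding into $\D$ can be arranged to absorb any required ambient colimits.
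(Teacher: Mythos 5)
Your proof is correct and follows essentially the same route as the paper: lift the homotopy-bimodule idempotent $s\circ h\mu$ on the free module $A\otimes A\op$ to a coherent idempotent via \Cref{lm : locff} and \Cref{lm : idemadd}, pass to an idempotent completion, and identify the splitting with $A$. The only (harmless) variation is in the last step, where you invoke uniqueness of idempotent splittings in the homotopy $1$-category plus conservativity of the forgetful functor, whereas the paper constructs the comparison map explicitly as a map of sequential diagrams and uses that idempotent splittings are absolute colimits; both arguments are valid and yield the same section.
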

For this, we use the following categorical facts: 

\begin{lm}[{\cite[Example 1.4.7.10. (Tag 00JC)]{kerodon}}]\label{lm : endo}
Let $\Delta^1/\partial \Delta^1\to B\mathbb N$ be the canonical map. It is a categorical equivalence. 
\end{lm}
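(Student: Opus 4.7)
Plan: I will show that the inclusion $\iota : \Delta^1/\partial\Delta^1 \hookrightarrow B\mathbb{N}$ is inner anodyne (a member of the weakly saturated class generated by inner horn inclusions $\Lambda^n_k \hookrightarrow \Delta^n$ for $0 < k < n$). Since $B\mathbb{N}$ is the nerve of an ordinary category, it is already an $\infty$-category (hence fibrant in the Joyal model structure), and so inner anodyneness of $\iota$ will automatically imply that it is a categorical equivalence.

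Combinatorially, the non-degenerate $n$-simplices of $B\mathbb{N}$ are sequences $(a_1, \dots, a_n)$ with each $a_i \geq 1$; the inner face maps $d_i$ for $0 < i < n$ act by $(a_1, \dots, a_n) \mapsto (a_1, \dots, a_i + a_{i+1}, \dots, a_n)$, while the outer faces $d_0, d_n$ drop the first or last entry. Under $\iota$, the unique non-degenerate edge of $\Delta^1/\partial\Delta^1$ maps to the $1$-simplex $(1)$. Filter $B\mathbb{N} = \colim_d F_d$ by letting $F_d$ be the sub-simplicial set generated by the non-degenerate simplices of total sum at most $d$; then $F_1 = \Delta^1/\partial\Delta^1$, and it suffices to exhibit each inclusion $F_d \hookrightarrow F_{d+1}$ as a finite composition of pushouts of inner horn inclusions.

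For the base step $F_1 \hookrightarrow F_2$, a single pushout along $\Lambda^2_1 \hookrightarrow \Delta^2$, with both outer faces glued to the $1$-simplex $(1)$, adjoins simultaneously the $2$-simplex $(1, 1)$ and its missing inner face, the new $1$-simplex $(2)$. For $F_d \hookrightarrow F_{d+1}$ in general, I start by attaching some $2$-simplex $(a, d+1-a)$ together with the new $1$-simplex $(d+1)$ via a single $\Lambda^2_1$ pushout, and then adjoin the remaining new simplices of sum $d+1$ via pushouts along larger inner horns $\Lambda^n_k \hookrightarrow \Delta^n$ ($n \geq 3$): each new non-degenerate $m$-simplex $\sigma$ of sum $d+1$ arises as the missing inner face of a suitable $(m+1)$-simplex $\tilde\sigma$ obtained by splitting one of the entries of $\sigma$ into a sum of two smaller positive integers, in such a way that the non-$k$ faces of $\tilde\sigma$ are either in $F_d$ or have already been attached in a previous step. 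A direct computation with the simplicial identities confirms that the resulting boundaries are correctly identified with the existing simplices.

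The principal technical obstacle is organizing these attachments into a coherent finite sequence at each stage: one must fix an explicit well-ordering on the new non-degenerate simplices of each total sum $d+1$ so that at every step the relevant inner horn has all of its non-missing faces already present. A workable ordering processes simplices by a lexicographic order on their index tuples, adjoining each new $m$-simplex $\sigma$ as the missing inner face of an $(m+1)$-simplex $\tilde\sigma$ whose only face of sum $d+1$ other than $\sigma$ itself is strictly smaller in the ordering (and hence already attached). Once this combinatorial bookkeeping is verified case by case, the countable composition of the inner anodyne maps $F_d \hookrightarrow F_{d+1}$ shows that $\iota$ is itself inner anodyne, completing the proof.
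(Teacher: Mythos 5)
The paper gives no argument for this lemma beyond the citation to Kerodon, and your overall strategy --- exhibit the inclusion as inner anodyne via the filtration of $B\mathbb N$ by total sum, then use that inner anodyne maps are categorical equivalences --- is exactly the standard one from the cited source. Your identification of the nondegenerate simplices of $B\mathbb N$ and their faces is correct, and the base step $F_1\hookrightarrow F_2$ is carried out correctly.

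The gap is in the organization of the inductive step, and it is not merely unfinished bookkeeping: the scheme as you describe it cannot work. A pushout along $\Lambda^{m+1}_k\hookrightarrow\Delta^{m+1}$ adjoins \emph{two} new nondegenerate simplices of sum $d+1$ at once, namely the $(m+1)$-dimensional filler $\tilde\sigma$ and its $m$-dimensional missing face $\sigma$; since each new simplex must be adjoined exactly once, exactly half of the new simplices of sum $d+1$ must play the role of fillers, and those cannot also ``arise as the missing inner face of a suitable $(m+1)$-simplex''. Your assertion that \emph{each} new $m$-simplex of sum $d+1$ is so obtained is therefore impossible, and it fails concretely for the top-dimensional simplex $(1,1,\dots,1)$: there is no $(d+2)$-simplex of sum $d+1$ of which it could be an inner face. (The phrase ``whose only face of sum $d+1$ other than $\sigma$'' also miscounts: $\tilde\sigma$ has $m$ inner faces, all of sum $d+1$.) What is actually needed is a perfect matching on the set of compositions of $d+1$, pairing each composition with a one-step refinement or coarsening. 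One that works: call $(a_1,\dots,a_m)$ with $a_1\geq 2$ a ``face'' and match it with the ``filler'' $(1,a_1-1,a_2,\dots,a_m)$; every composition of $d+1\geq 2$ is of exactly one of these two types, and this is a bijection between them. Attaching the pairs by pushouts along $\Lambda^{m+1}_1$, in order of increasing $m$, succeeds: the two outer faces of $(1,a_1-1,a_2,\dots,a_m)$ have sum at most $d$ and lie in $F_d$, while its inner faces $d_j$ for $2\leq j\leq m$ again begin with a $1$, hence are fillers belonging to lower-dimensional pairs that were attached in an earlier round. With this correction your argument closes up and recovers the proof of the Kerodon example the paper cites.
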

\begin{lm}{\cite[4.4.5.15.]{HTT}}
The canonical map $\mathbb N_\geq\to \idem$ is cofinal. 
\end{lm}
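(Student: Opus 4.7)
The plan is to apply Joyal's cofinality criterion (\cite[Theorem 4.1.3.1]{HTT}): a map of simplicial sets $p: K\to L$ is cofinal if and only if, for every vertex $l\in L$, the pullback $K\times_L L_{l/}$ has weakly contractible geometric realization. Since $\idem$ has a unique object $*$ up to equivalence (being the free \category{} on an idempotent endomorphism), the statement reduces to showing that the pullback $\mathbb N_\geq \times_\idem \idem_{*/}$ is weakly contractible.

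First, I would recall the explicit combinatorial description of $\idem$ from HTT Section 4.4.5: its $n$-simplices are indexed by linearly ordered sets together with certain combinatorial data encoding the coherent iteration of a single idempotent $e$. The map $\mathbb N_\geq \to \idem$ sends the generating morphisms $n\to n+1$ to this distinguished endomorphism, and sends longer chains to the corresponding iterated composite (which, coherently, is still equivalent to $e$). Under this description, one can write down an $n$-simplex of the pullback $\mathbb N_\geq \times_\idem \idem_{*/}$ explicitly as a chain $k_0\leq \cdots \leq k_n$ in $\mathbb N_\geq$ together with compatible simplicial data in $\idem_{*/}$ living over the induced string of idempotents.

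The central point is then to show this pullback is filtered in the \category ical sense; indeed, given any finite diagram of simplices in the pullback, one can push them all forward to lie over a single sufficiently large integer $k$, using idempotency of $e$ (i.e.\ the coherent equivalences $e\circ e\simeq e$ encoded in $\idem$) to absorb all composites into a single common ``target''. Since filtered \categories{} have contractible classifying spaces, this yields weak contractibility of the pullback and hence cofinality. The main obstacle is the careful simplex-by-simplex bookkeeping, as the simplicial structure of $\idem$ is genuinely intricate; in practice, this step is exactly what makes the proof in HTT technical, and the content of the lemma is essentially that the coherent idempotent data in $\idem$ is rigid enough that the sequential diagram of iterates already sees all of it.
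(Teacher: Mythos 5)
First, a remark on what you are being compared against: the paper does not prove this lemma at all --- it is quoted verbatim from \cite[4.4.5.15]{HTT} --- so the only ``paper proof'' is the citation. Your strategy (Joyal's criterion \cite[Theorem 4.1.3.1]{HTT} at the unique vertex $*$ of $\idem$, followed by weak contractibility of $\mathbb N_\geq\times_\idem \idem_{*/}$ via filteredness) is the right one and is essentially the standard argument.

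However, the step you defer as the ``main obstacle'' (simplex-by-simplex bookkeeping in $\idem$) is where your sketch has a real, if fillable, gap, and it can be closed much more cheaply than you suggest. The key observation is that $\idem$ is the nerve of an ordinary category $\mathbf I$ with a single object $*$ and $\Hom(*,*)=\{\id,e\}$, $e^2=e$. Since the nerve preserves pullbacks (it is a right adjoint) and satisfies $N(\mathbf C)_{x/}\cong N(\mathbf C_{x/})$, the comma object $\mathbb N_\geq\times_\idem\idem_{*/}$ is the nerve of the ordinary category $P$ whose objects are pairs $(k,f)$ with $k\in\mathbb N$ and $f\in\{\id,e\}$, and which has a (necessarily unique) morphism $(k,f)\to(l,g)$ exactly when either $k=l$ and $f=g$, or $k<l$ and $g=e$ (since the map $\mathbb N_\geq\to\idem$ sends $k<l$ to $e$, and $e\circ f=e$ for both values of $f$). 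So $P$ is a poset, any finite set of objects has the upper bound $(l,e)$ for $l$ large, hence $P$ is filtered and therefore weakly contractible --- no coherence data needs to be tracked. The imprecision in your write-up is that for a general \category{} ``filtered'' requires extending every map from a \emph{finite simplicial set} over the cone, not merely finding a common target for finitely many objects; your ``push everything forward to a large $k$'' argument only establishes the latter. That discrepancy is harmless precisely because the comma object is (the nerve of) a poset, where the two notions agree, so you should record that reduction explicitly instead of gesturing at the intricacy of the simplices of $\idem$.
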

Moreover, note that this canonical map is given by the following composite : $\mathbb N_\geq \to B\mathbb N\to \idem$ so that we have the following commutative diagram for any category $\D$:
\[\begin{tikzcd}
	{\Fun(\Delta^1/\partial\Delta^1,\D)} & {\Fun(B\mathbb N,\D)} & {\Fun(\mathbb N_\geq,\D)} \\
	& {\Fun(\idem,\D)}
	\arrow["\simeq"', from=1-2, to=1-1]
	\arrow[from=1-2, to=1-3]
	\arrow[from=2-2, to=1-2]
	\arrow[from=2-2, to=1-3]
\end{tikzcd}\]

With this in hand, we can prove the claim:
\begin{proof}[Proof of \Cref{prop:homsepimpliessep}]
Without loss of generality, we assume $\C$ is idempotent complete. 

Consider the idempotent $s: hA\otimes hA\op\to hA\to hA\otimes hA\op$ in the category of homotopy $A$-bimodules.
Note that its source and target are free $A\otimes A\op$-bimodules, and the functor $\ho(\LMod_{A\otimes A\op}(\C))\to \LMod_{h(A\otimes A\op)}(\ho(\C))$ is fully faithful on the full subcategory spanned by $A\otimes A\op$, which proves that this idempotent lifts to a coherent idempotent in $\LMod_{A\otimes A\op}(\C)$, by \Cref{lm : idemadd}. That is, we have a functor $\tilde s: \Idem\to \LMod_{A\otimes A\op}(\C)$ that classifies $s$. 

In the diagram $$\xymatrix{\Fun(\Delta^1/\partial\Delta^1,\LMod_{A\otimes A\op}(\C)) & \Fun(B\mathbb N,\LMod_{A\otimes A\op}(\C))\ar[l]_\simeq \ar[r] & \Fun(\mathbb N_\geq,\LMod_{A\otimes A\op}(\C)) \\
&  \Fun(\idem,\LMod_{A\otimes A\op}(\C)) \ar[u] \ar[ru]}$$ 
if we follow $\tilde s$ up and then left, we simply get $A\otimes A\op\xrightarrow{s}A\otimes A\op$. Now, in $\Fun(\Delta^1/\partial\Delta^1,\C)$, we have an arrow that corresponds to the following commutative square in $\LMod_{A\otimes A\op}(\C)$: 
\[\begin{tikzcd}
	{A\otimes A\op} & {A\otimes A\op} \\
	A & A
	\arrow["s", from=1-1, to=1-2]
	\arrow["{\id_A}"', from=2-1, to=2-2]
	\arrow["\mu"', from=1-1, to=2-1]
	\arrow["\mu", from=1-2, to=2-2]
\end{tikzcd}\]
Note that there exists such a commutative square in $\LMod_{A\otimes A\op}(\C)$, because there is one in $\LMod_{h(A\otimes A\op)}(\ho(\C))$, and the source is a free module (cf. \Cref{lm : locff}).

In particular, if we now go from $\Fun(\Delta^1/\partial\Delta^1, \LMod_{A\otimes A\op}(\C))$ to $\Fun(\mathbb N_{\geq}, \LMod_{A\otimes A\op}(\C))$, we get a map from $A\otimes A\op\xrightarrow{s}A\otimes A\op\xrightarrow{s}\dots$ to $A\xrightarrow{\id_A}A\xrightarrow{\id_A}\dots$. By commutativity of the diagram, the source is simply the restriction of $\tilde s$ along the map $\mathbb N_{\geq}\to \idem$. 

In particular, the source has a colimit given by the splitting of that idempotent, and we get a map from this colimit to $A$ in $\LMod_{A\otimes A\op}(\C)$. But splitting of idempotents are \emph{absolute} colimits \cite[Corollary 4.4.5.12.]{HTT}, so they are preserved by the forgetful functor $\LMod_{A\otimes A\op}(\C)\to \LMod_{h(A\otimes A\op)}(\ho(\C))$. If we redo this story in the latter category, $A$ is the splitting of the idempotent in question, and so the canonical map from this colimit to $A$ is an equivalence. 

It is therefore an equivalence in $\ho(\C)$, and therefore in $\C$, and therefore in $\LMod_{A\otimes A\op}(\C)$. This proves that $A$ is the splitting of some idempotent on $A\otimes A\op$ in $\LMod_{A\otimes A\op}(\C)$, along $A\otimes A\op\xrightarrow{\mu} A$, which is exactly saying that $A$ is separable, and so we are done. 
\end{proof}
We can now prove the converse of \Cref{prop : towersep}, namely:
\begin{prop}\label{prop:conversetowersep}
Assume $\C$ is additively symmetric monoidal and idempotent-complete, and let $R\in\CAlg(\C)$ be separable. In this case, relative tensor products over $R$ exist and so $\Mod_R(\C)$ is symmetric monoidal. 

Let $A\in \Alg(\Mod_R(\C))$. In this case, $A$ is separable if and only if it is separable over $R$.
\end{prop}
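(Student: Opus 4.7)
The plan is to reduce to a statement about homotopy categories, where a classical $1$-categorical argument applies, and then to lift the result using \Cref{prop:homsepimpliessep}.

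First, I would reduce to showing that $hA$ is separable in $\ho(\Mod_R(\C))$. As in the proof of \Cref{prop:homsepimpliessep}, we may assume $\C$ is idempotent complete; then, since $R$ is a separable commutative algebra, $\Mod_R(\C)$ is additively symmetric monoidal and idempotent complete, so \Cref{prop:homsepimpliessep} applies to it. It therefore suffices to show that $hA$ is separable in $\ho(\Mod_R(\C))$.

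Next, the plan is to identify this homotopy category explicitly. By \Cref{thm : homod=mod} applied to the separable commutative algebra $R$, together with \Cref{prop : tensorsep} to compare relative tensor products, there is a symmetric monoidal equivalence $\ho(\Mod_R(\C)) \simeq \Mod_{hR}(\ho(\C))$ under which $\otimes_R$ corresponds to $\otimes_{hR}$ (computed in $\ho(\C)$). The remaining problem is thus $1$-categorical: assuming $hA$ is separable over $\one$ in $\ho(\C)$, I need to show it is separable over $hR$. For this, I would take a bimodule section $s\colon hA \to hA\otimes hA\op$ witnessing separability over $\one$, take $q\colon hA\otimes hA\op \to hA\otimes_{hR} hA\op$ to be the canonical algebra map, and set $s' := q\circ s$. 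Since $q$ intertwines the two multiplications, $s'$ is a section of the multiplication $hA\otimes_{hR} hA\op \to hA$. Moreover, since $q$ is an algebra map and $s$ is $(hA\otimes hA\op)$-linear, $s'$ will be $(hA\otimes_{hR} hA\op)$-linear: on any object coming from $\Mod_{hR}(\ho(\C))$, the $(hA\otimes_{hR} hA\op)$-action factors through the $(hA\otimes hA\op)$-action via $q$, so between such objects, $(hA\otimes hA\op)$-linearity is automatically $(hA\otimes_{hR} hA\op)$-linearity.

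The main obstacle sidestepped by this approach is the $\infty$-categorical coherence needed to promote a bimodule section of $A\otimes A\op \to A$ in $\C$ to one of $A\otimes_R A\op \to A$ in $\Mod_R(\C)$. This is absorbed by \Cref{prop:homsepimpliessep} applied to the symmetric monoidal \category{} $\Mod_R(\C)$, leaving only the elementary $1$-categorical manipulation above.
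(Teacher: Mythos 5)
Your reduction to the homotopy category is exactly the paper's: apply \Cref{prop:homsepimpliessep} to $\Mod_R(\C)$, identify $\ho(\Mod_R(\C))\simeq\Mod_{hR}(\ho(\C))$ symmetric monoidally, and then study the composite $hA\xrightarrow{s}hA\otimes hA\op\xrightarrow{q}hA\otimes_{hR}hA\op$ in the resulting $1$-category. The problem is the final linearity step. You assert that ``the $(hA\otimes_{hR}hA\op)$-action factors through the $(hA\otimes hA\op)$-action via $q$'' and conclude that $(hA\otimes hA\op)$-linearity between $(hA\otimes_{hR}hA\op)$-modules is automatic $(hA\otimes_{hR}hA\op)$-linearity. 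The factorization goes the other way: the restricted $(hA\otimes hA\op)$-action on such a module $M$ is $(hA\otimes hA\op)\otimes M\xrightarrow{q\otimes\id}(hA\otimes_{hR}hA\op)\otimes M\to M$, i.e.\ the \emph{small} action factors through the \emph{big} one. With the correct direction, deducing $(hA\otimes_{hR}hA\op)$-linearity from $(hA\otimes hA\op)$-linearity requires $q\otimes\id_M$ to be an epimorphism; for a general algebra map $q$ this is simply false (e.g.\ a $\mathbb Z$-linear map between $\mathbb Z[x]$-modules need not be $\mathbb Z[x]$-linear), and the Warning preceding the proposition stresses that the failure of $A\otimes A\op\to A\otimes_R A\op$ to be an epimorphism is exactly the obstruction in general.

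The missing ingredient — and the only place the separability of $R$ is actually used in this direction — is that $q$ is a \emph{split} epimorphism: since $R$ is separable, $hA\otimes_{hR}hA\op$ is a retract of $hA\otimes hA\op$ as $(hA\otimes hA\op)$-modules, and split epimorphisms in a $1$-category are epimorphisms, whence an $(hA\otimes hA\op)$-linear map between $(hA\otimes_{hR}hA\op)$-modules is automatically $(hA\otimes_{hR}hA\op)$-linear. Inserting this observation repairs your argument and turns it into the paper's proof. (You also only treat one implication of the ``if and only if''; the converse is \Cref{prop : towersep}, which should at least be cited.)
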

\begin{warn}
Classically, if $A$ is separable, then it is so over $R$, with no separability assumption on $R$. This is wrong in homotopical algebra and if one tries to run the classical proof, one will encounter the issue that $A\otimes A\op\to A\otimes_R A\op$ is not an epimorphism.

A counterexample is given by the $\mathbb Q$-algebras $R = \mathbb Q[x]$ and $A=$ any nonzero separable commutative $\mathbb Q$-algebra, all of this in the \category{} of $\mathbb Q$-module spectra. 
\pend \end{warn}
\begin{proof}
We have proved in \Cref{prop : towersep} that if $A$ was separable over $R$, it was separable. Now, assume $A$ is separable.

We observe that by \Cref{prop:homsepimpliessep}, it suffices to show that $A$ is separable in $\ho(\Mod_R(\C))$. But because $R$ is separable, $\ho(\Mod_R(\C))\simeq \Mod_{hR}(\ho(\C))$, symmetric monoidally as the relative tensor products are preserved. In other words, we may assume that $\C$ is a $1$-category. 

But now $A\otimes A\op\to A\otimes_R A\op$ is a split epimorphism because $R$ is separable, and in a $1$-category, split morphisms are epimorphisms. It follows that an $A\otimes A\op$-linear map between $A\otimes_R A\op$-modules is automatically $A\otimes_R A\op$-linear. 

The following composite $A\xrightarrow{s} A\otimes A\op\to A\otimes_R A\op$, where $s$ is a witness that $A$ is separable, is therefore an $A\otimes_R A\op$-linear section of the multiplication map, which proves the claim. 
\end{proof}
We now exploit what we did so far to analyze morphisms between separable algebras. Our main result in the noncommutative world is:
\begin{thm}\label{thm : morsep}
Assume $\C$ is additively symmetric monoidal, and let $A,R\in\Alg(\C)$. If $A$ is separable, the canonical map $$\pi_0\map_{\Alg(\C)}(A,R)\to \hom_{\Alg(\ho(\C))}(hA,hR)$$ is an isomorphism.
\end{thm}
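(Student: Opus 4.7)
The plan is to identify both sides of the comparison map in terms of bimodule-style structures and then invoke the key equivalence from \Cref{thm : homod=mod}. The starting observation is that an algebra map $f : A \to R$ in $\C$ is equivalently a left $A$-module structure on the object $R \in \RMod_R(\C)$ (the right $R$-module $R$ has endomorphism algebra $R$ itself, and algebra maps into this endomorphism algebra are the same as left module structures). Concretely, this gives
\[
\map_{\Alg(\C)}(A, R) \simeq \mathrm{fib}_R\bigl(\LMod_A(\RMod_R(\C))^\simeq \to \RMod_R(\C)^\simeq\bigr),
\]
and an analogous description of the set $\hom_{\Alg(\ho(\C))}(hA, hR)$ in terms of $hA$-actions on $hR$. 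Viewing $\RMod_R(\C)$ as a left $\C$-module in the obvious way, the generalized form of \Cref{thm : homod=mod} then yields an equivalence
\[
\ho\bigl(\LMod_A(\RMod_R(\C))\bigr) \simeq \LMod_{hA}\bigl(\ho(\RMod_R(\C))\bigr)
\]
compatible with the forgetful functors to $\ho(\RMod_R(\C))$.

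For surjectivity, a homotopy algebra map $\phi : hA \to hR$ corresponds to an $hA$-action on the object $hR$ of $\ho(\RMod_R(\C))$. Under the equivalence above, this lifts to an $A$-module structure in $\LMod_A(\RMod_R(\C))$; because the equivalence commutes with the forgetful functors and the homotopy category construction is the identity on objects, the lifted underlying object is literally $R$. Unwinding the identification, this is an algebra map $f : A \to R$ in $\C$ with $hf = \phi$.

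For injectivity, suppose $f_0, f_1 : A \to R$ have $hf_0 = hf_1$. Then the bimodule structures $(R, f_0), (R, f_1) \in \LMod_A(\RMod_R(\C))$ have equal images in $\LMod_{hA}(\ho(\RMod_R(\C)))$, hence by the equivalence they are isomorphic in $\ho(\LMod_A(\RMod_R(\C)))$ via an iso whose underlying $\ho(\RMod_R(\C))$-map is $\mathrm{id}_{hR}$. Lifting yields a right-$R$-linear iso $\beta : R \to R$ in $\C$ with $h\beta = \mathrm{id}$, which is necessarily left multiplication by a unit $u \in \pi_0 R$ homotopic to $1$; the $A$-equivariance of $\beta$ translates into the identity $f_1 = \mathrm{conj}_u \circ f_0$. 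The main remaining task is to convert this equivariance condition into an actual homotopy of algebra maps: the units of $R$ form a union of path components of $\map_\C(\one, R)$, so any path from $u$ to $1$ in $R$ stays in the units and induces a path of inner algebra automorphisms $\mathrm{conj}_{u_t}$ from $\mathrm{conj}_u$ to $\mathrm{id}_R$; composing with $f_0$ gives the desired homotopy $f_1 \simeq f_0$ in $\map_{\Alg(\C)}(A, R)$. This last step is the principal technical point.
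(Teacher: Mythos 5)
Your overall strategy is the same as the paper's: encode an algebra map $A\to R$ as a module structure on $R$ over itself, and then use the separability of $A$ to compare module categories before and after passing to homotopy categories. However, as written your argument has a concrete gap: it conflates the two categories $\ho\bigl(\RMod_R(\C)\bigr)$ and $\RMod_{hR}\bigl(\ho(\C)\bigr)$. The set $\hom_{\Alg(\ho(\C))}(hA,hR)$ is, by the $1$-categorical version of your first observation, the set of $hA$-module structures on $hR$ viewed as an object of $\RMod_{hR}(\ho(\C))$ --- module structures internal to the homotopy category. Your appeal to \Cref{thm : homod=mod} (for the $\C$-module $\M=\RMod_R(\C)$) only produces an equivalence $\ho\bigl(\LMod_A(\RMod_R(\C))\bigr)\simeq \LMod_{hA}\bigl(\ho(\RMod_R(\C))\bigr)$, whose target involves the \emph{homotopy category of} $R$-modules in $\C$, not $hR$-modules in $\ho(\C)$. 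These are genuinely different categories (the forgetful functor between them is in general neither full nor faithful), so your surjectivity argument starts from an object of the wrong category, and your injectivity argument asserts that two structures "have equal images" in a category where equality of the corresponding homotopy algebra maps does not a priori give you anything.

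The missing step is exactly the right-hand square of the diagram in the paper's proof, and it is supplied by \Cref{lm : locff}: the comparison functor $\ho(\RMod_R(\C))\to \RMod_{hR}(\ho(\C))$ induces a bijection on hom-sets out of any retract of a free module, and all the objects relevant here ($R$ itself and the sources $A^{\otimes n}\otimes R$ of the action and coherence data) are free right $R$-modules. With that inserted, the action data and its $1$-categorical coherences transport uniquely across the comparison, and both your surjectivity and injectivity arguments go through; the paper packages this by observing that both horizontal functors (the one from \Cref{cor : sepff} that you use, and the one from \Cref{lm : locff} that you omit) are fully faithful on homotopy categories over the relevant subcategories, hence induce $\pi_0$-isomorphisms on the fibers. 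A minor further remark: in your injectivity step, once the underlying map of $\beta$ is the identity in $\ho(\RMod_R(\C))$, the unit $u$ is already $1$ in $\pi_0 R$, so the discussion of paths through the unit components is unnecessary; the real content is the lifting of the $A$-equivariance data, which is again handled by the fully-faithfulness statements above.
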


\begin{warn}\label{warn:nondiscrete}
    In general, $\map_{\Alg(\C)}(A,R)$ is not discrete, even if $R$ is also separable, see \Cref{prop:2loopmoduli} and \Cref{ex:modulinotcontr}.
    \pend \end{warn}
The situation is better in the commutative world, as we will see in \Cref{prop : e1comm} and \Cref{thm : commlift}. 
\begin{proof}
Up to embedding $\C$ symmetric monoidally and additively in a presentably additively symmetric monoidal \category, we may assume $\C$ is presentably additively symmetric monoidal. 

There is then a fully faithful functor $\Alg(\C)\to (\Mod_\C)_{\C/}, A\mapsto (\LMod_A(\C),A)$ \cite[Theorem 4.8.5.11]{HA} (see also \cite[Remark 4.8.3.25]{HA}), so that $\map_{\Alg(\C)}(A,R)$ can be described as the fiber of $$\map_{\Mod_\C}(\LMod_A(\C),\LMod_R(\C))\to \map_{\Mod_\C}(\C,\LMod_R(\C))$$ at the map $\C\to \LMod_R(\C)$ classifying the $R$-module $R$. 

By \cite[Theorem 4.8.4.1]{HA}\footnote{See also \cite[Theorem 4.3.2.7]{HA}}, this map can be rewritten as the forgetful map $$_R\BiMod_A(\C)^\simeq \to \LMod_R(\C)^\simeq$$
For simplicity of notation, we simply write $\LMod_R(\C) = \LMod_R$, and then by \cite[Theorem 4.3.2.7]{HA}, this can again be rewritten as $$\RMod_A(\LMod_R)^\simeq\to\LMod_R^\simeq$$

In more concrete terms: an algebra map $A\to R$ is the same data as a right $A$-module structure on the left $R$-module $R$. It is easy to check that the same holds for $1$-categories, with no presentability assumption\footnote{In fact, we can deduce that it also holds in general with no presentability assumption from the presentable case.}. 

Consider now the diagram:\[\begin{tikzcd}
	{\RMod_A(\LMod_R)} & {\RMod_{hA}(\ho(\LMod_R))} & {\RMod_{hA}(\LMod_{hR})} \\
	{\LMod_R} & {\ho(\LMod_R)} & {\LMod_{hR}}
	\arrow[from=1-1, to=2-1]
	\arrow[from=1-3, to=2-3]
	\arrow[from=1-1, to=1-2]
	\arrow[from=1-2, to=1-3]
	\arrow[from=2-1, to=2-2]
	\arrow[from=2-2, to=2-3]
	\arrow[from=1-2, to=2-2]
\end{tikzcd}\]

When restricted to the full subcategory of $\LMod_R$ (resp. $\ho(\LMod_R)$, resp. $\LMod_{hR}(\ho(\C))$) spanned by $R, R\otimes A^{\otimes n}$, the horizontal maps are fully faithful on homotopy categories: for the left square, this follows from \Cref{cor : sepff} applied to the $\C$-module $\M= \LMod_R$, and for the right square, this follows from \Cref{lm : locff}. 

Passing to groupoid cores and restricting to these components, we see that the horizontal maps are therefore $1$-equivalences, and so the induced maps on fibers are $0$-equivalences - the fiber of the leftmost map is, by the previous argument, $\map_{\Alg(\C)}(A,R)$, while the fiber of the rightmost map is $\hom_{\Alg(\ho(\C))}(hA,hR)$, so that this proves the claim.
\end{proof}

\subsection{From homotopy algebras to algebras}\label{subsection:obstruction}
The final goal of this section is to prove that not only is separability detected in the homotopy category, but that separability of a homotopy algeba is strong enough to guarantee that it can be lifted to an $\mathbb E_1$-algebra in $\C$. In other words, we now aim to prove: 
\begin{thm}\label{thm : e1lift}
Let $\C$ be an additive symmetric monoidal \category, and $A\in \Alg(\ho(\C))$ a homotopy separable homotopy algebra. 

There exists an algebra $\widetilde A\in\Alg(\C)$, necessarily separable, which lifts $A$. In fact, the moduli space of such lifts is simply-connected.
\end{thm}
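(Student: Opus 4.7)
The plan is to reformulate the lifting problem in terms of pointed $\C$-module $\infty$-categories via the embedding $\Alg(\C)\hookrightarrow (\Mod_\C)_{\C/}$, $\tilde A\mapsto (\LMod_{\tilde A}(\C),\C\to \LMod_{\tilde A}(\C))$ from \cite[Theorem 4.8.5.11]{HA}, and then apply a Postnikov/deformation-theoretic argument controlled by \Cref{lm:Postnikovcats}. First, by the usual trick, embed $\C$ additively and symmetric monoidally into a presentably additively symmetric monoidal $\infty$-category, so that the cited theorem applies. Under this embedding, and using \Cref{thm : homod=mod}, any separable lift $\tilde A$ of $A$ is seen through its pointed $\C$-module $(\LMod_{\tilde A}(\C),\text{free})$, whose homotopy category is exactly the pointed $\ho(\C)$-module $(\LMod_A(\ho(\C)),\text{free})$ attached to $A$ (and such a lift is automatically separable, by \Cref{prop:homsepimpliessep}). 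Thus the moduli space we care about is identified with a moduli space of pointed $\C$-module lifts of this fixed pointed $\ho(\C)$-module.

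Next I would construct the lift inductively along the Postnikov tower $\C\to\dots\to \tau_{\leq n}\C\to\dots\to \tau_{\leq 0}\C\simeq \ho(\C)$ (the last equivalence holding because $\C$ is additive, so its mapping anima are $0$-truncated exactly after passing to $\ho(\C)$). At each stage, \Cref{lm:Postnikovcats} provides an obstruction-theoretic control on pointed module-category lifts from $\tau_{\leq n}\C$-modules to $\tau_{\leq n+1}\C$-modules. The key input that trivializes these obstructions is the separability condition through \Cref{cor : retractfree}: every object of $\LMod_A(\ho(\C))$ is a retract of a free module of the form $A\otimes M$. Free modules and their hom-spaces lift tautologically (once $\tilde A$ is constructed at a given stage, $\tilde A\otimes M$ provides a lift of $A\otimes M$), and retracts/idempotents can be split at each Postnikov stage because additive $\infty$-categories are idempotent-complete in the relevant sense (\Cref{lm : idemadd}); splittings of idempotents are moreover absolute, hence compatible with truncation. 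This is what lets each obstruction vanish and the induction proceed.

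For simple-connectedness, we need both $\pi_0=*$ (any two lifts are equivalent) and $\pi_1=0$ (automorphisms of a given lift that become trivial in $\ho(\C)$ are connected); both follow by running the same Postnikov obstruction machinery applied to morphisms between lifts, respectively to homotopies between such morphisms, with obstructions again killed by the retract-of-free property. The reason why the argument stops at $\pi_1$, as the remark following the theorem warns (with \Cref{ex:modulinotcontr} as witness), is that at each higher Postnikov stage \Cref{lm:Postnikovcats} produces genuinely new obstructions that separability alone does not kill. The main obstacle in carrying this out is the careful use of \Cref{lm:Postnikovcats}: one must track precisely how the pointed structure behaves along the Postnikov tower of pointed $\C$-modules, and match the obstructions to data controlled by mapping spaces of free modules. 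Once that bookkeeping is in place, the separability condition reduces each relevant obstruction group to zero by the retract-of-free argument, which is what makes the moduli space at least simply-connected.
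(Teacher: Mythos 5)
Your overall shape — Postnikov induction along $\C\simeq\lim_n\ho_{\leq n}(\C)$ controlled by \Cref{lm:Postnikovcats} — matches the paper, but the key step is missing. You never say what the obstruction to lifting an algebra from $\Alg(\ho_{\leq n}(\C))$ to $\Alg(\ho_{\leq n+1}(\C))$ actually \emph{is}, and the mechanism you offer in its place (``every module is a retract of a free one, and free modules lift tautologically'') does not engage with the problem: the whole difficulty is to produce $\tilde A$ at the next stage, so ``once $\tilde A$ is constructed, $\tilde A\otimes M$ lifts $A\otimes M$'' is circular as a construction of the lift. The paper's argument is concrete at exactly this point: \Cref{lm:Postnikovcats} gives a fully faithful functor $\ho_{\leq n+1}(\C)\to\ho_{\leq n}(\C)\times_\D\ho_{\leq n}(\C)$, the two images $d_0(A_n)$ and $d_1(A_n)$ are separable algebras in $\D$ with the same underlying homotopy algebra, and \Cref{thm : morsep} (the $\pi_0$-surjectivity of $\map_{\Alg}(d_0A_n,d_1A_n)\to\hom_{\Alg(\ho)}$ for separable source) is what lets you promote that identification of homotopy algebras to an equivalence of algebras in $\D$, hence to a point of the pullback, hence by fully faithfulness to an algebra in $\ho_{\leq n+1}(\C)$. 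Separability enters through \Cref{thm : morsep}, not directly through \Cref{cor : retractfree}; your proposal never invokes \Cref{thm : morsep} in the existence step, and without it the induction does not close.

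Two further points. First, the detour through $(\Mod_\C)_{\C/}$ and ``pointed $\C$-module lifts'' creates work you do not address: \Cref{lm:Postnikovcats} is a statement about the symmetric monoidal categories $\ho_{\leq n}(\C)$ themselves, not about their module $\infty$-categories, so you would need a separate Postnikov-style decomposition of $\Mod_{\ho_{\leq n}(\C)}$ plus control of the essential image of the embedding; the paper avoids this entirely by noting that $\Alg(-)$ preserves limits (\Cref{lm:limpres}), so one can induct directly on algebra objects. Second, for simple-connectedness you propose to rerun the obstruction machinery on morphisms and homotopies, but this is unnecessary and again unsubstantiated: once non-emptiness is known, simple-connectedness is immediate from \Cref{thm : morsep} together with conservativity of $\Alg(\C)\to\Alg(\ho(\C))$ (injectivity on $\pi_0$, isomorphism on $\pi_1$, and $\pi_2$ of $\Alg(\ho(\C))^\simeq$ vanishing), as recorded in \Cref{obs:onlynonempty}.
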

\begin{warn}
The moduli space of lifts is not contractible in general, cf. \Cref{ex:modulinotcontr}. We will later see however that it \emph{is} contractible in the case of a homotopy commutative separable algebra, see \Cref{prop : e1comm}. 
\pend \end{warn}
Let us first specify explicitly what we mean by ``the moduli space of lifts''. 
\begin{defn}\label{defn:modulilifts}
Let $A\in\Alg(\ho(\C))$ be a homotopy algebra. We define the moduli space of lifts of $A$ to $\Alg(\C)$ to be the space $\Alg(\C)^\simeq\times_{\Alg(\ho(\C))^\simeq}\{A\}$, i.e. the fiber of $\Alg(\C)\to \Alg(\ho(\C))$ at $A$. 
\pend \end{defn}
\begin{obs}\label{obs:onlynonempty}
    By \Cref{thm : morsep} together with the fact that $\Alg(\C)\to \Alg(\ho(\C))$ is conservative, we find that $\Alg(\C)^\simeq\to\Alg(\ho(\C))^\simeq$ is injective on $\pi_0$, and an isomorphism on $\pi_1$ at any point of $\Alg(\C)^\simeq$. Furthermore, $\pi_2(\Alg(\ho(\C))^\simeq) = 0$ at every point, so that to prove that the moduli space of lifts is simply connected, it really suffices to prove that it is non-empty. 
\pend\end{obs}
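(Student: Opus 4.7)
The plan is to analyze the long exact sequence of homotopy groups associated to the map $f\colon \Alg(\C)^\simeq \to \Alg(\ho(\C))^\simeq$ at any putative lift $\tilde A$ of $A$, and thereby reduce simple-connectedness of the fiber $F$ over $A$ to the three listed facts, plus non-emptiness.

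First I would unwind the long exact sequence
\[
\pi_2 \Alg(\ho(\C))^\simeq \to \pi_1(F,\tilde A) \to \pi_1(\Alg(\C)^\simeq,\tilde A) \xrightarrow{f_*} \pi_1 \Alg(\ho(\C))^\simeq \to \pi_0 F \to \pi_0 \Alg(\C)^\simeq
\]
to see that $\pi_1(F,\tilde A) = 0$ is equivalent to vanishing of $\pi_2 \Alg(\ho(\C))^\simeq$ together with injectivity of $f_*$ on $\pi_1$, while path-connectedness of a non-empty $F$ is equivalent to $f_*$ being surjective on $\pi_1$ and injective on $\pi_0$ at $[A]$. Together, the three bullets displayed do exactly this job.

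Next I would verify the three bullets in turn. Vanishing of $\pi_2$ is immediate: $\ho(\C)$ is an ordinary $1$-category, hence so is $\Alg(\ho(\C))$, and the core of a $1$-category is a $1$-truncated space. For the $\pi_1$ isomorphism, identify $\pi_1(\Alg(\C)^\simeq, \tilde A) = \pi_0 \Aut_{\Alg(\C)}(\tilde A)$ and similarly on the target. Since $\tilde A$ is separable by \Cref{prop:homsepimpliessep}, \Cref{thm : morsep} applied with $R = \tilde A$ provides a bijection $\pi_0 \map_{\Alg(\C)}(\tilde A, \tilde A) \xrightarrow{\cong} \hom_{\Alg(\ho(\C))}(A, A)$; because the forgetful $\Alg(\C) \to \C \to \ho(\C)$ is conservative (both functors being conservative, the second essentially by definition), a morphism is invertible in $\Alg(\C)$ iff its image in $\Alg(\ho(\C))$ is, so this bijection restricts to one on automorphism classes. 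For $\pi_0$-injectivity at $A$, given two lifts $\tilde A, \tilde B$ -- both separable by \Cref{prop:homsepimpliessep} -- I would lift the identity of $A$ to a class in $\pi_0 \map_{\Alg(\C)}(\tilde A, \tilde B)$ via \Cref{thm : morsep}; any representative of that class is an equivalence in $\Alg(\C)$ by conservativity, giving $[\tilde A] = [\tilde B]$ in $\pi_0 \Alg(\C)^\simeq$.

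There is no substantive obstacle inside this reduction itself: once \Cref{thm : morsep} and \Cref{prop:homsepimpliessep} are in hand, it is a purely formal long-exact-sequence argument. The genuine content lies in producing a single lift, i.e. non-emptiness of $F$, which is the business of \Cref{thm : e1lift} proper and requires the deformation-theoretic input alluded to in the introduction.
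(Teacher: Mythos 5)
Your argument is correct and is essentially the paper's own: the observation is justified exactly by the long exact sequence of the fibration together with \Cref{thm : morsep}, conservativity of $\Alg(\C)\to\Alg(\ho(\C))$, and the $1$-truncatedness of the core of the $1$-category $\Alg(\ho(\C))$. You also rightly make explicit that the relevant (lifted) algebras are separable via \Cref{prop:homsepimpliessep} so that \Cref{thm : morsep} applies at those points, a hypothesis the paper leaves implicit in the phrase ``at any point of $\Alg(\C)^\simeq$''.
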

\begin{obs}\label{obs : loopmoduli}
$A$ lies in its connected component in $\Alg(\ho(\C))$, which is equivalent to $B\Aut_{\Alg(\ho(\C))}(A)$. If $\widetilde A$ is a lift of $A$, then the connected component of this lift in the moduli space is a connected component of the fiber of the map $B\Aut_{\Alg(\C)}(\widetilde A)\to B\Aut_{\Alg(\ho(\C)}(A)$. 

In particular, if we already know that the moduli space is connected, then the loop space of this moduli space at $\widetilde A$ is the fiber of $\Aut_{\Alg(\C)}(\widetilde A)\to \Aut_{\Alg(\ho(\C))}(A)$ at $\id_A$. 
\pend \end{obs}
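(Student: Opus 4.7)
The plan is to argue this observation purely formally, using standard facts about $\infty$-groupoids (i.e. spaces): namely, the identification of connected components with classifying spaces of automorphism groups, and the fact that homotopy fibers commute with taking based loops.

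First, for the initial assertion, recall that for any space $X$ and any basepoint $x\in X$, the connected component of $x$ in $X$ is canonically equivalent to $B\Aut_X(x) = B\Omega_x X$. Applying this to $X = \Alg(\ho(\C))^\simeq$ with basepoint $A$ (noting that $\ho(\C)$ is a $1$-category, so $\Alg(\ho(\C))^\simeq$ is a $1$-groupoid, which makes the identification particularly transparent), yields the first claim.

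For the second assertion, by \Cref{defn:modulilifts} the moduli space $\mathcal{M} := \Alg(\C)^\simeq \times_{\Alg(\ho(\C))^\simeq} \{A\}$ is the homotopy fiber of $p: \Alg(\C)^\simeq \to \Alg(\ho(\C))^\simeq$ at $A$. Given a lift $\widetilde A \in \mathcal M$, let $X_{\widetilde A} \subseteq \Alg(\C)^\simeq$ and $Y_A \subseteq \Alg(\ho(\C))^\simeq$ denote the respective connected components, so $X_{\widetilde A} \simeq B\Aut_{\Alg(\C)}(\widetilde A)$ and $Y_A \simeq B\Aut_{\Alg(\ho(\C))}(A)$ by paragraph one. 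Since $p(\widetilde A) = A$, the functor $p$ restricts to a map $p': X_{\widetilde A} \to Y_A$, and I would form the pullback
\[
X_{\widetilde A} \times_{Y_A} \{A\} \;\simeq\; X_{\widetilde A} \times_{\Alg(\ho(\C))^\simeq} \{A\}.
\]
The right-hand side sits inside $\mathcal M$ as the preimage of the connected component $X_{\widetilde A}$ under the canonical map $\mathcal M \to \Alg(\C)^\simeq$, and hence is a union of connected components of $\mathcal M$ — in particular, a union of components containing the component of $\widetilde A$.

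Third, for the final ``in particular'' statement, assume $\mathcal M$ is connected. Then the component of $\widetilde A$ in $\mathcal M$ is all of $\mathcal M$, which by the previous paragraph agrees with $\fib_A(B\Aut_{\Alg(\C)}(\widetilde A) \to B\Aut_{\Alg(\ho(\C))}(A))$. Taking based loops at $\widetilde A$ and using that $\Omega$ commutes with homotopy fibers gives
\[
\Omega_{\widetilde A} \mathcal M \;\simeq\; \fib_{\id_A}\!\bigl(\Omega_{\widetilde A} B\Aut_{\Alg(\C)}(\widetilde A) \to \Omega_A B\Aut_{\Alg(\ho(\C))}(A)\bigr) \;\simeq\; \fib_{\id_A}\!\bigl(\Aut_{\Alg(\C)}(\widetilde A) \to \Aut_{\Alg(\ho(\C))}(A)\bigr),
\]
which is the claimed description. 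There is no real obstacle here beyond careful bookkeeping — the statement is a direct consequence of how moduli spaces of lifts are defined together with elementary homotopy-theoretic manipulations of fibers and connected components, with the actual input (non-emptiness / simple-connectedness) deferred to \Cref{thm : e1lift} and \Cref{obs:onlynonempty}.
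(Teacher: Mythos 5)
Your argument is correct and is exactly the standard bookkeeping the paper leaves implicit (the observation is stated without proof): identifying components with $B\Aut$, noting the fiber over the component of $A$ restricted to the component of $\widetilde A$ is a union of components of the moduli space, and looping the resulting fiber sequence. Nothing to add.
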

Our proof relies on deformation theory and \Cref{thm : morsep}, as well as the following lemma (which one could make more precise, but the following version is enough for our purposes): 
\begin{lm}\label{lm:Postnikovcats}
    Let $\C$ be an additively symmetric monoidal \category. There exists an additively symmetric monoidal \category{} $\D$ and a commutative diagram of additively symmetric monoidal \categories: 
    \[\begin{tikzcd}
	\ho_{\leq n+1}(\C) & {\ho_{\leq n}(\C)} \\
	{\ho_{\leq n}(\C)} & \D
	\arrow[from=2-1, to=2-2]
	\arrow[from=1-2, to=2-2]
	\arrow[from=1-1, to=2-1]
	\arrow[from=1-1, to=1-2]
\end{tikzcd}\]
such that the induced map $\ho_{\leq n+1}(\C)\to \ho_{\leq n}(\C)\times_\D \ho_{\leq n}(\C)$ is fully faithful.
\end{lm}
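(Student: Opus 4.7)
The plan is to realize the truncation map $\ho_{\leq n+1}(\C)\to \ho_{\leq n}(\C)$ as a square-zero extension in the \category{} of additively symmetric monoidal \categories, and to take $\D$ to be the corresponding trivial extension of $\ho_{\leq n}(\C)$ classified by the Postnikov $k$-invariant. On mapping spaces, this reduces to the standard pullback description of a Postnikov section of a space, from which fully faithfulness of the induced functor is immediate.

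\textbf{Key steps.} For each pair of objects $(x,y)$, write $X=\map_\C(x,y)$. The $k$-invariant $k_{x,y}:\tau_{\leq n}X\to K(\pi_{n+1}X,n+2)$ exhibits $\tau_{\leq n+1}X$ as the fiber of $k_{x,y}$; equivalently, the pullback
\[
\tau_{\leq n}X \times_{\tau_{\leq n}X\oplus K(\pi_{n+1}X,n+2)} \tau_{\leq n}X,
\]
with one leg the zero section $a\mapsto(a,0)$ and the other the ``graph'' $a\mapsto(a,k_{x,y}(a))$ of the $k$-invariant, is canonically equivalent to $\tau_{\leq n+1}X$. The goal is then to globalize this pointwise data: construct an additively symmetric monoidal \category{} $\D$ whose hom-space from $x$ to $y$ takes the form $\tau_{\leq n}\map_\C(x,y)\oplus K(\pi_{n+1}\map_\C(x,y),n+2)$, equipped with two functors $\ho_{\leq n}(\C)\rightrightarrows \D$ (the zero section and the graph of the $k$-invariant), in such a way that the commutative square of the lemma is the classifying pullback square for $\ho_{\leq n+1}(\C)\to \ho_{\leq n}(\C)$. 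With $\D$ so constructed, the induced map on hom-spaces is precisely the pointwise equivalence recalled above, and hence the functor is fully faithful.

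\textbf{Main obstacle.} The principal technical point is the coherent construction of $\D$ and of the two structure maps $\ho_{\leq n}(\C)\rightrightarrows\D$, compatibly with the additively symmetric monoidal structure on $\C$. The natural vehicle is Lurie's theory of square-zero extensions, applied to the \category{} of additively symmetric monoidal \categories: the Postnikov stage $\ho_{\leq n+1}(\C)\to \ho_{\leq n}(\C)$ is canonically a square-zero extension, and $\D$ arises as the trivial extension of $\ho_{\leq n}(\C)$ by the appropriate shift of the $(n+1)$-st ``Postnikov bimodule'' of hom-spaces, with the two legs corresponding to the zero derivation and to the $k$-invariant. Once this formalism is set up, the defining pullback square of the extension is the commutative diagram asserted, and pointwise fully faithfulness follows from the Postnikov pullback description; the bulk of the work is verifying that the square-zero-extension machinery applies in this setting with sufficient functoriality.
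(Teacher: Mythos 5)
Your route is genuinely different from the paper's, and it is in fact the route the paper explicitly considers and declines: the proof of \Cref{lm:Postnikovcats} opens by noting that "one could in principle use the methods of \cite{harpaznuitenprasma}, but one would still have to make a number of additional verifications," and those methods — square-zero extensions and $k$-invariants in the tangent-bundle formalism for \categories{} — are exactly what you are invoking. The paper instead sidesteps the coherence problem entirely: it embeds $\C$ additively symmetric monoidally into a stable $\E$, forms the prestable \category{} $\Syn_\E=\Fun^\times(\E\op,\Sp_{\geq 0})$, and imports the pullback square of $\Mod_{\one^{\leq n}}(\Syn_\E)$'s from \cite[Proposition 7.3.6]{hopkinsluriebrauer}, together with the fact that $\E\to\Mod_{\one^{\leq k}}(\Syn_\E)$ factors fully faithfully through $\ho_{\leq k+1}(\E)$. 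There the object $\D$ and the two legs come for free from an already-established multiplicative statement about truncations of the unit, rather than having to be built by hand.

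The gap in your proposal is that the "main obstacle" you identify is the entire content of the lemma, and you do not resolve it. The pointwise statement about mapping spaces — that $\tau_{\leq n+1}X$ is the fiber of the $k$-invariant $\tau_{\leq n}X\to K(\pi_{n+1}X,n+2)$ — is standard, but it only gives fully faithfulness \emph{once $\D$ and the two functors $\ho_{\leq n}(\C)\rightrightarrows\D$ exist as additively symmetric monoidal \categories{} and functors}. Assembling the hom-spaces $\tau_{\leq n}\map_\C(x,y)\oplus K(\pi_{n+1}\map_\C(x,y),n+2)$ into a composition law, making the graph of the $k$-invariant a \emph{functor} (i.e., making the $k$-invariants coherently compatible with composition), and then further making all of this compatible with the tensor product, is precisely where the work lies; asserting that "the Postnikov stage is canonically a square-zero extension" in the \category{} of additively symmetric monoidal \categories{} is a nontrivial claim that would itself need a proof or a precise citation covering the multiplicative case. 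So the proposal is a correct plan at the level of strategy, but as written it assumes the statement's hardest part. If you want to complete it along these lines you would need to carry out the multiplicative deformation theory; the paper's use of $\Syn_\E$ is a way of getting the same square without doing so.
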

Here, $\ho_{\leq n}(\C)$ denotes the homotopy $n$-category of $\C$.

Taking this lemma for granted, the proof of \Cref{thm : e1lift} is not hard.
\begin{proof}[Proof of \Cref{thm : e1lift}]
Fix a homotopy separable $A\in\Alg(\ho(\C))$.  By \Cref{obs:onlynonempty}, it suffices to prove that $A$ admits some lift $\tilde A$ to $\Alg(\C)$. 

As $\C\simeq \lim_n \ho_{\leq n}(\C)$ and $\Alg(-)$ preserves limits (see \Cref{lm:limpres}), it suffices to prove that any given separable $A_n\in \Alg(\ho_{\leq n}(\C))$ admits a lift to $\Alg(\ho_{\leq n+1}(\C))$. For this, we apply \Cref{lm:Postnikovcats}: fix a square \[\begin{tikzcd}
	{\ho_{\leq n+1}(\C)} & {\ho_{\leq n}(\C)} \\
	{\ho_{\leq n}(\C)} & \D
	\arrow["{d_1}", from=1-2, to=2-2]
	\arrow[from=1-1, to=2-1]
	\arrow[from=1-1, to=1-2]
	\arrow["{d_0}"', from=2-1, to=2-2]
\end{tikzcd}\] as in the conclusion of that lemma. 

Viewing $A_n$ as a homotopy algebra in $\C$, we find that $d_0(A_n)\simeq d_1(A_n)$ as homotopy algebras in $\D$. But both are separable, as they are the image of the separable $A_n$ under symmetric monoidal functors $d_0,d_1$. 

In particular, by \Cref{thm : morsep}, this equivalence can be lifted to an equivalence of algebras, and thus we get a lift in the pullback. But the underlying object of this lift is (the underlying object of) $A_n$ in $\ho_{\leq n+1}(\C)$, so that it provides an algebra object $A_{n+1}$ in $\ho_{\leq n+1}(\C)$, by fully faithfulness, and this clearly lifts $A_n$.  
\end{proof}
Finally, let us prove \Cref{lm:Postnikovcats}:
\begin{proof}[Proof of \Cref{lm:Postnikovcats}]
    One could in principle use the methods of \cite{harpaznuitenprasma}, but one would still have to make a number of additional verifications. Instead, let us use the synthetic objects of \cite{hopkinsluriebrauer}.
    
We first deal with the case where $\C$ is stable: let $\E$ be a stably symmetric monoidal \category. We let $\Syn_\E$ denote $\Fun^\times(\E\op,\Sp_{\geq 0})$, this is an additive presentably symmetric monoidal \category, receiving a fully faithful, additive symmetric monoidal functor $\E\to \Syn_\E, c\mapsto \Map(-,c)_{\geq 0}$. 

   By \cite[Proposition 7.3.6.]{hopkinsluriebrauer}, we obtain a pullback square of additively symmetric monoidal \categories, :\[\begin{tikzcd}
	{\Mod_{\one^{\leq n}}(\Syn_\E)} & {\Mod_{\one^{\leq n-1}}(\Syn_\E)} \\
	{\Mod_{\one^{\leq n-1}}(\Syn_\E)} & \D
	\arrow[from=1-1, to=2-1]
	\arrow[from=1-1, to=1-2]
	\arrow[from=2-1, to=2-2]
	\arrow[from=1-2, to=2-2]
	\arrow[from=1-2, to=2-2]
\end{tikzcd}\]
and the composite $\E\to \Syn_\E\to \Mod_{\one^{\leq k}}(\Syn_\E)$ factors through $\ho_{\leq k+1}(\E)$ in a fully faithful way - this is essentially saying that $\tau_{\leq k}\Map(-,\one_\E) \otimes \Map(-,e)\simeq \tau_{\leq k}\Map(-,e)$ which follows from \cite[Lemma 7.1.1. and Corollary 7.3.7.(c)]{hopkinsluriebrauer} (alternatively, the proof of \cite[Lemma 7.1.1.]{hopkinsluriebrauer} works just as well for this statement).

In other words, we have a commuting diagram: 
\[\begin{tikzcd}
	{\ho_{\leq n+1}(\E)} & {\ho_{\leq n}(\E)} \\
	{\ho_{\leq n}(\E)} & {\Mod_{\one^{\leq n}}(\Syn_\E)} & {\Mod_{\one^{\leq n-1}}(\Syn_\E)} \\
	& {\Mod_{\one^{\leq n-1}}(\Syn_\E)} & \D
	\arrow[from=2-2, to=3-2]
	\arrow[from=2-2, to=2-3]
	\arrow[from=3-2, to=3-3]
	\arrow[from=2-3, to=3-3]
	\arrow[from=2-3, to=3-3]
	\arrow[hook, from=2-1, to=3-2]
	\arrow[hook, from=1-1, to=2-2]
	\arrow[hook, from=1-2, to=2-3]
	\arrow[from=1-1, to=2-1]
	\arrow[from=1-1, to=1-2]
\end{tikzcd}\]
where the diagonal arrows are fully faithful. It follows that the map $\ho_{\leq n+1}(\E)\to \ho_{\leq n}(\E)\times_\D \ho_{\leq n}(\E)$ is fully faithful. 

Now, let $\C$ be a general additive \category. Using the Yoneda embedding $\C\to \Fun^\times(\C\op,\Sp_{\geq 0})\to \Fun^\times(\C\op,\Sp)$ \footnote{We implicitly use here that any additive \category{} has a fully faithful Yoneda embedding into its presheaves of connective spectra. This follows from the fact that $\Sp_{\geq 0}\simeq \mathbf{Grp}_{\mathbb E_\infty}$ by the recognition principle.} and considering a small stable subcategory of $\Fun^\times(\C\op,\Sp)$ containing the image of the Yoneda embedding, we find a fully faithful additive symmetric monoidal embedding $\C\to \E$ with $\E$ stably symmetric monoidal. 

The following diagram allows us to conclude: 
\[\begin{tikzcd}
	{\ho_{\leq n+1}(\C)} & {\ho_{\leq n}(\C)} \\
	{\ho_{\leq n}(\C)} & {\ho_{\leq n+1}(\E)} & {\ho_{\leq n}(\E)} \\
	& {\ho_{\leq n}(\E)} & \D
	\arrow[from=2-2, to=3-2]
	\arrow[from=2-2, to=2-3]
	\arrow[from=3-2, to=3-3]
	\arrow[from=2-3, to=3-3]
	\arrow[hook, from=2-1, to=3-2]
	\arrow[hook, from=1-1, to=2-2]
	\arrow[hook, from=1-2, to=2-3]
	\arrow[from=1-1, to=2-1]
	\arrow[from=1-1, to=1-2]
\end{tikzcd}\]
\end{proof}
\begin{rmk}
    In a previous version of this paper, we used obstruction theory to prove \Cref{thm : e1lift}. Unravelling the proof in question, one would arrive at an essentially equivalent proof as the one proposed here. It simply seems to the author that this version is much simpler to parse, and to understand what is going on. 
\pend\end{rmk}
We now analyze the moduli space of lifts a bit further to show that it is not typically contractible. \Cref{thm : e1lift} shows that the moduli space is simply-connected, and we now explain how to describe its $2$-fold loopspace.
\begin{prop}\label{prop:2loopmoduli}
Let $A\in \Alg(\C)$, and $hA$ the corresponding algebra in $\ho(\C)$. Let $\mathcal M$ be the moduli space of lifts of $hA$ to $\Alg(\C)$, and $L_A$ the $\mathbb E_1$-cotangent complex of $A$. 

The double-loop space of $\mathcal M$ at $A$ is equivalent to the following spaces:
\begin{enumerate}
    \item $\Omega(\map_{\Alg(\C)}(A,A), \id_A)$; 
    \item the fiber over $\id_A$ of $\map_{A\otimes A\op}(A,A)\to \map_A(A,A)$, or equivalently if $\C$ is additive, its fiber over $0$; 
    \item $\map_{A\otimes A\op}(\Sigma L_A,A)$ if $\C$ is stable.
\end{enumerate}
\end{prop}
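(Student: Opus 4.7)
The plan is to chain together three formal manipulations, starting from the moduli space description and ending with the cotangent complex formula.

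For (1), I loop the fiber sequence $\mathcal M \to \Alg(\C)^\simeq \to \Alg(\ho(\C))^\simeq$ twice, based at $A$ and $hA$. Since $\Alg(\ho(\C))$ is an ordinary $1$-category, its maximal sub-$\infty$-groupoid is $1$-truncated, so $\Omega^2_{hA}\Alg(\ho(\C))^\simeq$ is contractible. As $\Omega^2$ preserves fiber sequences, this identifies $\Omega^2_A\mathcal M \simeq \Omega^2_A\Alg(\C)^\simeq$, which unfolds to $\Omega_{\id_A}\Aut_{\Alg(\C)}(A)$; and since $\id_A$ lies in an invertible connected component, this coincides with $\Omega(\map_{\Alg(\C)}(A, A), \id_A)$, giving (1).

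For (2), I recycle the bimodule description of algebra mapping spaces extracted in the proof of \Cref{thm : morsep}: there is a natural equivalence $\map_{\Alg(\C)}(A, R) \simeq \fib_R\bigl({}_R\BiMod_A(\C)^\simeq \to \LMod_R(\C)^\simeq\bigr)$, and specialising to $R = A$ sends the basepoint $\id_A$ to the standard bimodule structure on $A$. Since $\Omega$ commutes with fibers, looping at $\id_A$ produces
$$\Omega_{\id_A}\map_{\Alg(\C)}(A, A) \simeq \fib_{\id_A}\bigl(\Aut_{A\otimes A\op}(A) \to \Aut_A(A)\bigr),$$
and because $\id_A$ is invertible this fiber is the same as $\fib_{\id_A}(\map_{A\otimes A\op}(A, A) \to \map_A(A, A))$. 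In the additive case, both mapping spaces inherit an $\mathbb E_\infty$-group structure (from the biproduct in $\C$) compatible with the forgetful map, so translating by $-\id_A$ identifies this fiber with the fiber at $0$.

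For (3), in the stable case I would appeal to the defining cofiber sequence of the $\mathbb E_1$-cotangent complex $L_A \to A \otimes A\op \xrightarrow{\mu} A$ in $A$-bimodules \cite[Section 7.3]{HA}. Applying $\Map_{A\otimes A\op}(-, A)$ yields a fiber sequence of spectra
$$\Map_{A\otimes A\op}(A, A) \to \Map_{A\otimes A\op}(A \otimes A\op, A) \to \Map_{A\otimes A\op}(L_A, A),$$
whose middle term is canonically equivalent to $\Map_A(A, A)$ via evaluation at $1$, and under this identification the first map is the forgetful map featuring in (2). Rotating, the fiber of this forgetful map is $\Omega \Map_{A\otimes A\op}(L_A, A) \simeq \Map_{A\otimes A\op}(\Sigma L_A, A)$, so taking $\Omega^\infty$ and combining with (2) gives (3).

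The only genuine bookkeeping lies in tracking basepoints, particularly verifying that under the bimodule description of \Cref{thm : morsep} the basepoint $\id_A$ really is the standard $A$-bimodule, and that the ``$\id_A$ versus $0$'' shift in (2) is a genuine consequence of additivity. I do not anticipate any real obstacle beyond these routine verifications, since the conceptual input — the bimodule description of algebra hom spaces — is already available from the previous subsection.
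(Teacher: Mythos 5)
Your proposal is correct and follows essentially the same route as the paper: loop the defining fiber sequence of $\mathcal M$ using the (1-)truncatedness of $\Alg(\ho(\C))^\simeq$ to get (1), pass through the bimodule description of $\map_{\Alg(\C)}(A,A)$ coming from \cite[Theorems 4.8.4.1 and 4.8.5.11]{HA} to get (2), and identify the forgetful map with precomposition by $\mu$ together with $\mathrm{cofib}(\mu)\simeq \Sigma L_A$ to get (3). The only cosmetic differences are that the paper loops once and uses discreteness of hom-sets in $\ho(\C)$ where you loop twice at once, and takes the cofiber of $\mu$ where you rotate the fiber sequence — the bookkeeping points you flag (basepoint of the bimodule description, the $\id_A$ versus $0$ shift via the grouplike $\mathbb E_\infty$-structure) are exactly the ones the paper also verifies.
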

\begin{proof}
Without loss of generality, we assume $\C$ is presentably symmetric monoidal. 

Recall that $\mathcal M = \Alg(\C)\times_{\Alg(\ho(\C))}\{hA\}$ by definition. An equivalent description is the fiber sequence $\mathcal M\to \Alg(\C)^\simeq \to \Alg(\ho(\C))^\simeq$ at the point $hA\in \Alg(\ho(\C))^\simeq$. 

Looping once at $A$, we find the fiber sequence $\Omega \mathcal M\to \Omega(\Alg(\C)^\simeq,A)\to \Omega(\Alg(\ho(\C))^\simeq,hA)$, and thus $$\Omega \mathcal M\to \Aut_{\Alg(\C)}(A)\to \Aut_{\Alg(\ho(\C))}(hA)$$
As an endomorphism of $A$ which is the identity in $\ho(\C)$ must be an equivalence, this also yields a fiber sequence $\Omega \mathcal M\to \map_{\Alg(\C)}(A,A)\to \hom_{\Alg(\ho(\C))}(hA,hA)$ at $\id_{hA}$. As the latter is discrete, we deduce point 1. 

Next, we use the fully faithful embedding $\Alg(\C)\to (\Mod_\C(\PrL))_{\C/}, A\mapsto (\Mod_A,A)$, cf. \cite[Theorem 4.8.5.11]{HA} to describe $\map_{\Alg(\C)}(A,A)$ as the fiber of $\map_{\Mod_\C}(\Mod_A,\Mod_A)\to \map_{\Mod_\C}(\C,\Mod_A)$ over the canonical functor $\C\to \Mod_A$ classified by $A\in\Mod_A$. Using \cite[Theorem 4.8.4.1]{HA}, we rewrite this map as the functor $\BiMod_A^\simeq \to \Mod_A^\simeq$ that forgets the right $A$-module structure. Taking loops at the identity of $A\in\Alg(\C)$ yields the fiber sequence $$\Omega(\map_{\Alg(\C)}(A,A),\id_A)\to \Omega(\BiMod_A^\simeq, A)\to \Omega(\Mod_A^\simeq, A)$$ We rewrite the latter two terms as $\Aut_{A\otimes A\op}(A)\to \Aut_A(A)$ and use again the fact that any $A\otimes A\op$-linear endomorphism of $A$ which is an underlying equivalence is an equivalence to rewrite the fiber of this map as the fiber of $\map_{A\otimes A\op}(A,A)\to\map_A(A,A)$ over the identity. This proves the first half of 2., and using the additivity of $\C$ and the fact that this is a map of grouplike $\mathbb E_\infty$-monoids which has both $\id_A$ and $0$ in its image, we deduce that the fiber is the same over $0$, which is the second half of 2. 

Finally, 3. follows from the second half of 2.: we rewrite $\map_A(A,A)$ as  $\map_{A\otimes A\op}(A\otimes A\op,A)$ and then the restriction map $$\map_{A\otimes A\op}(A,A)\to \map_A(A,A)\simeq \map_{A\otimes A\op}(A\otimes A\op,A)$$ is identified with precomposition by the multiplication map $\mu: A\otimes A\op\to A$ (indeed, the multiplication is just the co-unit of the adjunction that forgets the right $A$-module structure), so that by 2., our double-loop space is identified with $\map_{A\otimes A\op}(\mathrm{cofib}(\mu),A)$. To conclude, we use that $\mathrm{cofib}(\mu)\simeq \Sigma L_A$  \cite[Theorem 7.3.5.1.]{HA}.
\end{proof}
\begin{rmk}\label{rmk : modulinotcontr}
$\map_{A\otimes A\op}(X,A)$ can also be described as $\map(\one, \hom_{A\otimes A\op}(X,A))$. So unless $\map(\one, Z(A)) \to \map(\one, A)$ is an inclusion of components, $\Omega^2(\mathcal M,A)$ is not contractible, and therefore $\mathcal M$ isn't either.  Here, $Z(A)$ is the $\mathbb E_1$-center of $A$, from \Cref{nota:center}. 
\pend \end{rmk}
\begin{ex}\label{ex:modulinotcontr}
Consider the commutative differential graded $\mathbb Q$-algebra $R= \mathbb Q[t]$, where $t$ is in degree $2$ as a commutative ring spectrum and let $\C= \Mod_R$. For any $n\geq 1$, the matrix ring $M_n(R) = \End_R(R^n)$ is separable, and its center is $R$ itself. Therefore $\map_{A\otimes A\op}(A,A)\to \map_A(A,A)$ is the map $\Omega^\infty(\mathbb Q[t]\to M_n(\mathbb Q[t]))$ and this is clearly not an inclusion of components as long as $n\geq 2$ (it is not surjective in any $\pi_{2k}, k\geq 1$). By \Cref{rmk : modulinotcontr}, the moduli space of lifts of $M_n(\mathbb Q[t])$ is not contractible. 
\pend \end{ex}

\begin{rmk}
In \cite{kleintilson}, the authors also study a certain moduli space of algebra structures. Note, however, that this is a different moduli space in that it is the moduli space of algebra structures on an \emph{object} of $\C$, while ours is the moduli space of algebra structures extending a homotopy algebra. The answers we get for the double loop space are thus different, even if there is some similarity in that both involve some version of Hochschild cohomology.  
\pend \end{rmk}

We apply these results in the case of ring spectra. For a commutative ring spectrum $R$, let $\Projsep(R)$ denote the full subgroupoid of $\Alg(\Mod_R(\Sp))$ spanned by separable algebras whose underlying $R$-module is finitely generated projective. This is clearly functorial along basechange. 

The following is our version of \cite[Theorem 6.1]{BRS} (cf. also \cite[Proposition 3.12, Theorem 3.15]{GL}):
\begin{prop}
Let $R$ be a commutative ring spectrum. In the span $$\Projsep(R)\leftarrow \Projsep(R_{\geq 0})\to \Projsep(\pi_0(R))$$
the left leg is an equivalence, and the right leg is essentially surjective, with simply-connected fibers.

In particular, any separable algebra over $\pi_0(R)$ can be (weakly uniquely) realized as $\pi_0$ of a separable algebra over $R$. 
\end{prop}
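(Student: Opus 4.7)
The proposition consists of two largely independent assertions: the left leg $\Projsep(R_{\geq 0})\to\Projsep(R)$ is an equivalence, and the right leg $\Projsep(R_{\geq 0})\to\Projsep(\pi_0(R))$ is essentially surjective with simply connected fibers. I will treat them in turn.

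\emph{The left leg is an equivalence.} Base change along $R_{\geq 0}\to R$ defines the functor. For essential surjectivity, start from $A\in\Projsep(R)$: its underlying module is a retract of $R^n$ classified by an idempotent $e\in\pi_0 M_n(R)$. Since $\pi_0(R)=\pi_0(R_{\geq 0})$, the same $e$ defines an idempotent in $\pi_0 M_n(R_{\geq 0})$, and \Cref{lm : idemadd} produces a coherent lift to $\Mod_{R_{\geq 0}}$ whose splitting $\tilde A$ is projective and realizes $A$ after base change. To lift the algebra structure essentially uniquely I would mimic the strategy of \Cref{thm : e1lift} and \Cref{thm : morsep}: the space of algebra structures on $\tilde A$ fits as a fiber of mapping spaces of multiplications, and separability of $A$ rigidifies these spaces at the levels of $\pi_0$ and $\pi_1$. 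The same style of analysis, now for $\Map$-level comparisons, then upgrades the functor to fully faithful.

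\emph{The right leg.} Here I would use the Postnikov tower $R_{\geq 0}=\lim_n\tau_{\leq n}R_{\geq 0}$, with $\tau_{\leq 0}R_{\geq 0}=\pi_0(R)$, together with the fact that $\Alg$ preserves limits (\Cref{lm:limpres}). The first step is to reduce to showing that each stage $\Projsep(\tau_{\leq n+1}R_{\geq 0})\to\Projsep(\tau_{\leq n}R_{\geq 0})$ is essentially surjective with simply connected fibers, since a Milnor-type argument will then give the analogous statement in the limit. This reduction requires compatibility of projectivity and separability under the Postnikov limit, which itself follows from the idempotent-lifting argument above combined with \Cref{prop:homsepimpliessep}.

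\emph{The inductive step and the main obstacle.} Each stage of the Postnikov tower is a square-zero extension with fiber $\pi_{n+1}(R)[n+1]$. The lifting problem at each stage is structurally analogous to the one solved by \Cref{thm : e1lift}: one sets up a pullback square of additively symmetric monoidal \categories{} in the spirit of \Cref{lm:Postnikovcats}, checks that the two images of the given separable algebra in the pullback agree as separable algebras, and then applies \Cref{thm : morsep} to promote that underlying equivalence into an equivalence of algebras in the pullback itself. The main technical obstacle is producing this pullback description for square-zero extensions of commutative ring spectra (or, equivalently, checking that the obstructions to lifting a separable algebra across such an extension vanish through $\pi_1$); this is a deformation-theoretic statement parallel to the one that underlies \Cref{thm : e1lift}, with the separability idempotent again providing the rigidifying bimodule section that kills the relevant obstruction groups.
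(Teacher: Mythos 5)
There are two genuine problems with your proposal, and both stem from missing the single observation that makes the paper's proof short: $\Projsep(R)$ is the space of separable algebras in the additive symmetric monoidal \category{} $\Proj(R)$ of projective $R$-modules, so it depends only on that category; moreover $\Proj(R_{\geq 0})\to\Proj(R)$ is an equivalence (free modules have the same mapping spaces since $\Omega^\infty R_{\geq 0}\simeq\Omega^\infty R$, and both sides are idempotent completions of the free modules), and $\Proj(\pi_0(R))$ \emph{is} the homotopy category of $\Proj(R_{\geq 0})$.

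For the left leg, your plan is to lift the underlying projective module by idempotent lifting and then ``rigidify'' the algebra structure using the separability results at the level of $\pi_0$ and $\pi_1$ of mapping spaces. This cannot prove an \emph{equivalence} of spaces: \Cref{thm : e1lift} and \Cref{thm : morsep} only control $\pi_0$ of mapping spaces and give simple-connectedness (not contractibility) of moduli of lifts — and \Cref{ex:modulinotcontr} shows these bounds are sharp. An equivalence $\Projsep(R_{\geq 0})\to\Projsep(R)$ requires matching \emph{all} homotopy groups of automorphism spaces of separable algebras, which separability does not rigidify. The correct input is the categorical equivalence $\Proj(R_{\geq 0})\simeq\Proj(R)$, after which nothing about separability needs to be lifted at all. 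For the right leg, your Postnikov-tower induction with square-zero extensions is essentially an attempt to re-prove \Cref{lm:Postnikovcats} by hand in this special case, and you explicitly leave its key deformation-theoretic step (the pullback description at each square-zero stage) as an unfilled ``main obstacle.'' This is unnecessary: once you note that $\pi_0\colon\Proj(R_{\geq 0})\to\Proj(\pi_0(R))$ exhibits the target as $\ho(\Proj(R_{\geq 0}))$, the right leg is literally the map $\Alg(\C)^{\simeq}\to\Alg(\ho(\C))^{\simeq}$ restricted to separable objects for $\C=\Proj(R_{\geq 0})$, and \Cref{thm : e1lift} (with \Cref{prop:homsepimpliessep}) gives essential surjectivity with simply-connected fibers verbatim.
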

\begin{proof}
For any commutative ring spectrum $R$, $\Projsep(R)$ can equivalently be described as the space of separable algebras in $\Proj(R)$, the additive category of projective $R$-modules. In particular, this only depends on this additive category, and it is a classical fact that $\Proj(R_{\geq 0})\to \Proj(R)$ is an equivalence. This proves the statement about the left leg. 

For the right leg, we observe that $\pi_0: \Proj(R_{\geq 0})\to \Proj(\pi_0(R))$ witnesses the latter as the homotopy category of the former, and thus, passing to algebras, $\Alg(\Proj(R_{\geq 0}))\to \Alg(\Proj(\pi_0(R))$ is equivalently $\Alg(\Proj(R_{\geq 0}))\to \Alg(\ho(\Proj(R)) $. The statement thus follows from \Cref{thm : e1lift}. 
\end{proof}
\begin{rmk}
Note that if $R$ is a discrete commutative ring, any central separable algebra over $R$ is necessarily finitely generated projective \cite[Theorem 2.1]{AuslanderGoldman}. In particular, this allows us to lift all central separable algebras. 
\pend \end{rmk}
\begin{rmk}
In \cite[Theorem 6.1]{BRS}, the lift along the right leg is said to be ``unique''. \Cref{rmk : modulinotcontr} and \Cref{ex:modulinotcontr} show that this unicity is to be taken with a grain of salt. 
\pend \end{rmk}

The results of this section suggest the slogan that ``Separable algebras and their modules are controlled by the homotopy category''. From the perspective of homotopy theory, this justifies to some extent the study of separable algebras and their modules in tensor triangulated categories, but also suggests that many results in the unstructured setting can be lifted for free to a more structured or coherent setting. Because of the non-unicity pointed out in \Cref{rmk : modulinotcontr}, we see that not \emph{everything} can be lifted for free. 

In the next section we will see that the situation in the commutative world is much better. The moduli spaces become contractible (even the $\mathbb E_1$ ones!), and the mapping spaces become discrete. 
\section{Commutative separable algebras}\label{section:comm}
In this section, we study commutative separable algebras. Unsurprisingly, this situation is much better behaved than in the associative case. In the commutative case, we will see that the obstructions to contractibility from \Cref{section:homotopycat} vanish - in fact, even in the homotopy commutative case. The key difference with the general case is that now, the multiplication map $\mu:A\otimes A\to A$ is an (homotopy) algebra map. 

We begin this section with a study of certain moduli spaces, and of mapping spaces from commutative separable algebras, and we then apply Lurie's deformation theory from \cite[Section 7.4]{HA} to compare étale algebras and separable commutative algebras. 

\begin{defn}
    A commutative separable algebra is a commutative algebra whose underlying algebra is separable.
\pend \end{defn}
We begin with a general proposition:
\begin{prop}\label{prop : retractloc}
Suppose $\C$ is additively symmetric monoidal and idempotent-complete.

Let $A,B\in\CAlg(\C)$ be commutative algebras in $\C$, and $f: A\to B$ a morphism of commutative algebras. If $f$ admits an $A$-module splitting, then $B$ is the localization of $A$ at an idempotent $e$. 

In particular $\Mod_A$ splits, symmetric monoidally, as a product $\Mod_A\simeq \Mod_B\times\Mod_B^\bot$. 

\end{prop}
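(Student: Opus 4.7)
The plan is to exhibit $B$ as an \emph{idempotent} $A$-algebra in the sense of \cite[Section 4.8.2]{HA}, by constructing an idempotent element $e \colon \one \to A$ and showing that the multiplication $\mu \colon B \otimes_A B \to B$ is an equivalence. Once this is established, the symmetric monoidal splitting $\Mod_A \simeq \Mod_B \times \Mod_B^\perp$ follows from the general theory of smashing localizations (see, e.g., \cite[Proposition 4.8.2.10]{HA}).

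First, I set $e \colon \one \to A$ to be the composite $\one \xrightarrow{\eta_B} B \xrightarrow{s} A$, where $\eta_B$ denotes the unit of $B$. Using $A$-linearity of $s$ together with $f \circ s = \id_B$ and the unitality of $f$, one checks directly that $f \circ e \simeq \eta_B$, and that the $A$-module endomorphism $s \circ f \colon A \to A$ is homotopic to ``multiplication by $e$'', namely $A \simeq A \otimes \one \xrightarrow{\id_A \otimes e} A \otimes A \xrightarrow{\mu_A} A$; this endomorphism is idempotent because $f \circ s = \id_B$. Since $\C$ (and hence $\Mod_A$) is additive and idempotent-complete, this idempotent splits, yielding a direct sum decomposition $A \simeq B \oplus A''$ of $A$-modules, where the summand $B$ is included via $s$ and projected via $f$, and $A''$ is the splitting of the complementary idempotent $\id_A - s \circ f$.

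The heart of the argument is then to show $B \otimes_A A'' \simeq 0$. For this, I would compute the $A$-action of $e$ on $B \otimes_A A''$ via each tensor factor; the two descriptions must agree. Via the right factor, $e$ acts as zero, because it acts as zero on $A''$ by construction of the complementary idempotent. Via the left factor, $e$ acts on $B$ as multiplication by $f(e) = \eta_B$, i.e., as the identity. Equating the two forces $\id_{B \otimes_A A''} \simeq 0$, and hence $B \otimes_A A'' \simeq 0$. Tensoring $A \simeq B \oplus A''$ with $B$ over $A$, and using that $\otimes_A$ distributes over finite direct sums in the additive setting, yields $B \simeq B \otimes_A A \simeq B \otimes_A B$; unwinding shows that the coprojection $B \to B \otimes_A B$ exhibiting $B$ as this summand is an equivalence, and so is its left inverse $\mu$. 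The main obstacle is this double computation — the rest is bookkeeping — and it is where one crucially uses both the algebra-map nature of $f$ (to obtain $f(e) = \eta_B$) and the additive structure (to form the complementary idempotent).
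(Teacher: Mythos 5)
Your argument is correct and follows the same overall strategy as the paper's proof: use the $A$-linear retraction to split $A\simeq B\oplus A''$ in $\Mod_A(\C)$, show the cross term $B\otimes_A A''$ vanishes, and read off that $\mu\colon B\otimes_A B\to B$ is an equivalence. Where you genuinely diverge is in how the vanishing is established. The paper routes this through \Cref{lm : unitsum}: for any splitting $\one\simeq a\oplus b$ of the unit of a semiadditive monoidal category, the canonical map $a\otimes b\to\one$ factors through both $a\to\one$ and $b\to\one$, hence is null by semiadditivity, while also admitting a retraction, forcing $a\otimes b\simeq 0$. You instead make the idempotent element $e=s(\eta_B)$ explicit and argue that it acts on $B\otimes_A A''$ as the identity through the first factor and as zero through the second, the two actions agreeing by the balancing of $\otimes_A$ over the commutative algebra $A$ --- which is precisely the trick the paper deploys later in the proof of \Cref{prop:absimpliesdiagpreserved}. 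Both arguments are elementary and equally valid; yours has the small bonus of exhibiting $e$ concretely, while the paper's lemma packages the statement in a form reusable for arbitrary splittings of the unit and for $\mathbb E_k$-structures.

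Two points you gloss over, both fixable by devices already in the paper. First, in a general small $\C$ the relative tensor products $B\otimes_A M$ need not exist a priori; the paper first embeds $\C$ additively and symmetric monoidally into a category with geometric realizations compatible with the tensor product and then descends using idempotent-completeness (alternatively, since $B$ and $A''$ are $A$-module retracts of $A$, the relevant tensor products are retracts of $M\simeq A\otimes_A M$ and exist by the argument of \Cref{prop : tensorsep}). Second, $\id_A-s\circ f$ is only idempotent up to homotopy; to split it one must first lift it to a coherent idempotent, which is where additivity enters via \Cref{lm : idemadd}. Finally, for the product decomposition $\Mod_A\simeq\Mod_B\times\Mod_B^\bot$ you need not just that $B$ is an idempotent $A$-algebra but also that $A''$ is one (which follows by the symmetric argument, or from \Cref{lm : unitsum}); citing smashing localizations alone does not quite deliver the splitting.
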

This follows from a more general, monoidal version, which we needed in some earlier versions of the proofs of this section. We end up only needing the symmetric monoidal version, but we digress for a moment for the convenience of the reader, and to record this simple fact, which is
simply a variant of the discussion of idempotent algebras in \cite[Section 4.8.2.]{HA} in the setting of \emph{non-symmetric} monoidal categories. Parts of it work exactly the same as in the monoidal setting, but some of it does not, so we simply record it here.

First, recall the definition:
\begin{defn}\label{defn:idem}
    Let $\M$ be an $\mathbb E_k$-monoidal \category{} with unit $\one$ and $k\geq 1$. An $\mathbb E_0$-object $e$ therein, that is, an object in $\M_{\one/}$, is called idempotent, if both $e\otimes (\one \to e)$ and $(\one\to e)\otimes e$ are equivalences. 

    For $0\leq d\leq k$, an $\mathbb E_d$-algebra $A$ in $\M$ is called idempotent if its underlying $\mathbb E_0$-algebra is. 
\pend \end{defn}
\begin{rmk}
    In the case of an $\mathbb E_k$-monoidal \category, $k\geq 2$, these two maps are homotopic and so it suffices to require one of them to be an equivalence. 
\pend \end{rmk}
The main result is:
\begin{prop}\label{prop:idemalg}
Let $\M$ be an $\mathbb E_k$-monoidal \category{} with $1\leq k\leq \infty$, and let $0\leq d\leq k$. The forgetful functor from $\mathbb E_d$-algebras to $\mathbb E_0$-algebras restricts to an equivalence between the respective full subcategories of idempotent algebras: $$\Alg_{\mathbb E_d}(\M)^{\idem}\xrightarrow{\simeq}\Alg_{\mathbb E_0}(\M)^{\idem}$$
\end{prop}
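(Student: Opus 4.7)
The plan is to adapt the proof of the symmetric monoidal analogue \cite[Proposition 4.8.2.9]{HA} to the $\mathbb E_k$-monoidal setting, via the same reflective subcategory strategy. Given an idempotent $\mathbb E_0$-algebra $(e, \eta : \one \to e)$ in $\M$, I would attach to it a reflective subcategory $\M^e \subset \M$ whose unit is $e$ and which inherits an $\mathbb E_k$-monoidal structure so that the inclusion and reflection are both $\mathbb E_k$-monoidal. Since the unit of any $\mathbb E_k$-monoidal category carries an essentially unique $\mathbb E_d$-algebra structure (the trivial one, as iterated tensor products of the unit are the unit), this produces the desired lift of $e$ to an $\mathbb E_d$-algebra.

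In more detail, I would let $\M^e \subset \M$ be the full subcategory of those $x$ for which $\eta \otimes x : x \to e \otimes x$ is an equivalence; by idempotence this is equivalent to the analogous condition on $x \otimes \eta$. The functor $L = e \otimes (-)$ lands in $\M^e$ (again using $e \otimes e \simeq e$), and $\eta$ witnesses it as a left adjoint to the inclusion. The key computation is that $L$-equivalences are stable under tensoring on either side, because $L(f \otimes z) \simeq L(f) \otimes L(z)$, which again follows from idempotence. Invoking the $\mathbb E_k$-analogue of \cite[Proposition 2.2.1.9]{HA} would then equip $\M^e$ with an $\mathbb E_k$-monoidal structure, with unit $e$, for which $L$ is $\mathbb E_k$-monoidal; the unique trivial $\mathbb E_d$-algebra structure on $e$ in $\M^e$ gives the sought-after lift.

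For the converse direction, which yields fully faithfulness on idempotents, any $\mathbb E_d$-algebra $A$ in $\M$ whose underlying $\mathbb E_0$-algebra is idempotent has underlying object in $\M^A$, and therefore corresponds to an $\mathbb E_d$-algebra in $\M^A$ with underlying object the unit; such a thing must be the essentially unique trivial algebra structure on the unit of $\M^A$. This matches the constructed lift, giving the desired equivalence.

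The main obstacle will be the non-symmetric version of \cite[Proposition 2.2.1.9]{HA}: the task is to upgrade the observation that the class of $L$-equivalences is closed under tensoring to the statement that $L$ is $\mathbb E_k$-monoidal. In the symmetric case, Lurie's result packages this directly; for the $\mathbb E_k$-operad, one could either verify it by hand for small $k$ (the $k=1$ case amounts to a direct construction of an associative monoidal structure on $\M^e$ inherited from $\M$) or, more uniformly, exploit that the $\mathbb E_k$-operad is generated by its binary operations and their standard coherences, all of which automatically preserve $L$-equivalences by the above stability property.
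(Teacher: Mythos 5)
There is a genuine gap, and it is precisely at the case $k=1$, which the statement includes and which the paper actually needs. Your key computation is that $L$-equivalences are stable under tensoring on either side because $L(f\otimes z)\simeq L(f)\otimes L(z)$, ``which again follows from idempotence.'' Unwinding this, you are asserting that the canonical map $e\otimes x\otimes y\to e\otimes x\otimes e\otimes y$ is an equivalence. Idempotence only lets you absorb \emph{adjacent} copies of $e$ ($e\otimes e\simeq e$); to cancel the copy of $e$ sandwiched between $x$ and $y$ you must commute it past $x$, which requires $k\geq 2$. For $k=1$ this map need not be an equivalence, so the class of $L$-equivalences is not stable under tensoring on the left (if $e\otimes f$ is an equivalence there is no reason for $e\otimes z\otimes f$ to be one), the reflection $e\otimes(-)$ is only \emph{oplax} monoidal rather than monoidal, and the localization machinery of \cite[Proposition 2.2.1.9]{HA} does not apply. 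The paper flags exactly this failure in the Warning following \Cref{lm:subcatidempotent}, calling it ``the key difference between $k=1$ and higher $k$'s.'' A smaller instance of the same slip occurs earlier in your proposal, where you claim that for a general object $x$ the condition that $\eta\otimes x$ be an equivalence is equivalent, ``by idempotence,'' to the condition on $x\otimes\eta$; in a merely monoidal category these one-sided conditions are independent (this is why \Cref{defn:idem} imposes both).

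The fix, which is the route the paper takes, is to work with the \emph{inclusion} rather than the reflection: the full subcategory $\M_e$ of objects $m$ for which \emph{both} $m\otimes\eta$ and $\eta\otimes m$ are equivalences is closed under non-empty tensor products and has $e$ as a unit, hence is a full sub-$\mathbb E_k$-monoidal category whose (lax $\mathbb E_k$-monoidal) inclusion into $\M$ preserves algebras; since $e$ is the unit of $\M_e$ it carries a canonical $\mathbb E_k$-algebra structure, which transports along the inclusion. This sidesteps any need for the reflection to be monoidal. For $k\geq 2$ your localization argument does go through and recovers the standard proof of \cite[Proposition 4.8.2.9]{HA}, but since the paper uses the $k=1$ case (e.g.\ in \Cref{lm:sepimpliessemisimp}, to split two-sided ideals in non-commutative separable algebras), the argument as written does not establish the proposition in the stated generality. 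Your fully faithfulness sketch would also need to be upgraded to handle mapping spaces between two \emph{distinct} idempotent algebras $e,e'$, as the paper does by showing that a factorization $\one\to e\to e'$ forces $e'\in\M_e$.
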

We begin with an easy lemma:
\begin{lm}\label{lm:subcatidempotent}
    Let $\M$ be an $\mathbb E_k$-monoidal \category{}, and $e\in \M_{\one/}$ an idempotent $\mathbb E_0$-object. The full subcategory $\M_e$ of $\M$  spanned by those $m$'s for which both $m\otimes (\one\to e)$ and $(\one\to e)\otimes m$ are equivalences determines a full sub-$\mathbb E_k$-operad of $\M$, which is itself an $\mathbb E_k$-monoidal \category.
\end{lm}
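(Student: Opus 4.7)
The plan is to equip $\M_e$ with its own $\mathbb E_k$-monoidal structure, with unit given by $e$ rather than $\one$, and to promote the fully faithful inclusion of underlying \categories{} $\M_e\hookrightarrow\M$ to a morphism of $\mathbb E_k$-operads. The only inputs required are associativity of the tensor product on $\M$, together with the fact that, for any $x\in\M$, the functors $x\otimes(-)$ and $(-)\otimes x$ preserve equivalences.

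First, I would verify that $\M_e$ is closed under binary tensor products. Given $m_1,m_2\in\M_e$, by associativity the map $(m_1\otimes m_2)\otimes(\one\to e)$ identifies with $m_1\otimes\bigl(m_2\otimes(\one\to e)\bigr)$, which is an equivalence since $m_2\in\M_e$; the symmetric argument works on the left. Induction then extends closure to $n$-ary tensor products for all $n\geq 1$. Second, I would note that $e$ itself lies in $\M_e$ (this is precisely the defining property of $e$ being an idempotent $\mathbb E_0$-object), and that the equivalences $m\otimes(\one\to e)\colon m\simeq m\otimes e$ and $(\one\to e)\otimes m\colon m\simeq e\otimes m$ for $m\in\M_e$ exhibit $e$ as a two-sided unit for the restricted tensor product on $\M_e$.

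With these two facts in hand, I would define $\M_e^\otimes\subset\M^\otimes$ to be the full subcategory spanned by those objects lying over some $\langle n\rangle\in\mathbb E_k^\otimes$ whose underlying colors all lie in $\M_e$. Closure under binary (and hence $n$-ary) tensor products ensures that coCartesian lifts of active morphisms in $\mathbb E_k^\otimes$ from objects of $\M_e^\otimes$ land again in $\M_e^\otimes$; the behaviour under inert morphisms is automatic from fullness. The resulting restriction $\M_e^\otimes\to\mathbb E_k^\otimes$ is then a coCartesian fibration whose fibres are powers of $\M_e$, so it exhibits $\M_e$ as an $\mathbb E_k$-monoidal \category, with the nullary operation picking out $e$.

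The main conceptual obstacle is reconciling the shift of unit—$\one\notin\M_e$ in general, so the sub-operad cannot be unit-preserving on the nose—with the claim that the inclusion $\M_e^\otimes\hookrightarrow\M^\otimes$ is a morphism of $\mathbb E_k$-operads. This is handled by regarding the map $\one\to e$ itself as the lax unit structure of the inclusion, together with the observation that iterating the idempotence equivalences $e\otimes(\one\to e)\simeq(\one\to e)\otimes e\simeq\mathrm{id}_e$ produces no additional higher coherences to check: every comparison is forced to be an equivalence by the hypothesis that $m\otimes(\one\to e)$ and $(\one\to e)\otimes m$ are equivalences for $m\in\M_e$. The remainder is formal bookkeeping with coCartesian fibrations over $\mathbb E_k^\otimes$.
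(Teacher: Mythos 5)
Your first step (closure of $\M_e$ under non-empty tensor products via associativity, plus the observation that $e\in\M_e$ is exactly the idempotence hypothesis) matches the paper and is fine. The gap is in the unit step. Since $\M_e^\otimes$ is a \emph{full} suboperad of $\M^\otimes$, the only thing that can obstruct it from being an $\mathbb E_k$-monoidal \category{} is the existence of a coCartesian lift of the active map $\langle 0\rangle\to\langle 1\rangle$: the lift inside $\M^\otimes$ lands at $\one$, which does not lie in $\M_e$ in general (and, relatedly, your claim that coCartesian lifts of \emph{all} active morphisms from objects of $\M_e^\otimes$ stay in $\M_e^\otimes$ fails for any active map with an empty fibre, since those lifts insert copies of $\one$). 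The substitute lift must land at $e$, and what actually has to be verified is a corepresentability statement: for every $m\in\M_e$, restriction along $\one\to e$ induces an equivalence $\map(e,m)\to\map(\one,m)$, because $\map_{\M_e^\otimes}(\emptyset,m)$ is computed in $\M^\otimes$ and equals $\map_\M(\one,m)$. This is exactly what the paper checks, exhibiting the inverse as $\map(\one,m)\to\map(e,e\otimes m)\xrightarrow{\ \sim\ }\map(e,m)$, where the second map uses $m\in\M_e$.

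Your proposal replaces this verification with the assertion that the natural equivalences $m\simeq e\otimes m\simeq m\otimes e$ ``exhibit $e$ as a two-sided unit'' and that ``every comparison is forced to be an equivalence by the hypothesis.'' That conflates a quasi-unit (an object acting by equivalences under tensoring) with an operadic unit (an object satisfying the corepresentability property above); passing from the former to the latter is a genuine theorem (Lurie's quasi-unitality results, \cite[Section 5.4.4]{HA}), which you neither invoke nor reprove. Either cite that, or insert the short mapping-space argument above --- without one of the two, the claim that $\M_e^\otimes\to\mathbb E_k^\otimes$ is a coCartesian fibration ``with the nullary operation picking out $e$'' is precisely the statement to be proved, not a consequence of bookkeeping with coCartesian fibrations.
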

\begin{warn}
For $k\geq 2$, the inclusion $\M_e\to \M$ actually admits an $\mathbb E_k$-monoidal left adjoint, given by tensoring with $e$. For $k=1$, this left adjoint is only oplax monoidal, as the canonical map $e\otimes x\otimes y\otimes e\to e\otimes x\otimes e\otimes y\otimes e$ need not be an equivalence. This is the key difference between $k=1$ and higher $k$'s.
\pend \end{warn}
\begin{proof}[Proof sketch]
    Firstly, $\M_e$ is stable under the formation non-empty tensor products, so we only need to prove that it admits a unit, which we claim is $e$. 

    In other words, we need to show that for $m\in \M_e$, the restriction map $\map(e,m)\to \map(\one,m)$ is an equivalence. The inverse is given by $\map(\one,m)\to \map(e,e\otimes m)\to \map(e,m)$ - it is a diagram chase to check that this is indeed an inverse. 
\end{proof}
\begin{proof}[Proof of \Cref{prop:idemalg}]
    Forgetting down to its underlying $\mathbb E_d$-monoidal category, we may assume without loss of generality that $d=k$. 

    We first prove essential surjectivity: fix an idempotent $\mathbb E_0$-algebra $e$. By \Cref{lm:subcatidempotent}, there is a lax $\mathbb E_k$-monoidal inclusion $\M_e\to \M$, which therefore sends $\mathbb E_k$-algebras to $\mathbb E_k$-algebras, and $e$ is the unit in the source, so it has an $\mathbb E_k$-algebra structure in the target as well, which proves essential surjectivity. 

    For fully faithfulness, fix two idempotent $\mathbb E_k$-algebras $e,e'$. We aim to prove that in both \categories, the mapping space from $e$ to $e'$ is empty or contractible, in the same case. Clearly if the mapping space in $\mathbb E_0$-algebras is empty, the same holds in $\mathbb E_k$-algebras, so we may assume it's non-empty, and in this case, we need to prove that both are contractible. 

    So suppose there exists a factorization $\one\to e\to e'$. We claim that in this case, $e'$ is in $\M_e$. By fully faithfulness of the inclusion $\M_e^\otimes\to \M^\otimes$, this will then prove the claim, as $e$ is the unit in $\M_e$. 

But now, note that the map $e'\to e\otimes e'$ can be composed with $e\otimes e'\to e'\otimes e'\simeq e'$,  so that $e'$ is a retract of $e\otimes e'$, which is an object $m$ for which the map $e\to e\otimes m$ is an equivalence. It follows that $e'$ is also such an object. Similarly for the map $e'\to e'\otimes e$. 
\end{proof}
We can now state the desired result: 

\begin{lm}\label{lm : unitsum}
Let $\cat D$ be a semiadditively $\mathbb E_k$-monoidal category with unit $\mathbf 1$, where $1\leq k\leq \infty$. Suppose $\mathbf 1$ splits as $a\oplus b$. 

Then there are essentially unique $\mathbb E_k$-algebra structures on $a,b$ in $\cat D$ for which the unit maps $\mathbf 1\to a, b$ are the projections coming from this decomposition. In particular, $\mathbf 1\to a\times b$ is an equivalence of algebras. 

More precisely, $a,b$ are idempotent algebras in $\D$ in the sense of \Cref{defn:idem}, and therefore have unique algebra structures extending their unit map by \Cref{prop:idemalg}. 
\end{lm}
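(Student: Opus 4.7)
The plan is to verify that $(a, p_a)$ and $(b, p_b)$ are idempotent $\mathbb{E}_0$-algebras in the sense of \Cref{defn:idem}; \Cref{prop:idemalg} then supplies the essentially unique $\mathbb{E}_k$-algebra structures extending them. The final ``in particular'' statement will follow because the product $a\times b$ computed in $\Alg_{\mathbb{E}_k}(\D)$ has underlying object the biproduct $a\oplus b$ and unit map $(p_a,p_b)\colon\one\to a\oplus b$, which is the given equivalence; since the forgetful functor to $\D$ is conservative, this is already an equivalence of $\mathbb{E}_k$-algebras.

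Unwinding \Cref{defn:idem}, I need to check that $p_a\otimes a$ and $a\otimes p_a$ are equivalences $a\to a\otimes a$, and analogously for $b$. Tensoring the splitting $\one\simeq a\oplus b$ with $a$ on either side yields decompositions $a\simeq a\otimes a\oplus b\otimes a$ and $a\simeq a\otimes a\oplus a\otimes b$, in which $p_a\otimes a$ and $a\otimes p_a$ are precisely the projections onto the $a\otimes a$ summands. So it suffices to prove the vanishings $a\otimes b\simeq 0$ and $b\otimes a\simeq 0$; by symmetry I focus on the first.

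The main obstacle is this vanishing, which I handle by running Eckmann--Hilton on $\End_\D(\one)$. Composition and tensor are two unital binary operations on this $\infty$-monoid sharing the unit $\id_\one$ and satisfying the interchange law (by bifunctoriality of $\otimes$); Eckmann--Hilton thus shows they agree and are commutative on $\pi_0$. Writing $e:=i_ap_a$ and $1-e:=i_bp_b$ for the complementary idempotents in the commutative ring $\pi_0\End_\D(\one)$, it follows that $e\otimes(1-e) = e\circ(1-e) = e-e^2 = 0$, as a homotopy class of endomorphisms of $\one\otimes\one$.

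By bifunctoriality, $e\otimes(1-e) = (i_a\otimes i_b)\circ(p_a\otimes p_b)$, while in the opposite order $(p_a\otimes p_b)\circ(i_a\otimes i_b) = (p_ai_a)\otimes(p_bi_b) = \id_{a\otimes b}$, so $i_a\otimes i_b$ is a split monomorphism with retraction $p_a\otimes p_b$. The vanishing of the first composite then forces $p_a\otimes p_b\simeq 0$, and hence $\id_{a\otimes b}\simeq 0$; thus $a\otimes b$ is a zero object. The same argument with the roles of $a$ and $b$ swapped handles $b\otimes a\simeq 0$ and the idempotency of $b$, completing the proof.
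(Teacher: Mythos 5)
Your proof is correct, and its skeleton — reduce everything to the vanishings $a\otimes b\simeq 0\simeq b\otimes a$, read off from the tensored splittings of $\one$ that $a,b$ are idempotent $\mathbb E_0$-algebras in the sense of \Cref{defn:idem}, and invoke \Cref{prop:idemalg} — is exactly the paper's. The only divergence is the mechanism for the key vanishing. Writing $a\xrightarrow{i_a}\one\xrightarrow{p_a}a$ and $b\xrightarrow{i_b}\one\xrightarrow{p_b}b$ for the biproduct data, the paper argues directly with $i_a\otimes i_b\colon a\otimes b\to \one\otimes\one\simeq\one$: by bifunctoriality it factors through $i_a\colon a\to\one$ and also through $i_b\colon b\to\one$, hence is killed by both projections and is therefore null; since it is split monic with retraction $p_a\otimes p_b$, the identity of $a\otimes b$ is null. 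You instead run Eckmann--Hilton on $\pi_0\End_\D(\one)$ to identify $\otimes$ with $\circ$ there, compute $e\otimes(1-e)=e(1-e)=0$ for the complementary idempotents $e=i_ap_a$ and $1-e=i_bp_b$, and then deduce $p_a\otimes p_b=0$ and $\id_{a\otimes b}=0$ from the same split-mono observation. Both are short diagram chases carried out at the level of $\ho(\D)$ (which suffices, as the paper notes, since $\D\to\ho(\D)$ is conservative, monoidal and biproduct-preserving); your version makes explicit the orthogonality of idempotents in the commutative ring $\pi_0\End_\D(\one)$ at the mild cost of invoking Eckmann--Hilton, while the paper's factorization trick never mentions that ring. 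The remaining steps, including the identification of $\one\xrightarrow{\sim}a\times b$ as a map of $\mathbb E_k$-algebras via the limit-preserving, conservative forgetful functor, agree with the paper.
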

\begin{proof}
We show that the projections witness $a,b$ as idempotent $\mathbb E_0$-algebras. 

For this, observe that $a\otimes a \oplus a \otimes b \oplus b\otimes a \oplus b\otimes b \simeq (a\oplus b)\otimes (a\oplus b)\simeq \mathbf 1\otimes\mathbf 1\simeq \mathbf 1 \simeq a\oplus b$. 

Second, observe that the morphism $a\otimes b \to \mathbf 1\otimes \mathbf 1\simeq \mathbf 1$ factors as $a\otimes b\to a\otimes \mathbf 1 \simeq a\to \mathbf 1$, but also as $a\otimes b\to \mathbf 1\otimes b \simeq b\to \mathbf 1$. In particular, $a\otimes b\to \mathbf 1$ factors through $0$, but it has a retraction, so $a\otimes b$ must be $0$. 

Similarly, $b\otimes a \simeq 0$. It is then just a matter of diagram chasing to see that $a,b$ are idempotents. (Note that this diagram chases can be made in $\mathrm{Ho}(\cat D)$, as $\cat D\to \mathrm{Ho}(\cat D)$ is conservative, monoidal, and biproduct preserving). 
\end{proof}
\begin{proof}[Proof of \Cref{prop : retractloc}]
We start by assuming $\C$ has geometric realizations that commute with the tensor product in each variable. Thus we can make $\Mod_A(\C)$ into a symmetric monoidal \category{} with the relative tensor product. 

We can now apply \Cref{lm : unitsum} to the category $\Mod_A(\C)$: $A$ is the unit, and it splits as $B\oplus C$ for some $C$, as $\C$ is additive and idempotent-complete, where the projection $A\to B$ is chosen to be $f$. \Cref{lm : unitsum} in the case $k=\infty$ tells us exactly that $C$ admits a unique commutative algebra structure in $\Mod_A(\C)$ extending its unit $A\to C$, and then $A\simeq B\times C$ as algebras, which is exactly saying that $B$ is the localization of $A$ at an idempotent. 

As $\C$ is additive compatibly with the tensor product, it follows that $\Mod_A(\C)\to \Mod_B(\C)\times\Mod_C(\C)$ is an equivalence, and under this identification we clearly have $\{0\}\times \Mod_C(\C) = \Mod_B(\C)^\bot$, where for a subcategory $\E$, $\E^\bot :=\{f\in\Mod_A(\C)\mid \forall e\in\E, \map(e,f) \simeq \pt\simeq \map(f,e)\}$. 

To deduce the statement for general $\C$, we note that if $\C\to \D$ is an additive, symmetric monoidal embedding where $\D$ has geometric realizations compatible with the tensor product, then because $\C$ was assumed idempotent complete, the decomposition $M\simeq B\otimes_A M\oplus C\otimes_A M$ for any $M\in\Mod_A(\C)\subset \Mod_A(\D)$ shows that $B\otimes_A M$ (resp. $C\otimes_A M$) is in fact in $\Mod_B(\C)\subset \Mod_B(\D)$ (resp. $\Mod_C(\C)\subset \Mod_\C(\D)$), which concludes the proof. 
\end{proof}
The following is the key consequence we were aiming for - at the triangulated level, it is already present in the work of Balmer \cite[Theorem 2.1]{balmerdegree} and is integral to his theory of degrees of separable algebras; see also \cite[Corollary 5.2]{NikoLuca} for a treatment at the level of stable \categories. 
\begin{cor}\label{cor : commsepsplit}
   Suppose $\C$ is additively symmetric monoidal. If $A$ is a separable commutative algebra in $\C$, its multiplication map $\mu: A\otimes A\to A$ is a localization at an idempotent, and this is the case more generally for the the iterated multiplication maps $\mu_n : A^{\otimes n}\to A$; in particular for every $n$, there is a unique commutative algebra $C_n$ under $A^{\otimes n}$ with an equivalence $A^{\otimes n}\xrightarrow{(\mu_n,p)}A\times C_n$. 
\end{cor}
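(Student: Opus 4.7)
The plan is to apply \Cref{prop : retractloc} to the binary multiplication map $\mu: A \otimes A \to A$, and then to bootstrap to the iterated multiplications $\mu_n$ by induction on $n$. Up to passing to the idempotent completion of $\C$ (in which $A$ remains a separable commutative algebra by \Cref{lm:general}), we may assume $\C$ is idempotent-complete so that \Cref{prop : retractloc} applies.

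For the base case $n=2$: since $A$ is commutative, $\mu$ is itself a morphism of commutative algebras, and commutativity identifies $(A,A)$-bimodules with $A \otimes A$-modules, so that a separability idempotent $s: A \to A \otimes A$ is precisely an $A \otimes A$-module splitting of $\mu$. Applying \Cref{prop : retractloc} with ``$A \to B$'' instantiated as $\mu: A \otimes A \to A$ then exhibits $A$ as the localization of $A \otimes A$ at an idempotent, yielding an equivalence $A \otimes A \simeq A \times C_2$ of commutative algebras under $A \otimes A$, with $\mu$ identified with the projection onto the $A$ factor. The uniqueness of $C_2$ as a commutative algebra under $A \otimes A$ is part of the content of \Cref{lm : unitsum} (or its generalization \Cref{prop:idemalg}): an idempotent $\mathbb E_0$-object admits an essentially unique commutative algebra structure extending its unit map.

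For the inductive step, assume $A^{\otimes (n-1)} \simeq A \times C_{n-1}$ as commutative algebras under $A^{\otimes (n-1)}$, with $\mu_{n-1}$ the projection. Since the tensor product in an additive symmetric monoidal \category{} distributes over finite products, tensoring with $A$ gives $A^{\otimes n} \simeq (A \otimes A) \times (C_{n-1} \otimes A) \simeq A \times C_2 \times (C_{n-1} \otimes A)$, where the last equivalence uses the $n=2$ case. Under these identifications, $\mu_{n-1} \otimes \id_A$ is the projection onto the $A \otimes A$ factor, and composing with $\mu$ (the projection of $A \otimes A$ onto $A$) shows that $\mu_n = \mu \circ (\mu_{n-1} \otimes \id_A)$ is the projection onto $A$. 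Setting $C_n := C_2 \times (C_{n-1} \otimes A)$ completes the induction, and the uniqueness of $C_n$ follows again from \Cref{lm : unitsum}.

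I do not foresee any substantive obstacle: the whole corollary is a formal unpacking of \Cref{prop : retractloc} once one observes that separability of a commutative $A$ is exactly the hypothesis of that proposition applied to $\mu$. The only minor technical point is to ensure that the relevant splittings live in $\C$ rather than only in an ambient category, which is handled by the idempotent-completion reduction at the outset.
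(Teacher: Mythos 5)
Your proof is correct and follows the route the paper intends: the corollary is stated without an explicit proof precisely because it is the formal unpacking of \Cref{prop : retractloc} applied to $\mu$ (using that for commutative $A$ the separability idempotent is an $A\otimes A$-module splitting of $\mu$), with the case of the iterated multiplications handled by exactly the kind of easy induction you give, tensoring the decomposition with $A$ and invoking \Cref{lm : unitsum} for uniqueness. Your preliminary reduction to the idempotent-complete case is also the right way to reconcile the hypotheses of the corollary with those of \Cref{prop : retractloc}.
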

\begin{rmk}\label{rmk:diagprop}
    Specializing to $n=2$ we see that in this case, $\mu^*: \Mod_A\to \Mod_{A\otimes A}$ is fully faithful, with essential image the ``diagonal'' bimodules. Separability implies in particular that ``being a diagonal bimodule'' really is a property of the bimodule $M$, and not the extra structure of an equivalence $M\simeq \mu^*N$.
\pend \end{rmk}
\begin{cor}\label{cor : commvanishingcotangent}
    Suppose $\C$ is additively symmetric monoidal. Let $A\in\CAlg(\C)$ be a commutative separable algebra, and let $L_A$ denote its $\mathbb E_1$-cotangent complex. In this case, the mapping space $\map_{A\otimes A\op}(L_A,A)$ is trivial, in fact even the corresponding hom-object in $\C$ is $0$. 
\end{cor}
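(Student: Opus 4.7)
The plan is to identify $L_A$ explicitly as an $A$-bimodule via the separability splitting from \Cref{cor : commsepsplit}, and then invoke the orthogonal decomposition of \Cref{prop : retractloc}.

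First, up to a fully faithful symmetric monoidal embedding into a presentably stably symmetric monoidal \category, I may assume the cotangent complex machinery of \cite[Section 7.3]{HA} applies directly; in particular, I will use the identification $\mathrm{cofib}(\mu) \simeq \Sigma L_A$ (where $\mu : A\otimes A \to A$ is the multiplication), as used in the proof of \Cref{prop:2loopmoduli}. Applying \Cref{cor : commsepsplit} with $n=2$ produces an equivalence $A \otimes A \simeq A \times C$ of commutative $A\otimes A$-algebras, with $\mu$ the projection onto the first factor. As $A$-bimodules (equivalently $A\otimes A$-modules, since $A^{\mathrm{op}} = A$), this is a splitting $A \otimes A \simeq A \oplus C$ in which $\mu$ is the projection. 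Thus $\mathrm{cofib}(\mu) \simeq \Sigma C$, and so $L_A \simeq C$ as $A$-bimodules.

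Next, \Cref{prop : retractloc} applied to $\mu$ provides a symmetric monoidal decomposition $\Mod_{A\otimes A}(\C) \simeq \Mod_A(\C) \times \Mod_C(\C)$, with the two factors orthogonal in the sense that mapping spaces across the factors are contractible. Since $A$ lives in the $\Mod_A$ factor and $L_A \simeq C$ lives in the $\Mod_C$ factor, orthogonality immediately gives $\map_{A\otimes A}(L_A, A) \simeq \mathrm{pt}$.

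For the internal hom statement, I note that $\hom_{A\otimes A}(L_A, A) \in \C$ exists by \Cref{prop : homsep}, since $A\otimes A$ is separable by \Cref{lm:general}. For any $X \in \C$, the hom-tensor adjunction gives $\map_\C(X, \hom_{A\otimes A}(L_A, A)) \simeq \map_{A\otimes A}(L_A \otimes X, A)$. Because the $\C$-tensoring respects the product decomposition, $L_A \otimes X$ remains in the $\Mod_C$ factor, so the same orthogonality yields a contractible mapping space. Hence $\hom_{A\otimes A}(L_A, A)$ is terminal in $\C$, and therefore $0$ by additivity. I do not expect any serious obstacles: the only bit of care is ensuring the cotangent complex formalism applies in the (possibly non-stable) additive setting via the embedding, but the conceptual content is the one-line observation that separability forces $L_A$ and $A$ to sit in complementary pieces of $\Mod_{A\otimes A}(\C)$.
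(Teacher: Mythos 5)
Your proof is correct and is essentially the paper's argument: both hinge on \Cref{prop : retractloc}/\Cref{cor : commsepsplit} identifying $\mu\colon A\otimes A\to A$ as a localization at an idempotent. The paper phrases the conclusion via the adjunction $\hom_{A\otimes A}(L_A,\mu^*A)\simeq\hom_A(\mu_!L_A,A)$ together with $\mu_!L_A\simeq 0$, whereas you make the equivalent observation that $L_A\simeq C$ sits in the complementary factor of $\Mod_{A\otimes A}(\C)\simeq\Mod_A(\C)\times\Mod_C(\C)$ and invoke orthogonality; these are the same argument in two guises.
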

\begin{proof}
We assume without loss of generality that $\C$ admits geometric realizations compatible with the tensor product, in particular it admits basechange along algebra maps. 

    As $A$ is commutative, we may consider $\mu: A\otimes A\to A$ as a commutative algebra map, and under the identification $A\op\simeq A$, this corresponds to the $A$-bimodule multiplication map $A\otimes A\op\to A$. Let $L_A'$ denote the $A\otimes A$-module fiber of $\mu$. Under the same identification, this corresponds to $L_A$. 

    In particular, as an $A\otimes A$-module, $A$ can be described as $\mu^*A$, so that $\hom_{A\otimes A}(L_A', \mu^*A) \simeq \hom_A(\mu_! L_A,A) $. 

    By \Cref{prop : retractloc}, $\mu: A\otimes A\to A$ is a localization at an idempotent, so that $\mu_!$ of the fiber is trivial, and the claim follows. 
\end{proof}
\begin{rmk}
    This corollary is the crucial difference between the commutative and the associative case. In \Cref{rmk : modulinotcontr}, we saw that it was precisely the nontriviality of $\map_{A\otimes A\op}(L_A,A)$ that makes the moduli space of $\mathbb E_1$-algebra structures non-trivial.
\pend \end{rmk}
As a corollary, we find that in the presence of homotopy commutativity, the obstruction theory for $\mathbb E_1$-structures simplifies greatly. We have: 
\begin{prop}\label{prop : e1comm}
 Suppose $\C$ is additively symmetric monoidal. Let $A\in \CAlg(\ho(\C))$ be a homotopy commutative, homotopy separable homotopy algebra. The moduli space of lifts of $A$ to an $\mathbb E_1$-algebra in $\C$ is contractible. 
\end{prop}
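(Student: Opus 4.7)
My plan is to build on \Cref{thm : e1lift}, which already establishes that the moduli space $\mathcal M$ of $\mathbb E_1$-lifts of $A$ is simply-connected, and in particular non-empty. Fix any lift $\tilde A\in \Alg(\C)$; by simple-connectedness, it then suffices to show that the double-loop space $\Omega^2(\mathcal M,\tilde A)$ is contractible.

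By \Cref{prop:2loopmoduli}(2), this double-loop space is the fiber over $\id_{\tilde A}$ of the forgetful map $\map_{\tilde A\otimes \tilde A\op}(\tilde A,\tilde A)\to \map_{\tilde A}(\tilde A,\tilde A)$. Replacing $\C$ by its idempotent completion (which does not affect the moduli space of lifts, as the underlying object of any lift lies in $\C$), \Cref{cor:centerabsolute} provides the $\mathbb E_1$-center $Z(\tilde A)=\hom_{\tilde A\otimes \tilde A\op}(\tilde A,\tilde A)\in \C$. Using the adjunction identifications $\map_{\tilde A\otimes \tilde A\op}(\tilde A,\tilde A)=\map_\C(\one,Z(\tilde A))$ and $\map_{\tilde A}(\tilde A,\tilde A)=\map_\C(\one,\tilde A)$, the forgetful map becomes $\map_\C(\one,-)$ applied to the canonical ``inclusion of the center'' morphism $\omega\colon Z(\tilde A)\to \tilde A$. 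It therefore suffices to prove that $\omega$ is an equivalence in $\C$.

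This is where homotopy commutativity enters. By the last sentence of \Cref{cor:centerabsolute}, the formation of $Z$, and hence the morphism $\omega$, is preserved by any symmetric monoidal functor, so in particular by $\C\to \ho(\C)$. The image of $\omega$ in $\ho(\C)$ is thus the canonical map $Z(A)\to A$ in the $1$-category $\ho(\C)$. Since $A$ is commutative there, an elementary $1$-categorical computation shows $Z(A)=A$, with canonical map the identity. By conservativity of $\C\to \ho(\C)$, the morphism $\omega$ is therefore already an equivalence in $\C$, as required.

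The main conceptual ingredients---\Cref{thm : e1lift}, \Cref{prop:2loopmoduli}, and \Cref{cor:centerabsolute}---are all already in hand, so I do not expect a serious obstacle. The step requiring the most care is the identification of the forgetful map in \Cref{prop:2loopmoduli}(2) with $\map_\C(\one,\omega)$; this is a direct unwinding of the universal properties defining the internal homs $\hom_{\tilde A\otimes \tilde A\op}(\tilde A,\tilde A)$ and $\hom_{\tilde A}(\tilde A,\tilde A)=\tilde A$, together with the observation that ``forgetting the right action'' corresponds to restriction along the multiplication $\tilde A\otimes \tilde A\op\to \tilde A$, which is precisely what naturally arises when comparing the two mapping spaces above.
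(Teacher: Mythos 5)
Your argument is correct, and it takes a genuinely (if mildly) different route from the paper's. Both proofs begin identically: \Cref{thm : e1lift} gives simple-connectedness, so everything reduces to showing the double loop space of \Cref{prop:2loopmoduli} is contractible, and both ultimately push the computation into $\ho(\C)$ using the "separable homs are computed in the homotopy category" results. The divergence is in which description of the double loop space is used and which vanishing statement does the work. The paper uses point 3 of \Cref{prop:2loopmoduli} and shows that the hom object $\hom_{\tilde A\otimes\tilde A\op}(\Sigma L_{\tilde A},\tilde A)$ vanishes via \Cref{prop : homsep} and \Cref{cor : commvanishingcotangent}, whose proof rests on the idempotent splitting of the multiplication from \Cref{prop : retractloc}. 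You use point 2, identify the forgetful map with $\map_\C(\one,-)$ applied to $Z(\tilde A)\to\tilde A$ (an identification the paper itself makes in \Cref{rmk : modulinotcontr}, so this step is safe), and prove that this map is an equivalence by reducing the center to $\ho(\C)$ via \Cref{cor:centerabsolute}. Your key input is exactly \Cref{lm: existse2}, which the paper records only \emph{after} this proposition as something one "could have guessed ahead of time"; you have promoted it to the engine of the proof. The two inputs are equivalent --- in a stable setting the fiber of $Z(\tilde A)\to\tilde A$ is $\hom_{\tilde A\otimes\tilde A\op}(\Sigma L_{\tilde A},\tilde A)$ --- but your version has the small advantage of avoiding point 3 of \Cref{prop:2loopmoduli}, which is stated only for stable $\C$, so it sits a little more comfortably with the additive hypothesis of the proposition and needs no cotangent complex at all. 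What the paper's route buys in exchange is that \Cref{cor : commvanishingcotangent} is the vanishing statement reused later (e.g.\ for \Cref{cor:uniqueidem} and throughout \Cref{section:indsep}), so organizing the proof around it is economical for the paper as a whole.
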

\begin{proof}
We assume without loss of generality that $\C$ admits internal hom's.  

\Cref{thm : e1lift} proves that this moduli space is simply-connected, so it suffices to prove that its double loopspace at any given point is contractible. So we fix an $\mathbb E_1$-algebra $\tilde A$ extending $A\in\Alg(\ho(\C))$.

By point 3.\ in \Cref{prop:2loopmoduli}, it suffices to prove that $\map_{\tilde A\otimes \tilde A\op}(\Sigma L_{\tilde A},\tilde A)$ is contractible, or better, it suffices to prove that the hom object $\hom_{\tilde A\otimes \tilde A\op}(\Sigma L_{\tilde A},\tilde A)$ is zero. The algebra $\tilde A\otimes \tilde A\op$ is separable, so by \Cref{prop : homsep}, this hom object can be computed in $\ho(\C)$. 

It now follows from \Cref{cor : commvanishingcotangent} that it is $0$ - note that as $\tilde A\otimes\tilde A\op\to \tilde A$ is split, its fiber can be computed in $\C$ or in $\ho(\C)$ equivalently.   
\end{proof}
We now explore other consequences of this orthogonality, namely the uniqueness of the separability idempotent of a commutative separable algebra, and we deduce from it nice descent properties of separable algebras. This uniqueness is well-known classically, and Naumann and Pol have also isolated it, as well as the resulting descent properties, in their recent work, cf. \cite[Lemma 6.2, Proposition 6.3]{NikoLuca}. 
\begin{cor}\label{cor:uniqueidem}
Assume $\C$ is additively symmetric monoidal. Let $A\in\CAlg(\C)$ be a commutative separable algebra. The space of separability idempotents for $A$ is contractible. More precisely, we define this space is the fiber of $\map_{A\otimes A\op}(A,A\otimes A\op) \to \map_{A\otimes A\op}(A,A)$ over $\id_A$. 
\end{cor}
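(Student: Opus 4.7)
The plan is to reduce everything to the splitting of the multiplication map provided by \Cref{cor : commsepsplit}. Since $A$ is a commutative separable algebra, that corollary says $\mu: A\otimes A\op \to A$ (with $A\op\simeq A$ using commutativity) is a localization at an idempotent, so $A\otimes A\op \simeq A\times C$ as commutative algebras for some orthogonal complement $C$. Applying \Cref{prop : retractloc} (after embedding $\C$ into a category with geometric realizations compatible with the tensor product, as is standard in this paper), I get a symmetric monoidal decomposition
\[
    \Mod_{A\otimes A\op}(\C)\simeq \Mod_A(\C)\times \Mod_C(\C).
\]

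Next I would identify, under this decomposition, the two objects that enter the mapping-space computation. The unit $A\otimes A\op$ corresponds to $(A,C)$, while $A$ viewed as an $A\otimes A\op$-module via $\mu$ corresponds to $(A,0)$: the $C$-component is $C\otimes_{A\otimes A\op}A$, which vanishes by the orthogonality of the two idempotent factors. Consequently
\[
    \map_{A\otimes A\op}(A, A\otimes A\op) \simeq \map_A(A,A)\times \map_C(0, C), \qquad \map_{A\otimes A\op}(A, A) \simeq \map_A(A,A)\times \map_C(0, 0),
\]
and both of the $C$-factors are contractible, so each of these mapping spaces is $\map_A(A,A)$.

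The final step is to check that the map induced by postcomposition with $\mu$ is the identity under these identifications. This is clear because $\mu$ itself corresponds to the projection $(A,C)\to (A,0)$, which is the identity on the first component and the unique map $C\to 0$ on the second; postcomposition therefore acts as the identity on $\map_A(A,A)$ (and as the unique map on the contractible $C$-factor). Its fiber over $\id_A$ is thus contractible, which gives the result.

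There is really no main obstacle beyond correctly identifying $A$ as the $(A,0)$-piece of the decomposition; once that orthogonality is installed, every subsequent step is essentially formal. In particular, no obstruction-theoretic input is needed — the statement reduces to unpacking \Cref{cor : commsepsplit} and \Cref{prop : retractloc}.
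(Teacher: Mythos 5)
Your argument is correct, and it rests on exactly the same key input as the paper's proof, namely that by \Cref{cor : commsepsplit} the multiplication $\mu: A\otimes A\op\to A$ is a localization at an idempotent, so that \Cref{prop : retractloc} splits $\Mod_{A\otimes A\op}$ as $\Mod_A\times\Mod_C$. The organization differs, though. The paper exploits additivity more directly: since $\mu_*$ is a map of grouplike $\mathbb E_\infty$-monoids whose image contains $\id_A$, the fiber over $\id_A$ agrees with the fiber over $0$, which is $\map_{A\otimes A\op}(A,L_A)$ with $L_A=\fib(\mu)$, and this vanishes because $\mu_!L_A=0$ (as in \Cref{cor : commvanishingcotangent}). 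You instead track both objects through the product decomposition — $A\otimes A\op\mapsto (A,C)$ and $A\mapsto (A,0)$, the latter using $C\otimes_{A\otimes A\op}A\simeq 0$ — and conclude that $\mu_*$ is an equivalence of mapping spaces, so every fiber is contractible. Your route is marginally longer but yields the slightly stronger statement that $\mu_*$ itself is an equivalence, and it avoids the grouplike-monoid shifting trick entirely. One minor point of hygiene: when you embed $\C$ to gain geometric realizations you should note (as the paper does implicitly) that the embedding is fully faithful, so the relevant mapping spaces and the fiber in question are unchanged; with that remark the proof is complete.
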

\begin{proof}
This is a map of grouplike $\mathbb E_\infty$-monoids, and $\id_A$ is in the image by separability, so the fiber is the same as the fiber over $0$, so it is equivalent to $\map_{A\otimes A\op}(A, L_A)$.

This is contractible for the same reasons as before, cf. \Cref{prop : retractloc} and \Cref{cor : commsepsplit}. 
\end{proof}
We observe that this uniqueness allows us to recover results of Sanders \cite[Corollary 2.12]{sandersetale}, and specifically, to make it homotopy coherent. To state it, we give the following definition:
\begin{defn}
    A separable algebra $A$ in $\C$ is said to be strongly separable if it admits a separability idempotent $s: \one\to A\otimes A$ such that $\tau\circ s\simeq s$ in $\C$, where $\tau: A\otimes A\to A\otimes A$ is the swap map. 

    It is said to be coherently strongly separable if $s$ can be made $C_2$-equivariant with respect to the swap $C_2$-action on $A\otimes A$, and the trivial action on $\one$.
\pend \end{defn}
\begin{cor}
Assume $\C$ is additively symmetric monoidal. Let $A$ be a commutative separable algebra. In this case, $A$ is coherently strongly separable.
\end{cor}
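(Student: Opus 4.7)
The plan is to deduce the result directly from the uniqueness statement of \Cref{cor:uniqueidem} by transporting a $C_2$-action of the swap $\tau:A\otimes A\to A\otimes A$ onto the space of separability idempotents $\M$, and then invoking the elementary fact that any contractible space with a group action has contractible homotopy fixed points. A homotopy fixed point of this action will, essentially by unravelling the definitions, be precisely a coherently $C_2$-equivariant separability idempotent for $A$.

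To set up the $C_2$-action, I will use that commutativity of $A$ is exactly the statement that $\mu\circ\tau\simeq\mu$, coherently. Structurally this means that $\mu:A\otimes A\to A$ becomes $C_2$-equivariant with respect to the swap $C_2$-action on $A\otimes A$ (acting as an algebra automorphism) and the trivial $C_2$-action on $A$. Viewing $A$ as an $A\otimes A$-module in the $C_2$-equivariant sense, the mapping spaces $\map_{A\otimes A}(A,A\otimes A)$ and $\map_{A\otimes A}(A,A)$ both acquire natural $C_2$-actions, and the restriction map along $\mu$ is $C_2$-equivariant. On the level of $\pi_0$, the action on the first mapping space sends a section $s$ to $\tau\circ s$; the identity $\mu\circ\tau\circ s\simeq\mu\circ s\simeq\id_A$, together with the compatibility of the $A\otimes A$-linear structure with the swap (using commutativity of $A$ so that the action of $A\otimes A$ on $A$ via $\mu$ is itself $\tau$-invariant), confirms that this genuinely produces another separability idempotent. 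Thus the fiber $\M$ over the $C_2$-fixed point $\id_A$ inherits a $C_2$-action.

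By \Cref{cor:uniqueidem}, $\M$ is contractible. Since $\M^{hC_2}$ is contractible whenever $\M$ is, we conclude that the space of coherent $C_2$-equivariant separability idempotents is itself contractible, and in particular non-empty. This non-emptiness is exactly the claim that $A$ is coherently strongly separable.

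The main technical subtlety is making homotopy-coherent sense of the $C_2$-action on $\M$ in an arbitrary additively symmetric monoidal \category{} $\C$. This can be handled by the usual embedding trick to reduce to the presentably symmetric monoidal setting, where one can honestly work with $BC_2$-indexed diagrams; more intrinsically, the entire argument takes place inside the \category{} of modules over the $C_2$-equivariant algebra classified by $A\otimes A\in\Fun(BC_2,\Alg(\C))$, where everything organizes itself automatically. A cleaner, fully coherent formulation is also provided by the appendix \Cref{app:cyctrace}, which is where \Cref{prop:cohstrongsep} (the precise packaging of this argument) will presumably live.
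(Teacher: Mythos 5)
Your proposal is correct and follows essentially the same route as the paper: one makes the restriction map $\map_{A\otimes A}(A,A\otimes A)\to\map_{A\otimes A}(A,A)$ $C_2$-equivariant using the $C_2$-equivariance of $\mu$ (via the adjunction $\mu_!\dashv\mu^*$), observes $\id_A$ is a fixed point, and extracts a $C_2$-fixed point from the contractible fiber supplied by \Cref{cor:uniqueidem}. The only caveats are cosmetic: the paper explicitly precomposes the resulting equivariant map $A\to A\otimes A$ with $\one\to A$ to land in the form required by the definition, and your closing reference to \Cref{app:cyctrace}/\Cref{prop:cohstrongsep} is misplaced --- that appendix concerns the trace pairing of dualizable algebras and is not used here.
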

\begin{proof}
Without loss of generality, we assume $\C$ admits geometric realizations compatible with the tensor product. 

The multiplication map $\mu: A\otimes A\to A$ is $C_2$-equivariant, hence so is the adjunction $\mu_!\dashv \mu^*$ between $\Mod_{A\otimes A}(\C)$ and $\Mod_A(\C)$, so that map $\map_{A\otimes A}(A,A\otimes A)\to \map_{A\otimes A}(A,A)$ can be made $C_2$-equivariant - it is given by a co-unit. Similarly, the identity $\id_A\in\map_{A\otimes A}(A,A)$ is a $C_2$-fixed point. 

It follows that the fiber of the first map over $\{\id_A\}$ can be given a compatible $C_2$-action. But it is contractible, by the previous corollary, and hence admits a $C_2$-fixed point. It follows that $\map_{A\otimes A}(A,A\otimes A)$ does too, and hence, by forgetting to $\map(A,A\otimes A)$, we see that it also has a $C_2$-fixed point whose underlying point is a separability idempotent. We can precompose the $C_2$-fixed point $A\to A\otimes A$ by $\one\to A$ to get a separability idempotent as in the above definition.
\end{proof}
We also note that in many cases, strongly separable implies coherently strongly separable:
\begin{prop}\label{prop:cohstrongsep}
    Let $A$ be a separable algebra in $\C$ whose underlying object in $\C$ is dualizable\footnote{This specific proposition does not require $\C$ to be additive, as is clear from the proof.}. If $A$ is strongly separable, then it is coherently strongly separable. 
\end{prop}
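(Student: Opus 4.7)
The plan is to use dualizability to translate the swap-symmetry of the separability idempotent into a self-duality condition on a map $A^\vee\to A$, and then to invoke the coherent $C_2$-equivariance of the trace pairing provided by Appendix~\ref{app:cyctrace}.

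Since $A$ is dualizable with dual $A^\vee$, the standard evaluation-coevaluation adjunction gives a natural equivalence
\[\map_\C(\one, A\otimes A)\simeq \map_\C(A^\vee, A).\]
Under this equivalence, the swap involution on the source corresponds, up to the canonical identification $A\simeq A^{\vee\vee}$, to the dualization involution $f\mapsto f^\vee$ on the target. A separability idempotent $s:\one\to A\otimes A$ thus corresponds to a map $\phi_s:A^\vee\to A$, and the strong-separability hypothesis becomes precisely the statement that $\phi_s\simeq\phi_s^\vee$ in $\ho(\C)$.

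I would then invoke \Cref{prop:C2trace}, which equips the trace pairing $\tr:A\otimes A\to\one$ (with $C_2$ acting by swap on the source and trivially on $\one$) with a coherent $C_2$-equivariant structure. Transporting this through the duality identifications above promotes the involution $f\mapsto f^\vee$ on $\map_\C(A^\vee, A)$ to a coherent $C_2$-action, and identifies the space of coherently strongly separable idempotents (with the unit-condition built in) with a suitable subspace of its $C_2$-fixed points.

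It remains to lift the $\pi_0$-fixed point $[\phi_s]$ to an actual $C_2$-fixed point. Here I would use the rigidity provided by separability, via \Cref{cor : retractfreeinmodule}: viewing the separability idempotent as a composite $\one\to A\xrightarrow{s'}A\otimes A\op$ with $s'$ an $A$-bimodule section, the relevant mapping spaces become retracts of free-module mapping spaces on which the swap action is manifestly controllable against the coherent data from \Cref{prop:C2trace}, so the coherent fixed point can be constructed directly rather than via obstruction theory. The main obstacle I anticipate is the bookkeeping: carefully tracking the canonical identifications $A\simeq A^{\vee\vee}$, verifying that the duality equivalence itself is $C_2$-equivariant in the sense needed, and checking that the $C_2$-fixed representative one produces still corresponds to a bona fide separability idempotent (i.e., satisfies $\mu\circ s\simeq \eta$ on the nose in $\C$). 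I expect this verification to be routine once the coherent $C_2$-structure from the appendix is unpacked explicitly.
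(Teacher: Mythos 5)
Your proposal correctly identifies the two ingredients that matter — the duality translation between $\map_\C(\one,A\otimes A)$ and $\map_\C(A^\vee,A)$, and the coherent $C_2$-equivariance of the trace pairing from \Cref{prop:C2trace} — but the final step, lifting the $\pi_0$-fixed point $[\phi_s]$ to an actual $C_2$-fixed point, is a genuine gap. In this proposition $A$ is \emph{not} assumed commutative, so \Cref{cor:uniqueidem} does not apply and the space of separability idempotents has no reason to be contractible; a point of a $C_2$-space that is fixed up to (unspecified) homotopy need not lift to a coherent fixed point, and the obstructions to such a lift live in nontrivial $C_2$-cohomology groups. The appeal to \Cref{cor : retractfreeinmodule} to make "the swap action manifestly controllable" does not address this: being a retract of a free module controls mapping spaces out of $A$, not the homotopy fixed points of the swap action on the space of idempotents. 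Indeed the paper's footnote points out that additivity is not even needed here, so none of the rigidity machinery of \Cref{section:homotopycat} is in play.

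The paper avoids the lifting problem entirely by exhibiting an \emph{explicit} coherently fixed point: by \cite[Proposition 2.30]{sandersetale}, when $A$ is dualizable and strongly separable, the dual of the trace pairing $A\otimes A\to A\xrightarrow{t}\one$ \emph{is} a separability idempotent. Since the trace pairing is coherently $C_2$-equivariant by \Cref{prop:C2trace} and dualization preserves equivariance, this particular idempotent is coherently strongly separable on the nose — no passage from $\pi_0$-fixed to coherently fixed is required. That citation of Sanders is the missing idea; with it, your duality bookkeeping in the first half becomes unnecessary as well.
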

\begin{proof}
    By \cite[Proposition 2.30]{sandersetale}, if $A$ is dualizable and strongly separable, then a separability idempotent is given by the dual of the trace pairing $A\otimes A\to A\xrightarrow{t}\one$. 

    Dualizing preserves $C_2$-equivariance, so it suffices to observe that the trace pairing $A\otimes A\to \one$ is $C_2$-equivariant. This is the case by \Cref{prop:C2trace}. 
\end{proof}
\begin{ques}
    In general, is a strongly separable algebra necessarily coherently strongly separable ? 
\end{ques}
Another corollary of uniqueness of separability idempotents is the fact that for commutative algebras, separability can be checked locally. We first make the following definition:
\begin{defn}
We let $\CSep(\C)\subset \CAlg(\C)^\simeq$ denote the subspace spanned by separable algebras. 
\pend \end{defn}
The statement of locality can then be phrased as follows:
\begin{cor}\label{cor:descentcommsep}
The functor $\C\mapsto \CSep(\C)$, defined on additively symmetric monoidal \categories, is limit-preserving. 
\end{cor}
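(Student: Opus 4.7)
My plan is to deduce this from two facts: (i) the functor $\C \mapsto \CAlg(\C)^\simeq$ preserves limits, which is standard, and (ii) the contractibility of the space of separability idempotents for commutative separable algebras, established in \Cref{cor:uniqueidem}.

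First, suppose $\C \simeq \lim_i \C_i$ is a limit of additively symmetric monoidal \categories. Since $\CAlg(-)$ preserves limits of symmetric monoidal \categories{} and $(-)^\simeq$ preserves limits of \categories, we have $\CAlg(\C)^\simeq \simeq \lim_i \CAlg(\C_i)^\simeq$. As separability is closed under equivalence, $\CSep(\C)$ (resp. each $\CSep(\C_i)$) is a union of connected components of $\CAlg(\C)^\simeq$ (resp. $\CAlg(\C_i)^\simeq$). By \Cref{lm:general}(\ref{item : symmonsep}), a separable algebra in $\C$ maps to a separable algebra in each $\C_i$, giving a well-defined map $\CSep(\C) \to \lim_i \CSep(\C_i)$. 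It thus suffices to show the converse: if $A \in \CAlg(\C)$ corresponds to a compatible family $(A_i) \in \prod_i \CAlg(\C_i)$ such that each $A_i$ is separable, then $A$ itself is separable.

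To prove this, consider the space $\mathrm{SepIdem}(B)$ of separability idempotents for a commutative algebra $B$, defined as the fiber over $\id_B$ of the restriction map
\[\mu_\ast: \map_{B \otimes B}(B, B \otimes B) \to \map_{B \otimes B}(B, B).\]
Using that $\LMod_{A \otimes A}(\C) \simeq \lim_i \LMod_{A_i \otimes A_i}(\C_i)$ (module categories commute with limits of \categories), that mapping spaces commute with such limits, and that the point $\id_A$ corresponds compatibly to $(\id_{A_i})_i$, we obtain an equivalence $\mathrm{SepIdem}(A) \simeq \lim_i \mathrm{SepIdem}(A_i)$. By \Cref{cor:uniqueidem}, each $\mathrm{SepIdem}(A_i)$ is contractible, so the limit is contractible, in particular non-empty. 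This produces a separability idempotent for $A$, proving that $A$ is separable.

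The main technical obstacle is the unpacking that module categories and mapping spaces (and hence fibers thereof) are compatible with limits of additively symmetric monoidal \categories{} in the way described; but these are standard \category{}-theoretic facts. The actual mathematical content is entirely captured by \Cref{cor:uniqueidem}: the property of being (commutative) separable is ``rigid enough'' that its witnesses are unique, which forces descent along arbitrary limit diagrams.
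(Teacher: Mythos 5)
Your proof is correct and is essentially the paper's argument: both rest entirely on \Cref{cor:uniqueidem} together with the standard fact that $\CAlg(-)^\simeq$, module categories, and mapping spaces commute with limits. The only difference is packaging — the paper considers the limit-preserving functor of \emph{pairs} (algebra, separability idempotent) and observes that \Cref{cor:uniqueidem} makes its projection an equivalence onto $\CSep(\C)$, whereas you unpack the same statement fiberwise over each algebra.
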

\begin{rmk}
    The corresponding statement for separable $\mathbb E_1$-algebras is wrong. We will give a counterexample involving Azumaya algebras and based on \cite{GL} in \Cref{ex:GL}.
\pend \end{rmk}
\begin{proof}
The proof is similar to the corresponding claim for dualizable objects, cf. \cite[Proposition 4.6.11.]{HA}. 

Consider the space of ``commutative algebras equipped with a separability idempotent'', namely the space of tuples $(A, s: A\to A\otimes A\op, h)$ where $s$ is a map of bimodules $A\to A\otimes A\op$, and $h$ a homotopy witnessing that $\mu\circ s\simeq \id_A$ in $A$-bimodules. 

The functor that assigns this space to $\C$ is clearly limit preserving in $\C$ (it can be written as a limit of spaces that are limit-preserving functors of $\C$), and the projection down to $\CAlg(\C)^\simeq$, which is natural in $\C$, establishes, by \Cref{cor:uniqueidem}, an equivalence with $\CSep(\C)$. The claim thus follows. 
\end{proof}
\begin{rmk}
If one thinks of ``descent''-type statements as statements about recovering a (symmetric monoidal) category as a limit of other (symmetric monoidal) categories, this result can be interpreted as saying that commutative separable algebras satisfy descent. 

For instance, if $\one\to A$ is a universal descent morphism in the sense of \cite[Definition D.3.1.1]{SAG} (e.g. an étale cover in $\CAlg(\Sp)$), one sees that an algebra $R\in\CAlg(\C)$ is separable if and only if $A\otimes R\in \CAlg(\Mod_A)$ is separable: one can check separability (of \emph{commutative} algebras) after passing to a (universal descent) cover. 
\pend \end{rmk}

In the above proof, we have used implicitly the following lemma, which we record explicitly:
\begin{lm}\label{lm:limpres}
    The functor $\CAlg: \CAlg(\Cat)\to \Cat$ preserves limits. This is more generally true for the functor $\Alg_\Oo$, for any \operad{} $\Oo$.
    
    Furthermore, given a limit diagram $\C_\bullet: I^\triangleleft \to \CAlg(\Cat)$, and an algebra object $A\in \Alg(\C_\infty)$\footnote{We use ``$\infty$'' to denote the cone point in $I^\triangleleft$.}, the canonical map $\LMod_A(\C_\infty)\to \lim_I \LMod_{A_i}(\C_i)$ is an equivalence, where $A_i$ is the image of $A$ under the induced functor $\Alg(\C_\infty)\to\Alg(\C_i)$. 
\end{lm}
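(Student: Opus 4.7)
The plan is to handle the two statements separately, with the second following from the first by a fiber argument. For the first part, I would use the operadic definition of $\Alg_\Oo$: recall that if $\C$ corresponds to a cocartesian fibration $\C^\otimes\to \mathrm{Fin}_*$, then $\Alg_\Oo(\C)$ is the $\infty$-category of $\infty$-operad maps $\Oo^\otimes\to \C^\otimes$ over $\mathrm{Fin}_*$ (\cite[Definition 2.1.3.1]{HA}). Limits in $\CAlg(\Cat)$ are computed on underlying \categories{} and on the level of the associated cocartesian fibrations over $\mathrm{Fin}_*$ (this can be extracted, e.g., from \cite[Proposition 3.2.2.1]{HA} and its ambient discussion). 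Since the space/\category{} of operad maps out of a fixed $\Oo^\otimes$ commutes with limits in the target, one obtains the natural equivalence $\Alg_\Oo(\lim_I \C_i)\simeq \lim_I \Alg_\Oo(\C_i)$. In particular, taking $\Oo = \mathbb E_\infty$ gives the statement for $\CAlg$.

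For the second part, I would use the $\infty$-operad $\mathcal{LM}$ of \cite[Section 4.2.1]{HA}, with the property that $\Alg_{\mathcal{LM}}(\C)$ is (equivalent to) the \category{} of pairs $(A,M)$ with $A\in\Alg(\C)$ and $M\in\LMod_A(\C)$, and the forgetful functor $\Alg_{\mathcal{LM}}(\C)\to \Alg(\C)$ is a cartesian fibration whose fiber over $A$ is exactly $\LMod_A(\C)$ (cf. \cite[Corollary 4.2.3.2]{HA}). Applying part 1 to the operads $\mathcal{LM}$ and $\mathbb E_1$, we get that both $\Alg_{\mathcal{LM}}$ and $\Alg$ send the limit diagram $\C_\bullet$ to a limit diagram in $\Cat$, and the natural transformation $\Alg_{\mathcal{LM}}\to \Alg$ between them is compatible with these limits. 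Taking fibers over the point $A\in\Alg(\C_\infty)$, whose image in $\Alg(\C_i)$ is $A_i$ by definition, and using that fibers commute with limits, yields the desired equivalence $\LMod_A(\C_\infty)\simeq \lim_I \LMod_{A_i}(\C_i)$.

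The expected main obstacle is purely bookkeeping: ensuring that the forgetful functor $\Alg_{\mathcal{LM}}\to \Alg$ is sufficiently natural in the symmetric monoidal \category{} $\C$ to make the fiber argument go through, and that the identifications of fibers with $\LMod_A$ are natural in $\C$ as well. Both are consequences of the fact that these constructions are defined operadically and functorially in $\C^\otimes$, so this ultimately amounts to invoking the relevant naturality statements from \cite[Sections 3.2, 4.2]{HA} rather than performing any substantive computation.
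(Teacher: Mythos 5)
Your proof is correct and its overall structure matches the paper's: part 2 in particular is identical (realize $\LMod_A(\C)$ as the fiber of $\Alg_{\mathcal{LM}}(\C)\to\Alg(\C)$ over $A$, apply part 1 to both operads, and use that fibers/pullbacks commute with limits). The only real difference is in how part 1 is justified. The paper invokes monoidal envelopes: $\Alg_\Oo(\C)\simeq \Fun^\otimes(\mathrm{Env}(\Oo),\C)$ naturally in $\C$, so $\Alg_\Oo(-)$ is corepresented by an object of $\CAlg(\Cat)$ and preserves limits for free. You instead work with the raw definition of $\Alg_\Oo(\C)$ as operad maps $\Oo^\otimes\to\C^\otimes$ over $\Fin_*$; this works, but it quietly requires two verifications that the envelope trick sidesteps: that limits in $\CAlg(\Cat)$ are indeed computed at the level of the associated cocartesian fibrations over $\Fin_*$, and that the condition of being an operad map (preservation of inert-cocartesian edges) is detected componentwise in such a limit (which holds here because the transition functors are symmetric monoidal). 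If you want the cleanest write-up, the envelope formulation packages both of these into a single citation of \cite[Proposition 2.4.9]{HA}; otherwise your argument is fine as long as you supply those two points explicitly rather than waving at ``bookkeeping.''
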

\begin{proof}
    The first part follows from the existence of envelopes, see \cite[Proposition 2.4.9]{HA}. In more detail, for any \operad{} $\Oo$, there is a symmetric monoidal \category{} $\mathrm{Env}(\Oo)$ with an $\Oo$-algebra $U_\Oo\in\Alg_\Oo(\mathrm{Env}(\Oo))$ such that evaluation at $U_\Oo$ induces an equivalence $$\Fun^\otimes(\mathrm{Env}(\Oo),\C)\to \Alg_\Oo(\C),$$ natural in $\C$. Since the source of this equivalence clearly preserves limits in $\C$, the claim follows. 

    The second part is a corollary of the first: let $\mathcal{LM}$ denote the \operad{} that classifies left modules \cite[Section 4.2.1]{HA}, and $\mathrm{Ass}$ the associative operard, with its canonical inclusion $\mathrm{Ass}\to \mathcal{LM}$ which induces the canonical forgetful functor $\Alg_{\mathcal LM}\to \Alg$. 
    
    We can then write $\LMod_A(\C) = \Alg_{\mathcal{LM}}(\C)\times_{\Alg(\C)}\{A\}$. By the first part of the statement, it follows that $(\C,A)\mapsto \LMod_A(\C)$ is a pullback of limit-preserving functors of the pair $(\C,A)$, and is thus itself a limit-preserving functor. 
\end{proof}
\begin{cor}
    The functor $\CAlg^{sep}(-)$, that assigns to an additively symmetric monoidal \category{} $\C$ the full subcategory of $\CAlg(\C)$ spanned by separable algebras, is limit-preserving.
\end{cor}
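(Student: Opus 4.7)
The plan is to combine \Cref{lm:limpres}, which shows that $\CAlg(-)$ preserves limits, with \Cref{cor:descentcommsep}, which provides the corresponding limit-preservation for the subfunctor $\CSep(-)$ at the level of groupoid cores. First I would record the general principle that full subcategory inclusions are stable under limits: given a diagram $\D_\bullet\colon I \to \Cat$ and a subdiagram $\D'_\bullet \hookrightarrow \D_\bullet$ consisting of fully faithful inclusions of full subcategories, the induced map $\lim_I \D'_i \to \lim_I \D_i$ is fully faithful with essential image the full subcategory on those objects whose image in each $\D_i$ lies in $\D'_i$. This is immediate from the facts that mapping spaces in a limit are computed as limits of mapping spaces, and that the groupoid core functor $(-)^\simeq$ preserves limits in $\Cat$.

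Applying this, given a limit diagram $\C_\bullet\colon I^\triangleleft \to \CAlg(\Cat)$ with cone point $\C_\infty$, \Cref{lm:limpres} yields $\CAlg(\C_\infty) \simeq \lim_I \CAlg(\C_i)$, and the above principle identifies $\lim_I \CAlg^{sep}(\C_i)$ with the full subcategory of $\CAlg(\C_\infty)$ spanned by those commutative algebras $A$ whose image in every $\C_i$ is separable. It remains to show that this full subcategory coincides with $\CAlg^{sep}(\C_\infty)$. The inclusion in one direction is item \ref{item : symmonsep} of \Cref{lm:general}: a symmetric monoidal functor preserves separability. Conversely, if the image of $A$ in each $\C_i$ is separable, then the tuple $(A_i)_i$ determines a point in $\lim_I \CSep(\C_i)$, which by \Cref{cor:descentcommsep} is equivalent to $\CSep(\C_\infty)$; hence $A$ itself is separable.

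I do not expect any substantial obstacle: the entire argument is a bookkeeping exercise that reduces the categorical claim to the spatial statement already established in \Cref{cor:descentcommsep}, using only that $\CAlg$ preserves limits and that full subcategory inclusions are stable under limits.
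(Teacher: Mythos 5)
Your argument is correct and is essentially the paper's own proof: both reduce the statement to the two facts that limits of fully faithful functors are fully faithful (giving fully faithfulness of the comparison functor, via \Cref{lm:limpres}) and that $\CSep(-)$ preserves limits (\Cref{cor:descentcommsep}, giving essential surjectivity). The only cosmetic difference is that the paper packages this as an abstract criterion for a pointwise fully faithful subfunctor of a limit-preserving functor, whereas you spell out the essential image of $\lim_I\CAlg^{sep}(\C_i)$ concretely and invoke \Cref{item : symmonsep} of \Cref{lm:general} explicitly for one containment.
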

\begin{proof}
    Generally, if $f,g: S\to \Cat_\infty$ are functors, $g$ preserves limits and $i:f\to g$ is a pointwise fully faithful natural transformation, then $f$ preserves limits if and only if $f^\simeq: E\to \Ss$ does. ``Only if'' is clear as $(-)^\simeq : \Cat_\infty\to \Ss$ preserves limits. 

    To prove ``if'', we note that limits of fully faithful functors are fully faithful. It follows that for any diagram $X:I\to E$, in the following commutative square 
    \[\begin{tikzcd}
	{f(\lim_I X)} & {\lim_If(X)} \\
	{g(\lim_IX)} & {\lim_Ig(X)}
	\arrow[from=2-1, to=2-2]
	\arrow["i", from=1-1, to=2-1]
	\arrow["i", from=1-2, to=2-2]
	\arrow[from=1-1, to=1-2]
\end{tikzcd}\]
the vertical arrows and the bottom horizontal arrow are all fully faithful. Therefore, so is the top horizontal arrow. Thus, to prove that it is an equivalence, we simply need to check that it is essentially surjective, but this follows from $f^\simeq$ preserving limits. 

We apply this to $f= \CAlg^{sep}, g= \CAlg(-)$: we observed above that $g$ and $f^\simeq = \CSep(-)$ preserved limits. 
\end{proof}

We now move on to the first main theorem of this section, which concerns highly coherent commutative structures on separable algebras. 
\begin{thm}\label{thm : commlift}
Let $\C$ be an additively symmetric monoidal \category, and $A\in\Alg(\C)$ a separable algebra. 

If $A$ is homotopy commutative, then it has an essentially unique $\mathbb E_\infty$-structure extending its given $\mathbb E_1$-structure. 

More generally, for any $n\geq 1$, $A$ has an essentially unique $\mathbb E_n$-structure extending its given $\mathbb E_1$-structure. 
\end{thm}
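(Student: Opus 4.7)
My plan is to reduce, via \Cref{prop : e1comm}, to a contractibility statement about the moduli space of $\mathbb E_n$-lifts of $hA\in\Alg(\ho(\C))$, and then to establish this contractibility by a Postnikov-tower obstruction theory that mirrors the proofs of \Cref{thm : e1lift} and \Cref{prop : e1comm}.

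\emph{Step 1 (Reduction).} By \Cref{prop : e1comm}, the moduli $\Alg(\C)^\simeq\times_{\Alg(\ho(\C))^\simeq}\{hA\}$ of $\mathbb E_1$-lifts of $hA$ is contractible, with $A$ as canonical point. From the fiber sequence
\[
\Alg_{\mathbb E_n}(\C)^\simeq\times_{\Alg(\C)^\simeq}\{A\}\to \Alg_{\mathbb E_n}(\C)^\simeq\times_{\Alg(\ho(\C))^\simeq}\{hA\}\to \Alg(\C)^\simeq\times_{\Alg(\ho(\C))^\simeq}\{hA\},
\]
the leftmost term is equivalent to the middle one — call it $\tilde M_n$. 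Since $\ho(\C)$ is a $1$-category and $hA$ is homotopy commutative, there is a unique $\mathbb E_n$-structure on $hA$ extending its $\mathbb E_1$-structure (for $n\geq 2$ it coincides with the canonical commutative structure). So the task is to prove that $\tilde M_n$ is contractible.

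\emph{Step 2 (Postnikov induction).} Writing $\C\simeq\lim_k\ho_{\leq k}(\C)$ and using that $\Alg_{\mathbb E_n}$ preserves limits (\Cref{lm:limpres}), I express $\tilde M_n$ as the inverse limit of the moduli spaces of $\mathbb E_n$-lifts of $hA$ to each $\ho_{\leq k}(\C)$. I then run an induction on $k$ exactly as in \Cref{thm : e1lift}: the gluing data needed to extend from $\ho_{\leq k}(\C)$ to $\ho_{\leq k+1}(\C)$ is controlled by \Cref{lm:Postnikovcats}, and boils down to identifying two $\mathbb E_n$-enhancements of $hA$ in the auxiliary \category{} $\D$ appearing in that lemma, which we know agree as $\mathbb E_1$-algebras. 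Promoting this identification to an equivalence of $\mathbb E_n$-algebras is an $\mathbb E_n$-variant of \Cref{thm : morsep}, which follows from the $\mathbb E_1$-version together with the observation that an $\mathbb E_n$-algebra map whose underlying $\mathbb E_1$-map is an isomorphism is an isomorphism.

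\emph{Step 3 (Vanishing of higher loops, and main obstacle).} Contractibility beyond simple-connectedness at a chosen lift requires that the higher loop spaces of $\tilde M_n$ vanish. In an $\mathbb E_n$-analog of \Cref{prop:2loopmoduli}, these loop spaces are identified with mapping spaces out of an $\mathbb E_n$-cotangent complex $L_A^{\mathbb E_n}$ into $A$. The vanishing is governed by \Cref{cor : commsepsplit}: since each iterated multiplication $A^{\otimes k}\to A$ is a localization at an idempotent, the $\mathbb E_n$-cotangent complex inherits, via the standard fiber sequence relating it to the $\mathbb E_1$-cotangent complex $L_A$, the triviality already established in \Cref{cor : commvanishingcotangent}. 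The main obstacle, in my view, is not the vanishing itself — which reduces essentially directly to \Cref{cor : commsepsplit} — but rather the bookkeeping needed to set up the $\mathbb E_n$-obstruction theory cleanly and to thread the operadic cotangent complex calculations through the Postnikov induction of Step 2. Once that scaffolding is in place, the $\mathbb E_\infty$ case follows by taking the limit over $n$.
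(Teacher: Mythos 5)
Your route is genuinely different from the paper's: the paper does not run any Postnikov/obstruction-theoretic induction for this theorem. Instead it first proves that for $A$ separable and homotopy commutative the \emph{entire mapping space} $\map_{\Alg(\C)}(A^{\otimes k},A)$ is discrete and computed in $\ho(\C)$ (\Cref{prop:discretecentralhocomm}, \Cref{cor:hocommtargetdiscrete}, \Cref{cor:discreteendopd}), so that the forgetful functor $\Alg(\C)\to\Alg(\ho(\C))$ is fully faithful on the subcategory spanned by the tensor powers of $A$; by \Cref{lm:ffalgebra} the space of $\Oo\otimes\mathbb E_1$-structures extending the given $\mathbb E_1$-structure is then literally the space of $\Oo$-structures on the commutative algebra $hA$ in the $1$-category $\Alg(\ho(\C))$, which is contractible. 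This is elementary and needs no deformation theory. Unfortunately, your proposal has a genuine gap precisely at the point where this discreteness is needed. In Step 2 you invoke ``an $\mathbb E_n$-variant of \Cref{thm : morsep}'' and claim it ``follows from the $\mathbb E_1$-version together with the observation that an $\mathbb E_n$-algebra map whose underlying $\mathbb E_1$-map is an isomorphism is an isomorphism.'' That observation only gives conservativity of the forgetful functor; it does not let you lift or compare $\mathbb E_n$-algebra identifications. An $\mathbb E_n$-structure (or an $\mathbb E_n$-equivalence in the auxiliary category $\D$ of \Cref{lm:Postnikovcats}) is a coherent system of points and paths in the spaces $\map_{\Alg}(A^{\otimes k},A)$, and \Cref{thm : morsep} controls only $\pi_0$ of these spaces. \Cref{warn:nondiscrete} and \Cref{ex:spacenotequiv} show they are \emph{not} discrete in general for separable $A$, so the higher homotopy you need to control does not vanish for free; homotopy commutativity is exactly what kills it, and that is the content you would have to supply. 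Once you do supply it, the Postnikov induction becomes superfluous.

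Step 3 has a secondary problem. The loop spaces of the moduli of $\mathbb E_n$-structures are controlled by maps out of $L_A^{\mathbb E_n}$ in $\mathbb E_n$-$A$-modules, i.e.\ modules over the enveloping algebra $\int_{S^{n-1}}A$; there is no ``standard fiber sequence'' transporting the vanishing of $\map_{A\otimes A\op}(L_A,A)$ from \Cref{cor : commvanishingcotangent} to the $\mathbb E_n$-case. The correct statement (the $\mathbb E_k$-case of \cite[Theorem 7.3.5.1]{HA}) identifies $\Sigma^nL_A^{\mathbb E_n}$ with the cofiber of the augmentation $\int_{S^{n-1}}A\to A$, and to show the relevant mapping space vanishes you would need to analyze this enveloping algebra — which can be done for a \emph{commutative} separable $A$ using \Cref{cor : commsepsplit} (one finds $\int_{S^{n-1}}A\simeq A$ for $n\geq 2$), but this presupposes the multiplicative structure you are in the middle of constructing, so the inductive bookkeeping you defer is not routine. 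In short: the skeleton of an obstruction-theoretic proof is visible here, but the two load-bearing steps (the $\mathbb E_n$-mapping-space comparison and the operadic cotangent-complex vanishing) are each asserted with justifications that do not work as stated, and the first of them is the actual mathematical heart of the theorem.
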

\begin{rmk}
    We note that this is an obvious commutative analogue of \Cref{thm : e1lift}, but that, as with \Cref{prop : e1comm}, the situation is better in the commutative world.
\pend \end{rmk}
We also note the important corollary that all the previous work in the section, about commutative separable algebras therefore also applies in the case of homotopy commutative separable algebras. 
Combined with \Cref{prop : e1comm}, this yields:
\begin{cor}
    Let $\C$ be an additively symmetric monoidal \category, and $A\in\Alg(\ho(\C))$ a homotopy separable  homotopy commutative homotopy algebra. 

It has an essentially unique $\mathbb E_\infty$-structure extending its given homotopy algebra structure.
\end{cor}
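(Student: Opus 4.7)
The plan is to combine \Cref{prop : e1comm} with \Cref{thm : commlift} via a standard iterated-fiber argument. Denote the moduli space of $\mathbb E_\infty$-lifts of $A$ by
\[\mathcal M_\infty := \Alg_{\mathbb E_\infty}(\C)^\simeq \times_{\Alg(\ho(\C))^\simeq} \{A\},\]
and factor the canonical underlying-homotopy-algebra functor as
\[\Alg_{\mathbb E_\infty}(\C)^\simeq \xrightarrow{f} \Alg(\C)^\simeq \xrightarrow{g} \Alg(\ho(\C))^\simeq.\]
The goal is to show that $\mathcal M_\infty$ is contractible, which is precisely the essential uniqueness statement we want.

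First, since $A$ is a homotopy commutative, homotopy separable homotopy algebra, \Cref{prop : e1comm} ensures that $\mathcal M_1 := \fib_A(g)$, the moduli space of $\mathbb E_1$-lifts of $A$, is contractible. In particular, we may fix a lift $\tilde A \in \Alg(\C)$; by \Cref{prop:homsepimpliessep} it is separable, and by construction its underlying homotopy algebra is commutative. We then invoke \Cref{thm : commlift} applied to $\tilde A$ to deduce that $\fib_{\tilde A}(f)$, the space of $\mathbb E_\infty$-structures on $\tilde A$ extending its $\mathbb E_1$-structure, is also contractible.

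To conclude, the standard fiber sequence for a composite of maps of spaces,
\[\fib_{\tilde A}(f) \to \mathcal M_\infty \to \mathcal M_1,\]
has contractible fiber and contractible base, hence $\mathcal M_\infty$ is contractible. There is no real obstacle in this argument: it is purely formal once \Cref{prop : e1comm} and \Cref{thm : commlift} are in place, the only small point to verify being the identification of the iterated fiber, which is routine.
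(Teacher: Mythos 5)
Your proof is correct and follows exactly the route the paper intends: the paper derives this corollary by combining \Cref{thm : commlift} with \Cref{prop : e1comm}, and your iterated-fiber argument (contractibility of the space of $\mathbb E_1$-lifts, then contractibility of the space of $\mathbb E_\infty$-refinements of the resulting separable, homotopy commutative $\mathbb E_1$-algebra) is precisely the formal glue that makes this combination work.
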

\begin{rmk}
    This theorem is consistent with the experience that all commutative separable algebras in tensor triangulated categories coming from stably symmetric monoidal \categories{} admit highly coherent structures.  
\pend \end{rmk}
\begin{rmk}
  This corollary should be reminiscent of the Goerss--Hopkins--Miller theorem \cite{Goerss-Hopkins}. However, Morava $E$-theory is \emph{not} separable. In \Cref{subsection:Morava}, we introduce the notion of an \emph{ind-separable} algebra to make up for this defect, and observe that Morava $E$-theories are examples of such things. We deduce extensions of the Goerss--Hopkins--Miller theorem to other \operads{} than $\mathbb E_1$ and $\mathbb E_\infty$ (cf. \Cref{thm : opdlift} below and \Cref{cor:GHM}) - these are well-known to experts but do not seem to be recorded in the literature. 
\pend \end{rmk}
In fact, we deduce \Cref{thm : commlift} from a more general statement. To state it, we introduce a certain class of \operads{} which contains the $\mathbb E_n, 1\leq n \leq \infty$. 
\begin{nota}
    Let $\Oo$ be an \operad, with a single color $x$, i.e. $\Oo^\otimes_{\langle 1\rangle}$ has a single object up to equivalence. By definition of an \operad, it follows that $\Oo^\otimes_{\langle n\rangle}$ has a unique object up to equivalence too, denoted $x\oplus ... \oplus x$ (see \cite[Remark 2.1.1.15]{HA} for the notation). In this case, we let $\Oo(n)$ denote the space of $n$-ary operations. In more detail, letting $\mu_n: \langle n\rangle \to \langle 1\rangle$ denote the unique active morphism in $\Fin_*$, we put: $$\Oo(n):=\map_{\Oo^\otimes}(x\oplus ... \oplus x, x)\times_{\hom_{\Fin_*}(\langle n\rangle,\langle 1\rangle)}\{\mu_n\}$$. 
\pend \end{nota}
\begin{defn}
    Let $\Oo$ be an \operad. We say it is \emph{weakly reduced} if:
    \begin{itemize}
        \item It has a single color, i.e. its underlying \category has a unique object $x$ up to equivalence. 
        \item Both $\Oo(0)$ and $\Oo(1)$ are connected. 
    \end{itemize}
\pend \end{defn}
\begin{ex}\label{ex:weaklyred}
    The $\mathbb E_n$-operads, $1\leq n\leq \infty$ are weakly reduced \operads.
\pend \end{ex}
Our more general statement can thus be stated as:
\begin{thm}\label{thm : opdlift}
Let $\C$ be an additively symmetric monoidal \category, and $A\in\Alg(\C)$ a separable algebra. 

For any weakly reduced \operad{} $\Oo$, the space of $\Oo\otimes\mathbb E_1$-structures on $A$ extending the given $\mathbb E_1$-structures is contractible. Here, $\otimes$ denotes the Boardman-Vogt tensor product of \operads{} following \cite[section 2.2.5]{HA}. 
\end{thm}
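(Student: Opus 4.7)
The plan is to start by applying Dunn's additivity theorem \cite[Theorem 5.1.2.2]{HA} to identify
$$\Alg_{\Oo\otimes \mathbb E_1}(\C) \simeq \Alg_\Oo(\Alg(\C)),$$
so that the moduli in question becomes the fiber of $\Alg_\Oo(\Alg(\C))^\simeq \to \Alg(\C)^\simeq$ over $A$, i.e.\ the space of $\Oo$-algebra structures on $A$ regarded as an object of the symmetric monoidal \category{} $\Alg(\C)$.

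I would then filter $\Oo$ by its arity-$\leq n$ suboperads $\Oo^{\leq n}$, obtaining an inverse tower of \categories{} $\Alg_{\Oo^{\leq n}}(\Alg(\C))$ whose limit computes $\Alg_\Oo(\Alg(\C))$, and pass to fibers over $A$. Because $\Oo$ is weakly reduced, the base of the tower is automatic: the connectedness of $\Oo(0)$ pins down the nullary operation as the canonical unit $\one_\C\to A$ coming from the $\mathbb E_1$-structure, and the connectedness of $\Oo(1)$ kills the unary contribution. The successive fibers of this tower are controlled, by operadic obstruction theory, by mapping spaces of the form $\map_{\Alg(\C)}(A^{\otimes k}, A)$ twisted by $\Oo(k)$, for $k\geq 2$.

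The two key inputs would then be: (i) $A^{\otimes k}$ is separable for every $k$ by \Cref{lm:general}, so by \Cref{thm : morsep} the set $\pi_0\map_{\Alg(\C)}(A^{\otimes k}, A)$ is identified with $\hom_{\Alg(\ho(\C))}(hA^{\otimes k}, hA)$; and (ii) the higher homotopy of these mapping spaces is controlled, via the analysis of \Cref{prop:2loopmoduli}, by cotangent-complex groups of the form $\map_{A\otimes A\op}(L_A, A)$, which vanish by \Cref{cor : commvanishingcotangent} once $A$ is known to be commutative. The existence of any $\Oo\otimes\mathbb E_1$-structure itself forces $A$ to be homotopy commutative via Eckmann--Hilton applied to any $2$-ary operation from $\Oo$ together with the given $\mathbb E_1$-multiplication (using Dunn additivity), so this cotangent-complex vanishing kicks in as soon as the tower is non-trivially populated and can be propagated up the tower.

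The main obstacle I anticipate is matching the operadic obstruction theory for a general weakly reduced $\Oo$ against the cotangent-complex vanishing of \Cref{cor : commvanishingcotangent} in a uniform way. I expect the cleanest route is to first treat $\Oo = \mathbb E_\infty$ directly, where Dunn additivity reduces the claim to \Cref{prop : e1comm} applied inside $\CAlg(\C)$, and then to deduce the general weakly reduced case from the canonical map of \operads{} $\Oo\otimes\mathbb E_1 \to \mathbb E_\infty$ together with the observation that, on the fibre over a commutative separable $A$, this induced map of moduli spaces is an equivalence.
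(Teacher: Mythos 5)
You have isolated the right key input --- the discreteness of the mapping spaces $\map_{\Alg(\C)}(A^{\otimes k},A)$ and their identification with $\hom_{\Alg(\ho(\C))}(hA^{\otimes k},hA)$, which is exactly \Cref{cor:discreteendopd} in the paper --- but the way you assemble it does not yet constitute a proof. The arity filtration with ``operadic obstruction theory'' whose layers are ``$\map_{\Alg(\C)}(A^{\otimes k},A)$ twisted by $\Oo(k)$'' is never constructed, and for a general \operad{} no such tower with those layers is available off the shelf: one would have to account for composition coherences, the $\Sigma_k$-actions, and the fact that $\Alg(\C)$ is not cocartesian symmetric monoidal. The point is that no tower is needed. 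Once all the spaces $\map_{\Alg(\C)}(A^{\otimes k},A)$ are discrete and computed by $\ho(\C)$, the forgetful functor $\Alg(\C)\to\Alg(\ho(\C))$ restricts to a symmetric monoidal equivalence between the full subcategories spanned by the $A^{\otimes k}$ and the $hA^{\otimes k}$, so the space of $\Oo$-structures on $A$ is literally the space of $\Oo$-structures on $hA$; since $hA$ is commutative and $\CAlg(\ho(\C))$ is a cocartesian symmetric monoidal $1$-category, \cite[Proposition 2.4.3.9]{HA} together with weak reducedness of $\Oo$ gives contractibility. This is the paper's argument, and it replaces your entire tower.

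Two further gaps. First, your input (ii) does not establish discreteness of $\map_{\Alg(\C)}(A^{\otimes k},A)$: \Cref{prop:2loopmoduli} and \Cref{cor : commvanishingcotangent} control the loop space at the \emph{identity} of $\map_{\Alg(\C)}(A,A)$ via the diagonal bimodule $A$ over $A\otimes A\op$, whereas you need vanishing of the loop space at an \emph{arbitrary} point $f\colon A^{\otimes k}\to A$, where the relevant bimodule structure on the target is twisted by $f$. That vanishing genuinely requires homotopy centrality of $f$ (see \Cref{ex:spacenotequiv} for what fails otherwise), here supplied by homotopy commutativity of the target; the statement you actually need is \Cref{cor:hocommtargetdiscrete}, proved via \Cref{prop:discretecentralhocomm}, not \Cref{cor : commvanishingcotangent}. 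Second, your proposed ``cleanest route'' is circular: knowing that the $\mathbb E_\infty$-moduli space is contractible says nothing about the $\Oo\otimes\mathbb E_1$-moduli space unless you independently prove that the restriction map between them is an equivalence --- in particular surjective on $\pi_0$, i.e.\ that every $\Oo\otimes\mathbb E_1$-structure extends to an $\mathbb E_\infty$-structure --- which is essentially the theorem itself. Moreover \Cref{prop : e1comm} concerns lifting along $\Alg(\C)\to\Alg(\ho(\C))$, a different fibration from $\CAlg(\C)^\simeq\to\Alg(\C)^\simeq$, so the claimed reduction of the $\mathbb E_\infty$ case does not parse as stated. Your Eckmann--Hilton remark, on the other hand, is a sensible observation about why homotopy commutativity is not an extra hypothesis once $\Oo$ has a binary operation, and is consistent with how the paper uses the theorem.
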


Let us briefly describe the strategy of proof of \Cref{thm : opdlift}, so that we can also explain the hypotheses on $\Oo$. We will expand on \Cref{thm : morsep}, by proving that under the homotopy commutativity assumption, $\map_{\Alg(\C)}(A,R)$ has lots of discrete components, in fact, enough to guarantee that each $\map_{\Alg(\C)}(A^{\otimes n},A)$ is discrete and equivalent to $\hom_{\Alg(\ho(\C))}(hA^{\otimes n},hA)$.

From this, it follows at once that $\Oo$-algebra structures on $A$ in $\Alg_{\mathbb E_1}(\C)$, i.e. $\Oo\otimes\mathbb E_1$-algebra structures on $A$ extending the given algebra structure, are equivalent to $\Oo$-algebra structures on $hA$ in $\Alg(\ho(\C))$. As $hA$ is commutative and $\Alg(\ho(\C))$ is a $1$-category, the assumptions on $\Oo$ will then guarantee that there is a unique such structure. 

We thus begin with:
\begin{prop}\label{prop:discretecentralhocomm}
    Assume $\C$ is additively symmetric monoidal.
    Let $A\in\Alg(\C)$ be separable and homotopy commutative, and let $R\in\Alg(\C)$ arbitrary. Let $f: A\to R$ be a map in $\Alg(\C)$, and suppose that it is homotopy-central, i.e. the following two maps are equivalent in $\C$: $A\otimes R\xrightarrow{f\otimes\id} R\otimes R\to R$ and $A\otimes R\simeq R\otimes A\xrightarrow{\id\otimes f} R\otimes R\to R$. 

    In this situation, $\Omega(\map_{\Alg(\C)}(A,R),f)$ is contractible, i.e. the component of $f$ in $\map_{\Alg(\C)}(A,R)$ is contractible. 
    \end{prop}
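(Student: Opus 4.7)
My plan is to reduce, via separability, to a $1$-categorical computation in $\ho(\C)$ where the homotopy centrality of $f$ applies directly. First, by embedding $\C$ additively and symmetric monoidally into a presentably symmetric monoidal additive \category{} if needed, I may assume $\C$ is presentably symmetric monoidal, and then follow the setup of the proof of \Cref{thm : morsep}: the fully faithful embedding $\Alg(\C) \hookrightarrow (\Mod_\C(\PrL))_{\C/}$ from \cite[Theorem 4.8.5.11]{HA} identifies $\map_{\Alg(\C)}(A, R)$ with the fiber over the canonical left $R$-module $R$ of the forgetful functor $\RMod_A(\LMod_R(\C))^\simeq \to \LMod_R(\C)^\simeq$, so that $f$ corresponds to $R$ equipped with the right $A$-module structure transferred along $f$, which I denote $R_f$. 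Since any $(R, A)$-bimodule endomorphism of $R_f$ lying over $\id_R$ is automatically an equivalence, looping at $f$ yields a fiber sequence that identifies $\Omega_f \map_{\Alg(\C)}(A, R)$ with the fiber over $\id_R$ of $\map_{R \otimes A\op}(R_f, R_f) \to \map_R(R, R)$. Passing to $\C$-enriched homs, the task reduces to showing that the canonical map $\hom_{R \otimes A\op}(R_f, R_f) \to \hom_R(R, R) \simeq R$ is an equivalence in $\C$.

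Next, I would exploit separability. Applying the right-module variant of \Cref{cor : retractfreeinmodule} to the $\C$-module $\M = \LMod_R(\C)$ (equivalently, applying the stated left-module version to $A\op$-modules, which is legitimate since $A\op$ is separable by \Cref{lm:general}), one sees that $R_f$ is a retract of the free right $A$-module $R \otimes A$ among $(R, A)$-bimodules. Hence $\hom_{R \otimes A\op}(R_f, R_f)$ exists in $\C$ as a retract of $\hom_{R \otimes A\op}(R \otimes A, R_f) \simeq \hom_R(R, R) = R$, and by the $\C$-module version of \Cref{prop : homsep} it is preserved by the symmetric monoidal functor $\C \to \ho(\C)$. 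The problem thus further reduces to showing that the corresponding map $\hom_{hR \otimes hA\op}(hR_f, hR_f) \to hR$ is an equivalence in the $1$-category $\ho(\C)$.

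This final step is where homotopy centrality enters. The preceding paragraph already provides that the map in question is a split monomorphism in $\ho(\C)$, so by Yoneda it suffices to check that every morphism $g : c \to hR$ in $\ho(\C)$ factors through $\hom_{hR \otimes hA\op}(hR_f, hR_f)$. Unwinding the universal property of the bimodule hom-object, such a factorization exists precisely when the two composites $c \otimes hA \rightrightarrows hR$ built from $g \otimes hf : c \otimes hA \to hR \otimes hR$ and then either $\mu$ or $\mu \circ \tau_{hR,hR}$ agree in $\ho(\C)$. A short diagram chase using naturality of the swap identifies these two composites, respectively, with $R_{hf} \circ (g \otimes \id_{hA})$ and $L_{hf} \circ (\id_{hA} \otimes g) \circ \tau_{c,hA}$, where $L_{hf}, R_{hf}$ denote left and right multiplication by $hf$; the homotopy centrality hypothesis is exactly the identity $L_{hf} = R_{hf} \circ \tau_{hA,hR}$, which, fed into the above expressions and combined with $\tau_{hA,c} \circ \tau_{c,hA} = \id$, forces the two composites to coincide. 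Thus every $g$ factors, so the map is a surjective split monomorphism, hence an equivalence. The one delicate point in writing up will be this last identification of the hom-object in terms of an ``equalizer of left and right multiplication by $hf$'', which has to be phrased via representable functors on $\ho(\C)$ rather than by invoking an equalizer formula that need not literally exist in the homotopy category.
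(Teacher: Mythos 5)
Your proof is correct and follows essentially the same route as the paper's: identify $\Omega(\map_{\Alg(\C)}(A,R),f)$ with the fiber of the forgetful map on $(R,A)$-bimodule endomorphisms of $R_f$, use separability of $A$ to exhibit $R_f$ as a retract of the free bimodule $R\otimes A$ (giving the splitting and the reduction to $\ho(\C)$), and then use homotopy centrality of $f$ to show that every left $R$-linear map is automatically right $A$-linear. The only difference is presentational: you package the argument through $\C$-enriched hom-objects and a Yoneda factorization criterion, whereas the paper checks the isomorphism on $\pi_0\map(R,\Omega^nR)$ for each $n$ via \Cref{cor : sepff} and \Cref{lm : locff}; both reduce to the same centrality computation in the homotopy category.
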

\begin{proof}
 Recall from \cite[Theorems 4.8.4.1 and 4.8.5.11]{HA} that $\map_{\Alg(\C)}(A,R)$ is equivalent to the fiber over $R$ of the forgetful map $_R\BiMod_A^\simeq\to \LMod_R^\simeq$. 

It follows that $\Omega(\map_{\Alg(\C)}(A,R),f)$ is equivalent to the fiber of $\Aut_{R\otimes A\op}(R,R)\to \Aut_{R}(R,R)$ over $\id_R$, where $R$ has the $R\otimes A\op$-module structure induced by $f$. As the forgetful functor $_R\BiMod_A\to \LMod_R$ is conservative, this is equivalently the fiber of the corresponding mapping spaces, again at $\id_R$. Because $\id_R$ is in the image and this map is a map of grouplike $\mathbb E_\infty$-spaces, the fiber over $\id_R$ is equivalent to the fiber over $0$. 

In other words, it suffices to prove that for every $n$, $\pi_n(\map_{R\otimes A\op}(R,R),0)\to\pi_n(\map_R(R,R),0)$ is an isomorphism, or equivalently, that $\pi_0(\map_{R\otimes A\op}(R,\Omega^n R))\to \pi_0(\map_R(R,\Omega^n R))$ is an isomorphism. 

By adjunction, this forgetful map is equivalent to the map given by precomposition with $R\otimes A\to R$: $\pi_0(\map_{R\otimes A\op}(R,\Omega^n R))\to \pi_0(\map_{R\otimes A\op}(R\otimes A,\Omega^n R))$, and because $A$ is separable, $R\otimes A\to R$ is split, so that this map is always injective. It thus suffices to prove that it is surjective. 

Note that this is a map between hom sets in $\ho(_R\BiMod_A(\C))$, and by \Cref{cor : sepff}, it is thus equivalent to a map between hom sets in $\RMod_{hA}(\ho(\LMod_R))$. Furthermore, the source in both cases is free as an $R$-module, so by \Cref{lm : locff} it is equivalent to a map between hom sets in $_{hR}\BiMod_{hA}(\ho(\C))$. In other words, we are trying to prove that every map $R\to \Omega^n R$ of left $hR$-modules is right $hA$-linear. We note that $R,\Omega^n R$ are $R$-bimodules, and the right $A$-module structure is induced from the right $R$-module structure by restricting along $f: A\to R$. In other words, for both $R$ and $\Omega^n R$, the right $A$-module structure is given by $M\otimes A\to M\otimes R\to M$. 

It thus suffices to prove that the right $A$-action on $\Omega^n R$ agrees with the following map: $\Omega^n R\otimes A\simeq A\otimes \Omega^n R\to R\otimes \Omega^n R\to \Omega^n R$. Indeed, this is the case for $R$ by assumption, and it will thus follow immediately that any left $hR$-linear map $R\to \Omega^n R$ is also right $hA$-linear. 

Now, by assumption, we already know that this is the case for the $A$-action on $R$, so it suffices to show that the action map of $A$ on $\Omega^n R$ is given (up to homotopy) by $ \Omega^n R\otimes A\simeq \Omega^n(R\otimes A)\to \Omega^n R$, where the second map is $\Omega^n\rho$, $\rho$ being the right action of $A$ on $R$. But this is clear, as the left $A$-action on $\Omega^n R$ is obtained via restiction of scalars from the left $R$-action, which \emph{is} given this way. 
\end{proof}
\begin{rmk}
    Note that the end of this proof really identifies maps in $\ho(\C)$, and there is no coherence claim. This is what the results from \Cref{section:homotopycat} buy us. 
\pend \end{rmk}

In the proof, we really use the homotopy centrality of $f: A\to R$. The result is not true in general if we drop this hypothesis, as the following example shows (in fact, in this example, $R$ is also separable):
\begin{ex}\label{ex:spacenotequiv}
    We start by a computation in a homotopy category, namely, consider $\D$ the symmetric monoidal $1$-category of $\mathbb Z/2d$-graded $\mathbb Q$-vector spaces, where $d$ is some odd integer different from $1$. Let $H$ be some group with a nontrivial automorphism $\alpha$ of order $d$, and consider the corresponding semi-direct product $G:= H\rtimes \mathbb Z/d$, with projection map $p: G\to \mathbb Z/d$ and section $i:\mathbb Z/d\to G$. 
    
    We let $A = \mathbb Q[u]$ where $|u|= 2$, and $u^d= 1$, and $R= \bigoplus_{g\in G}\mathbb Q[2 p(g)]$, where the algebra structure is the natural one, namely, given by $\mathbb Q[2p(g)]\otimes \mathbb Q[2p(g')]\to \mathbb Q[2p(gg')]$ (note that $A$ is given by the same construction, replacing $G$ by $\mathbb Z/d$). Both $A$ and $R$ are separable in $\D$. Let $f: A\to R$ be given by the section $i$, i.e. $u$ maps to the generator of the $i(\sigma)$ summand in $R$ - this is easily checked to be an algebra map. 
    
    Given $h\in H$ such that $\alpha(h)\neq h$, let $g_0= (h,1)$, and consider the corresponding element $r_0$, corresponding to $1\in \mathbb Q[2p(g_0)] = \mathbb Q[2]$ which corresponds to a left $R$-linear map $R\to R[-2]$. We claim that this left $R$-linear map is not right $A$-linear.  Indeed, it is given by $r\mapsto rr_0$, and right $A$-linearity would be the claim that $rar_0 = rr_0a$ which, when $r=1$, is the claim that $ar_0 = r_0a$, which can be checked to be wrong, essentially because $i$ does not land in the center of $G$. More precisely, when $a=u$, $r_0a, ar_0$ live in different summands of $\bigoplus_{g\in G}\mathbb Q[2p(g)]$, one of them in the summand corresponding to $(h,\sigma)$, and the other in the summand corresponding to $(\alpha(h),\sigma)$.

    We claim that this is now enough to give a counterexample to the previous proposition when $f$ is not homotopy central. Indeed, consider $S= \mathbb Q[t^{\pm 1}]$ as a commutative algebra in $\Mod_\mathbb Q$, where $t$ has degree $2d$. The homotopy category of $\Mod_S$ is symmetric monoidally equivalent to $\D$, and the suspension on $\Mod_S$ corresponds to shifting in $\D$. Further, because $A,R$ are separable in $\D$, they can be lifted to algebras in $\Mod_S$ in a weakly unique way by \Cref{thm : e1lift}, similarly for the map $f:\tilde A\to \tilde R$\footnote{In this case, the algebras in question are just Thom spectra so one can actually give a relatively easy construction both of the algebras and the map, as well as a proof that they are separable. This is expanded upon in \Cref{ex:exsecondtime}.}, and because $A$ is commutative in $\D$, \Cref{thm : commlift} implies that $\tilde A$ is commutative in a unique way. We are now left with proving that $\Omega(\map_{\Alg_S}(\tilde A,\tilde R),f)$ is not discrete. By the analysis in the previous proof, it suffices to prove that $\hom_{R\otimes A}(R,\Omega^2 R)\to \hom_R(R,R)$ is not surjective. We have just done this! 
\pend \end{ex}
As an immediate corollary, we find:
\begin{cor}\label{cor:hocommtargetdiscrete}
    Assume $\C$ is additively symmetric monoidal, and let $A\in\Alg(\C)$ be separable and homotopy commutative, and $R\in\Alg(\C)$ be homotopy commutative. 

    In this case, $\map_{\Alg(\C)}(A,R)$ is discrete and equivalent (via the canonical map) to $\hom_{\Alg(\ho(\C))}(hA,hR)$. 
\end{cor}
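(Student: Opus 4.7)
The plan is to combine the two previous results, namely \Cref{thm : morsep} and \Cref{prop:discretecentralhocomm}. Indeed, \Cref{thm : morsep} already provides a bijection on $\pi_0$, so all that remains is to upgrade this to a full equivalence, i.e., to show that every connected component of $\map_{\Alg(\C)}(A,R)$ is contractible. Since $R$ need not be separable, we cannot invoke \Cref{thm : morsep} directly on the connectedness of components; this is where \Cref{prop:discretecentralhocomm} intervenes, but it requires homotopy centrality of the basepoint, so the main content is to verify that hypothesis.

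First, I would observe that any algebra map $f\colon A\to R$ is automatically homotopy central when $R$ is homotopy commutative. This is a diagram chase in $\ho(\C)$: the map $A\otimes R\xrightarrow{\id\otimes f}R\otimes R\xrightarrow{\mu_R} R$ composed on the left of the swap $\tau_{A,R}$ equals, by naturality of $\tau$, the map $\mu_R\circ\tau_{R,R}\circ(f\otimes\id_R)$, and homotopy commutativity of $R$ gives $\mu_R\circ\tau_{R,R}=\mu_R$ in $\ho(\C)$, while $\tau_{R,A}\circ\tau_{A,R}=\id$ by the symmetric axiom. So the two composites defining homotopy centrality agree in $\ho(\C)$, i.e., $f$ is homotopy central.

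Next, I would apply \Cref{prop:discretecentralhocomm} to deduce that $\Omega(\map_{\Alg(\C)}(A,R),f)$ is contractible for every $f$, which is to say $\map_{\Alg(\C)}(A,R)$ is discrete. Combined with the $\pi_0$-bijection of \Cref{thm : morsep}, this yields the desired equivalence with $\hom_{\Alg(\ho(\C))}(hA,hR)$.

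The ``hard parts'' have essentially all been done already: the genuine content is in \Cref{thm : morsep} (for the $\pi_0$-statement) and in \Cref{prop:discretecentralhocomm} (for the contractibility of components). The only additional observation, that homotopy commutativity of the target forces homotopy centrality of any map into it, is a short diagrammatic check in the homotopy category.
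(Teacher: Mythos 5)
Your proposal is correct and follows exactly the paper's route: the paper's proof likewise observes that homotopy commutativity of $R$ forces any algebra map $A\to R$ to be homotopy central, then invokes \Cref{prop:discretecentralhocomm} for discreteness (with the $\pi_0$-identification coming from \Cref{thm : morsep}). Your write-up just spells out the naturality-of-the-swap diagram chase that the paper leaves implicit.
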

\begin{proof}
    This follows from \Cref{prop:discretecentralhocomm} as any map $f:A\to R$ is homotopy central, by homotopy commutativity of $R$. 
\end{proof}
\begin{cor}\label{cor:discreteendopd}
 Assume $\C$ is additively symmetric monoidal, and let $A\in\Alg(\C)$ be separable and homotopy commutative.
 
 In this case, for any $n\geq 0$, $\map_{\Alg(\C)}(A^{\otimes n},A)$ is discrete and equivalent to $\hom_{\Alg(\ho(\C)}(hA^{\otimes n},hA)$. 
\end{cor}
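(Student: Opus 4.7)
The plan is to derive this as an immediate consequence of \Cref{cor:hocommtargetdiscrete} by taking the source to be $A^{\otimes n}$ and the target to be $A$. For this, I need to verify the three hypotheses of that corollary in the present setting: that $A^{\otimes n}$ is separable, that $A^{\otimes n}$ is homotopy commutative, and that $A$ is homotopy commutative.

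The first point follows from \Cref{lm:general}: part 3 says that the tensor product of two separable algebras is separable, and an induction gives the claim for all $n \geq 1$. For the boundary case $n = 0$, we note $A^{\otimes 0} = \one$, which is separable by part 1 of the same lemma (the unit is an idempotent algebra). The second and third points are both consequences of $A$ being homotopy commutative: a tensor product of homotopy commutative algebras is again homotopy commutative, so in particular $A^{\otimes n}$ is.

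With these hypotheses verified, \Cref{cor:hocommtargetdiscrete} applied to the pair $(A^{\otimes n}, A)$ directly yields that $\map_{\Alg(\C)}(A^{\otimes n}, A)$ is discrete and that the canonical map
\[
\map_{\Alg(\C)}(A^{\otimes n}, A) \to \hom_{\Alg(\ho(\C))}(h(A^{\otimes n}), hA)
\]
is an equivalence. Finally, $h(A^{\otimes n}) \simeq (hA)^{\otimes n}$ as algebras in $\ho(\C)$, since the symmetric monoidal functor $\C \to \ho(\C)$ commutes with tensor powers, so we may identify the target with $\hom_{\Alg(\ho(\C))}(hA^{\otimes n}, hA)$ as in the statement.

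There is no real obstacle here: the entire content of the corollary has been packaged into \Cref{cor:hocommtargetdiscrete}, and the only verification specific to this statement is the closure of separability and homotopy commutativity under tensor products, both of which are formal.
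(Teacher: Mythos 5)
Your proof is correct and is exactly the argument the paper intends (the paper states this corollary without proof, as an immediate consequence of \Cref{cor:hocommtargetdiscrete}): one applies that corollary to the pair $(A^{\otimes n},A)$ after noting that separability and homotopy commutativity are closed under tensor products (\Cref{lm:general}, parts 1 and 3). Your handling of the $n=0$ case and the identification $h(A^{\otimes n})\simeq (hA)^{\otimes n}$ are the only details worth spelling out, and you do both.
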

We can now prove \Cref{thm : opdlift}: 
\begin{proof}[Proof of \Cref{thm : opdlift}]
By \Cref{cor:discreteendopd}, the (symmetric monoidal) forgetful functor $\Alg(\C)\to \Alg(\ho(\C))$ restricts to a (symmetric monoidal) equivalence between the full subcategories spanned by $A^{\otimes n}, n\geq 0$ and $hA^{\otimes n},n\geq 0$ respectively. 

It therefore induces an equivalence between the space of $\Oo$-algebra structures on $A$, and the space of $\Oo$-algebra structres on $hA$, for any single-colored operad $\Oo$. Now, $hA$ is commutative, so that we $\Oo$-algebra structures on $hA$ in $\Alg(\ho(\C))$ are the same thing as $\Oo$-algebra structures in $\CAlg(\ho(\C))$, which is cocartesian symmetric monoidal. 

Therefore, by \cite[Proposition 2.4.3.9]{HA}, the assumption that $\Oo$ is weakly reduced guarantees that the space of such structures is contractible\footnote{In this case, $\CAlg(\ho(\C))$ is a $1$-category, so we do not really need anything as sophisticated as \cite[Proposition 2.4.3.9]{HA}}. 
\end{proof}
\begin{proof}[Proof of \Cref{thm : commlift}]
    The operad $\mathbb E_\infty$ is clearly weakly reduced, and by \cite[Corollary 5.1.1.5, Theorem 5.1.2.2]{HA}, $\mathbb E_\infty\otimes\mathbb E_1\simeq \mathbb E_\infty$.

    Similarly, by \Cref{ex:weaklyred}, the operads $\mathbb E_n$ are weakly reduced for $1\leq n\leq \infty$, and again by \cite[Theorem 5.1.2.2]{HA}, $\mathbb E_{n+1}\simeq \mathbb E_n\otimes\mathbb E_1$. 
\end{proof}
In fact, we could have guessed ahead of time that $A$ could be made at least $\mathbb E_2$ in a canonical way, via the following elementary observation:
\begin{obs}\label{lm: existse2}
Assume $\C$ is additively symmetric monoidal. Let $A\in\Alg(\C)$ be a separable algebra, which is homotopy commutative. The forgetful map $Z(A) = \hom_{A\otimes A\op}(A,A)\to A$ (cf. \Cref{nota:center}) is an equivalence of algebras, and in particular $A$ admits an $\mathbb E_2$-algebra structure, as $Z(A)$ always does.
\pend \end{obs}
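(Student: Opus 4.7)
The plan is to show that the forgetful map $Z(A) \to A$ is an equivalence in $\C$. Since this map is naturally a morphism of $\mathbb{E}_1$-algebras (cf.\ \cite[Section 5.3]{HA} and \Cref{nota:center}) and $Z(A)$ carries a canonical $\mathbb{E}_2$-algebra structure, the $\mathbb{E}_2$-structure on $A$ extending the given $\mathbb{E}_1$-structure will then follow by transport along this equivalence. To verify $Z(A) \to A$ is an equivalence in $\C$, it suffices to check this after passing to $\ho(\C)$, since the forgetful functor to the homotopy category detects equivalences.

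Assume without loss of generality that $\C$ is idempotent-complete. By \Cref{cor:centerabsolute}, the center of a separable algebra is preserved by any symmetric monoidal functor; applied to $\C \to \ho(\C)$, this naturally identifies the image of the forgetful map $Z(A) \to A$ with the corresponding forgetful map $\hom_{hA \otimes hA\op}(hA, hA) \to hA$ computed in $\ho(\C)$.

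Now $hA$ is a commutative and separable algebra in $\ho(\C)$ (the latter by \Cref{item : symmonsep}), so by \Cref{cor : commsepsplit} the multiplication $\mu: hA \otimes hA \to hA$ is a localization at an idempotent. Consequently $\mu^*: \Mod_{hA}(\ho(\C)) \to \Mod_{hA \otimes hA}(\ho(\C))$ is fully faithful (cf.\ \Cref{rmk:diagprop}), which yields $\hom_{hA \otimes hA}(hA, hA) \simeq \hom_{hA}(hA, hA) = hA$, with the forgetful map to $hA$ corresponding to the identity. This shows $Z(A) \to A$ is an equivalence in $\ho(\C)$, and hence in $\C$.

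The main obstacle is the naturality verification: one must check that under these identifications, the forgetful map really corresponds to the identity of $hA$, rather than merely some equivalence. This is a routine diagram chase, using the explicit description of the splitting $A \to Z(A) \to A$ from \Cref{rmk:centerretractlinear} (as precomposition with the separability idempotent $s$, together with the fact that $\mu \circ s = \id_A$) and the naturality in \Cref{cor:centerabsolute}.
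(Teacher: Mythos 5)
Your argument is correct and follows the paper's proof in its essential structure: reduce to $\ho(\C)$ using the fact that the internal hom $\hom_{A\otimes A\op}(A,A)$ over a separable algebra is preserved by the symmetric monoidal functor $\C\to\ho(\C)$ (the paper cites \Cref{prop : homsep}, you cite its consequence \Cref{cor:centerabsolute}), and then conclude in the $1$-category. The only difference is the endgame: where you invoke \Cref{cor : commsepsplit} and full faithfulness of $\mu^*$, the paper simply observes that for a literally commutative algebra in a symmetric monoidal $1$-category the map $Z(hA)\to hA$ is an isomorphism by a direct element-free check, which needs no further input from separability.
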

Indeed, by \Cref{prop : homsep}, this internal hom is preserved by passage to the homotopy category, so it suffices to prove the claim there. But now, $hA$ is literally a commutative algebra in a $1$-category, so the claim is obvious.
\begin{rmk}\label{rmk:equivhocomm}
    We can rephrase \Cref{thm : opdlift} as saying that for any weakly reduced operad $\mathcal O$, the forgetful map $\Alg_{\mathcal O\otimes\mathbb E_1}(\C)^\simeq\to  \Alg(\C)^\simeq$ has trivial fibers over separable, homotopy commutative algebras, and in particular is an equivalence when restricted to the appropriate components. 
\pend \end{rmk}
A special case of the previous remark is that $\CAlg(\C)^\simeq\to \Alg(\C)^\simeq$ is an equivalence when restricted to the components of algebras that are homotopy commutative and separable in the target. We note a corollary of this: 
\begin{cor}\label{cor:descenthocomm}
    Let $h\CSep(\C)$ denote the full subspace of $\Alg(\C)^\simeq$ spanned by the separable, homotopy commutative algebras. The functor $\C\mapsto h\CSep(\C) $, defined on additively symmetric monoidal \categories, is limit-preserving.
\end{cor}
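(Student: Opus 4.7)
The plan is to exhibit a natural equivalence $\CSep(-) \simeq h\CSep(-)$ as functors on additively symmetric monoidal \categories, and then invoke \Cref{cor:descentcommsep} which already gives the limit-preservation of $\CSep$.

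First, I would observe that the forgetful functor $\CAlg(\C) \to \Alg(\C)$ restricts to a natural map $\CSep(\C) \to h\CSep(\C)$. Indeed, the underlying $\mathbb E_1$-algebra of a commutative separable algebra is trivially separable, and is in particular homotopy commutative, so that the underlying $\mathbb E_1$-algebra lies in $h\CSep(\C)$. Naturality in symmetric monoidal functors $f : \C \to \D$ is immediate: both $\CSep$ and $h\CSep$ are preserved under such functors, because separability is preserved by \Cref{lm:general}.(\ref{item : symmonsep}), and homotopy commutativity is preserved because $f$ is symmetric monoidal (in particular induces a symmetric monoidal functor $\ho(\C) \to \ho(\D)$).

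Next, I would invoke \Cref{rmk:equivhocomm}, which, as a direct consequence of \Cref{thm : opdlift} applied to the weakly reduced \operad{} $\mathbb E_\infty$, asserts that the forgetful map $\CAlg(\C)^\simeq \to \Alg(\C)^\simeq$ has contractible fibers over the components corresponding to separable homotopy commutative algebras. The preimage of $h\CSep(\C)$ under this forgetful map is exactly $\CSep(\C)$ (since the image consists of commutative, separable algebras, and conversely any commutative separable algebra lies in this preimage), so the above means that the natural map $\CSep(\C) \to h\CSep(\C)$ constructed in the previous paragraph is an equivalence of spaces.

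Finally, combining the two previous paragraphs, the functors $\CSep$ and $h\CSep$ are naturally equivalent on additively symmetric monoidal \categories. Since $\CSep$ preserves limits by \Cref{cor:descentcommsep}, the same holds for $h\CSep$, which is the desired statement. No real obstacle is expected: the argument is purely a combination of the existing \Cref{cor:descentcommsep} with the rigidity of separable homotopy commutative algebras established in this section, and naturality of the equivalence is automatic.
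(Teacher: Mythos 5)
Your proposal is correct and follows exactly the paper's own argument: the paper likewise identifies $\CSep(\C)\to h\CSep(\C)$ as a natural equivalence via \Cref{rmk:equivhocomm} (i.e.\ \Cref{thm : opdlift} for $\mathcal O=\mathrm{Comm}$) and then invokes \Cref{cor:descentcommsep}. Your write-up merely spells out the naturality and the identification of the fiber in more detail than the paper does.
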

\begin{proof}
    This follows from \Cref{cor:descentcommsep} and \Cref{rmk:equivhocomm} in the case $\mathcal O = \mathrm{Comm}$: the natural map $\CSep(\C)\to h\CSep(\C)$ is an equivalence. 
\end{proof}
We also note the following corollary of \Cref{cor:hocommtargetdiscrete}:
\begin{cor}\label{thm : commmorsep}
Assume $\C$ is additively symmetric monoidal.
    Let $A,R\in\CAlg(\C)$. If $A$ is separable, then the forgetful maps 
    $$\map_{\CAlg(\C)}(A,R)\to \map_{\Alg(\C)}(A,R)\to \hom_{\Alg(\ho(\C))}(hA,hR)$$ are equivalences.

    More generally, if $\Oo$ is any \operad{} and $R\in\Alg_{\Oo\otimes\mathbb E_1}(\C)$ is an algebra whose underlying $\mathbb E_1$-algebra is homotopy commutative, then, viewing $A$ as an $\Oo\otimes\mathbb E_1$-algebra using the unique map of \operads{} $\Oo\otimes\mathbb E_1\to \mathbb E_\infty$, we find that the canonical map $$\map_{\Alg_{\Oo\otimes\mathbb E_1}(\C)}(A,R)\to \hom_{\Alg_{\Oo\otimes\mathbb E_1}(\ho(\C))}(hA,hR)$$ 
    is an equivalence. 
\end{cor}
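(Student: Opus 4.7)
The plan is to reduce the unqualified first statement to the more general one, and then prove the general statement by combining Dunn/Boardman--Vogt additivity with \Cref{cor:hocommtargetdiscrete}. For the reduction, the map $\map_{\Alg(\C)}(A,R) \to \hom_{\Alg(\ho(\C))}(hA,hR)$ is already an equivalence by \Cref{cor:hocommtargetdiscrete}, since $R \in \CAlg(\C)$ is in particular homotopy commutative. For the map $\map_{\CAlg(\C)}(A,R) \to \map_{\Alg(\C)}(A,R)$, I will use $\mathbb E_\infty \otimes \mathbb E_1 \simeq \mathbb E_\infty$ (\cite[Theorem 5.1.2.2]{HA}), so $\CAlg(\C) \simeq \Alg_{\mathbb E_\infty \otimes \mathbb E_1}(\C)$; the general statement applied with $\Oo = \mathbb E_\infty$ then gives $\map_{\CAlg(\C)}(A,R) \simeq \hom_{\CAlg(\ho(\C))}(hA,hR)$, and the latter set coincides with $\hom_{\Alg(\ho(\C))}(hA,hR)$ because in the $1$-category $\Alg(\ho(\C))$, a morphism between two commutative algebras is simply a morphism of associative algebras.

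For the general statement, I would first invoke the universal property of the Boardman--Vogt tensor product (\cite[Section 2.2.5]{HA}) to identify $\Alg_{\Oo \otimes \mathbb E_1}(\D) \simeq \Alg_\Oo(\Alg(\D))$, naturally in the input symmetric monoidal \category. Under this equivalence our map is induced by applying $\Alg_\Oo$ to the symmetric monoidal forgetful functor $\Psi : \Alg(\C) \to \Alg(\ho(\C))$, so it suffices to show that $\Alg_\Oo(\Psi)$ is an equivalence on mapping spaces from $A$ to $R$. The key idea is that $\map_{\Alg_\Oo(\D)}(A, R)$ admits a description as a limit of spaces of the form $\map_\D(A^{\otimes n}, R^{\otimes n})$, with restriction and coherence data prescribed by the operations of $\Oo$ --- either via the envelope presentation $\Alg_\Oo(\D) \simeq \Fun^\otimes(\mathrm{Env}(\Oo), \D)$, where a symmetric monoidal natural transformation is determined by its component on the generator together with compatibilities for each $\Oo$-operation, or via the monadic bar resolution of $A$ by free $\Oo$-algebras.

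Granting such a description, for $\D = \Alg(\C)$ each $A^{\otimes n}$ is separable (by \Cref{lm:general}) and homotopy commutative (since $A$ is commutative), and each $R^{\otimes n}$ is homotopy commutative as an $\mathbb E_1$-algebra; hence \Cref{cor:hocommtargetdiscrete} yields that $\Psi$ induces an equivalence of discrete spaces $\map_{\Alg(\C)}(A^{\otimes n}, R^{\otimes n}) \to \hom_{\Alg(\ho(\C))}(hA^{\otimes n}, hR^{\otimes n})$ for every $n$. A limit of discrete spaces is discrete, and term-wise equivalences of limit diagrams induce equivalences on limits, so $\Alg_\Oo(\Psi)$ will be an equivalence on the mapping space from $A$ to $R$, as required.

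The main obstacle is making the ``limit of mapping spaces'' description rigorous and manifestly $\Psi$-natural. Using the bar resolution this takes the form $\map_{\Alg_\Oo(\D)}(A, R) \simeq \lim_{[n] \in \Delta} \map_\D(T_\Oo^n(A), R)$ where $T_\Oo$ is the free $\Oo$-algebra monad and $T_\Oo^n(A)$ is built from tensor powers of $A$ and spaces of $\Oo$-operations; the discreteness of every term on the right (again by \Cref{cor:hocommtargetdiscrete}) then rules out any higher-coherence issue and reduces the verification to an equivalence of $1$-categorical totalizations.
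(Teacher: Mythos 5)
Your proposal is correct and follows essentially the same route as the paper: reduce the first statement to the second via $\mathbb E_\infty\otimes\mathbb E_1\simeq\mathbb E_\infty$, identify $\Alg_{\Oo\otimes\mathbb E_1}(\C)\simeq\Alg_\Oo(\Alg(\C))$, and deduce the equivalence of $\Oo$-algebra mapping spaces from the discreteness statement \Cref{cor:hocommtargetdiscrete} applied to the (separable, homotopy commutative) tensor powers of $A$. The ``main obstacle'' you flag is exactly what the paper isolates as \Cref{lm:ffalgebra}, proved via the pullback-of-arrow-categories argument of \Cref{lm:mapfunclocal} applied to $\Alg_\Oo(\C)\subset\Fun(\Fin_*,\C^\otimes)\times_{\Fun(\Fin_*,\Fin_*)}\{\id\}$; of your two suggested routes, the envelope/symmetric-monoidal-transformation one is the paper's, whereas the monadic bar resolution would additionally require free $\Oo$-algebras (hence colimits) in $\Alg(\C)$ and $\Alg(\ho(\C))$ that a merely additively symmetric monoidal $\C$ need not have.
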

\begin{rmk}
    The condition on $R$ in the second part of the statement is automatic if $\Oo$ is weakly reduced and has at least one operation in arity $2$, e.g. for $\Oo= \mathbb E_n, 1\leq n\leq \infty$.
\pend \end{rmk}
To see that this really is a corollary, we first record the following classical lemma: 

\begin{lm}\label{lm:ffalgebra}
    Let $f:\C\to \D$ be a symmetric monoidal functor, and $A,B\subset \C$ two full subcategories. Assume $A$ is closed under tensor products in $\C$, and furthermore assume that for every $a\in A, b\in B$, the canonical map $\map_\C(a,b)\to \map_\D(f(a),f(b))$ is an equivalence. 

    In this case, for any \operad{} $\Oo$ and any $R\in\Alg_\Oo(A)$, viewed as an $\Oo$-algebra in $\C$, and any $S\in\Alg_\Oo(\C)$ whose underlying objects are in $B$, the canonical map $\map_{\Alg_\Oo(\C)}(R,S)\to \map_{\Alg_\Oo(\D)}(f(R),f(S))$ is an equivalence.
\end{lm}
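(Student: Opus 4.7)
The plan is to express the mapping space in $\Alg_\Oo(\C)$ as a limit of mapping spaces of the form $\map_\C(a,b)$ with $a\in A$ and $b\in B$, and then invoke the hypothesis termwise. I would use the description of $\Alg_\Oo(\C)$ as the \category{} of \operad{} maps $\Oo^\otimes\to\C^\otimes$ over $\Fin_*$, so that $\map_{\Alg_\Oo(\C)}(R,S)$ is the space of natural transformations over $\Fin_*$, computed as an end over $\Oo^\otimes$ of fiberwise mapping spaces, with naturality constraints coming from all morphisms in $\Oo^\otimes$.

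The heart of the argument is identifying the building blocks of this end. For an object $X=(c_1,\ldots,c_n)$ of $\Oo^\otimes$ lying over $\langle n\rangle$, the fiberwise mapping space is $\prod_i\map_\C(R(c_i),S(c_i))$; each factor has source $R(c_i)\in A$ (since the colors of $R$ lie in $A$) and target $S(c_i)\in B$ (since the underlying objects of $S$ lie in $B$), so the hypothesis applies. The naturality constraint attached to a morphism of $\Oo^\otimes$ lying over some $\beta:\langle n\rangle\to\langle m\rangle$ involves, via the cocartesian structure of $\C^\otimes\to\Fin_*$, mapping spaces of the form $\prod_j\map_\C(\bigotimes_{i\in\beta^{-1}(j)}R(c_i),S(d_j))$; here the source is a tensor product of colors of $R$, hence in $A$ by closure under tensor products, and the target is a single color of $S$, hence in $B$. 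Thus every mapping space appearing in the end diagram has source in $A$ and target in $B$.

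Since $f$ is symmetric monoidal, it induces a morphism of end diagrams from the $\C$-version to the $\D$-version that on each component is of the form $\map_\C(a,b)\to\map_\D(fa,fb)$ for suitable $a\in A$, $b\in B$, and hence is an equivalence by hypothesis. Limits in $\Ss$ preserve pointwise equivalences, so the induced map $\map_{\Alg_\Oo(\C)}(R,S)\to\map_{\Alg_\Oo(\D)}(fR,fS)$ is an equivalence, as desired.

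The main obstacle is the bookkeeping: setting up the end formula for operad maps cleanly and checking that every piece has the form required by the hypothesis. A perhaps slicker alternative uses the envelope $\mathrm{Env}(\Oo)$ to realize $\Alg_\Oo(\C)\simeq\Fun^\otimes(\mathrm{Env}(\Oo),\C)$; a symmetric monoidal natural transformation is then determined by its components on the colors of $\Oo$, and all mapping spaces appearing in the resulting limit are again of the form $\map_\C(R(c_1)\otimes\cdots\otimes R(c_n),S(c_0))$, which sit in the required $(A,B)$-form.
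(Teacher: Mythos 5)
Your proof is correct and is essentially the argument the paper gives: both reduce $\map_{\Alg_\Oo(\C)}(R,S)$ to a limit of mapping spaces in $\C^\otimes$, decompose each such space over $\hom_{\Fin_*}(\langle n\rangle,\langle m\rangle)$ into products of $\map_\C(\bigotimes_{i\in\alpha^{-1}(j)}R(c_i),S(d_j))$, and apply the hypothesis using closure of $A$ under tensor products. The only cosmetic difference is that the paper packages the "pointwise equivalence of diagrams implies equivalence of limits" step as a separate lemma about functor categories (proved via a pullback square with the arrow category) rather than writing out the end formula explicitly.
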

This is in turn a special case of:
\begin{lm}\label{lm:mapfunclocal}
    Let $f:C\to D$ be a functor between two \categories{}, and let $A,B\subset C$ be full subcategories such that for each $a\in A,b\in B$, the canonical map $\map_C(a,b)\to \map_D(f(a),f(b))$ is an equivalence. 

    Let $X,Y:I\to C$ be two functors such that for each $i\in I, X_i\in A,Y_i\in B$. In this case, the canonical map $$\map_{\Fun(I,C)}(X,Y)\to \map_{\Fun(I,D)}(f\circ X,f\circ Y)$$ is an equivalence. 
\end{lm}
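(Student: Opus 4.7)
The approach is to reduce the claim to a pointwise verification via the end formula for mapping spaces in functor \categories. Specifically, for any \category{} $E$ and functors $U,V:I\to E$, there is a natural equivalence
$$\map_{\Fun(I,E)}(U,V)\simeq \lim_{(\alpha:i\to j)\in \mathrm{Tw}(I)}\map_E(U_i,V_j),$$
where $\mathrm{Tw}(I)$ denotes the twisted arrow \category{} of $I$; this equivalence is natural in the triple $(E,U,V)$.

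First, I would apply this formula to $(C,X,Y)$ and to $(D,f\circ X,f\circ Y)$. Under these identifications, the canonical comparison map $\map_{\Fun(I,C)}(X,Y)\to \map_{\Fun(I,D)}(f\circ X,f\circ Y)$ becomes the map of limits over $\mathrm{Tw}(I)$ induced, objectwise, by the comparison maps $\map_C(X_i,Y_j)\to \map_D(f(X_i),f(Y_j))$ indexed by the arrows $\alpha:i\to j$ of $I$.

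Next, since limits preserve pointwise equivalences, it suffices to check that each of these objectwise maps is an equivalence. But $X_i\in A$ and $Y_j\in B$ by hypothesis on the functors $X,Y$, and the assumption on $(A,B,f)$ says exactly that $\map_C(a,b)\to \map_D(f(a),f(b))$ is an equivalence for all $a\in A$ and $b\in B$. This concludes the argument.

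The main obstacle is invoking the end / twisted arrow formula for mapping spaces in a functor \category. This is a standard fact in higher category theory, obtainable for instance from the straightening--unstraightening identification of $\map_{\Fun(I,E)}(U,V)$ with sections of the two-sided fibration over $\mathrm{Tw}(I)$ classified by $(i,j)\mapsto \map_E(U_i,V_j)$, or from the cotensoring description of $\Fun(I,-)$. Once this formula is in place, the rest of the proof is purely formal and consists of reading off the hypothesis.
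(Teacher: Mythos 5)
Your proof is correct, and it is in fact exactly the route the paper acknowledges in the first sentence of its proof ("this follows directly from the description of mapping spaces in $\Fun(I,C)$ as ends") before deliberately taking a different, more elementary path. The paper's argument avoids the twisted arrow \category{} entirely: it forms the full subcategory $C_{A,B}\subset C^{\Delta^1}$ of arrows $a\to b$ with $a\in A$, $b\in B$, observes that the hypothesis says precisely that the square comparing $C_{A,B}$ with $D^{\Delta^1}$ over $A\times B\to D\times D$ is a pullback, applies $\Fun(I,-)$ (which preserves pullbacks), and reads off the mapping spaces as fibers. What your approach buys is concision and a clean reduction to a pointwise statement, at the cost of invoking the end formula for mapping spaces in functor \categories{} together with its naturality in the ambient \category{} --- the one point you should make sure is genuinely available, since identifying the comparison map with the induced map of limits over $\mathrm{Tw}(I)$ requires naturality of the equivalence with respect to $f:C\to D$, not just its existence for each of $C$ and $D$ separately. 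What the paper's approach buys is self-containedness: it uses only the definition of mapping spaces as fibers of $E^{\Delta^1}\to E\times E$ and the fact that $\Fun(I,-)$ preserves limits. Both are complete proofs.
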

\begin{proof}
  This follows directly from the description of mapping spaces in $\Fun(I,C)$ as ends, cf. \cite[Proposition 5.1]{gepnerhaugsengnikolaus}, but for the sake of completeness, we give here a more elementary proof. 

Let $C_{A,B}\subset C^{\Delta^1}$ be the full subcategory spanned by arrows $a\to b$ where $a\in A,b\in B$. We note that our assumption guarantees that the following is a pullback square: 
\[\begin{tikzcd}
	{C_{A,B}} & {D^{\Delta^1}} \\
	{A\times B} & {D\times D}
	\arrow[from=2-1, to=2-2]
	\arrow[from=1-1, to=1-2]
	\arrow[from=1-1, to=2-1]
	\arrow[from=1-2, to=2-2]
\end{tikzcd}\]
In particular, it remains so after taking $\Fun(I,-)$. We now note that $\Fun(I,C_{A,B})\simeq \Fun(I,C)_{\Fun(I,A),\Fun(I,B)}$ and $\Fun(I,D^{\Delta^1})\simeq \Fun(I,D)^{\Delta^1}$ compatibly. 

Now for $X,Y$ as in the statement, $\map(X,Y)$ is the fiber of $\Fun(I,C)^{\Delta^1}\to \Fun(I,C)\times\Fun(I,C)$ over $(X,Y)$, so that by fullness of $A,B\subset C$, it is also the fiber of $\Fun(I,C)_{\Fun(I,A),\Fun(I,B)}\to \Fun(I,A)\times \Fun(I,B)$ over $(X,Y)$ and thus, because the above square is a pullback, the fiber of $\Fun(I,D)^{\Delta^1}\to \Fun(I,D)\times\Fun(I,D)$ over $(f\circ X,f\circ Y)$, i.e. $\map(f\circ X,f\circ Y)$, as claimed. 
  \end{proof}
\begin{proof}[Proof of \Cref{lm:ffalgebra}]
    $\Alg_\Oo(\C)$ (resp. $\Alg_\Oo(\D)$) is a full subcategory of $\Fun(\Fin_*,\C^\otimes)\times_{\Fun(\Fin_*,\Fin_*)}\{\id\}$ (resp. $\Fun(\Fin_*,\D^\otimes)\times_{\Fun(\Fin_*,\Fin_*)}\{\id\}$). 
    
    We can thus apply \Cref{lm:mapfunclocal} here, by taking $A^\otimes$ to be the full suboperad of $\C^\otimes$ spanned by objects of $A$, and taking $B^\otimes$ to be the full suboperad of $\C^\otimes$ spanned by objects of $B$. 
    
    We simply need to check the assumptions on $f^\otimes: \C^\otimes\to \D^\otimes$, i.e. we need to prove that for any tuples $A_1,...,A_n\in A, B_1,...,B_m\in B$ with corresponding objects $\underline A\in\C^\otimes_{\langle n \rangle}, \underline B\in\C^\otimes_{\langle m\rangle}$, the canonical map $$\map_{\C^\otimes}(\underline A,\underline B)\to \map_{\D^\otimes}(f^\otimes\underline A,f^\otimes\underline B)$$ is an equivalence. 

    This map is a map of spaces over $\hom_{\Fin_*}(\langle n\rangle, \langle m\rangle)$ and so we can take fibers over a given morphism $\alpha:\langle n\rangle\to \langle m \rangle$, and because $f^\otimes$ is symmetric monoidal, it is compatible with the equivalences $\map_{\C^\otimes}(\underline A,\underline B)\simeq \prod_{i\in\langle m\rangle^o}(\bigotimes_{j\in\alpha^{-1}(i)}A_j, B_i)$; and so the claim follows from the assumption on $A,B$, and the fact that $A$ is closed under tensor products. 
\end{proof}
\begin{proof}[Proof of \Cref{thm : commmorsep}]
 This follows again from \Cref{cor:hocommtargetdiscrete}, using the (definitional) equivalence $\Alg_{\Oo\otimes\mathbb E_1}(\C)\simeq \Alg_{\Oo}(\Alg_{\mathbb E_1}(\C))$, and \Cref{lm:ffalgebra} - we apply the latter to the symmetric monoidal functor $\Alg(\C)\to \Alg(\ho(\C))$, with the full subcategories $A,B$ spanned on the one hand by the commutative separable algebras, and on the other hand by the homotopy commutative algebras. 
\end{proof}
We also record the following special case explicitly: 
\begin{cor}\label{cor : cotrunc}
Assume $\C$ is additively symmetric monoidal.
Let $A\in \CAlg(\C)$ be a commutative separable algebra, and $R\in\CAlg(\C)$ an arbitrary commutative algebra. In this case, $\map_{\CAlg(\C)}(A,R)$ is $0$-truncated, i.e. discrete. 
\end{cor}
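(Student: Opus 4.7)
The plan is to simply observe that this corollary is the $\Oo = \mathrm{Comm}$ (or really, any situation where $R$ is homotopy commutative) specialization of \Cref{thm : commmorsep}. Indeed, since $R$ is commutative, it is in particular homotopy commutative, so the hypotheses of \Cref{thm : commmorsep} are met with both $A$ and $R$ playing their respective roles.

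Concretely, I would invoke \Cref{thm : commmorsep} to get a canonical equivalence
\[
\map_{\CAlg(\C)}(A,R)\xrightarrow{\simeq}\hom_{\Alg(\ho(\C))}(hA,hR).
\]
The target is a hom set in the $1$-category $\Alg(\ho(\C))$, and thus is a discrete space. Transporting this along the equivalence gives the conclusion that $\map_{\CAlg(\C)}(A,R)$ is $0$-truncated.

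There is no real obstacle here: the work has already been done in proving \Cref{cor:hocommtargetdiscrete} and its upgrade to commutative algebras via \Cref{lm:ffalgebra}. The only thing to double-check is that the hypothesis ``$A$ is separable'' in \Cref{thm : commmorsep} is indeed sufficient; here $A\in\CAlg(\C)$ means in particular that its underlying $\mathbb E_1$-algebra is separable and homotopy commutative, so this is automatic.
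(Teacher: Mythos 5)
Your proof is correct, and in fact the paper itself acknowledges this route: immediately before the corollary it says the statement ``is of course a special case of the above'' (namely \Cref{thm : commmorsep}), which is exactly your argument --- the mapping space is identified with a hom-set in the $1$-category $\Alg(\ho(\C))$, hence discrete. However, the proof the paper actually writes out is a different, deliberately more elementary one: it shows the diagonal $\map_{\CAlg(\C)}(A,R)\to \map_{\CAlg(\C)}(A,R)\times\map_{\CAlg(\C)}(A,R)$ is a monomorphism by observing that, since coproducts in $\CAlg(\C)$ are tensor products, this amounts to the multiplication $A\otimes A\to A$ being an epimorphism of commutative algebras, which follows from its being a localization at an idempotent (\Cref{cor : commsepsplit}). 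Your approach buys the stronger identification of the mapping space with $\hom_{\Alg(\ho(\C))}(hA,hR)$ but leans on the full strength of \Cref{cor:hocommtargetdiscrete} and \Cref{lm:ffalgebra}; the paper's alternative proves only $0$-truncatedness but uses nothing beyond the idempotent splitting of the multiplication, which is why the author records it as potentially ``useful in different contexts.'' Both are valid; yours is the one the paper implicitly endorses as the canonical derivation.
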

It is of course a special case of the above, but to make this consequence more concrete, we give an alternative, more elementary proof that could be useful in different contexts. 
\begin{proof}[Alternative proof of \Cref{cor : cotrunc}]
    It suffices to argue that the diagonal map $\map_{\CAlg(\C)}(A,R)\to \map_{\CAlg(\C)}(A,R)\times\map_{\CAlg(\C)}(A,R)$ is an inclusion of components, i.e. a monomorphism. 
    
    Since $\CAlg(\C)$ admits coproducts given by tensor products, this amounts to the claim that the multiplication map $A\otimes A\to A$ is an epimorphism in $\CAlg(\C)$. But this follows immediately from it being a localization at an idempotent, cf. \Cref{cor : commsepsplit}. 
\end{proof}
\subsection{Deformation theory and étale algebras}\label{section:etale}
In the specific case where $\C = \Mod_R$, for some connective ring spectrum, we can try, as in the étale case, to relate connective commutative separable algebras to their $\pi_0$, rather than to their corresponding homotopy algebra $\ho(\C)$. In that regard, the usual techniques of deformation theory work just as well as in the étale case, cf. \cite[Section 7.5]{HA}. We explain how this works in our situation. In fact, a big chunk of the deformation theory works for general $0$-cotruncated commutative algebras.  For a little while, we will therefore be in the setting of spectra and no longer a general $\C$ (although many of these results could be phrased more genreally in the presence of a $\mathrm{t}$-structure). 
\begin{prop}
    Let $S$ be a connective commutative ring spectrum, $R$ a connective commutative $S$-algebra, and $A$ a $0$-cotruncated connective commutative $S$-algebra. 

    In this case, the canonical maps $$\map_{\CAlg_S}(A,R)\to \map_{\CAlg_{\pi_0(S)}}(A\otimes_S\pi_0(S),\pi_0(R))\to \map_{\CAlg_{\pi_0(S)}^\heart}(\pi_0(A),\pi_0(R))$$ are equivalences. 
\end{prop}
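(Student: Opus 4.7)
The plan is to handle the two maps separately. For the second map, I would use the standard adjunction: $\pi_0 : \CAlg_{\pi_0(S)}^{\cn}\to\CAlg_{\pi_0(S)}^{\heart}$ is left adjoint to the fully faithful inclusion of discrete algebras. Since $\pi_0(R)$ is discrete,
\[
\map_{\CAlg_{\pi_0(S)}}(A\otimes_S\pi_0(S),\pi_0(R))\simeq \map_{\CAlg_{\pi_0(S)}^{\heart}}(\pi_0(A\otimes_S\pi_0(S)),\pi_0(R)),
\]
and $\pi_0(A\otimes_S\pi_0(S))\cong\pi_0(A)\otimes_{\pi_0(S)}\pi_0(S)\cong\pi_0(A)$, which gives the second equivalence on the nose.

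For the first map, the base-change adjunction identifies its codomain with $\map_{\CAlg_S}(A,\pi_0(R))$, so the map is induced by the truncation $R\to\pi_0(R)=\tau_{\leq 0}R$. Writing $R\simeq \lim_n\tau_{\leq n}R$ (valid since $R$ is connective), it suffices to show each Postnikov step $\map_{\CAlg_S}(A,\tau_{\leq n}R)\to\map_{\CAlg_S}(A,\tau_{\leq n-1}R)$ is an equivalence for $n\geq 1$. By Lurie's description of Postnikov towers of connective commutative algebras as towers of square-zero extensions (\cite[Section 7.4.1]{HA}), this step fits in a pullback square of mapping spaces in which the fiber over any $f:A\to\tau_{\leq n-1}R$ is either empty or a torsor under $\Omega^\infty\Map_{\Mod_A}(L_{A/S},f^*\pi_n(R)[n])$, with non-emptiness controlled by an obstruction class in $\pi_0\Omega^\infty\Map_{\Mod_A}(L_{A/S},f^*\pi_n(R)[n+1])$. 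Both the obstruction and the torsor are trivial as soon as $L_{A/S}=0$.

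The key step, and the main obstacle, is therefore to deduce $L_{A/S}=0$ from the $0$-cotruncation hypothesis alone. For any connective $N\in\Mod_A$ and any $k\geq 0$, the split square-zero extension $A\oplus N[k]$ is a connective commutative $S$-algebra, and the fiber of $\map_{\CAlg_S}(A,A\oplus N[k])\to\map_{\CAlg_S}(A,A)$ over $\id_A$ identifies with $\Omega^\infty\Map_{\Mod_A}(L_{A/S},N[k])$. By $0$-cotruncation both source and target are discrete, forcing this fiber to be discrete, i.e.\ $\pi_i\Map_{\Mod_A}(L_{A/S},N[k])=0$ for all $i\geq 1$. As $(i,k)$ varies over $i\geq 1,\, k\geq 0$, the integer $i-k$ takes every value in $\Z$, so every homotopy group of the mapping spectrum $\Map_{\Mod_A}(L_{A/S},N)$ vanishes. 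Hence $\Map_{\Mod_A}(L_{A/S},N)\simeq 0$ for every connective $N\in\Mod_A$; taking $N=L_{A/S}$ (itself connective, as $A$ is a connective $S$-algebra) yields $L_{A/S}=0$, closing the argument.
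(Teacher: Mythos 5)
Your proof is correct, but it diverges from the paper's argument at the decisive step. Both arguments dispose of the second map by the same $\pi_0\dashv(\text{inclusion})$ adjunction, and both reduce the first map to the Postnikov tower $R\simeq\lim_n\tau_{\leq n}R$ and thence to square-zero extensions. The paper, however, never invokes the cotangent complex: it notes that after applying $\map_{\CAlg_S}(A,-)$ a general square-zero extension is controlled by the split one $R\oplus\Sigma M$ (via the retraction $R\oplus\Sigma M\to R$), and for the split one the two maps $d_0,d_\eta\colon R\to R\oplus\Sigma M$ induce \emph{equal injections of sets} by $0$-cotruncatedness, so the pullback of sets is everything. You instead extract from $0$-cotruncatedness the stronger statement $L_{A/S}\simeq 0$ --- by testing against the split square-zero extensions $A\oplus N[k]$ and letting $k$ sweep out all homotopy groups of $\Map_{\Mod_A}(L_{A/S},N)$ --- and then run the standard obstruction theory for lifting along square-zero extensions. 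Your deduction is sound: the fiber of a map of discrete spaces is discrete, $i-k$ ranges over all of $\Z$ as $i\geq 1$ and $k\geq 0$ vary, and $L_{A/S}$ is connective, so $\Map_{\Mod_A}(L_{A/S},L_{A/S})\simeq 0$ forces $L_{A/S}\simeq 0$. The trade-off: the paper's softer ``pullback of sets along equal injections'' trick is reused verbatim in \Cref{prop:definv0cotrunc} for an abstract limit-preserving moduli functor, where no cotangent complex is available; your route yields a genuinely stronger intermediate conclusion ($L_{A/S}=0$, i.e.\ $A$ is formally étale over $S$) that the paper does not record, at the cost of being specific to mapping spaces.
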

\begin{proof}
    Note that, for both maps, by adjunction it suffices to prove that the map $\map_{\CAlg_S}(A,R)\to \map_{\CAlg_S}(A,\pi_0(R))$ is an equivalence. 

    As $R\simeq \lim_n R_{\leq n}$, it suffices to prove that each $R_{\leq n+1}\to R_{\leq n}$ induces an equivalence $\map_{\CAlg_S}(A,R_{\leq n+1})\to \map_{\CAlg_S}(A,R_{\leq n})$. 

    For this, we note that $R_{\leq n+1}\to R_{\leq n}$ is a square zero extension \cite[Corollary 7.4.1.28]{HA}, so it suffices to prove this claim for arbitrary square zero extensions of commutative $S$-algebras by connective modules. 

    So let $\widetilde R\to R$ denote such a square zero extension, classified by a pullback square 
    \[\begin{tikzcd}
	{\widetilde R} & R \\
	R & {R\oplus \Sigma M}
	\arrow["p", from=1-1, to=1-2]
	\arrow["{d_0}", from=1-2, to=2-2]
	\arrow["{d_\eta}"', from=2-1, to=2-2]
	\arrow["p"', from=1-1, to=2-1]
\end{tikzcd}\]
where $M$ is connective. As this is a pullback square, it remains so after applying $\map_{\CAlg_S}(A,-)$, and so, to prove that the left vertical map becomes an equivalence, it suffices to prove that this is so for the right vertical map. But the right vertical map has a left inverse, namely, the projection $R\oplus \Sigma M\to R$, so it suffices to prove that this one gets sent to an equivalence. 

However, this projection map $R\oplus \Sigma M\to R$ is a trivial square zero extension, so it suffices to prove the claim for these ones, i.e., extensions where the corresponding pullback square has $d_\eta \simeq d_0$. This is where we use $0$-cotruncatedness: applying $\map_{\CAlg_S}(A,-)$ yields a pullback square of \emph{sets} of the form: 
\[\begin{tikzcd}
	{\map_{\CAlg_S}(A,\widetilde R)} & {\map_{\CAlg_S}(A,R)} \\
	{\map_{\CAlg_S}(A,R)} & {\map_{\CAlg_S}(A,R\oplus \Sigma M)}
	\arrow["p", from=1-1, to=1-2]
	\arrow["{d_0}", from=1-2, to=2-2]
	\arrow["{d_0}"', from=2-1, to=2-2]
	\arrow["p"', from=1-1, to=2-1]
\end{tikzcd}\]
Where the two $d_0$'s really are the same map, and further are (split) injections. The pullback of sets along injections is given by the intersection of the images. But if the maps are equal, then their images are equal too, so that the intersection is the whole thing. This implies that the left vertical map is an equivalence, as was to be proved. 
\end{proof}
\newcommand{\Q}{\mathcal Q}
\begin{prop}\label{prop:definv0cotrunc}

    Let $\C\mapsto \Q(\C)$ be a limit-preserving subfunctor of the functor $\C\mapsto \CAlg(\C)$ defined on the category of  symmetric monoidal $\infty$-categories.

    Let $\Q^\cn$ denote the restriction of $\Q$ to the category $\CAlg^\cn$ of commutative connective ring spectra along $R\mapsto \Mod_R^\cn$. Suppose that the image of $\Q^\cn(R)\to \CAlg(\Mod_R^\cn)$ consists of $0$-cotruncated algebras. 
    
    In this case, for any commutative connective ring spectrum $R$, the canonical map $R\to \pi_0(R)$ induces an equivalence $\Q^\cn(R)\to \Q^\cn(\pi_0(R))$.
\end{prop}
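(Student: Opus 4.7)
The plan is to combine the previous proposition (on mapping spaces out of $0$-cotruncated algebras) with the limit-preserving property of $\Q^\cn$, applied to the Postnikov tower $R \simeq \lim_n \tau_{\leq n}R$ in $\CAlg^\cn$ and to the square-zero pullback of each successive truncation.

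First, I would observe that $\Mod^\cn_{-} : \CAlg^\cn \to \CAlg(\Cat_\infty)$ sends the Postnikov tower of $R$ to a limit diagram (a standard consequence of nilcompleteness of connective modules). Combined with limit-preservation of $\Q$, this gives $\Q^\cn(R) \simeq \lim_n \Q^\cn(\tau_{\leq n} R)$, so it suffices to show each map $\Q^\cn(\tau_{\leq n+1} R) \to \Q^\cn(\tau_{\leq n} R)$ is an equivalence. Setting $R_0 := \tau_{\leq n}R$ and $M := \pi_{n+1}(R)[n+1]$, the square-zero extension $\tau_{\leq n+1}R \to R_0$ fits in a pullback square $\tau_{\leq n+1}R \simeq R_0 \times_{R_0 \oplus \Sigma M} R_0$ in $\CAlg^\cn$, where the two maps into $R_0 \oplus \Sigma M$ are the zero section $d_0$ and the classifying derivation $d_\eta$. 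Another application of the limit-preservation properties yields
\[
\Q^\cn(\tau_{\leq n+1}R) \simeq \Q^\cn(R_0) \times_{\Q^\cn(R_0 \oplus \Sigma M)} \Q^\cn(R_0),
\]
with the structure maps given by basechange along $d_0, d_\eta$.

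It then suffices to show this pullback collapses to $\Q^\cn(R_0)$. I would establish two facts. First, the basechange $d_0^* : \Q^\cn(R_0) \to \Q^\cn(R_0 \oplus \Sigma M)$ is a monomorphism in $\Cat_\infty$, because it admits the basechange along the projection $p : R_0 \oplus \Sigma M \to R_0$ as a retraction: $p \circ d_0 = \id_{R_0}$ and $\Q$ is functorial. Second, the two basechange functors $d_0^*, d_\eta^*$ agree as functors $\Q^\cn(R_0) \to \Q^\cn(R_0 \oplus \Sigma M)$. For this, I would invoke the previous proposition: for any $A \in \Q^\cn(R_0)$, both $(d_0)_! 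A$ and $(d_\eta)_! A$ are $0$-cotruncated connective $(R_0 \oplus \Sigma M)$-algebras with the same $\pi_0$ (namely $\pi_0 A$ as a $\pi_0 R_0$-algebra, since $\pi_0(R_0 \oplus \Sigma M) = \pi_0 R_0$ and both $d_0, d_\eta$ induce the identity on $\pi_0$). The previous proposition identifies the mapping spaces between such algebras with $\hom$-sets in $\CAlg^\heart_{\pi_0 R_0}$, so $\id_{\pi_0 A}$ provides a canonical equivalence $(d_0)_! A \simeq (d_\eta)_! A$.

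The main obstacle will be upgrading these pointwise equivalences to a natural equivalence of functors $d_0^* \simeq d_\eta^*$. However, once one knows that the relevant mapping spaces are discrete and match the corresponding $\pi_0$-hom-sets (as the previous proposition guarantees), naturality is automatic: the required coherence squares live in discrete sets of $\pi_0 R_0$-algebra maps and visibly commute on $\pi_0$. Combining the two facts, the pullback $\Q^\cn(R_0) \times_{\Q^\cn(R_0 \oplus \Sigma M)} \Q^\cn(R_0)$ is taken along a common split monomorphism and therefore collapses to $\Q^\cn(R_0)$ via the diagonal, completing the reduction.
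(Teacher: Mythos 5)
Your reduction to square-zero extensions via the Postnikov tower matches the paper's, but from there you diverge: the paper never compares $d_0$ and $d_\eta$ for a general square-zero extension. Instead it notes that the right-hand vertical map $\Q^\cn(R_0)\xrightarrow{(d_0)_!}\Q^\cn(R_0\oplus\Sigma M)$ of the pullback square has the projection as a one-sided inverse, so it suffices to show $\Q^\cn(R_0\oplus\Sigma M)\to\Q^\cn(R_0)$ is an equivalence for \emph{trivial} square-zero extensions; those are themselves classified by two \emph{equal} derivations $R_0\rightrightarrows R_0\oplus\Sigma^2M$, at which point the ``pullback of sets along two equal injections'' argument applies directly to the (discrete) mapping spaces. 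You instead attack the general extension head-on by proving $(d_0)_!\simeq(d_\eta)_!$ and that $(d_0)_!$ is a monomorphism. Your second fact is correct and is a legitimate alternative to the paper's reduction: the previous proposition makes $\pi_0$ fully faithful from $0$-cotruncated connective $(R_0\oplus\Sigma M)$-algebras to discrete $\pi_0(R_0)$-algebras, both composites $\pi_0\circ(d_0)_!$ and $\pi_0\circ(d_\eta)_!$ are naturally isomorphic to $\pi_0(-)$, and a fully faithful functor reflects equivalences and reduces the naturality to a $1$-categorical check; so the pointwise maps you produce really are equivalences and really do assemble.

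The genuine gap is the justification of your first fact. A functor admitting a retraction is \emph{not} a monomorphism in $\Cat_\infty$, and a pullback along a split-but-not-genuine monomorphism does not collapse: for any group $G$, the functor $\C\to\C\times BG$ is split by the projection, yet $\C\times_{\C\times BG}\C\simeq\C\times G$. So ``common split monomorphism, therefore the pullback collapses to the diagonal'' is not a valid inference; the retraction $p_!$ only gives injectivity on equivalence classes of objects. The claim itself is nevertheless true, for a reason already on your table: by the previous proposition, $(d_0)_!$ induces on mapping spaces between objects of $\Q^\cn(R_0)$ the identity of $\hom_{\CAlg^\heart_{\pi_0(R_0)}}(\pi_0A,\pi_0B)$ (since $\pi_0(d_0)=\id$), hence is fully faithful, and fully faithful functors \emph{are} monomorphisms in $\Cat_\infty$. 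With that repair your argument goes through.
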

\begin{proof}
    The argument is essentially the same as before. We use the fact that $\Mod_\bullet^\cn$ preserves the inverse limits involved in Postnikov towers, namely the limit diagrams of the form $R\simeq \lim_n R_{\leq n}$ \cite[19.2.1.5]{SAG}, and pullback squares defining square zero extensions by connective modules \cite[Theorem 16.2.0.2.]{SAG}. 

As $\Q$ preserves all limits, $\Q^\cn$ preserves these specific limits, so that, to prove that $\Q^\cn(R)\to \Q^\cn(\pi_0(R))$ is an equivalence, it suffices to prove that $\Q^\cn(\widetilde R)\to \Q^\cn(R)$ is an equivalence for all square zero extensions by connective modules and hence, as before, it suffices to prove it for trivial square zero extensions by connective modules. 

For these ones, we note that the same argument as before using pullbacks of sets along equal injections implies, by looking at mapping spaces, that $\Q^\cn(R\oplus \Sigma M)\to \Q^\cn(R)$ is fully faithful. In more detail, we can fit this map in a pullback square: \[\begin{tikzcd}
	{\Q^\cn(R\oplus\Sigma M)} & {\Q^\cn(R)} \\
	{\Q^\cn(R)} & {\Q^\cn(R\oplus\Sigma^2M)}
	\arrow[from=1-1, to=2-1]
	\arrow[from=2-1, to=2-2]
	\arrow[from=1-2, to=2-2]
	\arrow[from=1-1, to=1-2]
\end{tikzcd}\]
where the two maps $\Q^\cn(R)\to \Q^\cn(R\oplus\Sigma^2M)$ are equivalent, and so, looking at mapping spaces, we find pullback squares of \emph{sets} of the form: 
\[\begin{tikzcd}
	X & Y \\
	Y & Z
	\arrow[from=2-1, to=2-2]
	\arrow[from=1-2, to=2-2]
	\arrow[from=1-1, to=2-1]
	\arrow[from=1-1, to=1-2]
\end{tikzcd}\]
where the two maps $Y\to Z$ are equal and (split) injective. It follows that the map $X\to Y$ is an isomorphism as before, and hence, that $\Q^\cn(R\oplus\Sigma M)\to \Q^\cn(R)$ is fully faithful.

Because it also has a right inverse (namely $\Q^\cn(R)\to \Q^\cn(R\oplus \Sigma M)$), it follows that it is also essentially surjective, hence an equivalence.
\end{proof}
\begin{cor}\label{cor:seppi0}
    Let $R$ be a connective commutative ring spectrum. Basechange along $R\to\pi_0(R)$ induces an equivalence $\CSep^\cn(R)\to \CSep^\cn(\pi_0(R))$.
\end{cor}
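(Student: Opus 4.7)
The plan is to apply \Cref{prop:definv0cotrunc} directly, taking $\mathcal{Q} = \CAlg^{sep}$, the functor that assigns to a symmetric monoidal \category{} $\C$ the full subcategory of $\CAlg(\C)$ spanned by the separable commutative algebras. I first need to check the two hypotheses of that proposition, and then I need to translate the resulting statement (which a priori is about $\CAlg^{sep,\cn}(R)$ as a category) into the statement I want (which is about the groupoid core $\CSep^\cn(R)$).

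For the first hypothesis, I need $\CAlg^{sep}$ to be a limit-preserving subfunctor of $\CAlg$. This is precisely the corollary immediately following \Cref{cor:descentcommsep}, which itself followed from the uniqueness of separability idempotents (\Cref{cor:uniqueidem}). For the second hypothesis, I need the image of $\CAlg^{sep}(\Mod_R^\cn) \to \CAlg(\Mod_R^\cn)$ to consist of $0$-cotruncated commutative algebras, where the notion of $0$-cotruncatedness in the proposition is exactly the condition that mapping spaces out of the algebra land in sets. This is the content of \Cref{cor : cotrunc}.

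Having verified both hypotheses, \Cref{prop:definv0cotrunc} yields an equivalence of \categories{} $\CAlg^{sep,\cn}(R) \xrightarrow{\simeq} \CAlg^{sep,\cn}(\pi_0(R))$ induced by basechange along $R \to \pi_0(R)$. Passing to groupoid cores gives the desired equivalence of spaces $\CSep^\cn(R) \xrightarrow{\simeq} \CSep^\cn(\pi_0(R))$, once one identifies $\CAlg^{sep}(\Mod_R^\cn)^\simeq$ with $\CSep^\cn(R)$; this is immediate since a commutative algebra in $\Mod_R^\cn$ is the same data as a connective commutative $R$-algebra, and the fully faithful symmetric monoidal inclusion $\Mod_R^\cn \hookrightarrow \Mod_R$ preserves and reflects separability (preservation is \Cref{lm:general}\eqref{item : symmonsep}; reflection is clear since the multiplication map, the would-be section, and the bimodule structure all involve only connective objects when $A$ is connective).

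I do not anticipate a major obstacle here: all the substantive work has been done in the preceding results, and this corollary is essentially an assembly of \Cref{prop:definv0cotrunc} with its two required inputs, \Cref{cor:descentcommsep} (and its upgraded categorical version) and \Cref{cor : cotrunc}. The only point requiring a small verification is the identification of connective separable algebras in the two possible senses (as separable algebras in $\Mod_R^\cn$ versus connective separable algebras in $\Mod_R$), but this is routine as indicated above.
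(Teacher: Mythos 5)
Your proof is correct and takes essentially the same route as the paper: the paper's own proof is literally ``combine \Cref{cor : cotrunc} and \Cref{prop:definv0cotrunc}'', and you have supplied exactly the verifications this combination requires (limit-preservation of $\CAlg^{sep}$ via \Cref{cor:descentcommsep} and its categorical upgrade, and $0$-cotruncatedness via \Cref{cor : cotrunc}). Your additional check that separability in $\Mod_R^\cn$ agrees with connectivity plus separability in $\Mod_R$ is a correct, harmless refinement that the paper leaves implicit.
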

\begin{proof}
    Combine \Cref{cor : cotrunc} and \Cref{prop:definv0cotrunc}. 
\end{proof}
In the case of étale extensions, however, one can go further: flatness (which is part of the definition of étale) forces étale extensions of $\pi_0(R)$ to also be discrete. We do not know if this is so for arbitrary commutative separable extensions, however, in the noetherian case, Neeman proved the following:
\begin{thm}[{\cite[Lemma 2.1., Remark 2.2.]{Neeman}}]\label{thm:Neeman}
    Let $R$ be a discrete commutative noetherian ring. Any commutative separable algebra in $\Mod_R$ is coconnective\footnote{In \cite{Neeman}, Neeman says ``connective'', but he is working with cohomological conventions.}. In particular, any connective commutative separable algebra is discrete. 

    A discrete separable commutative algebra is also flat.
\end{thm}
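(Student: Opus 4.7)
The plan is to combine a local-to-global reduction over the noetherian base $R$, a direct analysis in the field case, and a classical appeal for flatness.

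First, I would reduce to the case where $R$ is a field. Let $A$ be a commutative separable $R$-algebra, and suppose for contradiction that $\pi_i(A) \neq 0$ for some $i > 0$. Pick a prime $\mathfrak p$ in the support of $\pi_i(A)$; by flatness of localization, $\pi_i(A \otimes_R R_\mathfrak p) = \pi_i(A)_\mathfrak p \neq 0$. Since separability is preserved by base change (\Cref{item : symmonsep} of \Cref{lm:general}), we may assume $(R, \mathfrak m, k)$ is noetherian local. Now pass to the residue field: $A \otimes_R k$ is still commutative separable over $k$. Via the Tor spectral sequence
\[
E^2_{p,q} = \mathrm{Tor}^R_p(\pi_q(A), k) \Longrightarrow \pi_{p+q}(A \otimes_R k),
\]
combined with Nakayama's lemma (picking $i$ maximal with $\pi_i(A) \neq 0$, using noetherianness to control the homotopy groups of $A \otimes_R k$), the top-degree survivor $\pi_i(A) \otimes_R k \neq 0$ persists to $E^\infty$, yielding $\pi_i(A \otimes_R k) \neq 0$ for some $i > 0$ and reducing us to the field case.

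Second, over a field $k$ every object of $\Mod_k$ is formal. Combining \Cref{thm : commlift} with \Cref{cor:hocommtargetdiscrete}, commutative separable algebras in $\Mod_k$ are equivalent data to commutative separable algebras in $\ho(\Mod_k)$, which as a symmetric monoidal $1$-category is the category of $\mathbb Z$-graded $k$-vector spaces (with Koszul sign rule). So the task reduces to showing that a graded commutative separable $k$-algebra $B_\ast$ is concentrated in degree $0$. By \Cref{cor : commsepsplit}, the multiplication $\mu: B_\ast \otimes_k B_\ast \to B_\ast$ is a localization at an idempotent, with separability idempotent $e \in (B_\ast \otimes_k B_\ast)_0 = \bigoplus_i B_i \otimes_k B_{-i}$ satisfying $\mu(e) = 1$ and $(a \otimes 1)\, e = e\,(1 \otimes a)$ for every homogeneous $a$. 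A careful computation with the Koszul signs then forces $B_n = 0$ for $n \neq 0$: any nonzero homogeneous $y \in B_n$ with $n \neq 0$ produces a nontrivial constraint $(y \otimes 1)\, e = (1 \otimes y)\, e$ that is incompatible with $\mu(e) = 1$.

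For flatness, I would appeal to the classical Auslander-Goldman result \cite{AuslanderGoldman}: the separability idempotent makes a discrete $A$ a bimodule retract of $A \otimes_R A$, and using the bar resolution one concludes $\mathrm{Tor}^R_{>0}(A, -) = 0$, hence flatness over $R$.

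The main obstacle is the graded algebraic argument in the second step. A potentially slicker reformulation would use the cotangent complex (\Cref{cor : commvanishingcotangent}): separability implies $L_{B_\ast/k} = 0$, whence $B_\ast$ is ``graded étale'' over $k$; over a field this ought to force $B_\ast$ to be a product of separable field extensions placed in degree $0$. Turning this into a clean proof still requires setting up a usable theory of graded étaleness, which is why the direct graded computation with the separability idempotent is the most delicate piece of the argument.
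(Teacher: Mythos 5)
The paper does not actually prove this statement: it is imported wholesale from Neeman (his Lemma~2.1, Remark~2.2, and Proposition~1.6), so there is no internal proof to compare against. Judged on its own, your outline (reduce to residue fields, analyse graded vector spaces, then deduce flatness) is the natural one, but two of its three steps have genuine gaps.

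The reduction to a field is the real content of Neeman's lemma, and your argument for it does not go through. The theorem carries no finiteness hypothesis on $A$, so: (i) there need not exist a maximal $i$ with $\pi_i(A)\neq 0$ --- boundedness above is part of what is being proved; (ii) Nakayama fails for non-finitely-generated modules, and $\pi_i(A)$ could perfectly well be a divisible torsion module supported at $\mathfrak m$ with $\pi_i(A)\otimes_R k=0$; and (iii) even when $E^2_{0,i}=\pi_i(A)\otimes_R k$ is nonzero, it supports no differentials but \emph{receives} differentials from $\Tor^R_r(\pi_{i-r+1}(A),k)$, so $E^\infty_{0,i}$ is a proper quotient in general and "persists to $E^\infty$" is unjustified. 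The paper itself signals how delicate this is: \Cref{prop:etalesepaperf} has to assume $A$ almost perfect precisely in order to check Tor-amplitude against residue fields, and no such hypothesis is available here. The field case itself is morally fine ($\pi_*$ is symmetric monoidal over a field, and the degree-$0$ constraint on $e\in\bigoplus_i B_i\otimes B_{-i}$ together with $(x\otimes 1)e=(1\otimes x)e$ and $\mu(e)=1$ does, e.g., kill $k[t^{\pm 1}]$ by a finite-support argument), but it is Neeman's Proposition~1.6 and needs the actual decomposition into graded-simple pieces, not a gesture at Koszul signs.

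The flatness step is wrong as written. Classical Auslander--Goldman separability of the discrete ring $\pi_0(A)$ does \emph{not} imply flatness: the paper points out immediately after the theorem that $R/I$ is separable in $\Mod_R^\heart$ for every ideal $I$ and is generally not flat. What the statement uses is the \emph{derived} separability idempotent, i.e.\ the splitting of $A\otimes_R^{\mathbb L}A\to A$ in $\Mod_R$, and extracting flatness from that over a general noetherian ring --- again with no finiteness on $A$ --- is not a one-line bar-resolution argument. So both the passage to residue fields and the flatness claim need Neeman's actual arguments; the proposal as it stands does not replace them.
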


\begin{rmk}
    In that last sentence, we are considering separable algebras in $\Mod_R$, which are discrete; and not separable algebras in $\Mod_R^\heart$. The latter can be non-flat: for example, any quotient $R\to R/I$ is separable in $\Mod_R^\heart$, as $R/I\otimes_R R/I\cong R/I$. 
\pend \end{rmk}
\begin{rmk}
    Neeman proves more than \Cref{thm:Neeman}, he completely classifies commutative separable algebras over noetherian schemes.   
\pend \end{rmk}
We can thus deduce the following:
\begin{cor}\label{cor:pi0sep}
    Let $R$ be a connective commutative ring spectrum with noetherian $\pi_0$. The functor $\pi_0: \Mod_R^\cn\to \Mod_R^\heart\simeq \Mod_{\pi_0(R)}^\heart\subset \Mod_{\pi_0(R)}$ preserves separable commutative algebras. 

    Furthermore, any separable commutative algebra over $R$ is flat.

    The functor $\pi_0(-)$, or equivalently $\pi_0(R)\otimes_R -$ induces an equivalence $$\CAlg^{sep}(R)^\cn\to \CAlg^{sep}(\pi_0(R))^{\heart, \flat}$$
\end{cor}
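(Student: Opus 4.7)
The plan is to combine the equivalence $\CAlg^{sep}(R)^\cn\simeq\CAlg^{sep}(\pi_0(R))^\cn$ from \Cref{cor:seppi0} with Neeman's theorem (\Cref{thm:Neeman}), which, applied to the noetherian discrete ring $\pi_0(R)$, asserts that every connective separable commutative $\pi_0(R)$-algebra is both discrete and flat. Almost everything then reduces to transporting these two properties back up along basechange.

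Starting with a connective separable commutative $R$-algebra $A$, basechange along $R\to\pi_0(R)$ is symmetric monoidal, so by \Cref{item : symmonsep} of \Cref{lm:general} it sends $A$ to a connective separable commutative $\pi_0(R)$-algebra, which by Neeman's theorem is already discrete. Taking $\pi_0$ of this tensor product then gives
\[
A\otimes_R\pi_0(R)\;\simeq\;\pi_0(A)\otimes_{\pi_0(R)}^{\heart}\pi_0(R)\;\simeq\;\pi_0(A).
\]
Thus $\pi_0(A)$ inherits the structure of a separable commutative $\pi_0(R)$-algebra, hence of a separable commutative $R$-algebra by restriction of scalars; moreover, the basechange functor $(-)\otimes_R\pi_0(R)$ of \Cref{cor:seppi0} is identified on objects with $\pi_0(-)$.

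For flatness of $A$, the identification above says that $A\otimes_R\pi_0(R)$ is concentrated in degree $0$, so $A$ has vanishing higher $\Tor$'s against $\pi_0(R)$. Together with flatness of $\pi_0(A)$ over $\pi_0(R)$ (again from Neeman's theorem), this yields, for any discrete $R$-module $N$,
\[
A\otimes_R N\;\simeq\;(A\otimes_R\pi_0(R))\otimes_{\pi_0(R)} N\;\simeq\;\pi_0(A)\otimes_{\pi_0(R)}^{\heart} N,
\]
which is discrete; hence $A$ is flat over $R$ by the standard characterization (e.g.\ \cite[Proposition 7.2.2.13]{HA}). Combining these observations, \Cref{cor:seppi0} gives the stated equivalence, with target identified with $\CAlg^{sep}(\pi_0(R))^{\heart,\flat}$ via Neeman's theorem. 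The main step to verify carefully is the flatness argument, which crucially relies on having the right criterion for a connective module over a connective ring spectrum to be flat — everything else is a straightforward assembly of the two inputs.
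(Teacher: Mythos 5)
Your proposal is correct and follows essentially the same route as the paper: basechange to $\pi_0(R)$, invoke Neeman's theorem to get discreteness and flatness of $A\otimes_R\pi_0(R)\simeq\pi_0(A)$, deduce flatness of $A$ from the criterion that $A\otimes_R N$ is discrete for all discrete $N$ (this is precisely the paper's \Cref{lm:flat}), and conclude via \Cref{cor:seppi0} and \Cref{cor : cotrunc}. The only step you elide is the identification of $\CAlg^{sep}(\pi_0(R))^\cn$ with $\CAlg^{sep}(\pi_0(R))^{\heart,\flat}$ in the essentially-surjective direction, which needs the (routine) observation that the inclusion $\Mod_{\pi_0(R)}^{\heart,\flat}\to\Mod_{\pi_0(R)}$ is strong symmetric monoidal and hence preserves separability.
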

We have used the following notation:
\begin{nota}
    We use the superscript $^\flat$ to indicate flatness - we can use this for $\Mod_R$, where $R$ is a ring spectrum \cite[Definition 7.2.2.10.]{HA}, and by extension for \categories{} that admit natural forgetful functors to it, such as $\CAlg(R)$. 
\pend \end{nota}
To prove this, we use the following standard lemma: 
\begin{lm}\label{lm:flat}
    Let $R$ be a connective ring spectrum and $M$ a right $R$-module. Suppose $M\otimes_R \pi_0(R)$ is a flat (in particular discrete) $\pi_0(R)$-module. In this case, $M$ is a flat $R$-module.
\end{lm}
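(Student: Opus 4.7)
The plan is to deduce connectivity of $M$ first, and then invoke a standard characterization of flatness over connective ring spectra. For connectivity, suppose $M$ is bounded below, and let $N$ be minimal with $\pi_N(M)\neq 0$. Tensoring the cofiber sequence $\tau_{\geq 1}R\to R\to \pi_0(R)$ over $R$ with $M$ yields
$$M\otimes_R\tau_{\geq 1}R\to M\to M\otimes_R\pi_0(R).$$
Because $\tau_{\geq 1}R$ is $1$-connective and $R$ is connective, the left-hand term is $(N+1)$-connective, while the right-hand term is discrete by the hypothesis on $M\otimes_R\pi_0(R)$. If $N<0$, reading $\pi_N$ in the long exact sequence forces $\pi_N(M)=0$, contradicting minimality; hence $M$ is connective. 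In the only application of this lemma, namely \Cref{cor:pi0sep}, $M$ is already connective by assumption, so this step is essentially free.

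Having reduced to $M$ connective, the main step is to show that $M\otimes_R(-)$ is exact and preserves discreteness on the heart. For any discrete $R$-module $K$, which (since $R$ is connective) is the same datum as a discrete $\pi_0(R)$-module, associativity of the relative tensor product gives
$$M\otimes_R K\simeq (M\otimes_R\pi_0(R))\otimes_{\pi_0(R)} K.$$
Since $M\otimes_R\pi_0(R)$ is a flat discrete $\pi_0(R)$-module by assumption, this derived tensor product agrees with the underived one and is in particular discrete. Moreover, restricted to discrete modules, $K\mapsto M\otimes_R K$ is simply the underived tensor product with the flat $\pi_0(R)$-module $\pi_0(M\otimes_R\pi_0(R))$, and so is exact there.

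I would then conclude by invoking a standard flatness criterion for connective modules over connective ring spectra, of the sort found in \cite[Section 7.2.2]{HA}: a connective $R$-module is flat exactly when the functor it represents on $\Mod_R$ is $t$-exact on the heart, which is what the previous paragraph establishes. The only mildly delicate point is the connectivity reduction, and even that evaporates in the applications we care about; every other ingredient is a manipulation of standard tensor-product associativities and a citation.
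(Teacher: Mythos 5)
Your proof is correct and is essentially the paper's proof: the entire content in both cases is the associativity equivalence $M\otimes_R K\simeq (M\otimes_R\pi_0(R))\otimes_{\pi_0(R)}K$ for discrete $K$, followed by an appeal to Lurie's characterization of flat modules over connective rings (the paper cites \cite[Theorem 7.2.2.15.(5)]{HA} and stops there). Your extra connectivity reduction is a reasonable precaution if one reads that criterion as requiring connective input, but note two small inaccuracies in how you dismiss it: the exactness-on-the-heart paragraph is redundant (once $M\otimes_R(-)$ preserves discreteness, exactness on discrete modules is automatic from the long exact sequence of a cofiber sequence), and \Cref{cor:pi0sep} is \emph{not} the only application --- the lemma is also invoked in \Cref{prop:etalesepaperf}, where $M$ is only almost perfect, hence bounded below but not assumed connective, so there your bounded-below argument is actually doing work rather than being ``essentially free.'' As stated the lemma imposes no boundedness hypothesis on $M$, so your proof is strictly less general than the statement, though it covers every use made of it in the paper.
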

\begin{proof}
    For any discrete left $R$-module $N$, $M\otimes_R N\simeq M\otimes_R\pi_0(R)\otimes_{\pi_0(R)}N$ is discrete by assumption. This is enough by \cite[Theorem 7.2.2.15.(5)]{HA}. 
\end{proof}
\begin{proof}[Proof of \Cref{cor:pi0sep}]
    Let $A$ be a connective separable algebra over $R$. Basechange along $R\to \pi_0(R)$ is symmetric monoidal, so that $A\otimes_R\pi_0(R)$ is separable and hence, by Neeman's theorem (\Cref{thm:Neeman}), coconnective. As it is also connective, it is therefore discrete.
    
    It follows that it is isomorphic to its $\pi_0$, which is also $\pi_0(A)$, and hence, $\pi_0(A)$ is separable, \emph{as an algebra in} $\Mod_{\pi_0(R)}$\footnote{It is obviously separable in $\Mod_{\pi_0(R)}^\heart$, because $\pi_0:\Mod_R^\cn\to \Mod_{\pi_0(R)}^\heart$ is strong symmetric monoidal. }.

    Furthermore, $A\otimes_R\pi_0(R)$ is flat, again by \Cref{thm:Neeman}, and hence $A$ is flat, by the previous lemma. 

Now, as commutative separable algebras are $0$-truncated, by \Cref{cor : cotrunc}, this implies that the two functors (which we just explained are equivalent) $\pi_0(-)$ and $\pi_0(R)\otimes_R -$ are fully faithful as functors $\CAlg^{sep}(R)^\cn\to \CAlg^{sep}(\pi_0(R))^{\heart, \flat}$. We are left with proving that they are essentially surjective.  

But the inclusion $\Mod_{\pi_0(R)}^{\heart,\flat}\to \Mod_{\pi_0(R)}$ is strong symmetric monoidal, and hence preserves separable algebras. Thus, any object in $\CAlg^{sep}(\pi_0(R))^{\heart, \flat}$ lifts to $\CAlg^{sep}(\pi_0(R))^\cn$, and by \Cref{cor:seppi0}, anything there can be lifted to $\CAlg^{sep}(R)^\cn$.  
\end{proof}
In other words, under this noetherian-ness assumption, connective commutative separable algebras in $\Mod_{\pi_0(R)}$ are exactly the flat ordinary commutative separable algebras. Note that given a flat ordinary commutative separable algebra $A_0$, the corresponding commutative separable algebra over $R$ is flat, and hence has homotopy groups $\pi_*(A)\cong A_0\otimes_{\pi_0(R)}\pi_*(R)$. 

This allows us to compare separability with étale-ness in the sense of Lurie in the noetherian case. Namely, we have:
\begin{prop}\label{prop:etalesep}
    Let $R$ be a commutative ring spectrum and $A$ a commutative $R$-algebra. 
If $A$ is étale in the sense of \cite[Definition 7.5.0.4.]{HA}, then $A$ is separable. 

Conversely, if $R$ is connective and $\pi_0(R)$ is noetherian, then if $A$ is separable, connective and $\pi_0(A)$ is finitely presented over $\pi_0(R)$ then $A$ is étale in the same sense.
\end{prop}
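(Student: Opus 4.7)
My plan is to handle the two directions separately, both reducing to classical commutative algebra. For the forward direction, I first observe that since étale morphisms are stable under base change, $A\otimes_R A$ is étale over $A$ via either factor, and the multiplication map $\mu:A\otimes_R A\to A$ is a morphism of étale $A$-algebras. By Lurie's rigidity theorem \cite[Theorem 7.5.0.6]{HA}, the $\pi_0$-functor induces an equivalence between étale commutative $A$-algebras and classical étale $\pi_0(A)$-algebras, so it suffices to construct a classical $\pi_0(A)$-algebra section of $\pi_0(\mu):\pi_0(A)\otimes_{\pi_0(R)}\pi_0(A)\to \pi_0(A)$. This is exactly the classical fact that étale morphisms of ordinary commutative rings are separable: the diagonal $\Spec(\pi_0(A))\to \Spec(\pi_0(A)\otimes_{\pi_0(R)}\pi_0(A))$ is an affine open immersion, hence both open and closed, corresponding to an idempotent that realizes $\pi_0(A)\otimes_{\pi_0(R)}\pi_0(A)\cong \pi_0(A)\times B_0$ as $\pi_0(A)$-algebras, with $\pi_0(\mu)$ being the first projection and the first inclusion providing an algebra section. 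Lifting this section through Lurie's equivalence produces a commutative $A$-algebra section $s:A\to A\otimes_R A$ of $\mu$, which, since $A$ is commutative, is automatically an $A\otimes_R A$-module section when both sides are viewed as $A\otimes_R A$-modules (via $\mu$ and the identity respectively), and hence witnesses separability.

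For the converse, the main work has already been done in \Cref{cor:pi0sep}: under the stated noetherian hypotheses, any connective commutative separable $R$-algebra is flat over $R$, and its $\pi_0$ is a flat commutative separable $\pi_0(R)$-algebra in the classical discrete sense. Combined with the given finite presentation of $\pi_0(A)$ over $\pi_0(R)$, the classical characterization of étale morphisms of noetherian commutative rings---flat plus finitely presented plus separable equals étale---yields that $\pi_0(R)\to \pi_0(A)$ is classically étale, verifying condition (1) of \cite[Definition 7.5.0.4]{HA}. Simultaneously, the flatness of $A$ over $R$ gives the required isomorphism $\pi_*(R)\otimes_{\pi_0(R)}\pi_0(A)\xrightarrow{\cong}\pi_*(A)$, verifying condition (2). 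Hence $A$ is étale over $R$ in Lurie's sense.

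I do not expect significant obstacles in this proof: both directions are essentially translations between definitions via classical algebra, combined with previously established results (most notably \Cref{cor:pi0sep} for the converse and \cite[Theorem 7.5.0.6]{HA} for the forward direction). The only mild subtlety is ensuring that the commutative algebra section produced in the forward direction is genuinely an $A\otimes_R A$-module section as required by \Cref{defn:sep}; this follows automatically from commutativity of $A$, as any map of $A\otimes_R A$-algebras is a fortiori linear with respect to the underlying module structures.
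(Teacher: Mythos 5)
Your converse direction is fine and is essentially the paper's own argument: \Cref{cor:pi0sep} gives flatness of $A$ and separability of $\pi_0(A)$, and then flat $+$ finitely presented $+$ separable $=$ étale classically, which together with flatness of $A$ over $R$ is exactly Lurie's definition.

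The forward direction, however, has a genuine gap at its last step. The separability section of $\pi_0(\mu)$ is the first inclusion $\pi_0(A)\to \pi_0(A)\times B_0$, i.e. $x\mapsto (x,0)$; this map does \emph{not} preserve the unit, so it is not a morphism of (unital) commutative $\pi_0(A)$-algebras and therefore not a morphism in the category of étale $\pi_0(A)$-algebras. Consequently Lurie's rigidity theorem \cite[Theorem 7.5.0.6]{HA}, which is an equivalence of categories of \emph{unital} étale algebras, cannot be used to lift it. Worse, the closing claim that a commutative-algebra section of $\mu$ is ``automatically an $A\otimes_R A$-module section'' is false: the map $a\mapsto a\otimes 1$ is always a section of $\mu$ in $\CAlg$, for any commutative $A$ whatsoever, yet it is essentially never $A\otimes_R A$-linear (an $A\otimes_R A$-linear \emph{unital} ring section $s$ would satisfy $s(\mu(b))=b\cdot s(1)=b$, forcing $\mu$ to be an equivalence). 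The whole content of separability lies in the section being $A\otimes_R A$-linear at the cost of being non-unital, so unitality and linearity are in tension here, not mutually reinforcing.

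The idea is salvageable, and the repair is what the paper does: instead of lifting the non-unital section, lift the \emph{idempotent} $e\in\pi_0(A)\otimes_{\pi_0(R)}\pi_0(A)\cong\pi_0(A\otimes_R A)$ (the isomorphism uses flatness, which is part of étaleness). Then form the localization $(A\otimes_R A)[e^{-1}]$, through which $\mu$ factors, and check on homotopy groups—again using flatness to identify $\pi_*(A\otimes_R A)$ with $\pi_0(A)^{\otimes_{\pi_0(R)}2}\otimes_{\pi_0(R)}\pi_*(R)$—that $(A\otimes_R A)[e^{-1}]\to A$ is an equivalence. The inclusion of this direct factor is then the desired $A\otimes_R A$-linear section. (Equivalently, one could use rigidity to lift the \emph{product decomposition} $\pi_0(A)\otimes_{\pi_0(R)}\pi_0(A)\cong\pi_0(A)\times B_0$ rather than the section itself, and observe that the resulting projection-plus-inclusion is module-linear because it is the splitting of a direct factor.)
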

\begin{rmk}
   It is not clear to the author what the optimal statement is. Clearly, one cannot drop all connectivity assumptions: for instance, Galois extensions are separable (see \Cref{ex:galoissep}), but many of them, such as $\KO\to \KU$ are not étale. 

   It is reasonable to expect that one can drop the noetherian assumption, and possibly the connectivity assumption on $A$. 
\pend \end{rmk}
\begin{proof}
    Assume $A$ is étale. By definition, $\pi_0(A)$ is étale over $\pi_0(R)$. In particular, $\pi_0(A)$ is flat over $\pi_0(R)$, and separable in the classical sense. So let $e\in\pi_0(A)\otimes_{\pi_0(R)}\pi_0(A)\cong \pi_0(A\otimes_R A)$ be a separability idempotent. This is in turn an idempotent in $A\otimes_R A$ which gets sent to $1\in \pi_0(A)$ under the multiplication map. 

    In particular, it induces a map $(A\otimes_R A)[e^{-1}]\to A$. Because homotopy groups commute with localizations, and because $A$ is flat, this map can be identified, on homotopy groups, with $(\pi_0(A)\otimes_{\pi_0(R)}\pi_0(A)\otimes_{\pi_0(R)}\pi_*(R))[e^{-1}]\to \pi_0(A)\otimes_{\pi_0(R)}\otimes\pi_*(R)$. 

    Because $(\pi_0(A)\otimes_{\pi_0(R)}\pi_0(A))[e^{-1}]\to \pi_0(A)$ is an isomorphism, this map is also an isomorphism, which proves that $(A\otimes_R A)[e^{-1}]\simeq A$. As $e$ is idempotent, it follows that $A\otimes_R A\to A$ has an $A\otimes_RA$-linear splitting, thus proving that it is separable.

    For the converse, we already know $\pi_0(A)$ is separable, flat and finitely presented, which means that it is étale over $\pi_0(R)$ in the classical sense. Furthermore, we also know that $A$ is flat over $R$, which altogether means that $A$ is étale in the sense of  \cite[Definition 7.5.0.4.]{HA}. 
\end{proof}
\begin{ques}\label{question:noetherian}
    Do these results (\Cref{thm:Neeman}, \Cref{cor:pi0sep} and \Cref{prop:etalesep}) continue to hold without a noetherian-ness assumption ?
\end{ques}
In their recent work \cite{NikoLuca}, Naumann and Pol partially answer this question by removing the noetherian assumption and adding the assumption that the algebra $A$ is perfect as an $R$-module. One easily sees that their proof only uses the assumption that $A$ is \emph{almost perfect} \cite[Definition 7.2.4.10]{HA}. In fact, as it turns out, it follows that in this case, almost perfect implies perfect. We record it here for the convenience of the reader, but the proof is the same as that of \cite[Proposition 10.5]{NikoLuca}:
\begin{prop}\label{prop:etalesepaperf}
    The answer to \Cref{question:noetherian} is yes, when restricted to almost perfect separable algebras. More precisely, fix a connective commutative ring spectrum $R$. Let $A$ be a commutative separable $R$-algebra, whose underlying $R$-module is almost perfect. 

    In this case, $A$ is flat, and hence connective; $\pi_0(A)$ is separable as a $\pi_0(R)$-algebra in $\Mod_{\pi_0(R)}$ (and not only in $\Mod_{\pi_0(R)}^\heart$). In particular, $A$ is étale in the sense of Lurie over $R$.  

In particular, $A$ is \emph{perfect}, and even finitely generated projective as an $R$-module. In other words, almost perfect commutative separable algebras are always perfect/finitely generated projective, and they correspond exactly to finite étale extensions. The functor $\pi_0(-)$, or equivalently $\pi_0(R)\otimes_R -$, induces an equivalence between these and finite étale $\pi_0(R)$-algebras. 
\end{prop}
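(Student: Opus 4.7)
The plan, following Naumann--Pol, is to reduce to the residue fields of $\pi_0(R)$, invoke Neeman's theorem there, and then bootstrap back by a Nakayama-style argument that is available because $A$ is almost perfect. First I fix a prime $\mathfrak p \subset \pi_0(R)$, let $k = \kappa(\mathfrak p)$, and consider $\bar A := A \otimes_R k \in \CAlg(\Mod_k)$. Separability is preserved by the symmetric monoidal base change (\Cref{lm:general}\,\ref{item : symmonsep}), and almost perfectness is preserved by base change to another connective ring; but over a field almost perfect is the same as perfect, hence $\bar A$ is dualizable, bounded, and has finite-dimensional homotopy groups. Since $k$ is discrete and (trivially) noetherian, Neeman's theorem \Cref{thm:Neeman} says $\bar A$ is coconnective, so being also bounded below and perfect it is discrete. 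Classical commutative algebra then shows that $\bar A$ is finite étale over $k$; in particular $\bar A$ is $k$-flat.

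Next I want to upgrade this pointwise information to say that $A$ is itself connective and $R$-flat. Almost perfectness implies $A$ is bounded below and $\pi_i(A)$ is finitely presented over $\pi_0(R)$ for every $i$; and it also implies that the formation of $\pi_*(A \otimes_R k)$ is compatible with a Tor spectral sequence whose $E^2$-page is computed by $\mathrm{Tor}^{\pi_*(R)}_*(\pi_*(A), k)$. The vanishing of $\bar A$ outside degree zero, combined with a standard Nakayama-style downward induction on the almost perfect module $\pi_i(A)$, forces $\pi_i(A) = 0$ for $i \neq 0$ locally at every $\mathfrak p$, hence globally (since $\pi_i(A)$ is finitely presented and kills to zero at every residue field). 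The same spectral-sequence/Nakayama argument gives $\mathrm{Tor}^R_i(A,M) = 0$ for $i > 0$ and every discrete $M$, establishing flatness of $A$ via \Cref{lm:flat} (or rather its extension to general connective~$R$).

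Once $A$ is connective and $R$-flat, the canonical map $\pi_0(A) \otimes_{\pi_0(R)} \pi_0(A) \to \pi_0(A \otimes_R A)$ is an isomorphism, so that $\pi_0(A)$ is separable over $\pi_0(R)$ as an algebra in $\Mod_{\pi_0(R)}$; almost perfectness moreover forces $\pi_0(A)$ to be finitely presented over $\pi_0(R)$. Flat plus almost perfect then equals perfect (\cite[Proposition~7.2.4.23]{HA}), so $A$ is finitely generated projective over $R$. Applying \Cref{prop:etalesep} in its noetherian-free form --- whose hypotheses we have just checked (connectivity of $A$, separability and finite presentation of $\pi_0(A)$) --- yields that $A$ is étale over $R$ in Lurie's sense. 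The final equivalence of \categories{} follows from \Cref{cor:seppi0}: $\pi_0(-)$ is fully faithful between connective commutative separable algebras, and its essential image in the almost perfect world is exactly the finite étale $\pi_0(R)$-algebras, because any such lifts flatly and uniquely, with the lift remaining almost perfect by finite presentation of $\pi_0$.

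The main obstacle is the bootstrapping of Step~2: translating pointwise vanishing of higher homotopy and higher Tor at each residue field into honest connectivity and flatness of $A$. The key input that makes this go through without a noetherian hypothesis on $R$ is the almost perfectness of $A$, which provides the finite presentation of each $\pi_i(A)$ needed to run Nakayama at the level of $\pi_0(R)$-modules; without this assumption, the argument breaks and \Cref{question:noetherian} remains open.
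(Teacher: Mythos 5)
Your proof is correct and follows essentially the same route as the paper: base-change to the residue fields of $\pi_0(R)$, apply Neeman's result over a field to conclude discreteness there, and use almost perfectness to bootstrap connectivity, flatness, and then étaleness of $\pi_0(A)$ back over $R$. The only real difference is that your Nakayama/Tor-spectral-sequence bootstrapping in Step 2 is precisely the content of the Stacks Project lemma (Tag 068V, checking Tor-amplitude of pseudo-coherent modules at residue fields) that the paper simply cites, so you need not re-derive it by hand.
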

\begin{proof}
 By \Cref{lm:flat}, to prove that $A$ is flat, it suffices to prove that $A\otimes_R\pi_0(R)$ is flat, and in particular discrete, over $\pi_0(R)$. 
 
 We reduce to the case of a field using \cite[\href{https://stacks.math.columbia.edu/tag/068V}{Tag 068V}]{stacks-project}. In more detail, we note that the word ``pseudo-coherent'' used in \cite{stacks-project} is equivalent to ``almost perfect'' in the case of discrete rings, so that this lemma does apply to our situation. Then, we note that basechange preserves almost perfect modules, so that $A\otimes_R\pi_0(R)$ is almost perfect over $\pi_0(R)$, i.e. pseudo-coherent. Finally, we note that being flat is equivalent to being of $\mathrm{Tor}$-amplitude in $[0,0]$, so that by \cite[\href{https://stacks.math.columbia.edu/tag/068V}{Tag 068V}]{stacks-project} it suffices to prove that $A\otimes_R\pi_0(R)\otimes_{\pi_0(R)}k$ is concentrated in degree $0$ for any field $k$ and morphism $\pi_0(R)\to k$.

 But now $A\otimes_R k$ is the basechange of $A$ along the commutative ring map $R\to \pi_0(R)\to k$, so it is a separable commutative algebra over the field $k$, and by \cite[Proposition 1.6]{Neeman}, these are all discrete, as was to be shown. 

    Now, we have proved that $A$ is flat (and in particular connective), so that $\pi_0(R)\otimes_R A\simeq \pi_0(A)$ - the former is obviously separable, and therefore, so is the latter. 

    To prove that $A$ is étale, as in the proof of \Cref{prop:etalesep}, because we already know that it is flat, it suffices to prove that $\pi_0(A)$ is étale over $\pi_0(R)$. We know that it is separable and flat, so it suffices to prove that it is finitely presented, but it is finitely presented \emph{as a module} over $\pi_0(R)$, which immediately implies that it is finitely presented as an algebra as well, and hence étale.  

Furthermore, $\pi_0(A)$ is a finitely presented flat $\pi_0(R)$-module, hence it is finitely generated projective, and thus, by flatness of $A$, $A$ is a finitely generated projective $R$-module, and in particular perfect. 

    By \Cref{cor:seppi0}, the functor $\CAlg^{sep}(R)^{\mathrm{aperf}} \subset \CAlg^{sep}(R)^\cn \to \CAlg^{sep}(\pi_0(R))^\cn$ is fully faithful, and it lands in the full subcategory $\CAlg^{sep}(\pi_0(R))^{\heart,\flat}$. To conclude the proof, it thus suffices to prove that its essential image is exactly the finite étale extensions. But if $A_0$ is a finite étale extension of $\pi_0(R)$, it is in particular finitely generated projective, and so its unique lift to a flat $\pi_0(R)$-module is also finitely generated projective, and hence almost perfect. 
\end{proof}
\Cref{question:noetherian} remains however open in full generality.

We finally discuss Morita equivalence in the commutative setting. Recall that for ordinary commutative rings, Morita equivalence implies isomorphism. 
\begin{prop}
Suppose $\C$ is presentably symmetric monoidal. 
Let $A,B$ be commutative algebras, and suppose they are Morita equivalent, i.e. there is a $\C$-linear equivalence $\Mod_A(\C)\simeq \Mod_B(\C)$.

If $A$ is separable, then $A\simeq B$ as algebras in $\C$. If $\C$ is additively symmetric monoidal, then they are equivalent as commutative algebras. 
\end{prop}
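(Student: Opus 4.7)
The plan is to extract the algebra $A$ from the $\C$-linear category $\Mod_A(\C)$ through its $\mathbb{E}_1$-center, and transport this datum along the Morita equivalence.

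First, I would translate the Morita equivalence into bimodule language. By \cite[Theorem 4.8.4.1]{HA}, $\C$-linear endofunctors of $\Mod_A(\C)$ are identified with $\BiMod_A(\C)$ (composition corresponding to relative tensor product of bimodules), with $\id_{\Mod_A}$ represented by the unit bimodule $A$; similarly for $B$. Via $H \mapsto FHF^{-1}$, the $\C$-linear equivalence $F\colon \Mod_A(\C) \xrightarrow{\simeq} \Mod_B(\C)$ induces a monoidal equivalence $\BiMod_A(\C) \simeq \BiMod_B(\C)$ sending the unit $A$ to the unit $B$. Taking endomorphism $\mathbb{E}_1$-algebras yields an equivalence $Z(A) \simeq Z(B)$ of $\mathbb{E}_2$-algebras, where $Z$ is as in \Cref{nota:center}.

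Second, collapse $Z(A)$ using separability: by \Cref{lm: existse2}, since $A$ is separable and homotopy commutative, the forgetful map $Z(A) \to A$ is an equivalence of algebras, so $A \simeq Z(B)$ as $\mathbb{E}_1$-algebras; in particular $Z(B)$ is separable. To promote this to $A \simeq B$, I would descend to $\ho(\C)$: the Morita equivalence yields a $\ho(\C)$-linear equivalence $\Mod_{hA}(\ho(\C)) \simeq \ho(\Mod_A(\C)) \simeq \ho(\Mod_B(\C))$, where the first identification uses \Cref{thm : homod=mod} and separability of $A$. The object $hB$ is a generator of $\ho(\Mod_B(\C))$ (descending from $B$), with endomorphism $\mathbb{E}_1$-algebra equal to $hB$, and classical Morita theory for commutative rings applied inside the symmetric monoidal $1$-category $\ho(\C)$ then produces an isomorphism $hA \simeq hB$ as $\mathbb{E}_1$-algebras in $\ho(\C)$. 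Since $A$ is separable, \Cref{thm : morsep} lifts this uniquely to an equivalence $A \simeq B$ in $\Alg(\C)$.

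Finally, for the commutative statement in the additive case: once we know $A \simeq B$ as $\mathbb{E}_1$-algebras, both are separable and homotopy commutative, so \Cref{thm : commlift} produces essentially unique $\mathbb{E}_\infty$-structures extending their $\mathbb{E}_1$-structures, and the $\mathbb{E}_1$-equivalence refines uniquely to an equivalence of commutative algebras. The main obstacle will be the middle step: only the $A$-side of the equivalence identifies cleanly with a module category over $\ho(\C)$ via Schwede--Shipley (thanks to separability), so one has to rely on the compactness and generator properties of $hB$ inside $\ho(\Mod_B(\C))$ to extract the classical commutative Morita isomorphism $hA \simeq hB$ without symmetric treatment of the two sides.
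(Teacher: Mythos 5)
Your opening and closing steps coincide with the paper's proof: Morita invariance of the $\mathbb E_1$-center gives $Z(A)\simeq Z(B)$, separability plus commutativity of $A$ gives $Z(A)\simeq A$ via \Cref{lm: existse2}, and once $A\simeq B$ as $\mathbb E_1$-algebras is known, \Cref{thm : commlift} upgrades it to commutative algebras. The gap is the middle step, where you try to extract $hA\simeq hB$ by ``classical Morita theory inside $\ho(\C)$''. Before you know that $B$ is separable, the $1$-category $\ho(\Mod_B(\C))$ is not controlled by $hB$: there is no Schwede--Shipley/Eilenberg--Watts identification of $\ho(\C)$-linear endofunctors of $\ho(\Mod_B(\C))$ with $hB$-bimodules, the internal hom $\hom_{hB\otimes hB}(hB,hB)$ in $\ho(\C)$ need not exist (it is a totalization that $\ho(\C)$ lacks) nor agree with the image of $Z(B)$, and $hB$ is not a generator of $\ho(\Mod_B(\C))$ in any usable $1$-categorical sense, since generation under colimits in $\Mod_B(\C)$ does not descend to the homotopy category. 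The remark immediately following the proposition shows why some such input is unavoidable: in $\Sp_{T(n)}$ the commutative algebras $R[A]$ and $R^{BA}$ are Morita equivalent but not equivalent, so any argument that never converts the separability of $A$ into a statement about $B$ itself must break down. Your homotopy-category step never does this conversion; flagging the asymmetry as ``the main obstacle'' is accurate, but the obstacle is not overcome.

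The repair is short and uses only what you already have. From $A\simeq Z(B)$, use the canonical retraction $B\to Z(B)\to B$ in $\Alg(\C)$: since $B$ is commutative, the bimodule $B$ is pulled back along $\mu\colon B\otimes B\op\to B$, so restriction of scalars gives an algebra map $B\simeq\hom_B(B,B)\to\hom_{B\otimes B\op}(B,B)=Z(B)$ whose composite with the forgetful map $Z(B)\to B$ is the identity. This exhibits $B$ as a retract of the separable algebra $A$ in $\Alg(\C)$, so $B$ is separable by the closure of separability under retracts in \Cref{lm:general}. Now \Cref{lm: existse2} applies to $B$ as well, giving $Z(B)\simeq B$, and hence $A\simeq Z(A)\simeq Z(B)\simeq B$ directly in $\C$, with no descent to $\ho(\C)$ and no appeal to \Cref{thm : morsep}. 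This is exactly the paper's argument.
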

\begin{proof}
Note that the center $Z(R)$ of a ring is Morita invariant. 

Furthermore, we know that $Z(A)\simeq A$, as $A$ is commutative separable, and as $B$ is commutative we have a retraction $B\to Z(B)\to B$. By Morita invariance, and the previous fact, we thus have a retraction $B\to A\to B$, proving that $B$ is separable, and thus (by commutativity) $B\simeq Z(B)$. 

It follows that $A\simeq B$ as algebras. If we now assume $\C$ is additively symmetric monoidal, then \Cref{thm : commlift} implies that $A\simeq B$ as commutative algebras.
\end{proof}
\begin{ques}
Is this result valid if $A$ is not assumed separable and when $\C= \Sp$ ? 
\end{ques}
\begin{rmk}
If $A$ is not assumed to be separable, and we are allowed to work in a local setting, then the result is \emph{false}. A counterexample is given by taking $\C= \Sp_{T(n)}$, the \category{} of $T(n)$-local spectra\footnote{$K(n)$-local spectra would work just as well.} with $n\geq 1$ and at an implicit prime $p$, and $R$ a $T(n)$-local commutative algebra. Then, by \cite[Theorem B.(1)]{chromaticFT}, for any finite abelian $p$-group $A$, $R[A]$ is Morita equivalent over $\C$ to $R^{BA}$. Yet, they are typically not equivalent when $n\geq 2$.
\pend \end{rmk}
\begin{rmk}
Note that if $Z(A)$ is separable, we can make most of the argument work without assuming that $A$ is commutative: the argument proves that in this case, $B$ is separable. More precisely, if $A\in\Alg(\C), B\in \CAlg(\C)$, if $Z(A)$ is separable, and finally if $A$ is Morita equivalent to $B$, then $B$ is also separable (and it is the center of $A$). It is not clear to the author to what extent ``$Z(A)$ is separable'' is really an extra assumption, cf. \Cref{question : sepcenter} and the discussion in \Cref{section:center}. 
\pend \end{rmk}
\section{A variant: ind-separability}\label{section:indsep}
The goal of this section is to study a variant of the notion of separability, which we call ``ind-separability'', and which is better suited in some ``infinitary'' situations. We will see that they share many of the properties of separable algebras, in particular concerning highly structured multiplicative structures.

We will see that, in the \category{} of $K(n)$-local spectra, Morava $E$-theory is ind-separable - the proof of this will require as its only input the computation of the ring of cooperations of Morava $E$-theory, by Hopkins--Ravenel, Baker, and revisited by Hovey in \cite{hovey}. As a corollary, we will obtain a relatively simple proof of the Goerss--Hopkins--Miller theorem, or in some sense a reorganization of the classical proof; as well as an extension to the folklore claim that $E$-theory admits a unique $\mathbb E_d$-structure for any $1\leq d\leq \infty$\footnote{The $d=1$ case is known as the Hopkins--Miller theorem, and the $d=\infty$ case as the Goerss--Hopkins--Miller theorem.}.

Because of the infinitary nature of the notion of ind-separability, and because separable algebras only have strong enough rigidity properties in the commutative case, the variant we introduce here only really works in the (homotopy) commutative case. For similar reasons, this variant is best suited in the compactly generated case, and we will mostly stick to this assumption. 
\subsection{Ind-separability}
Recall that, if $A$ is a commutative separable algebra, the multiplication map $A\otimes A\to A$ witnesses the target as the localization of the source at an idempotent. The key observation of this section is that many of our results only really need it to be a localization at some set of elements. We thus define:
\begin{defn}
    Let $\C$ be a presentably, stably symmetric monoidal \category{}, and $A\in \CAlg(\C)$ a commutative algebra in $\C$. We say that $A$ is \emph{ind-separable} if there is a set $S\subset \pi_0\map(\one,A\otimes A)$ such that the multiplication map witnesses $A$ as the localization $(A\otimes A)[S^{-1}]$ in the \category{} of $A\otimes A$-modules.
\pend \end{defn}
\begin{rmk}
    Note that a priori, an ind-separable algebras has no particular reason to be a filtered colimit of separable algebras, i.e. an ind-(separable algebra). This will, however, be our main source of examples, cf. the subsequent sections. 
\pend \end{rmk}
This notion is relatively well-suited if we want to study the moduli space of commutative structures extending the underlying $\mathbb E_1$-algebra structure of $A$, at least when $\C$ is compactly generated. 

However, if we also want to get off the ground and go from a homotopy algebra structure to an actual algebra structure, we need to phrase this in ``up-to-homotopy'' terms. Because in a stably symmetric monoidal \category{} with filtered colimits, localizing a commutative algebra at a set of elements is a relatively well understood procedure, namely it is given by a telescope (see e.g. \cite[Appendix C]{bunke2018beilinson}), we can in fact give the following definition in the compactly generated case:
\begin{defn}
    Let $\C$ be a compactly generated presentably, stably symmetric monoidal \category{}, and $A\in \CAlg(\ho(\C))$ a homotopy commutative homotopy algebra in $\C$. We say that $A$ is \emph{homotopy ind-separable} if there exists a set $S\subset \pi_0\map(\one,A\otimes A)$ such that the multiplication map induces, for each compact object $c\in\C^\omega$, an isomorphism $$\pi_*(\map(c,A\otimes A))[S^{-1}] \to \pi_*\map(c,A)$$
\pend \end{defn}
In this definition, the localization is taken outside of $\pi_*$, and by it we simply mean the usual telescope construction in abelian groups. 
\begin{rmk}
Because filtered colimits are exact in $\Ab$, the condition that the above map be an isomorphism \emph{at $c$} is closed under co/fiber sequences in $\C^\omega$, and thus it suffices to check it on generators, e.g. on $R$ if $\C=\Mod_R(\Sp)$ for some commutative ring spectrum $R$. 
\pend \end{rmk}
Again because localizations of commutative algebras in stable \categories{} are computed as telescopes \cite[Appendix C]{bunke2018beilinson}, the following is an immediate consequence of the definition: 
\begin{lm}
     Let $\C$ be a compactly generated presentably, stably symmetric monoidal \category{}, and $A\in \CAlg(\C)$ a commutative algebra in $\C$. If $A$ is ind-separable, then its underlying homotopy algebra is homotopy ind-separable. 
\end{lm}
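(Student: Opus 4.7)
The plan is to unpack both definitions and reduce everything to the telescope formula for localization together with compactness of $c$. The key point is that the inversion $(A\otimes A)[S^{-1}]$ in $(A\otimes A)$-modules can be computed as a filtered colimit in $\C$ itself, since the forgetful functor $\Mod_{A\otimes A}(\C)\to\C$ preserves all colimits.

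First, I would use the telescope description from \cite[Appendix C]{bunke2018beilinson}: if one writes $S$ as a filtered union of its finite subsets $T\subset S$, then
$$(A\otimes A)[S^{-1}] \simeq \colim_{T\subset S \text{ finite}} (A\otimes A)[T^{-1}],$$
and each $(A\otimes A)[T^{-1}]$ is itself obtained from $A\otimes A$ by iteratively applying the single-element telescope $M\mapsto M[s^{-1}] = \colim\bigl(M\xrightarrow{s}M\xrightarrow{s}\cdots\bigr)$ for each $s\in T$. All these colimits are filtered and can be computed in $\C$.

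Next, I would use that $A \simeq (A\otimes A)[S^{-1}]$ by hypothesis, and that for any compact object $c\in\C^\omega$, the functor $\pi_*\map(c,-): \C\to \Ab_*$ commutes with filtered colimits. Applied to the presentation above, this gives
$$\pi_*\map(c,A) \;\cong\; \colim_{T} \pi_*\map\bigl(c,(A\otimes A)[T^{-1}]\bigr),$$
and for each finite $T = \{s_1,\dots,s_k\}$, iteratively applying the single-element case yields
$$\pi_*\map\bigl(c,(A\otimes A)[T^{-1}]\bigr) \;\cong\; \pi_*\map(c,A\otimes A)[T^{-1}],$$
where the localization on the right is the usual algebraic localization of the $\pi_0\map(\one,A\otimes A)$-module $\pi_*\map(c,A\otimes A)$ at the images of $T$. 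Taking the filtered colimit over $T$ and using that algebraic localization commutes with filtered colimits of localizing sets gives the desired isomorphism $\pi_*\map(c,A\otimes A)[S^{-1}] \xrightarrow{\cong} \pi_*\map(c,A)$, and a direct inspection shows that this isomorphism is induced by the multiplication map, as required.

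There is no serious obstacle here: the only mildly delicate point is verifying that the single-element telescope $M[s^{-1}]$ really induces algebraic localization at $s$ on $\pi_*\map(c,-)$ in a way natural in $c$, which is immediate from compactness of $c$ and the fact that the transition maps in the telescope are, on $\pi_*\map(c,-)$, given by multiplication by $s\in\pi_0\map(\one,A\otimes A)$ acting through the module structure. Once this is in place, the rest of the argument is a direct unwinding of the telescope formula.
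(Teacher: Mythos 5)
Your proof is correct and is essentially the argument the paper has in mind: the paper's proof consists of the single remark that localizations of commutative algebras in stable $\infty$-categories with filtered colimits are computed as telescopes, after which the claim follows from compactness of $c$ exactly as you spell out. You have simply made explicit the details (organizing the telescope over finite subsets of $S$, commuting $\pi_*\map(c,-)$ past the filtered colimit, and identifying the result with the algebraic localization) that the paper leaves as ``an immediate consequence of the definition.''
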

In some cases of interest though, the compacts of $\C$ are complicated to calculate, and so it can be useful to formulate a criterion at the level of $\C$. It can be hard to phrase in this generality, because the diagram that defines the telescope has no reason to lift to $\C$ in general. However, if $S$ is particularly nice, the diagram \emph{can} be lifted to $\C$ even if $A$ is only a homotopy algebra. 

To explain this in more detail, we begin with a construction. 
\begin{cons}
Let $\C$ be a symmetric monoidal additive \category{} admitting sequential colimits. Let $B\in\CAlg(\ho(\C))$ be a homotopy commutative homotopy algebra, and let $s: \one\to B$ be an ``element'' of $B$. This induces a (homotopy-)$B$-module map $B\to B$ given by multiplication by $s$, and thus, an $\mathbb N$-shaped diagram $B\to B\to B\to\dots$ \emph{in} $\C$. 

We call its colimit the telescope of $B$ at $s$, $\Tel_s(B)$. 

Suppose instead given an $\mathbb N$-indexed family $s=(s_i)$ of elements of $B$. We can then form its telescope as the colimit of the diagram $B\xrightarrow{s_1}B\xrightarrow{s_1s_2}B\xrightarrow{s_1s_2s_3}B\dots$, and  we still denote it by $\Tel_s(B)$. 
\pend \end{cons}
\begin{rmk}
    In this construction, the diagram $B\to B\to B\to \dots$ can really be constructed in $\C$ and not only in $\ho(\C)$, because $\mathbb N$ is free as an $\infty$-category (cf., e.g., \cite[Proof of Proposition 4.4.2.6]{HTT}). 
\pend \end{rmk}
\begin{defn}
    Let $\C$ be a symmetric monoidal additive \category{} admitting sequential colimits. Let $B\in\CAlg(\ho(\C))$ be a homotopy commutative homotopy algebra and let $M$ be a (homotopy-)$B$-module with a (homotopy-)$B$-module map $f: B\to M$. Let $s$ be an $\mathbb N$-indexed family of elements of $B$. 

    We say that $f$ witnesses $M$ as a telescope of $B$ at $s$ if there exist homotopies $f\circ (s_1...s_n)\simeq f$ for all $n$ that induce an equivalence $\Tel_s(B)\simeq M$. 
\pend \end{defn}
\begin{rmk}
    Note that in the latter definition, because $\mathbb N$ is free as an \category{}, the collection of homotopies $f\circ (s_1...s_n)\simeq f$ is sufficient to induce a map from the colimit to $M$.
\pend \end{rmk} 
\begin{rmk}
   If $\C$ is stable and compactly generated, the condition that the induced map $\Tel_s(B)\to M$ be an equivalence does not depend on the chosen homotopies $f\circ (s_1...s_n)\simeq f$. Indeed, it can be checked after applying $\pi_0\map(c,-)$ for all compacts $c$, and the induced map there does not depend on the homotopies. 
\pend\end{rmk}
\begin{defn}
    Let $\C$ be a stably symmetric monoidal \category{} with sequential colimits compatible with the tensor product, and let $A\in\CAlg(\ho(\C))$ be a homotopy commutative homotopy algebra. We say that $A$ is homotopy $\omega$-separable if the multiplication map $A\otimes A\op\to A$ witnesses $A$ as a telescope of $A\otimes A\op$ at some sequence $s=(s_i)$ of elements $s_i:\one\to A\otimes A\op$. 
\pend \end{defn}
The following is again an easy consequence of the definition:
\begin{lm}
     Let $\C$ be a compactly generated presentably, stably symmetric monoidal \category{}, and $A\in \CAlg(\ho(\C))$ a homotopy commutative homotopy algebra in $\C$. If $A$ is homotopy $\omega$-separable, then it is homotopy ind-separable. 
\end{lm}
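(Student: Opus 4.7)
The plan is straightforward: take $S = \{s_1 s_2 \cdots s_n \mid n \geq 1\} \subset \pi_0 \map(\one, A \otimes A)$ (or equivalently, up to passing to the multiplicative closure, the set $\{s_i\}$), and then transport the telescope description of $A$ to a telescope description of $\map(c, A)$ via compactness of $c$, concluding by the standard description of localization of an abelian group at a multiplicative set as a sequential colimit.

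More precisely, by assumption there exist homotopies $\mu \circ (s_1 \cdots s_n) \simeq \mu$ for all $n$, inducing an equivalence $\Tel_s(A \otimes A) \xrightarrow{\simeq} A$ in $\C$, where the telescope is the colimit of the $\mathbb N$-indexed diagram $A\otimes A \xrightarrow{s_1} A\otimes A \xrightarrow{s_1 s_2} A\otimes A \to \cdots$. Fix any compact object $c \in \C^\omega$. Applying $\map(c, -)$ — which commutes with filtered colimits by definition of compactness — yields an equivalence of spectra
\begin{equation*}
\colim_n \map(c, A \otimes A) \xrightarrow{\simeq} \map(c, A),
\end{equation*}
where the transition maps in the sequential diagram are induced by multiplication by the successive $s_i$'s. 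Since $\pi_*$ commutes with filtered (in particular sequential) colimits in $\Sp$, passing to homotopy groups gives an isomorphism of graded abelian groups
\begin{equation*}
\colim_n \pi_* \map(c, A \otimes A) \xrightarrow{\cong} \pi_* \map(c, A),
\end{equation*}
where the colimit on the left is taken over multiplication by $s_1 s_2 \cdots s_n$.

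This colimit is, by definition, exactly the localization $\pi_* \map(c, A\otimes A)[S^{-1}]$ of the graded $\pi_*(A\otimes A)$-module $\pi_* \map(c, A\otimes A)$ at the set $S$: localizing an abelian group at a countable multiplicative set is computed as the telescope of successive multiplication by its generators. Composing with the comparison map, we get the desired isomorphism $\pi_* \map(c, A\otimes A)[S^{-1}] \cong \pi_* \map(c, A)$, which exhibits $A$ as homotopy ind-separable. There is no real obstacle here; the main conceptual point, which is already built into the definitions, is that the $\mathbb N$-shaped telescope diagram can be lifted to $\C$ (because $\mathbb N$ is free as an \category{}) and is preserved by $\map(c, -)$ for compact $c$, which turns the homotopical telescope into the algebraic localization at the level of homotopy groups.
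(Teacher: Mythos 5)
Your proof is correct and is exactly the argument the paper has in mind: the paper states this lemma as "an easy consequence of the definition" and omits the proof entirely, and your writeup supplies precisely the intended chain — compactness of $c$ lets $\map(c,-)$ commute with the sequential telescope colimit, $\pi_*$ commutes with filtered colimits of spectra, and the resulting algebraic telescope computes the localization at the multiplicative set generated by the $s_i$ (equivalently at the partial products $s_1\cdots s_n$, since inverting all of these inverts each $s_i$). The only cosmetic remark is that one should read $\map(c,-)$ as the mapping spectrum (the paper's $\Map$) for the graded statement in all degrees, as you implicitly do.
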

The way things will go is that we will prove things about (homotopy) ind-separable (homotopy) algebras, and our main example of a homotopy ind-separable homotopy algebra (Morava $E$-theory) will be proved to be so by proving that is a homotopy $\omega$-separable algebra.

\subsection{Obstruction theory}
Having defined (homotopy) ind-separability, our first goal is to argue that most of our results about commutative separable algebras extend to the case of (homotopy) ind-separable algebras, at least when $\C$ is compactly generated, and under suitable assumption on phantom maps. To prove these results, we will use obstruction theory - we could in principle follow a similar approach as in \Cref{section:homotopycat}, but because filtered colimits do not interact so well with the formation of homotopy ($n$-)categories, we would have to stick closer to the proof of \Cref{lm:Postnikovcats} rather than just using it as a lemma. As a result, the proof would be more convoluted and the obstruction theory from \cite{PVK} neatly packages the constructions anyway. 
\begin{warn}
    An earlier version of this document was missing a key assumption which will appear here, about phantom maps. We will discuss this assumption when it is relevant.
\pend\end{warn}

Unlike \Cref{section:homotopycat}, we now start with a compactly generated stably symmetric monoidal $\C$.
\begin{assu}\label{assu:syn}
$\C$ is a compactly generated stably symmetric monoidal \category, in which tensor products commute with colimits in each variable. Further, we assume that the compact objects of $\C$ are closed under non-empty tensor products\footnote{We do \emph{not} assume that the unit is compact.}
\pend\end{assu}

We will rely heavily on \cite{PVK}, and so our first goal is to get ourselves in the setting of the obstruction theory from that paper. 

The following construction is very similar to the construction in \cite[Section 4.4]{hopkinsluriebrauer}, so we only briefly go over the details. 
\begin{cons}\label{cons:synrig}
Let $\C$ be as in \Cref{assu:syn}. 

Let $\Syn_\C \subset \Fun^\times((\C^\omega)\op,\Sp_{\geq 0})$. Similarly to \cite[Section 4.4]{hopkinsluriebrauer}, the assumption that $\C^\omega$ is closed under non-empty tensor products makes $\Syn_\C$ into a non-unital symmetric monoidal \category{} for which the Yoneda embedding $\C^\omega\to \Syn_\C$ is canonically non-unitally symmetric monoidal. 

    Again, similarly to \cite[Section 4.4]{hopkinsluriebrauer}, we obtain an essentially unique non-unitally symmetric monoidal colimit-preserving preserving functor $f:\Syn_\C\to \C$ whose restriction to $\C^\omega$ is (non-unitally symmetric monoidally) equivalent to the inclusion. The right adjoint of $f$, given by the restricted Yoneda embedding $M:c\mapsto \Map(-,c)_{\geq 0}$ thus acquires a canonical (non-unital) lax symmetric monoidal structure.  

    The structure maps $\Map(-,c)_{\geq 0}\otimes \Map(-,d)_{\geq 0}\to \Map(-,c\otimes d)_{\geq 0}$ are equivalences whenever $c,d\in \C^\omega$, so because $\C$ is compactly generated and $M$ preserves filtered colimits, we find that these structure maps are also equivalences for all $c,d\in\C$. 

    In particular, $\Map(-,\one)_{\geq 0}\otimes M(c) \simeq M(c)$ for all $c\in\C^\omega$ and thus, because of the universal property of $\Syn_\C$, $M(\one)\otimes -\simeq \id$ as functors $\Syn_\C\to \Syn_\C$ (see \cite[Lemma 4.4.9]{hopkinsluriebrauer}). 

    It follows that $\Syn_\C$ is in fact a \emph{unital} symmetric monoidal \category{}, and that $M: \C\to \Syn_\C$ is fully faithful and symmetric monoidal. Furthermore, $\Syn_\C$ is clearly Grothendieck prestable, complete and separated. We abuse notation and write $\one$ also for the unit of $\Syn_\C$, i.e. for $M(\one)$. 

    The grading given by $F[1] := F(\Omega -)$, where $\Omega : \C\to \C$ is the loop functor (equivalently the suspension functor on $\C\op$) makes it into a graded Grothendieck prestable category in the sense of \cite{PVK}, and the assembly map induces a shift structure on the unit $\tau: \Sigma\one[-1]\to \one$, again in the sense of \cite{PVK}. This is the only shift algebra we will consider in this section, so ``periodic module'' in the sense of \cite{PVK} should always be understood with respect to this shift algebra. 
\pend \end{cons}
\begin{warn}\label{warn:synsyn}
    When specialized to the case of $\C=\Mod_E(\Sp_{K(n)})$ where $E$ is Morava $E$-theory at height $n$, our definition of $\Syn_\C$ is related to the $\Syn_E$ appearing in \cite{hopkinsluriebrauer}, but they are not the same: $\Syn_E$ is also defined as an \category{} of product-preserving presheaves, but on something smaller than $\C^\omega$. 
\pend\end{warn}
We recall the following definition from \cite{PVK}:
\begin{defn}[{\cite[Definition 2.17]{PVK}}]
    An object $M\in\Syn_\C$ is a \emph{periodic} module over $\one$ if the canonical map induces an isomorphism $\pi_0(M)\otimes_{\pi_0(\one)}\pi_*(\one)\to \pi_*(M)$; equivalently if $\tau: \Sigma M[-1]\to M$ is a $1$-connective cover\footnote{The equivalence between these two conditions is proved as \cite[Proposition 2.16]{PVK}}.
\pend \end{defn}
\begin{lm}\label{lm:repperiodic}
   For any $c\in\C$, $M(c)$ is a periodic module over $\one$.
\end{lm}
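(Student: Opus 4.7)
The plan is to verify periodicity pointwise on test objects $c' \in \C^\omega$. First I would note that both operations relevant to the periodicity condition, namely the grading shift $[-1]$ and the suspension $\Sigma$, are computed pointwise in $\Sp_{\geq 0}$. The grading is pointwise by definition, $F[-1](c') = F(\Sigma c')$, and $\Sigma$ on $\Syn_\C$ is pointwise because $\Sigma$ on $\Sp_{\geq 0}$ preserves finite products, so applying it pointwise sends product-preserving functors to product-preserving functors; hence it gives the intrinsic $\Sigma$ of the reflective subcategory $\Syn_\C \subset \Fun^\times((\C^\omega)\op, \Sp_{\geq 0})$. Likewise, homotopy groups in $\Syn_\C$ are computed pointwise. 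So it suffices to show that $\tau \colon \Sigma M(c)[-1] \to M(c)$ is a $1$-connective cover pointwise on each $c'$.

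Second, I would unwind the source pointwise:
$$\Sigma M(c)[-1](c') \;=\; \Sigma\,M(c)(\Sigma c') \;=\; \Sigma\,\Map(\Sigma c', c)_{\geq 0} \;\simeq\; \Sigma\bigl(\Omega\Map(c', c)\bigr)_{\geq 0},$$
using the standard equivalence $\Map(\Sigma c', c) \simeq \Omega \Map(c', c)$ in $\Sp$. The assembly map lands in $M(c)(c') = \Map(c', c)_{\geq 0}$ via the counit of $\Sigma \dashv \Omega$. Thus the claim reduces to the spectrum-level fact that for any spectrum $X$, the canonical map $\Sigma(\Omega X)_{\geq 0} \to X_{\geq 0}$ identifies with the $1$-connective cover inclusion $\tau_{\geq 1}X_{\geq 0}\hookrightarrow X_{\geq 0}$. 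This is a direct computation of homotopy groups: the source has trivial $\pi_0$ and $\pi_n(\Sigma(\Omega X)_{\geq 0}) = \pi_{n-1}((\Omega X)_{\geq 0}) = \pi_n X$ for $n \geq 1$, matching $\tau_{\geq 1}X$.

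Third, putting it together, the pointwise identification of $\tau$ with the $1$-connective cover map at every $c'$ shows that $\tau \colon \Sigma M(c)[-1]\to M(c)$ is a $1$-connective cover in $\Syn_\C$, which is exactly the criterion for $M(c)$ to be periodic over $\one$. The only subtlety is to verify that the shift map $\tau$ really does reduce pointwise to the adjunction-counit map on connective covers; this is ultimately a matter of unwinding the definition of the shift structure on $\one = M(\one_\C)$ as the assembly map, and using that $M(c) = \one \otimes M(c)$ so that $\tau_{M(c)}$ is the base change of $\tau_{\one}$ along the module structure. I do not expect this step to present any genuine obstacle.
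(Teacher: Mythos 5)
Your proof is correct and follows essentially the same route as the paper's: identify $\tau$ pointwise with $\Sigma(\Map(\Sigma -,c))_{\geq 0}\to \Map(-,c)_{\geq 0}$, use $\Map(\Sigma -,c)\simeq\Omega\Map(-,c)$, and reduce to the fact that $\Sigma(\Omega X)_{\geq 0}\to X_{\geq 0}$ is a $1$-connective cover. You merely spell out more explicitly the pointwise reductions and the homotopy-group check that the paper leaves implicit.
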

\begin{proof}
    We use the second characterization for this: $\tau : \Sigma M(c)[-1]\to M(c)$ identifies with the map $\Sigma (\Map(\Sigma -, c))_{\geq 0}\to \Map(-,c)_{\geq 0}$. 

    Furthermore, $\Map(\Sigma -, c)\simeq \Omega \Map(-,c)$. Now, for any spectrum $X$, the canonical map $\Sigma (\Omega X)_{\geq 0}\to X_{\geq 0}$ is a $1$-connective cover, so we are done. 
\end{proof}
\begin{lm}\label{lm:periodicpi0}
    Let $M,N\in\Syn_\C$ be periodic modules over $\one$. If $f: M\to N$ is a morphism which induces an isomorphism on $\pi_0$, then $f$ is an equivalence. 
\end{lm}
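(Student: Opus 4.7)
The plan is to use the periodicity condition to reduce the problem to the hypothesis on $\pi_0$, and then invoke Postnikov-completeness of $\Syn_\C$ to promote an isomorphism on all homotopy groups to an equivalence.

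First I would check that $f$ induces an isomorphism on $\pi_n$ for every $n \geq 0$. For this, I use the first characterization of periodicity: naturality of the assembly map $\pi_0(-) \otimes_{\pi_0(\one)} \pi_n(\one) \to \pi_n(-)$ yields a commutative square
\[
\begin{tikzcd}
\pi_0(M) \otimes_{\pi_0(\one)} \pi_n(\one) \ar[r,"\sim"] \ar[d,"\pi_0(f)\otimes \id"'] & \pi_n(M) \ar[d,"\pi_n(f)"] \\
\pi_0(N) \otimes_{\pi_0(\one)} \pi_n(\one) \ar[r,"\sim"] & \pi_n(N)
\end{tikzcd}
\]
whose horizontal maps are isomorphisms by periodicity of $M$ and $N$. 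Since $\pi_0(f)$ is an isomorphism by hypothesis, the left vertical is too, and hence so is $\pi_n(f)$.

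Next I would observe that since $\Syn_\C \subset \Fun^\times((\C^\omega)\op, \Sp_{\geq 0})$ and homotopy groups are computed pointwise in this functor category, Postnikov-completeness of $\Sp_{\geq 0}$ transfers to $\Syn_\C$: a map $g$ in $\Syn_\C$ is an equivalence if and only if $g(c): M(c) \to N(c)$ is an equivalence for every $c \in \C^\omega$, which in turn is detected by the $\pi_*$'s of each $g(c)$, i.e.\ by $\pi_*(g)$. Applying this to our $f$ concludes the proof.

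There is no serious obstacle here; the only thing to make sure of is that the two characterizations of periodicity really give the naturality square above, and that the reference to Postnikov-completeness is valid in the form stated for a product-preserving functor category landing in $\Sp_{\geq 0}$. Both of these are essentially formal consequences of the construction of $\Syn_\C$ in \Cref{cons:synrig} together with standard facts about Grothendieck prestable categories.
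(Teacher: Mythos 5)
Your proposal is correct and follows essentially the same route as the paper: the paper's proof also deduces that $f$ is an isomorphism on all homotopy groups from the first (assembly-map) characterization of periodicity, and then concludes from the definition of $\Syn_\C$ and of its homotopy groups that such a map is an equivalence. You have merely spelled out the naturality square and the pointwise detection of equivalences in $\Fun^\times((\C^\omega)\op,\Sp_{\geq 0})$ more explicitly than the paper does.
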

\begin{proof}
    It follows from the first definition of periodic modules that $f$ induces an isomorphism on all homotopy groups. Given the definition of $\Syn_\C$ and of the homotopy groups, it is clear that this implies that it is an equivalence. 
\end{proof}
\begin{lm}\label{lm:periodicrep}
The functor $M:\C\to \Syn_\C$ identifies $\C$ with the full subcategory of $\Syn_\C$ spanned by periodic modules over $\one$.  
\end{lm}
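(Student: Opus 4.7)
The plan is to establish fully faithfulness, containment in periodic modules, and essential surjectivity in turn. The first two are already in place: $M$ is fully faithful by Construction~\ref{cons:synrig}, and its essential image lands in periodic modules by \Cref{lm:repperiodic}. Only essential surjectivity requires work.

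Given a periodic $N \in \Syn_\C$, I set $c := f(N) \in \C$, where $f \dashv M$ is the symmetric monoidal left adjoint from Construction~\ref{cons:synrig}, and aim to show that the unit $\eta_N : N \to M(f(N)) = M(c)$ is an equivalence. Both source and target are periodic---the target by \Cref{lm:repperiodic}---so \Cref{lm:periodicpi0} reduces the problem to checking that $\eta_N$ induces an isomorphism on $\pi_0$, equivalently that $\pi_0(\eta_N)(x)$ is an isomorphism for every $x \in \C^\omega$.

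For each such $x$, the enriched Yoneda lemma in $\Syn_\C$ (applicable since $\Syn_\C = \Fun^\times((\C^\omega)\op, \Sp_{\geq 0})$ on the $\Sp_{\geq 0}$-enriched $\C^\omega$) identifies $N(x) \simeq \Map_{\Syn_\C}(M(x), N)$, so $\pi_0(N)(x) = [M(x), N]_{\Syn_\C}$; similarly $\pi_0(Mf(N))(x) = [x, f(N)]_\C$. Using naturality of the unit together with $\eta_{M(x)} \simeq \id_{M(x)}$ (which follows from $fM \simeq \id_\C$, the counit being an equivalence since $M$ is fully faithful), the map $\pi_0(\eta_N)(x)$ is identified with the canonical \emph{apply $f$} map $[M(x), N]_{\Syn_\C} \to [x, f(N)]_\C$, sending $\phi : M(x) \to N$ to $f(\phi) : x \to f(N)$.

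The main remaining task, and the principal obstacle, is to verify this map is bijective whenever $N$ is periodic. When $N$ is already in the essential image of $M$, say $N \simeq M(d)$, the claim follows immediately from fully faithfulness of $M$. For general periodic $N$, the strategy is to exploit periodicity to reduce to this representable case: by definition, periodic modules are precisely those on which $\tau : \Sigma\one[-1]\to \one$ acts as a $1$-connective cover, and I expect to use the $\tau$-tower of $N$ from \cite{PVK} to reduce, level by level, to the already-settled representable case. Globally, this amounts to the standard identification of $\C$ with the $\tau$-periodic part of $\Syn_\C$, analogous to the identification of the ``generic fiber'' of synthetic spectra with ordinary spectra.
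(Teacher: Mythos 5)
Your reduction is fine as far as it goes: full faithfulness and the containment of the image in periodic modules are indeed already available, and using \Cref{lm:periodicpi0} to reduce the unit map $\eta_N\colon N\to Mf(N)$ to a $\pi_0$-statement, then rewriting $\pi_0(\eta_N)(x)$ as the map $[M(x),N]\to[x,f(N)]$, $\phi\mapsto f(\phi)$, is all correct. But the proof stops exactly where the content begins. The bijectivity of $[M(x),N]\to[x,f(N)]$ for periodic $N$ \emph{is} essential surjectivity (if $N\simeq M(d)$ it is trivially true, and conversely its truth for all compact $x$ forces $N$ into the image of $M$), so "reduce to the already-settled representable case" is circular unless you supply an actual mechanism. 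The mechanism you gesture at --- running up the $\tau$-tower of $N$ --- does not obviously exist: the layers $\one^{\leq n}\otimes N$ of that tower are truncated, hence \emph{not} periodic and \emph{not} representable, so there is no level at which the representable case applies, and periodicity ($\tau$ acting as a $1$-connective cover) is a condition about inverting $\tau$ in the limit, not one you can verify levelwise against representables. As written, the proposal is a correct framing with the decisive step missing.

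For comparison, the paper's proof goes around this entirely: by \cite[Proposition 2.22]{PVK} the periodic modules are identified with the $\tau$-local objects of the stabilization $\Sp(\Syn_\C)\simeq\Fun^\times((\C^\omega)\op,\Sp)$, i.e.\ with those functors sending suspensions to loops; these are exactly the exact functors $(\C^\omega)\op\to\Sp$, which form $\Ind(\C^\omega)\simeq\C$. If you want to salvage your adjunction-theoretic route, the honest way to finish is to prove that for periodic $N$ the canonical extension of $N$ to an exact functor $(\C^\omega)\op\to\Sp$ (obtained by inverting $\tau$) is representable by an object of $\C=\Ind(\C^\omega)$ --- which is precisely the cited identification, so you would in effect be reproducing the paper's argument rather than avoiding it.
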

\begin{proof}
By \cite[Proposition 2.22]{PVK}, we have an equivalence $\Syn_\C^{per}\simeq \Sp(\Syn_\C)^{\tau^{-1}}$, where the superscript $\tau^{-1}$ means ``$\tau$-local'', i.e. the $M\in \Sp(\Syn_\C)\simeq \Fun^\times((\C^\omega)\op,\Sp)$ such that the canonical map $\Sigma M(\Sigma -)\to M$ is an equivalence. 

By definition, this canonical map is an equivalence if and only if $M$ sends suspensions in $(\C^\omega)\op$ to loops, i.e. if and only if $M$ is an exact functor \cite[Corollary 1.4.2.14.]{HA}.  But $M:\C\to \Fun((\C^\omega)\op,\Sp)$ identifies $\C$ with the full subcategory of $\Fun((\C^\omega)\op,\Sp)$ spanned by exact functors, i.e. $\Ind(\C^\omega)$. 
\end{proof}
\begin{cor}\label{cor:synlocsep}
    Let $\C$ be as in \Cref{assu:syn}, and let $A\in\CAlg(\ho(\C))$ be a homotopy-ind-separable homotopy commutative homotopy algebra.

    In this case, $\pi_0M(A)\in \Mod_{\pi_0(\one)}(\Syn_\C)$ is an ind-separable commutative algebra.

    If $A\in\Alg(\C)$ is homotopy commutative and ind-separable, the same holds. 
\end{cor}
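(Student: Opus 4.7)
The plan is to transport the (homotopy) ind-separability of $A$ across the fully faithful symmetric monoidal functor $M:\C\to\Syn_\C$ from \Cref{cons:synrig}, and to verify the resulting localization pointwise on compact objects of $\C$.

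Since $M$ is symmetric monoidal, $M(A)$ is a (homotopy) commutative (homotopy) algebra in $\Syn_\C$ satisfying $M(A)\otimes M(A)\simeq M(A\otimes A)$. Applying $\pi_0$, which lands in the abelian heart $\Syn_\C^\heart\simeq\Fun^\times((\C^\omega)\op,\Ab)$ where commutative algebra structures are strict, yields a commutative algebra $\pi_0 M(A)$; via its unit $\pi_0(\one)\to \pi_0M(A)$, this is a commutative $\pi_0(\one)$-algebra in $\Mod_{\pi_0(\one)}(\Syn_\C)$, in both cases of the statement. By \Cref{lm:repperiodic}, $M(A)$ and $M(A\otimes A)$ are periodic, and by full faithfulness of $M$ the set $S\subset \pi_0\map_\C(\one_\C,A\otimes A)$ witnessing the (homotopy) ind-separability of $A$ is identified with a set $\widetilde S$ of global sections of $\pi_0(M(A)\otimes M(A))$, which we view via the canonical map as a set of elements of $\pi_0M(A)\otimes_{\pi_0(\one)}\pi_0M(A)$.

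The main step is then to verify that the multiplication map $\pi_0M(A)\otimes_{\pi_0(\one)}\pi_0M(A)\to \pi_0M(A)$ exhibits its target as the localization at $\widetilde S$. The localization of a commutative algebra at a set of elements is computed as a sequential telescope, hence as a filtered colimit; moreover, filtered colimits in $\Syn_\C$ are pointwise on $\C^\omega$ (filtered colimits of connective spectra commute with finite products). So it suffices to check the equivalence after evaluating at each compact object $c\in\C^\omega$. At such a $c$, the claim becomes
\[
\pi_0\map_\C(c,A\otimes A)[S^{-1}]\xrightarrow{\cong}\pi_0\map_\C(c,A),
\]
which is precisely the definition of homotopy ind-separability of $A$. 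In the case $A\in\Alg(\C)$ is ind-separable, its underlying homotopy algebra is already homotopy ind-separable, so the same argument applies.

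The main technical issue to address is identifying the tensor product $\pi_0M(A)\otimes_{\pi_0(\one)}\pi_0M(A)$ and its telescopic localization, both computed in $\Mod_{\pi_0(\one)}(\Syn_\C)$, with the concrete object $\pi_0(M(A)\otimes M(A))\simeq \pi_0M(A\otimes A)$ and its pointwise localization, so that evaluation on a compact $c\in\C^\omega$ returns the abelian group $\pi_0\map_\C(c,A\otimes A)$ and its localization at $S$. This compatibility rests on the symmetric monoidality of $M$ (so $M(A)\otimes M(A) = M(A\otimes A)$, which is periodic by \Cref{lm:repperiodic}), together with the exactness and filtered-colimit preservation of evaluation at compacts of $\C$.
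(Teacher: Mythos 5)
Your overall architecture matches the paper's: identify $\pi_0M(A)\otimes_{\pi_0(\one)}\pi_0M(A)$ with $\pi_0M(A\otimes A)$, transport $S$ across this identification, and then check the resulting telescopic localization pointwise on $\C^\omega$, where it reduces to the $*=0$ part of the definition of homotopy ind-separability. The second half of your argument (telescopes are sequential colimits, these are computed pointwise in $\Fun^\times((\C^\omega)\op,\Sp_{\geq 0})$, equivalences are detected pointwise) is fine and is essentially what the paper does.

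The gap is in the step you yourself flag as ``the main technical issue'': the identification of the relative tensor product $\pi_0M(A)\otimes_{\pi_0(\one)}\pi_0M(A)$, computed in $\Mod_{\pi_0(\one)}(\Syn_\C)$, with $\pi_0M(A\otimes A)$. You assert this ``rests on the symmetric monoidality of $M$ together with the exactness and filtered-colimit preservation of evaluation at compacts,'' but those facts do not give it: the functor $\pi_0:\Syn_\C\to\Syn_\C$ is only \emph{lax} symmetric monoidal in general, and the relative tensor product over $\pi_0(\one)$ is a geometric realization of Day convolutions, which is not computed by evaluation at compact objects. So you cannot verify this identification pointwise, and without it you cannot even make sense of $\widetilde S$ as a subset of $\pi_0\map(\one,\pi_0M(A)\otimes_{\pi_0(\one)}\pi_0M(A))$, which is what the definition of ind-separability of $\pi_0M(A)$ requires. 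The missing ingredient is the consequence of \Cref{lm:repperiodic} that you cite but do not use: by \cite[Proposition 2.16]{PVK}, for a periodic module one has $\pi_0M(X)\simeq M(X)\otimes_{\one}\pi_0(\one)$, so that on objects of $\C$ the functor $\pi_0M(-)$ is the composite of the two symmetric monoidal functors $M(-)$ and $-\otimes_{\one}\pi_0(\one)$; this is what makes the canonical comparison map $\pi_0M(A)\otimes_{\pi_0(\one)}\pi_0M(A)\to\pi_0M(A\otimes A)$ an equivalence. Once that is in place, the rest of your argument goes through as written.
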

\begin{proof}
We first observe that the canonical map $\pi_0M(A)\otimes_{\pi_0(\one)}\pi_0M(A)\to \pi_0M(A\otimes A)$ is an equivalence. Granted this observation, the claim simply follows from the definition of (homotopy) ind-separable and the fact that $\pi_0:\Syn_\C\to\Syn_\C$ preserves filtered colimits (and again, the fact that localizations of commutative algebras are given by telescopes). 

To prove the observation, we combine \Cref{lm:repperiodic} and \cite[Proposition 2.16]{PVK} to get that $\pi_0M(-)\simeq M(-)\otimes_{\one}\pi_0(\one)$, from which the claim follows as $M(-)$ is symmetric monoidal, and so is basechange along $\one\to\pi_0(\one)$ in $\Syn_\C$. 
\end{proof}

\begin{lm}\label{lm:extvanish}
    Let $\D$ be a stably symmetric monoidal \category{} admitting filtered colimits that are compatible with the tensor product, and let $A\in\CAlg(\D)$ be an ind-separable commutative algebra.

    Let $L_A$ be the fiber of the multiplication map $A\otimes A\to A$, viewed as an $A$-bimodule. For any $A$-module $M$, viewed as an $A$-bimodule via restriction along the multiplication map, we have that the mapping spectrum $\map_{A\otimes A}(L_A,M)$ vanishes. 
\end{lm}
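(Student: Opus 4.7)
The plan is to deduce the vanishing directly from the universal property of the localization $\mu: A\otimes A \to A \simeq (A\otimes A)[S^{-1}]$ in $\Mod_{A\otimes A}(\D)$, where $S \subset \pi_0\map(\one, A\otimes A)$ is a set witnessing ind-separability.

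First, I would observe that any $A$-module $M$, regarded as an $A\otimes A$-module via restriction along $\mu$, is automatically $S$-local in $\Mod_{A\otimes A}(\D)$. Indeed, for $s \in S$, the action of $s$ on $M$ factors through its image $\mu(s) \in \pi_0\map(\one,A)$, and then through the $A$-module structure of $M$. But $\mu(s)$ is invertible in $A$, since by the very definition of the localization $(A\otimes A)[S^{-1}]$ every element of $S$ is sent to an invertible one in $A$; hence $s$ acts invertibly on $M$.

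Next, I would apply the contravariant functor $\map_{A\otimes A}(-,M)$ to the fiber sequence $L_A \to A\otimes A \xrightarrow{\mu} A$ in $\Mod_{A\otimes A}(\D)$. By stability of $\D$ this is equivalently a cofiber sequence, so we obtain a fiber sequence of mapping spectra
$$\map_{A\otimes A}(A,M) \to \map_{A\otimes A}(A\otimes A,M) \to \map_{A\otimes A}(L_A,M).$$
By the universal property of the localization $\mu$ in $\Mod_{A\otimes A}(\D)$ combined with the $S$-locality of $M$ established above, the first map is an equivalence (both sides identify with $M$). The third term therefore vanishes, as claimed.

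There is essentially no obstacle: the lemma is a formal consequence of the definition of ind-separability once one unpacks the universal property of a localization at a set of elements. The only small point to keep in mind is the compatibility of filtered colimits with the tensor product in $\D$, which is what lets us identify $(A\otimes A)[S^{-1}]$ with the usual telescope construction and thereby characterize $S$-locality as invertibility of the action of each $s \in S$.
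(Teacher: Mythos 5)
Your proof is correct. It is, however, organized differently from the paper's: the paper passes $L_A$ through the basechange--restriction adjunction $\mu_!\dashv\mu^*$, reducing the claim to the vanishing of $\mu_!L_A = A\otimes_{A\otimes A}L_A$, which follows because the co-unit $A\otimes_{A\otimes A}A\to A$ is an equivalence (the localization is smashing, so $A$ is already $S$-local as a module over itself). You instead stay on the mapping-spectrum side: you check that any $A$-module, restricted along $\mu$, is $S$-local, and then invoke the universal property of the localization to see that $\map_{A\otimes A}(A,M)\to\map_{A\otimes A}(A\otimes A,M)$ is an equivalence, killing the cofiber term. The two arguments are dual faces of the same localization formalism and are equally short; yours has the minor advantage of applying verbatim to any $S$-local $A\otimes A$-module $M$ (not only those pulled back from $A$-modules), while the paper's isolates the single computation $\mu_!L_A=0$, which is reused implicitly elsewhere (e.g.\ in \Cref{cor:synlocsep} and \Cref{thm:indsepsyn}). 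One small point you elide with ``by the very definition'': what the definition gives directly is that each $s\in S$ acts invertibly on $A$ as an $A\otimes A$-module; since that action is multiplication by $\mu(s)$ and an $A$-linear automorphism of $A$ is multiplication by a unit, $\mu(s)$ is indeed invertible --- a one-line argument worth recording.
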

\begin{proof}
    Basechange along the multiplication map is left adjoint to restriction, so it suffices to prove that the basechange of $L_A$ is zero, and for this it suffices to prove that the co-unit $A\otimes_{A\otimes A}A\to A$ is an equivalence. This follows immediately from ind-separability. 
\end{proof}
\begin{thm}\label{thm:indsepmappingspace}
    Let $\C$ be as in \Cref{assu:syn}, and let $A\in\Alg(\C)$ be homotopy commutative and homotopy ind-separable. For any homotopy commutative algebra $R\in\Alg(\C)$, the mapping space $\map_{\Alg(\C)}(A,R)$ is discrete and equivalent to $\hom_{\Alg(\Syn_\C^\heart)}(\pi_0M(A),\pi_0M(R))$. 
\end{thm}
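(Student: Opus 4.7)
The plan is to transport the question into $\Syn_\C$ via the functor $M$ constructed in \Cref{cons:synrig}, and then to run the obstruction theory of \cite{PVK} using the vanishing result \Cref{lm:extvanish} as the main input that kills all obstruction groups. Since $M:\C\to \Syn_\C$ is fully faithful and symmetric monoidal, it induces a fully faithful functor on $\Alg(-)$, so we have $\map_{\Alg(\C)}(A,R)\simeq \map_{\Alg(\Syn_\C)}(M(A),M(R))$, and by \Cref{lm:repperiodic} both $M(A)$ and $M(R)$ are periodic modules over $\one$. By \Cref{cor:synlocsep}, $\pi_0 M(A)$ is an ind-separable commutative algebra in $\Syn_\C^\heart$; in particular, \Cref{lm:extvanish} applies to it.

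Next, I would use that the Postnikov tower of $M(R)$ in $\Syn_\C$ (or equivalently, its $\tau$-adic tower, since $M(R)$ is periodic) gives a tower of principal fibrations whose $n$th layer is a square-zero extension of $\pi_0 M(R)\otimes_{\pi_0\one}\pi_n\one$-modules. Applying $\map_{\Alg(\Syn_\C)}(M(A),-)$ to this tower, one obtains a Bousfield--Kan-type tower of spaces whose obstruction groups for lifting a map of $\mathbb E_1$-algebras from the $n$th layer to the $(n{+}1)$st can be identified, as in the standard deformation-theoretic computations (e.g. \cite[Section 7.4]{HA} adapted to $\Syn_\C$, or directly via the obstruction theory of \cite{PVK}), with $\pi_0$ and $\pi_{-1}$ of mapping spectra of the form $\map_{\pi_0M(A)\otimes\pi_0M(A)}(L_{\pi_0M(A)}, N)$ for suitable modules $N$ that are restricted from $\pi_0M(A)$-modules along the multiplication. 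By \Cref{lm:extvanish}, all of these mapping spectra vanish, so the tower of mapping spaces is constant (and in particular discrete at each stage), and the canonical map from the inverse limit to $\map_{\Alg(\Syn_\C)}(M(A),M(R))$ is an equivalence onto the discrete space $\hom_{\Alg(\Syn_\C^\heart)}(\pi_0M(A),\pi_0M(R))$.

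The main obstacle I anticipate is the convergence of this tower, which is the reason the warning about phantom maps is relevant: one needs that the $\lim^1$ terms computing $\pi_0\map_{\Alg(\Syn_\C)}(M(A),M(R))$ from the layers of the Postnikov tower of $M(R)$ vanish. In the graded prestable setting of \cite{PVK} this is handled by the completeness and separatedness of $\Syn_\C$ together with a hypothesis controlling phantom maps (which should be part of the running assumptions and which guarantees that $M(R)$ is genuinely the inverse limit of its truncations, not only up to phantoms). Granting this, the vanishing of the obstruction groups forces each step in the tower to be an equivalence, and the identification with $\hom_{\Alg(\Syn_\C^\heart)}(\pi_0 M(A),\pi_0M(R))$ is immediate from the bottom of the tower, since mapping \emph{into} a discrete algebra in $\Syn_\C^\heart$ from $\pi_0 M(A)$ recovers precisely the set of homotopy algebra maps in the heart.

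Finally, to see that this matches $\hom_{\Alg(\ho(\C))}(hA,hR)$-style data in the way the statement wants, one uses that $\pi_0\circ M$ is symmetric monoidal and sends homotopy commutative algebras in $\C$ to commutative algebras in $\Syn_\C^\heart$, so the right-hand side really does compute the claimed hom-set. Discreteness of the left-hand side then follows from the fact that all positive homotopy groups of $\map_{\Alg(\Syn_\C)}(M(A),M(R))$ are quotients of the (vanishing) obstruction groups in the tower, so the whole mapping space is concentrated in degree $0$.
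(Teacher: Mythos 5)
This is essentially the paper's proof: transport the mapping space to $\Syn_\C$ via the fully faithful symmetric monoidal functor $M$, note that $M(A)$ and $M(R)$ are periodic, run the obstruction theory of \cite{PVK} along the tower of truncations, and kill every obstruction group using \Cref{lm:extvanish} (via \Cref{cor:synlocsep}) together with the observation that homotopy commutativity of $R$ makes $\pi_0M(R)$ a diagonal bimodule, hence restricted along the multiplication of $\pi_0M(A)$. The only inaccuracy is your worry about phantom maps: convergence of the tower in $\Syn_\C$ is automatic from the completeness and separatedness of $\Syn_\C$ recorded in \Cref{cons:synrig} (this is what \cite[Remark 5.2]{PVK} uses), and phantom-map hypotheses enter only later, when one passes from $\Syn_\C^\heart$ back to $\ho(\C)$ as in \Cref{thm:indsepBrown}.
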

\begin{proof}
First, note that $M(A),M(R)$ are periodic algebras over $\one$, so that we can use \cite[Proposition 5.7]{PVK} to study the mapping space $\map_{\Alg(\C)}(A,R)\simeq \map_{\Alg(\Syn_\C)}(M(A),M(R))$. 

By \cite[Proposition 5.7]{PVK}, the fiber of $$\map_{\Alg(\Mod_{\one^{\leq n+1}}(\Syn_\C))}(\one^{\leq n+1}\otimes M(A),\one^{\leq n+1}\otimes M(R))\to  \map_{\Alg(\Mod_{\one^{\leq n}}(\Syn_\C))}(\one^{\leq n}\otimes M(A),\one^{\leq n}\otimes M(R))$$ at any point in the target is a space of paths in a certain space. We claim that this space is contractible. Indeed, this space is $$\map_{\BiMod_{\pi_0M(A)}(\Syn_\C)}(L_{\pi_0M(A)/\pi_0(\one)}^{\mathbb E_1}, \Sigma^{n+2}\pi_0M(R)[-(n+1)])$$
Now because $M(R)$ is homotopy commutative, the $M(R)$-bimodule $\pi_0M(R)[-(n+1)]$ is pulled back along the multiplication map $\pi_0M(R)\otimes_{\pi_0(\one)}\pi_0M(R)$, and so the same holds when we see this bimodule as a $\pi_0M(A)$-bimodule. Thus by \Cref{lm:extvanish} applied to $\D= \Mod_{\pi_0(\one)}(\Syn_\C)$ (and \Cref{cor:synlocsep}), this space is contractible, and hence so is the path space between any two points therein. Here we use that by \cite[Theorem 7.3.5.1]{HA}, $L_{\pi_0M(A)/\pi_0(\one)}^{\mathbb E_1}$ is what we have called $L_{\pi_0M(A)}$, computed in $\Mod_{\pi_0(\one)}(\Syn_\C)$.

This proves that the map 
$$\map_{\Alg(\Mod_{\one^{\leq n+1}}(\Syn_\C))}(\one^{\leq n+1}\otimes M(A),\one^{\leq n+1}\otimes M(R))\to  \map_{\Alg(\Mod_{\one^{\leq n}}(\Syn_\C))}(\one^{\leq n}\otimes M(A),\one^{\leq n}\otimes M(R))$$
is an equivalence, and thus by \cite[Remark 5.2]{PVK}, the map $\map_{\Alg(\Syn_\C)}(M(A),M(R))\to \map_{\Alg(\Mod_{\pi_0(\one))}}(\pi_0(\one)\otimes M(A),\pi_0(\one)\otimes M(R))\simeq \map_{\Alg(\Syn_\C^\heart)}(\pi_0M(A),\pi_0M(R))$ is an equivalence. 
\end{proof}
As in the proof of \Cref{thm : commmorsep}, and because $\Syn_\C^\heart$ is a $1$-category, we obtain:
\begin{cor}\label{cor:indsepnoBrown}
    Let $\C$ be as in \Cref{assu:syn} and let $A\in\Alg(\C)$ be an ind-separable homotopy commutative algebra. Let $\Oo$ be an arbitrary one-colored \operad. In this case, the canonical forgetful map $$\Alg_{\Oo\otimes\mathbb E_1}(\C)^\simeq\times_{\Alg(\C)^\simeq}\{A\}\to\Alg_{\Oo\otimes\mathbb E_1}(\Syn_\C^\heart)^\simeq\times_{\Alg(\Syn_\C^\heart)^\simeq}\{\pi_0M(A)\} $$ is an equivalence. In particular, if $\Oo$ is weakly reduced, $\Alg_{\Oo\otimes\mathbb E_1}(\C)^\simeq\times_{\Alg(\C)^\simeq}\{A\}$ is contractible. This is the case e.g. if $\Oo=\mathbb E_d,d\geq 1$.  
\end{cor}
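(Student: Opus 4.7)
My plan is to mimic the structure of the argument used for \Cref{thm : opdlift} and \Cref{thm : commmorsep}, with \Cref{thm:indsepmappingspace} playing the role that \Cref{cor:hocommtargetdiscrete} played there.

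First, I would verify that for every $n\geq 0$ the tensor power $A^{\otimes n}$ is again homotopy commutative and homotopy ind-separable: homotopy commutativity is automatic, and if $A$ is homotopy ind-separable, witnessed by a set $S \subset \pi_0\map(\one, A\otimes A)$, then inverting the $n$ sets of elements coming from each of the $n$ tensor factors of $(A^{\otimes n}) \otimes (A^{\otimes n}) \simeq (A\otimes A)^{\otimes n}$ one after the other shows that $A^{\otimes n}$ is homotopy ind-separable. Applying \Cref{thm:indsepmappingspace} with source $A^{\otimes n}$ and target $A^{\otimes m}$ then yields, for all $n,m$, equivalences
\[
\map_{\Alg(\C)}(A^{\otimes n}, A^{\otimes m}) \simeq \hom_{\Alg(\Syn_\C^\heart)}(\pi_0M(A)^{\otimes n}, \pi_0M(A)^{\otimes m}),
\]
where we use that $M$ is symmetric monoidal and that $\pi_0(-)$ on periodic modules is monoidal (cf.\ the proof of \Cref{cor:synlocsep}).

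Next, I would let $\mathcal A \subset \Alg_{\mathbb E_1}(\C)$ denote the full subcategory spanned by $\{A^{\otimes n}\}_{n\geq 0}$ and $\mathcal B \subset \Alg_{\mathbb E_1}(\Syn_\C^\heart)$ the full subcategory spanned by $\{\pi_0M(A)^{\otimes n}\}_{n\geq 0}$. Both are closed under tensor products and hence inherit symmetric monoidal structures, and the functor $\pi_0M(-)\colon \Alg_{\mathbb E_1}(\C) \to \Alg_{\mathbb E_1}(\Syn_\C^\heart)$ restricts to a symmetric monoidal functor $\mathcal A \to \mathcal B$ which is essentially surjective by definition and fully faithful by the previous paragraph. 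Thus it is a symmetric monoidal equivalence. Since an $\Oo\otimes\mathbb E_1$-algebra structure on $A$ extending its $\mathbb E_1$-structure is, by the equivalence $\Alg_{\Oo\otimes\mathbb E_1}(\M) \simeq \Alg_\Oo(\Alg_{\mathbb E_1}(\M))$, simply an $\Oo$-algebra structure on $A$ viewed as an object of $\mathcal A$ (and similarly on the right), this symmetric monoidal equivalence transports $\Oo$-algebra structures to $\Oo$-algebra structures and gives the asserted equivalence of fibers.

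For the \textbf{in particular} clause, $\pi_0M(A)$ is a (genuinely) commutative algebra in the $1$-category $\Syn_\C^\heart$, because it is the image of a homotopy commutative algebra under a symmetric monoidal functor landing in a $1$-category. Hence any $\Oo$-algebra structure on $\pi_0M(A)$ in $\Alg(\Syn_\C^\heart)$ factors through the cocartesian symmetric monoidal \category{} $\CAlg(\Syn_\C^\heart)$, and for weakly reduced $\Oo$ the space of such structures is contractible by \cite[Proposition 2.4.3.9]{HA}, exactly as in the conclusion of the proof of \Cref{thm : opdlift}. The only mild subtlety I anticipate is the careful bookkeeping of the reduction from $\Oo\otimes\mathbb E_1$-structures on $A$ in $\C$ to $\Oo$-structures on $A$ in $\Alg_{\mathbb E_1}(\C)$, and correspondingly on the $\Syn_\C^\heart$ side; once this is set up, everything else is formal given \Cref{thm:indsepmappingspace}.
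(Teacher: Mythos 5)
Your proof is correct and follows essentially the same route as the paper, which simply invokes the argument of \Cref{thm : commmorsep}/\Cref{thm : opdlift}: use \Cref{thm:indsepmappingspace} to identify mapping spaces between tensor powers, restrict to the full symmetric monoidal subcategories they span, transport $\Oo$-algebra structures across the resulting equivalence, and finish with \cite[Proposition 2.4.3.9]{HA} since $\pi_0M(A)$ is genuinely commutative in the $1$-category $\Syn_\C^\heart$. Your explicit check that tensor powers of an ind-separable algebra remain ind-separable is a detail the paper leaves implicit, and it is a welcome addition.
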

More generally, again arguing as in the proof of \Cref{thm : commmorsep}:
\begin{cor}
 Let $\C$ be as in \Cref{assu:syn} and let $A\in\Alg(\C)$ be an ind-separable homotopy commutative algebra.
    If $\Oo$ is any \operad{} and $R\in\Alg_{\Oo\otimes\mathbb E_1}(\C)$ is an algebra whose underlying $\mathbb E_1$-algebra is homotopy commutative, then, viewing $A$ as an $\Oo\otimes\mathbb E_1$-algebra using the unique map of \operads{} $\Oo\otimes\mathbb E_1\to \mathbb E_\infty$, we find that the canonical map $$\map_{\Alg_{\Oo\otimes\mathbb E_1}(\C)}(A,R)\to \hom_{\Alg_{\Oo\otimes\mathbb E_1}(\Syn_\C^\heart)}(\pi_0M(A),\pi_0M(R))$$ 
    is an equivalence. 
\end{cor}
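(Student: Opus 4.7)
The plan is to mirror the proof of \Cref{thm : commmorsep}, replacing its input (\Cref{cor:hocommtargetdiscrete}) by its ind-separable analogue \Cref{thm:indsepmappingspace}, and then applying \Cref{lm:ffalgebra} to leverage the Boardman--Vogt identification $\Alg_{\Oo\otimes\mathbb E_1}(-)\simeq \Alg_\Oo(\Alg(-))$.

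First, I would note that $\pi_0 M:\C\to \Syn_\C^\heart$ is symmetric monoidal (the composite of the symmetric monoidal $M$ with basechange along $\one\to \pi_0(\one)$), hence induces a symmetric monoidal functor $\pi_0 M:\Alg(\C)\to\Alg(\Syn_\C^\heart)$. Under the equivalences $\Alg_{\Oo\otimes\mathbb E_1}(\C)\simeq\Alg_\Oo(\Alg(\C))$ and similarly for $\Syn_\C^\heart$, the canonical map of the statement is identified with the map on $\Alg_\Oo$-mapping spaces induced by this symmetric monoidal functor.

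Next, I would apply \Cref{lm:ffalgebra} to $\pi_0 M:\Alg(\C)\to\Alg(\Syn_\C^\heart)$, taking $\mathcal A\subset \Alg(\C)$ to be the full subcategory spanned by ind-separable homotopy commutative algebras, and $\mathcal B\subset \Alg(\C)$ the full subcategory spanned by homotopy commutative algebras. The mapping-space hypothesis of the lemma is exactly \Cref{thm:indsepmappingspace}: for $a\in\mathcal A$ and $b\in\mathcal B$, the canonical map $\map_{\Alg(\C)}(a,b)\to\map_{\Alg(\Syn_\C^\heart)}(\pi_0 Ma,\pi_0 Mb)$ is an equivalence (and the target is discrete as $\Syn_\C^\heart$ is a $1$-category, giving the final equivalence with the hom-set).

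The only remaining verification, and the main obstacle, is that $\mathcal A$ is closed under tensor products in $\Alg(\C)$. Homotopy commutativity is clearly stable under $\otimes$. For ind-separability, if $A$ is witnessed by $S\subset \pi_0\map(\one,A\otimes A)$ and $B$ by $T\subset \pi_0\map(\one,B\otimes B)$, then the multiplication on $A\otimes B$ factors, after a swap using homotopy commutativity, as $\mu_A\otimes \mu_B$. Tensor products of localizations are localizations at the image of the union, so $\mu_{A\otimes B}$ should be a localization at $S':=\{s\otimes 1_{B\otimes B}:s\in S\}\cup\{1_{A\otimes A}\otimes t:t\in T\}$; the verification at the level of $\pi_*\map(c,-)$ for compact $c$ reduces to the fact that filtered colimits commute with mapping out of compact objects and that filtered colimits of abelian groups are exact. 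This conclusion combined with \Cref{lm:ffalgebra} produces the claimed equivalence.
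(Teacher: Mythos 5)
Your proposal is correct and is essentially the paper's own argument: the paper disposes of this corollary with the one-line remark that one argues as in the proof of \Cref{thm : commmorsep}, i.e., by combining \Cref{lm:ffalgebra} (via the identification $\Alg_{\Oo\otimes\mathbb E_1}(-)\simeq\Alg_\Oo(\Alg(-))$) with \Cref{thm:indsepmappingspace} as the mapping-space input, exactly as you do. Your explicit check that the relevant full subcategory of $\Alg(\C)$ is closed under tensor products --- tensoring the two telescopic localizations to see that a tensor product of ind-separable homotopy commutative algebras is again ind-separable --- is a step the paper leaves implicit, and your way of filling it in is correct (note only that the factorization of $\mu_{A\otimes B}$ through $\mu_A\otimes\mu_B$ uses the symmetry of $\C$ rather than homotopy commutativity of the factors).
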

As showcased in the proof of \Cref{cor:indsepnoBrown}, our obstruction theory based on $\Syn_\C$ really only knows about the $\pi_0$ of $M(A)$, which can be understood as the cohomology theory represented by $A$ \emph{on $\C^\omega$}, rather than the whole of $\C$. The following assumption on $\C$ will thus be needed if we want to lift information about $\Syn_\C^\heart$ to information about $\ho(\C)$: 
\begin{assu}\label{assu:Brown}
    Let $X,Y\in \C$ and let $f: \pi_0\Map(-,X)\cong \pi_0\Map(-,Y)$ be an isomorphism between the cohomology theories they represent on $\C^\omega$. There exists a map $\tilde f : X\to Y$ which lifts $f$ (and is therefore an equivalence).
\pend\end{assu}
\begin{rmk}\label{rmk:BrownHoyois}
    By \cite[Theorem 7]{hoyoisBrown}, this assumption is often satisfied in practice. Specifically, for $\C$ stable, it suffices for $\ho(\C^\omega)$ to be ``countable'', in the sense that it has countably many isomorphism classes, and mapping sets between any two objects are countable. 
    
    Because of long exact sequences induced by fiber sequences, to check this it suffices to check that there is a countable number of generators, and that the mapping sets between (shifts) of these generators are all countable. 
\pend\end{rmk}
\begin{thm}\label{thm:indsepsyn}
    Let $\C$ be as in \Cref{assu:syn}.

    Let $A\in\CAlg(\ho(\C))$ a homotopy ind-separable homotopy commutative homotopy algebra. In this case, the moduli space $\Alg(\C)^\simeq\times_{\Alg(\Syn_\C^\heart)^\simeq}\{\pi_0M(A)\}$ is contractible.

    If $\C$ satisfies \Cref{assu:Brown}, then any lift $\tilde A$ of $\pi_0M(A)$ is equivalent, as an object of $\C$, to $A$. 
        \end{thm}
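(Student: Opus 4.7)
The strategy is to mimic the proof of \Cref{thm:indsepmappingspace} at the level of moduli of algebras, using the obstruction theory of \cite{PVK} inside $\Syn_\C$.

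Set $B_0 := \pi_0 M(A) \in \CAlg(\Syn_\C^\heart)$, which is ind-separable by \Cref{cor:synlocsep}. Using the obstruction theory of \cite{PVK} for the Postnikov tower $\one \simeq \lim_n \one^{\leq n}$, the moduli space $\mathcal N := \Alg(\Syn_\C)^\simeq \times_{\Alg(\Syn_\C^\heart)^\simeq} \{B_0\}$ is an inverse limit of moduli of algebra lifts to $\Alg(\Mod_{\one^{\leq n}}(\Syn_\C))$, with successive fibres controlled by spaces of the form $\Omega^\infty \map_{B_0 \otimes B_0}(L_{B_0}, B_0[-k])$ involving the $\mathbb E_1$-cotangent complex of $B_0$ computed in $\Mod_{\pi_0(\one)}(\Syn_\C)$. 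Exactly as in the proof of \Cref{thm:indsepmappingspace}, \Cref{lm:extvanish} shows these mapping spectra vanish, so $\mathcal N$ is contractible.

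Since $M : \Alg(\C) \hookrightarrow \Alg(\Syn_\C)$ is fully faithful with essential image the algebras whose underlying synthetic object is periodic (by \Cref{lm:periodicrep}), the target moduli space $\Alg(\C)^\simeq \times_{\Alg(\Syn_\C^\heart)^\simeq} \{B_0\}$ is the subspace of $\mathcal N$ cut out by periodicity. The main technical step is therefore to show the unique lift $B \in \Alg(\Syn_\C)$ is periodic. For this, lift a witnessing set $S \subset \pi_0 \map_{\Syn_\C^\heart}(\one, B_0 \otimes B_0)$ of the ind-separability of $B_0$ to a set $\widetilde S \subset \pi_0 \map_{\Syn_\C}(\one, B \otimes B)$, using that $\pi_0 (B \otimes B) \cong B_0 \otimes_{\pi_0(\one)} B_0$, and argue that the multiplication $B \otimes B \to B$ realizes $B$ as the telescope localization at $\widetilde S$ — a property that can be checked after base change along $\one \to \pi_0(\one)$, where it holds by hypothesis. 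Since $B_0 \otimes_{\pi_0(\one)} B_0$ is itself ind-separable, the same PVK argument applied to it identifies $B \otimes B$ with the unique lift; the latter admits the manifestly periodic candidate $M(A) \otimes M(A) \simeq M(A \otimes A)$ (periodic by \Cref{lm:repperiodic}), so $B \otimes B$ is periodic, and then $B$ is a telescope of a periodic object, hence periodic. This periodicity step is the main obstacle: one must carefully handle that the lift is naturally in $\Syn_\C$ and force it back into the periodic subcategory using the ind-separability structure.

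For the second statement, assume \Cref{assu:Brown}. Any lift $\tilde A \in \Alg(\C)$ satisfies $\pi_0 \Map_\C(-, \tilde A)|_{\C^\omega} = \pi_0 M(\tilde A) \cong B_0 \cong \pi_0 M(A) = \pi_0 \Map_\C(-, A)|_{\C^\omega}$, so $A$ and $\tilde A$ represent the same cohomology theory on $\C^\omega$. By \Cref{assu:Brown}, this isomorphism is induced by a map in $\C$, which is then an equivalence.
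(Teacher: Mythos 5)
Your overall strategy is the paper's: run the obstruction theory of \cite{PVK} along the Postnikov tower of $\one$ in $\Syn_\C$, kill the obstruction groups with \Cref{lm:extvanish} applied to the ind-separable algebra $\pi_0M(A)$ supplied by \Cref{cor:synlocsep}, and use \Cref{assu:Brown} for the final sentence (that last part of your argument is correct and is exactly what the paper does). But there is a genuine gap in the first part, located in your identification of the moduli space. The space $\mathcal N=\Alg(\Syn_\C)^\simeq\times_{\Alg(\Syn_\C^\heart)^\simeq}\{B_0\}$ is \emph{not} contractible, and it is not what the tower of lifts along $\Mod_{\one^{\leq n+1}}(\Syn_\C)\to\Mod_{\one^{\leq n}}(\Syn_\C)$ computes. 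Indeed, $B_0$ itself, viewed as a discrete algebra in $\Syn_\C$ via the inclusion of the heart, is a point of $\mathcal N$, and it is not equivalent to any periodic lift as soon as $\pi_*(\one)$ is nonzero in positive degrees; so $\mathcal N$ has at least two components. What the PVK tower controls --- and what \cite[Theorem 5.4]{PVK} asserts --- is the moduli of \emph{periodic} lifts, which by \Cref{lm:periodicrep} is precisely the moduli space $\Alg(\C)^\simeq\times_{\Alg(\Syn_\C^\heart)^\simeq}\{\pi_0M(A)\}$ of the statement. Periodicity is built into that obstruction theory through the shift algebra $\tau$, so there is nothing to prove about it separately: one cites the theorem, observes that the obstructions in $\mathrm{Ext}^{n+2}(L^{\mathbb E_1}_{\pi_0M(A)/\pi_0(\one)},\pi_0M(A)[-n])$ vanish by \Cref{lm:extvanish}, and reduces contractibility to this non-emptiness exactly as in \Cref{obs:onlynonempty}, using \Cref{thm:indsepmappingspace} (via \Cref{cor:indsepnoBrown}) --- a reduction your write-up also skips.

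Your attempted repair --- showing a posteriori that the lift $B$ is periodic --- does not close the gap, because it is circular. To identify $B\otimes B$ with $M(A)\otimes M(A)\simeq M(A\otimes A)$ you appeal to ``the unique lift'' of $B_0\otimes_{\pi_0(\one)}B_0$, but uniqueness is only available (by the very argument under construction) within the moduli of \emph{periodic} lifts; to apply it to $B\otimes B$ you would already need to know that $B\otimes B$ is periodic, which is what you are trying to prove. The intermediate claim that ``$B\otimes B\to B$ realizes $B$ as the telescope at $\widetilde S$'' being checkable after base change along $\one\to\pi_0(\one)$ is also unjustified: $\pi_0(\one)\otimes_\one-$ agrees with $\pi_0$ only on periodic modules (\cite[Proposition 2.16]{PVK}), and conservativity of this base change on the relevant telescopes would need a separate completeness argument. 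The one step of that paragraph which is fine is the last: a filtered colimit of periodic objects is periodic, since homotopy groups and $\pi_0(-)\otimes_{\pi_0(\one)}\pi_*(\one)$ both commute with filtered colimits in $\Syn_\C$. The fix is simply to discard the periodicity digression and work from the start with the moduli of periodic lifts, as the paper does.
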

\begin{proof}
As in \Cref{obs:onlynonempty}, \Cref{cor:indsepnoBrown} shows that the real content here is the non-emptiness of this moduli space. 

We first replace $\Alg(\C)$ by $\Alg(\Syn_\C)^{per}$ (algebras in $\Syn_\C$ whose underlying object is periodic), and we use \cite[Theorem 5.4]{PVK} to prove the existence of a lift - by \Cref{lm:periodicrep}, the map $\Alg(\C)\to \Alg(\Syn_\C)^{per}$ is an equivalence. 

In more detail, \cite[Theorem 5.4]{PVK} tells us that the obstructions to the existence of a lift live in $\mathrm{Ext}^{n+2}(L_{\pi_0M(A)/\pi_0(\one)}^{\mathbb E_1},\pi_0M(A)[-n])$, so it suffices to prove that these groups vanish. By \cite[Theorem 7.3.5.1]{HA}, $L_{\pi_0M(A)/\pi_0(\one)}^{\mathbb E_1}$ is what we have called $L_{\pi_0M(A)}$, computed in $\Mod_{\pi_0(\one)}(\Syn_\C)$.  As the $\pi_0M(A)$-bimodule structure on $\pi_0M(A)[-n]$ is obtained by shifting the bimodule structure on $\pi_0M(A)$, and in particular by restriction along the multiplication map, \Cref{lm:extvanish} implies that these $\mathrm{Ext}$-groups vanish (using that $\pi_0M(A)$ is ind-separable by \Cref{cor:synlocsep}). We thus find a periodic algebra $\tilde A$ in $\Syn_\C$ whose $\pi_0$ is $\pi_0M(A)$ as was claimed. 

Now for the second part, by \Cref{lm:periodicrep}, we may in fact write $M(\tilde A)$ for some algebra $\tilde A\in\C$, with $\pi_0M(\tilde A)\cong \pi_0M(A)$ as algebras. By \Cref{assu:Brown}, it follows that $\tilde A\simeq A$, and so $M(\tilde A)\simeq M(A)$, all of this lifting the isomorphism $\pi_0M(\tilde A)\cong \pi_0M(A)$, and so we do get an algebra structure on $M(A)$ lifting the one on $\pi_0M(A)$. By fully faithfulness of $\C\to \Syn_\C$, this is what we wanted.  
\end{proof}
\begin{rmk}
    Without \Cref{assu:Brown}, this proof constructs a homotopy ind-separable algebra $\tilde A$ such that $\pi_0M(\tilde A)\cong\pi_0M(A)$ as algebras in $\Syn_\C^\heart$, i.e. as multiplicative cohomology theories on $\C^\omega$, but there is no way to guarantee that $A\simeq \tilde A$. 

    Even with \Cref{assu:Brown}, there is no way to guarantee that the multiplication we obtain on $A$ is the one we started with, they only agree up to phantom maps. 
\pend\end{rmk}
One might be tempted to conclude that the same sort of result holds for $\Alg(\C)^\simeq \times_{\Alg(\ho(\C))^\simeq}\{A\}$, because ``$\ho(\C)\to\Syn_\C^\heart$ is fully faithful''\footnote{This is what an earlier version of this document claimed without justification.}. However, $\ho(\C)\to\Syn_\C^\heart $ is generally not fully faithful, for similar reasons that \Cref{assu:Brown} was needed: the hom-set between $\pi_0M(X)$ and $\pi_0M(Y)$ is the set of natural transformation $\pi_0\Map(-,X)\to \pi_0\Map(-,Y)$ as functors on $\C^\omega$, not on $\C$. For example, any phantom map from $X$ to $Y$ is sent to $0$ in $\Syn_\C^\heart$.  Recall:
\begin{defn}
    A phantom map $X\to Y$ in $\C$ is a map such that for any compact $c\in\C^\omega$, the composite $c\to X\to Y$ is nullhomotopic. 
\pend\end{defn}
\begin{rmk}
    Of couse if the nullhomotopy is natural in $c\in\C^\omega$, then $X\to Y$ is nullhomotopic itself, but $\Syn_\C^\heart =\Fun^\times(\ho(\C^\omega)\op,\Ab)$ cannot see this. 
\pend\end{rmk}

Similarly, in the above theorem, if $\C$ satisfies \Cref{assu:Brown}, we obtain a lift on $M(A)$, or equivalently $A$, of the algebra structure on $\pi_0M(A)$, but this tells us that the homotopy algebra structure that we obtained on $A$ need only agree with the original one \emph{up to phantom} maps. And in fact, this is no surprise: our assumption that $A$ be ind-separable only depends on the multiplication $\mu$ up to phantom maps, because it is tested after mapping in from compact objects. The above theorem shows that if two homotopy algebra structures on $A$ agree up to phantom maps, then at most one of them can be lifted to an actual algebra structure. 

In particular, to properly get statements about $\ho(\C)$ rather than $\Syn_\C^\heart$, one needs to make assumptions about phantom maps. 
\begin{obs}
    If there are no phantom maps $A\to R$, then $\hom_{\ho(\C)}(hA,hR)\to \hom_{\Syn_\C^\heart}(\pi_0M(A),\pi_0M(R))$ is injective, and thus so is  $\hom_{\Alg(\ho(\C))}(hA,hR)\to \hom_{\Alg(\Syn_\C^\heart)}(\pi_0M(A),\pi_0M(R))$. 

    For any lift $\tilde A$ of $A$, the map $\map_{\Alg(\C)}(\tilde A,R)\to \hom_{\Alg(\Syn_\C^\heart)}(\pi_0M(A),\pi_0M(R))$ factors through $\hom_{\Alg(\ho(\C))}(A,hR)$, and is an equivalence by \Cref{thm:indsepsyn}. If there are no phantom maps $A\to R$, the above implies that both maps $\map_{\Alg(\C)}(\tilde A,R)\to \hom_{\Alg(\ho(\C))}(A,hR)$ and $\hom_{\Alg(\ho(\C))}(A,hR)\to \map_{\Alg(\Syn_\C^\heart)}(\pi_0M(A),\pi_0M(R))$ are equivalences. 
\end{obs}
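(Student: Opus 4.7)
The plan is to unpack the definitions and then use two-out-of-three style arguments for bijections.

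First, for the injectivity statement. By construction, $\Syn_\C^\heart = \Fun^\times(\ho(\C^\omega)\op,\Ab)$, and $\pi_0 M(X)$ is, by definition, the functor $c\mapsto \pi_0\Map_\C(c,X)$ on $\ho(\C^\omega)\op$. A natural transformation $\pi_0M(A)\to \pi_0M(R)$ is then a compatible collection of maps $\pi_0\Map(c,A)\to \pi_0\Map(c,R)$ for all $c\in\C^\omega$. Given a homotopy class $[f]\in\hom_{\ho(\C)}(hA,hR)$, the induced such natural transformation is post-composition by $f$, so two homotopy classes are sent to the same natural transformation precisely when their difference is phantom. Hence under the ``no phantom maps'' assumption, the map $\hom_{\ho(\C)}(hA,hR)\to \hom_{\Syn_\C^\heart}(\pi_0M(A),\pi_0M(R))$ is injective. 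The algebra version sits in a commutative square with this one via the (faithful) forgetful maps, so the top horizontal map is injective as well.

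For the second part, fix a lift $\tilde A$ of $A$. We have the composite
\[
\map_{\Alg(\C)}(\tilde A,R)\xrightarrow{\alpha} \hom_{\Alg(\ho(\C))}(hA,hR)\xrightarrow{\beta} \hom_{\Alg(\Syn_\C^\heart)}(\pi_0M(A),\pi_0M(R)),
\]
where $\alpha$ is the canonical map passing to the homotopy category and $\beta$ is the map considered above. By \Cref{thm:indsepmappingspace}, the composite $\beta\circ\alpha$ is a bijection (and the source is automatically discrete). Under the no-phantom-maps hypothesis, we have just shown that $\beta$ is injective. A standard diagram chase then forces both $\alpha$ and $\beta$ to be bijections: $\beta\circ\alpha$ surjective and $\beta$ injective gives $\beta$ surjective hence bijective; and then $\alpha=\beta^{-1}\circ(\beta\circ\alpha)$ is bijective.

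The only ``content'' here is the identification of the kernel of $\hom_{\ho(\C)}(-,-)\to\hom_{\Syn_\C^\heart}(\pi_0M(-),\pi_0M(-))$ with phantom maps, which is essentially tautological from the definition of $\Syn_\C$; everything else is a formal two-out-of-three argument combined with \Cref{thm:indsepmappingspace}. There is no real obstacle — the point of stating it as an observation is precisely that it isolates how the conclusions of \Cref{thm:indsepsyn} and \Cref{thm:indsepmappingspace} transfer from $\Syn_\C^\heart$ back to $\ho(\C)$ once phantom maps are controlled.
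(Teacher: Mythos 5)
Your proposal is correct and follows essentially the same route as the paper: the paper's observation is itself just the assertion that the kernel of $\hom_{\ho(\C)}(hA,hR)\to \hom_{\Syn_\C^\heart}(\pi_0M(A),\pi_0M(R))$ consists of phantom maps (which you rightly unpack as tautological from $\Syn_\C^\heart=\Fun^\times(\ho(\C^\omega)\op,\Ab)$, using stability to reduce to the difference being phantom), followed by the same two-out-of-three argument against the canonical composite. The only discrepancy is that the paper attributes the bijectivity of the composite to \Cref{thm:indsepsyn} while you invoke \Cref{thm:indsepmappingspace}, which is in fact the statement actually needed for mapping spaces, so your citation is the more accurate one.
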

The above observation buys us the following version of \Cref{thm:indsepsyn}:
\begin{thm}\label{thm:indsepBrown}
     Let $\C$ be as in \Cref{assu:syn} and \Cref{assu:Brown}. 
    Let $A\in\CAlg(\ho(\C))$ a homotopy ind-separable homotopy commutative homotopy algebra, and assume that $A$ receives no phantom map from any tensor power of $A$. In this case, the moduli space $$\Alg(\C)^\simeq\times_{\Alg(\ho(\C)))^\simeq}\{A\}$$ is contractible, i.e. $A$ admits a unique lift to an algebra in $\C$. 

Let $A\in\Alg(\C)$ be a homotopy commutative, ind-separable algebra, and let $R\in\Alg(\C)$ be a homotopy commutative algebra. If there are no phantom maps from $A$ to $R$, then the mapping space $\map_{\Alg(\C)}(A,R)$ is equivalent to $\hom_{\Alg(\ho(\C))}(hA,hR)$\footnote{One could weaken the assumption to ``Any two homotopy algebra maps that differ by a phantom map are homotopic'', but it does not seem like this is a checkable criterion.}. 
\end{thm}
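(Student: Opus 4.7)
The plan is to prove the second (mapping space) statement first, then use it to bootstrap the first (moduli space) statement. For the mapping space statement, the observation recorded immediately above the theorem already essentially carries out the argument: by \Cref{thm:indsepmappingspace}, the composite
\begin{equation*}
\map_{\Alg(\C)}(A,R)\xrightarrow{\alpha} \hom_{\Alg(\ho(\C))}(hA,hR)\xrightarrow{\beta} \hom_{\Alg(\Syn_\C^\heart)}(\pi_0M(A),\pi_0M(R))
\end{equation*}
is a bijection of discrete sets. The no-phantom-maps hypothesis forces $\beta$ to be injective, since a morphism $hA\to hR$ in $\Alg(\ho(\C))$ sent to $0$ in $\Alg(\Syn_\C^\heart)$ must be induced by a phantom map on underlying objects. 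A one-line diagram chase then shows both $\alpha$ and $\beta$ are bijections, which is the desired equivalence.

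For the existence half of the moduli space statement, I would apply \Cref{thm:indsepsyn} to $\pi_0M(A)$, which is ind-separable by \Cref{cor:synlocsep}. This produces an algebra $\tilde A\in\Alg(\C)$ with $\pi_0M(\tilde A)\cong\pi_0M(A)$ in $\Alg(\Syn_\C^\heart)$, and, via \Cref{assu:Brown}, an identification of the underlying objects $\tilde A\simeq A$ in $\C$. Transporting the algebra structure on $\tilde A$ along this equivalence yields a homotopy algebra structure $A'$ on the underlying object of $A$ whose image in $\Alg(\Syn_\C^\heart)$ agrees with $\pi_0M(A)$. The two multiplications $A\otimes A\to A$ and the two units $\one=A^{\otimes 0}\to A$ then differ by phantom maps out of tensor powers of $A$, all of which vanish by hypothesis; hence $A'=A$ in $\Alg(\ho(\C))$ and $\tilde A$ lifts $A$.

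For contractibility, let $\M$ denote the moduli space of lifts and fix $\tilde A\in\M$. Applying the already-proven mapping space statement with $R=\tilde A$ (which is homotopy commutative, and no phantom maps from $A$ to $\tilde A\simeq A$ is exactly our hypothesis) yields $\map_{\Alg(\C)}(\tilde A,\tilde A)\simeq\hom_{\Alg(\ho(\C))}(A,A)$; taking fibers over $\id_A$ shows $\Omega_{\tilde A}\M$ is contractible, so every connected component of $\M$ is contractible. For connectedness, given two lifts $\tilde A_1,\tilde A_2$, \Cref{thm:indsepsyn} applied to $\pi_0M(A)$ gives an equivalence in $\Alg(\C)$, and the mapping space statement applied with source $\tilde A_1$ and target $\tilde A_2$ lets us choose such an equivalence whose image in $\Alg(\ho(\C))$ is $\id_A$, providing a path in $\M$.

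The main hurdle is the three-level bookkeeping between $\Alg(\C)$, $\Alg(\ho(\C))$, and $\Alg(\Syn_\C^\heart)$: the deformation-theoretic input of \Cref{thm:indsepmappingspace,thm:indsepsyn} lives naturally at the level of $\Syn_\C^\heart$, while the conclusions are phrased at the level of $\ho(\C)$. It is precisely the no-phantom-maps-from-tensor-powers hypothesis that allows information to cross between these two worlds --- both in identifying algebra structures on the lifted underlying object, and in transferring the equivalence of mapping spaces from $\Syn_\C^\heart$ down to $\ho(\C)$.
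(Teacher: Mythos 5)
Your proof is correct and follows essentially the same route as the paper, which presents this theorem as a direct consequence of the Observation immediately preceding it together with \Cref{thm:indsepsyn} and \Cref{thm:indsepmappingspace}; you have simply spelled out the details the paper leaves implicit. The only cosmetic point is that the injectivity of $\beta$ should be phrased as ``two homotopy algebra maps with the same image in $\Alg(\Syn_\C^\heart)$ differ by a phantom map on underlying objects, hence agree,'' rather than in terms of a map being ``sent to $0$,'' since these are hom-sets rather than groups.
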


We could also run an obstruction-theory argument to get to highly structured commutative structures on homotopy ind-separable homotopy commutative homotopy algebras, but as in \Cref{section:comm}, we can also deduce it by more elementary means. 
\begin{cor}\label{cor:indsepBrown2}
    Let $\C$ be as in \Cref{assu:syn} and \Cref{assu:Brown}, and let $A\in\Alg(\C)$ be an ind-separable homotopy commutative algebra. Let $\Oo$ be an arbitrary one-colored \operad.   If $A$ receives no phantom maps from tensor powers of $A$, the canonical forgetful map $$\Alg_{\Oo\otimes\mathbb E_1}(\C)^\simeq\times_{\Alg(\C)^\simeq}\{A\}\to\Alg_{\Oo\otimes\mathbb E_1}(\ho(\C))^\simeq\times_{\Alg(\ho(\C))^\simeq}\{hA\} $$ is an equivalence. 
\end{cor}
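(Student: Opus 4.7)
The plan is to parallel the proof of \Cref{thm : opdlift} in the separable case, substituting \Cref{thm:indsepBrown} for the role played there by \Cref{cor:discreteendopd}. The argument breaks into two main steps: first, verify that every tensor power $A^{\otimes n}$ is itself ind-separable and homotopy commutative, so that \Cref{thm:indsepBrown} can be applied with source $A^{\otimes n}$; and second, deduce the equivalence of moduli spaces from the resulting mapping-space comparisons, exactly as in \Cref{thm : opdlift}.

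For the first step, homotopy commutativity of $A^{\otimes n}$ is immediate. For ind-separability, the homotopy commutativity of $A$ allows one to identify the multiplication $\mu_n: A^{\otimes n}\otimes A^{\otimes n}\to A^{\otimes n}$, up to a shuffling of tensor factors, with $\mu^{\otimes n}:(A\otimes A)^{\otimes n}\to A^{\otimes n}$. Since tensoring a module-localization with anything is again a module-localization, and since finite compositions of such localizations are localizations at the union of the localizing sets, $\mu^{\otimes n}$ exhibits $A^{\otimes n}$ as a localization of $(A\otimes A)^{\otimes n}$ at the set $S^{(n)}$ consisting of elements of the original localizing set $S$ placed in each of the $n$ tensor slots. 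Hence $A^{\otimes n}$ is ind-separable.

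Now by hypothesis $A$ receives no phantom maps from any $A^{\otimes n}$, so \Cref{thm:indsepBrown} applies and yields, for every $n\geq 0$, an equivalence
$$\map_{\Alg(\C)}(A^{\otimes n},A)\;\simeq\;\hom_{\Alg(\ho(\C))}(hA^{\otimes n},hA),$$
with both sides discrete. Exactly as in the proof of \Cref{thm : opdlift}, this implies that the (symmetric monoidal) forgetful functor $\Alg(\C)\to\Alg(\ho(\C))$ restricts to a (symmetric monoidal) equivalence between the full subcategories spanned by $\{A^{\otimes n}\}_{n\geq 0}$ and $\{hA^{\otimes n}\}_{n\geq 0}$; equivalently, it identifies the $\Oo$-operadic endomorphism structure of $A$ in $\Alg(\C)$ with that of $hA$ in $\Alg(\ho(\C))$. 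Passing to the space of $\Oo$-algebra structures on $A$ (respectively $hA$), i.e.\ to $\Oo\otimes\mathbb E_1$-structures extending the given $\mathbb E_1$-structure, yields the claimed equivalence of moduli spaces.

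The main obstacle is really just the bookkeeping in the first step: verifying that ind-separability, defined in module-localization terms, closes up nicely under tensor products in the homotopy-commutative (rather than strictly commutative) setting. Once that is accomplished, no additional obstruction theory is needed, because all of the homotopical content has already been absorbed into \Cref{thm:indsepBrown}; the remainder is the same elementary operadic manipulation that underlies \Cref{thm : opdlift}.
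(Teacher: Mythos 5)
Your proposal is correct and follows essentially the same route as the paper, which deduces this statement from the mapping-space half of \Cref{thm:indsepBrown} combined with the operadic bookkeeping of \Cref{lm:ffalgebra} and the proof of \Cref{thm : opdlift}; your explicit check that ind-separability is closed under tensor powers fills in a step the paper leaves implicit. The only caveat is that the stated hypothesis (no phantoms from $A^{\otimes n}$ \emph{to $A$}) supports the comparison of endomorphism operads rather than the full-subcategory equivalence you first assert, but since the moduli space only depends on the former, your ``equivalently'' reformulation is exactly what is needed.
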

More generally, again as a corollary of \Cref{lm:ffalgebra}, we obtain:
\begin{cor}\label{cor:indsepBrown3}
Let $\C$ be as in \Cref{assu:syn} and \Cref{assu:Brown}, and let $A\in\CAlg(\C)$ be ind-separable. If $\Oo$ is any \operad{} and $R\in\Alg_{\Oo\otimes\mathbb E_1}(\C)$ is an algebra whose underlying $\mathbb E_1$-algebra is homotopy commutative, and which receives no phantom maps from $A$, then, viewing $A$ as an $\Oo\otimes\mathbb E_1$-algebra using the unique map of \operads{} $\Oo\otimes\mathbb E_1\to \mathbb E_\infty$, the canonical map $$\map_{\Alg_{\Oo\otimes\mathbb E_1}(\C)}(A,R)\to \hom_{\Alg_{\Oo\otimes\mathbb E_1}(\ho(\C))}(hA,hR)$$ 
    is an equivalence.
\end{cor}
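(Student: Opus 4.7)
The plan is to mimic the proof of \Cref{thm : commmorsep}, replacing its key input \Cref{cor:hocommtargetdiscrete} with \Cref{thm:indsepBrown}. Concretely, I would apply \Cref{lm:ffalgebra} to the symmetric monoidal forgetful functor $\Alg(\C)\to \Alg(\ho(\C))$, via the identification $\Alg_{\Oo\otimes\mathbb E_1}(\C)\simeq \Alg_\Oo(\Alg(\C))$.

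Before doing so, I would verify that each tensor power $A^{\otimes n}$ is again a homotopy commutative, ind-separable commutative algebra. Homotopy commutativity is clear. For ind-separability, using the commutativity of $A$ the multiplication $\mu_{A^{\otimes n}}\colon A^{\otimes 2n}\to A^{\otimes n}$ is, up to reshuffling tensor factors, equivalent to $\mu_A^{\otimes n}$; each factor $\mu_A$ is a localization at a set of elements in the appropriate module category, and localizations at sets of elements are preserved by cocontinuous symmetric monoidal basechange, so their tensor product is again such a localization.

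With this in hand, let $\mathcal A\subset \Alg(\C)$ be the full subcategory spanned by the tensor powers $A^{\otimes n}$, which is closed under tensor products by construction, and let $\mathcal B\subset \Alg(\C)$ be the full subcategory spanned by homotopy commutative algebras receiving no phantom maps from any tensor power of $A$; under the hypothesis of the corollary, $R$ lies in $\mathcal B$. For every $a\in\mathcal A$ and $b\in\mathcal B$, \Cref{thm:indsepBrown} gives an equivalence $\map_{\Alg(\C)}(a,b)\xrightarrow{\simeq}\hom_{\Alg(\ho(\C))}(ha,hb)$. Applying \Cref{lm:ffalgebra} with these subcategories, and specializing to $A$ (regarded as an $\Oo\otimes\mathbb E_1$-algebra via the unique map $\Oo\otimes\mathbb E_1\to \mathbb E_\infty$) and $R$, gives the desired equivalence.

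The main delicate point is the phantom-map clause: to invoke \Cref{thm:indsepBrown} on the pair $(A^{\otimes n},R)$, one needs no phantom maps from $A^{\otimes n}$ into $R$, not merely from $A$. The natural reading that makes the argument go through is therefore that $R$ receives no phantom maps from any tensor power of $A$, matching the form of the hypothesis in \Cref{cor:indsepBrown2}. Granting this reading, the entire argument reduces to a formal application of already-established results, with the only genuine new verification being the ind-separability of tensor powers sketched above.
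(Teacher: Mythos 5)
Your route is the one the paper itself indicates: the corollary is obtained by applying \Cref{lm:ffalgebra} to the forgetful functor $\Alg(\C)\to\Alg(\ho(\C))$ exactly as in the proof of \Cref{thm : commmorsep}, and your verification that the tensor powers $A^{\otimes n}$ are again homotopy commutative and ind-separable (via $\mu_{A^{\otimes n}}\simeq\mu_A^{\otimes n}$ up to shuffling, and the fact that a tensor product of localizations at sets of elements is a localization at the union of their images) is a genuine check that the paper leaves implicit and that your argument correctly supplies.

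The one place where you deviate is the phantom-map clause, and here I would push back on your resolution. You are right that invoking \Cref{thm:indsepBrown} on the pairs $(A^{\otimes n},R)$ requires the absence of phantom maps $A^{\otimes n}\to R$ for all $n$, which is strictly stronger than the stated hypothesis; but rather than reading the hypothesis as strengthened (which would prove a weaker statement than the one claimed), you can get the corollary as literally stated by routing through $\Syn_\C^\heart$. The corollary following \Cref{thm:indsepmappingspace} already gives, with \emph{no} phantom hypothesis, that $\map_{\Alg_{\Oo\otimes\mathbb E_1}(\C)}(A,R)\to\hom_{\Alg_{\Oo\otimes\mathbb E_1}(\Syn_\C^\heart)}(\pi_0M(A),\pi_0M(R))$ is an equivalence, and this map factors through $\hom_{\Alg_{\Oo\otimes\mathbb E_1}(\ho(\C))}(hA,hR)$. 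It therefore suffices to show the second map is injective. Since $\ho(\C)$ and $\Syn_\C^\heart$ are $1$-categories, a morphism of $\Oo\otimes\mathbb E_1$-algebras there is determined by its underlying morphism, so injectivity reduces to injectivity of $\hom_{\ho(\C)}(hA,hR)\to\hom_{\Syn_\C^\heart}(\pi_0M(A),\pi_0M(R))$, whose failure is measured exactly by phantom maps $A\to R$ -- no tensor powers needed. This also explains the asymmetry between the hypothesis here and the one in \Cref{cor:indsepBrown2}, where maps out of all tensor powers $A^{\otimes n}$ genuinely appear as the underlying data and the stronger hypothesis is unavoidable.
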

\begin{rmk}
    As in the separable case, a consequence of this corollary is the discreteness of $\map_{\CAlg(\C)}(A,R)$, and, just as in that case, we could give a more elementary proof of this specific fact, cf. \Cref{cor : cotrunc} and its alternative proof. 
\pend\end{rmk}
\begin{cor}
    Let $\C$ be as in \Cref{assu:syn} and \Cref{assu:Brown}, and let $A\in\CAlg(\ho(\C))$ be a homotopy commutative, homotopy ind-separable homotopy algebra in $\C$ which receives no phantom maps from any tensor power of $A$. For any $1\leq d\leq \infty$, the moduli space $\Alg_{\mathbb E_d}(\C)^\simeq \times_{\Alg(\ho(\C))^\simeq}\{A\}$ is contractible. 
\end{cor}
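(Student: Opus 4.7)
The plan is to factor the moduli space into two successive lifting problems, and address each using a result already in the excerpt. Concretely, the forgetful maps $\Alg_{\mathbb E_d}(\C) \to \Alg(\C) \to \Alg(\ho(\C))$ give a composite pullback
\[
\Alg_{\mathbb E_d}(\C)^\simeq \times_{\Alg(\ho(\C))^\simeq} \{A\} \;\simeq\; \Alg_{\mathbb E_d}(\C)^\simeq \times_{\Alg(\C)^\simeq} \bigl( \Alg(\C)^\simeq \times_{\Alg(\ho(\C))^\simeq} \{A\} \bigr),
\]
so it suffices to show (i) that the $\mathbb E_1$-moduli $\Alg(\C)^\simeq \times_{\Alg(\ho(\C))^\simeq} \{A\}$ is contractible, and (ii) that for any lift $\tilde A$, the fiber $\Alg_{\mathbb E_d}(\C)^\simeq \times_{\Alg(\C)^\simeq} \{\tilde A\}$ is contractible.

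For (i) I would apply \Cref{thm:indsepBrown} directly: under Assumptions \ref{assu:syn} and \ref{assu:Brown}, the hypothesis that $A$ is homotopy commutative, homotopy ind-separable, and receives no phantom maps from tensor powers of itself is exactly what that theorem requires to conclude that the space of $\mathbb E_1$-lifts is contractible. Let $\tilde A$ denote the resulting essentially unique lift. Since the underlying object of $\tilde A$ in $\C$ coincides with that of $A$ in $\ho(\C)$, and tensor powers of $\tilde A$ have underlying object the same as tensor powers of $A$, the hypotheses ``homotopy commutative, ind-separable, no phantom maps from tensor powers'' transfer from $A$ to $\tilde A$ without further work.

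For (ii), I would invoke \Cref{cor:indsepBrown2} with $\Oo = \mathbb E_{d-1}$ when $d < \infty$, and with $\Oo = \mathbb E_\infty$ when $d = \infty$. In each case $\Oo$ is a single-coloured, weakly reduced \operad{} (\Cref{ex:weaklyred}), and by Dunn's additivity \cite[Theorem 5.1.2.2]{HA} (together with the Eckmann--Hilton style identification $\mathbb E_\infty\otimes\mathbb E_1\simeq \mathbb E_\infty$) we have $\Oo \otimes \mathbb E_1 \simeq \mathbb E_d$. The corollary then identifies the fiber at $\tilde A$ with
\[
\Alg_{\mathbb E_d}(\ho(\C))^\simeq \times_{\Alg(\ho(\C))^\simeq} \{hA\}.
\]
Since $\ho(\C)$ is a $1$-category and $hA$ is already commutative therein, this is the space of $\mathbb E_d$-algebra structures on a commutative algebra in a $1$-category; such spaces are contractible for any weakly reduced operad by \cite[Proposition 2.4.3.9]{HA}, exactly as in the proof of \Cref{thm : opdlift}.

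The main obstacle, and the reason for insisting on \Cref{thm:indsepBrown} rather than \Cref{thm:indsepsyn}, is the propagation of the phantom-map hypothesis: without it one only controls the moduli of lifts to $\Syn_\C^\heart$, and the passage back to $\ho(\C)$ can be obstructed by phantom maps into $A$ from its tensor powers. Once that hypothesis is in place, the two steps above are formal combinations of results already proved in the paper, and no additional obstruction theory is needed for the higher $\mathbb E_d$-structures beyond what is already packaged in \Cref{cor:indsepBrown2}.
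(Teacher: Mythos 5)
Your proposal is correct and is essentially the paper's (implicit) argument: the corollary is stated without proof precisely because it is the combination of \Cref{thm:indsepBrown} (contractibility of the $\mathbb E_1$-moduli of lifts) with \Cref{cor:indsepBrown2} applied to $\Oo=\mathbb E_{d-1}$ (resp.\ $\mathbb E_\infty$) and the $1$-categorical contractibility argument from the proof of \Cref{thm : opdlift}, exactly as you lay out. The one point you pass over quickly — that the $\mathbb E_1$-lift $\tilde A$ is genuinely ind-separable and not merely homotopy ind-separable, which is what \Cref{cor:indsepBrown2} requires — does hold, since in the compactly generated setting the localization $(A\otimes A)[S^{-1}]$ is a telescope and the comparison map to $A$ is an equivalence if and only if it induces isomorphisms on $\pi_*\map(c,-)$ for all compact $c$, which is exactly the homotopy ind-separability hypothesis.
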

The upshot of this discussion is that, at least in the compactly-generated case, with some assumption on phantom maps, and using slightly less elementary methods, we are able to recover most of the results from the commutative separable case in the commutative (homotopy) ind-separable case.  
\subsection{Examples}
We now discuss examples of ind-separable algebras.
\subsubsection{Ind-(separable algebras)}
The first natural source of examples is filtered colimits of (commutative) separable algebras. Of course, separable algebras are ind-separable (one can pick the set $S$ to consist of the single separability idempotent). 
\begin{lm}\label{lm:ind-sep implies indsep}
Let $\C$ be as in \Cref{assu:syn}, and let $A_\bullet: I\to \CAlg(\C)$ be a filtered diagram of commutative separable algebras. In this case, $\colim_I A_i$ is ind-separable. 

If $I$ is countable, one can choose $S$ in the definition of ind-separable to be countable.
\end{lm}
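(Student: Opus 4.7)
The plan is to build the set $S$ out of the separability idempotents of each $A_i$ and identify the resulting localization with $A$ via a bar-construction computation.

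\textbf{Construction of $S$.} For each $i\in I$, \Cref{cor:uniqueidem} gives an essentially unique separability idempotent $e_i:\one\to A_i\otimes A_i$, and by \Cref{cor : commsepsplit} the multiplication $\mu_i:A_i\otimes A_i\to A_i$ exhibits $A_i$ as the localization $(A_i\otimes A_i)[e_i^{-1}]$ in $A_i\otimes A_i$-modules. Let $s_i\in \pi_0\map(\one,A\otimes A)$ denote the image of $e_i$ under $A_i\otimes A_i\to A\otimes A$, and set $S=\{s_i:i\in I\}$. Note $\mu_*(s_i)=1\in\pi_0(A)$ by naturality (since $\mu_i(e_i)=1$), so $s_i$ acts invertibly on $A$ viewed as an $A\otimes A$-module, and hence $\mu$ factors as $A\otimes A\to (A\otimes A)[S^{-1}]\to A$.

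\textbf{Reduction to a filtered colimit.} Since $s_i$ is the image of $e_i$ under a map of commutative algebras, base change gives $(A\otimes A)[s_i^{-1}]\simeq A\otimes A\otimes_{A_i\otimes A_i}(A_i\otimes A_i)[e_i^{-1}]\simeq A\otimes A\otimes_{A_i\otimes A_i}A_i\simeq A\otimes_{A_i}A$. I would then check that for $i\leq j$ in $I$, the transition map $A_i\otimes A_i\to A_j\otimes A_j$ composed with $\mu_j$ equals, by naturality of multiplication, the composite $A_i\otimes A_i\xrightarrow{\mu_i}A_i\to A_j$; hence the image of $e_i$ in $A_j=(A_j\otimes A_j)[e_j^{-1}]$ is $1$, and a fortiori $s_i$ becomes a unit in $(A\otimes A)[s_j^{-1}]$. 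Consequently inverting any finite subset of $S$ is absorbed by inverting a single $s_j$ for large enough $j\in I$ (by filteredness), so $(A\otimes A)[S^{-1}]\simeq \colim_i(A\otimes A)[s_i^{-1}]\simeq \colim_i A\otimes_{A_i}A$.

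\textbf{Evaluating the colimit.} The relative tensor product is the realization of the two-sided bar construction, $A\otimes_{A_i}A\simeq |\mathrm{Bar}(A,A_i,A)|$ with $\mathrm{Bar}(A,A_i,A)_n=A\otimes A_i^{\otimes n}\otimes A$. Filtered colimits commute with geometric realizations and, by \Cref{assu:syn}, with tensor products in each variable, so
\[
\colim_i \mathrm{Bar}(A,A_i,A)_n \;\simeq\; A\otimes(\colim_i A_i)^{\otimes n}\otimes A \;=\; A\otimes A^{\otimes n}\otimes A \;=\; \mathrm{Bar}(A,A,A)_n.
\]
Passing to realizations yields $\colim_i A\otimes_{A_i}A\simeq A\otimes_A A\simeq A$, and unwinding identifications shows this equivalence is induced by the comparison map $(A\otimes A)[S^{-1}]\to A$ coming from $\mu$. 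Therefore $\mu$ exhibits $A$ as $(A\otimes A)[S^{-1}]$, so $A$ is ind-separable. For the final sentence, if $I$ is countable, pick a cofinal sequence $i_1\leq i_2\leq\cdots$; the argument above applied to this sequence produces a witness with countable set $S=\{s_{i_n}\}_{n\in\mathbb N}$.

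The main technical point I expect to need care with is the compatibility statement for the transition maps in the filtered colimit $\colim_i(A\otimes A)[s_i^{-1}]$, so that the identifications $(A\otimes A)[s_i^{-1}]\simeq A\otimes_{A_i}A$ assemble into a filtered system whose colimit really agrees with $(A\otimes A)[S^{-1}]$; everything else is a direct application of \Cref{cor : commsepsplit}, naturality of $\mu$, and commutation of tensor products with filtered colimits.
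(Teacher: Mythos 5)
Your proposal is correct and takes the same approach as the paper: the paper's proof constructs exactly the same set $S$ (images of the separability idempotents $e_i$ in $A\otimes A$) and then simply asserts ``it is easy to verify that this does the job.'' Your base-change identification $(A\otimes A)[s_i^{-1}]\simeq A\otimes_{A_i}A$, the cofinality argument reducing $(A\otimes A)[S^{-1}]$ to $\colim_i A\otimes_{A_i}A$, and the bar-construction computation of that colimit supply precisely the verification the paper leaves implicit, and they all check out.
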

\begin{proof}
    For every $i\in I$, let $s_i:\one\to A_i\otimes A_i\to A\otimes A$ be the image in $A\otimes A$ of the separability idempotent of $A_i$, and let $S$ be the set of the $s_i$'s. It is easy to verify that this does the job.
\end{proof}
\begin{ex}
    Let $X$ be a profinite set. The algebra $C(X;\mathbb Z)$ of continuous functions on $X$ is ind-separable, as the filtered colimit of $i\mapsto C(X_i;\mathbb Z)$ for any presentation of $X$ as $\lim_i X_i$, where each $X_i$ is finite. However, if $X$ is not finite, it is not separable. 
\pend \end{ex}
\begin{ex}\label{ex:proGalois}
    More generally, Rognes' pro-Galois extension \cite[Definition 8.1.1]{rognes} are ind-separable, by the above lemma together with \Cref{ex:galoissep}. In particular, taking the Goerss--Hopkins--Miller theorem for granted, Devinatz and Hopkins prove in \cite{devinatzhopkins} that Morava $E$-theory is a pro-Galois extension of the $K(n)$-local sphere $\mathbb S_{K(n)}$ in the \category{} of $K(n)$-local spectra. As we wish to give a non-circular proof of the Goerss--Hopkins--Miller theorem, we will give a different proof that Morava $E$-theory is ind-separable below. 
\pend \end{ex}
\subsubsection{Morava E-theory}\label{subsection:Morava}
In this section, we study Morava $E$-theory. \Cref{ex:proGalois} together with its description as a profinite Galois extension \cite{rognes} show that it is ind-separable in the \category{} of $K(n)$-local spectra. However, the proof that it is a pro-Galois extension relies on its highly commutative multiplicative structure, cf. \cite{devinatzhopkins}, i.e. on the Goerss--Hopkins--Miller theorem. 

We offer here a proof of the latter based on our earlier work on ind-separable algebras. The key (and in fact, only) input that we need about Morava $E$-theory is the computation of $\pi_*(L_{K(n)}(E\otimes E))$, as done by Hopkins--Ravenel, Baker, and revisited by Hovey in \cite{hovey} (we refer to \textit{loc. cit.} for a brief history of this computation). 
\begin{rmk}\label{rmk:isitanewproof?}
    Our results on ind-separable algebras rely on the obstruction theory from \cite{PVK}, an obstruction theory which was designed and used to give a proof of the Goerss--Hopkins--Miller theorem, so one might wonder to what extent our proof is actually different. It is not completely clear to the author - it however seems that it is at the very least a re-organization of that proof. Indeed, we first prove a single result about $E$-theory, namely its ind-separability, and then let the obstruction theory machine take its course, with no further input needed, unlike in \cite[Section 7]{PVK}, where calculations about Morava $E$-theory show up alongside the obstruction theory (among other things, $\mathrm{Ext}$-group computations in $E_*E$-comodules).

    Furthermore, as is clear from our proofs, we only really need the obstruction theory to get an $\mathbb E_1$-structure and describe $\mathbb E_1$-maps to other algebras - our proof clarifies the formal aspect of going from there to higher $\mathbb E_d$'s (including $d=\infty$). In particular, we obtain a proof of the folklore fact that Morava $E$-theory admits a unique $\mathbb E_d$-structure also for $1<d<\infty$ that does not require computing the corresponding $\mathbb E_d$-cotangent complexes - while this computation is not complicated (they all vanish, for $d>1$), it does not allow for generalizations to more general operads of the form $\Oo\otimes\mathbb E_1$. 

Finally, while we use the same obstruction theory as in \cite[Section 7]{PVK}, we apply it to a much simpler \category{}: our $\Syn_\C$ has no completion/localization coming into its definition. 

    In other words, it is not clear to what extent our proof is really new, but it is a re-packaging of the classical proof which has several advantages. 
\pend \end{rmk}
Fix a (from now on, implicit) prime $p$ and a height $n$. For a perfect field $k$ of characteristic $p$, and a formal group $\mathbf G$ of height $n$ over $k$, we have a spectrum $E(k,\mathbf G)$, called Morava $E$-theory (or Lubin-Tate theory), usually denoted $E$ or $E_n$. It can for instance be constructed using the Landweber exact functor theorem, and has a homotopy associative, homotopy commutative ring structure. It is also $K(n)$-local, so we can consider it as an object in $\CAlg(\ho(\Sp_{K(n)}))$. We refer to \cite[Part 1]{rezk1998notes} for an introduction to these homotopy ring spectra. 

As we mentioned, the only input we need is a computation of  $\pi_*(L_{K(n)}(E\otimes E))$. In the statement, we write $\hat\otimes$ for the $K(n)$-local tensor product, and $C(X,R)$ for the graded ring of continuous functions from a topological space $X$ to a graded topological ring $R$. For $k$ algebraic over $\mathbb F_p$ (and perfect), Hovey proves:
\begin{thm}[{\cite[Theorem 4.11]{hovey}}]
    There is an isomorphism $$\pi_*(E\hat\otimes E)\cong C(\Gamma,E_*)$$
    for which the multiplication map $\pi_*(E\hat\otimes E)\to E_*$ is identified with evaluation at the neutral element $e\in\Gamma$, $C(\Gamma,E_*)\to E_*$. Here, $\Gamma$ is the (profinite) Morava stabilizer group, equivalently, the group of automorphisms of $E$ in $\Alg(\ho(\Sp_{K(n)}))$. 
\end{thm}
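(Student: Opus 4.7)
The plan is to combine the Landweber exact functor theorem with the Milnor--Morava moduli description of $\MU_*\MU$, and then analyze how $K(n)$-localization converts the algebraic moduli of strict isomorphisms of formal group laws into the profinite formal moduli parametrized by the Morava stabilizer group. First, I would exploit that $E$ is Landweber exact to write, before completion, $E_*E \cong E_*\otimes_{\MU_*}\MU_*\MU\otimes_{\MU_*}E_*$. Since the Hopf algebroid $(\MU_*,\MU_*\MU)$ corepresents the groupoid of formal group laws and strict isomorphisms between them, this tensor product is the ring of functions on the affine $E_*$-scheme of strict isomorphisms $\mathbf{G}\to \mathbf{G}$; concretely, it is a polynomial-type $E_*$-algebra in the coefficients of the defining power series of such an isomorphism.

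Second, I would compute the effect of $K(n)$-localization. By the smash product theorem (or a direct argument using that $E$ is $K(n)$-locally dualizable), $L_{K(n)}(E\otimes E)$ coincides with the $I_n$-adic completion of $E\otimes E$, where $I_n=(p,u_1,\dots,u_{n-1})\subset E_*$ is the maximal ideal. On $\pi_*$ this amounts to completing the above ring of functions at $I_n$. Under the moduli interpretation, this completion selects exactly those strict isomorphisms that are $I_n$-adically continuous, i.e.\ those that converge as formal power series over the Lubin--Tate deformation ring. In the Lubin--Tate setting, such strict isomorphisms are precisely the elements of $\Gamma=\Aut(\mathbf{G})$; moreover the resulting topology on the ring matches the profinite topology on $\Gamma$, yielding an isomorphism $\pi_*(E\hat\otimes E)\cong C(\Gamma,E_*)$.

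Finally, for the statement about the multiplication, observe that the map $\mu\colon E\hat\otimes E\to E$ corresponds on the moduli side to the canonical section that records the identity isomorphism $\mathrm{id}\colon\mathbf{G}\to\mathbf{G}$ (this is simply the fact that after collapsing the two smash factors one loses the data of any nontrivial isomorphism). Under the identification with $C(\Gamma,E_*)$, this is exactly evaluation at the neutral element $e\in\Gamma$. The main technical obstacle is the second step: making precise that $I_n$-adic completion of the \emph{algebraic} ring of strict isomorphisms produces the ring of \emph{continuous} functions on the profinite group $\Gamma$. This requires carefully identifying $\Gamma$ as the set of $E_*$-points of the formal scheme of strict isomorphisms, tracking convergence of the defining power series, and checking compatibility of the two topologies; it is precisely this comparison that Hovey carries out in detail in \cite{hovey}.
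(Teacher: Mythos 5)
The first thing to say is that the paper does not prove this statement at all: it is quoted verbatim as \cite[Theorem 4.11]{hovey} and is explicitly flagged as the \emph{only external input} needed for the ind-separability of Morava $E$-theory. So there is no in-paper argument to compare yours against; the relevant comparison is with the literature (Hovey, and the Lubin--Tate/moduli approach as in Lurie's Elliptic Cohomology II, which the paper itself invokes later for \Cref{thm:Moravaindsepperf}).

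Your outline has the right overall shape, and steps one and three are fine: Landweber exactness gives $E_*E\cong E_*\otimes_{\MU_*}\MU_*\MU\otimes_{\MU_*}E_*$, and the multiplication does correspond to the tautological section recording the identity isomorphism, hence to evaluation at $e\in\Gamma$ once the main identification is in place. But there are two genuine gaps in the middle step, and they sit exactly where the content of the theorem lies. First, the passage from $K(n)$-localization to $I_n$-adic completion \emph{on homotopy groups} is not automatic: one needs $\pi_*(E\otimes E)$ to be even and the sequence $(p,u_1,\dots,u_{n-1})$ to be regular on it (this holds because $E_*E$ is flat over $E_*$ by Landweber exactness, but it must be said -- the smashing theorem is not the relevant tool; compare the argument the paper itself runs in the corollary following this theorem). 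Second, and more seriously, the identification of the completed ring of strict isomorphisms with $C(\Gamma,E_*)$ is not a statement about ``convergence of power series'': before completion the scheme in question parametrizes strict isomorphisms between the \emph{two different} pullbacks of the formal group to $\Spec(E_*\otimes E_*)$, not automorphisms of $\mathbf G$ over $E_*$, and the mechanism that converts an isomorphism of the two pulled-back universal deformations over the completed ring into an automorphism of the special fibre $\mathbf G/k$ (together with the induced self-map of $E_0$) is Lubin--Tate rigidity, i.e.\ the uniqueness of deformations up to unique isomorphism. You correctly identify this as the technical heart and defer it to the reference, which is honest, but it means your text is a proof outline rather than a proof: the step you defer is precisely the one the theorem is asserting.
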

    Let $X$ be a profinite space with a point $x\in X$. Write $X = \lim_i X_i$ where the $X_i$'s are finite sets, with projection maps $p_i:X\to X_i$, and let $\delta_i:X\to X_i\to \{0,1\}$ denote the indicator function of $(p_i)^{-1}(p_i(x))$. 
    \begin{lm}\label{lm:locindicator}
   Composing the $\delta_i$'s with the inclusion $\{0,1\}\to \mathbb Z$, form the subset $S$ of $C(X,\mathbb Z)$ consisting of the $\delta_i$'s. 

    Then evaluation at $x$, as a ring map $e:C(X,\mathbb Z)\to \mathbb Z$, witnesses the target as the localization of the source at $S$. 
    \end{lm}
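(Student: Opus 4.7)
The plan is to verify the universal property of the ring localization directly, noting that since $C(X,\Z)$ and the elements $\delta_i$ are all discrete, the $\infty$-categorical localization in the definition of ind-separability reduces to the classical ring localization (the relevant telescopes are computed in discrete abelian groups).

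First I would observe that each $\delta_i$ is an idempotent, being the indicator function of the clopen set $Y_i := p_i^{-1}(p_i(x))$, and that $e(\delta_i) = \delta_i(x) = 1$, so $e$ trivially inverts each $\delta_i$ and factors through $C(X,\Z)[S^{-1}]$. To show this factorization is an isomorphism, I would verify the universal property: given any ring map $\phi: C(X,\Z) \to R'$ inverting all $\delta_i$, since $\phi(\delta_i)$ is an invertible idempotent in $R'$ it must equal $1$, so $\phi$ vanishes on the ideal $I := \sum_i (1-\delta_i) C(X,\Z)$. It then suffices to prove $I = \ker(e)$.

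The inclusion $I \subseteq \ker(e)$ is immediate since $(1-\delta_i)(x) = 0$. For the converse, any $f \in \ker(e)$ has $f(x) = 0$, and since $f$ is a continuous map to the discrete space $\Z$, $f^{-1}(0)$ is a clopen neighborhood of $x$ in $X$. The key standard fact I would invoke is that the $Y_i$'s form a neighborhood basis at $x$ in the profinite topology: any clopen neighborhood of $x$ has the form $p_i^{-1}(V)$ for some $V \subseteq X_i$ containing $p_i(x)$, and hence contains $Y_i$. Thus some $Y_i \subseteq f^{-1}(0)$, meaning $f \cdot \delta_i = 0$, equivalently $f = f(1-\delta_i) \in I$. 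Once this is established, $e$ descends to an isomorphism $C(X,\Z)/\ker(e) \to \Z$, confirming the universal property.

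The main (mild) obstacle is really just this neighborhood-basis fact for profinite spaces; the remaining content is routine manipulation of idempotents, and the reduction from $\infty$-categorical to classical localization is automatic in our discrete setting.
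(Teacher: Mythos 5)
Your proof is correct, but it takes a genuinely different route from the paper's. The paper writes $C(X,\mathbb Z)$ as the filtered colimit $\colim_i C(X_i,\mathbb Z)$ (every continuous $\mathbb Z$-valued function factors through a finite stage), observes that the localization of the finite product $C(X_i,\mathbb Z)\cong \prod_{X_i}\mathbb Z$ at the single idempotent $\delta_i$ is the factor $\mathbb Z$ indexed by $p_i(x)$, and concludes by commuting localization with the filtered colimit. You instead verify the universal property of the localization directly, reducing everything to the identification $\ker(e)=\sum_i(1-\delta_i)\,C(X,\mathbb Z)$, which you establish via the fact that the sets $p_i^{-1}(p_i(x))$ form a clopen neighbourhood basis at $x$. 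The two arguments rest on essentially the same topological input --- ``clopen subsets of a profinite set are pulled back from finite stages'' is the same compactness statement as ``$C(X,\mathbb Z)=\colim_i C(X_i,\mathbb Z)$'' --- but your version is more self-contained as a piece of commutative algebra and yields the slightly stronger explicit description of $\ker(e)$ as the ideal generated by the $1-\delta_i$, whereas the paper's version is shorter and dovetails with the telescope/filtered-colimit formalism that the surrounding section uses to interpret such localizations spectrally. Your preliminary remark that the $\infty$-categorical localization reduces to the classical one here is also correct, since the lemma is a statement about discrete rings.
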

\begin{proof}
As $\mathbb Z$ is discrete, $C(X,\mathbb Z)$ is the colimit of the $C(X_i,\mathbb Z)$ along restriction maps. Now, the localization of $C(X_i,\mathbb Z)$ at the indicator function of $p_i(x)$ is clearly $\mathbb Z$, and the result follows easily. 
\end{proof}
We also recall the following lemma from \cite{hovey}:
\begin{lm}[{\cite[Proposition 2.5]{hovey}}]\label{lm:ctsfuncsbasechange}
Suppose $G$ is a profinite group and $R$ is a graded commutative ring that is complete in the $\mathfrak a$-adic topology for some homogeneous ideal $\mathfrak a$. Then there is a natural isomorphism $R\hat\otimes C(G,\mathbb Z)\to C(G,R)$, where $\hat\otimes$ is the $\mathfrak a$-adically completed tensor product. 
\end{lm}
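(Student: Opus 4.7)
The plan is to identify both $R\hat\otimes C(G,\mathbb Z)$ and $C(G,R)$ with the same double limit/colimit $\lim_k\colim_i (R/\mathfrak a^k)^{G_i}$, where we fix a presentation $G\cong \lim_i G_i$ as a cofiltered limit of finite discrete groups (which exists by profiniteness).

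First, I would analyze $C(G,\mathbb Z)$. Since $\mathbb Z$ is discrete and $G$ is compact, any continuous function $G\to\mathbb Z$ has finite image, and using that the kernels of $G\to G_i$ form a neighborhood basis of the identity (and translates give a basis of the topology on $G$), any such function factors through some $G_i$. Hence $C(G,\mathbb Z)\cong \colim_i C(G_i,\mathbb Z)\cong \colim_i \mathbb Z^{G_i}$, a filtered colimit of finitely generated free abelian groups.

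Next I would handle $C(G,R)$. Writing $R\cong\lim_k R/\mathfrak a^k$ using completeness, and using that continuous maps out of $G$ into a limit of topological rings form the corresponding limit, we get $C(G,R)\cong \lim_k C(G,R/\mathfrak a^k)$. For each fixed $k$, the quotient $R/\mathfrak a^k$ is discrete, so the same argument as in the $\mathbb Z$ case yields $C(G,R/\mathfrak a^k)\cong \colim_i (R/\mathfrak a^k)^{G_i}$. This gives $C(G,R)\cong \lim_k\colim_i (R/\mathfrak a^k)^{G_i}$.

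For the left-hand side, by definition $R\hat\otimes C(G,\mathbb Z) = \lim_k \bigl((R\otimes C(G,\mathbb Z))/\mathfrak a^k\bigr)\cong \lim_k\bigl((R/\mathfrak a^k)\otimes C(G,\mathbb Z)\bigr)$, using that $C(G,\mathbb Z)$ is $\mathbb Z$-flat (it is a filtered colimit of free modules). Since tensor products commute with filtered colimits, this equals $\lim_k\colim_i (R/\mathfrak a^k)\otimes \mathbb Z^{G_i}\cong \lim_k\colim_i (R/\mathfrak a^k)^{G_i}$, matching the previous computation. Finally, I would verify that the evident natural map $R\otimes C(G,\mathbb Z)\to C(G,R)$ sending $r\otimes f$ to $g\mapsto r f(g)$ is compatible with these identifications; this is a routine diagram chase which realizes the isomorphism as claimed. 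The only real subtlety is checking compatibility of the topologies and the naturality squares; once one commits to the presentation $G=\lim_i G_i$, all of the content is in the simple observation that continuous functions from a profinite group to a discrete abelian group factor through a finite quotient.
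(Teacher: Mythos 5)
The paper does not actually prove this lemma---it is quoted verbatim from Hovey's paper and used as a black box---so there is no internal proof to compare against; your argument is correct and is essentially the standard one (and, as far as I recall, the same reduction Hovey himself uses: factor continuous functions to a discrete target through a finite quotient, then pass to the limit over $R/\mathfrak a^k$). Two tiny remarks: the flatness of $C(G,\mathbb Z)$ is not needed for the identification $(R\otimes C(G,\mathbb Z))/\mathfrak a^k(R\otimes C(G,\mathbb Z))\cong (R/\mathfrak a^k)\otimes C(G,\mathbb Z)$, since right-exactness of the tensor product already gives it; and since $R$ is graded, the whole argument should be read degreewise, with ``complete'' understood to include separatedness so that $R\cong\lim_k R/\mathfrak a^k$. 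Neither point affects the validity of your proof.
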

\begin{cor}
    The homotopy algebra $E\in\CAlg(\ho(\Sp_{K(n)}))$ is homotopy ind-separable.
\end{cor}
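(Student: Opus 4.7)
The plan is to exhibit an explicit set $S \subset \pi_0(E\hat\otimes E)$ and verify the definition of homotopy ind-separability directly. Using Hovey's identification $\pi_0(E\hat\otimes E) \cong C(\Gamma, E_0)$, I would write $\Gamma = \lim_i \Gamma_i$ as a profinite limit with projections $p_i$, set $U_i = p_i^{-1}(p_i(e))$ (a shrinking basis of clopen neighborhoods of the identity $e \in \Gamma$), and take $S = \{\delta_i\}_i$ where $\delta_i \in C(\Gamma, \mathbb Z) \subset C(\Gamma, E_0)$ is the indicator function of $U_i$. Since $\delta_i(e) = 1$, the multiplication map $\mu$ (identified with evaluation at $e$) already inverts each $\delta_i$, so there is a candidate map $(E \hat\otimes E)[S^{-1}] \to E$ to verify.

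By compact generation, the condition to check is that for every compact $c \in \Sp_{K(n)}^\omega$, the map $\pi_*\map(c, E\hat\otimes E)[S^{-1}] \to \pi_*\map(c, E)$ is an isomorphism. The Hopkins--Smith thick subcategory theorem further reduces this to a single $c = L_{K(n)} F$ with $F$ a generalized Moore spectrum of type $n$, say $F = \mathbb S/(p^k, v_1^k, \dots, v_{n-1}^k)$. The essential algebraic input is that for such an $F$, $E^*F$ is a finitely generated $E_*$-module annihilated by some power of $\mathfrak m = (p, v_1, \dots, v_{n-1})$, and hence is \emph{discrete} in the $\mathfrak m$-adic topology on $E_*$.

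With this in hand, using that $F$ is finite, I would rewrite $\map(L_{K(n)} F, E\hat\otimes E) \simeq DF \otimes (E\hat\otimes E)$, and then compute $\pi_*(DF \otimes (E\hat\otimes E))$ iteratively via the cofiber sequences presenting $DF\otimes E$ as a finite tower of quotients $\Sigma^? E/J$: since $p, v_1, \dots, v_{n-1}$ act as non-zero-divisors on $E_*$ and pointwise on $C(\Gamma, E_*)$, the relevant short exact sequences remain short exact after applying $C(\Gamma, -)$, yielding by induction an identification $\pi_*(DF \otimes (E\hat\otimes E)) \cong C(\Gamma, E^{-*}F)$ in which $E^{-*}F$ carries the discrete topology, and under which the multiplication corresponds to evaluation at $e \in \Gamma$. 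Since $E^{-*}F$ is discrete, $C(\Gamma, E^{-*}F) = \colim_i \bigoplus_{\gamma \in \Gamma_i} E^{-*}F$, and the proof then concludes exactly as in \Cref{lm:locindicator}: inverting $\delta_i$ projects onto the summand over $p_i(e)$, and passing to the colimit produces $E^{-*}F = \pi_*\map(c, E)$, with the localization map identified with evaluation at $e$.

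The main obstacle is the identification $\pi_*(DF \otimes (E\hat\otimes E)) \cong C(\Gamma, E^{-*}F)$: it is essentially a base-change calculation made possible by the discreteness of $E^{-*}F$, and rests on \Cref{lm:ctsfuncsbasechange} together with the compatibility of $\mathfrak m$-torsion modules with continuous functions out of a profinite set. Once that is granted, everything else is a pointwise application of \Cref{lm:locindicator} with the discrete coefficient module $E^{-*}F$ in place of $\mathbb Z$.
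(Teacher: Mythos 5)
Your proof is correct and rests on the same input as the paper's (Hovey's computation $\pi_*(E\hat\otimes E)\cong C(\Gamma,E_*)$ and the same set $S$ of indicator functions of a shrinking basis of clopen neighborhoods of $e\in\Gamma$), but it takes a genuinely different route. The paper proves the stronger statement that $E$ is homotopy $\omega$-separable: it forms the telescope of $E\hat\otimes E$ at $S$ in $\Sp$, computes its homotopy as $C(\Gamma,E_*)[S^{-1}]$, and then identifies the homotopy of the $K(n)$-localization of that telescope with the $\mathfrak m$-adic completion of this ring (using evenness and regularity of $(p,u_1,\dots,u_{n-1})$ for $\MU$-modules), which collapses to $E_*$ by \Cref{lm:ctsfuncsbasechange} and \Cref{lm:locindicator}. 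You instead verify the definition of homotopy ind-separability directly on a compact generator $L_{K(n)}F$: smashing with $DF$ first turns all coefficient modules into discrete $\mathfrak m$-torsion modules, so no localize-then-complete step is needed and \Cref{lm:locindicator} applies verbatim with the discrete module $E^{-*}F$ in place of $\mathbb Z$. What your approach buys is the avoidance of the completion argument; what it costs is the base-change identification $\pi_*(DF\otimes(E\hat\otimes E))\cong C(\Gamma,E^{-*}F)$, which requires checking that $C(\Gamma,-)$ preserves the relevant short exact sequences of (complete, eventually discrete) quotients of $E_*$ — a point you correctly flag, and one of comparable delicacy to the regularity claims the paper leaves to the reader. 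One caveat: your argument establishes homotopy ind-separability but not the homotopy $\omega$-separability that the paper's telescope argument yields; this is all the corollary asserts and all that is used downstream, but the two statements are not the same.
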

\begin{proof}
We prove that it is in fact homotopy $\omega$-separable. 

Let $\Gamma \cong \lim_k \Gamma/U_k$ be a description of the Morava stabilizer group as a countable inverse limit of its finite quotients (we implicitly use here that $\Gamma$ is first countable, cf. \cite[Theoem 1.4]{hovey}, and let $\delta_k$ denote the indicator function of $U_k$ (this corresponds to $\delta_i$ in \Cref{lm:locindicator} with $x=$ the neutral element of $\Gamma$). 

    Let $S\subset \pi_0(E\hat\otimes E)\cong C(\Gamma,E_0)$ correspond to the set of the $\delta_k$'s. We claim that the multiplication map $E\hat\otimes E\to E$ witnesses the latter as a telescope of $E\hat\otimes E$ at $S$ in $\Sp_{K(n)}$.  
Indeed, this telescope is the $K(n)$-localization of the same telescope \emph{in} $\Sp$, and we can compute that the homotopy groups of the latter are simply $\pi_*(E\hat\otimes E)[S^{-1}]\cong C(\Gamma,E_*)[S^{-1}]$. In particular, they are concentrated in even degrees and  the sequence $(p,u_1,...,u_{n-1})$ is a regular sequence on them. To express this precisely, we can e.g. observe that $E\hat\otimes E$ can be viewed as an $\MU$-module, and so we can make sense of $(p,u_1,...,u_{n-1})$ on it, and they agree with the ones coming from $E_*$. The same can be said for $E\hat\otimes E[S^{-1}]$.

Now, for an $\MU$-module  $M$ on which $u_n$ acts invertibly, the $K(n)$-localization is given by $\lim_k M\otimes_{\MU}\MU/(p^k,...u_{n-1}^k)$, and so, if $M$ is concentrated in even degrees and the sequence $(p,u_1,...,u_{n-1})$ is regular on $M$, then the homotopy groups of $L_{K(n)}M$ are simply the $\mathfrak m = (p,u_1,...,u_{n-1})$-adic completion of the homotopy groups of $M$.  
 
In particular, $\pi_*(L_{K(n)}((E\hat\otimes E)[S^{-1}])$ is the $\mathfrak m$-adic completion of $C(\Gamma,E_*)[S^{-1}]$, i.e., by \Cref{lm:ctsfuncsbasechange} the $\mathfrak m$-adic completion of $E_*\otimes C(\Gamma,\mathbb Z)[S^{-1}]$, and so, by \Cref{lm:locindicator}, just $E_*$. This is only a verification on homotopy groups, but it is not hard to see that it implies the desired statement. 
\end{proof}
\begin{rmk}
    Note that $\Sp_{K(n)}$ \emph{is} compactly generated, and since its compacts are also dualizable, they are closed under non-empty tensor products. However, the unit is not compact. 
\pend \end{rmk}
\begin{lm}\label{lm:K(n)Brown}
    The \category{} of $K(n)$-local spectra satisfies \Cref{assu:syn} and \Cref{assu:Brown}. 
\end{lm}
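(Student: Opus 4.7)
The plan is to verify each of the two assumptions separately. The $\infty$-category $\Sp_{K(n)}$ is presentably stably symmetric monoidal under the tensor product $X \otimes Y := L_{K(n)}(X \wedge Y)$, which commutes with colimits in each variable because it is obtained by composing the smash product (a left adjoint) with the left adjoint $L_{K(n)}$. By the work of Hovey--Strickland, $\Sp_{K(n)}$ is compactly generated, for instance by a single compact generator $V = L_{K(n)} V_0$ where $V_0$ is a finite spectrum of type $n$ (whose existence is guaranteed by the Hopkins--Smith periodicity theorem). For closure of the compacts under non-empty tensor products: every compact of $\Sp_{K(n)}$ is a retract of some $L_{K(n)} F$ with $F$ finite of type $\geq n$, and any such object is dualizable in $\Sp_{K(n)}$ (with dual $L_{K(n)} DF$); thus if $X$ is compact and $Y$ is dualizable, then $[X \otimes Y, -] \simeq [X, DY \otimes -]$ preserves filtered colimits. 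This settles \Cref{assu:syn}.

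For \Cref{assu:Brown}, I would invoke Hoyois's criterion as recalled in \Cref{rmk:BrownHoyois}: it suffices to show that $\ho(\Sp_{K(n)}^\omega)$ has countably many isomorphism classes of objects and countable mapping sets. Since $\Sp_{K(n)}^\omega$ is the thick subcategory generated by a single compact generator $V$ of type $n$, both conditions reduce to the single countability statement that the graded abelian group $[V, \Sigma^\ast V]$ is countable in each degree.

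To establish this key point, the plan is to identify $[V, \Sigma^k V]$ with $\pi_{-k} L_{K(n)}(DV \wedge V)$ and observe that $W := DV \wedge V$ is again a finite type-$n$ spectrum. One then analyzes the $K(n)$-local Morava $E$-theory (or Devinatz--Hopkins) spectral sequence converging to $\pi_\ast L_{K(n)} W$, whose $E_2$-page is the continuous group cohomology $H^\ast_c(\mathbb{G}_n; E^\ast W)$, where $\mathbb{G}_n$ is the Morava stabilizer group. Because $W$ has type $n$, the maximal ideal $I_n = (p, u_1, \ldots, u_{n-1})$ acts nilpotently on $E^\ast W$, so the latter is a countable discrete $\mathbb{G}_n$-module (being finitely generated over the countable discrete ring $E^\ast/I_n \cong \mathbb{F}_{p^n}[u^{\pm 1}]$). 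Since continuous cohomology of a profinite group with coefficients in a countable discrete module is countable in each bidegree, the $E_2$-page, and hence its target $\pi_\ast L_{K(n)} W$, is countable in each degree. The main technical point is less conceptual than bookkeeping: one has to package these standard chromatic facts carefully to obtain the countability statement, but no serious obstacle arises.
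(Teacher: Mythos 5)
Your proof takes essentially the same route as the paper: verify \Cref{assu:syn} via compact generation by a type $n$ finite complex and dualizability of the compacts, then apply Hoyois's criterion and reduce the countability of mapping sets to the countability of $\pi_*L_{K(n)}W$ for $W$ finite of type $n$, established via the Devinatz--Hopkins spectral sequence with countable $E_2$-page. The one step you elide is the passage from ``countable $E_2$-page'' to ``countable abutment'': a strongly convergent spectral sequence with countably many filtration stages contributing to a fixed total degree could a priori have an uncountable limit, so you need the horizontal vanishing line (for fixed $r$, $E_r^{s,t}=0$ for $s\gg 0$, a consequence of the smashing theorem), which guarantees that only finitely many countable subquotients assemble into each $\pi_k$; the paper makes this point explicitly and you should too. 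Your use of $I_n$-nilpotence on $E^*W$ to get countability of the coefficients is a harmless variant of the paper's argument that $E_n\otimes W$ lies in the thick subcategory generated by $K(n)$.
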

\begin{proof}
    \Cref{assu:syn} is clear, in fact $\Sp_{K(n)}$ is compactly generated (as a stable \category) by $L_{K(n)}X$ for any finite spectrum $X$ of type $n$. 

    Therefore, by \Cref{rmk:BrownHoyois}, it suffices to prove that for a type $n$ spectrum $X$, $\pi_*\map(L_{K(n)}X,L_{K(n)}X)$ is countable. Because $X$ is a finite spectrum, this reduces to proving that $\pi_*(L_{K(n)}X)$ is countable. For this, we refer to the discussion about finite type in the introduction of \cite{devinatzfinite}. We sketch the argument below for the convenience of the reader. In what follows, we let $E_n$ denote Morava theory at height $n$ over $\mathbb F_{p^n}$.  

    The argument is essentially that there is a strongly convergent spectral sequence of signature $$E_2^{s,t}=H^s(\mathbb G_n, (E_n)_t X)\implies \pi_{t-s}(L_{K(n)}X)$$ by \cite[Proposition 6.7]{DH}, using that $X$ has type $n$. 

    Using again that $X$ has type $n$, we observe that $E_n\otimes X$ is in the thick subcategoy generated by $K(n)$. Now the spectral sequence with $E_2$-page $H^s(\mathbb G_n, K(n)_t)$ consists of countable groups: $\mathbb G_n$ is a profinite group with a countable basis, and each $K(n)_t$ is a discrete countable group. Thus, the same holds for $H^s(\mathbb G_n,(E_n)_tX)$.

    Finally, this spectral sequence has a vanishing line, i.e. for a fixed $r\geq 2$, $E_r^{s,t}=0$ for $s>>0$ by the smashing theorem, so the countability of the $E_2$ terms implies the countability of the groups it converges to (there are no infinite extensions because of the vanishing line).
\end{proof}
\begin{rmk}
    Alternatively, the proof of the analogous result for $\Sp_{T(n)}$ is simpler because for a type $n$ finite spectrum $X$, $L_{T(n)}X$ is a telescope of a $v_n$-self map on $X$. One can then simply observe that $L_{T(n)}(E\otimes E)\simeq L_{K(n)}(E\otimes E)$ because $E$ is an $\MU$-module (note that in $\Sp_{T(n)}$, $L_{K(n)}$ is smashing, so this equivalence between $L_{T(n)}$ and $L_{K(n)}$ for $\MU$-modules follows from the same one for $\MU$, which in turn follows from \cite[Theorem 2.7.(iii)]{ravenel1993life}). 
\pend\end{rmk}
It already follows from \Cref{thm:indsepsyn} and \Cref{cor:indsepnoBrown} that there is a unique commutative algebra in $\Sp_{K(n)}$, $\tilde E$, which represents $E^*(-)$ on compact $K(n)$-local spectra, and is equivalent to $E$ as a ($K(n)$-local) spectrum; and furthermore its endomorphism operad is entirely determined by the corresponding one for $E^*(-)$, which one can compute - for exemple its endomorphism space is discrete and isomorphic to the Morava stabilizer group.

For completeness, to reassure the reader about phantom maps and to relate our work to algebra structures in the homotopy category, we spend some time discussing phantom maps to Morava $E$-theory. 
\begin{lm}\label{lm:landweberphantom}
    Let $E,E'$ be Landweber exact spectra.
    \begin{itemize}
        \item $E\otimes E'$ is Landweber exact;
        \item There are no nonzero phantom maps $E\to E'$.
    \end{itemize}
\end{lm}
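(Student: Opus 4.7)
The plan is to prove both parts using the standard characterization of Landweber exactness: $E$ is Landweber exact if and only if $E_*(-) \simeq E_* \otimes_{\MU_*} \MU_*(-)$ naturally, equivalently $E_*$ is flat over $\MU_*$ via its classifying formal group law, equivalently the morphism $\Spec E_* \to \mathcal{M}_{\mathrm{FG}}$ to the moduli stack of formal groups is flat.

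For the first part, I plan to use that flatness is preserved under base change on $\mathcal{M}_{\mathrm{FG}}$. Concretely, $\pi_*(E \otimes E') \cong E_* \otimes_{\MU_*} \MU_*\MU \otimes_{\MU_*} E'_*$ by two iterated applications of Landweber's formula, which corresponds to the fiber product $\Spec E_* \times_{\mathcal{M}_{\mathrm{FG}}} \Spec E'_*$. Since each factor is flat over $\mathcal{M}_{\mathrm{FG}}$, so is the fiber product, and $E \otimes E'$ is Landweber exact. Alternatively, one verifies Landweber's criterion directly by inspecting the regular sequence $(p, v_1, v_2, \dots)$ on the iterated tensor product, using flatness of $E'_*$ to reduce it to the regularity on $E_*$.

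For the second part, I plan to write $E$ as a filtered colimit of finite spectra $E \simeq \colim_\alpha E_\alpha$ and use the Milnor short exact sequence
\[ 0 \to \lim{}^1_\alpha\, [E_\alpha, \Sigma^{-1} E'] \to [E, E'] \to \lim_\alpha [E_\alpha, E'] \to 0 \]
to identify phantom maps with the $\lim^1$ term. To show this vanishes I would appeal to the $\MU$-based Adams spectral sequence for $[E_\alpha, E']$: since $E'$ is Landweber exact, $\MU_* E'$ is an extended $\MU_*\MU$-comodule, so $\mathrm{Ext}^{s,*}_{\MU_*\MU}(\MU_*E_\alpha, \MU_*E') = 0$ for $s>0$. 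The spectral sequence then collapses to an edge isomorphism $[E_\alpha, E']_* \cong \Hom_{\MU_*\MU}^*(\MU_*E_\alpha, \MU_*E')$, and the resulting description as Hom-sets of comodules, together with finiteness of each $E_\alpha$, gives the Mittag--Leffler property.

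The main obstacle will be the Mittag--Leffler verification and the choice of a suitable filtered system $\{E_\alpha\}$ with enough finiteness to run the argument cleanly. This is most transparent when $E_*$ is countably generated over $\MU_*$ --- for example for Morava $E$-theory, the only case actually used in the paper --- in which case one can use a countable sequential system and the argument is standard; in full generality one may wish instead to cite existing results on phantom maps between Landweber exact spectra (e.g.\ from the work of Hovey and collaborators on the stable category of $\MU$-modules).
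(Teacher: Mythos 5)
Your first bullet is handled correctly: $\pi_*(E\otimes E')\cong E_*\otimes_{\MU_*}\MU_*\MU\otimes_{\MU_*}E'_*$ presents $\mathrm{Spec}\,\pi_*(E\otimes E')$ as the fiber product of two flat maps to the moduli stack of formal groups, and flatness is stable under base change and composition; this is essentially the argument in the reference the paper cites for this point (Rezk, \S 15). One caveat: your parenthetical alternative ``using flatness of $E'_*$ to reduce to regularity on $E_*$'' is imprecise, since Landweber exactness does not give flatness of $E'_*$ over $\MU_*$ --- the flatness actually available is over the stack, which is what your main argument uses.

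The second bullet has a genuine gap. The claim that $\mathrm{Ext}^{s,*}_{\MU_*\MU}(\MU_*E_\alpha,\MU_*E')=0$ for $s>0$ because $\MU_*E'$ is extended is false for a general finite spectrum $E_\alpha$: the change-of-rings isomorphism gives $\mathrm{Ext}^{s}_{\MU_*\MU}(\MU_*E_\alpha,\MU_*\MU\otimes_{\MU_*}E'_*)\cong \mathrm{Ext}^{s}_{\MU_*}(\MU_*E_\alpha,E'_*)$, which vanishes in positive degrees only when $\MU_*E_\alpha$ is projective over $\MU_*$ (extended comodules are only \emph{relatively} injective). Concretely, for $E_\alpha=\Sph/p$ and $E'=\KU$ one has $\Hom_{\MU_*\MU}(\MU_*(\Sph/p),\MU_*\KU)\cong \KU_*[p]=0$ while $\mathrm{Ext}^1_{\MU_*}(\MU_*/p,\KU_*)\cong\KU_*/p\neq 0$ and $[\Sph/p,\KU]_*\neq 0$, so there is no collapse and no edge isomorphism. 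The actual proof (Lurie's Lecture 17, which is what the paper cites) needs the additional structural input that a Landweber exact spectrum is \emph{evenly generated}: $E$ is a filtered colimit of finite even spectra $X_\alpha$ such that each transition map $\MU_*X_\alpha\to\MU_*X_\beta$ factors through a finitely generated free $\MU_*$-module. It is this factorization through free modules (for which the universal coefficient sequence does degenerate) that makes the tower $\{[X_\alpha,E']\}$ pro-isomorphic to a Mittag--Leffler system and kills the ${\lim}^1$ term; without it your argument does not go through. (Note that the paper itself proves neither bullet --- it cites Rezk for the first and Lurie for the second --- so if you want a self-contained argument you should import the even-generation lemma rather than an Adams spectral sequence collapse.)
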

\begin{proof}
    The first part is a consequence of \cite[§15]{rezk1998notes}, and the second is \cite[Lecture 17, Corollary 7]{lurienotes}. 
\end{proof}
\begin{warn}
    This lemma is about phantom maps in $\Sp$. There are more phantom maps in $\Sp_{K(n)}$, because the compact objects are of the form $L_{K(n)}X$ for $X$ a finite type $\geq n$ spectrum, so there are fewer compact objects.
\end{warn}
\begin{lm}\label{lm:phantomK(n)}
  Fix a perfect $\mathbb F_p$-algebra $k$, and a formal group $\mathbf G$ of height $n$ over $k$, and let $E=E(k,\mathbf G)$ be the corresponding Morava $E$-theory, and let $X\to E$ be a $K(n)$-locally phantom map. It is also phantom in $\Sp$.
\end{lm}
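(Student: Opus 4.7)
The plan is to prove the contrapositive: given a finite spectrum $F$ and a map $f:F\to X$ in $\Sp$ with $g\circ f:F\to E$ non-null, I will produce a finite spectrum $F'$ of type $\geq n$ together with a map $F'\to F$ for which the composite $F'\to F\to X\to E$ is non-null. Since $X$ is $K(n)$-local, such a map $F'\to X$ factors uniquely as $F'\to L_{K(n)}F'\to X$, and $L_{K(n)}F'$ is compact in $\Sp_{K(n)}$ whenever $F'$ has type $\geq n$; the non-nullity then contradicts the $K(n)$-locally phantom hypothesis. The case $\mathrm{type}(F)\geq n$ is immediate (take $F'=F$), so I focus on $\mathrm{type}(F)<n$.

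To construct $F'$, I use a cofinal system of generalized type-$n$ Moore spectra $M_I=\mathbb{S}/(p^{a_0},v_1^{a_1},\dots,v_{n-1}^{a_{n-1}})$ guaranteed by Hopkins--Smith. For each such $I$, set $F_I:=\Sigma^{-d_I}(F\otimes M_I)$, where $d_I$ is the dimension of the top cell of $M_I$. The top-cell map $M_I\to\Sigma^{d_I}\mathbb{S}$ induces $F_I\to F$, and $F_I$ has type $\geq n$ since smashing with a type-$n$ spectrum cannot decrease type. It therefore suffices to show that if $g\circ f\in E^0(F)$ is non-null, then its pullback to $E^0(F_I)$ is non-null for some $I$.

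The key input is the $\mathfrak{m}$-adic completeness of $E$: as a $K(n)$-local $\MU$-module on which $u_n$ acts invertibly, $E\simeq \lim_I E/I$ in $\Sp$, where $E/I=E\otimes M_I$ (this is precisely the formula used earlier in this section when verifying ind-separability of $E$). Since $DF$ is dualizable, tensoring with $DF$ preserves this limit, so $E\otimes DF\simeq \lim_I(E/I\otimes DF)$. Using self-duality of Moore spectra up to a shift of $d_I$, the group $E^0(F_I)$ is identified with $\pi_0(E/I\otimes DF)$, and the pullback $E^0(F)\to E^0(F_I)$ becomes the canonical projection $\pi_0(E\otimes DF)\to\pi_0(E/I\otimes DF)$ coming from $E\to E/I$. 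The Milnor exact sequence then yields an injection $E^0(F)\hookrightarrow \prod_I E^0(F_I)$, provided the Mittag--Leffler condition holds on the tower $\{\pi_1(E/I\otimes DF)\}_I$; but each term is a finitely generated module over the Artinian local ring $\pi_0(E)/I$, hence itself Artinian, and any tower of Artinian modules is automatically Mittag--Leffler. Thus some $(g\circ f)|_{F_I}$ must be non-null, completing the argument.

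The main obstacle is the identification $E^0(F_I)\cong \pi_0(E/I\otimes DF)$ together with the compatibility of the pullback along $F_I\to F$ with the map $E\to E/I$; the remaining ingredients are formal consequences of dualizability of finite spectra, the completeness description $E\simeq\lim_I E/I$ already in use in this section, and the Artinian structure of the quotients $E_0/I$.
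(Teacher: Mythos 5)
Your proof is correct, but it runs in the opposite direction from the paper's. The paper resolves the \emph{source}: it writes $L_{K(n)}V$ (for $V$ finite) as a $K(n)$-local sequential colimit of finite type-$n$ spectra $V_k$, kills each composite $V_k\to X\to E$ by the phantom hypothesis, and disposes of the remaining $\lim^1_{\mathbb N}\pi_1\Map(V_k,E)$ obstruction via the Mittag--Leffler condition. You instead resolve the \emph{target}: you write $E\simeq \lim_I E/I$ via generalized Moore spectra, transport this through Spanier--Whitehead duality to an injection $E^0(F)\hookrightarrow \lim_I E^0(F\otimes M_I[-d_I])$, and detect a non-null composite on some type-$n$ smash $F\otimes M_I$. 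The two arguments are essentially dual, and both ultimately rest on the same input — the relevant $\lim^1$ vanishes because the groups in the tower are Artinian $\pi_0(E)$-modules (in your case, finitely generated modules killed by a power of $\mathfrak m$; note that $\pi_*(E/I\otimes DF)$ need only be killed by a power of $I$ rather than by $I$ itself, since e.g. $p^{a_0}\cdot\mathrm{id}$ need not vanish on $\mathbb S/p^{a_0}$, but this does not affect Artinian-ness). Your route is more hands-on: it needs the Hopkins--Smith tower of generalized Moore spectra and their self-duality, but avoids the paper's appeal to $\omega_1$-compactness of $L_{K(n)}V$ and the presentation of a localized finite spectrum as a colimit of type-$n$ complexes. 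Only one small bookkeeping remark: the Milnor sequence should be applied to a cofinal \emph{sequence} of multi-indices $I$, which your phrase ``the tower'' implicitly does.
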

\begin{proof}
    Let $V$ be a finite spectrum, and $f: V\to X$ a map. We wish to show that $V\to X\to E$ is null, or equivalently that $L_{K(n)}V\to L_{K(n)}X\to E$ is null (as $E$ is $K(n)$-local). Now note that $L_{K(n)}V$, being $\omega_1$-compact, is $K(n)$-locally a sequential colimit of finite type $n$ spectra, say $L_{K(n)}V\simeq L_{K(n)}\colim_\mathbb N V_k$. Now each $V_k\to L_{K(n)}X\to L_{K(n)}E$ is null because of our assumption, so the only obstruction to $L_{K(n)}V\to E$ being null is in $\lim^1_\mathbb N \pi_1\Map(V_k, E)$. It therefore suffices to argue that this $\lim^1$ is $0$, by e.g. showing that it satisfies the Mittag-Leffler condition. 

    But each $V_k$ is a type $n$ complex, so $\Map(V_k,E)$ is in the thick subcategory generated by $K(n)$, and thus its $\pi_m$, for any fixed $m$, is an Artinian $\pi_0(E)$-module\footnote{If $k$ is a finite field, it is in fact a \emph{finite} abelian group; but for $k$ a large perfect field, even $\pi_0(K(n))$ is not finite - however the homotpy groups of $K(n)$ are finite dimensional vector spaces over $\pi_0(K(n))\cong k$.}. This automatically implies the Mittag-Leffler condition. 
\end{proof}
\begin{cor}
    There are no nonzero phantom maps in $K(n)$-local spectra from any tensor powers of Morava $E$-theories to any Morava $E$-theory. 
\end{cor}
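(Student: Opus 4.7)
The plan is to deduce this directly from the two preceding lemmas, using the standard observation that phantom maps form a two-sided ideal, together with the adjunction $L_{K(n)} \dashv \iota$ between $\Sp$ and $\Sp_{K(n)}$.

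So let $E_1,\ldots,E_k$ be Morava $E$-theories at height $n$, let $E'$ be another such, and let $f : E_1\hat\otimes\cdots\hat\otimes E_k \to E'$ be a $K(n)$-locally phantom map. First, I would invoke \Cref{lm:phantomK(n)} (which is stated for a single source but whose proof only uses that the target is a Morava $E$-theory) to conclude that $f$ is phantom in $\Sp$ as well. Second, I would precompose with the unit $\eta : E_1\otimes\cdots\otimes E_k \to L_{K(n)}(E_1\otimes\cdots\otimes E_k) = E_1\hat\otimes\cdots\hat\otimes E_k$ to obtain a map $\tilde f := f\circ \eta$ in $\Sp$. Since the class of phantom maps is closed under precomposition (any map from a finite spectrum into the source of $\tilde f$ composes to a map from a finite spectrum into the source of $f$), $\tilde f$ is itself phantom in $\Sp$.

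The third step is to observe, by iterated application of the first part of \Cref{lm:landweberphantom}, that $E_1\otimes\cdots\otimes E_k$ is Landweber exact. Then the second part of \Cref{lm:landweberphantom}, applied to the phantom map $\tilde f$ between the Landweber exact spectra $E_1\otimes\cdots\otimes E_k$ and $E'$, forces $\tilde f = 0$. Finally, since $E'$ is $K(n)$-local, the adjunction gives a bijection
\[
\map_{\Sp_{K(n)}}(L_{K(n)}(E_1\otimes\cdots\otimes E_k), E') \;\simeq\; \map_{\Sp}(E_1\otimes\cdots\otimes E_k, E'),
\]
under which $f$ corresponds to $\tilde f$; hence $f = 0$.

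There is no real obstacle here: once one has the two preceding lemmas in hand, the proof is just bookkeeping about tensor products and localization. The only mildly subtle point is remembering that $K(n)$-local phantom-ness is an a priori weaker condition than $\Sp$-phantom-ness (since $\Sp_{K(n)}$ has fewer compacts), which is precisely what \Cref{lm:phantomK(n)} addresses, and that one must reduce from the $K(n)$-local tensor product to the honest tensor product before appealing to Landweber exactness, which is what the unit $\eta$ accomplishes.
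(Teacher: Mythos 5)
Your proof is correct and follows essentially the same route as the paper: upgrade $K(n)$-local phantomness to $\Sp$-phantomness via \Cref{lm:phantomK(n)}, then kill the map using Landweber exactness of the tensor power via \Cref{lm:landweberphantom}. Your explicit precomposition with the unit $\eta$ and the adjunction argument makes precise a point the paper's two-line proof elides (namely that the $K(n)$-local source $E_1\hat\otimes\cdots\hat\otimes E_k$ is not itself Landweber exact, only the honest tensor product is), which is a worthwhile clarification but not a different argument.
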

\begin{proof}
    Morava $E$-theories are Landweber exact, so there are no nonzero phantom maps in $\Sp$ of the form $\bigotimes_{i=1}^k E(k_i,\mathbf G_i)\to E$ by \Cref{lm:landweberphantom}. By \Cref{lm:phantomK(n)}, this implies that there are no nonzero phantoms in $\Sp_{K(n)}$. 
\end{proof}
We thus obtain the Goerss--Hopkins--Miller theorem, and its variants for other operads, namely:
\begin{cor}\label{cor:GHM}
 Fix a perfect algebraic extension $k$ of $\mathbb F_p$, and a formal group $\mathbf G$ of height $n$ over $k$, and let $E=E(k,\mathbf G)$ be the corresponding Morava $E$-theory, considered as a (homotopy commutative) homotopy algebra. We have:
    \begin{enumerate}
        \item For any weakly reduced \operad{} $\Oo$ (e.g. $\mathbb E_d, 1\leq d\leq \infty$), the moduli space $$\Alg_{\Oo\otimes\mathbb E_1}(\Sp)^\simeq\times_{\Alg(\ho(\Sp))^\simeq}\{E\}$$ is contractible. 
        \item For any \operad{} $\Oo$ and any $R\in\Alg_{\Oo\otimes\mathbb E_1}(\Sp_{K(n)})$ whose underying algebra is homotopy commutative and which receives no phantom map from $E$, viewing $E$ as an $\Oo\otimes\mathbb E_1$-algebra using the unique map of \operads{} $\Oo\otimes\mathbb E_1\to \mathbb E_\infty$, the canonical map $$\map_{\Alg_{\Oo\otimes\mathbb E_1}(Sp_{K(n)})}(E,R)\to \hom_{\Alg_{\Oo\otimes\mathbb E_1}(\ho(\Sp_{K(n)}))}(hE,hR)$$ is an equivalence. This is the case e.g. if $R$ is a Lubin-Tate theory.
    \end{enumerate}

    In particular, if we consider the underlying spectrum of $E$, its space of $\mathbb E_d$-structures, for any $1\leq d\leq \infty$, is equivalent to $B\Aut(\Gamma)$.
\end{cor}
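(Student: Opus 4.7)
The plan is to assemble the previously established ingredients and feed them into the obstruction-theoretic machinery of the section. All substantive inputs are already in place: $E$ is homotopy ind-separable in $\C=\Sp_{K(n)}$ via Hovey's computation of $\pi_*L_{K(n)}(E\otimes E)$ together with \Cref{lm:locindicator}; $\Sp_{K(n)}$ satisfies \Cref{assu:syn} and \Cref{assu:Brown} by \Cref{lm:K(n)Brown}; and no nonzero phantom map in $\Sp_{K(n)}$ goes from any tensor power of $E$ to $E$, by combining \Cref{lm:landweberphantom}, \Cref{lm:phantomK(n)}, and the fact that tensor products of Landweber-exact spectra remain Landweber-exact.

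First, I would invoke \Cref{thm:indsepBrown} in $\C=\Sp_{K(n)}$ to obtain a contractibly-unique $\mathbb E_1$-lift $\tilde E\in\Alg(\Sp_{K(n)})$ of the homotopy algebra $E$. For any weakly reduced operad $\Oo$, the factorization $\Alg_{\Oo\otimes\mathbb E_1}(\Sp_{K(n)})^\simeq \to \Alg(\Sp_{K(n)})^\simeq \to \Alg(\ho(\Sp_{K(n)}))^\simeq$ yields, over the point $E$, a fiber sequence whose leftmost term (the moduli of $\Oo\otimes\mathbb E_1$-lifts of $\tilde E$, i.e.\ $\Alg_{\Oo\otimes\mathbb E_1}(\Sp_{K(n)})^\simeq\times_{\Alg(\Sp_{K(n)})^\simeq}\{\tilde E\}$) is contractible by \Cref{cor:indsepnoBrown} and whose rightmost term (the moduli of $\mathbb E_1$-lifts of $E$) is contractible by \Cref{thm:indsepBrown}; hence the middle term $\Alg_{\Oo\otimes\mathbb E_1}(\Sp_{K(n)})^\simeq\times_{\Alg(\ho(\Sp_{K(n)}))^\simeq}\{E\}$ is contractible. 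To transfer the statement from $\Sp_{K(n)}$ to $\Sp$ in part~(1), I would use $K(n)$-locality of $E$: for each $k$, $\map_\Sp(E^{\otimes k},E)\simeq\map_{\Sp_{K(n)}}(L_{K(n)}E^{\otimes k},E)$ compatibly with composition, so $L_{K(n)}$ identifies the endomorphism operad of $E$ in $\Sp$ with the one in $\Sp_{K(n)}$, and therefore the corresponding moduli of lifts in the two ambient categories.

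Part~(2) follows by the same mechanism, using \Cref{cor:indsepBrown3} in place of \Cref{cor:indsepBrown2}, the no-phantom-map hypothesis on $R$ now being assumed in the statement itself. For the final sentence, I would specialize part~(1) to $\Oo=\mathbb E_{d-1}$, so that $\Oo\otimes\mathbb E_1\simeq\mathbb E_d$ by \cite[Theorem 5.1.2.2]{HA}: contractibility of the moduli over $E\in\Alg(\ho(\Sp))$ means the forgetful map identifies the connected component of any $\mathbb E_d$-lift of $E$ in $\Alg_{\mathbb E_d}(\Sp)^\simeq$ with the connected component of $E$ in the $1$-category $\Alg(\ho(\Sp))^\simeq$, which is $B\Aut_{\Alg(\ho(\Sp))}(E)=B\Gamma$ by the very definition of $\Gamma$. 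The main obstacle in the write-up is nothing beyond bookkeeping: checking that the various fiber sequences of moduli assemble as claimed, and that the $\Sp$-versus-$\Sp_{K(n)}$ identification really yields what we want; every non-formal computation needed has already been carried out in the run-up to the corollary.
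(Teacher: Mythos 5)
Your argument is essentially the paper's: the published proof likewise checks that $\Sp_{K(n)}$ satisfies \Cref{assu:syn} and \Cref{assu:Brown} via \Cref{lm:K(n)Brown}, then cites \Cref{thm:indsepBrown}, \Cref{cor:indsepBrown2} and \Cref{cor:indsepBrown3} (your fiber-sequence decomposition is just an unpacking of how those combine), and disposes of the $\Sp$-versus-$\Sp_{K(n)}$ issue exactly as you do, using that $E$ is $K(n)$-local and that $\Sp_{K(n)}$ is a symmetric monoidal Bousfield localization of $\Sp$. The only caveat concerns the final sentence, which the paper's own proof does not address either: what you actually compute is the connected component of $E$ in $\Alg_{\mathbb E_d}(\Sp)^\simeq$, namely $B\Aut_{\Alg(\ho(\Sp))}(E)\simeq B\Gamma$, which is neither literally the fiber of $\Alg_{\mathbb E_d}(\Sp)^\simeq\to \Sp^\simeq$ over the underlying spectrum nor the stated $B\Aut(\Gamma)$, so that claim would need a separate (and more careful) justification.
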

\begin{proof}
We first note that $\Sp_{K(n)}$ satisfies \Cref{assu:syn} and \Cref{assu:Brown} by \Cref{lm:K(n)Brown}. 

Now, by \Cref{thm:indsepBrown}, \Cref{cor:indsepBrown2} and \Cref{cor:indsepBrown3}, the only thing left to comment on is why we could write $\Sp$ in place of $\Sp_{K(n)}$ in item 1. The point is that $E$ is $K(n)$-local, and $\Sp_{K(n)}$ is a symmetric monoidal Bousfield localization of $\Sp$, so that the space of $\Oo$-algebra structures on $E$ in $\Sp$ is equivalent to the one in $\Sp_{K(n)}$. There, $E$ is ind-separable and so the results from the previous subsection apply.
\end{proof}
The right hand side of these equivalences, i.e. homotopy algebra maps from $hE$ to $hR$ can also be computed, at least under favourable circumstances, e.g. if $R$ is also a Morava $E$-theory, or more generally if it is even $2$-periodic, cf. e.g.  \cite{rezk1998notes}. 
\begin{rmk}
    At some point, the Goerss--Hopkins--Miller theorem was the only known way to construct a commutative ring structure on Morava $E$-theory. Lurie proposed an alternative construction in \cite{lurie2018elliptic} where he directly gives a construction of $E$-theory with its commutative ring structure. 
\pend \end{rmk}

In the case of $E(k,\mathbf G)$ for an algebraic extension of $\mathbb F_p$, $k$, we wanted to give a self-contained argument for the ind-separability, for our proof to be at least a somewhat new proof of the Goerss--Hopkins--Miller theorem. We now move on to the case of a general perfect commutative $\mathbb F_p$-algebra - for this, we use an analogue of the computation of $\pi_*(L_{K(n)}(E\otimes E))$ for general perfect $\mathbb F_p$-algebras which follows from Lurie's work \cite{lurie2018elliptic}. We prove: 

\begin{thm}\label{thm:Moravaindsepperf}
    Let $R$ be a perfect (discrete) commutative $\mathbb F_p$-algebras, $\mathbf G$ a formal group of height exactly $n$ over $R$. Assume that $R$ is ind-separable over $\mathbb F_p$, i.e. that the multiplication $R\otimes_{\mathbb F_p}R\to R$ is a localization at a set $S$ of elements. 

    In this situation, $E(R,\mathbf G)$ is ind-separable in $\Sp_{K(n)}$.  
\end{thm}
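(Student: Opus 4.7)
The plan is to mirror the proof for Morava $E$-theory associated to an algebraic extension of $\mathbb F_p$, replacing the computation of Hopkins--Ravenel/Baker/Hovey used there with the analogous (and more flexible) computation coming from Lurie's moduli-theoretic construction of Lubin--Tate theory \cite{lurie2018elliptic}. Specifically, I will show that $E(R,\mathbf G)$ is homotopy $\omega$-separable in $\Sp_{K(n)}$: the multiplication map $E\hat\otimes E\to E$ will be exhibited as a telescope of $E\hat\otimes E$ at a countable family of elements lifted from a localizing set for $R\otimes_{\mathbb F_p}R\to R$.

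First, I would identify the homotopy groups of $L_{K(n)}(E(R,\mathbf G)\otimes E(R,\mathbf G))$. Lurie's universal property of $E(R,\mathbf G)$ as the Lubin--Tate spectrum classifying deformations of $(R,\mathbf G)$ gives a natural ring map $R\otimes_{\mathbb F_p}R \to \pi_0(L_{K(n)}(E\hat\otimes E))$, and moreover identifies $\pi_*(L_{K(n)}(E\hat\otimes E))$ with the $\mathfrak m$-adic completion of $E_*\otimes_{R}(R\otimes_{\mathbb F_p}R)$, where $\mathfrak m=(p,u_1,\ldots,u_{n-1})$ (using that $R$ is perfect, so Witt-vector lifts exist and $E$ is Landweber exact over $\MU$). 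Under this identification, the multiplication map $E\hat\otimes E\to E$ corresponds to basechange along the multiplication $R\otimes_{\mathbb F_p}R\to R$.

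Next, I would use the ind-separability hypothesis on $R/\mathbb F_p$: fix a set $S_0\subset R\otimes_{\mathbb F_p}R$ witnessing $(R\otimes_{\mathbb F_p}R)[S_0^{-1}]\cong R$ via multiplication. Lift $S_0$ to a set $S\subset \pi_0(L_{K(n)}(E\hat\otimes E))$ via the map above. As in the algebraic extension case, one checks (using that $E$ is an $\MU$-module and that $u_n$ is invertible) that the telescope of $E\hat\otimes E$ at $S$ \emph{in} $\Sp$ has homotopy groups in which $(p,u_1,\ldots,u_{n-1})$ remains regular and which are concentrated in even degrees, and hence the effect of $K(n)$-localization on the telescope is just $\mathfrak m$-adic completion on homotopy groups. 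By the analogue of \Cref{lm:ctsfuncsbasechange}, this completion then converts $E_*\otimes_{R}(R\otimes_{\mathbb F_p}R)[S_0^{-1}]\cong E_*\otimes_R R\cong E_*$, so the telescope at $S$ is $K(n)$-locally equivalent to $E$ via the multiplication map. This gives homotopy $\omega$-separability, hence homotopy ind-separability, and since this comes from an actual commutative algebra structure, $E(R,\mathbf G)$ is ind-separable.

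The main obstacle is making the first step precise: extracting from Lurie's framework a usable description of $\pi_*(L_{K(n)}(E(R,\mathbf G)\otimes E(R,\mathbf G)))$ that makes transparent both the map from $R\otimes_{\mathbb F_p}R$ and the compatibility with the multiplication of $E$. If $R$ were an algebraic extension of $\mathbb F_p$, this reduces to Hovey's description via the profinite Morava stabilizer group; in general, one expects Lurie's deformation-theoretic approach to yield the identification, but one must carefully check Landweber exactness for the relevant formal group over $R\otimes_{\mathbb F_p}R$ and that the $\mathfrak m$-adic completion commutes with the various tensor products, as in the proof of \Cref{lm:ctsfuncsbasechange}. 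Once this identification is in hand, the remaining arguments are formally identical to the algebraic extension case.
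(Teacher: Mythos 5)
Your overall strategy (reduce to a statement about the ring of co-operations via Lurie's moduli-theoretic description, then localize at a lifted set) is the right one and is what the paper does, but your identification of the co-operations ring is wrong, and this breaks the argument. You claim that Lurie's universal property identifies $\pi_*(L_{K(n)}(E\hat\otimes E))$ with the $\mathfrak m$-adic completion of $E_*\otimes_R(R\otimes_{\mathbb F_p}R)$. This is false: $E(R,\mathbf G)\hat\otimes E(R,\mathbf G)$ is the Lubin--Tate theory not of $R\otimes_{\mathbb F_p}R$ (with some formal group) but of the ring $R_\infty$ classifying \emph{isomorphisms} between the two formal groups $\mathbf G_1,\mathbf G_2$ pulled back along the two inclusions $R\rightrightarrows R\otimes_{\mathbb F_p}R$. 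Already for $R=\overline{\mathbb F}_p$ one has $\pi_0(E\hat\otimes E)\cong C(\Gamma,E_0)$ with $\Gamma$ the extended Morava stabilizer group, which is vastly larger than anything built from $\overline{\mathbb F}_p\otimes_{\mathbb F_p}\overline{\mathbb F}_p$ alone; your proposed description entirely omits the automorphisms of the formal group. By \cite[Theorem 5.23]{goerssMFG}, $R_\infty$ is an ind-(finite étale) extension $\colim_k R_k$ of $R\otimes_{\mathbb F_p}R$, and this is the ring you must work with.

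Consequently your localizing set is too small. Inverting only the lifts of $S_0\subset R\otimes_{\mathbb F_p}R$ does not collapse $R_\infty$ down to $R$: after inverting $S_0$, each $R_k[S_0^{-1}]$ is a finite étale $R$-algebra which merely \emph{splits off} a copy of $R$ (via the section classifying the identity isomorphism of $\mathbf G$), so one must additionally invert the resulting idempotents $e_k\in R_k[S_0^{-1}]$ to cut down to the correct factor. This is the analogue, in the general perfect case, of inverting the indicator functions $\delta_k$ of open subgroups of $\Gamma$ in the algebraic-extension case — a step your proposal has no counterpart for. The paper's proof proceeds exactly this way: it shows $R_\infty[(S\cup\{e_k\}_k)^{-1}]\cong R$, and then invokes a separate general lemma (proved from \cite[Theorems 5.1.5 and 5.4.1]{lurie2018elliptic} via the universal property in complex periodic $K(n)$-local rings) stating that a localization of perfect $\mathbb F_p$-algebras with formal groups induces a localization of the associated Lubin--Tate spectra in $\Sp_{K(n)}$; this last step replaces your hands-on telescope/regular-sequence computation and avoids having to re-verify Landweber exactness and completion compatibilities by hand. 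To repair your proposal you would need to (i) replace $R\otimes_{\mathbb F_p}R$ by $R_\infty$ throughout, and (ii) enlarge your localizing set by the idempotents $e_k$.
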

\begin{rmk}
The \category{} $\Sp_{K(n)}$ is a smashing localization of $\Sp_{T(n)}$, so that this result implies the same one in $\Sp_{T(n)}$. 
\pend\end{rmk}
As mentioned above, the key ingredient is again a computation of $E(R,\mathbf G)\otimes E(R,\mathbf G)$ (to be taken in $\Sp_{K(n)}$) - this computation is a combination of the universal property of Morava $E$-theory \cite[Theorem 5.1.5.]{lurie2018elliptic} together with an explicit analysis of the coproduct in Lurie's $\mathcal{FG}$ (cf. \cite[Remark 5.1.6.]{lurie2018elliptic}), by way of an analysis of the stack of isomorphisms between formal groups, cf. \cite[Theorem 5.23]{goerssMFG} (Goerss credits Lazard for this result). 
\begin{proof}
 Consider $R\otimes_{\mathbb F_p} R$ with the two formal groups induced from $\mathbf G$ along the two inclusions of $R$, say $\mathbf G_1$ and $\mathbf G_2$. 

By \cite[Theorem 5.23]{goerssMFG}, we can find a sequence $R_k$ of finite étale extensions of $R\otimes_{\mathbb F_p}R$ whose colimit $R_\infty$ classifies isomorphisms of formal groups between $\mathbf G_1$ and $\mathbf G_2$ (in particular, it acquires one specific formal group, $\mathbf G_\infty$, which comes with isomorphisms to the basechanges of $\mathbf G_i, i=1,2$)\footnote{Note that in \cite{goerssMFG}, Goerss $\mathrm{Iso}(\mathbf G_1,\mathbf G_2)_k\to \Spec(R\otimes_{\mathbb F_p} R)$ is finite étale, and hence it is also affine - the corresponding ring is our $R_k$. }.

    Furthermore, it is not hard to deduce from \cite[Theorem 5.1.5.]{lurie2018elliptic} (see also \cite[Remark 5.1.6.]{lurie2018elliptic}) that $E(R,\mathbf G)\otimes E(R,\mathbf G) \simeq E(R_\infty,\mathbf G_\infty)$ as commutative algebras (the tensor product is taken in $K(n)$-local spectra), and the multiplication map to $E(R,\mathbf G)$ corresponds to the map $R_\infty\to R$ classifying the identity isomorphism of $\mathbf G$. 

Let $S$ be a set of elements of $R\otimes_{\mathbb F_p}R$ such that the multiplication map $R\otimes_{\mathbb F_p}R\to R$ witnesses the target as the localization of the domain at $S$ (this exists by assumption). We claim that each induced map $R_k[S^{-1}]\to R$ is a localization, in fact, a split localization. Indeed, $R_k$ is a finite étale extension of $R\otimes_{\mathbb F_p}R$, so that $R_k[S^{-1}]$ is a finite étale extension of $R$. But via $R_k\to R_\infty \to R$, this is an étale extension which admits a section, and thus a splitting, and this splitting gives an $R$-algebra isomorphism $R_k[S^{-1}]\cong R\times T$, for some $R$-algebra $T$. Let $e_k$ be the corresponding idempotent in $R_k[S^{-1}]$. It follows that $R_\infty[(S\cup\{e_k, k\in\mathbb N\})^{-1}]\cong R$ via the canonical map $R_\infty\to R$. 

Thus, to conclude, it suffices to prove the following: if $(R,\mathbf G)\to (A,\mathbf H)$ be a morphism of formal groups over perfect $\mathbb F_p$-algebras which witnesses the target as a localization of the source at a set $S$ of elements, then there is a set of elements $\tilde S\subset \pi_0E(R,\mathbf G)$ such that $E(R,\mathbf G)\to E(A,\mathbf H)$ witnesses the target as the localization of the source at this set of elements (in $\Sp_{K(n)}$). 

But this follows from \cite[Theorems 5.1.5. and 5.4.1.]{lurie2018elliptic}: indeed, consider any subset $\tilde S\subset \pi_0E(R,\mathbf G)$ whose image under the surjective morphism $\pi_0E(R,\mathbf G)\to R$ is $S$, and let $T= E(R,\mathbf G)[\tilde S^{-1}]$ (computed in $\Sp_{K(n)}$). By \cite[Remark 4.1.10.]{lurie2018elliptic}, $T$ is complex periodic.  It is then clear that $T$ and $E(A,\mathbf H)$ have the same universal property in the \category{} of complex periodic $K(n)$-local commutative ring spectra. 

\end{proof}
\begin{rmk}
    When $R$ is strictly henselian, one can prove a converse to this theorem, that is, if $E(R,\mathbf G)$ is ind-separable in $\Sp_{K(n)}$, then $R$ is ind-separable over $\mathbb F_p$. It is also reasonable to expect a converse in general, but the author has not found a proof. 

    In particular, as there are strictly henselian rings that are not ind-separable over $\mathbb F_p$, one sees that the result is not true in full generality.
\pend\end{rmk}
 
\begin{rmk}
     We make a final note that the results in this subsection say nothing about the homotopy algebra structures on the spectra $E(R,\mathbf G)$. In particular, while for a given perfect field $k$, and a given height $n$, the various spectra $E(k,\mathbf G), \mathrm{ht}(\mathbf G) = n$ are homotopy equivalent \cite{lueckepeterson}, they are not equivalent as ring spectra if the formal groups are not isomorphic. But this is already the case at the level of homotopy algebras (the formal group only depends on the homotopy algebra structure). 
\pend\end{rmk}
\section{Examples}\label{section:examples}
We take a bit of time away from theory to look at some examples of separable algebras. All the examples we mention here are fairly standard. We begin with Galois extensions, and then move on to certain ``cochain algebras'' which appear among other places in equivariant stable homotopy theory, and can be organized through \categories{} of spans. We later go to the setting of group rings under certain assumptions on the ``cardinality'' of the group - these appear among other places in ambidexterity theory, and can also be organized through \categories{} of spans. We later mention examples related to algebraic geometry, namely we recall that étale maps of schemes induce separable algebras, and that (certain) Azumaya algebras are separable. Finally, we conclude with a non-example, by pointing out that separability is really a ``linear'' story, namely that there are no interesting examples in cartesian cases.

\begin{warn}

In the cases of \categories{} of spans, it is convenient to use $(\infty,2)$-categorical technology to organize the proofs that the relevant algebras are separable, by going through the $(\infty,2)$-category of correspondences. However, some of this technology has not been developed yet, and is only really known in the case of $2$-categoies. The reader can thus view these examples as either sketches (``a complete proof is left to the reader''), conjectures, or as proving less than what we claim, in the following sense : our proofs will still be valid at the homotopy category level, because there we only need the $2$-categorical version of the aforementioned technology. We note that because of the results of the previous sections, for most purposes, this is not a real restriction: as long as one maps those span categories to an \emph{additive} symmetric monoidal \category{}, homotopy separability guarantees full-fledged separability. 

We will indicate with a (*) the statements that are subject to this warning. 
\pend \end{warn}
\subsection{Galois theory}\label{subsection:galoisex}
In this section, we review one of the main examples of separability, namely Galois extensions. Originally introduced in field theory, they were later studied in the more general context of commutative rings \cite{AuslanderGoldman}, and later, by work of Rognes \cite{rognes}, in the setting of commutative ring spectra. His definition extends verbatim to more general stable homotopy theories. We recall the definition for the convenience of the reader: 
\begin{defn}
Let $\C$ be a cocompletely, stably symmetric monoidal \category{}, and let $A\in\CAlg(\C)$. For an $\mathbb E_1$-group $G$, an object $B\in\CAlg(\Mod_A)^{BG}$ is called a $G$-Galois extension of $A$ if: 
\begin{itemize}
    \item The induced map $A\to B^{hG}$ is an equivalence (of commutative algebras);
    \item the natural map $B\otimes_A B\to F(G_+,B)$, adjoint to the action map $A[G]\otimes_A B\otimes_A B\to B\otimes_A B\to B$, is an equivalence (informally, this map is given by $x\otimes y\mapsto (g\mapsto g(x)y)$). 
\end{itemize}
\pend \end{defn}
When $G$ is a discrete group, $F(G_+,B) \simeq \prod_G B$ and the multiplication map $B\otimes_A B\to B$ becomes identified with evaluation at $e\in G, \prod_G B\to B$. In particular, this clearly has a section as $\prod_G B$-modules, and we obtain:
\begin{prop}[{\cite[Lemma 9.1.2.]{rognes}}]\label{ex:galoissep}
Let $G$ be a discrete finite group, and $A\to B$ a $G$-Galois extension in $\CAlg(\C)$. In this case, $B$ is a separable $A$-algebra. 
\end{prop}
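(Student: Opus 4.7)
The plan is to argue essentially by unwinding the two defining properties of a Galois extension. The key idea is that the second condition identifies $B\otimes_A B$ with the very concrete algebra $F(G_+,B)$, which when $G$ is discrete and finite becomes a finite product, and the multiplication map corresponds to a projection which is obviously split.

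More precisely, first I would observe that since $G$ is discrete and finite, $G_+$ is a finite coproduct of copies of $S^0$, so $F(G_+,B)\simeq \prod_{g\in G}B$. In a stable (or even semiadditive) setting this finite product agrees with the finite coproduct $\bigoplus_{g\in G}B$, and the commutative algebra structure is the pointwise one. Under the Galois equivalence $B\otimes_A B\xrightarrow{\simeq} \prod_{g\in G}B$, the multiplication map $\mu:B\otimes_A B\to B$ is identified with the projection $\mathrm{pr}_e:\prod_{g\in G}B\to B$ onto the factor indexed by the neutral element $e\in G$ (this is built into the definition, since the adjoint of the action map sends $x\otimes y$ to $g\mapsto g(x)y$, whose value at $e$ is $xy$).

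Second, I would construct the separability idempotent explicitly. The product decomposition $\prod_{g\in G}B$ is a localization at the idempotent $\epsilon_e\in \prod_{g\in G}B$ which equals $1$ in the $e$-slot and $0$ elsewhere (this is an instance of \Cref{prop : retractloc} and \Cref{lm : unitsum} applied in $\mathrm{Mod}_A$). In particular, the $e$-th inclusion $\iota_e:B\to \prod_{g\in G}B$ is a map of $\prod_{g\in G}B$-modules (where the target acts on the source through $\mathrm{pr}_e$), and $\mathrm{pr}_e\circ\iota_e\simeq \mathrm{id}_B$. Transporting $\iota_e$ back along the Galois equivalence gives a $B\otimes_A B$-linear section of $\mu$, which is exactly a separability idempotent.

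There is no real obstacle here, as the argument is essentially a verification that the second Galois axiom, restricted to the finite discrete case, already provides the separability idempotent for free; one just has to be a bit careful to note that the splitting of $\mathrm{pr}_e$ is not merely an underlying splitting in $\C$ but automatically $\prod_{g\in G}B$-linear, so that after transport it becomes $B\otimes_A B$-linear. In light of \Cref{cor:sectionunderlying} one could alternatively appeal to separability of $\prod_{g\in G}B$ as a product of copies of $B$ (which is trivially separable via $\mathrm{id}$), but the direct construction above is cleaner and makes the separability idempotent visible.
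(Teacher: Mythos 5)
Your proof is correct and follows essentially the same route as the paper, which identifies $B\otimes_A B$ with $\prod_G B$ via the second Galois axiom, observes that the multiplication becomes evaluation/projection at $e\in G$, and notes that this visibly splits $\prod_G B$-linearly. The extra details you supply (the idempotent $\epsilon_e$ and the transport of the inclusion $\iota_e$ back along the Galois equivalence) are just a fuller spelling-out of the same argument.
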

\begin{rmk}
    In the case of a Galois extension, a proof of \Cref{thm : commlift} was already sketched by Mathew in \cite[Theorem 6.25]{Akhilgalois}. 
\pend \end{rmk}
\begin{ex}
    Any Galois extension of fields $K\to L$ is $\mathrm{Gal}(L/K)$-Galois. More generally, Galois extensions of commutative rings are Galois, this follows from \cite[Proposition 2.3.4.(c)]{rognes}.
    \pend \end{ex}
\begin{ex}
    Profinite Galois extension in the sense of \cite[Definition 8.1.1]{rognes} are in general only ind-separable, cf. \Cref{section:indsep}. 
\pend \end{ex}
We conclude this subsection with examples of Galois extensions which are \emph{absolutely} separable in the following sense:
\begin{defn}\label{defn:abssep}
Suppose $\C$ admits geometric realizations compatible with the tensor product. 
Let $A\in\CAlg(\C)$, and let $B\in \Alg(\Mod_A)$ be an $A$-algebra. We say $B$ is \emph{absolutely separable} if it is separable over $A$, and furthermore if it has a separability idempotent which factors as $\one\to B\otimes B\op\to B\otimes_A B\op$. 
\pend \end{defn}
\begin{rmk}
    We do not require the lift $\one\to B\otimes B\op$ to be an idempotent. 
\pend \end{rmk}
This condition implies that the results of \Cref{section:descent} apply: we will see specifically that it implies descent in topological Hochschild homology (\Cref{cor:HHdescent}).
\begin{ex}
If $A\to B$ is a $G$-Galois extension between connective ring spectra, then $B\otimes B\to B\otimes_A B$ is surjective on $\pi_0$, and so $A\to B$ is absolutely separable. 
\pend \end{ex}
\begin{ex}\label{ex:Adamssummand}
Let $L^\wedge_p$ be the Adams summand of $KU^\wedge_p$. The canonical map $L^\wedge_p\to KU^\wedge_p$ is an $\mathbb F_p^\times$-Galois extension in the \category{} of $p$-complete spectra \cite[5.5.2]{rognes}, and the idempotent comes from a splitting of $KU_{(p)}$ \cite[5.5.4]{rognes} so it already lives in $KU^\wedge_p\otimes KU^\wedge_p$. 
\pend \end{ex}
\begin{ex}\label{ex:KOKU}

The $C_2$-Galois extension $\KO\to \KU$ \cite[Proposition 5.3.1]{rognes} also witnesses $\KU$ as an absolutely separable extension of $\KO$. Indeed, consider the fiber sequence of $\KO$-modules $\KO\to \KU\to \Sigma^2\KO$. If we smash it with $\KU$ over $\KO$ we get the following commutative diagram \cite[Diagram (5.3.3)]{rognes}: 
$$\xymatrix{\KU\ar[r] \ar[d]^{=} & \KU\otimes_{\KO}\KU \ar[r]\ar[d] & \Sigma^2 \KU \ar[d]^\beta\\
\KU \ar[r] & \prod_{C_2}\KU \ar[r] & \KU}$$

If we smash it with $\KU$ over the sphere, we get the following, still commutative comparison diagram : 
$$\xymatrix{\KU\otimes \KO \ar[r]\ar[d] & \KU\otimes \KU \ar[r]\ar[d] & \KU\otimes\Sigma^2 \KO\ar[d] \\ \KU\ar[r] \ar[d]^{=} & \KU\otimes_{\KO}\KU \ar[r]\ar[d] & \Sigma^2 \KU \ar[d]^\beta\\
\KU \ar[r] & \prod_{C_2}\KU \ar[r] & \KU}$$

We wish to know if the composite $\KU\otimes \KU\to \prod_{C_2}\KU$ hits $(1,0)$ on $\pi_0$. For this, note that $\KU\otimes \KO\to \KU$ has a splitting given by the unit of $\KO$, and so does $\KU\otimes\Sigma^2\KO\to \Sigma^2 \KU$. 

Furthermore, note that $\KU\simeq \KO\otimes \Sigma^{-2}\mathbb CP^2$ and we can write the top rightmost vertical morphism as $\KO\otimes (\KU\otimes \Sigma^{-2}\mathbb CP^2\to \KU\otimes \Sigma^{-2}S^4)$. Furthermore, although $\mathbb CP^2\to S^4$ does not have a splitting, $\KU\otimes \mathbb CP^2\to \KU\otimes S^4$ does (indeed, the cofiber is $\KU\otimes S^4\to \KU\otimes S^3$, and this is a $\KU$-module map which must therefore be $0$ for degree reasons). 

It follows that there is a map $\Sigma^2\KU\to \KU\otimes \KU$ making the following diagram commute: 
$$\xymatrix{& \Sigma^2 \KU \ar[d] \ar[ld] \\
\KU\otimes \KU\ar[r] & \KU\otimes\Sigma^2\KO}$$
Now, in $\pi_0(\KU\otimes \KU)$, consider the image of $\beta^{-1}$ by this map. If you go down-down-right in the big diagram, it's the same as right-down-down, which gives you $1\in \pi_0(\KU)$. So the image in $\prod_{C_2}\KU$ must have been of the form $(n+1,n) = (1,0) + (n,n)$. 

Consider now the image of $n$ under $\KU\to \KU\otimes \KO\to \KU\otimes \KU$ on $\pi_0$. Going down-down gives you exactly $(n,n)$, and so $(1,0)$ is in the image of $\pi_0$, this is what we wanted to prove. 
\pend \end{ex}
\subsection{Spans and equivariant stable homotopy theory}
In this subsection and the next, we will deal with span categories. For an account, see \cite{barwick,mackeyII} \footnote{Where the \category{} of spans is called the ``effective Burnside category''}. For the proofs, it will also be convenient to use the $(\infty,2)$-categories of correspondences that extend them \cite{stefanich},\cite{macpherson}. 

One reason to be interested in span categories is their relation to equivariant stable homotopy theory: the category of genuine $G$-spectra, $\Sp_G$, can be described as the category of spectral Mackey functors, i.e. direct sum preserving functors $\Span(\Fin_G)\op\to \Sp$. 

In \cite{restrictionétale}, Balmer, Dell'Ambrogio and Sanders describe, for a subgroup $H\leq G$, the category $\Sp_H$ as the category of modules over some algebra $A^G_H \in\CAlg(\Sp_G)$ which they prove is separable. In particular, all their work at the level of homotopy categories works at the level of stable $\infty$-categories by \Cref{section:homotopycat}. 

Note that $A_H^G$ is the image under the symmetric monoidal Yoneda embedding $\Span(\Fin_G)\to \Sp_G$ of an algebra \emph{in} $\Span(\Fin_G)$. The object of this subsection is to prove that this algebra is already separable there. Note that $\Span(\Fin_G)$ is not additive, so we cannot apply \cite[Theorm 1.1]{restrictionétale} directly and work in $\ho(\Span(\Fin_G))$, where the result is simpler to prove. 

More generally, we prove
\begin{thm}[*]\label{thm:spancoalg}
Let $C$ be a small category with finite limits, and $X\in C$. We view $X$ as a commutative algebra in $C\op$, and thus, $X^\vee$ as a commutative algebra in $\Span(C)$ under the canonical symmetric monoidal functor $C\op\to \Span(C)$.

If the evaluation map from the cotensoring $X^{S^1}\to X$ is an equivalence, then $X^\vee$ is a separable commutative algebra in $\Span(C)$. 
\end{thm}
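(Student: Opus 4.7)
The plan is to exhibit an explicit separability section coming directly from the diagonal, working in the symmetric monoidal $(\infty,2)$-categorical refinement $\Corr(C)$ of $\Span(C)$. In this setting, the morphism $\Delta : X \to X \times X$ of $C$ produces an adjoint pair $\Delta_! \dashv \Delta^*$, where $\Delta^*$ is the backward span $X \times X \xleftarrow{\Delta} X \xrightarrow{=} X$---precisely the multiplication $\mu$ of $X^\vee$---and $\sigma := \Delta_!$ is the forward span $X \xleftarrow{=} X \xrightarrow{\Delta} X \times X$. The claim is that $\sigma$ is a bimodule section of $\mu$.

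The identity $\mu \circ \sigma \simeq \id_{X^\vee}$ is where the hypothesis enters. Span composition is pullback, so $\mu \circ \sigma$ is represented by the span $X \leftarrow X \times_{X \times X, \Delta, \Delta} X \to X$, whose middle is by definition $X^{S^1}$. Under the adjunction $\sigma \dashv \mu$, the unit $\id_{X^\vee} \to \mu \circ \sigma$ in $\Corr(C)$ unwinds to the canonical map $X \to X^{S^1}$ (inclusion of constant loops); our hypothesis is exactly the statement that this map is an equivalence.

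For the bimodule property, by commutativity of $X^\vee$ it suffices to check the Frobenius identity $\sigma \circ \mu \simeq (\mu \otimes \id) \circ (\id \otimes \sigma)$. The left-hand side computes directly to the span $X^2 \xleftarrow{\Delta} X \xrightarrow{\Delta} X^2$. The right-hand side reduces, by span composition, to the pullback of $(\id, \Delta)$ and $(\Delta, \id)$ over $X^3$; a direct diagram chase using only the universal property of products shows that the square
\[\begin{tikzcd}
X & {X \times X} \\
{X \times X} & {X \times X \times X}
\arrow["\Delta", from=1-1, to=1-2]
\arrow["\Delta"', from=1-1, to=2-1]
\arrow["{(\Delta,\id)}", from=1-2, to=2-2]
\arrow["{(\id,\Delta)}"', from=2-1, to=2-2]
\end{tikzcd}\]
is cartesian in $C$, with both projections equal to $\Delta$. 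Hence the right-hand side again computes to $X^2 \xleftarrow{\Delta} X \xrightarrow{\Delta} X^2$; this matching is the relevant Beck-Chevalley identity in $\Corr(C)$.

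The main obstacle is foundational, in line with the (*): the symmetric monoidal $(\infty,2)$-category $\Corr(C)$, the adjunction $\Delta_! \dashv \Delta^*$, and the Beck-Chevalley property have only been fully worked out at the level of $2$-categories. At the level of the homotopy $2$-category everything above is classical, so combined with \Cref{prop:homsepimpliessep}, the conclusion holds for all intended applications: after postcomposing with any symmetric monoidal functor from $\Span(C)$ into an additive \category, the resulting algebra is homotopy separable, hence separable.
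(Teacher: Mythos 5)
Your proof is correct and follows essentially the same route as the paper's: both work in the $(\infty,2)$-category $\Corr(C)$, take the adjoint of $\mu=\Delta^*$ given by the forward span on $\Delta$ as the candidate section, identify the composite with the span $X\leftarrow X^{S^1}\to X$ (an equivalence by hypothesis), and reduce the bimodule-linearity to the cartesianness of the coassociativity square, i.e.\ the same adjointability/Beck--Chevalley condition, with the same foundational caveat marked by (*).
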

\begin{rmk}
This applies in particular if $C$ is a $1$-category such as $\Fin_G$. 
\pend \end{rmk}
In the course of this proof, we use the following:
\begin{conj}\label{conj:rightlaxlinear}
Let $\mathfrak B$ be a symmetric monoidal $(\infty,2)$-category, $A\in\Alg(\iota_1\mathfrak B)$ an algebra in (the underlying $(\infty,1)$-category of) $\mathfrak B$, $M,N$ $A$-modules in $\mathfrak B$, and $f: M\to N$ an $A$-module map. If $f$ admits a right adjoint $f^R$, and the square: 
\[\begin{tikzcd}
	{A\otimes M} & {A\otimes N} \\
	M & N
	\arrow[from=1-1, to=2-1]
	\arrow["f", from=2-1, to=2-2]
	\arrow[from=1-2, to=2-2]
	\arrow["{A\otimes f}", from=1-1, to=1-2]
\end{tikzcd}\]
is horizontally right-adjointable, then $f^R$ is canonically $A$-linear; and more precisely $f$ admits a right adjoint in $\Mod_A(\mathfrak B)$.
\end{conj}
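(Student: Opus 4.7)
The plan is to produce the $A$-linear structure on $f^R$ by mate calculus, and then bootstrap it into a genuine adjunction inside $\Mod_A(\mathfrak B)$. The horizontal right-adjointability hypothesis is, by definition, precisely the input needed to mate the square for $f$ into a square
\[\begin{tikzcd}
A\otimes N \arrow[r,"A\otimes f^R"] \arrow[d,"a_N"'] & A\otimes M \arrow[d,"a_M"] \\
N \arrow[r,"f^R"'] & M
\end{tikzcd}\]
which commutes up to canonical invertible $2$-cell. This is the first piece of data needed to give $f^R$ a structure of $A$-module map.

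To upgrade this single compatibility to the full coherent $A$-linear structure, the plan is to describe the space of $A$-module maps $N \to M$ in $\mathfrak B$ as the (lax) limit over $\Delta$ of the cosimplicial diagram $[n]\mapsto \map_{\mathfrak B}(A^{\otimes n}\otimes N, M)$ (where the cosimplicial structure uses the multiplication of $A$ and the action on $N$, $M$). Then, one applies the mate correspondence levelwise: the horizontal right-adjointability of the original square propagates, by iterated application of the compatibility of mates with composition, to a compatible system of $2$-cells at each level of this cosimplicial object, which together assemble into an enhancement of $f^R$ as an $A$-module map. Equivalently, one can try to invoke a $(\infty,2)$-categorical Barr--Beck, asserting that $\Mod_A(\mathfrak B) \to \mathfrak B$ is ``monadic'' in an appropriate $(\infty,2)$-sense, and apply the general principle that right adjoints lift along a monadic right adjoint precisely when the appropriate Beck--Chevalley/mate condition holds.

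Once $f^R$ is known to be an $A$-module map, the second assertion (that $f\dashv f^R$ \emph{in} $\Mod_A(\mathfrak B)$) should essentially be automatic: the unit and counit of the adjunction in $\mathfrak B$ can be checked to be $A$-linear $2$-cells by the same mate argument applied one dimension up, which is to say that the triangle identities together with the horizontal-right-adjointability datum force the unit and counit to be $A$-linear with no further choice. An efficient way to see this is to note that the forgetful functor $\Mod_A(\mathfrak B)\to\mathfrak B$ should be locally conservative on $2$-cells after tensoring with $A$, so that the triangle identities witnessed in $\mathfrak B$ lift uniquely to triangle identities in $\Mod_A(\mathfrak B)$.

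The main obstacle is, as the author flags, foundational: at the strict $2$-categorical level, the statement is entirely classical (it is the standard ``adjoints in $\mathrm{Mod}$'' fact, easily proved by mates), but the $(\infty,2)$-categorical version requires a coherent theory of lax limits, mate calculus, and a $(\infty,2)$-Barr--Beck that does not yet seem to be fully recorded in the literature. The conceptual content of the proof is already present at the $2$-categorical level; the real work is purely in setting up the coherences.
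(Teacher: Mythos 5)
You should first note that the paper does not prove this statement at all: it is stated as a \emph{conjecture}, and every downstream result that relies on it is flagged with a (*) precisely because the author regards the required $(\infty,2)$-categorical technology as not yet available in the literature (it is only cited as known for $\mathfrak B=\Cat$, via \cite[Remark 7.3.2.9]{HA} and \cite{HHLN}). So there is no proof in the paper to compare against; the question is whether your sketch actually discharges the conjecture, and it does not.

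Your opening move is correct and is the intended one: horizontal right-adjointability is by definition the invertibility of the mate $2$-cell, and since $A\otimes-$ is a functor of $(\infty,2)$-categories it preserves adjunctions, so $(A\otimes f)^R\simeq A\otimes f^R$ and the mate square gives the first layer of $A$-linearity on $f^R$. But everything after that defers exactly the content of the conjecture. Concretely: (i) the description of the space of $A$-module maps $N\to M$ as a lax limit over $\Delta$ of $\map_{\mathfrak B}(A^{\otimes n}\otimes N,M)$ is itself something that needs to be established for a general symmetric monoidal $(\infty,2)$-category, and is not a formal consequence of the $\infty$-operadic definition of $\Mod_A$; (ii) ``iterated application of the compatibility of mates with composition'' producing a \emph{compatible system} of $2$-cells over all of $\Delta$ is precisely a coherent calculus of mates, which as the author remarks is only recorded for $\Cat$ in \cite{HHLN} and not for, e.g., $(\infty,2)$-categories of correspondences, which is the case actually needed in the paper; (iii) an $(\infty,2)$-categorical Barr--Beck theorem with the accompanying Beck--Chevalley criterion for lifting right adjoints is likewise not available; and (iv) the claim that the forgetful functor $\Mod_A(\mathfrak B)\to\mathfrak B$ is ``locally conservative on $2$-cells after tensoring with $A$'', so that the triangle identities lift automatically, is asserted without justification and is not obviously true as stated. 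In short, your proposal correctly identifies the classical $2$-categorical argument and correctly identifies where the difficulty lies, but it does not supply the coherence data whose absence is the reason the statement is a conjecture rather than a theorem; as written it is a plausible roadmap, not a proof.
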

We note that in the case where $\mathfrak B$ is the $(\infty,2)$-category of \categories{}, this conjecture is essentially proved in \cite[Remark 7.3.2.9]{HA}. 
\begin{rmk}
    This conjecture should also have a more general form, similarly to the calculus of mates in \cite{HHLN}. Namely, in the above, if we only assume that $f$ admits a right adjoint $f^R$, then this right adjoint should be canonically \emph{lax} $A$-linear, that is, come with suitably compatible and coherent maps ``$a\otimes f^R(m)\to f^R(a\otimes m)$'', and it should then be a property (namely, adjointability) that these maps are equivalences. Conversely, the left adjoint of a lax $A$-linear morphism should always be oplax $A$-linear, and this should be a perfect correspondence between oplax $A$-linear left adjoints, and lax $A$-linear right adjoints. In the case $\mathfrak B= \Cat$, this can be deduced from \cite{HHLN}, but below we need it for $\mathfrak B$ being an $(\infty,2)$-category of correspondences.  
\pend \end{rmk}
As explained in the introduction to this section, this conjecture is well-known (and classical) in the case of $2$-categories, so the arguments that we give apply unconditionally to the homotopy category $\ho(\Span(C))$, and thus to any additive \category{} $\C$ with a symmetric monoidal map $\Span(C)\to \C$. 

\begin{proof}
We use the $(\infty,2)$-category of correspondences, $\Corr(C)$, see \cite{stefanich}, \cite{macpherson}. In particular, its underlying \category{} is $\Span(C)$. 

We note that the multiplication map of $X$ is given by the span $X\times X \xleftarrow{\Delta} X \xrightarrow{=}X$. We note that, as a morphism in $\Corr(C)$, it admits a left adjoint, cf. \cite{stefanich}. Because the multiplication map is $X\times X$-linear, it is a \emph{property} that this left adjoint is actually $X\times X$-linear, namely that the square from be left adjointable (by the dual of \Cref{conj:rightlaxlinear}). 

Let us assume for now that we have checked this - the composite is then the composite of spans $$X\xleftarrow{=}X \xrightarrow{\Delta} X\times X \xleftarrow{\Delta} X \xrightarrow{=}X$$
which is easily seen to be given by the span $X\xleftarrow{ev} X^{S^1}\xrightarrow{ev}X$. Our assumption guarantees that this is an equivalence, hence an equivalence of $X\times X$-modules, and so up to composing by its inverse, we find that $X$ is separable as an algebra. 

Let us now check the property : we need to check that the oplax-$X$-linear structure maps are strict, we do it for the left-$X$-linear one, and the right-$X$ linear one follows by symmetry. The left $X$-linearity of the multiplication map is given by the following commutative diagram in $\Corr(C)$: 
\[\begin{tikzcd}
	{X\times X\times X} & {X\times X} \\
	{X\times X} & X
	\arrow["\mu", from=2-1, to=2-2]
	\arrow["{\mu\times X}"', from=1-1, to=2-1]
	\arrow["\mu"', from=1-2, to=2-2]
	\arrow["{X\times \mu}", from=1-1, to=1-2]
\end{tikzcd}\]
In this diagram, all maps are in $C\op$, so this is just the image under $C\op\to \Corr(C)$ of the canonical coassociativity diagram for $X$, and this canonical coassociativity diagram is a pullback square: 
\[\begin{tikzcd}
	{X\times X\times X} & {X\times X} \\
	{X\times X} & X
	\arrow["\Delta"', from=2-2, to=2-1]
	\arrow["{\Delta\times X}", from=2-1, to=1-1]
	\arrow["\Delta", from=2-2, to=1-2]
	\arrow["{X\times \Delta}"', from=1-2, to=1-1]
\end{tikzcd}\]
In particular, it is adjointable in $\Corr(C)$ e.g. by \cite{stefanich},\cite{macpherson}, so we are done. 
\end{proof}

\subsection{Spans, ambidexterity and Thom spectra}
In this subsection, we study a situation similar to the one of the previous subsection, except that we start with a monoid $G$ in $C$, and view it as a monoid in $\Span(C)$. 

The result that we prove is:
\begin{thm}[*]\label{thm:spansepgroup}
Let $f: \Span(C)\to \C$ be a symmetric monoidal functor, and suppose it sends the span $\pt \leftarrow G\to \pt$ to an equivalence. Then $f(G)$ is a separable algebra in $\C$. 
\end{thm}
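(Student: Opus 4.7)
The plan is to mimic the strategy of the proof of \Cref{thm:spancoalg}, producing an $f(G)$-bimodule section of $f(\mu)$ via the right adjoint of $\mu$ in the ambient $(\infty,2)$-category $\Corr(C)$.

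First I would identify the multiplication of $G$ in $\Span(C)$ with the span $\mu := (G\times G\xleftarrow{\id}G\times G\xrightarrow{m}G)$, and consider its right adjoint $\mu^R := (G\xleftarrow{m}G\times G\xrightarrow{\id}G\times G): G\to G\otimes G$ in $\Corr(C)$. The candidate bimodule section will be $f(\mu^R)$, corrected by the inverse of the equivalence assumed on $f(\pt\leftarrow G\to \pt)$.

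To show that this candidate is a bimodule map, I would show that $\mu^R$ is itself a $G$-bimodule morphism in $\Span(C)$. As in \Cref{thm:spancoalg}, since $\mu$ is $G$-bilinear, by (the dual of) \Cref{conj:rightlaxlinear} it suffices to verify that the associativity square for $m$ in $C$ is a pullback square, so that the bilinearity squares for $\mu$ are adjointable in $\Corr(C)$. This pullback property is essentially the ``group-like'' condition: it amounts to the shear map $(g,h)\mapsto (gh,h)$ being an isomorphism in $C$. Granted this, the symmetric monoidal functor $f$ transports $\mu^R$ to an $f(G)$-bimodule morphism $f(\mu^R)$ in $\C$.

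I would then compute the composite $\mu\circ\mu^R$ directly via the pullback formula, obtaining the span $(G\xleftarrow{m}G\times G\xrightarrow{m}G)$. Using the shear isomorphism once more, this is isomorphic to the span $(G\xleftarrow{p_1}G\times G\xrightarrow{p_1}G)$, which is exactly $\id_G\otimes N$ with $N = (\pt\xleftarrow{}G\xrightarrow{}\pt)$. Applying $f$ gives $f(\mu)\circ f(\mu^R)\simeq \id_{f(G)}\otimes f(N)$, an equivalence of $f(G)$-bimodules because $f(N)$ is invertible by hypothesis (and any endomorphism of the unit is automatically central, hence bilinear). Precomposing $f(\mu^R)$ with the inverse of this equivalence yields the desired bimodule section of $f(\mu)$, proving separability.

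The principal obstacle is the $(\infty,2)$-categorical input: the bimodule linearity of $\mu^R$ rests on \Cref{conj:rightlaxlinear}, which is only established at the $2$-categorical level. As per the blanket warning for this section, the argument is thus unconditional only at the homotopy-category level, and \Cref{prop:homsepimpliessep} then upgrades homotopy separability to genuine separability for any additive target.
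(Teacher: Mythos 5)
Your proposal is correct and follows essentially the same route as the paper: take the right adjoint of $\mu$ in $\Corr(C)$, verify its $G\otimes G\op$-linearity via \Cref{conj:rightlaxlinear} and the pullback/adjointability of the associativity square, identify the composite $\mu\circ\mu^R$ with $\id_G\otimes(\pt\leftarrow G\to\pt)$ using the shear map, and invert $f(\pt\leftarrow G\to\pt)$ to obtain the separability section. The caveats you flag (the $(\infty,2)$-categorical conjecture, and the fallback to the homotopy-category level plus \Cref{prop:homsepimpliessep}) match the paper's own warning for this section.
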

\begin{ex}\label{ex:sepchar0}
Consider the case where $C=\Fin$, the category of finite sets. In this case, $\Span(\Fin)$ is the initial semiadditively symmetric monoidal \category. In particular, for any semiadditively symmetric monoidal $\C$, there is an essentially unique symmetric monoidal, semiadditive functor $\Span(\Fin)\to \C$. It sends a finite set $X$ to $\bigoplus_X \one$. 

In this case, a $G$ as in the theorem is simply a finite group. The theorem is saying that if its order $|G|$ is invertible in $\C$, then $\one[G]$ is separable. This is typical from classical algebra: the group algebra $\mathbb Q[G]$ is always separable, and  more generally, for a field $k$, $k[G]$ is separable over $k$ if and only if $|G|\in k^\times$.
\pend \end{ex}
A generalization of the previous example, and our motivating example for this section, comes from the theory of higher semi-additivity, cf. \cite{yonatan,AmbiChro}. This is a context where one can sum not only over finite sets, but also over finite groupoids, or more generally, $m$-finite spaces, i.e. spaces $X$ with finitely many components, and with, at every point, $\pi_k(X)=0$ for $k>m$ (or possibly only the $m$-finite spaces, all of whose homotopy groups are $p$-groups, for some fixed prime $p$). We refer to the above references for a more detailed account of this theory. We let $\Ss_m^{(p)}$ denote the \category{} of $m$-finite spaces all of whose homotopy groups are $p$-groups.

In that case, when $\C$ is ($p$-typically) $m$-semiadditive \cite[Definition 3.1.1]{AmbiHeight}, there is a unique symmetric monoidal functor $\Span(\Ss_m^{(p)})\to \C$ which preserves $p$-typical $m$-finite colimits, which we denote by $\one[-]$. The span $\pt\leftarrow G\to \pt$ is sent to the \emph{cardinality} $|G|_\C$ of $G$, as a morphism $\one\to \one$. The property that this be an equivalence is related to the so-called \emph{semi-additive height} of $\C$.  For example, ``height $0$'' corresponds to the rational case, where all these cardinalities are invertible. Higher heights are also related to chromatic height - we refer to \cite{AmbiHeight} for more details. 

\begin{proof}
The proof again makes use of the higher categorical structure of $\Corr(C)$. Just as before, we observe that $\mu : G\times G\to G$ has a right adjoint, and by \cref{conj:rightlaxlinear}, it is simply a property for it to be $G\times G\op$-linear, which we can check in the exact same way as in the proof of \Cref{thm:spancoalg}. The key point is that the associativity diagram for $G$ in $C$ (which is also the ``left $G$-linearity'' diagram) is a pullback diagram in $C$: 
\[\begin{tikzcd}
	{G\times G\times G} & {G\times G} \\
	{G\times G} & G
	\arrow["{G\times \mu}"', from=1-1, to=2-1]
	\arrow["{\mu\times G}", from=1-1, to=1-2]
	\arrow["\mu"', from=2-1, to=2-2]
	\arrow["\mu", from=1-2, to=2-2]
\end{tikzcd}\]
and hence, it is adjointable in $\Corr(C)$. This is exactly what we need for the adjoint of $\mu$ to be $G\times G\op$-linear.

Now, this gives us a $G\times G\op$-linear morphism $G\to G\times G\op$ in $\Span(C)$. The composition $G\to G\times G\op\to G$ is given by the span $G\xleftarrow{\mu}G\times G\xrightarrow{\mu}G$, and as a morphism in $\Span(C)$, this is equivalent to $G\xleftarrow{pr_1}G\times G\xrightarrow{pr_1}G$ because of the shear map $G\times G\to G\times G$. We can rewrite the latter span as $(\pt\leftarrow G\to \pt)\times G$. The claim now follows in the same way: up to inverting the span $\pt\leftarrow G\to \pt$, we have a separability idempotent.  
\end{proof}
\begin{ex}
    In \cite[Definition 4.7]{CycloChro}, the authors introduce, for any stable $\infty$-semiadditive presentably symmetric monoidal \category{} $\C$ a height $n$ $p^r$th-cyclotomic extension $\one[\omega^{(n)}_{p^r}]$, which is a higher height analogue of the usual cyclotomic extensions. 

    This cyclotomic extension is defined as the splitting of an idempotent on $\one[B^nC_{p^r}]$ and the definition of ``height $n$'' guarantees that $|B^nC_{p^r}|$ is invertible in $\C$, in other words, that the previous theorem applies. So $\one[B^nC_{p^r}]$ is separable, and hence so is $\one[\omega^{(n)}_{p^r}]$. This shows that, even if it is not always Galois (cf. \cite[Proposition 3.9]{yuan}), it is separable, which is a notion not too far from ``étale'' in the commutative setting. 
\pend \end{ex}
For a group $G$, the group algebra $\one[G]$ can be seen as the colimit of the constant diagram with value $\one$, indexed by $G$. We saw in \Cref{ex:sepchar0} that when $|G|$ is invertible, this algebra is separable - we now describe a slight extension of this result, namely to Thom objects. First, we recall the following construction: 
\begin{cons}
Let $X$ be a space, and $f:X\to\Pic(\C)$ a map, where $\Pic(\C)\subset \C^\simeq$ is the maximal subgroupoid spanned by the invertible objects in $\C$. One may take the colimit of the composite $X\to \Pic(\C)\to \C$, if it exists. 

If $\C$ is, say, cocomplete, this corresponds to the unique colimit-preserving functor $\Ss_{/\Pic(\C)}\to \C$ which restricts to the canonical inclusion along the Yoneda embedding $\Pic(\C)\to \Ss_{/\Pic(\C)}\to \C$.As a consequence, this functor $\Ss_{/\Pic(\C)}\to \C$ is symmetric monoidal, so it sends groups $G$ equipped with a group map $G\to \Pic(\C)$ to an algebra object in $\C$. 
\pend \end{cons}
\begin{prop}[*]
    Assume $\C$ is $m$-semiadditive for some $0\leq m\leq\infty$. The above construction extends uniquely to an $m$-semiadditive, symmetric monoidal functor $\Span((\Ss_m)_{/\Pic(\C)})\to \C$. 

    In particular, if $f:G\to \Pic(\C)$ is a group map from an $m$-finite group $G$, where $|G|_\C$ is invertible in $\C$, then its Thom object $\colim_G f$ is a separable algebra in $\C$. 
\end{prop}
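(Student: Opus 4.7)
The plan is to construct the desired $m$-semiadditive symmetric monoidal functor out of $\Span((\Ss_m)_{/\Pic(\C)})$, and then reduce the separability statement to \Cref{thm:spansepgroup}.

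First I would equip $(\Ss_m)_{/\Pic(\C)}$ with the Day convolution symmetric monoidal structure induced by the $\mathbb E_\infty$-group structure on $\Pic(\C)$: the tensor of $(X,f)$ and $(Y,g)$ is $(X\times Y, f\boxplus g)$, with unit the basepoint $(\pt,e)$. The colimit functor $F_0:(\Ss_m)_{/\Pic(\C)}\to \C$ defined by $(X,f)\mapsto \colim_X f$ is then symmetric monoidal --- this is essentially the statement that tensor products in $\C$ commute with colimits, packaged via the universal property of Day convolution --- and it preserves $m$-finite colimits. Next I would invoke the universal property of $\Span((\Ss_m)_{/\Pic(\C)})$ as a free $m$-semiadditive symmetric monoidal \category{} (this is the content behind the (*) warning, building on \cite{yonatan}): since $\C$ is $m$-semiadditive and $F_0$ preserves $m$-finite colimits, $F_0$ extends uniquely to an $m$-semiadditive symmetric monoidal functor $\widetilde F:\Span((\Ss_m)_{/\Pic(\C)})\to \C$, settling the first sentence of the proposition. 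A direct computation shows that $\widetilde F$ sends any span of the form $(\pt,e)\leftarrow (X,0)\to (\pt,e)$ --- where $0$ denotes the composite $X\to\pt\xrightarrow{e}\Pic(\C)$ --- to the cardinality $|X|_\C:\one\to\one$.

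For the second part, a group map $f:G\to\Pic(\C)$ makes $(G,f)$ into a monoid in $(\Ss_m)_{/\Pic(\C)}$ (the multiplication $G\times G\to G$ lies over $\Pic(\C)$ precisely because $f$ is a group homomorphism), and $\widetilde F(G,f)=\colim_Gf$. I would then replay the argument of \Cref{thm:spansepgroup}: the multiplication $\mu$ admits a right adjoint $\mu^R$ in $\Corr((\Ss_m)_{/\Pic(\C)})$ which is $G\otimes G\op$-linear (using that the associativity square is a pullback, again by $f$ being a group map, and invoking \Cref{conj:rightlaxlinear}), and the composite $\mu\circ\mu^R:G\to G$ is identified via the shear equivalence with the tensor product of $\id_{(G,f)}$ with the span $(\pt,e)\leftarrow (G,0)\to (\pt,e)$. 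Concretely, one checks that $f\boxplus f$ pulls back along the shear $(g_1,g_2)\mapsto (g_1g_2,g_2)$ to $f\circ\mathrm{pr}_1$, which is exactly the structure map of $(G,f)\otimes (G,0)$. Applying $\widetilde F$ then gives $|G|_\C\cdot \id_{\mathrm{Th}(f)}$, and inverting $|G|_\C$ exhibits $|G|_\C^{-1}\cdot\widetilde F(\mu^R)$ as the desired bimodule section of the multiplication.

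The main obstacle is the construction of $\widetilde F$ itself: the universal property of $\Span$ as a free $m$-semiadditive symmetric monoidal \category{}, especially for non-cartesian Day-convolution structures, has not been formalized in full generality in the literature, which is why this section carries the (*) warning. As noted at the start of the section, however, the $2$-categorical analog suffices for applications to additive symmetric monoidal \categories, so the separability conclusion for $\colim_Gf$ is unconditional whenever this is the regime we care about.
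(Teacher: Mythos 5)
Your proof is correct and follows essentially the same route as the paper: both rest on Harpaz's universal property of $\Span((\Ss_m)_{/X})$ for a symmetric monoidal $\infty$-groupoid $X$ (your detour through the Day convolution on $(\Ss_m)_{/\Pic(\C)}$ is just a repackaging of extending the inclusion $\Pic(\C)\subset\C$), and both reduce separability to \Cref{thm:spansepgroup} applied to the monoid $(G,f)$ in $(\Ss_m)_{/\Pic(\C)}$ together with the observation that the span $(\pt,e)\leftarrow(G,e)\to(\pt,e)$ with trivial structure map is sent to $|G|_\C$. One cosmetic remark: the shear that turns both legs $\mu$ into $\mathrm{pr}_1$ and carries $f\circ\mu$ to $f\circ\mathrm{pr}_1$ is $(g_1,g_2)\mapsto(g_1g_2^{-1},g_2)$ rather than $(g_1,g_2)\mapsto(g_1g_2,g_2)$, but this changes nothing in the argument.
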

\begin{proof}
    The ``in particular'' part follows from \Cref{thm:spansepgroup}, together with the observation that the span $(\pt,\one)\leftarrow (G,\one)\to (\pt,\one)$ is indeed sent to $|G|_\C$ in $\C$. 

    Now, for the first part, namely the existence of the map, we use \cite[Theorem 5.28]{yonatan} in the special case where $\mathcal C = \Pic(\C)$. We note that the canonical symmetric monoidal structure on $\Span((\Ss_m)_{/X})$, when $X$ is a symmetric monoidal $\infty$-groupoid, is the one induced by the universal property of \cite[Theoem 5.28]{yonatan} because the natural map $X\to\Span((\Ss_m)_{/X})$ is symmetric monoidal for this symmetric monoidal structure. 

    The cited theorem thus implies that a symmetric monoidal map $X\to \C$ (here the inclusion $\Pic(\C)\subset \C$) extends essentially uniquely to an $m$-semiadditive symmetric monoidal functor $\Span((\Ss_m)_{/X})\to \C$. 
\end{proof}
\begin{ex}
    In the case $m=0$, an $m$-finite group is simply an ordinary finite group, and if furthermore every point in $G$ is sent to the unit $\one\in \Pic(\C)$, then the Thom object is simply a twisted group ring $\one_\alpha[G]$. 
\pend \end{ex}
\begin{ex}\label{ex:exsecondtime}
    The algebra from \Cref{ex:spacenotequiv} is an example of this construction. Indeed, let $\D= \Mod_{\mathbb Q[t^{\pm 1}]}$ with $t$ in degree $2d$ for some odd $d\neq 1$. We let $G=H\rtimes \mathbb Z/d$ as in \Cref{ex:spacenotequiv}, and $G\to \Pic(\D)$ is the map $G\to \mathbb Z/d\to \Pic(\D)$, where the latter map picks out $\Sigma^2\mathbb Q[t^{\pm 1}]$.  Let us briefly explain why $\mathbb Z/d\to \Pic(\D)$ can be made into a map of commutative groups. This picard element is clearly classified by a map $\mathbb S\to \Pic(\D)$, and because it is $d$-torsion, by a map $\mathbb S/d\to \Pic(\D)$. The homotopy groups of $\Pic(\D)$ are rational above $\pi_2$, and the homotopy groups of $\mathbb S/d$ are finite, so this map canonically factors through $\tau_{\leq 1}(\mathbb S/d)$, which is $\mathbb Z/d$ because $d$ is odd.

    Now colimits over $G,\mathbb Z/d$ are just coproducts, so it is easy to check that the algebra structure in the homotopy category of $\D$ is the one we described in \Cref{ex:spacenotequiv}. Because $|G|$ and $|\mathbb Z/d|$ are invertible in $\D$, we find that these algebras are indeed separable (note that this does not depend on \Cref{conj:rightlaxlinear} because $\D$ is additive).  
\pend \end{ex}
Along the way, we record the following result we have sketched in the previous example (cf. also \cite[Example 2.30]{lawsonroots} and the surrounding discussion):
\begin{lm}
    Let $\D$ be a symmetric monoidal \category{}, and $L\in\Pic(\D)$ be an invertible element with $L^{\otimes d}\simeq \one_\D$. Assume that $d$ is odd, and invertible in $\pi_*\map(\one_\D,\one_\D), *\geq 1$. The space of maps of commutative groups $\mathbb Z/d\to \Pic(\D)$ classifying $L$ is equivalent to the space of equivalences $L^{\otimes d}\simeq \one$. 
\end{lm}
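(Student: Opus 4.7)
The plan is to identify $\Map_{\CGrp}(\Z/d, \Pic(\D))$ with $\Omega^\infty\mathrm{pic}(\D)[d]$, where $\mathrm{pic}(\D)$ is the Picard spectrum and $\mathrm{pic}(\D)[d] := \mathrm{fib}(d\cdot: \mathrm{pic}(\D) \to \mathrm{pic}(\D))$, and then to compute the fiber over $L$ of the forgetful map by a standard iterated-pullback manipulation.

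First, I would identify a map of commutative groups $\Z/d \to \Pic(\D)$ with a map of connective spectra $H\Z/d \to \mathrm{pic}(\D)$, so that $\Map_{\CGrp}(\Z/d,\Pic(\D)) = \Omega^\infty\map_{\Sp}(H\Z/d,\mathrm{pic}(\D))$. Recall $\pi_0\mathrm{pic}(\D) = \pi_0\Pic(\D)$, $\pi_1\mathrm{pic}(\D) = \pi_0\map(\one,\one)^\times$, and $\pi_i\mathrm{pic}(\D) = \pi_{i-1}\map(\one,\one)$ for $i \geq 2$. The invertibility hypothesis then says that multiplication by $d$ is an equivalence on $\pi_i\mathrm{pic}(\D)$ for $i \geq 2$, so that $\tau_{\geq 2}\mathrm{pic}(\D)$ is $\Z[1/d]$-local.

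The key step is to compare $\map_{\Sp}(H\Z/d, \mathrm{pic}(\D))$ with $\map_{\Sp}(\Sph/d, \mathrm{pic}(\D))$ along the truncation $\Sph/d \to H\Z/d$. Tensoring the fiber sequence $\Sph_{>0} \to \Sph \to H\Z$ with $\Sph/d$, the cofiber of this truncation is $\Sigma(\Sph_{>0}\otimes\Sph/d)$, which is $2$-connective and has $d$-torsion homotopy. A Postnikov-tower argument for the target then shows that $\Omega^\infty\map(\Sigma(\Sph_{>0}\otimes\Sph/d), \mathrm{pic}(\D))$ is contractible: the contributions from $\tau_{\geq 2}\mathrm{pic}(\D)$ vanish because the source is $d$-torsion while the target is $\Z[1/d]$-local, and the contributions from $\tau_{\leq 1}\mathrm{pic}(\D)$ vanish in non-negative degrees because the source is $2$-connective while the target is $1$-truncated. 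Hence $\Map_{\CGrp}(\Z/d,\Pic(\D)) \simeq \Omega^\infty\map_{\Sp}(\Sph/d,\mathrm{pic}(\D)) \simeq \Omega^\infty\mathrm{pic}(\D)[d]$, where the last identification uses the cofiber sequence $\Sph \xrightarrow{d} \Sph \to \Sph/d$.

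Finally, under this identification, the forgetful map $\Map_{\CGrp}(\Z/d,\Pic(\D)) \to \Pic(\D)$ is the inclusion of the fiber of $(-)^{\otimes d}: \Pic(\D) \to \Pic(\D)$, and its homotopy fiber over $L$ fits in an iterated pullback
\[\{L\} \times_{\Pic(\D)} \mathrm{fib}((-)^{\otimes d}) \simeq \{L\} \times_{\Pic(\D)} \Pic(\D) \times_{\Pic(\D)} \{\one\} \simeq \{L^{\otimes d}\} \times_{\Pic(\D)} \{\one\},\]
which is precisely the space of paths from $L^{\otimes d}$ to $\one$ in $\Pic(\D)$, i.e.\ the space of equivalences $L^{\otimes d} \simeq \one$ in $\D$. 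The hard part will be the Postnikov-tower analysis showing that $\Sph/d \to H\Z/d$ induces an equivalence of $\Omega^\infty$-mapping spaces into $\mathrm{pic}(\D)$; this is exactly where the invertibility of $d$ on the higher homotopy of $\map(\one,\one)$ (and concretely the resulting $\Z[1/d]$-locality of $\tau_{\geq 2}\mathrm{pic}(\D)$) enters crucially.
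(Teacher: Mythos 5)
Your approach is essentially the one the paper itself sketches in the example preceding this lemma: translate commutative\hyp{}group maps $\Z/d\to\Pic(\D)$ into maps of connective spectra $H\Z/d\to\mathrm{pic}(\D)$, compare these with maps out of $\Sph/d$ using the $d$-invertibility of the higher homotopy of $\mathrm{pic}(\D)$, and identify $\Omega^\infty\map_{\Sp}(\Sph/d,\mathrm{pic}(\D))$ fiberwise over $\Pic(\D)$ with spaces of trivializations of $L^{\otimes d}$. The bookends of your argument (the identification of commutative groups with connective spectra, and the iterated pullback at the end) are fine.

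There is, however, a genuine gap in the middle step, and it sits exactly where the hypothesis that $d$ is odd must be used --- a hypothesis your argument never invokes. Write $C=\Sigma(\Sph_{>0}\otimes\Sph/d)$ for the cofiber of $\Sph/d\to H\Z/d$. The contractibility of $\Omega^\infty\map(C,\mathrm{pic}(\D))$ only shows that $\Omega^\infty\map(H\Z/d,\mathrm{pic}(\D))\to\Omega^\infty\map(\Sph/d,\mathrm{pic}(\D))$ is an inclusion of path components: $\Omega^\infty$ of a fiber sequence of spectra with contractible $\Omega^\infty$ of the fiber need not be surjective on $\pi_0$ (compare $\Sigma^{-1}H\Z\to 0\to H\Z$). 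Surjectivity on $\pi_0$ is precisely the \emph{existence} of the $\CGrp$-refinement, i.e.\ the main content of the lemma, and it requires in addition that $\pi_{-1}\map(C,\mathrm{pic}(\D))=0$. After your (correct) reduction to $\tau_{\leq 1}\mathrm{pic}(\D)$, this group is $[\Sph_{>0}\otimes\Sph/d,\,\tau_{\leq 1}\mathrm{pic}(\D)]\cong\hom\bigl(\pi_1(\Sph_{>0}\otimes\Sph/d),\,\pi_0\map(\one,\one)^\times\bigr)$, and $\pi_1(\Sph_{>0}\otimes\Sph/d)\cong\Z/2\otimes\Z/d$ vanishes precisely because $d$ is odd; note that $d$ is \emph{not} assumed invertible on $\pi_1\mathrm{pic}(\D)=\pi_0\map(\one,\one)^\times$, so no locality argument is available in this degree. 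Equivalently: for $d$ odd your $C$ is in fact $3$-connective rather than merely $2$-connective, so the $\tau_{\leq 1}$-contributions vanish in all degrees $\geq -1$, which is what you actually need. For $d$ even the statement is false in general --- the obstruction to factoring $\Sph/d\to\mathrm{pic}(\D)$ through $H\Z/d$ is the image of $\eta$ in $\pi_0\map(\one,\one)^\times$ --- so this is not a removable hypothesis, and any complete proof has to use it at this point.
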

\subsection{Twists of Morava {$K$}- and {$E$}-theories}\label{subsection:SW}
In this subsection, we recover the main results of \cite{satiwesterland} (namely \cite[Theorems 1.1 and 1.2]{satiwesterland}), and correct along the way \cite[Theorem 1.2]{satiwesterland}, as well as get rid of any need for obstruction theory. For the convenience of the reader, we recall these theorems:
\begin{thm}[{\cite[Theorem 1.1]{satiwesterland}}]\label{thm:SWK}
    Let $n\geq 1$ and $K(n)$ be Morava $K$-theory at height $n$ and an implicit prime $p$. The canonical map is an equivalence:$$\map_*(K(\mathbb Z,n+2),BGL_1(K(n)))\to \hom_{\Alg(K(n)_*)}(K(n)_*K(\mathbb Z,n+1), K(n)_*)$$
\end{thm}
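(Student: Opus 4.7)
The plan is to exhibit $A := L_{K(n)}(K(n) \otimes \Sigma^\infty_+ K(\mathbb Z, n+1))$ as a filtered colimit of commutative separable algebras in $\Sp_{K(n)}$ and then apply the elementary results of \Cref{section:comm} stagewise, sidestepping any obstruction theory (consistent with the ``simpler proof'' mentioned in the introduction). The first step is to identify the left-hand side with a mapping space of $\mathbb E_1$-algebras: a based map $K(\mathbb Z,n+2) \to BGL_1(K(n))$ corresponds, via the equivalence between pointed connected spaces and group-like $\mathbb E_1$-spaces, to an $\mathbb E_1$-map $K(\mathbb Z,n+1) \to GL_1(K(n))$; then, using the $\Sigma^\infty_+ \dashv GL_1$ adjunction followed by basechange along $\Sph \to K(n)$, the left-hand side is identified with $\map_{\Alg(\Sp_{K(n)})}(A, K(n))$.

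Next I would exhibit $A$ as a filtered colimit $A \simeq \colim_k A_k$ in $\CAlg(\Sp_{K(n)})$, with each $A_k$ commutative separable. The Ravenel--Wilson computation describes $K(n)_* K(\mathbb Z, n+1)$ as an explicit Hopf algebra over $K(n)_*$ assembled from finite pieces; combining this with the semi-additive height $n$ structure of $\Sp_{K(n)}$ and the height $n$ cyclotomic-type constructions (cf.\ \Cref{thm:spansepgroup} and the ensuing discussion of height $n$ cyclotomic extensions), one realizes the successive pieces as commutative separable algebras.

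Given such a presentation, mapping into $K(n)$ converts the colimit into a limit: $\map_{\Alg(\Sp_{K(n)})}(A, K(n)) \simeq \lim_k \map_{\Alg(\Sp_{K(n)})}(A_k, K(n))$. By \Cref{cor:hocommtargetdiscrete} (applied to each separable commutative $A_k$ mapping to the homotopy commutative $K(n)$), each term is discrete and equal to $\hom_{\Alg(\ho(\Sp_{K(n)}))}(hA_k, hK(n))$, which by graded-field-ness of $K(n)_*$ simplifies to $\hom_{\Alg(K(n)_*)}((A_k)_*, K(n)_*)$. A Mittag--Leffler argument (akin to \Cref{lm:phantomK(n)}, using the finite-type structure of the $A_k$'s over $K(n)_*$ to control the relevant $\lim^1$) shows the limit is itself discrete and equal to $\hom_{\Alg(K(n)_*)}(\colim_k (A_k)_*, K(n)_*) = \hom_{\Alg(K(n)_*)}(K(n)_* K(\mathbb Z, n+1), K(n)_*)$.

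The main obstacle is the second paragraph: explicitly constructing the filtered colimit presentation of $A$ by commutative separable algebras. This demands a careful reading of Ravenel--Wilson in light of the height $n$ cyclotomic theory, and verifying that the transition maps are genuine maps of commutative separable algebras in $\Sp_{K(n)}$ and that the colimit recovers $A$. Once this is achieved, the rest is formal, invoking only the elementary commutative-separable results of \Cref{section:comm} together with the graded-field nature of $K(n)_*$.
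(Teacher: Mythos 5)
Your overall strategy for odd primes is essentially the paper's: reduce to $\mathbb E_1$-algebra maps out of $\Sph_{K(n)}[K(\mathbb Z,n+1)]$, use the $p$-adic equivalence $K(\mathbb Z,n+1)\simeq \colim_k K(\mathbb Z/p^k,n)$ to write this as an inverse limit, observe each $\Sph_{K(n)}[K(\mathbb Z/p^k,n)]$ is commutative separable, and invoke the discreteness results of \Cref{section:comm}. But two points need correction. First, your $A:=L_{K(n)}(K(n)\otimes\Sigma^\infty_+K(\mathbb Z,n+1))$ is not the right object: $K(n)$ is not an $\mathbb E_2$-ring, so $\LMod_{K(n)}$ carries no (symmetric) monoidal structure, there is no category of $K(n)$-algebras to basechange into, and $A$ as you define it cannot be a filtered colimit of \emph{commutative} algebras in $\Sp_{K(n)}$. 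One must stay with the $\Sph_{K(n)}$-algebra $\Sph_{K(n)}[K(\mathbb Z,n+1)]$ and only pass to $K(n)$-modules at the level of the homotopy category, where $K(n)$ \emph{is} a commutative algebra (at odd $p$). Relatedly, what you flag as the ``main obstacle'' — producing the separable presentation — is actually the easy part and requires no Ravenel--Wilson input: the pieces are $\Sph_{K(n)}[K(\mathbb Z/p^k,n)]$, separable by \Cref{thm:spansepgroup} because $|K(\mathbb Z/p^k,n)|$ is invertible in $\Sp_{K(n)}$ (\cite[Lemma 5.3.3]{AmbiHeight}).

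The genuine gap is the prime $2$. Your stagewise appeal to \Cref{cor:hocommtargetdiscrete} requires the \emph{target} $K(n)$ to be homotopy commutative, which fails at $p=2$. This is where Ravenel--Wilson actually enters: their computation shows $K(n)^*(K(\mathbb Z,n+1))$ is even, and W\"urgler's result that $\mu-\mu\circ\tau = v_n\cdot\mu\circ(Q\otimes Q)$ with $Q$ of odd degree then forces every algebra map $\Sph[K(\mathbb Z,n+1)]\to K(n)$ to be homotopy-central, so that \Cref{prop:discretecentralhocomm} (which only needs centrality of the given map, not commutativity of the target) applies. Without this step your argument only proves the theorem for odd primes. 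Your concluding Mittag--Leffler worry, on the other hand, is unnecessary: once each term of the inverse limit is discrete, the relevant $\lim^1$ of $\pi_1$'s vanishes for free.
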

We warn the reader that, as stated, the following theorem contains a mistake, which we correct later:
\begin{thm}[{\cite[Theorem 1.2]{satiwesterland}}]\label{thm:SWE}
   Let $n\geq 1$ and $E_n$ be Morava $K$-theory at height $n$ and an implicit prime $p$. The canonical maps are equivalences:$$\map_{\CAlg(\Ss)}(K(\mathbb Z,n+2),BGL_1(E_n))\to \map_{\Ss_*}(K(\mathbb Z,n+2),BGL_1(E_n))\to \hom_{\Alg((E_n)_*)}((E_n)_*K(\mathbb Z,n+1), (E_n)_*)$$
\end{thm}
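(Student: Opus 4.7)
Plan: The strategy is to reinterpret each of the three mapping spaces as data attached to a single homotopy commutative $E_n$-algebra in $\Sp_{K(n)}$, and then apply the ind-separability rigidification from \Cref{section:indsep}. A pointed map $\phi: K(\mathbb Z, n+2) \to BGL_1(E_n)$ corresponds under looping to an $\mathbb E_1$-map $K(\mathbb Z, n+1) \to GL_1(E_n)$ (via the recognition principle identifying pointed maps between deloopings with $\mathbb E_1$-maps); the Thom construction then produces an $E_n$-algebra $A_\phi \in \Alg(\Mod_{E_n}(\Sp_{K(n)}))$ equipped with an augmentation $A_\phi \to E_n$, which is commutative precisely when $\phi$ is a commutative group map. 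The Thom isomorphism provides an equivalence $A_\phi \otimes_{E_n} A_\phi \simeq A_\phi \otimes \Sigma^\infty_+ K(\mathbb Z, n+1)$ under which the multiplication map is identified with $A_\phi$ tensored with the augmentation $\Sigma^\infty_+ K(\mathbb Z, n+1) \to \mathbb S$.

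The main step is to show that every $A_\phi$ is homotopy commutative and homotopy ind-separable in $\Sp_{K(n)}$. Homotopy commutativity follows from the $\mathbb E_\infty$-structure on $K(\mathbb Z, n+1)$ together with the commutativity of the multiplication in $(E_n)$-homology of $BGL_1(E_n)$. For ind-separability we invoke the Ravenel-Wilson computation \cite{ravenelwilson}, which exhibits $K(n)_* K(\mathbb Z, n+1)$ as a filtered colimit of finite-dimensional Hopf algebras indexed by the Postnikov-type filtration of $K(\mathbb Z, n+1)$. Lifting this to $(E_n)_* K(\mathbb Z, n+1)$ by Landweber exactness and running the same telescope argument as in the proof that Morava $E$-theory is homotopy $\omega$-separable --- with indicator-function-type elements as in \Cref{lm:locindicator} --- we produce a countable set $S \subset \pi_*(A_\phi \otimes_{E_n} A_\phi)$ whose localization witnesses the multiplication map as a $K(n)$-local telescope. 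Crucially, the Thom isomorphism untwists the tensor product so that this analysis does not depend on $\phi$.

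Once ind-separability is established, \Cref{lm:K(n)Brown} verifies \Cref{assu:syn} and \Cref{assu:Brown} for $\Sp_{K(n)}$, and the absence of phantom maps from $A_\phi$ to $E_n$ follows exactly as in the proof of \Cref{cor:GHM} (the underlying spectrum of $A_\phi$ is Landweber exact by Ravenel-Wilson, reducing via \Cref{lm:phantomK(n)} to \Cref{lm:landweberphantom}). Applying \Cref{cor:indsepBrown2} with $\Oo = \mathbb E_\infty$ yields contractibility of the space of $\mathbb E_\infty$-lifts of the $\mathbb E_1$-structure on $A_\phi$, which under the twist-to-Thom correspondence is precisely the first equivalence $\map_{\CAlg(\Ss)}(K(\mathbb Z, n+2), BGL_1(E_n)) \xrightarrow{\sim} \map_{\Ss_*}(K(\mathbb Z, n+2), BGL_1(E_n))$. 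Applying \Cref{cor:indsepBrown3} to the augmentation gives discreteness of $\map_{\CAlg_{E_n}}(A_\phi, E_n)$ and its identification with $\hom_{\Alg(\ho(\Sp_{K(n)}))}(hA_\phi, hE_n)$; flatness of $(E_n)_* K(\mathbb Z, n+1)$ over $(E_n)_*$ (again from Ravenel-Wilson) then identifies this hom set with $\hom_{\Alg((E_n)_*)}((E_n)_* K(\mathbb Z, n+1), (E_n)_*)$, yielding the second equivalence.

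The principal obstacle is the identification of the localizing set $S$ witnessing homotopy $\omega$-separability: one must check that the Ravenel-Wilson Hopf-algebra filtration produces, even in the presence of the twist $\phi$, a cofinal sequence of elements whose inversion collapses $\pi_*(A_\phi \otimes_{E_n} A_\phi)$ onto $\pi_*(A_\phi)$. As noted, the Thom isomorphism reduces this to the untwisted case, where it becomes an explicit calculation with the Hopf algebra of Ravenel-Wilson, closely analogous to the telescope computation for the Morava stabilizer group in \Cref{subsection:Morava}.
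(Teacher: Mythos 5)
Your proposal runs into a fundamental problem before any of the technique matters: the statement you are proving is false as written, and the paper says so explicitly (see \Cref{warn:mistakeSW}). The correct target of the second map is $\hom_{\Alg((E_n)_*)}((E_n)^\vee_*K(\mathbb Z,n+1),(E_n)_*)$ with \emph{completed} Morava $E$-homology $(E_n)^\vee_*(X)=\pi_*(L_{K(n)}(E_n\otimes X))$; for $n$ even the uncompleted hom-set is a point while the completed one is $\mathbb Z_p$. Your argument is carried out entirely in $\Sp_{K(n)}$ (the ind-separability, the telescopes, the Brown-representability hypotheses all live there), so the algebra you actually control is the $K(n)$-local group ring, whose homotopy is $(E_n)^\vee_*K(\mathbb Z,n+1)$ and not $(E_n)_*K(\mathbb Z,n+1)$. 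The final step, where you invoke ``flatness of $(E_n)_*K(\mathbb Z,n+1)$'' to land on the uncompleted hom-set, is exactly where the proof breaks --- and it is precisely the error in Sati--Westerland that the paper is correcting. A second, independent problem is your identification of the first equivalence with ``contractibility of the space of $\mathbb E_\infty$-lifts of the $\mathbb E_1$-structure on $A_\phi$'': the fiber of $\map_{\CAlg(\Ss)}(K(\mathbb Z,n+2),BGL_1(E_n))\to\map_{\Ss_*}(K(\mathbb Z,n+2),BGL_1(E_n))$ over $\phi$ is not the moduli of $\mathbb E_\infty$-refinements of the Thom spectrum $A_\phi$; the correct and much cleaner identification, which the paper uses, is that both sides are spaces of ($\mathbb E_\infty$- resp.\ $\mathbb E_1$-) algebra maps $E_n[K(\mathbb Z,n+1)]\to E_n$ out of the $K(n)$-local group ring.

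Beyond these gaps, your route is also substantially heavier than the paper's. The paper does not use ind-separability or any obstruction theory for this theorem: it writes $K(\mathbb Z,n+1)$ $p$-adically as $\colim_k K(\mathbb Z/p^k,n)$, observes that each $E_n[K(\mathbb Z/p^k,n)]$ is honestly \emph{separable} because the cardinality $|K(\mathbb Z/p^k,n)|$ is invertible in the $\infty$-semiadditive category $\Sp_{K(n)}$ (\Cref{thm:spansepgroup} plus \cite[Lemma 5.3.3]{AmbiHeight}), applies the elementary rigidity results for separable commutative algebras (\Cref{thm : commmorsep}), uses freeness of $\pi_*(E_n[K(\mathbb Z/p^k,n)])$ from ambidexterity to compute the hom-sets algebraically, and only then passes to the inverse limit over $k$ --- which is where the $\mathfrak m$-adic completion, and hence $(E_n)^\vee_*$, enters. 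Your plan to establish homotopy $\omega$-separability of the full group ring via Ravenel--Wilson telescopes and then run the $\Syn_\C$ obstruction theory of \Cref{section:indsep}, including verifying \Cref{assu:Brown} and the absence of phantom maps from all tensor powers, could in principle be made to prove the corrected statement, but it trades an elementary limit argument for the full PVK machinery and several unverified claims (e.g.\ Landweber exactness of $E_n\otimes\Sigma^\infty_+K(\mathbb Z,n+1)$, and homotopy commutativity of a Thom spectrum of a map that is a priori only a loop map).
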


Their proof relies on a computation of the Morava $K$-theories of Eilenberg-MacLane spaces due to Ravenel and Wilson \cite{ravenelwilson}, together with Goerss--Hopkins obstruction theory. We aim to explain how one can get rid of the latter, and recover this result based only on Ravenel and Wilson's calculations. 
    \begin{warn}\label{warn:mistakeSW}
    Our proof proceeds by proving that $E_n[K(\mathbb Z/p^k,n)]$ is separable in $K(n)$-local spectra -- in particular, it involves $K(n)$-localization, and thus we note that \cite[Theorem 1.2]{satiwesterland} is wrong as stated. Namely, the correct equivalence is 
      $$\map_{\CAlg(E_n)}(E_n[K(\mathbb Z,n+1)],E_n)\simeq \hom_{\CAlg(\pi_*(E_n))}((E_n)^\vee_*K(\mathbb Z,n+1),\pi_*(E_n))$$
      where $(E_n)^\vee_*(X) := \pi_*(L_{K(n)}(E_n\otimes X))$ is completed Morava $E$-theory. One can prove that without completion, this equivalence does not hold. For example if $n$ is even (so that $n+1$ is odd, and $H_*(K(\mathbb Z,n+1);\mathbb Q) =\mathbb Q[\epsilon], |\epsilon| = 1$), it is not so hard\footnote{The proof uses the rational computation together with the fact that $(E_n)_*$ is torsion free and concentrated in even degrees.} to prove that $\hom_{\CAlg(\pi_*(E_n))}((E_n)_*K(\mathbb Z,n+1), (E_n)_*) \cong \hom_{\CAlg(\pi_*(E_n))}((E_n)_*, (E_n)_*) \cong \pt$. On the other hand, with completed Morava $E$-theory, one does find a set isomorphic to $\mathbb Z_p$, as claimed in \cite{satiwesterland}. 
    \end{warn}
    Thus, the corrected version of \Cref{thm:SWE} is: 
    \begin{thm}[{\cite[Theorem 1.2]{satiwesterland}}]\label{thm:SWEcorrect}
   Let $n\geq 1$ and $E_n$ be Morava $K$-theory at height $n$ and an implicit prime $p$. The canonical maps are equivalences:$$\map_{\CAlg(\Ss)}(K(\mathbb Z,n+2),BGL_1(E_n))\to \map_{\Ss_*}(K(\mathbb Z,n+2),BGL_1(E_n))\to \hom_{\Alg((E_n)_*)}((E_n)^\vee_*K(\mathbb Z,n+1), (E_n)_*)$$
\end{thm}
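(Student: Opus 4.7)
The plan is to translate both mapping spaces on the left-hand side into algebra mapping spaces in $\Mod_{E_n}(\Sp_{K(n)})$, verify that the relevant commutative algebra is ind-separable using Ravenel--Wilson's computation, and apply the obstruction theory of \Cref{section:indsep}. Throughout, set
\[
A := L_{K(n)}(E_n \otimes \Sigma^\infty_+ K(\mathbb Z, n+1)),
\]
a commutative $E_n$-algebra in $\Sp_{K(n)}$.

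First, combining the adjunctions between pointed (resp. grouplike $\mathbb E_\infty$-) spaces and connective spectra, the universal property of the group ring, the equivalence $\Omega BGL_1(E_n) \simeq GL_1(E_n)$, and $K(n)$-localization, I would identify
\begin{align*}
\map_*(K(\mathbb Z, n+2), BGL_1(E_n)) & \simeq \map_{\Alg(\Mod_{E_n}(\Sp_{K(n)}))}(A, E_n), \\
\map_{\CAlg(\Ss)}(K(\mathbb Z, n+2), BGL_1(E_n)) & \simeq \map_{\CAlg(\Mod_{E_n}(\Sp_{K(n)}))}(A, E_n),
\end{align*}
compatibly with the forgetful map.

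Second, I would show that $A$ is ind-separable via a \emph{shear} argument. The self-equivalence of $K(\mathbb Z, n+1)^{\times 2}$ given by $(a, b) \mapsto (a+b, b)$ induces an isomorphism of $E_n$-algebras on $A \otimes_{E_n} A$ under which the multiplication map $\mu$ is carried to $\mathrm{id}_A \otimes_{E_n} \epsilon$, where $\epsilon : A \to E_n$ is the augmentation induced by the basepoint inclusion. Since $A \otimes_{E_n} -$ is symmetric monoidal and preserves localizations at sets of elements, it suffices to exhibit $\epsilon$ as a localization of $A$ at a set $S \subset \pi_0 A = (E_n)^\vee_0 K(\mathbb Z, n+1)$. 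This is where Ravenel--Wilson enters: their explicit description of $(E_n)^\vee_* K(\mathbb Z, n+1)$ makes the augmentation transparent and allows one to read off such an $S$. Alternatively, one can realize $A$ as a filtered colimit of algebras of the form $L_{K(n)}(E_n[K(\mathbb Z/p^k, n+1)])$ (or of suitable finite pieces thereof); each of these is separable by \Cref{prop:homsepimpliessep} once Ravenel--Wilson is used to identify its $\pi_*$ with a finite étale extension of $(E_n)_*$, and then \Cref{lm:ind-sep implies indsep} concludes.

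Third, the conclusion is now formal. By \Cref{lm:K(n)Brown}, $\Sp_{K(n)}$ satisfies \Cref{assu:syn} and \Cref{assu:Brown}; and the vanishing of phantom maps from tensor powers of $A$ into $E_n$ follows from a Landweber-type argument modeled on \Cref{lm:landweberphantom} and \Cref{lm:phantomK(n)}. Then \Cref{cor:indsepBrown2} gives that the forgetful map from commutative to associative algebra structures is an equivalence, yielding the first equivalence in the theorem; and \Cref{thm:indsepBrown} together with \Cref{cor:indsepBrown3} identify the common value with $\hom_{\Alg(\ho(\Mod_{E_n}(\Sp_{K(n)})))}(hA, hE_n)$, which, using that $\pi_* A = (E_n)^\vee_* K(\mathbb Z, n+1)$ is free over $(E_n)_*$ by Ravenel--Wilson, coincides with $\hom_{\Alg((E_n)_*)}((E_n)^\vee_* K(\mathbb Z, n+1), (E_n)_*)$, giving the second equivalence.

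The main obstacle will be establishing the ind-separability of $A$: concretely pinning down the localizing set $S$, or equivalently verifying the filtered-colimit-of-separables presentation, requires a careful unpacking of the Ravenel--Wilson computation, and the augmentation ideal is only \emph{topologically} generated, so the localization statement must be formulated with care. A secondary technical difficulty is the phantom-maps verification, since $A$ is a group algebra over a Morava $E$-theory rather than a Morava $E$-theory itself, so \Cref{lm:phantomK(n)} does not apply verbatim.
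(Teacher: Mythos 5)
Your overall translation into algebra mapping spaces in $\Mod_{E_n}(\Sp_{K(n)})$ matches the paper's first step, but from there you take a genuinely different — and substantially heavier — route than the paper does, and the route as proposed has real gaps. The paper never establishes ind-separability of $A = E_n[K(\mathbb Z,n+1)]$ and never invokes the obstruction theory of \Cref{section:indsep}, \Cref{assu:Brown}, or any phantom-map analysis for this result. Instead it writes $K(\mathbb Z,n+1)$ $p$-adically as $\colim_k K(\mathbb Z/p^k,n)$ (note the index: $K(\mathbb Z/p^k,n)$, not $K(\mathbb Z/p^k,n+1)$ as in your filtered-colimit presentation), so that the mapping space becomes an \emph{inverse limit} over $k$ of mapping spaces out of the finite stages $E_n[K(\mathbb Z/p^k,n)]$. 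Each finite stage is honestly \emph{separable} — by \Cref{thm:spansepgroup} together with the invertibility of the semiadditive cardinality $|K(\mathbb Z/p^k,n)|$ in $\Sp_{K(n)}$ from \cite[Lemma 5.3.3]{AmbiHeight} — so \Cref{thm : commmorsep} applies at each stage with no obstruction theory at all; the freeness of $\pi_*E_n[K(\mathbb Z/p^k,n)]$ over $(E_n)_*$ and an $\mathfrak m$-adic completion argument (evenness plus regularity of $(p,u_1,\dots,u_{n-1})$) then assemble the inverse limit into $\hom_{\Alg((E_n)_*)}((E_n)^\vee_*K(\mathbb Z,n+1),(E_n)_*)$. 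This is precisely why the paper can claim the proof involves no obstruction theory; your proposal reintroduces it along with the phantom-map hypotheses you correctly flag as unresolved.

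The more serious local error is your justification of separability at the finite stages: you propose to use Ravenel--Wilson to identify $\pi_*$ of each stage with a finite étale extension of $(E_n)_*$ and then invoke \Cref{prop:homsepimpliessep}. This fails on two counts. First, $(E_n)^\vee_*K(\mathbb Z/p^k,n)$ is not étale over $(E_n)_*$ in general — already at height $1$ it is the $p$-completed representation ring of $\mathbb Z/p^k$, i.e.\ essentially $\mathbb Z_p[t]/(t^{p^k}-1)$, which is ramified and not separable as a discrete $\mathbb Z_p$-algebra. Second, \Cref{prop:homsepimpliessep} requires separability in $\ho(\Mod_{E_n}(\Sp_{K(n)}))$, which is not the category of graded $(E_n)_*$-modules, so even a statement about $\pi_*$ would not directly feed into it. The separability here genuinely comes from the higher-semiadditive shearing argument (your ``shear'' idea is the right instinct, but it must be run at the level of the finite group $K(\mathbb Z/p^k,n)$ where it produces an actual separability idempotent, rather than at the level of the colimit where you are left hunting for a topologically generated localizing set). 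Your concluding algebraic identification and the use of freeness are fine, but the proof as written does not close without repairing the separability input and either completing the ind-separability/phantom analysis or switching to the paper's inverse-limit strategy.
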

We prove \Cref{thm:SWK}, indicating along the way the necessary changes for \Cref{thm:SWEcorrect}. 
\begin{proof}
Let $M_K:=\map_*(K(\mathbb Z,n+2),BGL_1(K(n)))$, $M_E^{d+1}:=\map_{\Alg_{\mathbb E_d}(\Ss)}(K(\mathbb Z,n+2),BGL_1(E_n))$ (including for $d=0$ and $\infty$).
    We begin by noting that $$M_K\simeq \map_{\Alg(\Ss)}(K(\mathbb Z,n+1),GL_1(K(n)) \simeq \map_{\Alg(\Sp_{K(n)})}(\Sph_{K(n)}[K(\mathbb Z,n+1)],K(n))$$. In the case of $E$-theory, one can similarly move to $$M_E^d\simeq \map_{\Alg_{\mathbb E_d}(\Mod_{E_n}(\Sp_{K(n)}))}(E_n[K(\mathbb Z,n+1)],E_n)$$. 

    Now, $K(\mathbb Z,n+1)$ is $p$-adically equivalent to $\colim_k K(\mathbb Z/p^k,n)$ so that we can rewrite our space as inverse limits of similar spaces with $K(\mathbb Z/p^k,n)$ in place of $K(\mathbb Z,n+1)$. By \cite[Lemma 5.3.3.]{AmbiHeight}, we may apply \Cref{thm:spansepgroup} to $G=K(\mathbb Z/p^k,n)$ and $\C=\Sp_{K(n)}$ (resp. $\Mod_{E_n}(\Sp_{K(n)})$) and obtain that $\Sph_{K(n)}[K(\mathbb Z/p^k,n)]$ (resp. $E_n[K(\mathbb Z/p^k,n)]$, computed in $\Sp_{K(n)}$) is separable as a $K(n)$-local ($E_n$-)algebra\footnote{For the connection to \Cref{thm:spansepgroup}, see the discussion following \Cref{ex:sepchar0}.}. 

    In the case of $E$-theory, we can directly conclude that $\map_{\Alg_{\mathbb E_d}(E_n)}(E_n[K(\mathbb Z/p^k,n)],E_n)\simeq \hom_{\CAlg(\ho(\Mod_{E_n}))}(E_n[K(\mathbb Z/p^k,n)],E_n)$ by \Cref{thm : commmorsep}. By \cite[Proposition 3.4.3., Proposition 2.4.10.]{hopkinslurieambi}, $\pi_*(E_n[K(\mathbb Z/p^k,n)])$ is a free $(E_n)_*$-module, so that $$\hom_{\CAlg(\ho(\Mod_{E_n}))}(E_n[K(\mathbb Z/p^k,n)],E_n)\cong \hom_{\CAlg((E_n)_*)}(\pi_*(E_n[K(\mathbb Z/p^k,n)]), (E_n)_*)$$
   Thus $$M_E^d\simeq \hom_{\Alg((E_n)_*)}(\pi_*(\colim_k L_{K(n)}(E_n[K(\mathbb Z/p^k,n)])), (E_n)_*)$$
   Finally, $(E_n)_*$ is $\mathfrak m$-adically complete (where $\mathfrak m = (p,v_1,...,v_{n-1})$ is the maximal ideal in the local ring $\pi_0(E_n)$), and $\colim_k L_{K(n)}(E_n[K(\mathbb Z/p^k,n))$ is concentrated in even degrees, so that the $\mathfrak m$-adic completion of its homotopy groups is the same as the homotopy groups of its $K(n)$-localization, and so we get:
   $$M_E^d\simeq \hom_{\Alg((E_n)_*)}(\pi_*( L_{K(n)}(E_n[K(\mathbb Z,n+1)])), (E_n)_*)$$ which was to be proved. 

   In the case of $K$-theory, the approach is the same but more care must be taken at the prime $2$, as Morava $K$-theory is not homotopy commutative\footnote{Since we are working over the sphere, there is only one $\mathbb E_1$-structure on Morava $K$-theory, and it is homotopy commutative at odd primes. }.
   
   In the case of odd primes, Morava $K$-theory is homotopy commutative, and so by combining \Cref{thm : morsep} and \Cref{cor:hocommtargetdiscrete} we obtain $M_K\simeq \hom_{\Alg(\ho(\Sp))}(\mathbb S_{K(n)}[K(\mathbb Z,n+1)],K(n))$. In $\ho(\Sp)$, $K(n)$ is now a commutative algebra, so this is equivalent to $$\hom_{\Alg(\Mod_{K(n)}(\ho(\Sp)))}(K(n)[K(\mathbb Z,n+1)],K(n))$$ and $\Mod_{K(n)}(\ho(\Sp))$ is monoidally equivalent to $\Mod_{K(n)_*}(\mathbf{GrVect}_{\mathbb F_p})$\footnote{Every (homotopy) $K(n)$-module is free up to shifts.} so that the latter is equivalent to $\hom_{\Alg(K(n)_*)}(K(n)_*K(\mathbb Z,n+1),K(n)_*)$, as claimed. 

   The case of the prime $2$ is a bit more subtle, but it can be approached using the work of Würgler \cite{wurgler}. More specifically, \cite[Proposition 2.4, Remark 2.6.(b)]{wurgler} shows that there is a map $Q: K(n)\to \Sigma^{2^n-1}K(n)$ such that the multiplication map $\mu: K(n)\otimes K(n)\to K(n)$ differs from its twist $K(n)\otimes K(n) \overset{\tau}{\simeq} K(n)\otimes K(n)\to K(n)$ by $v_n\cdot \mu\circ (Q\otimes Q)$. 

   Note that $Q$ is of odd degree. Ravenel and Wilson's computation \cite{ravenelwilson} shows, in particular, that $K(n)^*(K(\mathbb Z,n+1))$ is concentrated in even degrees, so that for any map $f: \Sph[K(\mathbb Z,n+1)]\to K(n)$, the composition $\Sph[K(\mathbb Z,n+1)]\to K(n)\to \Sigma^{2^n-1}K(n)$ is $0$ ($2^n-1$ is odd as $n\geq 1$). In particular, for any such map, the composites $\Sph[K(\mathbb Z,n+1)]\otimes K(n)\to K(n)\otimes K(n)\to K(n)$ and $\Sph[K(\mathbb Z,n+1)]\otimes K(n)\simeq K(n)\otimes \Sph[K(\mathbb Z,n+1)]\to K(n)\otimes K(n)\to K(n)$  agree. We may thus apply \Cref{prop:discretecentralhocomm} even though $K(n)$ is not homotopy commutative, and conclude in the same way as before, using \Cref{thm : morsep} to compute $\pi_0$. 
\end{proof}

\subsection{Scheme theory}
In \cite{balmerNT}, Balmer proves the following (compare \Cref{prop:etalesep}): 
\begin{thm}[{\cite[Theorem 3.5]{balmerNT}}]
    Let $f:V\to X$ be a separated étale morphism of quasicompact, quasiseparated schemes. In this case, $f_*\mathcal O_V$ is a separable algebra in $\mathrm{QCoh}(X)$. 
\end{thm}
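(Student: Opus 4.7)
The overall strategy is to produce the separability idempotent geometrically, from the clopen nature of the diagonal of a separated étale morphism.

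First I would reduce to the case $X = \Spec R$ affine via \Cref{cor:descentcommsep}: separability of a commutative algebra is a property preserved under limits of symmetric monoidal \categories, and $\QCoh(-)$ on qcqs schemes sends Zariski covers to limits, compatibly with the restriction of $f_*\mathcal O_V$ to open subschemes. After this reduction, the key geometric input is that for any separated étale morphism $f: V \to X$, the diagonal $\Delta_f : V \to V \times_X V$ is simultaneously an open immersion (as $f$ is unramified, being étale) and a closed immersion (as $f$ is separated), hence a clopen immersion. This provides a canonical decomposition of qcqs schemes $V \times_X V \simeq V \sqcup W$ over $X$.

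The characteristic function of $V$ in $V \times_X V$ then defines a global idempotent $e$ of the commutative algebra $(f \times_X f)_* \mathcal O_{V \times_X V}$ in $\QCoh(X)$, and this idempotent is sent to the unit $1 \in A := f_*\mathcal O_V$ under the canonical restriction-to-diagonal map $(f \times_X f)_*\mathcal O_{V \times_X V} \to f_*\mathcal O_V$. By flat base change (which applies since $f$ is flat — étale — and qcqs), there is a canonical equivalence $(f \times_X f)_* \mathcal O_{V \times_X V} \simeq A \otimes_{\mathcal O_X} A$ intertwining this restriction map with the multiplication $\mu: A \otimes_{\mathcal O_X} A \to A$. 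Hence $e$ transports to a map $\mathcal O_X \to A \otimes_{\mathcal O_X} A$ with $\mu \circ e \simeq \id_A$; promoting it to an $A$-bimodule splitting is a diagram chase expressing the fact that $\Delta_f$ is simultaneously a section of both projections $V \times_X V \rightrightarrows V$.

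The main obstacle I expect is the flat base change identification $(f \times_X f)_*\mathcal O_{V \times_X V} \simeq A \otimes_{\mathcal O_X} A$ at the level of the derived $\infty$-category, since when $f$ is not affine, $f_* \mathcal O_V$ can carry nontrivial higher cohomology sheaves. The standard derived flat base change theorem for qcqs morphisms provides the equivalence, but one must take care to formulate separability at the level of $\QCoh(X)$ as a stable symmetric monoidal $\infty$-category and to verify that the bimodule structures agree coherently, not merely up to homotopy. This is essentially bookkeeping; the conceptual content of the proof is entirely captured by the geometric fact that the diagonal of a separated étale morphism is clopen.
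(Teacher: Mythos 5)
The paper does not prove this statement itself --- it is quoted as \cite[Theorem 3.5]{balmerNT} --- so the only comparison available is with Balmer's original argument, and your proposal is essentially that argument: the diagonal of a separated étale morphism is a clopen immersion, the resulting splitting $V\times_X V\simeq V\sqcup W$ produces an idempotent in $(f\times_X f)_*\mathcal O_{V\times_X V}$, and derived flat base change plus the projection formula for qcqs morphisms identify this algebra with $A\otimes_{\mathcal O_X}A$ so that the restriction-to-the-diagonal map becomes the multiplication. This is correct, and the last step is cleaner than you make it sound: the clopen decomposition exhibits $A\otimes_{\mathcal O_X}A\simeq A\times C$ as commutative algebras with $\mu$ the projection onto the first factor, so the inclusion of that factor is automatically an $A\otimes A$-linear (hence $A$-bimodule) section --- no further diagram chase is needed (compare \Cref{cor : commsepsplit}). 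One small remark: the initial reduction to $X$ affine via \Cref{cor:descentcommsep} is harmless but superfluous, since the clopen-diagonal argument runs verbatim over an arbitrary qcqs base; moreover that reduction does not make $V$ affine, so it buys you nothing for the base-change step, which must in any case be carried out at the derived level for a possibly non-affine qcqs $f$, exactly as you note.
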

As already mentioned, Neeman proved in \cite{Neeman} that, at least in the noetherian case, this is not far from exhausting all examples:
\begin{thm}[{\cite[Theorem 7.10]{Neeman}}]
    Let $X$ be a noetherian scheme and $A\in \mathrm{QCoh}(X)$ a commutative  separable algebra. There exists an étale morphism $g:U\to X$ and a specialization-closed subset $V\subset U$ such that $A\simeq g_*L_V\mathcal O_U$ of commutative algebras\footnote{Neeman only proves that this is an equivalence of algebras in the homotopy category, but \Cref{thm : commlift} tells us that this suffices.}.

    Here, $L_V$ is the Bousfield-localization of $\mathrm{QCoh}(U)$ associated to the specialization-closed subset $V$.
\end{thm}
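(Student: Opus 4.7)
The plan is to exploit the rigidity established earlier in the paper to reduce Neeman's classification to a classical question about commutative rings, then solve the classical question by a spreading-out and assembly argument.

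First, since $\QCoh$ sheafifies in the Zariski topology and \Cref{cor:descentcommsep} tells us that the formation of commutative separable algebras is compatible with limits of symmetric monoidal \categories, one reduces to the affine case $X = \Spec R$ with $R$ a noetherian discrete ring. The task becomes: given $A \in \CAlg(\Mod_R)$ separable, produce an étale $R$-algebra $B$ and a specialization-closed subset $V \subset \Spec B$ such that $A \simeq L_V B$ as commutative algebras. Next, one invokes \Cref{thm:Neeman} together with \Cref{cor:pi0sep}: over a noetherian $R$, every commutative separable algebra in $\Mod_R$ is discrete and flat, so $A$ is determined by (and equal to) its classical counterpart $\pi_0(A)$, which is a flat commutative separable $R$-algebra in the ordinary sense.

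Now one is in purely classical territory, and the geometric content takes over. By \Cref{cor : commsepsplit}, the multiplication $\pi_0(A) \otimes_R \pi_0(A) \to \pi_0(A)$ is a localization at an idempotent, which says geometrically that the diagonal of $\Spec A \to \Spec R$ is an open immersion, i.e.\ that the morphism is unramified. Combined with flatness, $\Spec A \to \Spec R$ is thus ``essentially étale''. One then writes $A$ as the filtered union of its finitely generated $R$-subalgebras $A_i$; by the noetherian hypothesis each $A_i$ is finitely presented, and flatness and unramifiedness pass to sub-$R$-algebras in a controlled way, so each $\Spec A_i \to \Spec R$ can be arranged to be a genuine étale morphism. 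The final step is to assemble this directed system into a single étale $g: U \to X$ together with a specialization-closed $V \subset U$ such that $A \simeq g_* L_V \mathcal O_U$.

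The main obstacle is precisely this last assembly step. A priori one only obtains $A$ as a filtered colimit of étale $R$-algebras, and the passage to a single presentation $L_V \mathcal O_U$ requires identifying which ``extra'' sections of a sufficiently large étale model need to be inverted; this is exactly what the specialization-closed subset $V$ parametrizes, via the classification of smashing Bousfield localizations on $\QCoh(U)$ for $U$ noetherian in terms of specialization-closed subsets of $|U|$. Making this assembly precise, and matching the commutative algebra structure on $A$ with the one coming from $g_* L_V \mathcal O_U$ (which is automatic up to homotopy by \Cref{thm : commlift}, but needs a geometric choice of $U$ and $V$), is the core technical content of Neeman's original argument.
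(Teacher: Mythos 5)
The paper does not prove this statement: it is quoted verbatim from Neeman, and the only original content is the footnote, which observes that Neeman's equivalence of algebras in the homotopy category upgrades to an equivalence of commutative algebras in $\QCoh(X)$ by \Cref{thm : commlift}. Your proposal correctly identifies that last point, but your attempt to reprove the classification itself contains a genuine error and an explicit gap.

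The error is in the second step. You invoke \Cref{thm:Neeman} and \Cref{cor:pi0sep} to conclude that every commutative separable algebra over a noetherian ring is discrete and flat, and thereby reduce to classical commutative algebra. But \Cref{thm:Neeman} only says that commutative separable algebras are \emph{coconnective}, and that the \emph{connective} ones are discrete; \Cref{cor:pi0sep} is likewise a statement about connective separable algebras. The general separable algebra is not discrete, and the statement you are trying to prove makes this plain: $g_*L_V\mathcal O_U$ typically has nonzero negative homotopy groups (take $U=\mathbb A^2_k$ and $V$ the closed point, so that $\pi_{-1}(L_V\mathcal O_U)\cong H^2_{\mathfrak m}(k[x,y])\neq 0$). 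So your reduction discards exactly the objects that force the $L_V$ into the classification, and the rest of the argument (open diagonal, flatness, spreading out) only ever sees the finite-étale part of the answer. Separately, even in the discrete case the spreading-out step is shaky — flatness and unramifiedness do not pass to finitely generated subalgebras of a filtered colimit without further argument — and you explicitly defer the assembly of the directed system into a single $g_*L_V\mathcal O_U$, which is where the substance of Neeman's Theorem 7.10 actually lives. As it stands, the proposal reduces the theorem to its hardest part and then cites Neeman for that part, so it is not an independent proof.
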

In other words, up to idempotent algebras and under a noetherianity assumption, all commutative separable algebras come from étale maps. 

\subsection{Azumaya algebras}
In \Cref{section:brauer}, we will see that there is a strong connection between Azumaya algebras and separable algebras. We will prove that many Azumaya algebras are separable, specifically (cf. \Cref{prop:Azsep}):
\begin{prop}
Assume $\C$ is presentably symmetric monoidal.
Let $A\in \Alg(\C)$ be an algebra. If $A$ is Azumaya \emph{and} the unit $\eta : \one\to A$ admits a retraction, then $A$ is separable.
\end{prop}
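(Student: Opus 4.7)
The plan is to exploit the Azumaya isomorphism $A\otimes A\op\simeq\hom_\C(A,A)=:\End_\C(A)$ to construct a separability idempotent directly out of the retraction $r:A\to\one$ of $\eta$. Under the Azumaya identification, the multiplication $\mu:A\otimes A\op\to A$ becomes the ``evaluation at the unit'' map $\End_\C(A)\to A$, $\phi\mapsto\phi\circ\eta$, and the natural $A$-bimodule structure on $A\otimes A\op$ matches the bimodule structure on $\End_\C(A)$ given by $(a\cdot\phi\cdot b)(x)=a\phi(x)b$. Consequently, an $A$-bimodule section of $\mu$ is the same data as a bimodule-central element $\phi\in\End_\C(A)$ (i.e.\ one with $\phi(x)\in Z(A)$ for all $x$, equivalently $\phi$ factoring through the center) satisfying $\phi\circ\eta=\eta$.

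Given $r:A\to\one$ with $r\circ\eta=\id_\one$, I would then define $\phi:=\eta\circ r\in\End_\C(A)$. Since $\phi$ factors through $\eta:\one\to A$, and the image of $\eta$ is central in $A$ (because $\eta$ is an algebra map out of the commutative algebra $\one$ and therefore lands in $Z(A)$), $\phi$ is bimodule-central. Moreover $\phi\circ\eta=\eta\circ r\circ\eta=\eta\circ\id_\one=\eta$, so the condition $\phi\circ\eta=\eta$ holds. The corresponding element of $A\otimes A\op$ under Azumaya is therefore an $A$-bimodule section of $\mu$, i.e.\ a separability idempotent, and $A$ is separable.

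The main obstacle will be the bookkeeping needed to identify the $A$-bimodule structure on $A\otimes A\op$ with the natural bimodule structure on $\End_\C(A)$ described above, and to identify bimodule-central elements with endomorphisms factoring through $Z(A)$. Both are formal consequences of the Azumaya condition combined with the dualizability of $A$ (which gives $\End_\C(A)\simeq A^\vee\otimes A$), but they must be set up carefully at the level of spaces of bimodule maps in the $\infty$-categorical setting rather than discrete sets; once they are in place, the construction of $\phi=\eta\circ r$ proceeds exactly as above. I would also note that essentially the same analysis reverses to yield the stronger iff statement claimed in the paper: any separability idempotent produces, via Azumaya, a bimodule-central $\phi=\eta\circ r$ with $\eta\circ r\circ\eta=\eta$, which in the Azumaya setting forces $r$ to be a retraction of $\eta$.
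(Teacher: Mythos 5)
Your overall strategy is the paper's: factor the multiplication as $A\otimes A\op\xrightarrow{\sim}\End(A)\xrightarrow{ev_\eta}A$ using the Azumaya equivalence, and split $ev_\eta$ using the retraction $r$; the separability idempotent you produce, $\eta\circ r$, is literally the one the paper's section $\id_A\otimes r^\vee$ restricts to. The place where your write-up does not yet constitute a proof is the step you yourself flag as ``bookkeeping'': the identification of $A$-bimodule sections of $\mu$ with \emph{bimodule-central elements} $\phi$ of $\End(A)$ satisfying $\phi\circ\eta\simeq\eta$. In the $\infty$-categorical setting a bimodule map $A\to M$ is a point of $\map_{A\otimes A\op}(A,M)$, which is a totalization of a cobar-type diagram; an element $\one\to M$ together with the observation that it ``lands in the center'' only supplies the first stage of that totalization, so centrality is coherent \emph{structure}, not a condition one checks. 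In particular, knowing that $\phi=\eta\circ r$ factors through $Z(A)\to A$ as a map in $\C$ does not by itself produce the required bimodule map $A\to\End(A)$. This is not mere bookkeeping: it is exactly the content the Azumaya hypothesis has to supply.

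The repair is the paper's argument, which avoids elements altogether: under $\End(A)\simeq A\otimes A^\vee$ the transported bimodule structure is concentrated entirely on the first tensor factor (the bimodule $A$ tensored with the \emph{object} $A^\vee$), and $ev_\eta$ is identified with $\id_A\otimes(A^\vee\to\one)$, the identity of the bimodule $A$ tensored with the map dual to $\eta$. A section is then $\id_A\otimes r^\vee$ for $r^\vee\colon\one\to A^\vee$ dual to $r$ -- a bimodule map by construction, with no centrality to verify. (If one wants your full dictionary, one can compute $\map_{A\otimes A\op}(A,\End(A))\simeq Z(A)\otimes A^\vee\simeq A^\vee$ using dualizability of $A$ and $Z(A)\simeq\one$, recovering precisely the correspondence between bimodule sections of $\mu$ and retractions of $\eta$.) Finally, your closing remark on the converse is slightly off: from $\eta\circ r\circ\eta\simeq\eta$ alone one cannot conclude $r\circ\eta\simeq\id_\one$ without knowing $\eta$ is suitably monic; the paper instead observes that separability gives a retraction $A\to Z(A)$ directly, and $Z(A)\simeq\one$ for Azumaya algebras.
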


We will recall the definition of Azumaya algebras in higher algebra in \Cref{section:brauer}. Doing so, we will along the way correct a mistake in \cite[Proposition 1.4]{BRS}, which states this result without the assumption that the unit splits - we will provide counterexamples to this statement, cf. \Cref{ex:MoravaK} and \Cref{ex:cofibeta}.

In classical algebra, the assumption on the unit is automatic, and we have:
\begin{thm}[{\cite[Theorem 2.1]{AuslanderGoldman}}]
    Let $R$ be an ordinary commutative ring. An Azumaya algebra in $\Mod_R^\heart$ is separable. 
\end{thm}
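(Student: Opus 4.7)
The plan is to deduce the theorem from the characterization of separability in terms of bimodule projectivity (the lemma after \Cref{lm:general}, which asserts that in a semiadditively symmetric monoidal \category, an algebra $A$ is separable if and only if $A$ is projective as an $A\otimes A\op$-module). This reduces the problem to verifying a projectivity statement, which will follow from classical Morita theory applied to the defining properties of Azumaya algebras.

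First I would unwind the definition: an Azumaya $R$-algebra $A$ (in the classical sense, i.e.\ in $\Mod_R^\heart$, which is how the higher-algebraic definition recalled in \Cref{section:brauer} specializes) is a faithful finitely generated projective $R$-module for which the canonical map $A\otimes_R A\op \to \End_R(A)$ is an isomorphism of $R$-algebras. In particular, under this isomorphism, the left $A\otimes_R A\op$-module $A$ (with its canonical bimodule structure) corresponds to $A$ viewed as a left $\End_R(A)$-module in the natural way.

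Next I would invoke classical Morita theory. Since $A$ is a faithful finitely generated projective $R$-module, it is a progenerator of $\Mod_R^\heart$, so the functor $A\otimes_R -\colon \Mod_R^\heart \to \Mod_{\End_R(A)}^\heart$ is an equivalence. This equivalence sends $R$ to $A$. Since $R$ is a (rank-one free, hence) finitely generated projective $R$-module, its image $A$ is a finitely generated projective $\End_R(A)$-module. Transporting along the Azumaya isomorphism $A\otimes_R A\op \cong \End_R(A)$, we conclude that $A$ is finitely generated projective as an $A\otimes_R A\op$-module, compatibly with its bimodule structure.

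Applying the cited lemma to the semiadditive symmetric monoidal $1$-category $\Mod_R^\heart$ then yields separability. The main subtlety — really the only thing that needs attention — is to make sure that the projectivity produced by the Morita equivalence is recorded with the correct bimodule structure, namely the one pulled back along $A\otimes_R A\op \xrightarrow{\cong} \End_R(A)$; this is immediate from the construction of the Morita equivalence but deserves to be spelled out since separability depends on the multiplication map $A\otimes_R A\op \to A$ (equivalently, the evaluation map $\End_R(A)\to A$ at $1_A$) admitting a bimodule section, and this is precisely what projectivity of the bimodule $A$ provides.
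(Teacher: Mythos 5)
Your proof is correct, and it is essentially the argument of Auslander--Goldman that the paper cites: the paper itself gives no proof of this statement beyond the reference to \cite[Theorem 2.1]{AuslanderGoldman}, and that reference's proof is precisely your Morita-theoretic one (the progenerator $A$ is finitely generated projective over $\End_R(A)\cong A\otimes_R A\op$, so the projectivity characterization of separability applies). The only point worth noting is that the paper's own machinery suggests an alternative route via \Cref{prop:Azsep} --- Azumaya plus a splitting of the unit implies separable --- which would require separately checking that the unit of a classical Azumaya algebra splits; your argument produces the bimodule retraction directly and so sidesteps that step.
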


\subsection{Cartesian symmetric monoidal categories}
We conclude this Examples section with a situation where there are no interesting examples. The unit of a symmetric monoidal category is of course always separable, and we show:
\begin{prop}
Let $\C$ be cartesian symmetric monoidal. The only separable algebra in $\C$ is the unit, i.e. the terminal object. 
\end{prop}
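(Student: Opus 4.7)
The plan is to exploit the identification $A\otimes A\op = A\times A$ in the cartesian setting, together with the fact that the unit $\one$ is terminal. Given a separable algebra $A$ with bimodule section $s : A\to A\times A$ of $\mu$, I would first decompose $s$ via the two projections as $s = \langle s_1, s_2\rangle$ with $s_i : A\to A$.

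Next, I would unpack the $A$-bimodule structure on $A\times A$: the left $A$-action acts on the first factor (informally, $c\cdot (a,b) = (ca, b)$), and the right $A$-action acts on the second factor (informally, $(a,b)\cdot d = (a, bd)$), as befits the $A\op$-leg. Writing out left $A$-linearity of $s$ and projecting to the second coordinate produces the identity $s_2 \circ \mu = s_2 \circ \pi_2$ as morphisms $A\times A\to A$. Precomposing this with $\id_A\times \eta : A\to A\times A$ and using the unit axiom $\mu\circ (\id_A\times \eta) = \id_A$ together with $\pi_2\circ (\id_A\times \eta) = \eta\circ t_A$, where $t_A : A\to \one$ is the unique map, shows that $s_2 = s_2\circ \eta\circ t_A$. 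In particular $s_2$ factors through $\one$. A symmetric argument using right $A$-linearity yields $s_1 \circ \mu = s_1 \circ \pi_1$, and precomposing with $\eta\times \id_A$ shows $s_1$ also factors through $\one$.

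Consequently $s : A\to A\times A$ itself factors through $\one$, say as $A\xrightarrow{t_A} \one \xrightarrow{\langle u,v\rangle} A\times A$. The separability relation $\mu\circ s = \id_A$ then exhibits $\id_A$ as a morphism that factors through $\one$, so $A$ is a retract of $\one$. Since $\one$ is terminal, this forces $A\simeq \one$.

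The argument is thoroughly elementary; the only real bookkeeping is identifying the bimodule structure on $A\times A$ in cartesian coordinates, and there is no substantive obstacle. The argument also extends verbatim to an $\infty$-categorical cartesian setting because the factorization through $\one$ is automatically coherent: $t_A$ is essentially unique by terminality of $\one$.
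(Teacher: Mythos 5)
Your proof is correct and is essentially the paper's argument: the paper first reduces (via $\ho(\C)$ and the Yoneda embedding) to monoids in $\Set$ — a reduction it explicitly describes as purely aesthetic — and then runs the same computation element-wise, using left linearity to show $s_2$ is constant and right linearity to show $s_1$ is constant, so that $\id_A=\mu\circ s$ factors through the terminal object. Your diagrammatic phrasing is just the coordinate-free version of that same argument.
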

\begin{proof}
As $\C\to\ho(\C)$ preserves products, and as the unit object has an essentially unique algebra structure, we may assume $\C$ is a $1$-category. Using the (classical) Yoneda embedding, we may even assume $\C= \Set$.\footnote{These reductions are purely \ae sthetic, the proof goes through more or less unchanged in the general case. } 

Let $M$ be a monoid with multiplication map $\mu : M\times M\to M$ and neutral element $\eta: \pt\to M$, which we assume to be separable, with section $s: M\to M\times M $. We write $s$ as $(s_1,s_2)$. Left $M$-linearity of $s$ guarantees that $s_2$ is constant. Indeed, for any $x\in M$, we have $(s_1(x),s_2(x)) = s(x) = s(x\cdot 1) = x\cdot s(1) = x\cdot (s_1(1), s_2(1)) = (x\cdot s_1(1), s_2(1)) $; and similarly right $M$-linearity guarantees that $s_1$ is constant. This proves that $s$ is constant and hence $\mu\circ s$ is, i.e. $\id_M$ is constant, from which it follows that $M =\pt$.
\end{proof}
\section{Auslander-Goldman theory}\label{section:brauer}
In \cite{AuslanderGoldman}, Auslander and Goldman lay the foundations of a systematic study of separable algebras (in classical algebra). One of the key results that they prove is the following: for a (discrete) commutative ring $R$, an $R$-algebra $A$ is separable if and only if its center $C$ is separable over $R$, and $A$ is Azumaya over its center. This allows one to reduce the study of general separable algebras to two special cases : the commutative case, which is closely related to étale algebras, and the central case, which is closely related to the theory of Azumaya algebras and the Brauer group. 

Our goal in this section is to raise two questions such that a positive answer to both, or at least reasonable conditions under which they have positive answers, would allow one to give a similar treatment of separable algebras in homotopical algebra. 

We separate this key result in two parts: first, the center $C$ of $A$ is separable over $R$ (and commutative), and second, $A$ is Azumaya over its center.

In homotopical algebra, the center $Z(A)$ of $A$ is in general an $\mathbb E_2$-algebra, but if it is separable, it is therefore canonically $\mathbb E_\infty$, i.e. commutative, by \Cref{thm : commlift}. This raises the following question (cf.  \cite[Theorem 2.3]{AuslanderGoldman}):
\begin{ques}\label{question : sepcenter}
Let $A\in \Alg(\C)$ be a separable algebra. Is its center $Z(A)$ separable too ? 
\end{ques}

The second key result is that $A$ is Azumaya over its center $Z(A)$. We start by offering a few recollections about Azumaya algebras, along the way correcting an error in \cite{BRS} about the relation between separable algebras and Azumaya algebras.  Once this is done, we can phrase the second main question of this section. 

We then start this section by answering the Azumaya question (\Cref{question : centralazumaya}) in certain cases; and we then attack \Cref{question : sepcenter}, again answering it in certain cases. For both questions, we fall short of answering it in the generality of $\Mod_R(\Sp)$, where $R$ is some commutative ring spectrum - this is essentially because we lack ``residue fields'', as will be clear from our discussion. 

\subsection{Azumaya algebras}
We start by recalling a possible definition of Azumaya algebras: 
\begin{defn}
Let $\C$ be presentably symmetric monoidal. An algebra $A\in\Alg(\C)$ is \emph{Azumaya} if $\LMod_A(\C)$ is invertible in $\Mod_\C(\PrL)$. 
\pend \end{defn}

We also recall several equivalent characterizations. For this, we need the following proposition/definition:
\begin{prop}[{\cite[Proposition 2.1.3., Corollary 2.1.4.]{hopkinsluriebrauer}}]
Let $\C$ be presentably symmetric monoidal. Let $M$ be a dualizable object of $\C$. The following are equivalent: 
\begin{enumerate}
    \item $M$ generates $\C$ under $\C$-colimits, that is, the smallest tensor ideal of $\C$ closed under colimits and containing $M$ is the whole of $\C$; 
    \item The ($\C$-linear) functor $\hom(M,-): \C\to \RMod_{\End(M)}(\C)$ is an equivalence; 
    \item $\End(M)$ is ($\C$-linearly) Morita equivalent to the unit $\one$; 
    \item $M\otimes -$ is conservative.
\end{enumerate}
If $M$ satisfies one (and hence all) of these properties, it is called \emph{full}. Furthermore, any (and hence all) of these properties are stable under passing to the dual $M^\vee$.
\end{prop}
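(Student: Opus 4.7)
The plan is to exploit dualizability of $M$ systematically throughout. Writing $M^\vee$ for the dual, one has natural equivalences $\hom(M,-)\simeq M^\vee\otimes -$ and $\End(M)\simeq M^\vee\otimes M$, and $M$ is canonically a left $\End(M)$-module. This gives a $\C$-linear colimit-preserving adjunction
\[
F := -\otimes_{\End(M)} M : \RMod_{\End(M)}(\C) \rightleftarrows \C : \hom(M,-) =: G.
\]
The equivalence (2)$\Leftrightarrow$(3) is then essentially tautological: any $\C$-linear Morita equivalence $\RMod_{\End(M)}(\C)\simeq \C$ is determined by where it sends the free module $\End(M)$, which must go to an invertible, and the only natural candidate is $M$ itself; thus $G$ being an equivalence is the same as $\End(M)$ being $\C$-linearly Morita equivalent to $\one$ via the bimodule $M$.

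Next I would prove (1)$\Rightarrow$(2). The strategy is to check that the unit $\eta$ and counit $\epsilon$ of $F\dashv G$ are equivalences. On the free module $\End(M)$, the unit is the canonical identification $\End(M)\to G(F(\End(M)))=\hom(M,M)=\End(M)$. Since both $F$ and $G$ are $\C$-linear and colimit-preserving, and since $\End(M)$ generates $\RMod_{\End(M)}(\C)$ under $\C$-colimits, this upgrades to $\eta$ being a natural equivalence. For the counit, it suffices that $G$ be conservative; and $G=M^\vee\otimes -$, so this amounts to conservativity of $M^\vee\otimes -$. Using dualizability in the form $\map(M^\vee\otimes Y, X)\simeq \map(Y, M\otimes X)$, the kernel of $M^\vee\otimes -$ is precisely the right orthogonal of the $\C$-linear colimit-closed tensor ideal generated by $M$; this kernel is zero exactly when $M$ generates $\C$, i.e.\ under (1). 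I expect this orthogonality computation to be the main obstacle: one must verify that being right orthogonal to $M$ alone (as one gets from the bare definition of the kernel) is equivalent to being right orthogonal to the entire tensor ideal it generates, which is where dualizability is used crucially.

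The remaining implications are short. (2)$\Rightarrow$(4) is immediate: an equivalence is conservative, so $G=M^\vee\otimes -$ is conservative. (4)$\Rightarrow$(1) uses the same orthogonality computation in the other direction: conservativity of $M\otimes -\simeq \hom(M^\vee,-)$ says that the right orthogonal of the tensor ideal generated by $M^\vee$ is zero, i.e.\ that $M^\vee$ generates. Running the entire cycle (1)$\Rightarrow$(2)$\Rightarrow$(3)$\Rightarrow$(4)$\Rightarrow$(1) once with $M^\vee$ in place of $M$ (using $(M^\vee)^\vee\simeq M$) then establishes the $M\leftrightarrow M^\vee$ symmetry: all four conditions are invariant under passing to the dual, and in particular conservativity of $M\otimes -$ and of $M^\vee\otimes -$, as well as generation by $M$ and by $M^\vee$, are all equivalent to each other and to (2)–(3).
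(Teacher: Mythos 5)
The serious gap is your claim that (2) $\Leftrightarrow$ (3) is ``essentially tautological''. Only (2) $\Rightarrow$ (3) is immediate. Condition (3) asserts the existence of \emph{some} $\C$-linear equivalence $\RMod_{\End(M)}(\C)\simeq\C$; such an equivalence is classified by \emph{some} module $N$ via $-\otimes_{\End(M)}N$, and there is no reason for $N$ to be $M$ (already $M\otimes L$ for $L$ invertible gives a different equivalence), nor does the free module have to land on an invertible object (in $\Mod_k$ with $M=k^2$, the standard Morita equivalence sends $M_2(k)$ to $k^2$). So ``the only natural candidate is $M$'' is not an argument, and since in your remaining implications (3) only ever appears as a target, your cycle never establishes that (3) implies the other conditions. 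The paper closes this by proving (3) $\Rightarrow$ (1) directly: a Morita equivalence supplies bimodules with $X\otimes_{\End(M)}Y\simeq\one$, and $Y$ is a retract of $\End(M)\otimes Y\simeq M\otimes M^\vee\otimes Y$, which places $\one$ in the colimit-closed tensor ideal generated by $M$. You need this step, or some substitute for it.

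A second problem is that your ``orthogonality computation'' (used both for the conservativity of $M^\vee\otimes-$ in (1) $\Rightarrow$ (2) and for (4) $\Rightarrow$ (1)) silently assumes $\C$ is stable: you replace conservativity by triviality of a kernel, and you use that the right orthogonal of the generated ideal vanishes if and only if the ideal is everything — both of which rely on pointedness and on the theory of localizing subcategories in the stable presentable setting, whereas the proposition is stated for an arbitrary presentably symmetric monoidal $\C$. The paper avoids this with two elementary devices that work unstably: for (1) $\Rightarrow$ (4), the collection of $Z$ such that $Z\otimes f$ is an equivalence is a colimit-closed tensor ideal, so if it contains $M$ it contains $\one$; and conservativity of $M^\vee\otimes-$ follows from that of $M\otimes-$ because $M$ is a retract of $M\otimes M^\vee\otimes M$. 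I recommend substituting these for your orthogonality step. The remainder of your outline — the unit computation on the free module, (2) $\Rightarrow$ (4) up to the harmless interchange of $M$ and $M^\vee$, and running the cycle on $M^\vee$ to obtain the duality statement — is sound.
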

\begin{proof}
We prove (1) $\implies$ (4) $\implies$(2) $\implies$ (3) $\implies$ (1).

Assume (1), and let $f: X\to Y$ be a map such that $M\otimes f$ is an equivalence. The collection of $Z$'s such that $Z\otimes f$ is an equivalence is certainly a tensor ideal of $\C$, closed under colimits, so that by (1), it contains $\one$. In particular, $f$ is an equivalence, thus proving (4). 

Let us now assume (4). The functor $G= \hom(M,-): \C\to \RMod_{\End(M)}$ preserves limits and colimits hence has a left adjoint $F= M\otimes_{\End(M)}-$. The unit map at $\End(M)$, $\End(M)\to \hom(M, M\otimes_{\End(M)}\End(M))$ is easily seen to be an equivalence, and  both the source and the target of the unit $\id\to GF$ are $\C$-linear and colimit-preserving, hence the unit is an equivalence at all $\End(M)$-modules. 

To prove that the counit is an equivalence, by the triangle identities, it thus suffices to show that the right adjoint $\hom(M,-)$ is conservative, and because the forgetful functor $\RMod_{\End(M)}\to \C$ is conservative, it suffices to show that $\hom(M,-) : \C\to \C$ is conservative. By dualizability, this is equivalent to $M^\vee\otimes -$. Now if $M^\vee\otimes f$ is an equivalence, so is $M\otimes M^\vee \otimes M\otimes f$; and thus, so is $M\otimes f$, as $M$ is a retract of $M\otimes M^\vee \otimes M$. By conservativity of $M$, it follows that $f$ is an equivalence, and hence $\hom(M,-)$ is conservative. This proves (2). 

(2) clearly implies (3), by definition of Morita equivalence. 

So let us now assume (3). The existence of a Morita equivalence yields a right $\End(M)$-module $X$ and a left $\End(M)$-module $Y$ such that $X\otimes_{\End(M)}Y\simeq \one$. The smallest $\C$-linear subcategory of $\RMod_{\End(M)}$ closed under colimits and containing $\End(M)$ contains $X$, so that the smallest $\C$-linear subcategory of $\C$ closed under colimits and containing $Y\simeq \End(M)\otimes_{\End(M)}Y$ also contains $\one$. But now $Y$ is a retract (in $\C$) of $\End(M)\otimes Y \simeq  M\otimes M^\vee\otimes Y$ so that (1) follows.  
\end{proof}
We also briefly need: 
\begin{defn}\label{defn:atomic}
Let $\C$ be presentably symmetric monoidal, and let $\M$ be a $\C$-module in $\PrL$. An object $x\in\M$ is called $\C$-\emph{atomic} if the canonical map $c\otimes\hom(x,y)\to \hom(x,c\otimes y)$ is an equivalence for all $c\in\C,y\in\M$, and $\hom(x,-)$ preserves all colimits. Here, $\hom$ denotes the $\C$-valued hom object of $\M$. 
\pend \end{defn}
\begin{rmk}
This definition appears in \cite[Definition 2.2]{BMS} in the case where $\C$ is a \emph{mode}, so that it actually suffices to assume that $\hom(x,-)$ preserves colimits, cf. \cite[Remark 2.4]{BMS}. 
\pend \end{rmk}
The following is immediate from the definitions:
\begin{lm}
   Let $\C$ be presentably symmetric monoidal. 
   \begin{itemize}
       \item $\C$-atomic objects in $\C$ are exactly dualizable objects. 
       \item If $f:\M_0\to \M_1$ is an equivalence of $\C$-modules in $\PrL$, it carries $\C$-atomic objects to $\C$-atomic objects. 
   \end{itemize}
\end{lm}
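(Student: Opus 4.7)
For the first bullet, I would take $\M = \C$ and unravel both conditions in the definition of $\C$-atomicity. If $x$ is dualizable with dual $x^\vee$, then $\hom(x,-) \simeq x^\vee \otimes -$ naturally in $\C$; this functor preserves colimits because tensoring with any object preserves colimits in a presentably symmetric monoidal \category, and the linearity map $c \otimes (x^\vee \otimes y) \to x^\vee \otimes (c \otimes y)$ is just the symmetry, hence an equivalence. Conversely, suppose $x$ is $\C$-atomic. Specializing the linearity equivalence $c \otimes \hom(x,y) \to \hom(x,c\otimes y)$ to $y = \one$, we obtain that $c \otimes \hom(x,\one) \to \hom(x,c)$ is an equivalence for all $c \in \C$. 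Writing $x^\vee := \hom(x,\one)$, this is the standard criterion for $x$ to be dualizable in a closed symmetric monoidal \category: the functor $- \otimes x^\vee$ becomes right adjoint to $-\otimes x$, and one recovers unit and counit by adjunction. (Alternatively, one can simply note that this identifies $\hom(x,-)$ with $x^\vee \otimes -$, which is a \emph{left} adjoint, so its right adjoint $\hom(x^\vee, -)$ provides a candidate for $-\otimes x$ to be adjoint to, verifying dualizability.)

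For the second bullet, let $f: \M_0 \xrightarrow{\simeq} \M_1$ be an equivalence of $\C$-modules in $\PrL$ and let $x \in \M_0$ be $\C$-atomic. Because $f$ is a $\C$-linear equivalence, it automatically preserves the $\C$-valued internal hom: for all $a,b\in \M_0$, the canonical map $\hom_{\M_0}(a,b) \to \hom_{\M_1}(f(a),f(b))$ is an equivalence, as can be checked by mapping any $c\in\C$ into it and using $\C$-linearity of $f$ together with its essential surjectivity. Given any $y' \in \M_1$, write $y' \simeq f(y)$ and compute
\[
\hom_{\M_1}(f(x), c \otimes y') \simeq \hom_{\M_1}(f(x), f(c\otimes y)) \simeq \hom_{\M_0}(x, c\otimes y) \simeq c \otimes \hom_{\M_0}(x,y) \simeq c \otimes \hom_{\M_1}(f(x), y'),
\]
the third equivalence being atomicity of $x$. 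Finally, $\hom_{\M_1}(f(x), -) \simeq \hom_{\M_0}(x, -) \circ f^{-1}$ preserves colimits since both factors do ($f^{-1}$ as an equivalence, and $\hom_{\M_0}(x,-)$ by atomicity of $x$). Hence $f(x)$ is $\C$-atomic.

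Both assertions are essentially formal manipulations of the definitions, so I do not expect any substantive obstacle. The only mildly delicate point is the verification that a $\C$-linear equivalence in $\PrL$ preserves the $\C$-valued internal hom; but this is immediate from the fact that $\map_{\M_i}(c \otimes a, b) \simeq \map_\C(c, \hom_{\M_i}(a,b))$ characterizes $\hom_{\M_i}(a,b)$ uniquely, combined with $\C$-linearity of $f$ and the Yoneda lemma.
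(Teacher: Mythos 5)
Your proposal is correct, and it is essentially the argument the paper has in mind: the paper offers no written proof, declaring the lemma ``immediate from the definitions,'' and your unwinding --- specializing the projection formula at $y=\one$ to recover the standard dualizability criterion $c\otimes\hom(x,\one)\xrightarrow{\simeq}\hom(x,c)$, and transporting internal homs along a $\C$-linear equivalence via their corepresenting property --- is exactly that intended verification. The only loose spot is the parenthetical alternative in the first bullet (that $\hom(x,-)$ being a left adjoint ``provides a candidate'' adjunction is not by itself a proof of dualizability), but your main line of argument does not rely on it.
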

We can now prove: 
\begin{prop}[{\cite[Corollary 2.2.3.]{hopkinsluriebrauer}}]\label{prop : azeq}
Let $\C$ be presentably symmetric monoidal, and let $A\in\Alg(\C)$ be an algebra. The following are equivalent: 
\begin{enumerate}
    \item $A$ is Azumaya; 
    \item $A$ is dualizable, full, and the canonical map $A\otimes A\op\to \End(A)$ is an equivalence; 
    \item $A$ is dualizable, full, and there is an equivalence of algebras $A\otimes A\op\simeq \End(A)$; 
    \item There is some full dualizable module $M$ and an equivalence of algebras $A\otimes A\op\simeq \End(M)$; \item $A\otimes A\op$ is ($\C$-linearly) Morita equivalent to the unit $\one$; 
    \item There exists an algebra $B$, a full dualizable object $M$, and an equivalence $A\otimes B\simeq \End(M)$
    \item There exists an algebra $B$ and a ($\C$-linear) Morita equivalence between $A\otimes B$ and $\one$
    
\end{enumerate}
\end{prop}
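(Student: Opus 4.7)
The plan is to organize the seven conditions around the central equivalence (1) $\iff$ (5), which captures the essential Morita-theoretic content. The key observation is the canonical equivalence of $\C$-modules
\[
\LMod_A(\C)\otimes_\C \LMod_{A\op}(\C)\simeq \LMod_{A\otimes A\op}(\C)
\]
from \cite[Theorem 4.8.4.6]{HA}, together with $\LMod_{\one}(\C)\simeq \C$. From these, $\LMod_A(\C)$ being invertible in $\Mod_\C(\PrL)$ is equivalent to $\LMod_{A\otimes A\op}(\C) \simeq \C$ as $\C$-modules: the inverse, when it exists, must be identified with $\LMod_{A\op}(\C)$ by uniqueness of tensor inverses in any symmetric monoidal $\infty$-category. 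This is exactly condition (5).

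Next, I would dispatch the easy implications: (2) $\Rightarrow$ (3) $\Rightarrow$ (4) are immediate from weakening the statement, while (5) $\Rightarrow$ (7) follows by taking $B = A\op$, and (6) $\Rightarrow$ (7) follows from the previous proposition, which tells us $\End(M)$ for full dualizable $M$ is $\C$-linearly Morita equivalent to $\one$. For (4) $\Rightarrow$ (5), combine the identification $A\otimes A\op \simeq \End(M)$ with the same fact applied to $M$.

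The substantive step is (7) $\Rightarrow$ (2). Given a $\C$-linear equivalence $F: \LMod_{A\otimes B}(\C)\simeq \C$, set $M := F(A\otimes B)$, the image of the free rank-one module. Since $A\otimes B$ is $\C$-atomic in $\LMod_{A\otimes B}(\C)$ and atomicity is preserved by $\C$-linear equivalences, $M$ is dualizable in $\C$; an analogous argument using generation under $\C$-colimits shows $M$ is full. The equivalence $F$ induces an equivalence of algebras $A\otimes B \simeq \End_\C(M)$, yielding condition (6). To upgrade this to (2), I would use that the Morita equivalence determines the $\C$-linear dual of $\LMod_A(\C)$ uniquely, so that $\LMod_B(\C) \simeq \LMod_{A\op}(\C)$, which forces a canonical equivalence $B \simeq A\op$ of algebras. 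Choosing $M = A$ with its standard bimodule structure and tracing through $F$, the canonical map $A\otimes A\op \to \End(A)$ is then identified with the equivalence provided.

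The main obstacle is this last identification: ensuring that the abstract equivalence extracted from the Morita equivalence coincides with the \emph{canonical} algebra map $A\otimes A\op \to \End(A)$ appearing in (2), rather than just some equivalence as in (3) or (4). This reduces to tracking the bimodule $A$ through the equivalence $\LMod_{A\otimes A\op}(\C) \simeq \C$ and verifying that the algebra structure on $F(A)$ is the one induced by left and right multiplication --- a standard but somewhat delicate unravelling of the $\C$-module structure on $\LMod_{A\otimes A\op}(\C)$, essentially carried out in \cite[Corollary 2.2.3]{hopkinsluriebrauer}.
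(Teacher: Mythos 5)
Your overall architecture is sound and most of the implications match the paper's: the chain (2) $\Rightarrow$ (3) $\Rightarrow$ (4) $\Rightarrow$ (5) $\Rightarrow$ (7), the equivalence (6) $\Leftrightarrow$ (7) via the construction $M:=F(A\otimes B)$ and atomicity, and the observation that $\LMod_{A\otimes A\op}(\C)\simeq \LMod_A(\C)\otimes_\C\LMod_{A\op}(\C)$ are all essentially what the paper does. But there is a genuine gap in your substantive step (7) $\Rightarrow$ (2). You assert that the Morita equivalence identifies $\LMod_B(\C)$ with $\LMod_{A\op}(\C)$ and that this ``forces a canonical equivalence $B\simeq A\op$ of algebras.'' It does not: a $\C$-linear equivalence of module categories is precisely a Morita equivalence, and Morita equivalent algebras need not be equivalent as algebras (e.g.\ $A$ and $\End_A(A^n)$). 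The subsequent identification of $M$ with $A$, and hence of the abstract equivalence with the canonical map $A\otimes A\op\to\End(A)$, rests on this false step; you then acknowledge the remaining identification as the ``main obstacle'' and defer it to the reference, so the hardest part of (2) is never actually established.

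The repair --- which is how the paper argues --- is to route through (1) rather than through $B$. From (7) one gets that $\LMod_A(\C)$ is invertible. Since $\LMod_A(\C)$ is always dualizable in $\Mod_\C(\PrL)$ with dual $\LMod_{A\op}(\C)$, invertibility says exactly that the \emph{evaluation} functor $\LMod_{A\otimes A\op}(\C)\simeq\LMod_{A\op}(\C)\otimes_\C\LMod_A(\C)\to\C$, given by $-\otimes_{A\otimes A\op}A$, and the coevaluation are equivalences. The coevaluation being an equivalence gives dualizability of $A$ in $\C$; the evaluation functor sends the generator $A\otimes A\op$ to $A$, so being an equivalence it induces an equivalence of endomorphism algebras $A\otimes A\op\simeq\End_{A\otimes A\op}(A\otimes A\op)\xrightarrow{\ \simeq\ }\End(A)$, which one checks is the canonical map; fullness of $A$ then follows from the resulting identification $\LMod_{\End(A)}(\C)\simeq\C$. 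This uses only the canonical evaluation map, never an identification of $B$ with $A\op$. A related minor point: your justification of (1) $\Leftrightarrow$ (5) by ``uniqueness of tensor inverses'' is slightly circular as stated --- what you need is that the inverse of an invertible object is its dual, together with the identification of the dual of $\LMod_A(\C)$ as $\LMod_{A\op}(\C)$, which is the same mechanism that drives the argument above.
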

\begin{proof}
We prove (1) $\implies$ (2) $\implies$ (3) $\implies$ (4) $\implies$(5) $\implies$ (7) $\implies$ (1), and we prove (6) $\iff$ (7). 

Note that (2) $\implies$ (3) $\implies$ (4) are just each specializations of the previous one, so these implications are obvious, same for (5)$\implies$ (7). 

For (4) $\implies$ (5) (resp. (6)$\implies$(7)), we simply observe that for a full dualizable object $M$, $\End(M)$ is Morita equivalent to $\one$ by the previous proposition/definition. 

(7)$\implies$ (1) follows from the observation that $\LMod_A \otimes_\C\LMod_B\simeq \LMod_{A\otimes B}$, and hence (7) implies that $\LMod_A\otimes_\C\LMod_B\simeq \C$, which is the definition of Azumaya. 

We are left with (1)$\implies$(2) and (7)$\implies$(6). The proof of (7)$\implies$(6) poceeds by observing that any algebra Morita equivalent to $\one$ is of the form $\End(M)$ for some full dualizable $M$. Indeed, suppose $A$ is such an algebra, and fix a Morita equivalence $F:\LMod_A\simeq \C$. Note that $A\in\LMod_A$ is $\C$-atomic, i.e. $\hom_A(A,-): \LMod_A\to \C$ is $\C$-linear and colimit-preserving - indeed, $\hom_A(A,-)$ is $\C$-linearly equivalent to the forgetful functor. As $F$ is a $\C$-linear equivalence, $F(A) \in \C$ is also atomic by the previous lemma, and it is therefore dualizable, also by the previous lemma. Furthermore, we have $A\simeq \End_A(A)\op\simeq \End(F(A))\op\simeq \End(F(A)^\vee)$. 

It follows that $F(A)^\vee$ is a dualizable object with $\End(F(A)^\vee)$ Morita equivalent to the unit, so by the previous proposition/definition, it is full, which proves the claim.

Finally, we need to prove that (1) implies (2). The observation here is that $\LMod_A$ is always dualizable in $\Mod_\C$, so that invertibility is the property that the evaluation and coevaluation maps, $\LMod_{A\op}\otimes_\C\LMod_A\to \C$ and $\C\to \LMod_A\otimes_\C\LMod_{A\op}$ respectively, be equivalences. 

For the second one, it implies in particular that $A$ is proper, i.e. that $A$ is dualizable as an object of $\C$.

Next, note that the map $ \LMod_{A\otimes A\op}\simeq  \LMod_A\otimes_\C\LMod_{A\op}\to \C$ is given by tensoring over $A\otimes A\op$ with the $A\otimes A\op$-module $A$. It therefore sends $A\otimes A\op$ to $A$, and as it is an equivalence, it induces an equivalence of algebras $A\otimes A\op\simeq \End_{A\otimes A\op}(A\otimes A\op)\xrightarrow{\simeq} \End(A)$. It is easy to check that this is the canonical map. 

To prove (2), we are left with checking that $A$ is full. But it follows from what we just said that $\LMod_{\End(A)}$ was equivalent to $\C$, and $A$ is dualizable, so by the previous proposition/definition, $A$ is full, and so we are done. 
\end{proof}
A further key property of Azumaya algebras is their \emph{centrality}. 
\begin{lm}\label{lm:aziscentral}
    Let $A\in \Alg(\C)$ be an Azumaya algebra. In this case, the center of $A$ is equivalent to the unit $\one$.
\end{lm}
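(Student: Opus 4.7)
My plan is to use the characterization of Azumaya algebras given in \Cref{prop : azeq}, specifically the equivalence of (1) with (2): that the canonical algebra map $A\otimes A\op \to \End(A)$ is an equivalence, with $A$ a full dualizable object of $\C$. Under this identification, the $A$-bimodule structure on $A$ (coming from left and right multiplication) corresponds precisely to the canonical left $\End(A)$-module structure on $A$, since the canonical map sends $a\otimes b$ to the endomorphism ``multiply by $a$ on the left and $b$ on the right''. So we obtain
\[ Z(A) = \hom_{A\otimes A\op}(A,A) \simeq \hom_{\End(A)}(A,A). \]

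The second step is to compute this using fullness of $A$. The dual $A^\vee$ is again full dualizable (\Cref{prop : azeq} records this) and $\End(A^\vee) \simeq \End(A)\op$. Applying characterization (2) of fullness (the equivalence $\hom(A^\vee,-): \C \xrightarrow{\simeq} \RMod_{\End(A^\vee)}$) and rewriting in terms of left modules, we obtain an equivalence of $\C$-modules
\[ A\otimes - : \C \xrightarrow{\simeq} \LMod_{\End(A)}, \]
which sends $\one_\C$ to $A$ with its canonical $\End(A)$-action. Since this is an equivalence, it induces equivalences on hom objects, so in particular
\[ \hom_{\End(A)}(A,A) = \hom_{\End(A)}(A\otimes\one, A\otimes\one) \simeq \hom_\C(\one,\one) \simeq \one_\C. \]
Combining the two steps gives $Z(A)\simeq \one_\C$, as desired.

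I do not expect any real obstacle here: the argument is essentially a formal consequence of the characterizations already assembled in \Cref{prop : azeq}, together with the standard fact that a ``Morita'' equivalence $A\otimes - : \C\simeq \LMod_{\End(A)}$ induces equivalences on internal homs. The only point deserving a small check is the identification of the $A\otimes A\op$-module structure on $A$ with its $\End(A)$-module structure along the canonical map, but this follows immediately from the definition of the canonical map $A\otimes A\op \to \End(A)$.
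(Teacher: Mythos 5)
Your proof is correct, but it takes a different route from the paper's. The paper identifies $Z(A)=\hom_{A\otimes A\op}(A,A)$ with the endomorphism object of the identity functor of the $\C$-module $\LMod_A(\C)$, and then uses invertibility of $\LMod_A(\C)$ directly: the evaluation $\C\to \Fun^L_\C(\LMod_A(\C),\LMod_A(\C))$ is a $\C$-linear equivalence sending $\one$ to $\id$, whence $Z(A)\simeq \End(\one)\simeq\one$. You instead stay at the level of algebras: you invoke characterization (2) of \Cref{prop : azeq} to replace $A\otimes A\op$ by $\End(A)$ (correctly noting that the restriction of the canonical $\End(A)$-action on $A$ along the canonical map is the bimodule structure), and then use fullness of $A^\vee$ to produce the $\C$-linear Morita equivalence $A\otimes -:\C\simeq\LMod_{\End(A)}$ carrying $\one$ to $A$; since a $\C$-linear equivalence preserves $\C$-valued hom objects, $\hom_{\End(A)}(A,A)\simeq\hom_\C(\one,\one)\simeq\one$. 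Both arguments are formal consequences of \Cref{prop : azeq}; yours uses more of its content (the equivalence of (1) with (2), plus the Morita theory of full dualizable objects and the stability of fullness under dualization), while the paper's proof only needs the definition of Azumaya together with the identification of the center as endomorphisms of the identity $\C$-linear functor. The trade-off is that your version makes the module-theoretic mechanism explicit and avoids appealing to that identification, at the cost of a couple of routine compatibility checks ($\End(A^\vee)\simeq\End(A)\op$, $\hom(A^\vee,-)\simeq A\otimes-$, and the matching of module structures), all of which you have handled or flagged correctly.
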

\begin{proof}
    The center of $A$ is equivalent to the endomorphism object of the $\C$-module $\LMod_A(\C)$. Since the latter is invertible, the functor $\C\to \Fun^L_\C(\LMod_A(\C),\LMod_A(\C))$ is a $\C$-linear equivalence, and it sends $\one$ to $\id_{\LMod_A(\C)}$. It follows that $Z(A)\simeq\End(\id_{\LMod_A(\C)})\simeq \End(\one)\simeq \one$, as claimed. 
\end{proof}
In \cite[Proposition 1.4]{BRS}, it is claimed that an Azumaya algebra is necessarily separable, in analogy with \cite[Theorem 2.1.]{AuslanderGoldman}. Unfortunately, there is an error in their argument: in their notation, the module $\tilde F(A) = A\wedge_R A$ is \emph{not} the canonical bimodule $A\wedge_R A\op$, but rather the bimodule obtained by tensoring the canonical bimodule $A$ with the object $A$. There are, in fact, counterexamples to this statement. We give two: a local one, and a global one.

\begin{ex}\label{ex:MoravaK}
There are some associative ring structures on Morava K-theory $K(n)$ in the category $\Sp_{K(n)}$ of $K(n)$-local spectra, which are Azumaya algebras, cf. \cite{hopkinsluriebrauer}. However, none of these are separable: a bimodule splitting as in \Cref{defn:sep} would yield a retraction of $$E_n\simeq \Map_{K(n)\otimes K(n)\op}(K(n),K(n))\to K(n)$$ and there is clearly no such thing (the first equivalence follows from \Cref{lm:aziscentral}). 
\pend \end{ex}
\begin{ex}\label{ex:cofibeta}
The same example as in \Cref{rmk : notMorita} also provides a global example here, that is, without needing to localize. Namely, if $X$ is a type $0$ spectrum, such as the cofiber of $\eta$, $\End(X)$ is Morita equivalent to $\Sph$ and hence Azumaya, but we already argued that it is not separable.  
\pend \end{ex}

We can now state the corrected version of \cite[Proposition 1.4]{BRS}
\begin{prop}\label{prop:Azsep}
Let $A\in \Alg(\C)$ be an algebra. If $A$ is Azumaya, then $A$ is separable if and only if the unit $\eta : \one\to A$ admits a retraction. 
\end{prop}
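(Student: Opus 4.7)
The plan is to exploit the Azumaya hypothesis to obtain a Morita equivalence $\BiMod_A \simeq \C$, and translate the separability question through it. By \Cref{prop : azeq}, there is an algebra equivalence $A \otimes A\op \simeq \End(A)$ and $A$ is full and dualizable; combined with \Cref{lm:aziscentral}, which gives $Z(A) \simeq \one$, this ensures that $A$ is a compact projective generator of $\BiMod_A$ with $\hom_{A \otimes A\op}(A, A) \simeq \one$, so $\Phi := \hom_{A \otimes A\op}(A, -)$ is a $\C$-linear equivalence $\BiMod_A \xrightarrow{\simeq} \C$ sending $A$ to $\one$. Hence $\mu: A \otimes A\op \to A$ admits a bimodule section if and only if $\Phi(\mu)$ admits a section in $\C$, and my task reduces to computing the latter.

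To identify $\Phi(\mu)$, I would first compute $\Phi(A \otimes A\op)$. Via $A \otimes A\op \simeq \End(A)$, the source becomes $\End(A)$ as a left module over itself, which by dualizability factors as $A \otimes A^\vee$ with the $\End(A)$-action on the $A$-factor (so $A^\vee$ enters purely as a $\C$-object). By $\C$-linearity, $\Phi(A \otimes A\op) \simeq \Phi(A) \otimes A^\vee \simeq A^\vee$. The multiplication $\mu$, under $A \otimes A\op \simeq \End(A)$, is ``evaluate at $\eta$'' ($\phi \mapsto \phi \circ \eta$), and in the factorization $\End(A) \simeq A \otimes A^\vee$ this is precisely $\id_A \otimes \eta^\vee: A \otimes A^\vee \to A$. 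Applying the $\C$-linear functor $\Phi$ collapses this to $\Phi(\mu) = \eta^\vee: A^\vee \to \one$.

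Combining these, $\mu$ admits a bimodule section if and only if $\eta^\vee: A^\vee \to \one$ admits a section in $\C$, which by dualizability of $A$ is equivalent to $\eta: \one \to A$ admitting a retraction in $\C$. The part I expect to require the most care is the module-structure bookkeeping: checking that $A \otimes A\op \simeq A \otimes A^\vee$ holds as left $\End(A)$-modules in the precise form needed (the $A^\vee$-factor being ``$\C$-linear'' so that $\C$-linearity of $\Phi$ applies), and that $\mu$ correspondingly identifies with $\id_A \otimes \eta^\vee$. Once these identifications are in hand, the computation of $\Phi(\mu)$ and the final conclusion are formal.
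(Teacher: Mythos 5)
Your proof is correct, and its core computation coincides with the paper's: both hinge on factoring $\mu$ as $A\otimes A\op\xrightarrow{\simeq}\End(A)\xrightarrow{ev_\eta}A$, identifying $ev_\eta$ with $A\otimes(\eta^\vee:A^\vee\to\one)$ as a map of bimodules, and using dualizability to convert sections of $\eta^\vee$ into retractions of $\eta$. The genuine difference is in the ``only if'' direction: the paper deduces it separately from \Cref{cor:centerabsolute} together with \Cref{lm:aziscentral} (separability makes $Z(A)\simeq\one$ a retract of $A$), whereas you transport the entire question across the Morita equivalence $\Phi=\hom_{A\otimes A\op}(A,-):\BiMod_A\to\C$, so that both implications drop out of the single identification $\Phi(\mu)\simeq\eta^\vee$ — a clean uniformization. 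The one place to tighten is your justification that $\Phi$ is an equivalence: ``compact projective generator'' is not the operative notion here. What you actually need is that $A\otimes A\op\simeq\End(A)$ with $A$ full and dualizable (\Cref{prop : azeq}), so that $N\mapsto A\otimes N$ is a $\C$-linear equivalence onto $\LMod_{\End(A)}(\C)$ and $\Phi$ is its right adjoint, hence itself a $\C$-linear equivalence, carrying $A$ to $Z(A)\simeq\one$ by \Cref{lm:aziscentral}. With that substitution, the module-structure bookkeeping you flag at the end goes through exactly as in the paper's own argument.
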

\begin{proof}
First note that the multiplication map $A\otimes A\op\to A$ factors as $A\otimes A\op\to \End(A)\to A$, where the second map is evaluation at the unit $\eta : \one\to A$, as a map of bimodules.

If $A$ is Azumaya, it follows that this multiplication admits a bimodule section if and only if $ev_\eta : \End(A)\to A$ has an $A$-bimodule section. Now $\End(A)\simeq A\otimes A^\vee$ as $A$-bimodules, where the latter has the structure of $A$-bimodule coming from $A$, so that this map is really $A\otimes (A^\vee\xrightarrow{ev_\eta} \one)$. 

Finally, $A^\vee\to \one$ is dual to $\eta:  \one \to A$. So, if the unit has a retraction, then $ev_\eta: A^\vee \to \one$ has a section, and therefore so does $A\otimes (A^\vee\to \one)$, as a map of bimodules, and by the previous discussion, so does $A\otimes A\op\to A$, so that $A$ is separable. 

Conversely, if $A$ is separable, then the canonical map $Z(A)\to A$ admits a section (cf. \Cref{cor:centerabsolute}). As $A$ is Azumaya, we can combine this with \Cref{lm:aziscentral} to obtain that $\one \simeq Z(A)\to A$ admits a retraction. 
\end{proof}
We use this proposition to prove that, unlike in the commutative case, separability cannot be checked locally:
\begin{ex}\label{ex:GL}
    In \cite[Proposition 7.17]{GL}, Gepner and Lawson construct a twisted form of $M_2(\KU)$, that is, a $\KO$-algebra $Q$, necessarily Azumaya, for which $Q\otimes_{\KO}\KU\simeq M_2(\KU)$ as algebras. This algebra is not $M_2(\KO)$, in fact $\pi_*Q\cong \KU_*\langle C_2\rangle$, a twisted group ring. 

    It follows that $Q$ is not separable: it is Azumaya so by the above proposition, if it were separable, its unit would split. But it has no $\pi_1$, so such a splitting is impossible as $\pi_1(\KO)\neq 0$. Therefore we have a non-separable algebra, $Q$, whose basechange along a Galois-extension is separable - it follows that $\Alg^{sep}(\Mod_{\KU}^{hC_2})\to \Alg^{sep}(\Mod_{\KU})^{hC_2}$ is not an equivalence: the former is equivalent to $\Alg^{sep}(\Mod_{\KO})$, and the latter to the full subgroupoid of $\Alg(\Mod_{\KO})$ consisting of those algebras whose basechange to $\KU$ is separable. 
\pend \end{ex}
\begin{rmk}
Let us mention, without too much detail, the following interpretation of separable Azumaya algebras. Let $\C$ be an additively symmetric monoidal \category{}, and $A$ an Azumaya algebra therein. In this case, $A$ is separable (equivalently, its unit splits) if and only if it remains Azumaya in $\Syn_\C=\Fun^\times(\C\op,\Sp)$ under the symmetric monoidal Yoneda embedding $\C\to \Syn_\C$, if and only if it is ``absolutely Azumaya'', i.e. it remains Azumaya after applying any additive symmetric monoidal functor (the part of the definition of ``Azumaya'' which is not clearly preserved by any functor is the ``fullness'' property, but is here guaranteed by the retraction onto the unit). 

Sven van Nigtevecht has independently observed\footnote{Private communication.} that the obstruction theory from \cite{PVK}\footnote{Which used to be used in \Cref{subsection:obstruction}} can be used in the case where $A$ is an Azumaya algebra which remains Azumaya in $\Syn_\C$, for then the mapping spectrum in $\Mod_{A\otimes A\op}(\Syn_\C)$ from $A$ to itself is simply $\Map_{\Syn_\C}(\one_{\Syn_\C},\one_{\Syn_\C})\simeq \Map(\one_\C,\one_\C)_{\geq 0}$. From our perspective, this is explained by the fact that under this assumption, $A$ is actually separable.  
\pend \end{rmk}

In the setting of classical rings, a stronger result holds: if $A$ is dualizable, separable, and \emph{central}, i.e. its center $Z(A)$ is the unit $\one$, then $A$ is Azumaya. We do not know whether the converse holds in our generality, and we therefore raise it as a question:
\begin{ques}\label{question : centralazumaya}
Let $A\in\Alg(\C)$ be a dualizable separable algebra which is \emph{central}, i.e. the unit map $\one\to Z(A)$ is an equivalence. In particular, $A$ is full, as it retracts onto $Z(A)\simeq \one$. 

Is $A$ necessarily Azumaya ? 
\end{ques}
We provide a positive answer in the following cases:
\begin{thm}\label{thm:Brauer}
If $\C$ is one of the following: 
\begin{itemize}
    \item $\QCoh(X)$ for some (connective) spectral Deligne-Mumford stack $X$ \cite[Definition 1.4.4.2]{SAG}; 
    \item  $\Mod_R(\Sp)$, where $R$ is some commutative ring spectrum for which $R\otimes\mathbb F_p=0$ for all primes $p$; 
    \item $\Mod_R(\Sp)$, where $R$ is a commutative ring spectrum which is even, $2$-periodic and whose $\pi_0$ is regular noetherian, and in which $2$ is invertible.
\end{itemize}
then \Cref{question : centralazumaya} has a positive answer for $\C$: if $A\in\Alg(\C)$ is a dualizable separable algebra which is \emph{central}, i.e. the unit map $\one\to Z(A)$ is an equivalence, then $A$ is Azumaya. 
\end{thm}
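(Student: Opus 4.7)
The plan is to reduce, in each of the three listed cases, to checking a single map is an equivalence after basechange to sufficiently many (possibly graded) fields, where the classical Auslander–Goldman theorem applies.

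First, I observe that $A$ is automatically full: by \Cref{cor:centerabsolute} separability makes $\one \simeq Z(A)$ a retract of $A$, so any tensor ideal closed under colimits that contains $A$ must contain $\one$ and hence all of $\C$. Combined with dualizability, by \Cref{prop : azeq} it remains only to show that the canonical algebra map $\mu : A \otimes A\op \to \End(A)$ is an equivalence in $\C$. Both sides are dualizable objects of $\C$, so $\mu$ will be an equivalence provided it becomes one after any jointly conservative (on dualizable objects) family of symmetric monoidal, colimit-preserving functors.

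Second, in each listed case I construct such a family landing in $\Mod_k$ for a (possibly graded) field $k$:
\begin{itemize}
    \item For $\C = \QCoh(X)$, use the (geometric) points of $X$: pullback along $\Spec(\kappa(x)) \to X$ is symmetric monoidal and colimit-preserving, and the collection of such pullbacks is well known to be jointly conservative on perfect complexes for quasi-compact quasi-separated spectral DM stacks.
    \item For $R$ rational, the symmetric monoidal equivalence $\Mod_R \simeq D(\pi_*(R))$ with the derived category of the graded $\mathbb{Q}$-algebra $\pi_*(R)$ allows one to use localization at graded prime ideals followed by passage to graded residue fields; finite generation of the homotopy of perfect modules ensures joint conservativity.
    \item For $R$ even $2$-periodic with $\pi_0(R)$ regular noetherian (and $2$ invertible), the periodicity together with invertibility of $2$ identifies $\Mod_R$ with a version of $D(\pi_0(R))$, and residue fields of a regular noetherian ring are classically jointly conservative on perfect complexes.
\end{itemize}

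Third, I conclude over $\Mod_k$ for a (graded) field $k$. The rigidification results of \Cref{section:homotopycat,section:comm}, in particular \Cref{thm : homod=mod} applied to the separable algebra $A \otimes A\op$, allow me to check whether $\mu$ is an equivalence at the level of $\ho(\Mod_k) \simeq \mathrm{GrVect}_k$, where $A$ becomes a finite-dimensional (graded) $k$-algebra. By the same theorem, its center in $\ho(\Mod_k)$ agrees with $h Z(A) = h\one = k$, and separability persists. The classical Auslander–Goldman theorem (in its graded formulation, compatible with our symmetric monoidal sign conventions; the assumptions ensure we avoid characteristic-$2$ pathologies in graded commutativity) says exactly that such an algebra is (graded) Azumaya, i.e.\ $\mu$ is an iso in $\mathrm{GrVect}_k$, and hence an equivalence in $\Mod_k$. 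The main technical obstacle is ensuring that in case 2, the graded residue-field functors are jointly conservative on dualizable $R$-modules despite the lack of noetherian hypotheses on $\pi_*(R)$; this relies on the fact that perfect modules over a graded-commutative $\mathbb{Q}$-algebra have finitely-generated projective graded homotopy, so their maps are detected residue-field-wise by standard Nakayama-type arguments.
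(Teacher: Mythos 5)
Your overall strategy --- deduce fullness from the retraction of $A$ onto $Z(A)\simeq\one$, then check that $A\otimes A\op\to\End(A)$ becomes an equivalence after a jointly conservative family of symmetric monoidal functors landing in modules over graded fields --- is the paper's strategy (\Cref{lm:Brbootstrap} plus \Cref{prop:BrGrVect}). But there is a serious gap in your second case: you have read ``$R\otimes\mathbb F_p=0$ for all primes $p$'' as ``$R$ is rational''. It is not. The hypothesis is satisfied by Morava $E$-theories and more generally by non-connective ring spectra of chromatic flavour (the paper remarks on exactly this after the statement), and for such $R$ there is no equivalence $\Mod_R\simeq D(\pi_*R)$ and no supply of graded residue fields coming from commutative algebra on $\pi_*R$. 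The paper's proof of this case is where the real work lies: the nilpotence theorem reduces the vanishing of the (dualizable) cofiber of $A\otimes A\op\to\End(A)$ to its vanishing $K(n)$-locally for each $n$; the chromatic Nullstellensatz produces maps to Lubin--Tate theories $E(L)$ detecting nonvanishing of dualizable modules; and because Morava $K$-theory at $p=2$ is not homotopy commutative (so $K(n)_*$ is not symmetric monoidal), one must pass through the category of Milnor modules and an argument with idempotents in comodules to run the simplicity argument there. None of this is replaceable by the rational argument you give, which covers only a degenerate sub-case.

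Your case 3 has a similar, if milder, problem: $\Mod_R$ for an even $2$-periodic $R$ with regular noetherian $\pi_0$ is not ``a version of $D(\pi_0(R))$'', and the residue fields you need are not obtained by ordinary commutative algebra; one needs Mathew's generalized residue fields $K(\mathfrak p)$, constructed as $R$-module spectra by killing regular sequences, which are jointly conservative on perfect $R$-modules and --- this is precisely where $2\in\pi_0(R)^\times$ enters --- can be chosen homotopy commutative so that $K(\mathfrak p)_*$ is symmetric monoidal. Finally, at the graded-field endgame you appeal to a ``graded Auslander--Goldman theorem''; with Koszul signs this is not simply the classical statement (graded division algebras over a graded field are genuinely more complicated than in the ungraded case, and your case 1 does produce residue fields of characteristic $2$, so your parenthetical about avoiding such pathologies does not apply). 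The paper instead gives a short self-contained argument: central separable algebras in a semisimple symmetric monoidal category with no nontrivial idempotents in $\End(\one)$ are simple (\Cref{lm:sepimpliessemisimp}), applied to $A\otimes A\op$ (central by \Cref{lm:centertensor}), which forces $A\otimes A\op\to\End(A)$ to be injective; a dimension count over the graded field finishes.
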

\begin{rmk}
    The second situation of \Cref{thm:Brauer} is somewhat orthogonal to the first one: such a commutative ring $R$, unless it is rational, must be non-connective, and of ``chromatic'' flavour. For instance, Morava $E$-theories fall into this category. 

    The third situation allows for certain non-connective commutative $\mathbb F_p$-algebras at odd primes, such as $\mathbb F_p^{tS^1}$, but not, e.g., $\mathbb F_p^{tC_p}$. 
\pend \end{rmk}
\begin{proof}
    Combine \Cref{cor:BrDMStack}, \Cref{cor:Brchro} and \Cref{prop:BrEvenRegNoeth}. 
\end{proof}
The strategy of proof in all cases of \Cref{thm:Brauer}, which is also the one we will use in \Cref{section:center} to adress \Cref{question : sepcenter}, is to try to \emph{descend} the question to simpler and simpler $\C$'s, until we reach a classical algebraic $\C$, where the usual proofs just go through. 

The ``descent'' statement in this case is the following:
\begin{lm}\label{lm:Brbootstrap}
Let $f: \C\to \D$ be a \emph{conservative} symmetric monoidal functor. For a dualizable, full algebra $A\in\Alg(\C)$, if $f(A)$ is Azumaya, then so is $A$. 

More generally, if $f_i: \C\to \D_i$ is a jointly conservative family of symmetric monoidal functors, if each $f_i(A)$ is Azumaya, then so is $A$. 
\end{lm}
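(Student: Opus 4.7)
The plan is to use the characterization of Azumaya algebras given in item (2) of \Cref{prop : azeq}, namely that an algebra $A$ is Azumaya if and only if it is dualizable, full, and the canonical algebra map $A\otimes A\op\to \End(A)$ is an equivalence. The first two conditions are given by hypothesis, so the whole content of the lemma reduces to showing that this canonical map is an equivalence.

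The key observation is that for any symmetric monoidal functor $g: \C\to \D$ and any dualizable object $A\in\C$, $g$ preserves duals and internal homs against $A$: indeed, $g(A)$ is dualizable with dual $g(A^\vee)\simeq g(A)^\vee$, so in particular $g(\End(A))\simeq g(A\otimes A^\vee)\simeq g(A)\otimes g(A)^\vee \simeq \End(g(A))$, and similarly $g(A\otimes A\op)\simeq g(A)\otimes g(A)\op$. By naturality of the canonical map, $g$ sends the canonical map $A\otimes A\op\to\End(A)$ to the canonical map $g(A)\otimes g(A)\op\to \End(g(A))$.

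Applying this to each $f_i$: by the Azumaya hypothesis on $f_i(A)$, the canonical map $f_i(A)\otimes f_i(A)\op\to \End(f_i(A))$ is an equivalence (again by \Cref{prop : azeq}), so $f_i$ sends the canonical map $A\otimes A\op\to\End(A)$ to an equivalence. Since the family $(f_i)$ is jointly conservative, we conclude that $A\otimes A\op\to \End(A)$ is itself an equivalence, as desired.

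There is no real obstacle in this argument; the only thing to double-check is the naturality of the canonical map $A\otimes A\op\to \End(A)$ under symmetric monoidal functors on dualizable algebras, which is a formal consequence of the fact that this map is adjoint (under dualizability of $A$) to the multiplication map $A\otimes A\op\otimes A\to A$, a purely operadic construction that is preserved by any symmetric monoidal functor.
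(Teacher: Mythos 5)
Your proposal is correct and follows essentially the same route as the paper's proof: reduce to the criterion of item (2) in \Cref{prop : azeq}, note that a symmetric monoidal functor carries the canonical map $A\otimes A\op\to \End(A)$ of a dualizable algebra to the corresponding canonical map for $f_i(A)$, and conclude by joint conservativity. The closing remark on naturality of the canonical map is exactly the point the paper also invokes (phrased there as "$f$ is symmetric monoidal and $A$ is rigid"), so nothing is missing.
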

\begin{proof}
We deal with the case of a single functor, the other case being similar (or simply a consequence, by taking $f= (f_i)_{i\in I} : \C\to\prod_I \D_i$).

$A$ is already assumed to be dualizable and full, so by point 2. in \Cref{prop : azeq}, it suffices to show that the canonical map $A\otimes A\op\to \End(A)$ is an equivalence. 

The functor $f$ is symmetric monoidal, and $A$ is rigid, so that applying $f$ to this map yields the canonical map $f(A)\otimes f(A)\op\to \End(f(A))$. By conservativity of $f$, if this is an equivalence, then so was the canonical map.
\end{proof}
The key example we try to reduce to is the category of modules over a graded field. 
\begin{nota}
    We consider the category of graded abelian groups as symmetric monoidal using the Koszul convention: the symmetry isomorphism $A\otimes B\cong B\otimes A$ is $a\otimes b\mapsto (-1)^{|a||b|}b\otimes a$ for homogeneous elements $a,b$ of respective degrees $|a|,|b|$.
\pend \end{nota}
\begin{defn}
    A graded field is a commutative algebra $k$ in graded abelian groups such that every homogeneous element $x\in k_*$ is invertible. 

    A graded division algebra is similar, except we do not require commutativity.
\pend \end{defn}
\begin{rmk}
    Graded fields are easy to classify: they are either fields concentrated in degree $0$, or of the form $k[t^{\pm 1}]$ for some $t$ of positive degree - necessarily even if the characteristic of $k$ is not $2$. 

    On the other hand, graded division algebras are more complicated to classify: even if the degree $0$ part is a field (i.e. commutative), the non-commutativity of the multiplication in higher degrees allows for a wealth of examples. 
\pend \end{rmk}

\begin{prop}\label{prop:BrGrVect}
Let $k$ be a graded field, and $\D$ the category of graded $k$-vector spaces. Any central separable algebra in $\D$ is Azumaya. 
\end{prop}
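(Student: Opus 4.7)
The plan is to show that a central separable $A$ over a graded field $k$ is in fact graded simple as an algebra, and that $A \otimes A\op$ is itself simple, which together with graded Jacobson density produces the isomorphism $A \otimes A\op \simeq \End(A)$ that characterizes Azumaya algebras by (2) of \Cref{prop : azeq}.

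First, I would verify that $A$ is dualizable (finite-dimensional over $k$): a separability idempotent $e = \sum_{i=1}^n a_i \otimes b_i$ is a finite tensor, and the bimodule relation together with $\sum a_i b_i = 1$ and $Z(A) = k$ forces $A$ to be generated as a $k$-module by finitely many of the $a_i, b_i$. Then, \Cref{cor : retractfreeinmodule} applied to left $A$-modules shows that every left $A$-module is a retract of a free module, hence projective, so $A$ is semisimple in the graded Artin--Wedderburn sense: a finite product of graded simple algebras. Since $Z(A) = k$ is one-dimensional and the center of such a product splits across factors, there can only be one factor, i.e., $A$ is graded simple.

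Next, I would show $A \otimes A\op$ is itself simple. The same separability argument (with $A \otimes A\op$ in place of $A$, using closure of separability under $\otimes$) gives that $A \otimes A\op$ is semisimple. For simplicity, I would invoke the classical identity $Z(A \otimes B) \simeq Z(A) \otimes Z(B)$ for $A$ central simple and $B$ arbitrary, adapted to the graded setting: applied with $B = A\op$ it yields $Z(A \otimes A\op) \simeq k$, and a semisimple algebra with one-dimensional center is simple. The structure map $\varphi : A \otimes A\op \to \End(A)$ is then injective by simplicity; on the other hand, \Cref{cor:centerabsolute} identifies $\End_{A \otimes A\op}(A) \simeq Z(A) \simeq k$, so graded Jacobson density (or Burnside's theorem, in the finite-dimensional case) makes $\varphi$ surjective, hence an equivalence of algebras. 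Fullness of $A$ follows from the retraction $A \to k$ of the unit also supplied by \Cref{cor:centerabsolute}, and we conclude via characterization (2) of \Cref{prop : azeq} that $A$ is Azumaya.

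The main obstacle is verifying that classical semisimple algebra theory — Artin--Wedderburn, the identity $Z(A \otimes B) \simeq Z(A) \otimes Z(B)$ for $A$ central simple, and Jacobson density — transports cleanly to the graded Koszul setting. Away from characteristic $2$, any odd-degree element $x$ of a graded field would satisfy $x \cdot x = -x \cdot x$ by the Koszul rule, forcing $x^2 = 0$ and contradicting invertibility, so $k$ is concentrated in even degrees and Koszul signs on $k$ itself are trivial, introducing only harmless bookkeeping on graded modules; in characteristic $2$ signs vanish entirely. In both cases the transport of the classical theorems is essentially routine but should be checked carefully before being relied upon.
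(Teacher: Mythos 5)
Your proof is correct in outline and reaches the same two pivotal facts as the paper — simplicity of $A\otimes A\op$ and injectivity of $A\otimes A\op\to\End(A)$ — but by a noticeably different and more "classical" route. The paper never establishes that $A$ itself is semisimple as an algebra: it deduces simplicity of a central separable algebra directly from \Cref{lm:sepimpliessemisimp} (any two-sided ideal splits off as a bimodule summand because the ambient category $\mathbf{GrVect}_k$ is semisimple and $A\otimes A\op$ is separable, so by \Cref{lm : unitsum} it is cut out by a central idempotent, which centrality then kills), and it gets $Z(A\otimes A\op)\simeq k$ from \Cref{lm:centertensor}, a purely categorical identity valid for any two separable algebras. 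You instead route through graded Artin--Wedderburn for $A$ and for $A\otimes A\op$, the classical centralizer identity $Z(A\otimes B)\simeq Z(A)\otimes Z(B)$ for $A$ central simple, and graded Jacobson density for surjectivity where the paper just counts dimensions. Your approach works, but it forces you to transport three pieces of classical structure theory to the Koszul-graded setting, each of which the paper's lemmas are specifically designed to avoid; the paper's argument is the more economical one here. On the plus side, you explicitly address dualizability and fullness, which the paper's proof of this proposition leaves implicit (fullness is only recorded in the discussion surrounding \Cref{question : centralazumaya}, and the concluding dimension count silently presupposes finite-dimensionality).

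The one step I would not accept as written is your finiteness argument. The claim that a central separable algebra over a (graded) field is finite-dimensional is true — it is the graded analogue of Auslander--Goldman's Theorem 2.1, which the paper itself cites for exactly this purpose — but your justification, that the bimodule relation "forces $A$ to be generated as a $k$-module by finitely many of the $a_i,b_i$", is false as stated: for $A=M_n(k)$ the separability idempotent $\sum_i e_{i1}\otimes e_{1i}$ involves only $2n$ matrix units, which do not span the $n^2$-dimensional $A$. The actual proof (Villamayor--Zelinsky / Auslander--Goldman) is a genuine Morita-theoretic argument, not a one-liner. Either cite that result, or note that in every application in the paper (\Cref{thm:Brauer} and its corollaries) the algebra is assumed dualizable, so finite-dimensionality over the graded field is given and this step can be skipped entirely.
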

One way to go about this proof is to prove the following lemma, which is classical in the ungraded case and most likely well-known in the graded case too. There is, however, an easier proof given our assumption, so we will simply mention the lemma here and let the reader fill in the details of this proof if they are interested. 
\begin{lm}\label{lm:gradeddivision}
    Let $k$ be a graded field, and $D,D'$ central graded division algebras over $k$. The algebra $D\otimes_k D'$ is graded simple, i.e. it has no nontrivial homogeneous ideal.
\end{lm}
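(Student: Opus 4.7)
The plan is to adapt the classical proof that a tensor product of central simple algebras over a field is simple, being careful with the Koszul sign convention throughout. Specifically, I will show that any nonzero homogeneous two-sided ideal $I \subseteq D \otimes_k D'$ must contain a homogeneous unit, hence coincide with $D \otimes_k D'$.

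Given such an $I$, every element can be written as $\sum_{i=1}^n d_i \otimes d'_i$ with $d_i \in D$ and $d'_i \in D'$ homogeneous. Pick a nonzero homogeneous $x \in I$ minimizing this length $n$. A standard cleaning step shows the $d'_i$ may be assumed $k$-linearly independent (otherwise rewriting reduces $n$), and multiplying on the left by the homogeneous unit $d_1^{-1} \otimes 1$ we may further assume $d_1 = 1$.

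The key move is to take graded commutators with elements $d \otimes 1$ for $d \in D$ homogeneous. Since $I$ is two-sided it is closed under the operation $[d \otimes 1, x]_{\mathrm{gr}} := (d \otimes 1) x - (-1)^{|d||x|} x (d \otimes 1)$. Using the Koszul product $(a \otimes a')(b \otimes b') = (-1)^{|a'||b|} ab \otimes a'b'$ and the identity $|d_i| + |d'_i| = |x|$, a direct calculation yields
\[
[d \otimes 1, x]_{\mathrm{gr}} = \sum_{i=1}^n [d, d_i]_{\mathrm{gr}} \otimes d'_i.
\]
The $i = 1$ term vanishes because $d_1 = 1$, so this element has length $\leq n-1$ and lies in $I$; minimality of $n$ forces it to be zero, and linear independence of the $d'_i$ then gives $[d, d_i]_{\mathrm{gr}} = 0$ for every $i$ and every homogeneous $d \in D$. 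Thus each $d_i$ lies in the graded center of $D$, which equals $k$ by the centrality hypothesis. Consequently $x$ collapses (up to Koszul signs) to $1 \otimes d'$ with $d' = \sum_i (\pm d_i) d'_i \in D'$ homogeneous and nonzero (since the $d'_i$ are linearly independent and $d_1 = 1$). Homogeneous nonzero elements of the graded division algebra $D'$ are invertible, so $x$ is a unit of $D \otimes_k D'$ and $I$ is the whole algebra.

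The only point at which the argument departs from the ungraded classical one is the bookkeeping of Koszul signs: one must use the graded commutator rather than the ordinary commutator, and interpret ``central'' in the graded (Koszul) sense. Both are routine once set up, so I expect no serious obstacle beyond verifying the sign computation above.
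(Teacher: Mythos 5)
Your proof is correct, and it is worth noting that the paper deliberately does \emph{not} prove this lemma: it is stated with the comment that the reader may fill in the details, and the paper instead reaches \Cref{prop:BrGrVect} by a different route, using separability. Concretely, \Cref{lm:sepimpliessemisimp} shows that in a semisimple symmetric monoidal abelian category every two-sided ideal of a separable algebra splits off as a ring factor, so a \emph{central} separable algebra is automatically simple; this is then applied to $A\otimes A\op$. What you have written is the classical central-simple-algebra commutator argument transported to the graded setting, and the sign bookkeeping is right: since $|x|=|d_i|+|d_i'|$, the two Koszul signs $(-1)^{|d||x|}$ and $(-1)^{|d||d_i'|}$ combine to $(-1)^{|d||d_i|}$, giving exactly $[d\otimes 1,x]_{\mathrm{gr}}=\sum_i[d,d_i]_{\mathrm{gr}}\otimes d_i'$; and the graded (Koszul) center is indeed the correct reading of ``central'', since that is what $\hom_{A\otimes A\op}(A,A)$ computes in graded vector spaces. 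Two small points you could make explicit: first, the final collapse $d_i\otimes d_i'=1\otimes d_id_i'$ is genuinely sign-free because a graded field is concentrated in even degrees unless the characteristic is $2$ (cf.\ the paper's remark classifying graded fields), so the signs you hedge on are all $+1$; second, your argument only uses centrality of $D$ and the division property of $D'$, so you in fact prove the slightly stronger classical statement. The trade-off between the two approaches: your lemma is more general in that it needs no separability hypothesis and applies to arbitrary central graded division algebras, whereas the paper's route uses separability to bypass the lemma entirely and is shorter given that standing assumption.
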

We are in a simpler situation, as we assume separability:
\begin{lm}\label{lm:sepimpliessemisimp}
    Let $\D$ be a symmetric monoidal abelian category which is semi-simple, and let $A\in\Alg(\C)$ be a separable algebra. Any (bilateral) ideal $I$ in $A$ splits: there is an isomophism of algebras $A\cong I\times A/I$. 

    In particular, if $A$ is central, i.e. $\one\cong Z(A)$, and $\End_\D(\one)$ has no nontrivial idempotents, then any (bilateral) ideal is $0$ or $A$. 
\end{lm}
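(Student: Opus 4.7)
The plan is to decompose $A$ as a product of algebras by using \Cref{lm : unitsum} inside the $\mathbb{E}_1$-monoidal category $\BiMod_A(\D)$, whose unit is $A$ itself. First, I would argue that the bilateral ideal $I$ is a direct summand of $A$ as an $A$-bimodule. Since $I$ is a sub-$A$-bimodule, the short exact sequence $0\to I\to A\to A/I\to 0$ is a sequence of $A$-bimodules; by semi-simplicity of $\D$, it splits in $\D$. Now, by \Cref{lm:general}(3), $A\otimes A\op$ is separable, and bimodules over $A$ are left modules over $A\otimes A\op$, so \Cref{cor:sectionunderlying} lifts the underlying splitting to an $A$-bimodule splitting. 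This yields a bimodule decomposition $A\simeq I\oplus (A/I)$.

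Next, I would apply \Cref{lm : unitsum} to the semi-additively $\mathbb{E}_1$-monoidal category $(\BiMod_A(\D),\otimes_A)$, which has unit $A$. Here the hypotheses are clear: $\BiMod_A(\D)$ is semi-additive (the forgetful to $\D$ creates finite biproducts) and $\otimes_A$ preserves these biproducts in each variable. The previous step exhibits the unit as a direct sum, so the lemma produces essentially unique associative algebra structures on $I$ and $A/I$ in $\BiMod_A(\D)$ together with an equivalence $A\simeq I\times (A/I)$ in $\Alg(\BiMod_A(\D))$. To descend this to $\Alg(\D)$, I would use the lax monoidal forgetful functor $\BiMod_A(\D)\to\D$: it induces a functor $\Alg(\BiMod_A(\D))\to\Alg(\D)$ and preserves products, since both forgetful functors $\BiMod_A(\D)\to\D$ and $\Alg(-)\to (-)$ create finite products. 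The result is an equivalence $A\simeq I\times (A/I)$ of $\mathbb{E}_1$-algebras in $\D$, proving the first claim.

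For the ``in particular'' statement, suppose $I$ is a bilateral ideal with $I\neq 0, A$. The decomposition $A\simeq I\times (A/I)$ in $\Alg(\D)$ has both factors nonzero, and the corresponding projection $p_I: A\to A$ (going $A\to I\to A$ via the summand inclusion) is a nontrivial idempotent $A$-bimodule endomorphism of $A$, i.e.\ it is neither $0$ nor $\id_A$. Now by the centrality assumption and \Cref{cor:centerabsolute}, we have
\[\pi_0\map_{\BiMod_A}(A,A) \cong \pi_0\map(\one, Z(A)) \cong \pi_0\map(\one,\one) = \End_\D(\one),\]
which by hypothesis has only $0$ and $1$ as idempotents. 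This contradicts the existence of the nontrivial idempotent $p_I$, so $I\in\{0,A\}$.

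The main care needed is in the translation between algebras in $(\BiMod_A(\D),\otimes_A)$ and algebras in $(\D,\otimes)$: one needs the lax monoidal forgetful $\BiMod_A(\D)\to \D$ to genuinely promote the product equivalence produced by \Cref{lm : unitsum} into an algebra equivalence in $\D$. Everything else is a fairly formal assembly of \Cref{cor:sectionunderlying}, \Cref{lm : unitsum}, and \Cref{cor:centerabsolute}.
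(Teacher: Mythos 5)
Your proof is correct and follows essentially the same route as the paper: semi-simplicity splits $I\hookrightarrow A$ in $\D$, separability of $A\otimes A\op$ together with \Cref{cor:sectionunderlying} upgrades this to an $A$-bimodule splitting, and \Cref{lm : unitsum} applied to $(\BiMod_A(\D),\otimes_A)$ yields the algebra decomposition $A\cong I\times A/I$. For the ``in particular'' the paper instead invokes $Z(A\times B)\cong Z(A)\times Z(B)$, but your identification of the splitting idempotent with an idempotent of $\End_{\BiMod_A}(A)\cong\End_\D(\one)$ via $Z(A)\simeq\one$ is the same argument in only slightly different clothing.
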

\begin{proof}
    As $\D$ is semi-simple, the inclusion $I\to A$, which is a morphism of $A\otimes A\op$-modules, admits a section in $\D$. Because $A\otimes A\op$ is separable, \Cref{cor:sectionunderlying} implies that it admits a section of $A$-bimodules. The result follows by \Cref{lm : unitsum}. 

    The ``in particular'' follows from the fact that $Z(A\times B)\cong Z(A)\times Z(B)$. 
\end{proof}
\begin{lm}\label{lm:centertensor}
    Let $\C$ be idempotent-complete, and let $A,B\in\Alg(\C)$ be separable. The canonical map $Z(A)\otimes Z(B)\to Z(A\otimes B)$ is an equivalence. 
\end{lm}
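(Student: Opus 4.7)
The plan is to exhibit both $Z(A) \otimes Z(B)$ and $Z(A \otimes B)$ as splittings of the same idempotent on $A \otimes B$, and then verify that the canonical comparison map of the statement realizes the induced identification of splittings.

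By \Cref{lm:general}(3), the tensor $A \otimes B$ is separable, so by \Cref{cor:centerabsolute}, $Z(A \otimes B)$ exists and sits as a retract of $A \otimes B$. Similarly, $Z(A)$ and $Z(B)$ are retracts of $A$ and $B$ respectively, so $Z(A) \otimes Z(B)$ is a retract of $A \otimes B$. Write $e_{A \otimes B}$ and $e_A \otimes e_B$ for the two corresponding idempotents on $A \otimes B$. Without loss of generality, I would embed $\C$ symmetric monoidally and fully faithfully into an enlargement with internal homs (e.g., via Yoneda into Day convolution presheaves); this preserves all the hom objects in question by \Cref{prop : homsep}, so proving the result in the enlargement suffices.

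The heart of the argument is the identity $e_{A \otimes B} = e_A \otimes e_B$. By \Cref{rmk:centerretractlinear}, for a separable $C$ the idempotent $e_C: C \to C$ unpacks to the composite
\begin{equation*}
C \simeq \one \otimes C \xrightarrow{\epsilon_C \otimes C} (C \otimes C\op) \otimes C \xrightarrow{\alpha_C} C,
\end{equation*}
where $\epsilon_C := s_C \circ \eta_C$ is the underlying separability idempotent and $\alpha_C$ is the bimodule action. Under the canonical symmetry $(A \otimes A\op) \otimes (B \otimes B\op) \simeq (A \otimes B) \otimes (A \otimes B)\op$, the element $\epsilon_A \otimes \epsilon_B$ is a separability idempotent for $A \otimes B$ (as used in the proof of \Cref{lm:general}(3)), and $\alpha_{A \otimes B}$ corresponds to $\alpha_A \otimes \alpha_B$. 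A direct diagram chase using these identifications then gives $e_{A \otimes B} = e_A \otimes e_B$, so the two splittings are canonically equivalent. To conclude, one checks the canonical map $Z(A) \otimes Z(B) \to Z(A \otimes B)$ of the statement — which corresponds under the universal property of $Z(A \otimes B)$ to the tensor $Z(A) \otimes A \otimes Z(B) \otimes B \to A \otimes B$ of the individual action maps — intertwines the retractions $r_A \otimes r_B$ and $r_{A \otimes B}$, so that it induces the canonical identification of splittings.

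The main obstacle is the coherent bookkeeping of symmetries in verifying $e_{A \otimes B} = e_A \otimes e_B$: at the level of underlying endomorphisms this is the classical Sweedler-level calculation $(e_i \otimes g_j)(x \otimes y)(f_i \otimes h_j) = (e_i x f_i) \otimes (g_j y h_j)$, but in the $\infty$-categorical setting the "equality'' is a homotopy built from structure isomorphisms of $\C^\otimes$. Since all the swaps involved are standard symmetries of the symmetric monoidal structure, the required coherences are automatic from the coherence theorem for symmetric monoidal \categories, so this is more a matter of careful diagram chasing than of genuine obstruction.
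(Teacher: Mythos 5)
Your proof is correct, but it takes a genuinely different route from the paper's. You exhibit both sides as retracts of $A\otimes B$, check that the two idempotent endomorphisms $e_{A\otimes B}$ and $e_A\otimes e_B$ agree (the coherent version of the Sweedler computation $\sum_{i,j}(e_i\otimes g_j)(x\otimes y)(f_i\otimes h_j)=(\sum_i e_ixf_i)\otimes(\sum_j g_jyh_j)$), and then verify that the canonical map intertwines the inclusions so that it realizes the resulting identification of retracts. The paper instead observes that the canonical map is an instance of the natural transformation $\hom_R(M,N)\otimes\hom_S(P,Q)\to\hom_{R\otimes S}(M\otimes P,N\otimes Q)$, which is an equivalence at the free tuple $(R,R,S,S)$; a natural transformation that is an equivalence at one object is an equivalence at any retract of it, and separability exhibits $(A,A,B,B)$ as a retract of $(A\otimes A\op,A\otimes A\op,B\otimes B\op,B\otimes B\op)$ — no idempotent bookkeeping at all. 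The naturality-plus-retract argument is shorter and more flexible: the remark following the lemma notes that it still works if only $A$ is separable and $B$ is merely smooth (or $A$ proper), a generalization your argument does not give since it uses separability idempotents on both factors. What your approach buys is an explicit description of the equivalence and its inverse in terms of the separability idempotents. The two checks you defer are indeed routine, and your implicit observation that a mere homotopy between the two idempotent endomorphisms of $A\otimes B$ (together with the compatibility of inclusions) suffices to identify the retracts — since $(r\circ\iota')\circ(r'\circ\iota)\simeq\id$ only uses the homotopy class — correctly disposes of the coherence worry you raise at the end.
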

\begin{proof}
    Note that this a map of the form $\hom_R(M,N)\otimes \hom_S(P,Q)\to \hom_{R\otimes S}(M\otimes P,N\otimes Q)$, and the latter is natual in $M,N,P,Q$. Furthermore, for $M=N=R, P=Q=S$, it is clearly an equivalence. Hence it is so for any tuple $(M,N,P,Q)$ which is a retract of $(R,R,S,S)$.

    By separability of $A,B$, $(A,A,B,B)$ is a retract of $(A\otimes A\op,A\otimes A\op,B\otimes B\op,B\otimes B\op)$ and so we are done. 
\end{proof}
\begin{rmk}
    In fact an easy modification of this proof shows that it suffices that $A$ is separable if we also assume that $B$ is smooth, or that $A$ is proper, see \cite[Section 4.6.4]{HA} for definitions.
\pend \end{rmk}
\begin{proof}[Proof of \Cref{prop:BrGrVect}]
As $k$ is a graded field, the category $\D$ of graded $k$-vector spaces is semi-simple, and $\End_\D(\one)= \End(k) = k$ has no nontrivial idempotents. 

In particular, by \Cref{lm:sepimpliessemisimp} if $A$ is central and separable, then it is simple: it has no nontrivial ideals.  

We apply this to $A\otimes A\op$ instead: it is still separable (\Cref{lm:general}) and central (\Cref{lm:centertensor}), and therefore by the above it is simple.  

It follows that the canonical map $A\otimes A\op\to \End(A)$, which is an algebra map, has no kernel, i.e. it is injective. Comparing the dimensions of both sides implies that it is an isomorphism\footnote{We are in a graded setting, but over a graded field, so dimensions still make sense.}. By \Cref{prop : azeq}, point 2., we are done. 
\end{proof}
The case of ordinary fields is enough to bootstrap to all connective Deligne-Mumford stacks:
\begin{cor}\label{cor:BrDMStack}
    Let $X$ be a (connective) Deligne-Mumford stack. Any dualizable central separable algebra in $\QCoh(X)$ is Azumaya. 
\end{cor}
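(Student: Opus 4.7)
The plan is to apply \Cref{lm:Brbootstrap} to reduce to \Cref{prop:BrGrVect} by exhibiting a jointly conservative family of symmetric monoidal functors $\QCoh(X) \to \mathbf{GrVect}_\kappa$ (for various fields $\kappa$), along which dualizable central separable algebras go to dualizable central separable algebras.

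I would proceed in three steps. First, since $X$ is Deligne--Mumford, I can choose an étale surjection $\coprod_i \Spec(R_i) \to X$ with each $R_i$ a connective commutative ring spectrum; pullback along this cover gives a jointly conservative family of symmetric monoidal colimit-preserving functors $\QCoh(X) \to \Mod_{R_i}$ by flat descent. Second, for each connective $R = R_i$ and each prime $\mathfrak p$ of $\pi_0(R)$, I basechange along the composite $\mathbb{E}_\infty$-map $R \to \pi_0(R) \to \kappa(\mathfrak p)$ to land in $\Mod_{H\kappa(\mathfrak p)}$. Third, I post-compose with $\pi_* : \Mod_{H\kappa(\mathfrak p)} \to \mathbf{GrVect}_{\kappa(\mathfrak p)}$, which is symmetric monoidal and conservative (every $H\kappa$-module splits as a coproduct of shifts of $H\kappa$). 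In the final target, $A$ is sent to a dualizable, central (by \Cref{cor:centerabsolute}) separable (by \Cref{lm:general}) algebra, hence Azumaya by \Cref{prop:BrGrVect}; in particular, the canonical map $A \otimes A\op \to \End(A)$ becomes an equivalence in each $\mathbf{GrVect}_{\kappa(\mathfrak p)}$.

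The main obstacle will be joint conservativity of the residue-field step on the relevant class of objects. Full conservativity of the family $\{\Mod_R \to \Mod_{H\kappa(\mathfrak p)}\}$ fails for arbitrary modules, but it suffices to detect that the map $A \otimes A\op \to \End(A)$ is an equivalence, a map between \emph{perfect} $R$-modules (since $A$ is dualizable, and $R$ is connective), whose cofiber $C$ is therefore perfect and in particular bounded below with finitely generated $\pi_0(R)$-module homotopy groups. A standard Tor-spectral-sequence induction combined with classical Nakayama then shows that $C \otimes_R \kappa(\mathfrak p) = 0$ for every $\mathfrak p$ forces $\pi_n(C) = 0$ for every $n$, hence $C = 0$. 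Combining this verification with \Cref{lm:Brbootstrap} completes the proof.
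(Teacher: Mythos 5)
Your proof is correct and follows essentially the same route as the paper's: reduce to the affine connective case, then to residue fields of $\pi_0$, then pass to graded vector spaces and invoke \Cref{prop:BrGrVect}. The only cosmetic differences are that the paper cites the limit decomposition of $\QCoh(X)$ from \cite{SAG} rather than an étale atlas, and performs the reduction to fields in two successive applications of \Cref{lm:Brbootstrap} (first basechanging to $\pi_0(R)$ using conservativity on bounded-below modules, then to residue fields using joint conservativity on perfect modules) where you collapse both steps into a single Nakayama-type argument on the perfect cofiber of $A\otimes A\op\to\End(A)$.
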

\begin{proof}
    By \cite[Proposition 6.2.4.1]{SAG}, $\QCoh(X)$ is a limit, in $\CAlg(\PrL)$, of \categories{} of the form $\Mod_R(\Sp)$, where $R$ is a connective commutative ring spectrum. 

    Since for any diagram $f:I\to \Cat$ and any essentially surjective map from a set $I_0\to I$, the forgetful functor $\lim_I f\to \prod_{I_0}f$ is conservative, we can apply \Cref{lm:Brbootstrap} to reduce to the case of $\Mod_R(\Sp)$, where $R$ is a connective commutative ring spectrum. 

    Since we assumed the algebra was dualizable, we can in fact reduce to $\Perf(R)$. Now, for a connective ring spectrum $R$, the restriction of $\pi_0(R)\otimes_R -$ to bounded below $R$-modules is symmetric monoidal and conservative, so we can reduce to the case where $R$ is discrete, again by \Cref{lm:Brbootstrap}. 

    For a discrete commutative ring $R$, the basechange functors along all ring maps $R\to k$, where $k$ is a field are jointly conservative on perfect $R$-modules, so we can reduce to the case of a field.

     Now note that for any $\C$, $\C\to \ho(\C)$ is conservative and symmetric monoidal. For a field $k$, $\ho(\Mod_k)$ is symmetric monoidally equivalent to the $1$-category of graded $k$-vector spaces, and so \Cref{prop:BrGrVect} allows us to conclude. 
\end{proof}
To deal with the second case of \Cref{thm:Brauer}, we first specialize to $R=$ Morava $E$-theory - the nilpotence theorem \cite{HS} and the chromatic Nullstellensatz \cite{ChroNS} will be our tools to reduce to this key case. 

The situation is simpler at odd primes than at the even prime, so we first deal with the odd primes, even though the proof we will give for the prime $2$ also works for odd primes. 
\begin{prop}
    Let $R=E=E(k,\mathbf G)$ be a Morava $E$-theory\footnote{See \cite[Section 2.4]{ChroNS} for a modern introduction} over some field $k$ of odd characteristic, and at some height $n>0$, and let $A\in\Alg(\Mod_E)$ be a rigid separable $E$-algebra. If $A$ is furthermore central, then $A$ is Azumaya.
\end{prop}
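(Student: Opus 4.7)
The plan is to apply the bootstrap lemma \Cref{lm:Brbootstrap}: since $A$ is dualizable by assumption and full (because separability and centrality give a retraction $A\to Z(A)\simeq \one$, so $A\otimes -$ is conservative), it suffices to exhibit a jointly conservative family of symmetric monoidal functors out of $\Mod_E$ (at least on the subcategory of dualizable objects) that send $A$ to an Azumaya algebra in a \category{} where we already know the result. The natural target is the homotopy category of modules over the residue field $K = E/\mathfrak m$, where $\mathfrak m = (p, u_1, \dots, u_{n-1})$ is the maximal ideal of $\pi_0(E)$.

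The key functor is the composite
\[
F : \Mod_E \xrightarrow{K\otimes_E -} \Mod_K \xrightarrow{\ho(-)} \ho(\Mod_K).
\]
At odd primes, $K$ is homotopy commutative, so $\ho(\Mod_K)$ is symmetric monoidally equivalent to $\mathbf{GrVect}_{K_*}$, and $F$ is therefore symmetric monoidal. The first step is to check that $F$ is conservative on dualizable (equivalently perfect) $E$-modules: this is a standard Nakayama argument, since any perfect $E$-module $M$ with $K\otimes_E M = 0$ satisfies $M \simeq \mathfrak m\cdot M$, and $\mathfrak m$-adic completeness of perfect $E$-modules (inherited from $E$ itself) then forces $M = 0$.

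Next, I would verify that $F(A)$ satisfies the hypotheses of \Cref{prop:BrGrVect}: it is dualizable (preserved by symmetric monoidal functors), separable (by \Cref{lm:general}, item \ref{item : symmonsep}), and central. For centrality, note that because $A$ is separable, \Cref{cor:centerabsolute} implies that $Z(A)$ is preserved by any symmetric monoidal functor; thus
\[
Z(F(A)) \;\simeq\; F(Z(A)) \;\simeq\; F(\one) \;\simeq\; \one_{\mathbf{GrVect}_{K_*}} = K_*.
\]
(Here we also use that $F$ is symmetric monoidal, so it sends the unit to the unit.) Applying \Cref{prop:BrGrVect} to $F(A)\in \mathbf{GrVect}_{K_*}$ then gives that $F(A)$ is Azumaya.

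Finally, \Cref{lm:Brbootstrap} applied to the conservative symmetric monoidal functor $F$ concludes that $A$ is Azumaya in $\Mod_E$. The main obstacle is really the interplay between the two reductions: one has to invoke the odd-prime hypothesis precisely to pass from $\Mod_K$ to its homotopy category symmetric monoidally (at the prime $2$, $K(n)$ fails to be homotopy commutative, which is why the prime $2$ will need a separate argument). Aside from that, the arguments are a standard combination of the residue-field reduction familiar from chromatic homotopy theory and the classical-algebra input provided by \Cref{prop:BrGrVect}.
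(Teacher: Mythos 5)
Your proposal is correct and follows essentially the same route as the paper: reduce along the symmetric monoidal functor given by Morava $K$-theory (homotopy commutative at odd primes) to graded modules over the graded field $K(n)_*$, where it is conservative on dualizable $E$-modules, and then conclude via \Cref{prop:BrGrVect} and \Cref{lm:Brbootstrap}. The extra details you supply (fullness of $A$ from the retraction onto $Z(A)\simeq\one$, and centrality of the image via \Cref{cor:centerabsolute}) are exactly the checks the paper leaves implicit.
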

\begin{rmk}
In contrast to \Cref{ex:MoravaK}, these Azumaya algebras are not ``atomic'' in the sense of \cite{hopkinsluriebrauer}, precisely because they are separable and therefore retract onto $E$.
\pend \end{rmk}
\begin{proof}
    Because we are working at an odd prime, there exist ring structures on Morava $K$-theory $K(n)$ that are homotopy commutative \cite[Section 3]{strickland}. In this case, $K(n)_*:\Mod_E\to \Mod_{K(n)_*}(\mathbf{GrVect}_k)$ is a symmetric monoidal functor, and it is conservative when restricted to $K(n)$-local $E$-modules, in particular when restricted to perfect, or equivalently dualizable, $E$-modules. 
    
As $K(n)_*$ is a graded field, \Cref{prop:BrGrVect} applies again, and we are done, again by \Cref{lm:Brbootstrap}. 
\end{proof}
In fact, thanks to work of Mathew \cite{mathewthick}, the same argument works more generally: 
\begin{prop}\label{prop:BrEvenRegNoeth}
    Let $R$ be a commutative ring spectrum which is even, $2$-periodic, with regular noetherian $\pi_0$, and such that $2\in \pi_0(R)^\times$. Let $A\in\Alg(\Mod_R)$ be a dualizable separable $R$-algebra. If $A$ is furthermore central, then $A$ is Azumaya.
\end{prop}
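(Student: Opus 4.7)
The strategy parallels that of the preceding proposition: I will reduce, via a jointly conservative family of symmetric monoidal functors on $\Perf(R)$, to the case of modules over a graded field, where \Cref{prop:BrGrVect} applies.

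The first step is to apply Mathew's thick subcategory theorem \cite{mathewthick}: for $R$ even $2$-periodic with regular noetherian $\pi_0$, one obtains, for each prime $\mathfrak p \in \Spec(\pi_0(R))$, a residue field $R$-algebra $k(\mathfrak p)$ which is itself even and $2$-periodic, with $\pi_0(k(\mathfrak p))$ the residue field of $\pi_0(R)$ at $\mathfrak p$, and such that the family of basechange functors $\{k(\mathfrak p)\otimes_R -\}$ is jointly conservative on $\Perf(R)$. Invertibility of $2$ in $\pi_0(R)$ implies that each $k(\mathfrak p)$ is homotopy commutative: the relevant Toda-type obstructions are $2$-torsion classes sitting in odd degrees of $\pi_*(k(\mathfrak p))$, which vanish in our situation.

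Next, since $\pi_*(k(\mathfrak p)) = \pi_0(k(\mathfrak p))[u^{\pm 1}]$ is concentrated in even degrees over a field, it is a graded field, so every perfect $k(\mathfrak p)$-module is free up to shifts. The functor $\pi_*\colon \Mod_{k(\mathfrak p)} \to \Mod_{\pi_*(k(\mathfrak p))}(\mathbf{GrVect})$ is then symmetric monoidal (the Künneth map is an isomorphism on perfects, and the symmetry uses homotopy commutativity of $k(\mathfrak p)$) and conservative on perfect modules. Composing yields a jointly conservative family of symmetric monoidal functors $\Perf(R) \to \Mod_{\pi_*(k(\mathfrak p))}(\mathbf{GrVect})$. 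Under each of these, $A$ remains dualizable, separable (\Cref{item : symmonsep} of \Cref{lm:general}), and central (\Cref{cor:centerabsolute}, which precisely says that symmetric monoidal functors preserve the center of a separable algebra). By \Cref{prop:BrGrVect}, each image is Azumaya, so the canonical map $A\otimes_R A\op \to \End_R(A)$ becomes an equivalence after each functor in the family, and hence, by joint conservativity, is itself an equivalence. Fullness of $A$ is automatic from centrality and separability (it retracts onto $Z(A)\simeq R$), so \Cref{prop : azeq}(2) lets me conclude that $A$ is Azumaya.

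The main obstacle is the first step: extracting from \cite{mathewthick} the residue field $R$-algebras $k(\mathfrak p)$ with enough structure to make the composite functors $\pi_*(k(\mathfrak p)\otimes_R -)$ genuinely symmetric monoidal on $\Perf(R)$, together with their joint conservativity. Once this input is secured, the argument is a formal bootstrap via \Cref{lm:Brbootstrap} combined with elementary graded linear algebra over a graded field.
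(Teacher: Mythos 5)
Your proposal is correct and follows essentially the same route as the paper: the paper's proof likewise replaces the Morava $K$-theories of the preceding proposition by Mathew's residue fields $K(\mathfrak p)$ (citing \cite[Definition 2.5, Proposition 2.8]{mathewthick} for the graded-field property and joint conservativity on perfect modules, and \cite[Section 3]{strickland} for homotopy commutativity when $2$ is invertible), and then concludes via \Cref{lm:Brbootstrap} and \Cref{prop:BrGrVect}. Your write-up merely spells out the details that the paper leaves implicit, such as the preservation of centrality via \Cref{cor:centerabsolute} and the fullness of $A$ coming from the retraction onto $Z(A)\simeq R$.
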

\begin{proof}
    The same proof as above works, where we replace $K(n)$ by the $K(\mathfrak p)$'s, cf. \cite[Definition 2.5]{mathewthick}. Indeed, each $K(\mathfrak p)_*$ is a graded field by \textit{loc. cit.}, they are jointly conservative on perfect $R$-modules \cite[Proposition 2.8]{mathewthick} (in fact on all modules), and finally by \cite[Section 3]{strickland}, if $2\in \pi_0(R)^\times$, they can be chosen to be homotopy commutative.
\end{proof}
We now deal with the even prime. The point is that in this situation, Morava $K$-theory cannot be chosen to be homotopy commutative, so that $K(n)_*$ is only monoidal, but not symmetric monoidal, which means that it is possibly not compatible with the map $A\otimes A\to \End(A) \simeq A^\vee\otimes A$, and in particular we cannot check Azumaya-ness through this functor.

There is a way out, using the notion of Milnor modules \cite[Section 6]{hopkinsluriebrauer}. The main take-away of this notion for us is the following: 
\begin{thm}\label{thm:Milnor}
Let $E=E(k,\mathbf G)$ be a Morava $E$-theory at height $n$ at the prime $p$, possibly even. There is a symmetric monoidal $1$-category $\Mil_E$ of \emph{Milnor-modules} together with a (strong) symmetric monoidal homology theory $h_*:\Mod_E\to \Mil_E$. 

For any choice of a Morava $K$-theory $K\in\Alg(\ho(\Mod_E))$, the monoidal homology theory $K_*: \Mod_E\to \coMod_{K_*^EK}(\Mod_{K_*}((\mathbf{GrVect}_k))$ factors through a \emph{monoidal}\footnote{At the prime $2$, there is no choice of Morava $K$-theory that makes this symmetric monoidal.} equivalence $\Mil_E\simeq \coMod_{K_*^EK}(\Mod_{K_*}((\mathbf{GrVect}_k))$. 
\end{thm}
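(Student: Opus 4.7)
The plan is to extract both statements from the theory of \emph{Milnor modules} developed by Hopkins--Lurie in \cite[Section 6]{hopkinsluriebrauer}; our contribution is to organize the output of that theory in the form we need here. In more detail, $\Mil_E$ is defined as the $1$-category of additive functors from a small full subcategory $\mathcal{M}\subset\ho(\Mod_E)$ of ``molecular'' $E$-modules to $\Ab$. This subcategory is closed under tensor products, so $\Mil_E$ inherits a Day-convolution symmetric monoidal structure, and since $\ho(\Mod_E)$ is symmetric monoidal, the restricted Yoneda functor $h_*: \Mod_E\to \Mil_E$, $M\mapsto \pi_*\Map_E(-,M)|_{\mathcal{M}}$, is strong symmetric monoidal.

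For the monoidal equivalence, the idea is to use that on molecular objects, the functor $K_*(-)=\pi_*(K\otimes_E -)$ induces an equivalence between $\mathcal{M}$ and the category of finitely generated free graded $K_*^E K$-comodules, and then to extend to all of $\Mil_E$ by the universal property of presheaves. This is exactly \cite[Theorem 6.6.3 or analogue]{hopkinsluriebrauer}. The monoidality of $K_*$ (i.e., the Künneth formula for a tensor product of molecular modules) gives the monoidality of the equivalence. At odd primes, $K$ can be chosen homotopy commutative \cite[Section 3]{strickland}, so the Künneth isomorphism is symmetric and the equivalence is symmetric monoidal; at the prime $2$, no such choice exists, so the Künneth isomorphism is only monoidal and the equivalence drops down to a mere monoidal equivalence. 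This is why $\Mil_E$ is needed as an intermediate symmetric monoidal $1$-category: its symmetric monoidal structure is manifest from Day convolution over $\ho(\Mod_E)$ itself, whereas the comodule category inherits an asymmetry from the non-homotopy-commutativity of $K$.

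The main obstacle is checking that the restriction of $h_*$ to molecular modules really does identify with the ``free'' comodules, and that the monoidal structure transferred by the resulting equivalence is the expected one rather than an opposite/twisted variant. This amounts to explicit identifications of the coaction map $K_*(M)\to K_*^EK\otimes_{K_*}K_*(M)$ induced by $M\to K\otimes_E M$ and of the Künneth comparison $K_*(M\otimes_E N)\to K_*(M)\otimes_{K_*}K_*(N)$; both are carried out in \cite{hopkinsluriebrauer} and essentially reduce, by the two-out-of-three property of equivalences of additive functors, to checking the statements on $M=N=E$, where they are tautologies.
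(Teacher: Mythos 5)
Your proposal is correct and coincides with the paper's approach: the paper offers no proof of this statement, treating it as a recollection of \cite[Section 6]{hopkinsluriebrauer} (with the odd-primary symmetric monoidal equivalence being \cite[Proposition 6.9.1]{hopkinsluriebrauer}), and your outline — additive presheaves on molecular modules with Day convolution, restricted Yoneda as the symmetric monoidal homology theory, identification of molecular objects with free $K_*^EK$-comodules extended by the universal property, and Strickland's homotopy-commutative $K$ at odd primes versus its failure at $p=2$ — is an accurate summary of that cited construction.
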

Note that the notion of separable algebra, and of rigidity can be phrased completely in monoidal terms (for duality, one needs to worry about left vs right duality, but these notions still make sense). The only part of ``rigid central separable algebra'' that requires symmetry is the centrality part. 

In particular, if $A\in\Alg(\Mod_E)$ is rigid and separable, $K_*(A)$ is a rigid separable algebra in $K_*$-modules in $\mathbf{GrVect}_k$. This will turn out to be enough for us. 

We begin with a lemma:
\begin{lm}\label{lm:idempotentcomod}
    Let $\C$ be a symmetric monoidal $1$-category, $H$ a commutative Hopf algebra in $\C$, i.e. a group object in $\CAlg(\C)\op$. 

Let $I$ be a non-unital algebra in $\coMod_H(\C)$ such that the underlying non-unital algebra $I$ in $\C$ admits a unit \cite[Definition 5.4.3.1]{HA}. In this case, $I$ admits a unit in $\coMod_H(\C)$.  
\end{lm}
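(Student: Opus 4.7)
The plan is to use uniqueness of units of non-unital algebras, together with the Hopf algebra structure on $H$. The hypothesis picks out a unit $e: \one \to I$ in $\C$, necessarily unique since any two units $e_1, e_2$ satisfy $e_1 = e_1 \cdot e_2 = e_2$. The statement that $I$ admits a unit in $\coMod_H(\C)$ is equivalent to the statement that this $e$ is a morphism of $H$-comodules when $\one$ is endowed with the trivial coaction; concretely, that $\rho \circ e$ agrees with the composition $\one \cong \one \otimes \one \xrightarrow{e \otimes \eta_H} I \otimes H$, where $\rho: I \to I \otimes H$ denotes the coaction and $\eta_H : \one \to H$ the unit of $H$.

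The main construction is the ``Galois-type'' map $\alpha := (\id_I \otimes \mu_H) \circ (\rho \otimes \id_H) : I \otimes H \to I \otimes H$, which factors the coaction as $\rho = \alpha \circ (\id_I \otimes \eta_H)$ by a direct computation. The antipode $S$ of $H$ yields an explicit inverse (replace $\rho$ by $(\id_I \otimes S) \circ \rho$ in the definition), so $\alpha$ is an isomorphism of the underlying object. Crucially, the commutativity of $H$ makes $\alpha$ a morphism of non-unital algebras: expanding $\alpha((x \otimes h)(y \otimes k))$ using that $\rho$ is an algebra map (part of the datum that $I$ is an algebra in $\coMod_H(\C)$) yields, in Sweedler notation, a factor $x_{(1)} y_{(1)} h k$ in $H$, whereas $\alpha(x \otimes h) \cdot \alpha(y \otimes k)$ carries the factor $x_{(1)} h y_{(1)} k$; the two agree precisely because $H$ is commutative.

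To finish, I would observe that $I \otimes H$ is a non-unital algebra with unit $e \otimes \eta_H$ (a tensor product of units), that $(\id_I \otimes \eta_H)$ sends $e$ to $e \otimes \eta_H$, and that $\alpha$ --- being a non-unital algebra isomorphism between algebras admitting units --- automatically preserves the unique unit (indeed, $\alpha(e \otimes \eta_H)$ acts as a two-sided unit in the target by surjectivity of $\alpha$, hence equals the unique unit). Combining, $\rho \circ e = \alpha \circ (\id_I \otimes \eta_H) \circ e = \alpha(e \otimes \eta_H) = e \otimes \eta_H$, which is the desired equivariance. The crux, and the only nontrivial step, is the verification that $\alpha$ is an algebra map, where the commutativity hypothesis on $H$ is indispensable.
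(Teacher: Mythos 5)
Your proof is correct, and it takes a genuinely different route from the paper's. The paper identifies $\coMod_H(\C)$ with the category of algebraic representations of the affine group $\Spec_\C(H)$ and then invokes the general fact that, for a group(oid) action, the square $\Alg(\Fun(BG,\D))\to \Alg^{nu}(\Fun(BG,\D))\times_{\Alg^{nu}(\D)}\Alg(\D)$ is an equivalence — i.e., units are detected underlying-wise in functor categories over groupoids — which rests on the fully faithfulness of $\Alg(\D)^\simeq\to\Alg^{nu}(\D)^\simeq$. You instead argue directly with the translation (Galois) map $\alpha=(\id_I\otimes\mu_H)\circ(\rho\otimes\id_H)$: the antipode inverts it, commutativity of $H$ makes it multiplicative (your Sweedler computation $x_{(1)}hy_{(1)}k=x_{(1)}y_{(1)}hk$ is exactly the point, and it is valid element-freely in any symmetric monoidal $1$-category), and a non-unital algebra isomorphism carries the necessarily unique unit to the necessarily unique unit, forcing $\rho\circ e=e\otimes\eta_H$; the unit axioms then descend along the faithful forgetful functor. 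Your proof is more elementary and self-contained — it avoids the representability discussion that the paper itself flags as delicate beyond $1$-categories — and it isolates precisely where the Hopf hypothesis enters (the antipode inverts $\alpha$), which matches the paper's remark that the statement fails for mere bialgebras such as $\mathbb Z\times\mathbb Z$. The trade-off is that the paper's argument is the one that would transport most directly to an $\infty$-categorical setting once the representation-theoretic description of comodules is available, whereas your computation is tied to the strictness of a $1$-category (which is all the lemma claims).
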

\begin{rmk}
        We state and prove this lemma for $1$-categories because in the proof, we use a description of comodules as ``algebraic representations'' of an ``algebraic group'' (see below). This description is elementary for $1$-categories, while for $\infty$-categories, it is highly expected to hold completely analogously, but we did not want to get into the intricacies of its proof. 

        We later only use it for $1$-categories, so this is not an issue, but it would be interesting to prove the corresponding description for symmetric monoidal \categories{} (the lemma, for instance, would follow immediately in the same generality). 
    \pend\end{rmk}
As mentioned in the remark, to prove this lemma, it is convenient to use the usual description of $\coMod_H(\C)$ as ``algebraic representations of $\Spec_\C(H)$''. Let us make this a bit more precise. The corepresented functor $M=\Spec_\C(H):\CAlg(\C)\to \Ss$ given by $\map(H,-)$ is canonically a monoid whenever $H$ is a comonoid in $\CAlg(\C)$. The categoy of transformations $BM(R)\to \Mod_R(\C)$, natural in $R\in\CAlg(\C)$ can be viewed as a category of ``algebraic representations of $M$'' - here, $R\mapsto \Mod_R(\C)$ is functorial along base-change\footnote{Because every such natural transformation has a value $c$ at $R=\one$, the value at every other $R$ is of the form $R\otimes c$, and so all the required basechanges exist, along arbitrary maps $R\to S$, therefore, to make this definition, we do not actually need $\C$ to have arbitrary relative tensor products.}.

It is an instructive exercise to prove that this category is symmetric monoidally equivalent to the category of $H$-comodules, compatibly with the forgetful functor to $\C$. We use this fact without further comment. 
\begin{nota}
    Let $M:\CAlg(\C)\to \Mon$ be a functor from commutative algebras in $\C$ to (discrete) monoids. We let $\Rep_M(\C)$ denote the symmetric monoidal category of algebraic representations of $M$, as described above. 
\end{nota}
    \begin{rmk}
    We note that here, $H$ needs to be a Hopf algebra - the lemma is not true for general commutative bialgebras. For example, let $H$ be the bialgebra in abelian groups whose underlying algebra is $\mathbb Z\times\mathbb Z$. The functor on $\CAlg(\Ab)$ it corepresents is simply $\Idem: R\mapsto \Idem(R)$, the functor mapping a ring to its set of idempotents, and we can make it a commutative monoid under multiplication, thus making $H$ into a bialgebra. In this case, one can make $\mathbb Z$ into an algebraic representation of $\Spec_\C(H)$, i.e. an $H$-comodule, via the canonical action of $\Idem(R)$ on $R$ by multiplication. It is easy to check that this makes it into a non-unital algebra whose underlying algebra is unital, but it is not unital.  
    \pend \end{rmk}
    
\begin{proof}
By \cite[Theorem 5.4.3.5]{HA}, the unit of a non-unital algebra, if it exists, is unique. More precisely, the forgetful functor $\Alg(\D)^\simeq\to \Alg^{nu}(\D)^\simeq$ is fully faithful\footnote{Note that this is not true if one removes the symbol $^\simeq$, it is only faithful.}

 In particular, for any group $G$, if $A\in \Alg^{nu}(\Fun(BG,\D))$ is a non-unital algebra such that the underlying $A\in \Alg^{nu}(\D)$ admits a unit, then $A$ admits a unit too.  More precisely, the canonical map $\Alg(\Fun(BG,\D))\to \Alg^{nu}(\Fun(BG,\D))\times_{\Alg^{nu}(\D)}\Alg(\D)$ is an equivalence.

 It follows that the same holds for the category of representations of any functor $G:\CAlg(\C)\to \Grp$, i.e. the canonical map $\Alg(\Rep_G(\C))\to \Alg^{nu}(\Rep_G(\C))\times_{\Alg^{nu}(\C)}\Alg(\C)$ is an equivalence for any such $G$. 
 
 The result now follows from the symmetric monoidal equivalence $\coMod_H(\C)\simeq \Rep_{\Spec_\C(H)}(\C)$, compatible with the forgetful functor as discussed before the proof. 
\end{proof}

\begin{prop}\label{prop:BrEthy}
    Let $R=E=E(k,\mathbf G)$ be a Morava $E$-theory over some field $k$ of positive, possibly even characteristic, and at some height $n>0$, and let $A\in\Alg(\Mod_E)$ be a dualizable separable $E$-algebra. If $A$ is furthermore central, then $A$ is Azumaya.
\end{prop}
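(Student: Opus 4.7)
The plan is to reduce to a statement in the symmetric monoidal $1$-category $\Mil_E$ of Milnor modules via Theorem~\ref{thm:Milnor}. I apply the symmetric monoidal functor $h_*: \Mod_E \to \Mil_E$; it factors $K_*$ and is therefore conservative on perfect $E$-modules (since $K_*$ is, by Nakayama-type arguments using that $\pi_0(E)$ is a local ring whose residue field is detected by $K$). Thus by \Cref{lm:Brbootstrap} it suffices to show $h_*(A)$ is Azumaya in $\Mil_E$. As $h_*$ is symmetric monoidal, $h_*(A)$ is dualizable, separable (\Cref{lm:general}(7)), and central with $Z(h_*(A)) \simeq \one_{\Mil}$ (\Cref{cor:centerabsolute}); in particular its unit splits, so $h_*(A)$ is full. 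By \Cref{prop : azeq}(2), the task reduces to showing the canonical bimodule map $\varphi: h_*(A) \otimes h_*(A)^{op} \to \End(h_*(A))$ is an equivalence in $\Mil_E$.

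Composing the monoidal equivalence $\Phi: \Mil_E \simeq \coMod_{K_*^E K}(\Mod_{K_*}(\mathbf{GrVect}_k))$ with the conservative monoidal forgetful functor $U$ to $\Mod_{K_*}(\mathbf{GrVect}_k)$, I reduce to checking the induced map of graded $K_*$-modules is an equivalence. By dualizability of $A$ (preserved by monoidal functors), both sides are free $K_*$-modules of the same finite rank $d^2$, where $d = \dim_{K_*} K_*(A)$, so it suffices to prove injectivity. The source $K_*(A \otimes A^{op})$ is a separable algebra in $\Mod_{K_*}(\mathbf{GrVect}_k)$ (separability being a monoidal notion, preserved by monoidal functors). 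Since $K_*$ is a graded field, $\Mod_{K_*}(\mathbf{GrVect}_k)$ is semi-simple, and \Cref{lm:sepimpliessemisimp}, whose proof only needs the monoidal formulation of bimodules, implies that the kernel ideal $I$ of the algebra map splits as a ring direct factor, and hence acquires a unit $e$ in the underlying $K_*$-algebra.

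To derive a contradiction from $I \neq 0$, I lift this splitting back to $\Mil_E$. The kernel $I$, being the kernel of a morphism in $\coMod$, is a subcomodule; and as a ring direct factor of a separable algebra it is a non-unital subalgebra of $\Phi(h_*(A) \otimes h_*(A)^{op})$ whose underlying non-unital algebra admits a unit. By \Cref{lm:idempotentcomod}, $I$ admits a unit in $\coMod$, so the central idempotent $e$ lifts to a morphism in $\coMod$, producing a product decomposition $h_*(A) \otimes h_*(A)^{op} \simeq B_1 \times B_2$ of algebras in $\Mil_E$ after transport by $\Phi^{-1}$. By \Cref{lm:centertensor}, $h_*(A) \otimes h_*(A)^{op}$ is central with center $\one_{\Mil}$, so $\one_{\Mil} \simeq Z(B_1) \times Z(B_2)$. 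Since $\End(\one_{\Mil}) \cong K_*$ is a graded field with no nontrivial idempotents, one factor must vanish; as the image of $\varphi$ is nonzero, we must have $I = 0$, so $\varphi$ is injective, hence an equivalence by the rank count.

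The main obstacle will be verifying the hypotheses of \Cref{lm:idempotentcomod} in this setting, particularly at the prime $2$ where $K$-theory fails to be homotopy commutative: one needs a commutative Hopf algebra structure on $K_*^E K$ compatible with the setup of Theorem~\ref{thm:Milnor}, and the interaction between this structure, the purely monoidal (not symmetric monoidal) equivalence $\Phi$, and the lifting of the idempotent from the underlying $K_*$-algebra to the comodule level must be tracked carefully. In particular, one must check that the subcomodule structure on $I$ and its non-unital algebra structure really do satisfy the compatibility required for the lemma to apply.
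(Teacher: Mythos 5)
Your proposal is correct and follows essentially the same route as the paper: reduce to $\Mil_E$ via conservativity of $K_*$ and \Cref{lm:Brbootstrap}, use \Cref{lm:sepimpliessemisimp} over the graded field $K_*$ to produce a central idempotent generating the kernel ideal, lift it to a comodule map via \Cref{lm:idempotentcomod} (the commutative Hopf algebra structure on $K_*^EK$ you flag as the main obstacle is exactly what the paper invokes, citing that it holds even at $p=2$), and conclude simplicity of $A\otimes A\op$ plus a rank count. The only cosmetic difference is that the paper isolates "central separable in $\Mil_E$ implies simple" as an intermediary statement before specializing to $A\otimes A\op$, whereas you argue directly on the kernel of the canonical map.
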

\begin{proof}
Fix an atomic $E$-algebra $K$ \cite[Definition 1.0.2]{hopkinsluriebrauer}, i.e. a Morava $K$-theory. 

By \Cref{thm:Milnor}, $K_*$ factors through $h_*:\Mod_E\to \Mil_E$, and $K_*$ is conservative on perfect $E$-modules, hence by \Cref{lm:Brbootstrap}, it suffices to prove the result in $\Mil_E$. 

We prove the following intermediary result: let $A$ be a central separable algebra in $\Mil_E$, then $A$ is simple, i.e. any (bilateral) ideal $I\hookrightarrow A$ is $0$ or $A$.  Notice that the functor $\Mil_E\to \coMod_{K_*^EK}(\Mod_{K_*}((\mathbf{GrVect}_k))\to \Mod_{K_*}(\mathbf{GrVect}_k)$ is (strong) monoidal, and conservative, so it sends ideals to ideals, and dualizable separable algebras to dualizable separable algebras. 

By \Cref{lm:sepimpliessemisimp}, there is a central idempotent $e$ in $A$ such that $I = eA$. Furthermore, we started with an ideal in $\coMod_{K_*^EK}(\Mod_{K_*})$, and \Cref{lm:idempotentcomod} will in fact imply that $e$ is a morphism $K_*\to A$ in comodules, and not only in $\Mod_{K_*}$ (note that $K_*^EK$ is a commutative Hopf algebra by \cite[Lemma 2.6]{piotrtobi}- this is so even at the prime $2$).

The algebra we apply \Cref{lm:idempotentcomod} to is $I$, viewed as a non-unital algebra in comodules. The existence of the central idempotent $e$ in $A$ such that $I = eA$ guarantees that $I$ is unital in $\Mod_{K_*}$, and thus, the lemma guarantees that it is unital in comodules.

This means that its unit is a morphism of $K_*^EK$-comodules $K_*\to I$, i.e., that the idempotent $e$ is a map of $K_*^EK$-comodules $K_*\to A$.

This further implies that $A$ splits as an algebra in $\coMod_{K_*^EK}$ as $I\times A/I$. This being a statement only about the monoidal structure of $\coMod_{K_*^EK}$, it holds also in the monoidal category of Milnor modules, i.e. $\Mil_E$. But there, $A$ is central by assumption, and so $I= 0$ or $A$, as was to be proved. We have thus proved that $A$ was simple. 

We now apply this to $A\otimes A\op$, which is dualizable, central and separable as well, and hence simple. It follows that the canonical map $A\otimes A\op\to \End(A)$ is injective. Now, the two sides have the same (finite) dimension as $K_*$-modules, so it follows that this map is an isomorphism, which is what was to be proved.  
\end{proof}
To prove the general case of a commutative ring spectrum for which $R\otimes\mathbb F_p=0$ for all $p$, we use the nilpotence theorem \cite{HS}. Let us recall an important consequence of it:
\begin{prop}\label{prop:nilpotence}
    Let $R$ be a commutative ring spectrum and $P$ a dualizable $R$-module. Suppose that for all implicit primes $p$ and and all $0\leq n\leq \infty$, $L_{K(n)}P= 0$. In this case, $P=0$. 

    Here, $K(0)=\mathbb Q,K(\infty) = \mathbb F_p$.
In particular, if $R\otimes\mathbb F_p= 0$ for all primes $p$, then it suffices to check that $L_{K(n)}P= 0$ for all $0\leq n<\infty$.
\end{prop}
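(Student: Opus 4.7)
The plan is to reduce the statement to a direct application of the Hopkins--Smith nilpotence theorem, applied to the endomorphism ring of $P$.

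First, I would observe that since $P$ is dualizable over $R$, we have an equivalence of $R$-algebras $\End_R(P) \simeq P \otimes_R P^\vee$, and moreover $P = 0$ if and only if $\End_R(P) = 0$ if and only if the unit $1 \in \pi_0 \End_R(P)$ vanishes. (If $\End_R(P) = 0$, then $P\simeq P\otimes_{\End_R(P)}\End_R(P) = 0$; conversely, if $P=0$ then certainly $P^\vee\otimes_R P = 0$.)

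Next, I would translate the hypothesis $L_{K(n)} P = 0$ into $K(n) \otimes P = 0$ as spectra; these are equivalent. Since $K(n)\otimes -$ is an $\mathbb S$-linear operation that commutes with $-\otimes_R-$, it follows that
\[
K(n) \otimes \End_R(P) \;\simeq\; (K(n)\otimes P) \otimes_R P^\vee \;=\; 0
\]
for all primes $p$ and all $0 \le n \le \infty$. In particular, the image of $1 \in \pi_0 \End_R(P)$ in $\pi_0(K(n) \otimes \End_R(P))$ is zero for all $p$ and all $n$.

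I would then invoke the nilpotence theorem of Hopkins--Smith \cite{HS}, applied to the (homotopy associative) ring spectrum $\End_R(P)$ and the element $1 \in \pi_0 \End_R(P)$: since $K(n)$ detects nilpotence of elements of homotopy groups of ring spectra, the element $1$ must be nilpotent in $\pi_0 \End_R(P)$. But $1^N = 1$ in any ring, so this forces $1 = 0$, hence $\End_R(P) = 0$, hence $P = 0$.

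For the ``in particular'' clause, I would observe that under the assumption $R \otimes \mathbb F_p = 0$ for all primes $p$, any $R$-module $P$ automatically satisfies
\[
K(\infty)\otimes P \;=\; H\mathbb F_p \otimes P \;\simeq\; (H\mathbb F_p \otimes R) \otimes_R P \;=\; 0,
\]
so the case $n = \infty$ is automatic and only $0 \le n < \infty$ need be verified.

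There is no real obstacle to overcome; the content lies entirely in invoking the nilpotence theorem in the right form and noticing that the unit element of a ring is nilpotent if and only if the ring is zero.
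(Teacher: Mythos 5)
Your proposal is correct and follows essentially the same route as the paper: both reduce to the endomorphism ring $\End_R(P)\simeq P\otimes_R P^\vee$, use the bar-construction description of the relative tensor product to deduce $K(n)\otimes\End_R(P)=0$ from $K(n)\otimes P=0$, and then invoke the Hopkins--Smith nilpotence theorem (you merely make explicit the standard step of applying it to the unit $1\in\pi_0\End_R(P)$, which the paper leaves implicit). The treatment of the ``in particular'' clause is also identical.
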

\begin{proof}
    By definition, $L_{K(n)}P=0$ if and only if $K(n)\otimes P = 0$. As $P$ is dualizable and $K(n)$ admits a ring structure, $K(n)\otimes P = 0$ if and only if $K(n)\otimes \End(P)= 0$: one direction is always true, as $K(n)\otimes P$ is a module over $K(n)\otimes\End(P)$. For the other direction, note that $\End(P)\simeq P\otimes_R P^\vee \simeq \colim_{\Delta\op}P\otimes R^{\otimes n}\otimes P^\vee$. 

    Similarly, $P=0$ if and only if $\End(P)=0$.

    Now, $\End(P)$ is an $\mathbb E_1$-ring, so the result follows from \cite[Theorem 3]{HS}.

    The ``in particular'' part follows from the fact that if $R\otimes\mathbb F_p=0$, then $P\otimes\mathbb F_p=0$ too. 
\end{proof}
We also recall an important consequence of the Chromatic Nulstellensatz \cite{ChroNS}. 
\begin{prop}\label{prop:NS}
Fix an implicit prime $p$.
    Let $R$ be a $K(n)$-local commutative ring spectrum, and $P$ a nonzero dualizable $R$-module. There exists a field $L$ as well as a map of commutative ring spectra $R\to E(L)$ such that $E(L)\otimes_R P \neq 0$. 
\end{prop}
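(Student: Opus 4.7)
The plan is to deduce this as a formal consequence of the main theorem of \cite{ChroNS}, which asserts that any nonzero $K(n)$-local commutative ring spectrum admits a map of commutative ring spectra to some Lubin-Tate theory $E(L)$ with $L$ algebraically closed of characteristic $p$. Since the paper presents \Cref{prop:NS} as a recollection from \cite{ChroNS}, my proof plan is a reduction rather than a first-principles argument.

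The first step is to replace $P$ by its endomorphism algebra. Since $P$ is dualizable in $\Mod_R$, the object $\End_R(P) \simeq P \otimes_R P^\vee$ is a nonzero $\mathbb{E}_1$-$R$-algebra (the identity $\id_P$ is a nonzero class in $\pi_0\End_R(P)$). Furthermore, basechange commutes with the formation of endomorphism objects of dualizable modules, so $E(L)\otimes_R\End_R(P)\simeq \End_{E(L)}(E(L)\otimes_R P)$. In particular, $E(L)\otimes_R P=0$ if and only if $E(L)\otimes_R \End_R(P)=0$, so it suffices to find a commutative ring map $R\to E(L)$ with $E(L)\otimes_R \End_R(P) \neq 0$.

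The second step is to construct a nonzero commutative $R$-algebra that records enough data from $P$ to force the nonvanishing. A natural candidate is either the free commutative $R$-algebra $\mathrm{Sym}_R(\End_R(P))$ or the square-zero extension $R\oplus\End_R(P)$; both are manifestly nonzero since $R$ is a retract. Applying the Chromatic Nullstellensatz to such an auxiliary algebra produces a commutative ring map $S\to E(L)$, which restricts to a map $R\to E(L)$ together with an additional $R$-module map $\End_R(P)\to E(L)$.

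The main obstacle I foresee is extracting the conclusion $E(L)\otimes_R P\neq 0$ from this data: a priori, the auxiliary map $\End_R(P)\to E(L)$ could be zero, in which case the whole construction degenerates into the trivial augmentation $S\to R\to E(L)$ and gives no new information. Overcoming this requires the \emph{parametrized} strengthening of the Chromatic Nullstellensatz proved in \cite{ChroNS}, which is equivalent to the joint conservativity of the family $\{E(L)\otimes_R -\}$ on dualizable $R$-modules. Its proof in \cite{ChroNS} combines the existence statement with a Nakayama-type argument adapted to the $K(n)$-local setting, and \Cref{prop:NS} is essentially a direct reformulation of this refined form, so the bulk of the work is already contained in the reference.
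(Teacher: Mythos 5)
There is a genuine gap. Your reduction to $\End_R(P)$ is fine (and is in fact how the paper handles the related nilpotence statement, \Cref{prop:nilpotence}), but the core of your argument — applying the existence form of the Chromatic Nullstellensatz to an auxiliary commutative algebra such as $\mathrm{Sym}_R(\End_R(P))$ or $R\oplus\End_R(P)$ — fails for exactly the reason you identify: nothing prevents the resulting map from factoring through the augmentation back to $R$, in which case the extra $R$-module map $\End_R(P)\to E(L)$ is zero and you learn nothing. Your proposed repair is to invoke a ``parametrized strengthening'' of the Nullstellensatz said to be \emph{equivalent} to the joint conservativity of $\{E(L)\otimes_R-\}$ on dualizable modules — but that joint conservativity \emph{is} \Cref{prop:NS}. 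The paper explicitly notes that this statement is only implicit in the proof of \cite[Theorem 4.47]{ChroNS} and is not spelled out there, which is precisely why it reproduces an argument; so you cannot discharge the content by citation, and your proposal as written is circular.

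The missing argument has two steps, neither of which appears in your proposal. First, one reduces to $R=E(A)$ for $A$ a perfect $\mathbb F_p$-algebra of Krull dimension $0$: by \cite[Theorem 5.1]{ChroNS} there is a nilpotence-detecting map $R\to E(A)$ in $\Sp_{T(n)}$, and nilpotence-detecting implies nil-conservative (\cite[Lemma 4.32]{ChroNS}), hence conservative on dualizable objects (\cite[Proposition 4.4.4]{AmbiChro}), so $E(A)\otimes_R P\neq 0$. Second, over $E(A)$ one runs a Nakayama-type detection: $L_{K(n)}\Mod_{E(A)}$ is compactly generated, $p$ acts nilpotently on any compact $c$, so $[c,P]_{E(A)}$ vanishes once $[c,P]_{E(A)}\otimes_{W(A)}A$ does; and \cite[Lemmas 4.45 and 4.46]{ChroNS} show that nonvanishing of the latter is detected after basechange along some $A\to k$ with $k$ a perfect field, identifying the result with $[c\otimes_{E(A)}E(k),P\otimes_{E(A)}E(k)]_{E(k)}$. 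It is this residue-field detection over the perfect ring $A$, not a formal manipulation with free or square-zero commutative algebras, that produces the field $L$ witnessing $E(L)\otimes_R P\neq 0$.
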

To prove this from the results of \cite{ChroNS}, we need a bit of work. Before doing so, let us deduce the desired result from this. 

\begin{cor}\label{cor:Brchro}
    Let $R$ be a commutative ring spectrum such that $R\otimes\mathbb F_p=0$ for all $p$. \Cref{question : centralazumaya} has a positive answer in $\Mod_R(\Sp)$, that is, every dualizable central separable algebra is Azumaya.  
\end{cor}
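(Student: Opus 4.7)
The plan is to reduce the statement to \Cref{prop:BrEthy} (the Morava $E$-theory case) via the chromatic Nullstellensatz and the nilpotence theorem. Fix a dualizable central separable $A\in\Alg(\Mod_R)$. First, because $A$ is separable, the retraction $A\to Z(A)\simeq \one$ of \Cref{cor:centerabsolute} provides a retraction of the unit $\one\to A$, so $A$ is full. By point 2 of \Cref{prop : azeq}, to prove $A$ is Azumaya it suffices to show that the canonical map $\phi\colon A\otimes A^{op}\to \End(A)$ is an equivalence; equivalently, that its cofiber $C$ vanishes. Since $A$ is dualizable over $R$, both $A\otimes A^{op}$ and $\End(A)\simeq A\otimes A^\vee$ are dualizable, and so $C$ is a dualizable $R$-module.

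By \Cref{prop:nilpotence}, using the hypothesis $R\otimes\mathbb F_p=0$ for every prime $p$, it suffices to show that $L_{K(n)}C=0$ for every prime $p$ and every $0\leq n<\infty$. Fix such a pair $(p,n)$ and suppose for contradiction that $L_{K(n)}C\neq 0$. Basechange along $R\to L_{K(n)}R$ (taken in $K(n)$-local modules) is symmetric monoidal and sends dualizables to dualizables, so $L_{K(n)}C$ is a nonzero dualizable module over the $K(n)$-local commutative ring spectrum $L_{K(n)}R$. By \Cref{prop:NS}, there exist a field $L$ and a map of commutative ring spectra $L_{K(n)}R\to E(L)$ such that $E(L)\otimes_{L_{K(n)}R}L_{K(n)}C\neq 0$. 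Composing with $R\to L_{K(n)}R$ and identifying the basechanges yields $E(L)\otimes_R C\neq 0$.

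To derive a contradiction, I will apply \Cref{prop:BrEthy} to the $E(L)$-algebra $E(L)\otimes_R A$. Basechange preserves dualizability, and it preserves separability by \Cref{lm:general}(7). For centrality: since $A$ is separable, \Cref{cor:centerabsolute} tells us that $Z(A)$ is preserved by every symmetric monoidal functor, so $Z(E(L)\otimes_R A)\simeq E(L)\otimes_R Z(A)\simeq E(L)\otimes_R \one\simeq E(L)$, i.e.\ $E(L)\otimes_R A$ is central. Therefore \Cref{prop:BrEthy} applies and $E(L)\otimes_R A$ is Azumaya over $E(L)$, which means that the canonical map $E(L)\otimes_R \phi$ is an equivalence. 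Taking cofibers gives $E(L)\otimes_R C\simeq 0$, contradicting the output of the Nullstellensatz. Hence $L_{K(n)}C=0$ for every $(p,n)$, so $C=0$ by nilpotence, $\phi$ is an equivalence, and $A$ is Azumaya. The only place requiring genuine input beyond the preparatory lemmas is the passage to $E(L)$-algebras, where the key point is that separability lets centrality be preserved under basechange — this is what makes the reduction to \Cref{prop:BrEthy} go through.
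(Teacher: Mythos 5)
Your proof is correct and follows essentially the same route as the paper: reduce to vanishing of the cofiber of $A\otimes A\op\to\End(A)$, invoke dualizability plus \Cref{prop:nilpotence} to test $K(n)$-locally, use \Cref{prop:NS} to land in $\Mod_{E(L)}$, and conclude by \Cref{prop:BrEthy}. The only difference is that you spell out the verification that centrality, separability and fullness persist under the basechange to $E(L)$ (via \Cref{cor:centerabsolute} and \Cref{lm:general}), which the paper leaves implicit.
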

\begin{proof}
    Let $A$ be a dualizable central separable algebra over $R$, and let $P$ denote the cofiber of $A\otimes A\op\to \End(A)$. We aim to prove that $P=0$. To reach a contradiction, we assume $P\neq 0$. 
    
    As $A$ is dualizable, $P$ is dualizable too. By \Cref{prop:nilpotence}, there exists a prime $p$ and an $n$ such that $L_{K(n)}P\neq 0$. By \Cref{prop:NS}, there exists a field and a map of commutative ring spectra $L_{K(n)}R\to E(L)$ such that $E(L)\otimes_{L_{K(n)}R}L_{K(n)}P\neq 0$. Note that $P$ is dualizable over $R$, so that $L_{K(n)}R\otimes_R P\simeq L_{K(n)}P$. 
    
    Therefore, $E(L)\otimes_R P\neq 0$. As $E(L)\otimes_R -$ is symmetric monoidal, we find that $E(L)\otimes_R A$ is not Azumaya. This contradicts \Cref{prop:BrEthy}. 
    \end{proof}
We now explain how to deduce \Cref{prop:NS} from \cite{ChroNS}.

First, a definition \cite[Definition 4.4.1]{AmbiChro}: 
\begin{defn}
    A monoidal functor $f:\D\to\E$ between stably monoidal \categories{} is said to be \emph{nil-conservative} if for all $R\in \Alg(\D)$, $f(R) = 0$ implies that $R=0$. 
\pend \end{defn}
\begin{lm}[{\cite[Lemma 4.32]{ChroNS}}]
    Let $\C \in \CAlg(\PrL)$ be compactly generated, with the property that every compact in $\C$ is dualizable, and let $A\to B$ a morphism in $\CAlg(\C)$. If it detects nilpotence, then $B\otimes_A -: \Mod_A(\C)\to \Mod_B(\C)$ is nil-conservative.  
\end{lm}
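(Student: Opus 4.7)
The plan is to reduce nil-conservativity to element-wise nilpotence detection, following the strategy of \cite[Lemma 4.32]{ChroNS}. Suppose $R \in \Alg(\Mod_A(\C))$ with $B \otimes_A R \simeq 0$; the goal is to show $R \simeq 0$. Since $R$ is unital, this is equivalent to showing that the unit $\eta : A \to R$ is nullhomotopic, and since $\C$ is compactly generated, one can check this after precomposing with any morphism $c \to A$ for $c \in \C^\omega$. Equivalently, it suffices to show that for every compact---hence, by assumption, dualizable---object $c \in \C^\omega$, the mapping spectrum $\map_\C(c, R)$ is zero, which amounts to every class $\alpha \in \pi_* \map_\C(c, R)$ vanishing.

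Given such an $\alpha$, the algebra structure on $R$ together with dualizability of $c$ equips it with iterated ``powers'' $\alpha^{(k)} : c^{\otimes k} \to R^{\otimes_A k} \to R$, so the notion of $\alpha$ being \emph{nilpotent} (meaning some $\alpha^{(k)}$ is null) makes sense in this general context. The hypothesis that $A \to B$ detects nilpotence asserts precisely that $\alpha$ is nilpotent if and only if $B \otimes_A \alpha$ is nilpotent in the corresponding sense for $B \otimes_A R$. Since $B \otimes_A R \simeq 0$, the image of every such $\alpha$ is literally zero, and in particular nilpotent; hence every $\alpha$ is nilpotent.

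The main obstacle is bridging the gap between ``every $\alpha$ is nilpotent'' and ``every $\alpha$ is null''. The idea is to localize $R$ in a controlled way: for a candidate nonzero $\alpha$, form a telescope-type $A$-algebra $R[\alpha^{-1}]$ obtained by inverting $\alpha$ (using that $c$ is dualizable, so tensor powers $c^{\otimes k}$ are well-behaved). If $\alpha^{(k)} = 0$ then $R[\alpha^{-1}] \simeq 0$; conversely, if $R$ is nonzero and $\alpha$ is chosen to test this nonvanishing, one arranges that $R[\alpha^{-1}]$ remains nonzero, giving a contradiction. Equivalently, one passes to an auxiliary algebra $R' = R \otimes \End_\C(c)$, whose unit encodes $\alpha$ as a ``unit-like'' class whose nilpotence forces $R' \simeq 0$ and hence $R \simeq 0$. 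We refer to \cite[Lemma 4.32]{ChroNS} for the detailed execution of this argument, which crucially uses the dualizability hypothesis on compact objects of $\C$ to make sense of the notion of nilpotence being detected.
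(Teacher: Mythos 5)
First, a contextual remark: the paper does not prove this statement at all --- it is recorded as a recollection, cited verbatim as \cite[Lemma 4.32]{ChroNS}, so there is no in-paper proof to compare against. Your proposal, however, does attempt an argument, so let me assess it on its own terms.

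There is a genuine gap, and it sits exactly where you say it does: ``bridging the gap between every $\alpha$ is nilpotent and every $\alpha$ is null.'' Your proposed bridge --- forming a telescope $R[\alpha^{-1}]$ and ``arranging'' that it remains nonzero when $R$ is nonzero --- is not justified and is essentially circular: the equivalence ``$R[\alpha^{-1}]\simeq 0$ iff $\alpha$ is nilpotent'' is fine, but it converts nilpotence of $\alpha$ into vanishing of a localization, not into vanishing of $\alpha$ itself, and the claim that one can ``choose $\alpha$ to test nonvanishing of $R$'' so that the telescope survives is precisely what would need proof. Deferring ``the detailed execution'' to the reference means the main obstacle is never overcome. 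The irony is that you wrote down the correct reduction in your first paragraph and then abandoned it: since $R$ is unital, $R\simeq 0$ if and only if the unit $\eta\colon A\to R$ is null (as $\id_R = \mu\circ(\eta\otimes_A\id_R)$). So one should apply nilpotence detection to the \emph{single} class $\alpha=\eta$, viewed as a map out of the dualizable object $A\in\Mod_A(\C)$. Its image in $B\otimes_A R\simeq 0$ is null, hence nilpotent, so $\eta$ is nilpotent: some power $\eta^{(k)}\colon A\simeq A^{\otimes_A k}\to R^{\otimes_A k}\to R$ is null. But unitality gives $\eta^{(k)}\simeq\eta$ for all $k$, so $\eta$ itself is null and $R\simeq 0$. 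For the unit, ``nilpotent'' and ``null'' coincide tautologically, which is why no telescope, no quantification over all compacts $c$, and no general bridge from nilpotence to vanishing is needed. I would also note that your reduction ``it suffices to show $\map_\C(c,R)=0$ for every compact $c$'' trades the one-map problem (is $\eta$ null?) for the strictly harder-looking problem of killing all homotopy classes, which is what forces you into the untenable general case.
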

\begin{lm}[{\cite[Proposition 4.4.4]{AmbiChro}}]
    A nil-conservative monoidal exact functor between stably monoidal categories is conservative when restricted to dualizable objects. 
\end{lm}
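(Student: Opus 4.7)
The plan is to leverage the fact that a dualizable object $X$ always gives rise to an algebra, namely its endomorphism algebra $\End(X) \simeq X \otimes X^\vee$, and then apply nil-conservativity to this algebra. So suppose $f : \D \to \E$ is a nil-conservative, monoidal, exact functor between stably monoidal \categories, and let $X \in \D$ be dualizable with $f(X) \simeq 0$; I will show $X \simeq 0$.

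First, since $f$ is (strong) monoidal, it automatically preserves duals, so $f(X^\vee) \simeq f(X)^\vee \simeq 0$, and consequently $f(\End(X)) \simeq f(X) \otimes f(X)^\vee \simeq 0$ as an object of $\E$. Moreover, $f$ being monoidal implies that the induced map $f : \Alg(\D) \to \Alg(\E)$ carries the algebra $\End(X)$ to $f(\End(X))$. Applying nil-conservativity to $\End(X) \in \Alg(\D)$, we conclude that $\End(X) \simeq 0$ in $\D$.

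Now the identity map $\id_X \in \pi_0\map_\D(X,X)$ corresponds, under the standard equivalence $\map_\D(X,X) \simeq \map_\D(\one, \End(X))$, to the unit map $\one \to \End(X)$. Since $\End(X) \simeq 0$, this unit map is necessarily nullhomotopic, hence $\id_X \simeq 0$. In a pointed \category, $\id_X \simeq 0$ means that $\id_X$ factors as $X \to 0 \to X$, exhibiting $X$ as a retract of $0$; since $\D$ is stable (in particular idempotent-complete and pointed), this forces $X \simeq 0$, completing the proof.

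There is essentially no obstacle here: the only thing to be mindful of is the preservation of duals by monoidal functors and the standard identification of $\id_X$ with the unit of $\End(X)$, both of which are completely formal. The content of the lemma lies entirely in noting that for dualizable objects, conservativity reduces to a statement about the associated endomorphism algebras, to which the hypothesis of nil-conservativity applies directly.
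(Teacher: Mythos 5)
The paper does not actually reprove this lemma---it is quoted directly from \cite{AmbiChro}---so there is no in-text proof to measure you against; judged on its own terms, the core of your argument is correct and is essentially the standard one: for dualizable $X$ with $f(X)\simeq 0$, the endomorphism algebra $\End(X)\simeq X\otimes X^\vee$ is sent to a zero algebra, nil-conservativity forces $\End(X)\simeq 0$, and the identification of $\id_X$ with the unit $\one\to\End(X)$ (or, alternatively, the triangle identity exhibiting $X$ as a retract of $\End(X)\otimes X$) forces $X\simeq 0$.

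The gap is that you have proved a strictly weaker statement than the one claimed. Conservativity means that $f$ \emph{reflects equivalences}, not merely that it reflects zero objects, and you only establish the latter. The missing reduction is: given $g\colon X\to Y$ between dualizable objects with $f(g)$ an equivalence, pass to $C=\mathrm{cofib}(g)$ and argue that $C\simeq 0$. This requires two facts you never invoke: first, that $C$ is again dualizable, which holds because the dualizable objects of a stably monoidal \category{} (with tensor exact in each variable) form a thick subcategory; and second, that $f(C)\simeq \mathrm{cofib}(f(g))\simeq 0$, which is precisely where the hypothesis that $f$ is \emph{exact} enters. The fact that your proof nowhere uses exactness is the tell that this step has been skipped. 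Once $C\simeq 0$ is known, stability of $\D$ gives that $g$ is an equivalence, and your zero-detection argument then finishes the proof. With that paragraph added, the argument is complete.
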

\begin{cor}\label{cor:nildetcons}
    Let $\C \in \CAlg(\PrL)$ be compactly generated, with the property that every compact in $\C$ is dualizable, and $A\to B$ a morphism in $\CAlg(\C)$. If it detects nilpotence, then $B\otimes_A -: \Mod_A(\C)\to \Mod_B(\C)$ is conservatie when restricted to dualizable objects. 
\end{cor}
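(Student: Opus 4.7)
The plan is to directly combine the two lemmas stated immediately before the corollary, which are essentially designed to fit together. First I would invoke the first lemma (from \cite{ChroNS}) using the hypotheses on $\C$ and on $A \to B$: since $\C$ is compactly generated with all compacts dualizable, and $A \to B$ detects nilpotence in $\CAlg(\C)$, we get that $B \otimes_A - : \Mod_A(\C) \to \Mod_B(\C)$ is nil-conservative.

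Next I would verify that this functor satisfies the hypotheses of the second lemma (from \cite{AmbiChro}). The categories $\Mod_A(\C)$ and $\Mod_B(\C)$ are stably monoidal (since $\C$ is presentable stable and symmetric monoidal, so is $\Mod_A$, and similarly for $\Mod_B$), and $B \otimes_A -$ is symmetric monoidal (hence in particular monoidal) as it is basechange along a map of commutative algebras. It is also exact, being a left adjoint between stable $\infty$-categories.

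Having verified these hypotheses, the second lemma then directly yields that $B \otimes_A -$ is conservative when restricted to dualizable objects, which is exactly the conclusion. There is no real obstacle here: the proof is a formal concatenation of the two preceding lemmas, and the main ``work'' (namely the content of each of the two lemmas) is cited rather than reproved.
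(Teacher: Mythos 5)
Your proposal is correct and is exactly the argument the paper intends: the corollary is stated immediately after the two lemmas precisely so that it follows by concatenating them, with the routine verification that $B\otimes_A-$ is a (symmetric) monoidal exact functor between stably monoidal categories. The paper gives no separate proof, so there is nothing to compare beyond noting that your hypothesis-checking matches the implicit one.
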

One of the main results of \cite{ChroNS} is: 
\begin{thm}[{\cite[Theorem 5.1]{ChroNS}}]\label{thm:chroNS}
    Let $R$ be a nonzero $T(n)$-local ring. There exists a perfect $\mathbb F_p$-algebra $A$ of Krull dimension $0$ and a nilpotence detecing map $R\to E(A)$ in $\Sp_{T(n)}$. 
\end{thm}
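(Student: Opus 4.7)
The plan is to follow the strategy that a categorified Hilbert Nullstellensatz should reduce to constructing a single ``residue map'' from $R$ to some Lubin-Tate theory, and then enlarging the base to achieve both perfectness/dimension-zeroness and nilpotence-detection. First, I would show it suffices to produce \emph{any} nonzero map $R\to E(k')$ in $\CAlg(\Sp_{T(n)})$ for $k'$ some perfect $\mathbb{F}_p$-algebra: given such a map, one can enlarge $k'$ to a perfect $\mathbb{F}_p$-algebra $A$ of Krull dimension zero (for instance, by taking an algebraic closure of a residue field of $k'$, or more flexibly a product over all such residue fields), and the resulting composite $R\to E(k')\to E(A)$ should retain enough of the information to detect nilpotence, since basechange along faithfully flat/pro-étale extensions of coefficients preserves Lubin-Tate theories and does not destroy nonzeroness.

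Second, for nilpotence detection, I would use the classical Devinatz-Hopkins-Smith nilpotence theorem combined with the fact that $E(A)$ with $A$ algebraically closed dominates, in a suitable sense, the Morava $K$-theories at height $n$ and prime $p$. Concretely, any element in $\pi_*R$ which becomes nilpotent after basechange to a sufficiently rich $E(A)$ should already be nilpotent by \Cref{prop:nilpotence} (appropriately $T(n)$-localized), because the composite with $E(A)\to K(n)$-modules factors all the Morava $K$-theory data. This step reduces the problem to constructing the map.

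The main obstacle, and the real content of the theorem, is the existence of a nonzero map $R\to E(A)$ at all. Unlike in classical commutative algebra, one cannot simply quotient by a maximal ideal and take an algebraic closure: the category $\CAlg(\Sp_{T(n)})$ lacks a workable notion of prime spectrum, and algebras like $R$ may have very complicated Picard and Brauer groups obstructing the construction of residue fields. My proposal here would be to mimic the Burklund-Schlank-Yuan approach: first produce a map to \emph{some} nontrivial $T(n)$-local commutative algebra with good enough chromatic behavior (using a Picard/Galois-inductive argument and the fact that the $T(n)$-local category has a generating set of ``small'' objects), and then run a transfinite obstruction-theoretic argument to inflate the target into a Lubin-Tate theory, filling in the residue field along the way using a chromatic analogue of the fact that any field embeds into an algebraic closure.

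The hardest point is this last construction: one must simultaneously control the algebraic structure of the coefficient ring and the higher-homotopical obstructions to lifting maps into Lubin-Tate theories. One should expect obstruction theory à la Goerss-Hopkins or the synthetic obstruction theory of Pstr\k{a}gowski-Van Koughnett (used earlier in the paper in \Cref{section:indsep}) to play a central role, because Lubin-Tate theories are precisely the universal objects for their ``formal group cohomology theory,'' and one needs to realize enough homotopy-level data by an actual map of $\mathbb{E}_\infty$-rings.
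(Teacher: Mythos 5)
This statement is not proved in the paper at all: it is quoted verbatim from \cite[Theorem 5.1]{ChroNS} (Burklund--Schlank--Yuan) and used as a black box, so there is no internal proof to compare your attempt against. What you have written is an outline of how one might prove the chromatic Nullstellensatz itself, which is the main theorem of a long separate paper and far outside the scope of the present one.

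Judged on its own terms, your sketch has a genuine gap, and in fact you name it yourself: ``the main obstacle, and the real content of the theorem'' is the construction of the map $R\to E(A)$, and you leave that entirely to an unspecified ``transfinite obstruction-theoretic argument.'' A proof proposal that defers its only nontrivial step is not a proof. There is also a concrete error in your first reduction: it does \emph{not} suffice to produce any nonzero map $R\to E(k')$ in $\CAlg(\Sp_{T(n)})$. Nilpotence detection is a much stronger condition than nonvanishing --- for instance, if $R\simeq R_1\times R_2$ with both factors nonzero, a nonzero map factoring through $R_1$ will fail to detect nilpotence of elements supported on $R_2$ --- so the passage from ``nonzero map'' to ``nilpotence-detecting map'' cannot be achieved by merely enlarging the coefficient ring $k'$ afterwards. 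In \cite{ChroNS} the perfect $\mathbb F_p$-algebra $A$ of Krull dimension $0$ and the nilpotence-detection property both come out of the construction itself (a carefully organized colimit over all ``residue maps,'' combined with the Devinatz--Hopkins--Smith nilpotence theorem and the universal property of Lubin--Tate theories via spherical Witt vectors), not from a post-hoc algebraic closure of a single target. If you need this statement, the correct move --- and the one the paper makes --- is simply to cite it.
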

If $R$ is $K(n)$-local, then it is also $T(n)$-local. If $P$ is furthermore dualizable over $R$, then for any map of commutative algebras $R\to S$ to a $K(n)$-local ring $S$, $E(A)\otimes_R P$ is already $K(n)$-local.
\begin{cor}
    In order to prove \Cref{prop:NS}, it suffices to prove the special case where $R= E(A)$ for $A$ a perfect $\mathbb F_p$-algebra of Krull dimension $0$. 
\end{cor}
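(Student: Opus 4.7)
The plan is to reduce the general case directly to the special case via a nilpotence-detecting map provided by the Chromatic Nullstellensatz, and then to transport non-vanishing of $P$ along this map using the conservativity statement from \Cref{cor:nildetcons}. This is the obvious strategy suggested by the preceding lemmas, and essentially the only thing to check is that everything stays $K(n)$-local and dualizable.

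Concretely, I would proceed as follows. Start with a $K(n)$-local commutative ring spectrum $R$ and a nonzero dualizable $R$-module $P$. Since $\Sp_{K(n)}\subset \Sp_{T(n)}$, we may view $R$ as a $T(n)$-local ring, and apply \Cref{thm:chroNS} to obtain a perfect $\mathbb F_p$-algebra $A$ of Krull dimension $0$ together with a nilpotence-detecting map $R\to E(A)$ in $\CAlg(\Sp_{T(n)})$. Since $\Sp_{T(n)}$ is compactly generated with compact objects being dualizable, \Cref{cor:nildetcons} applies and tells us that the basechange $E(A)\otimes_R -\colon \Mod_R(\Sp_{T(n)})\to \Mod_{E(A)}(\Sp_{T(n)})$ is conservative on dualizable objects. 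Hence $E(A)\otimes_R P\neq 0$. Moreover, as noted just before the corollary, because $E(A)$ is $K(n)$-local and $P$ is dualizable over $R$, the module $E(A)\otimes_R P$ is automatically $K(n)$-local, and it is dualizable over $E(A)$ as the basechange of a dualizable module.

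Now apply the assumed special case of \Cref{prop:NS} to the $K(n)$-local ring $E(A)$ and the nonzero dualizable $E(A)$-module $E(A)\otimes_R P$. This produces a field $L$ and a map $E(A)\to E(L)$ in $\CAlg(\Sp_{K(n)})$ such that
\[
E(L)\otimes_{E(A)}\bigl(E(A)\otimes_R P\bigr)\;\simeq\;E(L)\otimes_R P\;\neq\;0.
\]
The composite $R\to E(A)\to E(L)$ is then the desired map of commutative ring spectra, completing the proof.

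The only real thing to watch is the interaction of the $T(n)$-local and $K(n)$-local settings: the Chromatic Nullstellensatz produces the map $R\to E(A)$ in $T(n)$-local spectra, while the conclusion of \Cref{prop:NS} is naturally phrased in the $K(n)$-local setting, and so one needs to invoke the observation that for $P$ dualizable over a $K(n)$-local $R$, basechange to a $K(n)$-local ring stays $K(n)$-local. Once this bookkeeping is done, there is no serious obstacle: the argument is a straightforward two-step reduction (first to $R=E(A)$, then from $E(A)$ to $E(L)$).
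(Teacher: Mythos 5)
Your proof is correct and follows the same route as the paper: apply the Chromatic Nullstellensatz to get a nilpotence-detecting map $R\to E(A)$, use \Cref{cor:nildetcons} to see that $E(A)\otimes_R P\neq 0$, check it stays $K(n)$-local and dualizable, and feed it into the assumed special case. You are slightly more explicit than the paper about the final composition $R\to E(A)\to E(L)$, but there is no substantive difference.
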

\begin{proof}
Suppose \Cref{prop:NS} holds whenever $R=E(A)$, $A$ a perfect $\mathbb F_p$-algebra of Krull dimension $0$, and let $R$ be an arbitrary $K(n)$-local commutative ring spectrum, and $P$ a nonzero dualizable $R$-module. 

By \Cref{thm:chroNS} (\cite[Theorem 5.1]{ChroNS}), we can find a nilpotence detecting map $R\to E(A)$ in $\Sp_{T(n)}$ for some perfect $\mathbb F_p$-algeba of Krull dimension $0$, $A$.  By \Cref{cor:nildetcons}, the $T(n)$-local tensor product with $E(A)$ over $R$ is conservative on dualizable objects, hence $E(A)\otimes_R P$ is nonzero, since its $T(n)$-localization is nonzero (note that $P$ is dualizable over $R$, so it is already $T(n)$-local, and hence it is nonzero as a $T(n)$-local $R$-module), and dualizable over $E(A)$, thus \Cref{prop:NS} follows for $R$. 
\end{proof}
The proof of this special case is in fact implicit in the proof of \cite[Theorem 4.47]{ChroNS} - we reproduce the proof nonetheless, for the convenience of the reader, as it is not explicitly spelled out:
\begin{proof}[Proof of \Cref{prop:NS}]
    By the previous corollary, we may assume $R= E(A)$ for some perfect $\mathbb F_p$-algebra $A$ of Krull dimension $0$. 

    Let $P$ be a dualizable $E(A)$-module. For any field $k$ and any map $A\to k$, $E(k)\otimes_{E(A)}P$ is $K(n)$-local, and thus equivalent to its $K(n)$-localization. 

Assume $E(k)\otimes_{E(A)}P= 0$ for all such $A\to k$, we wish to prove that $P= 0$. The \category{} $L_{K(n)}\Mod_{E(A)}$ is compactly generated so it suffices to show that $[c,P]_{E(A)} = 0$ for any compact $c$. As $c$ is compact in a $p$-complete \category{}, $p$ acts nilpotently on it. It follows that if $[c,P]_{E(A)}\otimes_{W(A)}A \cong [c,P]_{E(A)} \otimes_{\mathbb Z}\mathbb F_p$ is zero, then so is $[c,P]_{E(A)}$. Here, $W(A)$ is the ring of Witt vectors of $A$. 

Now, by \cite[Lemma 4.45]{ChroNS}, if $[c,P]_{E(A)}\otimes_{W(A)}A $  is nonzero, there is a perfect field $k$ and a map $A\to k$ such that $[c,P]_{E(A)}\otimes_{W(A)}A\otimes_A k \neq 0$. By \cite[Lemma 4.46]{ChroNS}, this tensor product is $[c\otimes_{E(A)}E(k), P\otimes_{E(A)}E(k)]_{E(k)} $, and this is $0$ by assumption. 

Here, we have used \cite[Lemma 4.46]{ChroNS} with $\mathcal C = L_{K(n)}\Mod_{E(\mathbb F_p)}$ so that, by \cite[Lemma 2.37]{ChroNS}, $\mathbb W_{\mathcal C}(B) \simeq E(B)$ for any perfect $\mathbb F_p$-algebra $B$ (in particular $B=\mathbb F_p,A$), and so that this really is an application of \cite[Lemma 4.46]{ChroNS}. 
\end{proof}
This concludes the proof of \Cref{thm:Brauer}. As is clear from the proof, if one wants to answer \Cref{question : centralazumaya} positively for $\Mod_R(\Sp)$ for an arbitrary commutative ring spectrum $R$, one may without loss of generality assume $R$ is an $\mathbb F_p$-algebra for some prime $p$. In this case, residue fields are harder to come by, and are the subject of ongoing work. 

A first issue is that, away from characteristic $2$, one cannot hope for graded fields, cf. \cite[Example 3.9]{mathewresidue}. One could still try to find enough ``residue fields'' and analyze their homotopy categories in enough detail to answer the question there. A good test-case would be to start with $R= k^{tC_p}$, where $k$ is a field of characteristic $p$ and the Tate construction is taken with respect to the trivial action. In this case, $\Mod_{k^{tC_p}}\simeq \mathrm{StMod}_{kC_p}$, the stable module \category, and it seems possible to study the separable algebras therein an try to prove that they are Azumaya - for instance, the commutative case was studied in \cite{sepstmod} (of course, the commutative case is orthogonal to our discussion, but Balmer and Carlson's result shows that such an analysis is not completely impossible).  

We note that \Cref{question : centralazumaya} (both its inputs and its answers, positive or negative) can be phrased in the homotopy category $\ho(\C)$, and so one can also try to approach it using the homological residue fields of Balmer \cite{balmerfields}. Thus the question becomes completely about (graded) abelian symmetric monoidal $1$-categories, over $\mathbb F_p$. It is not clear to the author whether one can say anything in this generality. 

\subsection{Centers of separable algebras}\label{section:center}
In this subsection, we study \Cref{question : sepcenter}. Just as in the previous subsection, our approach is via descent. As the center of an algebra is $\mathbb E_2$, and in particular homotopy commutative, \Cref{cor:descenthocomm} tells us that separability can be tested locally. 

In the previous subsection however, we used a much weaker notion of ``local'', namely, we tested Azumaya-ness against \emph{conservative functors}, of which there is a larger supply than ``descendable'' functors. We were not able to phrase separability in terms of certain maps being equivalences, and so we are not able to use this technique.

For this reason, our positive answer is in a more restricted generality. The goal of this section is to prove: 
\begin{thm}\label{thm:center}
Let $A \in\Alg(\C)$ be a separable algebra. \Cref{question : sepcenter} has a positive answer, i.e. the center $Z(A)$ is separable, in the following cases:
\begin{enumerate}
    \item\label{item:sepcenterring} If $\C=\Mod_R(\Sp)$ for some connective commutative ring spectrum $R$, and $A$ is almost perfect \cite[Definition 7.2.4.10]{HA}.
 More generally, this holds if $\C= \QCoh(X)$ for some (connective) Deligne-Mumford stack $X$ and if $A$ is locally almost perfect.  
    \item\label{item:sepcentermorava} If $\C= \Mod_E(\Sp_{K(n)})$ is the \category{} of $K(n)$-local $E$-modules, where $E$ is Morava $E$-theory at height $n$, for some height $n$ and some odd implicit prime $p$. In particular, the same is true if $\C=\Sp_{K(n)}$. 
\end{enumerate}
\end{thm}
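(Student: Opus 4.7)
By \Cref{cor:centerabsolute}, $Z(A) = \hom_{A\otimes A\op}(A,A)$ exists and is a retract of $A$ in $\C$, preserved by any symmetric monoidal functor; moreover by \cite[Section 5.3]{HA} it carries a canonical $\mathbb E_2$-algebra structure, so it is homotopy commutative. The aim is thus to upgrade $Z(A)$ from homotopy commutative to separable. By \Cref{cor:descenthocomm}, separability of a homotopy commutative algebra can be tested along limit-preserving symmetric monoidal functors, and preservation of $Z$ under any symmetric monoidal functor (granted $A$ separable) identifies the center of a basechanged algebra with the basechange of the center. This is the framework in which the descent strategy will be carried out.

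For part (1), the plan is to base change along $\pi_0(R)\otimes_R - : \Mod_R \to \Mod_{\pi_0(R)}$ to reduce to a discrete base ring. The basechange $A' := \pi_0(R)\otimes_R A$ is an almost perfect separable associative algebra over $\pi_0(R)$, with center $\pi_0(R)\otimes_R Z(A)\simeq Z(A')$. Taking $\pi_0$ yields a classically separable associative algebra $\pi_0(A)$ over the discrete ring $\pi_0(R)$, whose classical center $Z^{cl}(\pi_0(A))$ is separable by classical Auslander--Goldman (\cite[Theorem 2.3]{AuslanderGoldman}). By \Cref{cor:seppi0}, this classical separable commutative algebra admits a unique separable commutative lift $\tilde B$ to $R$, which is finitely generated projective by \Cref{prop:etalesepaperf}. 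The remaining step is to identify $Z(A)\simeq \tilde B$, or at least compare them enough to transfer a separability idempotent; this uses that $\tilde B$ is flat, hence determined by its $\pi_0(R)$-module, combined with a Postnikov comparison argument and the retraction $A\to Z(A)$. The DM stack case reduces to the affine one via the limit description $\QCoh(X)\simeq \lim_i\Mod_{R_i}$ along an affine étale cover (\cite[Proposition 6.2.4.1]{SAG}) together with \Cref{cor:descenthocomm}.

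For part (2), at odd primes Morava $K$-theory $K(n)$ can be chosen homotopy commutative, yielding a symmetric monoidal functor $K(n)_* : \ho(\Mod_E(\Sp_{K(n)})) \to \Mod_{K(n)_*}(\mathbf{GrVect}_k)$ which, by \Cref{cor:centerabsolute}, sends $Z(A)$ to $Z(K(n)_*(A))$. Since $K(n)_*$ is a graded field, the target is semisimple, and the argument of \Cref{prop:BrGrVect} (combining \Cref{lm:sepimpliessemisimp} and \Cref{lm:centertensor}) shows $Z(K(n)_*(A))$ is separable. The descent from $K(n)_*$-separability back to $E$-module separability proceeds by inductively lifting the separability idempotent along the $\mathfrak m$-adic tower $E\to\cdots\to E/\mathfrak m^{k+1}\to E/\mathfrak m^k\to\cdots\to E/\mathfrak m$ (with $\mathfrak m=(p,u_1,\dots,u_{n-1})$), where at each square-zero step the obstructions vanish because of the $K(n)_*$-level separability and uniqueness of the lift is governed by \Cref{cor:uniqueidem}. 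For $\Sp_{K(n)}$, descend from $\Mod_E(\Sp_{K(n)})$ using that $\mathbb S_{K(n)}\to E$ is a profinite Galois, hence descendable, extension, so separability of the center descends along it via \Cref{cor:descenthocomm}.

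The main obstacles are precisely the descent steps in each case. In (1), the delicate point is identifying $Z(A)$ with the commutative lift $\tilde B$ produced by \Cref{cor:seppi0}: these are two a priori distinct almost perfect $R$-algebras with matching basechange to $\pi_0(R)$, and one must leverage flatness of $\tilde B$ and the bimodule structure of $A$ to compare them. In (2), the inductive lift up the $\mathfrak m$-adic tower of $E$ is subtle because mere conservativity of $K(n)_*$ does not suffice to descend the \emph{existence} of a separability idempotent, so one must carefully set up the obstruction-theoretic machinery at each stage.
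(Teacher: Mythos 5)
Your overall shape — prove the statement over a (graded) field directly, then descend — matches the paper's, and you correctly identify that the descent steps are the crux. But in both parts the route you propose through those steps has genuine gaps that the paper's proof is specifically engineered to avoid.

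In part (1), the chain ``take $\pi_0(A)$, apply classical Auslander--Goldman to its classical center, lift via \Cref{cor:seppi0}, identify the lift with $Z(A)$'' breaks in several places. First, the classical center of $\pi_0(A)$ computed in $\Mod_{\pi_0(R)}^\heart$ is not obviously $\pi_0$ of the derived center $Z(A)=\hom_{A\otimes A\op}(A,A)$, and $A$ need not even be connective or flat, so $\pi_0(R)\otimes_R A$ need not be discrete. Second, and more seriously, classical separability in the abelian category $\Mod_{\pi_0(R)}^\heart$ does \emph{not} imply separability in the derived category $\Mod_{\pi_0(R)}$ (the paper's remark after \Cref{thm:Neeman} makes exactly this point: $R/I$ is classically separable but not derived separable), so \Cref{cor:seppi0} cannot be fed the classical center without first proving it is flat — which is the real content. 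Third, the final identification $Z(A)\simeq\tilde B$ is where all the work lives and you leave it as a comparison ``using flatness and a Postnikov argument.'' The paper instead never invokes classical Auslander--Goldman: it proves the field case directly (\Cref{lm:centerfield}), then shows the derived center $Z(A)$ itself is flat by checking Tor-amplitude after basechange to residue fields (legitimate by \cite[Tag 068V]{stacks-project} since $Z(A)$ is almost perfect, being a retract of $A$), then that it is étale by checking on residue fields, and concludes by \Cref{prop:etalesep}. The point, stated explicitly in the paper, is that étaleness of a finitely presented flat algebra \emph{can} be checked along jointly conservative functors, whereas separability cannot — your proposal does not exploit this and so has no mechanism to descend.

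In part (2), applying $K(n)_*$ and proving separability of the center over the graded field is fine, but your proposed descent — lifting the separability idempotent up an $\mathfrak m$-adic tower $E\to\cdots\to E/\mathfrak m^{k}\to\cdots$ by obstruction theory — is not carried out and is problematic as stated: the quotients $E/\mathfrak m^k$ do not carry natural commutative ring structures, the claimed square-zero structure of the stages is unjustified, and conservativity of $K(n)_*$ alone (as you note yourself) cannot produce the idempotent. The paper's proof routes through the Hopkins--Lurie category $\Syn_E$ of synthetic/Milnor modules, where the Postnikov tower of the unit \emph{does} have established pullback squares of symmetric monoidal categories (\Cref{lm:cohesivesyn}), so that \Cref{prop:centerdescent} applies; the reduction to the heart $\Syn_E^\heart$ then uses the odd-primary symmetric monoidal identification with graded comodules over a Hopf algebra, where separability can be checked on underlying $K_*$-modules via the algebraic-group description (the \Cref{lm:idempotentcomod}-style argument). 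Your direct passage to $\Mod_{K(n)_*}(\mathbf{GrVect}_k)$ discards the comodule structure that makes this last descent step work.
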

\begin{rmk}
    If we have some \emph{a priori} control over $Z(A)$, one can get sometimes phrase separability in terms of certain maps being equivalences, and then get a more general positive answer. This is the case if, for instance, we assume that $A$ is sufficiently finite over its center. For instance, in the ordinary category of (discrete) $R$-modules for some (discrete) commutative ring $R$, Auslander and Goldman prove in \cite[Theorem 2.1]{AuslanderGoldman} that a separable algebra is always dualizable over its center. We do not know in what generality this can be expected, and as we were not able to formulate general criteria for this to happen, we did not include results along these lines here.  
\pend \end{rmk}
\begin{rmk}
We note that if $A$ is separable and almost perfect over a connective commutative ring spectrum $R$, $Z(A)$ is also almost perfect, and thus, by \Cref{prop:etalesepaperf}, it is separable if and only if it is étale. Under these finiteness assumptions, étaleness can be checked ``conservative locally'', and this is how we will be able to actually prove \Cref{item:sepcenterring}. In an earlier draft, the assumptions on $R$ were more restrictive, and we are grateful to Niko Naumann and Luca Pol for sharing a draft of their work which allowed us to prove \Cref{prop:etalesepaperf}, and subsequently, this version of the above theorem. 
\end{rmk}

Before moving on to the proof of \Cref{thm:center}, we note that under a positive answer to \Cref{question : sepcenter}, we can somewhat recreate the picture from \cite{AuslanderGoldman}: 
\begin{lm}\label{lm : centercenter}
Suppose $A\in\Alg(\C)$ is separable, and that $Z(A)$ is also separable. 

In this case, the center of $A$ \emph{as a $Z(A)$-algebra}, $C$, is equivalent to $Z(A)$, i.e. $A$ is a central $Z(A)$-algebra. 
\end{lm}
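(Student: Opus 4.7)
The plan is to show that the canonical restriction map $C \to Z(A)$ (turning an $A\otimes_{Z(A)} A\op$-linear endomorphism of $A$ into merely an $A\otimes A\op$-linear endomorphism) is an equivalence. As usual, we may assume without loss of generality that $\C$ admits geometric realizations compatible with the tensor product, so that all relative tensor products in question exist. Note also that since $Z(A)$ is assumed separable and is always homotopy commutative (as an $\mathbb{E}_2$-algebra), \Cref{thm : commlift} promotes it canonically to a commutative algebra, so the notation $A\otimes_{Z(A)} A\op$ makes sense. Write $Z := Z(A)$ for short.

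The key input is \Cref{cor : commsepsplit}: because $Z$ is commutative separable, the multiplication $Z \otimes Z \to Z$ exhibits $Z$ as the localization of $Z\otimes Z$ at an idempotent, i.e.\ there is an equivalence of commutative algebras $Z\otimes Z \simeq Z \times Z'$ under which the multiplication is the first projection. The $A$-bimodule structure on $A$ gives $A\otimes A\op$ a canonical commutative $Z\otimes Z$-algebra structure, and base-changing the above splitting along this map yields an equivalence of algebras
\[
    A \otimes A\op \;\simeq\; \big(A \otimes_{Z\otimes Z} Z\big)\times\big(A \otimes_{Z\otimes Z} Z'\big) \;\simeq\; \big(A \otimes_Z A\op\big) \times \big(A\otimes A\op\big)\otimes_{Z\otimes Z}Z'.
\]
In particular, the canonical map $A\otimes A\op \to A \otimes_Z A\op$ is a split projection of algebras onto a direct factor. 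It follows that the forgetful functor $\Mod_{A\otimes_Z A\op}(\C) \to \Mod_{A\otimes A\op}(\C)$ is fully faithful, with essential image those $A\otimes A\op$-modules on which the second factor $(A\otimes A\op)\otimes_{Z\otimes Z}Z'$ acts by zero, equivalently those on which the $Z\otimes Z$-action factors through the multiplication $Z\otimes Z \to Z$.

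We now check that the $A$-bimodule $A$ lies in this essential image: the left and right $Z$-actions on $A$, obtained by restricting the $A$-actions along the (central) unit $Z \to A$, agree by the very definition of $Z$ as the center of $A$. Hence the $Z\otimes Z$-action on $A$ factors through $Z$, and $A$ lifts (uniquely) to an $A\otimes_Z A\op$-module, which is of course the module structure used to define $C$. Combining this with the previous paragraph, the fully faithfulness of the forgetful functor yields
\[
    C \;=\; \hom_{A\otimes_Z A\op}(A,A) \;\xrightarrow{\simeq}\; \hom_{A\otimes A\op}(A,A) \;=\; Z,
\]
which is exactly the claim. The only delicate step is the verification that the splitting $Z\otimes Z \simeq Z\times Z'$ as commutative algebras really survives base change in $\C$ to give a decomposition of $A\otimes A\op$, but this is immediate once one notes that base change along a map of commutative algebras preserves localizations at idempotents and hence products.
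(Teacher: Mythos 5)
Your proof is correct, but it takes a genuinely different route from the paper's. The paper first reduces to the case where $\C$ is a $1$-category: since $A$ is separable over $Z(A)$ (\Cref{prop:conversetowersep}), the relative center can be computed in $\ho(\Mod_{Z(A)}(\C))\simeq \Mod_{hZ(A)}(\ho(\C))$ (via \Cref{thm : homod=mod}, \Cref{prop : tensorsep} and \Cref{cor:centerabsolute}); there, the splitting of $A\otimes A\op\to A\otimes_{Z(A)}A\op$ makes the restriction $C\to Z(A)$ a monomorphism, and sandwiching $Z(A)\to C\to Z(A)$ between subobjects of $A$ finishes the argument. You instead stay at the $\infty$-categorical level and use \Cref{cor : commsepsplit} to exhibit $A\otimes A\op\to A\otimes_{Z(A)}A\op$ as the projection onto a direct algebra factor, so that restriction of scalars is fully faithful and the restriction map $C\to Z(A)$ is an equivalence on the nose. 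This is a clean alternative; its one delicate point is the assertion that ``the left and right $Z(A)$-actions on $A$ agree'': a priori this is only an identity up to homotopy, but because membership in the direct factor is equivalent to the separability idempotent of $Z(A)$ acting as the identity on $A$ --- a property detectable in the homotopy category precisely because of the product decomposition (cf.\ \Cref{rmk:diagprop}) --- the $\pi_0$-level computation $z(1)a=z(a)=az(1)$ suffices. With that caveat spelled out your argument is complete, and it has the mild advantage of producing the equivalence $C\simeq Z(A)$ directly as the canonical restriction map rather than by a sandwiching argument; the price is invoking the idempotent splitting of $Z(A)\otimes Z(A)$ rather than only its split surjectivity onto $Z(A)$.
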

\begin{proof}
Note that \Cref{thm : commlift} implies, together with the homotopy commutativity of $Z(A)$, that $Z(A)$ has an essentially unique commutative algebra structure extending its ($\mathbb E_2$-)algebra structure. 

Furthermore, by \Cref{thm : homod=mod} and \Cref{prop : tensorsep}, we have that $\ho(\Mod_{Z(A)}(\C))\simeq \Mod_{hZ(A)}(\ho(\C))$ as symmetric monoidal categories, compatibly with the lax symmetric monoidal functor to $\ho(\C)$.

As $A$ is separable over $Z(A)$ by \Cref{prop:conversetowersep}, its center in $\Mod_{Z(A)}(\C)$ can be computed in $\ho(\Mod_{Z(A)}(\C))$ by \Cref{cor:centerabsolute}, and thus we may assume that $\C$ is a $1$-category. 

In particular, $A\otimes A\op\to A\otimes_{Z(A)}A\op $ is then an epimorphism, as it is split, so that $\hom_{A\otimes_{Z(A)} A\op}(A,A)\to \hom_{A\otimes A\op}(A,A)$ is a a monomorphism, compatible with the forgetful map to $A$. 

But the first one receives a map from $Z(A)$, as $Z(A)$ is commutative, also compatible with the forgetful map to $A$, and so, because all these maps to $A$ are monomorphisms (as they admit retractions and we are in a $1$-category), this implies the claim.
\end{proof}

We now move on to \Cref{thm:center}. The descent method here is based on:
\begin{prop}\label{prop:centerdescent}
    Let $A$ be a homotopy commutative algebra in $\C$. Assume that there is fully faithful symmetric monoidal functor $\C\to \lim_I \D_i$, where $i\mapsto \D_i$ is a diagram of additively symmetric monoidal \categories{}, and where $I$ has a weakly initial set of objects $I_0$. 

    In this case, if the projection $p_{i_0}(A)$ is separable in $\D_{i_0}$ for all $i_0\in I_0$, then $A$ is separable in $\C$.
\end{prop}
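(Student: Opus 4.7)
The plan is to apply the descent principle \Cref{cor:descenthocomm}, which states that $\C\mapsto h\CSep(\C)$ preserves limits on additively symmetric monoidal \categories, to the diagram $i\mapsto \D_i$, and then transfer the result back to $\C$ via the given fully faithful symmetric monoidal functor $F:\C\to \C'$, where $\C' := \lim_I \D_i$.

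First I would show that $F(A)\in\Alg(\C')$ lies in $h\CSep(\C')$. By \Cref{cor:descenthocomm} we have $h\CSep(\C')\simeq \lim_I h\CSep(\D_i)$, so it suffices to verify that $p_i(F(A))$ is separable and homotopy commutative in $\D_i$ for every $i\in I$. Homotopy commutativity is preserved by any symmetric monoidal functor and so holds pointwise. Separability at $i_0\in I_0$ is given by assumption; for a general $i\in I$, weak initiality of $I_0$ provides a morphism $f:i_0\to i$ with $i_0\in I_0$, and the corresponding structure functor $f_*:\D_{i_0}\to\D_i$ is symmetric monoidal, so by \Cref{item : symmonsep} it carries separability of $p_{i_0}(F(A))$ to separability of $p_i(F(A))\simeq f_*(p_{i_0}(F(A)))$.

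Second, I would descend separability from $\C'$ back to $\C$. Fully faithfulness of $F$, combined with symmetric monoidality, implies that the induced functor $\BiMod_A(\C)\to \BiMod_{F(A)}(\C')$ is fully faithful: mapping spaces of $A$-bimodules can be computed as limits of mapping spaces in $\C$ of the form $\map_\C((A\otimes A\op)^{\otimes n}\otimes M,N)$, all of which $F$ preserves. Consequently, the multiplication map $A\otimes A\op \to A$ admits a bimodule section in $\C$ if and only if its image does in $\C'$, and the latter holds by the first step.

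The main obstacle is this final descent step, and the cleanest way to handle it is via \Cref{cor:uniqueidem}: since $F(A)$ is homotopy commutative and separable in $\C'$, \Cref{thm : commlift} equips it with an essentially unique commutative structure, and \Cref{cor:uniqueidem} produces a canonical separability idempotent $s:\one\to F(A\otimes A\op)$ in $\C'$. By full faithfulness of $F$ this lifts to a unique map $\tilde s:\one\to A\otimes A\op$ in $\C$, and its property of being a separability idempotent reduces to equalities of maps in bimodule categories, which hold in $\C'$ by construction and therefore in $\C$ by the full faithfulness of $\BiMod_A(F)$.
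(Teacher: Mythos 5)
Your proposal is correct and follows essentially the same route as the paper: use weak initiality together with \Cref{item : symmonsep} to spread separability to all $p_i(A)$, apply \Cref{cor:descenthocomm} to conclude separability of the image of $A$ in $\lim_I\D_i$, and descend along the fully faithful embedding. Your final step merely spells out the justification (full faithfulness on bimodule categories, or equivalently the unique separability idempotent of \Cref{cor:uniqueidem}) that the paper leaves implicit in the phrase ``by fully faithfulness.''
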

Here, a set of objects $I_0$ in $I$ is weakly initial if any object in $I$ receives a map from some object in $I_0$.
\begin{proof}
    As $I_0$ is a weakly initial set of objects, the assumption on $p_{i_0}(A)$ implies that $p_i(A)$ is separable in every object $i$, and so by \Cref{cor:descenthocomm}, the image of $A$ in $\lim_I\D_i$ is separable. By fully faithfulness, it follows that $A$ is also separable in $\C$. 
\end{proof}
This explains the second half of \Cref{item:sepcenterring} in \Cref{thm:center}: for any (connective) Deligne Mumford stack $X$, $\QCoh(X)$ can be expressed as a limit of \categories{} of the form $\Mod_R(\Sp)$, so if one  can prove the result for those ones, it follows automatically for $\QCoh(X)$. So we will prove \Cref{item:sepcenterring} from \Cref{thm:center} only in the affine case. We begin with:
\begin{lm}\label{lm:centerfield}
Let $A$ be a separable algebra in $\Mod_k(\Sp)$, where $k$ is a field. In this case, the center of $A$ is separable.    
\end{lm}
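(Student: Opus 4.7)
The idea is to reduce this to the classical (graded) Auslander--Goldman statement in the homotopy category. By \Cref{cor:centerabsolute}, $Z(A)$ is preserved by every symmetric monoidal functor, so its image in $\ho(\Mod_k(\Sp))$ is canonically the center $Z(hA)$ of $hA$. Moreover $Z(A)$ is canonically an $\mathbb E_2$-algebra (\Cref{nota:center}), so in particular $hZ(A)$ is homotopy commutative. By \Cref{prop:homsepimpliessep} applied to $Z(A)$, it thus suffices to prove that $Z(hA)$ is separable in $\ho(\Mod_k(\Sp))$.

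Now, since $k$ is a field, the homotopy category $\ho(\Mod_k(\Sp))$ is symmetric monoidally equivalent to the $1$-category $\mathbf{GrVect}_k$ of graded $k$-vector spaces equipped with the Koszul symmetry. The problem has thereby been reduced to the following $1$-categorical claim: for $B$ a separable algebra in $\mathbf{GrVect}_k$, the graded center $Z(B)$ is a separable commutative algebra in $\mathbf{GrVect}_k$. This is a graded version of the classical Auslander--Goldman theorem \cite[Theorem 2.3]{AuslanderGoldman}. To prove it, I would argue as follows: $\mathbf{GrVect}_k$ is a semisimple abelian symmetric monoidal category with $\End(\one) = k$ having no nontrivial idempotents, so by \Cref{lm:sepimpliessemisimp} every bilateral ideal of $B$ splits. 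A graded Artin--Wedderburn argument then decomposes $B$ as a finite product of graded-simple separable algebras of the form $M_{\vec n_i}(D_i)$ with $D_i$ a graded division algebra over $k$, so $Z(B) \cong \prod_i Z(D_i)$ is a finite product of graded field extensions of $k$. Separability of $B$ forces each such extension $Z(D_i)/k$ to be (classically) separable --- otherwise $B \otimes_k \bar k$ would fail to be semisimple, contradicting that separability is preserved by the symmetric monoidal functor $-\otimes_k \bar k$ (item 7 of \Cref{lm:general}) together with \Cref{lm:sepimpliessemisimp}. Separable (graded) field extensions are then separable as commutative algebras (they are the graded analogs of finite étale $k$-algebras), and the result follows from item 4 of \Cref{lm:general}.

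The main obstacle is the last step, namely the graded Artin--Wedderburn decomposition together with the verification that the resulting graded field extensions are classically separable over $k$. In particular, one needs to know that a separable algebra in $\mathbf{GrVect}_k$ is necessarily finite-dimensional over $k$, so that the decomposition is genuinely a finite product; this can be extracted by writing a separability idempotent as a finite sum $e = \sum_{j=1}^n a_j \otimes b_j$ in $(B \otimes B^{op})_0$ and using the resulting retraction $B^{\oplus n} \to B$ combined with standard finiteness arguments over a field. Everything else in the plan is essentially bookkeeping.
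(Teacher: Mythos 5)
Your reduction to $\mathbf{GrVect}_k$ --- via \Cref{cor:centerabsolute} and \Cref{prop:homsepimpliessep} --- is exactly the paper's first step, but after that the two arguments genuinely diverge. You propose the classical route: finite-dimensionality, a graded Artin--Wedderburn decomposition into matrix algebras over graded division algebras, and an analysis of the resulting graded field extensions. The paper instead avoids all structure theory: (i) the $Z(A)$-linear retraction $A\to Z(A)$ of \Cref{rmk:centerretractlinear} gives $IA\cap Z(A)=I$ for every ideal $I\subseteq Z(A)$, and \Cref{lm:sepimpliessemisimp} makes the two-sided ideal $IA$ principal on a central idempotent $e$, which then lies in $IA\cap Z(A)=I$; hence every ideal of $Z(A)$ is a direct summand and $Z(A)$ is semisimple; (ii) therefore $A$ is projective over $Z(A)$, so $A\otimes_k A\op$ is projective over $Z(A)\otimes_k Z(A)$, and the chain of bimodule retracts $Z(A)\to A\to A\otimes_k A\op$ exhibits $Z(A)$ as projective over its enveloping algebra, i.e.\ separable. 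This is short and self-contained; note that the paper deliberately sidesteps graded division algebras elsewhere for exactly this reason (see the discussion around \Cref{lm:gradeddivision}).

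Beyond being much heavier, your route contains one step that is false as literally stated, and it is not among the obstacles you flag. The assertion that ``separable (graded) field extensions are separable as commutative algebras'' fails for graded fields not concentrated in degree $0$: for $k[t^{\pm1}]$ with $|t|=2$, one has $k[t^{\pm1}]\otimes_k k[t^{\pm1}]\cong k[t_1^{\pm1},t_2^{\pm1}]$, an integral domain, and a separability element $e$ would have to satisfy $(t_1-t_2)e=0$ and $\mu(e)=1$ simultaneously, which is impossible; so $k[t^{\pm1}]$ is a perfectly good graded field extension of $k$ that is \emph{not} separable. Your conclusion therefore only goes through because finite-dimensionality (once established) forces each $Z(D_i)$ to be an honest field in degree $0$, where the classical étale story applies. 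In other words, the finiteness step is load-bearing rather than bookkeeping, and together with a Koszul-signed graded Wedderburn theory it constitutes essentially all of the work. The plan can be completed, but I would encourage you to compare it with the paper's two-step argument, which needs none of these inputs.
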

\begin{proof}
    The homotopy groups functor induces a symmetric monoidal equivalence $\ho(\Mod_k(\Sp))\simeq \mathbf{GrVect}_k$ with the category of graded $k$-vector spaces, so by \Cref{cor:centerabsolute} and \Cref{prop:homsepimpliessep}, it suffices to prove the result in $\mathbf{GrVect}_k$, so let $A$ be a separable algebra therein. 

    We note that by \Cref{rmk:centerretractlinear}, $Z(A)$ is a $Z(A)$-linear retract of $A$. It follows that for any ideal $I$ in $Z(A)$, we have $IA\cap Z(A) = I$. But now, by \Cref{lm:sepimpliessemisimp}, because $\mathbf{GrVect}_k$ is semisimple, $IA$ must be principal, generated by a (graded) central idempotent $e$. In particular, $e\in IA\cap Z(A)$, and so $e\in I$. Thus, $Z(A)$ is semi-simple. 

    It follows that any module over $Z(A)$ is projective, and in particular $A$ is projective over $Z(A)$. Thus, as $Z(A)$-bimodules, we have that $Z(A)$ is a retract of $A$, which is a retract of $A\otimes_k A\op$, which is projective over $Z(A)\otimes_k Z(A)$. Hence $Z(A)$ is projective over $Z(A)\otimes_k Z(A)$, which implies that it is separable. 
\end{proof}
Recall that by \cite[Proposition 1.6]{Neeman}, this means in particular that $Z(A)$ is discrete and an étale algebra overr the field $k$, in the usual sense. 

We then reduce the general case to the discrete case:
\begin{prop}
To prove \Cref{item:sepcenterring} from \Cref{thm:center}, it suffices to prove it in the case where $R$ is discrete. 
\end{prop}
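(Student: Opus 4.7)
The plan is to reduce the connective case to the discrete one by basechange along $R \to \pi_0(R)$, and then identify $Z(A)$ with the unique étale lift of $\pi_0(Z(A))$ provided by \Cref{prop:etalesepaperf}. Concretely, let $R$ be connective and $A$ an almost perfect separable $R$-algebra. The basechange $A \otimes_R \pi_0(R)$ is almost perfect separable over the discrete ring $\pi_0(R)$, so by hypothesis its center is separable over $\pi_0(R)$. By \Cref{cor:centerabsolute}, this center is $\pi_0(R) \otimes_R Z(A) \simeq \pi_0(Z(A))$; being almost perfect (as a retract of the almost perfect $A$) and separable, it is in fact finite étale over $\pi_0(R)$ by \Cref{prop:etalesepaperf}, hence flat and finitely generated projective.

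Next, \Cref{lm:flat} applied to $Z(A)$ — which is connective with flat $\pi_0$-basechange — shows that $Z(A)$ is flat over $R$; in particular it is perfect, and $\pi_*(Z(A)) \cong \pi_0(Z(A)) \otimes_{\pi_0(R)} \pi_*(R)$ as graded $\pi_*(R)$-algebras, the algebra structure being determined by that of $\pi_0(Z(A))$ via the Künneth isomorphism for flat modules. By \Cref{prop:etalesepaperf} there is an essentially unique connective commutative separable $R$-algebra $\widetilde Z$ with $\pi_0(\widetilde Z) \cong \pi_0(Z(A))$ as commutative $\pi_0(R)$-algebras, and this $\widetilde Z$ is likewise flat with $\pi_*(\widetilde Z) \cong \pi_0(Z(A)) \otimes_{\pi_0(R)} \pi_*(R)$. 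Consequently $h\widetilde Z \simeq hZ(A)$ as homotopy algebras in $\ho(\Mod_R)$.

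Now \Cref{thm : morsep} applied to the separable algebra $\widetilde Z$ yields a bijection $\pi_0 \map_{\Alg(\Mod_R)}(\widetilde Z, Z(A)) \cong \hom_{\Alg(\ho(\Mod_R))}(h\widetilde Z, hZ(A))$; the element corresponding to the homotopy equivalence above produces an $\mathbb E_1$-algebra map $\widetilde Z \to Z(A)$ which is an equivalence after passing to $\ho$, and hence an equivalence in $\Mod_R$ itself. Therefore $Z(A) \simeq \widetilde Z$ as algebras, and $Z(A)$ is separable, as desired.

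The main obstacle in this plan is the last identification step: a priori $Z(A)$ is merely homotopy commutative, and we have no control over its $R$-algebra structure beyond what the mapping-space results from \Cref{section:homotopycat} afford. The crux is to first promote $Z(A)$ to a flat, perfect $R$-module via \Cref{lm:flat}, thereby forcing $\pi_*(Z(A))$ to be determined entirely by the discrete datum $\pi_0(Z(A))$; only then does one have the rigidity needed to match $Z(A)$ with the concretely constructed étale lift $\widetilde Z$.
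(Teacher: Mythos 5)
Your reduction takes a genuinely different route from the paper's. The paper's argument is a soft descent along the Postnikov tower: it writes bounded-below $R$-modules as the limit of the $\Mod_{R_{\leq n}}$'s, expresses each stage $R_{\leq n+1}\to R_{\leq n}$ as a square-zero extension whose module category is a pullback (both facts from \cite{SAG}), and applies \Cref{prop:centerdescent} at every step; no structural information about the center is used. You instead basechange directly along $R\to\pi_0(R)$, use the discrete case together with \Cref{prop:etalesepaperf} to see that $Z(A)\otimes_R\pi_0(R)\simeq Z(A\otimes_R\pi_0(R))$ is finite étale --- in particular discrete and finitely generated projective --- then bootstrap via \Cref{lm:flat} to conclude that $Z(A)$ is flat, indeed finitely generated projective, over $R$, and finally try to match $Z(A)$ with the canonical étale lift $\widetilde Z$ of $\pi_0(Z(A))$. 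This is a viable strategy, and it buys something the paper's proof does not (a concrete identification of $Z(A)$ as a finite étale $R$-algebra), at the cost of importing the machinery of \Cref{section:etale}.

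There is, however, a gap at exactly the step you single out as the crux. You deduce $h\widetilde Z\simeq hZ(A)$ as homotopy algebras in $\ho(\Mod_R)$ from the isomorphism $\pi_*(Z(A))\cong\pi_0(Z(A))\otimes_{\pi_0(R)}\pi_*(R)\cong\pi_*(\widetilde Z)$ ``via the Künneth isomorphism.'' An isomorphism of homotopy groups, even as graded $\pi_*(R)$-algebras, does not by itself produce an equivalence in $\ho(\Mod_R)$, let alone one intertwining the two multiplications up to homotopy: a morphism in $\ho(\Mod_R)$ is not determined by its effect on homotopy groups, and flatness alone does not repair this (for general flat modules there are $\lim^1$-type ambiguities). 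What does repair it is that $Z(A)$ and $\widetilde Z$ are finitely generated projective: $\pi_0$ induces a symmetric monoidal equivalence $\ho(\Proj(R))\simeq\Proj(\pi_0(R))$, the fact already used in the paper's discussion of $\Projsep$ after \Cref{thm : e1lift}, so on this full subcategory homotopy classes of maps --- and hence homotopy algebra structures --- are detected on $\pi_0$. Once this is in place you can also streamline the ending: $hZ(A)$ is homotopy separable because $\pi_0(Z(A))$ is separable over $\pi_0(R)$, and \Cref{prop:homsepimpliessep} then gives separability of $Z(A)$ directly, with no need to construct $\widetilde Z$ or to invoke \Cref{thm : morsep}.
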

\begin{proof}
    Note that the canonical functor $\Mod_R\to \lim_n \Mod_{R_{\leq n}}$ is fully faithful when restricted to bounded below objects by \cite[19.2.1.5]{SAG}, so it suffices to prove the result for each $R_{\leq n}$, by \Cref{prop:centerdescent}. 

    In particular, as $R_{\leq n+1}\to R_{\leq n}$ is a square zero extension by a connective spectrum, it suffices to prove that the result is stable under such, namely, that the result for $R_{\leq n}$ implies that for $R_{\leq n+1}$. This follows from \cite[Theorem 16.2.0.2]{SAG} and \Cref{prop:centerdescent}: the \category{} of bounded below $R_{\leq n+1}$-modules can be expressed as a pullback where the two corners are the \category{} of bounded below $R_{\leq n}$-modules. 
\end{proof}
\begin{rmk}
    This proof in fact shows that to provide a positive answer to \Cref{question : sepcenter} for a connective $R$, and in the bounded below case, it suffices to provide one for $\pi_0(R)$. 
\pend \end{rmk}
\begin{proof}[Proof of \Cref{item:sepcenterring} from \Cref{thm:center}]
As explained above, we may assume $R$ is a discrete commutative ring. We fix an almost perfect separable algebra $A$ over $R$. We first aim to prove that its center $Z(A)$ is a flat $R$-module. 

By \Cref{cor:centerabsolute}, $Z(A)$ is a retract of $A$ and thus is also almost perfect. By \cite[\href{https://stacks.math.columbia.edu/tag/068V}{Tag 068V}]{stacks-project}, to prove that it is flat, we may therefore basechange to any field and check that the result is in degree $0$. 

Using again \Cref{cor:centerabsolute}, we find that for any map $R\to k$ to a field, $Z(A)\otimes_R k\simeq Z(A\otimes_R k)$. Now $A\otimes_R k$ is a separable algebra over a field, so by the case of fields, i.e. \Cref{lm:centerfield} , $Z(A\otimes_R k)$ is separable. By \cite[Proposition 1.6]{Neeman}, it follows that it is discrete. Thus, $Z(A)$ is indeed flat over $R$, as claimed. 

In particular, it is also discrete. Because it is almost perfect, it follows that it is also finitely presented, as a module over $R$. It also follows that it is finitely presented as an algebra over $R$, and thus  \cite[\href{https://stacks.math.columbia.edu/tag/02GM}{Tag 02GM}]{stacks-project} implies that, to prove that it is étale over $R$, we may check after basechange along any map $R\to k$, $k$ a field. But there \Cref{lm:centerfield} kicks in again: $Z(A)\otimes_R k\simeq Z(A\otimes_R k)$ is separable and hence étale over $k$. 

It follows that $Z(A)$ is étale over $R$. By \Cref{prop:etalesep}, $Z(A)$ is separable.
\end{proof}
We now move on to \Cref{item:sepcentermorava} from \Cref{thm:center}. The proof of this will rely, as in \Cref{section:brauer}, on Milnor modules. However, because the center of an algebra is a notion that really relies on the symmetric monoidal structure of the ambient category, this time we were not able to use a trick as in the proof of \Cref{prop:BrEthy} to use the (not-necessarily-symmetric) monoidal equivalence $\Mil_E\simeq \coMod_{K_*^EK}(\Mod_{K_*}((\mathbf{GrVect}_k))$, so we are only able to give a proof at odd primes, where this equivalence \emph{can} be made symmetric monoidal. 

We will need a bit more about Milnor modules, so we recommend the reader have a deeper look at \cite[Section 6]{hopkinsluriebrauer}. What we called $\Mil_E$ in \Cref{thm:Milnor} is denoted $\Syn_E^\heart$ in \cite{hopkinsluriebrauer}, but there is also a larger \category{} $\Syn_E$\footnote{cf. \Cref{warn:synsyn}} and a fully faithful (Proposition 4.2.5 in \textit{loc. cit.}), symmetric monoidal (Variant 4.4.11 in \textit{loc. cit.}) embedding $\mathrm{Sy}[-]: \Mod_E(\Sp_{K(n)})\to \Syn_E$. We let $\one$ denote the unit of $\Syn_E$, and $\one^{\leq n}$ its truncations. The following is implicit in \cite{hopkinsluriebrauer}:
\begin{lm}\label{lm:convergencesyn}
    Let $X\in \Syn_E$. The canonical map $X\to \lim_n \one^{\leq n}\otimes X$ is an equivalence. In particular, the canonical symmetric monoidal functor $\Syn_E\to\lim_n \Mod_{\one^{\leq n}}(\Syn_E)$ is fully faithful. 
\end{lm}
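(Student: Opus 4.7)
The plan is to reduce the convergence statement to a Postnikov-type completeness property of $\Syn_E$, and then deduce the second half formally.

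First, I would unpack what $\one^{\leq n}\otimes X$ looks like. By construction of the truncations in the presence of a shift algebra (in the sense of \cite{PVK}), $\one^{\leq n}$ fits into a cofiber sequence $\Sigma^{n+1}\one[-(n+1)]\xrightarrow{\tau^{n+1}}\one\to\one^{\leq n}$. Tensoring with $X$, which commutes with colimits since $\Syn_E$ is presentably symmetric monoidal, yields a cofiber sequence
\begin{equation*}
\Sigma^{n+1}X[-(n+1)]\xrightarrow{\tau^{n+1}\otimes X} X\to \one^{\leq n}\otimes X.
\end{equation*}
Taking the limit over $n$, the fiber of the canonical map $X\to \lim_n\one^{\leq n}\otimes X$ is $\lim_n \Sigma^{n+1}X[-(n+1)]$, where the structure maps of the tower come from $\tau$. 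So the core of the argument is to show that this limit of ``increasingly connective'' objects vanishes.

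Next, I would invoke the fact that $\Syn_E$ is built (cf.\ \cite[Section 6]{hopkinsluriebrauer}, in analogy with \Cref{cons:synrig}) as a full subcategory of a functor category into $\Sp_{\geq 0}$, and thus its homotopy groups (defined levelwise via the shift structure $\tau$) detect equivalences and commute with the relevant limits. Concretely, the object $\Sigma^{n+1}X[-(n+1)]$ has homotopy groups concentrated in degrees $\geq n+1$ (this is exactly the content of the shift grading: the functor $[-1]=(\Omega-)^*$ decreases degrees by one while $\Sigma$ increases them by one, but shifting by $-(n+1)$ in the auxiliary direction raises connectivity of the levelwise spectra). Since $\Sp_{\geq 0}$ is Postnikov complete, and limits in $\Syn_E$ are computed pointwise in $\Sp_{\geq 0}$, any Mittag-Leffler-type tower of $(n+1)$-connective objects has vanishing limit; in particular $\lim_n\Sigma^{n+1}X[-(n+1)]\simeq 0$, giving the main claim.

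Finally, for the ``in particular'' statement, I observe that the canonical functor $\Syn_E\to \lim_n\Mod_{\one^{\leq n}}(\Syn_E)$ is a symmetric monoidal functor whose composite with the obvious right adjoint (forgetting the $\one^{\leq n}$-module structures and taking the limit) is precisely $X\mapsto \lim_n \one^{\leq n}\otimes X$. The first part identifies the unit of this adjunction as an equivalence, which is equivalent to fully faithfulness of the functor in question.

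The main obstacle, and the only nontrivial technical point, is the connectivity/Postnikov-completeness claim: verifying that the tower $\{\Sigma^{n+1}X[-(n+1)]\}_n$ really does consist of objects of increasing connectivity in a sense strong enough to make its limit vanish. This is a standard feature of $\Fun^\times(-,\Sp_{\geq 0})$-type categories, and essentially boils down to the fact that in $\Sp_{\geq 0}$ the loop functor shifts connectivity, combined with the description of $\tau$ as the $1$-connective cover map on representable periodic modules (cf.\ \Cref{lm:repperiodic}).
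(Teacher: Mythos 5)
Your proof is correct and follows essentially the same route as the paper: the paper observes that each $X\to\one^{\leq n}\otimes X$ is an equivalence on $n$-truncations and then invokes pointwise Postnikov convergence in $\Fun^\times(-,\Sp_{\geq 0})$, which is exactly the content of your cofiber sequence $\Sigma^{n+1}X[-(n+1)]\to X\to\one^{\leq n}\otimes X$ having $(n+1)$-connective fiber term, together with the vanishing of the limit of a pro-zero-on-homotopy-groups tower. The deduction of the ``in particular'' clause via the unit of the adjunction is also the paper's argument verbatim.
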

\begin{proof}
    The second part of the statement follows from the first, as the canonical map $X\to \lim_n \one^{\leq n}\otimes X$ is the unit of the adjunction $\Syn_E\rightleftarrows\lim_n \Mod_{\one^{\leq n}}(\Syn_E)$. 

    For the first part, we simply note that the canonical map $X\to \one^{\leq n}\otimes X$ induces an equivalence upon $n$-truncation, and therefore so do the morphisms $\one^{\leq m}\otimes X\to \one^{\leq n}\otimes X$. Because limits and truncations in $\Syn_E=\Fun^\times(\Mod_E^{\mathrm{mol}},\Ss)$ are pointwise, the claim follows. 
\end{proof}
The following will allow us to reduce to $\Syn_E^\heart$:
\begin{lm}[{\cite[Proposition 7.3.6]{hopkinsluriebrauer}}]\label{lm:cohesivesyn}
    For every $n\geq 0$, basechange along $\one^{\leq n+1}\to \one^{\leq n}$ fits in a pullback square of additively symmetric monoidal \categories{} of the form:
    $$\xymatrix{\Mod_{\one^{\leq n+1}}(\Syn_E) \ar[r] \ar[d] & \Mod_{\one^{\leq n}}(\Syn_E) \ar[d] \\ \Mod_{\one^{\leq n}}(\Syn_E)\ar[r] & \mathcal C}$$
\end{lm}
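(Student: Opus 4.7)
The plan is to realize the square as arising from a pullback decomposition of the algebra $\one^{\leq n+1}$ itself, and then to apply a general principle that turns such decompositions of algebras into decompositions of their module categories. More precisely, by the standard Postnikov analysis in a nice enough setting (cf.\ \cite[Corollary 7.4.1.28]{HA}), the truncation map $\one^{\leq n+1}\to \one^{\leq n}$ is a square zero extension, classified by a derivation whose domain is $\one^{\leq n}$ and whose target is (a suitable shift of) $\pi_{n+1}(\one)$, regarded as a $\one^{\leq n}$-module. Concretely, this presents $\one^{\leq n+1}$ as the pullback, in $\CAlg(\Syn_E)$, of two copies of $\one^{\leq n}$ over an algebra $C := \one^{\leq n}\oplus \Sigma^{n+2}\pi_{n+1}(\one)$ (the trivial square zero extension by the appropriate shift), where one leg is the zero derivation and the other is the classifying derivation. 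The first step is thus to verify this description of $\one^{\leq n+1}$, which is a general feature of Postnikov towers in separated, complete Grothendieck prestable symmetric monoidal \categories{} as established in \cite{hopkinsluriebrauer}.

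The second step is to promote this pullback square of commutative algebras to the claimed pullback square of symmetric monoidal module categories, with $\mathcal C = \Mod_C(\Syn_E)$. This does not hold for arbitrary pullback squares of algebras, but it does hold for square zero extensions by connective modules: this is the ``module gluing'' statement, which can be phrased as the assertion that the formation of modules sends such pullbacks in $\CAlg(\Syn_E)$ to pullbacks in $\CAlg(\PrL)$ (or in $\CAlg(\Cat)$, depending on the precise statement). In the context of $\Syn_E$, which is Grothendieck prestable, complete and separated, this follows from the cohesive deformation theory developed in \cite[Section 7]{hopkinsluriebrauer} (and is essentially the content of their Proposition 7.3.6); the argument is formally analogous to \cite[Theorem 16.2.0.2]{SAG} used in the commutative ring spectrum setting.

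The main obstacle is the second step. The delicate point is that the module-gluing statement for pullback squares of commutative algebras is in general false: one genuinely needs the square to come from a square zero extension by a connective module, and one must keep track of which maps are being used when gluing. The arguments in \cite{HA,SAG} in the ring spectrum context, and in \cite{hopkinsluriebrauer} in the synthetic context, both handle this by reducing the problem to a statement about connective modules and then invoking Beck--Chevalley/descent along a morphism whose fiber has a nilpotent action, which is what makes the gluing effective. Verifying these hypotheses in $\Syn_E$ amounts to checking that the shift algebra structure on $\one$ and the grading on $\Syn_E$ interact well with truncation, which is guaranteed by the basic properties of $\Syn_E$ recalled in \Cref{cons:synrig}. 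Once this is in place, the resulting pullback is clearly a pullback of additively symmetric monoidal \categories, as both the module functors and the basechange functors preserve (and are compatible with) the additive and symmetric monoidal structure.
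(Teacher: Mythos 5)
The paper gives no proof of this lemma: it is quoted directly from Hopkins--Lurie, and \cite[Proposition 7.3.6]{hopkinsluriebrauer} is used as a black box (the same citation already does the work in the proof of \Cref{lm:Postnikovcats}). Your sketch --- presenting $\one^{\leq n+1}$ as a square-zero extension of $\one^{\leq n}$, so that it is the pullback of two copies of $\one^{\leq n}$ over the trivial extension, and then invoking module-gluing for square-zero extensions by connective modules with $\mathcal C=\Mod_{\one^{\leq n}\oplus\Sigma^{n+2}\pi_{n+1}(\one)}(\Syn_E)$ --- is a faithful outline of how that result is established in the source (the prestable analogue of \cite[Theorem 16.2.0.2]{SAG}), so it is consistent with what the paper relies on. Be aware, though, that your second and decisive step is itself justified by appeal to ``essentially the content of their Proposition 7.3.6,'' i.e.\ to the very statement being proved; as a standalone argument the proposal therefore does not add independent content beyond the citation the paper already makes, and a genuinely self-contained proof would have to reprove the cohesiveness/gluing statement in the Grothendieck prestable setting rather than cite it.
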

\begin{proof}[Proof of \Cref{item:sepcentermorava} from \Cref{thm:center}]
We begin by proving the case of $\C= \Mod_E(\Sp_{K(n)})$.

  Let $A\in\Mod_E(\Sp_{K(n)})$ be a separable algebra. By \cite[Proposition 4.2.5, Variant 4.4.11]{hopkinsluriebrauer} and \Cref{cor:centerabsolute}, to prove that its center is separable, it suffices to prove that its image $\mathrm{Sy}[A]\in\Syn_E$  has the same property, and by \Cref{lm:convergencesyn} and \Cref{prop:centerdescent}, it suffices to prove the same result for each $\one^{\leq n}\otimes\mathrm{Sy}[A]\in Mod_{\one^{\leq n}}(\Syn_E)$. 

  By induction, \Cref{prop:centerdescent} and by \Cref{lm:cohesivesyn}, it suffices to prove it for $\one^{\leq 0}\otimes\mathrm{Sy}[A]\in\Mod_{\one^{\leq 0}}(\Syn_E)$. By \cite[Lemma 7.1.1]{hopkinsluriebrauer}, the latter is equivalent to $\pi_0\mathrm{Sy}[A]$\footnote{Denoted $\mathrm{Sy}^\heart[A]$ in \textit{loc. cit.}.}\footnote{This result can be seen as a version of the statement ``$\mathrm{Sy}[A]$ is flat'', see also \cite[Proposition 2.16]{PVK}. Thus in a sense the beginning of this proof is very similar to the proof of \Cref{item:sepcenterring}.}, and the same is true for $\mathrm{Sy}[A\hat \otimes A]$. In other words, $\pi_0\mathrm{Sy}[A]$ is a separable algebra in $\Syn_E^\heart$, so we are reduced to the case of separable algebras in $\Syn_E^\heart$. 

  It is in this last analysis, i.e. that of separable algebras in $\Syn_E^\heart$, that we really use that we were working with $\Mod_E(\Sp_{K(n)})$ and the precise $\Syn_E$ from \cite{hopkinsluriebrauer}. Namely, \cite[Proposition 6.9.1]{hopkinsluriebrauer} states that, at an odd prime, there is a symmetric monoidal equivalence between $\Syn_E^\heart$ and the category of graded modules over a (finite dimensional) cocommutative Hopf algebra over $K_*\cong k[t^{\pm 1}], |t| = 2$. The latter is equivalently described as a category of algebraic representations of an algebraic group, and so, by \Cref{cor:descentcommsep}, one can check that an algebra is separable on underlying objects (this is similar to the proof of \Cref{lm:idempotentcomod}). 

As the center is preserved by this forgetful functor, we are reduced to the case of the category of graded modules over a graded field, where the proof is essentially the same as that of \Cref{lm:centerfield}. 

This concludes the proof for $\C=\Mod_E(\Sp_{K(n)})$. The case of $\C=\Sp_{K(n)}$ follows from this, together with \Cref{prop:centerdescent} and Galois descent for the $K(n)$-local Galois extension $\Sph_{K(n)}\to E$ (in more detail, see \cite[Proposition 10.10]{Akhilgalois}). 
\end{proof}

As for \Cref{question : centralazumaya} we note that all the parts involved in \Cref{question : sepcenter} (its inputs and its answers, positive or negative) only depend on the homotopy category of $\C$. In particular, if one tries to answer the question in full generality, one can try to consider abelian categories, e.g. via Balmer's homological residue fields -- this idea is very clearly apparent in the proof of \Cref{item:sepcentermorava}. 

Ultimately, the questions in \Cref{section:brauer}, in full generality, are questions about symmetric monoidal abelian categories.

\section{Separable algebras and descent in Hochschild homology}\label{section:descent}
Separable algebras are defined in of the bimodule structure of $A$, which is also relevant to the definition of the Hochschild homology of $A$. This connection was already explored in \cite[Section 9]{rognes} and \cite[Section 1]{BRS}.

In this section, we study one aspect of this connection. To explain it, fix some base stably, presentably symmetric monoidal \category{} $\C$, and a commutative algebra $A$ in $\C$. Our goal is to explain in what way, when $B$ is an $A$-algebra which is \emph{absolutely separable}, the Hochschild homology of $B$ can be made to depend ``linearly'' on $B$. This linear dependence will imply that in the absolutely separable case, one can prove strong descent results for $\HH_\C(A)\to \HH_\C(B)$ from similar results for $A\to B$, cf. \Cref{cor:HHdescent} for a general statement and \Cref{cor:Galoisdescent} for a specific example of interest. 
\begin{cons}
Let $A$ be a commutative algebra in $\C$.
Recall from \cite[Corollary 4.2.3.7]{HA} that there is a presentable fibration $\BiMod^A(\C) \to \Alg_A(\C)$ which classifies the functor $B\mapsto \BiMod_B(\C)$\footnote{Note that we take $B$-bimodules in $\C$, even though we view $B$ as an algebra in $\Mod_A(\C)$.}. 

As $A$ is initial in $\Alg_A$, we further get a natural transformation $(A,M)\to (B,M)$ on $\BiMod^A(\C)$. Here, we abuse notation and write $M$ for the restriction of scalars to $A$ of a $B$-bimodule $M$. This induces a natural transformation $\HH_\C(A,M)\to \HH_\C(B,M)$, where $\HH_\C$ is Hochschild homology relative to $\C$, see \cite[Section 1]{horel}. 
\pend \end{cons}
Taking $A = \one $, we recover the usual map $M\to \HH_\C(B,M)$. In terms of relative tensor products, we have $\HH_\C(B,M)\simeq B\otimes_{B\otimes B\op}M$, and this map is induced by the right-$B\otimes B\op$-module map given by $\mu:B\otimes B\op\to B$. In particular, we have, as an immediate consequence of the definition:
\begin{prop}
Let $\C$ be presentably symmetric monoidal and $B$ a separable algebra in $\C$. The natural map $M\to \HH_\C(B,M)$ admits a natural section. 
\end{prop}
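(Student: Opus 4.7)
The plan is to read the section off the separability idempotent directly. By definition, $B$ being separable provides a $B$-bimodule map $s : B \to B \otimes B\op$ such that $\mu \circ s = \id_B$, where $\mu$ is the multiplication. Viewing $B$ as a right $B \otimes B\op$-module via $\mu$ and $B \otimes B\op$ as a right module over itself, both $s$ and $\mu$ are then in particular morphisms of right $B \otimes B\op$-modules.

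Using the identifications $M \simeq (B\otimes B\op) \otimes_{B\otimes B\op} M$ and $\HH_\C(B,M) \simeq B \otimes_{B\otimes B\op} M$ noted just before the statement, the natural map $M \to \HH_\C(B,M)$ is exhibited as $\mu \otimes_{B\otimes B\op} \id_M$. I would therefore simply form $s \otimes_{B\otimes B\op} \id_M : \HH_\C(B,M) \to M$ and observe that, since $- \otimes_{B\otimes B\op} \id_M$ respects composition, the composite $(\mu \otimes_{B\otimes B\op} \id_M) \circ (s \otimes_{B\otimes B\op} \id_M)$ equals $(\mu\circ s) \otimes_{B\otimes B\op} \id_M = \id_{\HH_\C(B,M)}$. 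This is the desired section.

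Naturality in $M$ is automatic: the map $s$ is fixed, independent of $M$, and $- \otimes_{B\otimes B\op} X$ is functorial in $X$, so the collection $\{s \otimes_{B\otimes B\op} \id_M\}_{M \in \BiMod_B(\C)}$ assembles into a natural transformation of functors $\BiMod_B(\C) \to \C$. There is no real obstacle; the only bookkeeping is the identification of the natural map $M \to \HH_\C(B,M)$ with the tensored multiplication, which is contained in the preamble of the proposition, together with the standard fact that relative tensor product preserves composition of module maps in each variable.
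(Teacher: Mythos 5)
Your proof is correct and is exactly the argument the paper intends: the proposition is stated there as an immediate consequence of the identification $\HH_\C(B,M)\simeq B\otimes_{B\otimes B\op}M$, with the section obtained by tensoring the separability section $s$ with $\id_M$ over $B\otimes B\op$, precisely as you do. Nothing is missing.
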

As a consequence of \Cref{cor : commsepsplit} and \Cref{rmk:diagprop}, we also have (compare \cite[Proposition 9.2.5]{rognes}):
\begin{prop}\label{prop:HHisyourself}
Let $\C$ be additively presentably symmetric monoidal and $B$ a separable commutative algebra in $B$. The natural map $M\to \HH_\C(B,M)$ admits a natural section, and it is an equivalence if and only if $M$ is a diagonal bimodule, i.e. of the form $\mu^*N$ for some $B$-module $N$.  
\end{prop}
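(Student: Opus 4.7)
The plan is to reduce both claims to a simple observation about the decomposition of $\Mod_{B\otimes B}(\C)$ afforded by \Cref{cor : commsepsplit}. Since $B$ is commutative and separable, that corollary provides an equivalence of commutative algebras $B \otimes B \simeq B \times C$ (using $B\op = B$), which by \Cref{prop : retractloc} upgrades to a symmetric monoidal equivalence $\Mod_{B\otimes B}(\C) \simeq \Mod_B(\C) \times \Mod_C(\C)$.

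Under this equivalence, I would identify the relevant functors: restriction $\mu^{*}: \Mod_B(\C) \to \Mod_{B\otimes B}(\C)$ corresponds to the inclusion $N \mapsto (N, 0)$ of the first factor, while its left adjoint $\mu_{!} = (-)\otimes_{B \otimes B} B$ corresponds to the projection $(M_1, M_2) \mapsto M_1$. Consequently the natural map $M \to \HH_\C(B, M)$ is the unit $M \to \mu^{*}\mu_{!} M$ of this adjunction, which under the identifications above becomes the canonical map $(M_1, M_2) \to (M_1, 0)$.

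From this description, both conclusions follow at once. For the section, the inclusion $(M_1, 0) \to (M_1, M_2)$ of the first factor of a product decomposition is natural in $M$ and is $B \otimes B$-linear, hence in particular provides a natural $\C$-linear section of the unit map. For the characterization of equivalences, $(M_1, M_2) \to (M_1, 0)$ is an equivalence if and only if $M_2 = 0$, which is precisely the condition that $M$ lies in the essential image of $\mu^{*}$, i.e.\ that $M$ is a diagonal bimodule, as recorded in \Cref{rmk:diagprop}.

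The only real piece of bookkeeping is to verify that the natural transformation $M \to \HH_\C(B,M)$ coming from the morphism $(\one, M) \to (B, M)$ in $\BiMod^\one(\C)$ agrees with the unit $M \to \mu^{*}\mu_{!} M$ of the above adjunction; this will come from unwinding the formula $\HH_\C(B,M) \simeq M \otimes_{B\otimes B} B = \mu_{!}M$ and the universal property of the bar construction, and I expect no substantive difficulty there.
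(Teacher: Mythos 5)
Your argument is correct and is precisely the route the paper takes: the paper derives this proposition directly from \Cref{cor : commsepsplit} and \Cref{rmk:diagprop}, i.e.\ from the splitting $B\otimes B\simeq B\times C$ and the induced product decomposition of $\Mod_{B\otimes B}(\C)$, under which the unit $M\to\mu^*\mu_!M$ becomes $(M_1,M_2)\to(M_1,0)$. Your identification of the natural section with the inclusion of the first factor and of the equivalence condition with membership in the essential image of $\mu^*$ matches the paper's (implicit) reasoning exactly.
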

Our next goal is to give a similar condition for the map $\HH_\C(A,M)\to \HH_\C(B,M)$ to be an equivalence. Specializing to $M=B$, we will find that in that situation, $\HH_\C(B)$ is equivalent to $\HH_\C(A,B)$, the latter being ``linear in $B$''. 

The key to finding this condition is the following:
\begin{thm}\label{thm:HHtrans}
    Let $\C$ be presentably symmetric monoidal, $A$ a commutative algebra in $\C$, and $B$ an $A$-algebra. In this case, the functor $M\mapsto \HH_\C(A,M)$, defined on \emph{$B$-bimodules}, admits a canonical lift to $B$-bimodules \emph{in $\Mod_A(\C)$}, or equivalently, to $B\otimes_A B\op$-bimodules. 
    
    With this lift abusively denoted $\HH_\C(A,-)$, we have a natural equivalence $$\HH_A(B,\HH_\C(A,-))\simeq \HH_\C(B,-)$$ 
    under which the canonical map $\HH_\C(A,M)\to \HH_A(B,\HH_\C(A,M))$ is identified with the canonical map $\HH_\C(A,M)\to \HH_\C(B,M)$.
\end{thm}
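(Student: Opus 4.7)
The plan is to deduce everything from a single pushout identification of commutative algebras in $\C$ (equivalently, of commutative $A$-algebras):
\[B \otimes_A B\op \simeq A \otimes_{A \otimes A\op} (B \otimes B\op).\]
This expresses the relative enveloping algebra of $B$ over $A$ as the base change of its absolute enveloping algebra along the multiplication $\mu \colon A \otimes A\op \to A$; it follows from the fact that pushouts of commutative algebras are tensor products, together with the universal property of $B \otimes_A B\op$ as the pushout of $B \leftarrow A \to B\op$.

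Granted this, for a $B$-bimodule $M\in\C$, viewed as a left $B \otimes B\op$-module, associativity of relative tensor products yields
\[\HH_\C(A, M) = A \otimes_{A \otimes A\op} M \simeq \bigl( A \otimes_{A \otimes A\op} (B \otimes B\op) \bigr) \otimes_{B \otimes B\op} M \simeq (B \otimes_A B\op) \otimes_{B \otimes B\op} M,\]
and the right-hand side carries a canonical left $B \otimes_A B\op$-module structure, providing the lift of item~1. For item~2, applying $B \otimes_{B \otimes_A B\op} -$ and invoking associativity once more gives
\[\HH_A(B, \HH_\C(A, M)) \simeq B \otimes_{B \otimes_A B\op} (B \otimes_A B\op) \otimes_{B \otimes B\op} M \simeq B \otimes_{B \otimes B\op} M = \HH_\C(B, M).\]

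The cleanest packaging of the coherence of these identifications is to observe that the lift in item~1 is literally the base-change functor $\LMod_{B \otimes B\op}(\C) \to \LMod_{B \otimes_A B\op}(\C)$ along the map of algebras $B \otimes B\op \to B \otimes_A B\op$; the equivalence in item~2 is then a direct instance of the transitivity of base change, namely that the composition of the base changes along $B \otimes B\op \to B \otimes_A B\op$ and $B \otimes_A B\op \to B$ is the base change along their composite. For item~3, both the natural map $\HH_\C(A, M) \to \HH_\C(B, M)$ and the map $\HH_\C(A, M) \to \HH_A(B, \HH_\C(A, M))$ are, under this identification, induced by the canonical map of algebras $A \to B$ (equivalently, by the unit of the relative tensor product against $B$ over $B \otimes_A B\op$); tracking this unit through the chain of equivalences yields the desired identification of canonical maps. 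The main obstacle is a purely bookkeeping one: ensuring that all the associativity equivalences and the identification of canonical maps are constructed as natural transformations of functors on $\BiMod_B(\C)$, rather than just objectwise equivalences. This is handled by working throughout within the presentable-fibration formalism of \cite[Section 4.4]{HA}.
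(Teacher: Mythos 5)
Your proposal follows the same route as the paper: the lift is the base-change functor $\LMod_{B\otimes B\op}(\C)\to\LMod_{B\otimes_A B\op}(\C)$, the equivalence $\HH_A(B,\HH_\C(A,-))\simeq\HH_\C(B,-)$ is transitivity of base change along $B\otimes B\op\to B\otimes_A B\op\to B$, and the identification of the canonical maps is deferred to bookkeeping (as it also is in the paper, which invokes the projection formula and leaves the details to the reader). The paper packages the one nontrivial input --- that base change along $B\otimes B\op\to B\otimes_A B\op$ followed by the forgetful functor to $\LMod_A(\C)$ computes $A\otimes_{A\otimes A\op}(-)$ --- as horizontal right adjointability of the square of module categories induced by the square of algebras $A\otimes A\op\to B\otimes B\op$, $A\to B\otimes_A B\op$, verified by noting that all functors in sight are $\C$-linear and colimit-preserving and that the comparison map is an equivalence on the generator $B\otimes B\op$.

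The one genuine flaw is your justification of the key identity $B\otimes_A B\op\simeq A\otimes_{A\otimes A\op}(B\otimes B\op)$. You derive it from ``the universal property of $B\otimes_A B\op$ as the pushout of $B\leftarrow A\to B\op$'' together with the fact that pushouts of commutative algebras are tensor products. But $B$ is only assumed to be an associative $A$-algebra; $B\otimes_A B\op$ is not a pushout in $\Alg(\C)$ or $\Alg(\Mod_A(\C))$ (the pushout of associative algebras is a free product, not a relative tensor product), so your argument only covers commutative $B$. The identity itself is correct in general: it is the instance $X=B$, $Y=B\op$ of the standard equivalence $X\otimes_A Y\simeq(X\otimes Y)\otimes_{A\otimes A\op}A$ for a right $A$-module $X$ and a left $A$-module $Y$ (applicable here because $A$ is commutative and $B$ is an algebra in $\Mod_A(\C)$, so $B$ and $B\op$ are canonically $A$-bimodules), or equivalently it is the generator case of the paper's adjointability statement. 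With that repair --- which is precisely where the paper's $\C$-linearity-plus-generators argument does its work, and which also supplies the naturality in $M$ that you defer to the fibration formalism --- your argument coincides with the paper's.
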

\begin{proof}
    On $A$-bimodules, the functor $\HH_\C(A,-)$ can equivalently be described as basechange along $\mu: A\otimes A\op\to A$. 
 Consider then the following commutative diagram of algebras in $\C$: 
 \[\begin{tikzcd}
	{A\otimes A} & {B\otimes B\op} \\
	A & {B\otimes_A B\op}
	\arrow[from=1-1, to=2-1]
	\arrow[from=1-1, to=1-2]
	\arrow["p", from=1-2, to=2-2]
	\arrow[from=2-1, to=2-2]
\end{tikzcd}\]  
Upon applying the functor $\LMod_\bullet(\C)$, it induces a commutative diagram (where we leave $\C$ implicit in the notation):
\[\begin{tikzcd}
	{\LMod_{A\otimes A}} & {\LMod_{B\otimes B\op}} \\
	{\LMod_A} & {\LMod_{B\otimes_A B\op}}
	\arrow[from=1-1, to=2-1]
	\arrow[from=1-1, to=1-2]
	\arrow[from=1-2, to=2-2]
	\arrow[from=2-1, to=2-2]
\end{tikzcd}\]
The claim is that this square is horizontally right adjointable, i.e. that the canonical natural transformation in the following square: 
\[\begin{tikzcd}
	{\LMod_{A\otimes A}} & {\LMod_{B\otimes B\op}} \\
	{\LMod_A} & {\LMod_{B\otimes_A B\op}}
	\arrow[from=1-1, to=2-1]
	\arrow[from=1-2, to=1-1]
	\arrow[from=1-2, to=2-2]
	\arrow[from=2-2, to=2-1]
	\arrow[shorten <=5pt, shorten >=5pt, Rightarrow, from=1-1, to=2-2]
\end{tikzcd}\]
is an equivalence. 

All functors involved, as well as this transformation, are canonically $\C$-linear, and this transformation is an equivalence at $B\otimes B\op$. This suffices, as $\LMod_{B\otimes B\op}$ is generated under colimits and tensors with $\C$ by $B\otimes B\op$. 

It follows that the basechange functor $\LMod_{B\otimes B\op}\to \LMod_{B\otimes_A B\op}$ is the desired lift: forgetting down to $\LMod_A$, we see that it sends $M$ to $\HH_\C(A,M)$. 

Furthermore, the diagonal $B\otimes B\op$-module $B$ can be viewed as $p^*B$, where $p:B\otimes B\op\to B\otimes_A B\op$ is the canonical map and we abuse notation by denoting $B$ also the diagonal $B\otimes_AB\op$-module.

It follows that $\HH_\C(B,M)\simeq p^*B\otimes_{B\otimes B\op}M\simeq B\otimes_{B\otimes_A B\op} p_!M$, where $p_!$ is the basechange functor, so that this last term is $\HH_A(B,\HH_\C(A,M))$ by definition.

The identification of the canonical maps comes from making this last equivalence explicit using the projection formula - the details are left to the reader. 
\end{proof}
\begin{rmk}
If we apply this to, e.g. $\cat C = \Sp, A= \mathbb Z$ and $B$ any $\mathbb Z$-algebra, then this says in particular that $\HH_\mathbb Z(B; \THH(\mathbb Z; B))\simeq \THH(B)$. Using a Postnikov filtration, this yields the $\HH$ to $\THH$ spectral sequence \cite[Theorem 4.1]{PirWald}. We chose $\mathbb Z$, but of course any base ring would give a similar formula and spectral sequence. 
\pend \end{rmk}

In particular, when $B$ is separable as an $A$-algebra, $\HH(A;M)$ retracts onto $\HH(B;M)$. Taking $M=B$, $\HH(A;B)$ retracts onto $\HH(B)$. To identify when this retraction is an equivalence, we recall the following definition from \Cref{subsection:galoisex}:
\begin{defn}
Suppose $\C$ admits geometric realizations compatible with the tensor product. 
Let $A\in\CAlg(\C)$, and let $B\in \Alg(\Mod_A(\C))$ be an $A$-algebra. We say $B$ is \emph{absolutely separable} if it is separable over $A$, and furthermore it has a separability idempotent which factors as $\one\to B\otimes B\op\to B\otimes_A B\op$. 
\pend \end{defn}
\begin{rmk}
    We do not require the lift $\one\to B\otimes B\op$ to be an idempotent. 
\pend \end{rmk}
The key point is that, in the commutative case, absolute separability is going to guarantee that $\HH_\C(A,M)$ is a diagonal bimodule whenever $M$ is a diagonal $B$-bimodule. In more detail: 
\begin{prop}\label{prop:absimpliesdiagpreserved}
    Let Let $\C$ be additively presentably symmetric monoidal, $A$ a commutative algebra in $\C$, and $B$ a commutative $A$-algebra. Suppose $B$ is an absolutely separable $A$-algebra. Let $\mu: B\otimes B\op\to B$ be the multiplication map (which is canonically an algebra map, as $B$ is a commutative $A$-algebra).

    In this case, the functor $\HH_\C(A,-): \Mod_{B\otimes B\op}(\C)\to \Mod_{B\otimes_A B\op}(\C)$ defined in \Cref{thm:HHtrans}, precomposed with restriction of scalars $\mu^*:\Mod_B(\C)\to \Mod_{B\otimes B\op}(\C)$, factors through the full subcategory $\Mod_B(\C)\subset \Mod_{B\otimes_A B}(\C)$ of diagonal $B$-bimodules. 
    \end{prop}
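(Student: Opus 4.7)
The plan is to exploit the characterization of diagonal bimodules via the separability idempotent. By \Cref{cor : commsepsplit}, since $B$ is commutative separable over $A$, the multiplication $\mu_A:B\otimes_A B\op\to B$ is a localization at the separability idempotent $\bar s:\one\to B\otimes_A B\op$. Consequently, the fully faithful functor $\mu_A^*:\Mod_B(\C)\to \Mod_{B\otimes_A B\op}(\C)$ of \Cref{rmk:diagprop} has essential image the $\bar s$-local modules, and since $\bar s$ is idempotent these are precisely those $M$ on which multiplication by $\bar s$ acts as the identity. So it will suffice to prove, for every $N\in\Mod_B(\C)$, that $\bar s$ acts as the identity on the $B\otimes_A B\op$-module $\HH_\C(A,\mu^*N)=(B\otimes_A B\op)\otimes_{B\otimes B\op}\mu^*N$.

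The key step is where absolute separability enters. By hypothesis, there exists $\tilde s:\one\to B\otimes B\op$ whose image under the canonical algebra map $B\otimes B\op\to B\otimes_A B\op$ is $\bar s$. The action of $\bar s$ on $\HH_\C(A,\mu^*N)$ is multiplication by $\bar s\otimes 1$ on the first tensor factor. Using that the tensor is taken over $B\otimes B\op$ and that $\bar s$ comes from $\tilde s$, one can slide $\tilde s$ to the other side of the relative tensor: the action of $\bar s\otimes 1$ coincides with that of $1\otimes \tilde s$, i.e., with the action of $\tilde s$ on $\mu^*N$ coming from its $B\otimes B\op$-module structure. But that structure factors through $\mu:B\otimes B\op\to B$, so the action of $\tilde s$ is multiplication by the element $\mu\circ\tilde s:\one\to B$. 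Tracing through the factorization $B\otimes B\op\to B\otimes_A B\op\xrightarrow{\mu_A}B$ of $\mu$, one sees $\mu\circ\tilde s=\mu_A\circ\bar s$, which is the unit $1_B$ by the very definition of a separability idempotent. Hence $\tilde s$ acts as the identity on $\mu^*N$, and so does $\bar s$ on $\HH_\C(A,\mu^*N)$.

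Naturality in $N$ is automatic from naturality of all the constructions involved. The conceptual heart is that $\bar s$ by itself is only a morphism $\one\to B\otimes_A B\op$, and a priori cannot be slid across a tensor product over $B\otimes B\op$; absolute separability is precisely the hypothesis that provides the lift $\tilde s$ for which the sliding is legitimate. I do not anticipate any serious obstacle beyond bookkeeping: the sliding step is a general feature of bar resolutions (morally, the fact that for $r\in\pi_0\map(\one,R)$, the map $r\otimes 1$ agrees with $1\otimes r$ in any relative tensor product over $R$), and the identification of $\mu\circ\tilde s$ with the unit is a straightforward diagram chase using the defining property of $\bar s$.
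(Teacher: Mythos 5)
Your proof is correct and follows essentially the same route as the paper's: both hinge on lifting the separability idempotent to $B\otimes B\op$ via absolute separability, sliding it across the relative tensor product over $B\otimes B\op$, and computing that it acts on $\mu^*N$ as multiplication by $\mu\circ\tilde s = 1_B$. The only cosmetic difference is that the paper first reduces to the generator $N=B$ using $\C$-linearity and colimit-preservation and then shows the complementary factor $\mu^*B\otimes_{B\otimes B\op}C$ vanishes (with $B\otimes_A B\op\simeq B\times C$), whereas you argue directly for all $N$ via the characterization of diagonal bimodules as those on which $\bar s$ acts as the identity.
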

\begin{proof}
    The full subcategory $\Mod_B(\C)\subset \Mod_{B\otimes_A B}(\C)$ is closed under colimits and tensors with $\C$, as the inclusion is given by restriction of scalars along the (relative) multiplication map $B\otimes_A B\op\to B$. 
    
    As the composite $\Mod_B(\C)\xrightarrow{\mu^*}\Mod_{B\otimes B\op}(\C)\xrightarrow{\HH_\C(A,-)} \Mod_{B\otimes_A B\op}(\C)$ is also colimit-preserving and $\C$-linear, it suffices to prove that it sends $B$ to a diagonal bimodule.  It thus suffices to prove that $\mu^*B\otimes_{B\otimes B\op}(B\otimes_A B\op)\to  \mu^*B\otimes_{B\otimes B\op}B$ is an equivalence
    
    Following \Cref{cor : commsepsplit}, write $B\otimes_A B\op\simeq B\times C$ for some $B\otimes_A B\op$-algebra $C$. It now suffices to prove that $\mu^*B\otimes_{B\otimes B\op}C = 0$.

 Let $e: 1\to B\otimes B\op$ denote a lift of the separability idempotent (we say ``the'' because of \Cref{cor:uniqueidem}), which exists by assumption. 
Then, on $B\otimes_{B\otimes B\op}C$, $e$ acts as the identity, because it does so on $B$; and as $0$ because it does so on $C$. It follows that $B\otimes_{B\otimes B}C = 0$, so we are done. 
\end{proof}
\begin{rmk}
$e$ acts as the separability idempotent on $B$ and $C$, $B\otimes B\op$-linearly because the map $B\otimes B\op\to B\otimes_A B\op$ is a map of commutative algebras; if it were only a map of associative algebras, it would not necessarily be true.
\pend \end{rmk}
We thus obtain:
\begin{cor}\label{cor : hhlin}
Let $\C$ be additively presentably symmetric monoidal
Let $A,B\in \CAlg(\C)$, and let $f: A\to B$ be a map of commutative algebras. Assume $B$ is absolutely separable over $A$. 

For any $B$-module $M$, viewed as a $B$-bimodule via restriction of scalars along $B\otimes B\op\to B$, the canonical map $\HH_\C(A;M)\to \HH_\C(B;M)$ is an equivalence.

In particular, the canonical map $\HH_\C(A;B)\to \HH_\C(B)$ is an equivalence. 
\end{cor}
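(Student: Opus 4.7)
The plan is to combine the three main preparatory results: \Cref{thm:HHtrans}, \Cref{prop:HHisyourself}, and \Cref{prop:absimpliesdiagpreserved}, so that the proof becomes essentially formal.

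First, I would invoke \Cref{thm:HHtrans} to rewrite the canonical map as the unit map
\[
\HH_\C(A, M) \to \HH_A(B, \HH_\C(A, M)),
\]
where $\HH_\C(A, M)$ is viewed as a $B \otimes_A B\op$-module via the lift provided by that theorem. Since $M$ is a diagonal $B$-bimodule (coming from a $B$-module by restriction along $\mu : B \otimes B\op \to B$), \Cref{prop:absimpliesdiagpreserved} applies and tells us that $\HH_\C(A, M)$ is itself a diagonal $B$-bimodule in $\Mod_A(\C)$, i.e.\ its $B \otimes_A B\op$-module structure is pulled back from a $B$-module structure along the relative multiplication $B \otimes_A B\op \to B$. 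Here it is crucial that $B$ is commutative, since this is what lets us apply \Cref{prop:absimpliesdiagpreserved}, which in turn uses the absolute separability hypothesis.

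Next, I would apply \Cref{prop:HHisyourself}, but in the symmetric monoidal \category{} $\Mod_A(\C)$ rather than $\C$ itself: because $B$ is separable over $A$ (and commutative), the unit map $N \to \HH_A(B, N)$ is an equivalence whenever $N$ is a diagonal $B$-bimodule in $\Mod_A(\C)$. Applying this to $N = \HH_\C(A, M)$, which we have just identified as such a diagonal bimodule, gives the desired equivalence
\[
\HH_\C(A, M) \xrightarrow{\simeq} \HH_A(B, \HH_\C(A, M)) \simeq \HH_\C(B, M),
\]
and the composite is the canonical map by the last assertion of \Cref{thm:HHtrans}. The ``in particular'' part is then obtained by specializing to $M = B$, viewed as the diagonal $B$-bimodule.

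There is no real obstacle: all the content has been packaged into the three cited results. The only point that requires a bit of care is making sure that \Cref{prop:HHisyourself} is being applied in $\Mod_A(\C)$ (which is additively presentably symmetric monoidal since $A$ is commutative and $\C$ is) with $B$ separable \emph{over $A$}, rather than in $\C$ with $B$ absolutely separable; the absolute separability hypothesis has already been consumed in the previous step via \Cref{prop:absimpliesdiagpreserved}.
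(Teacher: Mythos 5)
Your proof is correct and follows exactly the same route as the paper's: rewrite the canonical map via \Cref{thm:HHtrans} as the unit $\HH_\C(A,M)\to \HH_A(B,\HH_\C(A,M))$, identify $\HH_\C(A,M)$ as a diagonal $B\otimes_A B\op$-module using \Cref{prop:absimpliesdiagpreserved}, and conclude by \Cref{prop:HHisyourself} applied over $A$. Your explicit remark that \Cref{prop:HHisyourself} is being invoked in $\Mod_A(\C)$ with $B$ separable over $A$ is a point the paper leaves implicit, but the argument is the same.
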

\begin{proof}
    By \Cref{thm:HHtrans}, the canonical map $\HH_\C(A,M)\to \HH_\C(B,M)$ is equivalent to the canonical map $\HH_\C(A,M)\to \HH_A(B,\HH_\C(A,M))$, it thus suffices to prove that the latter is an equivalence. By \Cref{prop:HHisyourself}, this is equivalent to $\HH_\C(A,M)$ being a diagonal $B\otimes_A B\op$-module, which follows from the assumption that $B$ is absolutely separable together with \Cref{prop:absimpliesdiagpreserved}. 
\end{proof}
\begin{ex}
Suppose $A$ is a separable algebra in $\C$. For any commutative algebra $C$, $C\otimes A$ is a separable $C$-algebra, and it is in fact absolutely separable. Indeed, the separability idempotent lifts as $1\to A\otimes A\op\to C \otimes A\otimes C\otimes A\op\to (C\otimes A)\otimes_C (C\otimes A\op)$.
\pend \end{ex}
\begin{ex}\label{ex : flatsepabs}
Let $\C = \Mod_R(\Sp)$ for some commutative ring spectrum $R$, and assume $A\to B$ is a flat map of commutative $R$-algebras. We have a map, natural in the $A$-module $M$, of the form $\pi_0(B)\otimes_{\pi_0(A)}\pi_*(M)\to \pi_*(B\otimes_A M)$. By flatness, both sides are homology theories, and using flatness again, this map is an isomorphism when $M=A$, so it is an isomorphism for all $M$. 

In particular if $M=B$, we have an isomorphism $\pi_0(B)\otimes_{\pi_0(A)}\pi_*(B)\cong \pi_*(B\otimes_A B)$. So if $B$ is further separable over $A$, the separability idempotent lifts to $B\otimes B$, i.e. $B$ is absolutely separable. 
\pend \end{ex}
\begin{ex}
Any separable extension of connective commutative ring spectra is absolutely separable, because then $B\otimes B\to B\otimes_A B$ is surjective on $\pi_0$. 
\pend \end{ex}
\begin{ex}
We saw in \Cref{subsection:galoisex} that the Galois extensions $L_p\to \KU_p$ and $\KO\to \KU$ were absolutely separable.   
\pend \end{ex}
\begin{cor}\label{cor:HHdescent}
    Let $\C$ be stably presentably symmetric monoidal
Let $A,B\in \CAlg(\C)$, and let $f: A\to B$ be a map of commutative algebras. Assume $B$ is absolutely separable over $A$. 

If $B$ \emph{is descendable} over $A$ in the sense of \cite[Definition 3.18]{Akhilgalois}, then $\HH_\C(B)$ is descendable over $\HH_\C(A)$.

In particular, $\HH_\C(A)$ is the limit of the Amitsur complex $\lim_\Delta \HH_\C(B)^{\otimes_{\HH(A)}n}$, or equivalently $\lim_\Delta \HH_\C(B^{\otimes_A n})$. 

If $B$ is furthermore faithful $G$-Galois for some discrete group $G$, we have $\HH_\C(A)\simeq \HH_\C(B)^{hG}$. 
\end{cor}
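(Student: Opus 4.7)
The plan is to leverage two facts: first, the functor $\HH_\C\colon\CAlg(\C)\to\CAlg(\C)$ is symmetric monoidal (being canonically identified with $S^1\otimes-$, a colimit-preserving endofunctor of the cocartesian-symmetric-monoidal \category{} $\CAlg(\C)$), and in particular preserves relative tensor products of commutative algebras: $\HH_\C(R\otimes_T S)\simeq \HH_\C(R)\otimes_{\HH_\C(T)}\HH_\C(S)$; second, \Cref{cor : hhlin}, which computes the Hochschild homology of an absolutely separable commutative algebra via linear basechange. Combining these should let me realize the Amitsur cosimplicial object of $\HH_\C(A)\to\HH_\C(B)$ as the image of the Amitsur cosimplicial object of $A\to B$ under a symmetric monoidal colimit-preserving basechange functor, from which descendability and the limit formula follow formally.

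Concretely, I would first check that each $B^{\otimes_A n}$ is absolutely separable over $A$: separability over $A$ is stable under tensor products over $A$ by \Cref{lm:general}, and a lift of the separability idempotent to $(B^{\otimes_A n})\otimes (B^{\otimes_A n})$ is obtained by tensoring $n$ copies of a chosen lift $\tilde s\colon\one\to B\otimes B$ and pushing forward along $B^{\otimes 2n}\to (B^{\otimes_A n})\otimes(B^{\otimes_A n})$. Corollary \ref{cor : hhlin} applied to $B^{\otimes_A n}$ then yields
\[
\HH_\C(B^{\otimes_A n})\simeq \HH_\C(A;B^{\otimes_A n})\simeq \HH_\C(A)\otimes_A B^{\otimes_A n},
\]
while the symmetric monoidality of $\HH_\C$ further identifies this with $\HH_\C(B)^{\otimes_{\HH_\C(A)} n}$ as an $\HH_\C(A)$-algebra. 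Thus the cosimplicial object $\HH_\C(B)^{\otimes_{\HH_\C(A)}\bullet+1}$ is obtained from the $A$-cosimplicial object $B^{\otimes_A\bullet+1}$ by applying the symmetric monoidal colimit-preserving functor $\HH_\C(A)\otimes_A-\colon \Mod_A(\C)\to \Mod_{\HH_\C(A)}(\C)$, giving simultaneously the equivalence of the two Amitsur complexes stated in the corollary and the fact that descendability transfers (being the property that the unit lies in the thick $\otimes$-ideal generated by $B$, which is preserved by such functors). Convergence of the Amitsur complex then follows from \cite[Proposition 3.22]{Akhilgalois}.

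For the Galois assertion, a faithful $G$-Galois extension with $G$ discrete is in particular descendable and satisfies $N\simeq (N\otimes_A B)^{hG}$ for every $A$-module $N$ (cf.\ \cite{Akhilgalois,rognes}); taking $N=\HH_\C(A)$ and invoking once more $\HH_\C(A)\otimes_A B\simeq \HH_\C(B)$ -- an equivalence that is canonically $G$-equivariant, the $G$-action on both sides coming from functoriality of $\HH_\C$ in $B$ -- yields $\HH_\C(A)\simeq \HH_\C(B)^{hG}$. The main technical input is the absolute separability of the higher tensor powers $B^{\otimes_A n}$ over $A$, which is what makes \Cref{cor : hhlin} applicable in every cosimplicial degree; beyond that, the only care needed is in keeping track of the various $\HH_\C(A)$-algebra structures, and of the $G$-equivariance of the identifications in the Galois case, both of which should be formal from the naturality of $\HH_\C$.
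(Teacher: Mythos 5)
Your proof is correct, and its core coincides with the paper's: both arguments reduce to applying the exact symmetric monoidal base change $\HH_\C(A)\otimes_A-\colon\Mod_A(\C)\to\Mod_{\HH_\C(A)}(\C)$ (which is what $\HH_\C(A,-)=\mu_!\mu^*$ is on diagonal bimodules), observing that it preserves thick tensor ideals and sends $B$ to $\HH_\C(B)$ by \Cref{cor : hhlin}, so descendability transfers. You diverge in two places. First, you verify absolute separability of each $B^{\otimes_A n}$ over $A$ in order to apply \Cref{cor : hhlin} in every cosimplicial degree; this is a harmless but unnecessary detour, since once $\HH_\C(A)\otimes_A B\simeq\HH_\C(B)$ is known for $n=1$ and one uses that $\HH_\C$ is symmetric monoidal on $\CAlg(\C)$ (being $S^1\otimes-$, it preserves pushouts of commutative algebras), the termwise identification $\HH_\C(B^{\otimes_A n})\simeq\HH_\C(B)^{\otimes_{\HH_\C(A)}n}\simeq\HH_\C(A)\otimes_A B^{\otimes_A n}$ is formal — this is also how one gets the equivalence of the two Amitsur complexes without further separability input. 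Second, for the Galois statement the paper identifies the limit of the Amitsur complex with $\HH_\C(B)^{hG}$ directly, via \cite[Proposition 6.28]{nilpotencedescent}, whereas you invoke Galois descent for modules, $N\simeq(N\otimes_A B)^{hG}$, at $N=\HH_\C(A)$ together with the $G$-equivariance of $\HH_\C(A)\otimes_A B\simeq\HH_\C(B)$; both routes work, though yours requires the additional equivariance check you flag, which does hold by naturality of the transformation $\HH_\C(A,-)\to\HH_\C(B,-)$ in $B\in\CAlg(\Mod_A(\C))$.
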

\begin{proof}
For the first part, we note that the functor $\HH_\C(A,-): \Mod_A(\C)\xrightarrow{\mu^*} \BiMod_A(\C)\xrightarrow{\mu_!} \Mod_A(\C)$ is exact and $\Mod_A(\C)$-linear. Furthermore, it sends $A$ to $\HH_\C(A)$. In particular, if $A$ is in the thick tensor ideal of $\Mod_A(\C)$ generated by $B$, then $\HH_\C(A)$ is in the thick tensor ideal of $\Mod_{\HH_\C(A)}(\C)$ generated by $\HH_\C(A,B)\simeq \HH_\C(B)$ (\Cref{cor : hhlin}). 

The second part follows from the first part by \cite[Proposition 3.20]{Akhilgalois}, and the third part by identifying the limit of the Amitsur complex with the homotopy fixed points, using e.g. \cite[Proposition 6.28]{nilpotencedescent}\footnote{See the proof of \cite[Theorem 6.27]{nilpotencedescent} in the case where $\mathcal F$ is the family of free finite $G$-sets for the use of \cite[Proposition 6.28]{nilpotencedescent}.}. 
\end{proof}
\begin{cor}\label{cor:Galoisdescent}
The Galois extensions $L_p\to \KU_p$ and $\KO\to \KU$ have descent in $\THH$ (for the former, we mean $p$-complete $\THH$). 
\end{cor}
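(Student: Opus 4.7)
The plan is to apply Corollary \ref{cor:HHdescent} essentially verbatim to each extension, so the proof amounts to checking, in the appropriate ambient \category{} $\C$, that the three hypotheses are met: absolute separability, descendability, and the fact that we have a faithful Galois extension with \emph{discrete} group so that the final clause gives the clean fixed-point description.

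For $\KO \to \KU$, I would take $\C = \Sp$. Absolute separability was verified explicitly in Example \ref{ex:KOKU}, by producing on $\pi_0$ a lift of the separability idempotent to $\KU \otimes \KU$ using the splitting $\KU \simeq \KO \otimes \Sigma^{-2}\mathbb{CP}^2$ and the obvious retraction of $\KU \otimes \mathbb{CP}^2 \to \KU \otimes S^4$. The extension is faithful $C_2$-Galois by \cite[Proposition 5.3.1]{rognes}, and faithful Galois extensions with finite discrete group are descendable in the sense of \cite[Definition 3.18]{Akhilgalois} (cf. \cite[Proposition 6.2.1]{Akhilgalois}). Hence Corollary \ref{cor:HHdescent} yields $\THH(\KO) \simeq \THH(\KU)^{hC_2}$, which is the claimed descent.

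For $L_p \to \KU_p$, I would take $\C = \Sp^\wedge_p$, the \category{} of $p$-complete spectra; in this \category{} the internally defined Hochschild homology over the unit is exactly $p$-complete $\THH$, matching the parenthetical clarification in the statement. Absolute separability was established in Example \ref{ex:Adamssummand} using the Adams splitting of $\KU_{(p)}$, and this is a faithful $\mathbb F_p^\times$-Galois extension in $\Sp^\wedge_p$ by \cite[5.5.2, 5.5.4]{rognes}, hence again descendable. Corollary \ref{cor:HHdescent} then gives $\THH(L_p)^\wedge_p \simeq (\THH(\KU_p)^\wedge_p)^{h\mathbb F_p^\times}$.

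There is no real obstacle: all substantive work has already been done earlier, namely the exhibition of absolute separability in Examples \ref{ex:Adamssummand} and \ref{ex:KOKU}, together with Corollary \ref{cor:HHdescent}. The only point requiring care is the choice of the ambient symmetric monoidal \category{} for the second extension, since $L_p \to \KU_p$ is only Galois after $p$-completion; once one works inside $\Sp^\wedge_p$ throughout, Corollary \ref{cor:HHdescent} applies and the $p$-complete $\THH$ appearing in the statement is simply $\HH_{\Sp^\wedge_p}$ of the unit, with the $\mathbb F_p^\times$-action and homotopy fixed points being computed in $p$-complete spectra.
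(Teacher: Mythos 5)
Your proof is correct and follows exactly the route the paper takes: it is a direct application of \Cref{cor:HHdescent} to the absolute separability established in \Cref{ex:Adamssummand} and \Cref{ex:KOKU}, with the standard fact that faithful finite Galois extensions are descendable supplying the remaining hypothesis. The only added value over the paper's one-line proof is that you make explicit the choice of ambient \category{} ($\Sp$ versus $p$-complete spectra) and the identification of $\HH_{\Sp^\wedge_p}$ with $p$-complete $\THH$, both of which the paper leaves implicit.
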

\begin{proof}
This follows from \Cref{cor:HHdescent} together with \Cref{ex:Adamssummand}, \Cref{ex:KOKU}. 
\end{proof}
\begin{rmk}
These descent results were already known, cf. \cite[Example 4.6]{Akhildescent}. However, the techniques for this are more sophisticated: here we only use nilpotence techniques, while in the cited \cite{CMNN}, the authors use some chromatically localized variant of nilpotence, called $\epsilon$-nilpotence. 
\pend \end{rmk}
\begin{rmk}
    In the special case of Galois extensions, one could rephrase \Cref{cor:HHdescent} in terms of Mathew's work in \cite{Akhildescent}. Indeed, in the absolutely separable case, one can prove that the map $\HH_\C(A)\otimes_A B\to \HH_\C(B)$ is an equivalence, which by \cite[Proposition 4.3]{Akhildescent} is equivalent to Galois descent. 

    Note that Mathew's criterion \cite[Theorem 1.3]{Akhildescent} for Galois descent in $\THH$ is a special case of \Cref{cor:HHdescent}: by \Cref{prop:etalesep} and \Cref{ex : flatsepabs}, the assumptions therein imply absolute separability. 
\pend \end{rmk}

\appendix
\section{On the cyclic invariance of the trace}\label{app:cyctrace}
The goal of this appendix is to provide a short proof of:
\begin{prop}\label{prop:C2trace}
    Let $\C$ be a symmetric monoidal \category{}, and $A\in\Alg(\C)$ an algebra whose underlying object is dualizable. Let $t:A\to \one$ be the trace form of $A$, defined as $A\to A\otimes A\otimes A^\vee \to A\otimes A^\vee\to \one$, where the first map is the coevalution of $A$, the second is the multiplication of $A$, and the final map is the evaluation of $A$. 

    The trace pairing of $A$, defined as the composite $A\otimes A\to A\to \one$, has a canonical $C_2$-equivariant structure, where the source has the swap action and the target has the trivial action.
\end{prop}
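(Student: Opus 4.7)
My plan is to factor the trace pairing through the trace-of-composition pairing on $\End(A)$, and then invoke the (coherent) cyclic invariance of the trace of endomorphisms of a dualizable object.

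Concretely, the trace $t:A\to\one$ factors canonically as $A\xrightarrow{L}\End(A)=A\otimes A^\vee\xrightarrow{\mathrm{ev}}\one$, where $L$ is the $\mathbb E_1$-algebra map in $\C$ classifying the left regular representation of $A$ on itself, and $\mathrm{ev}$ is the evaluation of $A$ against its dual. Indeed, this is simply a triangle-identity unwinding of the definition of $t$. Because $L$ is an algebra map, we get an identification
\[
t\circ\mu\;\simeq\;\mathrm{ev}\circ\mathrm{comp}\circ(L\otimes L): A\otimes A\to\End(A)\otimes\End(A)\to\End(A)\to\one,
\]
where $\mathrm{comp}$ is the composition pairing on $\End(A)$. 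The map $L\otimes L$ is manifestly $C_2$-equivariant for the swap actions on source and target, so the problem reduces to equipping the trace-of-composition pairing $p:=\mathrm{ev}\circ\mathrm{comp}:\End(A)\otimes\End(A)\to\one$ with a canonical $C_2$-equivariance (swap action on the source, trivial on the target).

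For this last step, I would unpack $\End(A)\otimes\End(A)\simeq A\otimes A^\vee\otimes A\otimes A^\vee$ and observe that $p$ is the double evaluation which first contracts the middle pair $(A^\vee_{(2)},A_{(3)})$ using $\mathrm{ev}$, and then contracts the remaining pair $(A_{(1)},A^\vee_{(4)})$ using $\mathrm{ev}$ together with the symmetry. Using the symmetry of the tensor product one rewrites $p$ as a \emph{symmetric} arrangement: the tensor product of two evaluations, one for the matched pair $\{2,3\}$ and one for the matched pair $\{1,4\}$, composed with the multiplication $\one\otimes\one\to\one$. In this presentation, the swap of the two $\End(A)$ factors acts by interchanging the roles of these two evaluations. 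The required $C_2$-equivariance then follows from the coherence of the symmetric monoidal structure: for any two morphisms $\alpha,\beta:\one\to\one$ the tensor products $\alpha\otimes\beta$ and $\beta\otimes\alpha$ are canonically homotopic via the symmetry, and the multiplication on $\one$ intertwines these coherent homotopies.

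The main obstacle I anticipate is not the computation itself, which is a short string-diagram manipulation, but rather producing the $C_2$-equivariance coherently in an $\infty$-categorical setting — as a genuine $C_2$-equivariant structure, not merely an equality $p\simeq p\circ\sigma$ in the homotopy category. The cleanest route is probably to work inside a symmetric monoidal $(\infty,2)$-categorical framework for traces, in which the cyclic invariance of the trace is built into the structure (via, e.g., the $SO(2)$-action on the trace coming from the framed cobordism hypothesis); alternatively, one can spell out the needed coherences directly from the symmetric monoidal structure on $\C$, which is the path I would follow if one wants to keep the argument elementary.
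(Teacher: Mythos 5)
Your opening reduction is exactly the one the paper uses: since $L:A\to\End(A)$ is an algebra map, $t\circ\mu$ factors as $\mathrm{ev}\circ\mathrm{comp}\circ(L\otimes L)$, and $L\otimes L$ is $C_2$-equivariant as the image of $L$ under the symmetric monoidal functor $y\mapsto y\otimes y:\C\to\C^{BC_2}$; so everything comes down to the trace-of-composition pairing on $\End(A)$. Your combinatorial description of that pairing (contract the pairs $\{2,3\}$ and $\{1,4\}$ in $A_1\otimes A^\vee_2\otimes A_3\otimes A^\vee_4$, and observe that the swap of the two $\End(A)$ factors exchanges the two contractions) is also correct, and matches the string-diagram verification in the paper.

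The gap is at the step you yourself flag and then do not close. Writing $p$ as ``rearrange into blocks, then $\mathrm{ev}\otimes\mathrm{ev}$, then $\one\otimes\one\to\one$'' is fine, and the last two maps are unproblematically $C_2$-equivariant (again via $y\mapsto y\otimes y$ and the $\mathbb E_\infty$-structure on $\one$). But the rearrangement $A_1\otimes A^\vee_2\otimes A_3\otimes A^\vee_4\simeq (A^\vee_2\otimes A_3)\otimes(A^\vee_4\otimes A_1)$ is a cyclic rotation that genuinely mixes the two $\End(A)$ factors: it is neither a block permutation nor of the form $f\otimes f$, so its equivariance does not follow from ``the coherence of the symmetric monoidal structure'' in any automatic way. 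Your fallback observation that $\alpha\otimes\beta\simeq\beta\otimes\alpha$ for $\alpha,\beta:\one\to\one$ only produces a single homotopy $p\simeq p\circ\sigma$; a $C_2$-equivariant structure on a map into a trivial-action target is a point of $\map(X,\one)^{hC_2}$ and requires infinitely many further coherences, none of which are supplied. The missing ingredient, which is the actual content of the paper's proof, is the observation that the symmetry $\tau:x\otimes x\to x\otimes x$ is itself canonically $C_2$-equivariant for the swap action on both sides. This is a nontrivial fact about abelian (more precisely $\mathbb E_2$-) groups acting on objects: for such a group $G$ and $y\in\C^{BG}$, each $g\in G$ acts by a $G$-equivariant automorphism of $y$. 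With that lemma, one writes $t\circ\mu$ as a composite of maps each of which individually lives in $\C^{BC_2}$ (the extra swap on $x\otimes x$ being precisely what turns ``$\mathrm{tr}(a)\mathrm{tr}(b)$'' into ``$\mathrm{tr}(ab)$''), and the coherence problem disappears. Your alternative suggestion of working in a framed cobordism framework is viable -- the paper sketches exactly that argument, using contractibility of the relevant diffeomorphism groups to kill the higher coherences -- but as written your elementary route stops one lemma short of a proof.
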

\begin{rmk}
    Informally, the trace pairing is given by $a\mapsto$ the trace on $A$ of the endomorphism $L_a$ given by left multiplication by $a$, i.e. $a\mapsto \tr(L_a\mid A)$. The $C_2$-invariance then comes from the cyclic invariance of traces, namely $\tr(L_aL_b\mid A) \simeq \tr(L_bL_a\mid A)$. In fact, this will be apparent from the proof. 
\pend \end{rmk}
We start by simply giving a quick construction of this $C_2$-equivariant structures, and we conclude the appendix by sketching two other proofs which are slightly more conceptual but also more involved. For this reason, we will not go into the details of these other proofs. We are grateful to Jan Steinebrunner for suggesting the following simple proof:
\begin{proof}
    We observe that the map $A\to A\otimes A^\vee\simeq \End(A)$ is an algebra map, and that therefore, the claim for $A$ reduces to the same claim for $\End(A)$ instead, and thus, to the more general case of $A=\End(x)$ for any dualizable object $x\in\C$. 

    In this situation, we write the trace pairing of $A$ as the composite: $$\End(x)\otimes \End(x)\simeq (x\otimes x^\vee)\otimes (x\otimes x^\vee)\simeq (x\otimes x)\otimes (x^\vee \otimes x^\vee) \simeq (x\otimes x)\otimes (x^\vee \otimes x^\vee)\simeq \End(x)\otimes \End(x)\to \one \otimes \one\simeq \one$$
    where: the first map is the equivalence $\End(x)\simeq x\otimes x^\vee$; the second swaps the middle term $x^\vee\otimes x\simeq x\otimes x^\vee$; the third, importantly, swaps the first term $x\otimes x\simeq x\otimes x$ but leaves the second unchanged; the fourth reorders the terms as in the first two steps; and the fifth is simply $t\otimes t$, where $t:\End(x)\to \one$ is the evaluation $x\otimes x^\vee\to \one$. 

    One easily verifies that this coincides with $t\circ \mu$, where $\mu$ is the multiplication map of $\End(x)$ and $t$ is as above. Indeed, this is a check at the level of homotopy categories. In terms of string diagrams, the long composite described above is:
\begin{equation}\label{fig1}
\begin{tikzpicture}[
    auto,
    col/.style={circle,thick, draw=white!20,
                 inner sep=1pt,minimum size=7mm}]
    \def\xspace{1.2}
    \def\yspace{.8}
    
\node[col] (00) at (-4*\xspace,{-1*\yspace})  {$x$};
    \node[col] (01) at (-4*\xspace,{-2*\yspace})  {$x^\vee$};
    \node[col] (02) at (-4*\xspace,{-3*\yspace})  {$x$};
    \node[col] (03) at (-4*\xspace,{-4*\yspace})  {$x^\vee$};
    \node[col] (10) at (-3*\xspace,{-1*\yspace})  {$x$};
    \node[col] (11) at (-3*\xspace,{-2*\yspace})  {$x$};
    \node[col] (12) at (-3*\xspace,{-3*\yspace})  {$x^\vee$};
    \node[col] (13) at (-3*\xspace,{-4*\yspace})  {$x^\vee$};
        \node[col] (20) at (-2*\xspace,{-1*\yspace})  {$x$};
    \node[col] (21) at (-2*\xspace,{-2*\yspace})  {$x$};
    \node[col] (22) at (-2*\xspace,{-3*\yspace})  {$x^\vee$};
    \node[col] (23) at (-2*\xspace,{-4*\yspace})  {$x^\vee$};
        \node[col] (30) at (-1*\xspace,{-1*\yspace})  {$x$};
    \node[col] (31) at (-1*\xspace,{-2*\yspace})  {$x^\vee$};
    \node[col] (32) at (-1*\xspace,{-3*\yspace})  {$x$};
    \node[col] (33) at (-1*\xspace,{-4*\yspace})  {$x^\vee$};
    
 \draw [-] (00) to [out=0, in = 180] (10) ;
\draw [-] (03) to [out=0, in = 180] (13);
\draw [-] (12) to [out=0, in = 180] (22);
\draw [-] (13) to [out=0, in = 180] (23);
\draw [-] (20) to [out=0, in = 180] (30);
\draw [-] (23) to [out=0, in = 180] (33);
\draw plot [smooth,tension=1.5] coordinates {(-1,-0.75) (0,-1.25) (-1,-1.75)};
\draw plot [smooth,tension=1.5] coordinates {(-1,-2.25) (0,-2.75) (-1,-3.25)};
\draw [-] (01) to (12);
\draw [-] (10) to (21);
\draw [-] (02) to (11);
\draw [-] (11) to (20);
\draw [-] (21) to (32); 
\draw [-] (22) to (31);
\end{tikzpicture}
\end{equation}
whereas the map $t\circ \mu$ is:
\begin{equation}\label{fig2} 
\begin{tikzpicture}[
    auto,
    col/.style={circle,thick, draw=white!20,
                 inner sep=1pt,minimum size=7mm}]
    \def\xspace{1.2}
    \def\yspace{.8}
    
\node[col] (00) at (-4*\xspace,{-1*\yspace})  {$x$};
    \node[col] (01) at (-4*\xspace,{-2*\yspace})  {$x^\vee$};
    \node[col] (02) at (-4*\xspace,{-3*\yspace})  {$x$};
    \node[col] (03) at (-4*\xspace,{-4*\yspace})  {$x^\vee$};

 \draw plot [smooth,tension=1.5] coordinates {(-4,-0.75) (-2,-2) (-4,-3.25)};
  \draw plot [smooth,tension=1.75] coordinates {(-4.25,-1.75) (-3.5,-2) (-4.25,-2.25)};
\end{tikzpicture}
\end{equation}

And one can see how to deform the top one into the bottom one using elementary string diagram operations. 

    The advantage of spelling the second, much simpler map in terms of the long composite is that every step in the long composite is now $C_2$-equivariant. 

    The first step is $C_2$-equivariant because there is a functor $y\mapsto y\otimes y$ from $\C$ to $\C^{BC_2}$. The second step is $C_2$-equivariant because this functor has a canonical symmetric monoidal structure. 

    The third step is the more interesting one (again): we are claiming that the swap map $x\otimes x\to x\otimes x$ is $C_2$-equivariant, where both sides have the ``swap'' $C_2$-stucture. But this is a general fact about objects with an $A$-action, for $A$ an abelian group. More precisely,  let $A$ be an $\mathbb E_2$-monoid, and consider the composite $BA\times BA\times \C^{BA}\xrightarrow{\mu\times \C^{BA}} BA\times \C^{BA}\xrightarrow{ev} \C$, where $\mu:BA\times BA\to BA$ is the mutliplication map remaining on $BA$ from the $\mathbb E_2$-structure on $A$. This map has a mate given by $BA\times \C^{BA}\to \C^{BA}$, and this further has a mate given by $BA\to \Fun(\C^{BA},\C^{BA})$.

    In particular, evaluation at a given $a\in A$ induces an endomorphism of $\id_{\C^{BA}}$, in particular, for each $y\in\C^{BA}$, an $A$-equivariant endomorphism $\rho_a: y\to y$, whose underlying morphism is simply the action of $a$. 

    Plugging in $A=C_2$, we find that the swap map on $x\otimes x$ is $C_2$-equivariant. The rest of the steps is as elementary as the first two steps. It is worth pointing out that the equivalence $\one\otimes\one\simeq \one$ is in fact $C_2$-equivariant for the trivial action on the target. 
\end{proof}

We now give two incomplete sketches of other proofs. They are more involved, but they give more information; in particular while one ``can easily see'' that the above generalizes to $C_n$-equivariance of an $n$-fold trace pairing, for these other ones, it really is immediate. 
\subsubsection{Using the cobordism hypothesis}
The string diagrams \ref{fig1} and \ref{fig2} can be interpreted as barely symbolic tools to convey a proof, or as one dimensional framed manifolds, i.e. as living in the one dimensional cobordism \category{}. By the cobordism hypothesis\footnote{While a proof of the general cobordism hypothesis has only been sketched by Lurie in \cite{luriecob}, in dimension $1$, it has been completely proved, see \cite{harpazcob}.}, the positively framed $0$-dimensional manifold ``point'' is the universal dualizable object, and so to prove that the trace pairing $\End(x)\otimes\End(x)\to\End(x)\to\one$ can be made $C_2$-equivariant for an arbitrary dualizable $x$, it suffices to prove it in the cobordism category. 

The string diagrams that we drew clearly show that, at the very least, this trace pairing admits a homotopy from itself to itself precomposed with the swap $\End(x)\otimes \End(x)\simeq \End(x)\otimes \End(x)$, but the problem lies in the higher coherence of $C_2$-equivariance. It turns out, however, that in the universal case there are no such coherence problems. 

Indeed, the group of orientation-preserving diffeomorphisms of $[0,1]$ rel $\{0,1\}$ is contractible - it is a convex subspace of the space of all maps; and it follows that the group of orientation-preserving diffeomorphisms of any compact $1$-dimensional manifold with no closed components, rel boundary is also contractible. In particular, the subspace of $\map_{\mathrm{Cob}_1^{\mathrm{fr}}}(\End(x)\otimes\End(x), \emptyset)$ spanned by the cobordisms with no ``floating circles'' is contractible - but our trace pairing lives in this subspace ! In particular, up-to-homotopy-$C_2$-equivariance guarantees actual $C_2$-equivariance. 

This proof has the advantage that it also proves that if we further require naturality in $(\C,x)$, then the $C_2$-equivariant structure is essentially unique. For the same reason, $C_n$-equivariance, which is easy $1$-categorically (in fact, follows from $C_2$-equivariance !), also follows from this proof, as the spaces are still contractible. 
\subsubsection{Using Hochschild homology}
This proof will be even more sketchy than the previous one. It relies on folklore facts about Hochschild homology that do not seem to have been completely referenced in the literature yet.  

First, without loss of generality, we assume $\C$ is presentably symmetric monoidal, so that we can use $\C$-linear Hochschild homology at our leisure. 

In this case, for a dualizable algebra $A$, the forgetful map $\LMod_A(\C)\to\C$ is an internal left adjoint in $\Mod_\C(\PrL)$ and thus induces a map $\HH_\C(A)\to \HH_\C(\one)\simeq \one$, cf. \cite{HSS}.

Now, $\HH_\C$ has a canonical $S^1$-action constructed in \cite{HSS} using the cobordism hypothesis, and in \cite{NS} using a construction of $\HH_\C$ as a cyclic bar construction. The latter, however, has only been constructed functorially in morphisms of algebras, and so cannot be used to construct/analyze the transfer $\HH_\C(A)\to \HH_\C(\one)$. It is however a folklore fact that these two constructions agree, with their $S^1$-actions, and if we use this, the rest of the proof is relatively clear: for any $\C$-atomic $x\in \LMod_A(\C)$ (cf. \Cref{defn:atomic}), we have a map $\hom_A(x,x)\to \HH_\C(A)$, and a commutative diagram in $\C$:
\[\begin{tikzcd}
	{\hom_A(x,x)} & {\hom(x,x)} \\
	{\HH_\C(A)} & {\HH_\C(\one)}
	\arrow[from=1-1, to=2-1]
	\arrow[from=2-1, to=2-2]
	\arrow[from=1-2, to=2-2]
	\arrow[from=1-1, to=1-2]
\end{tikzcd}\]
where the right vertical map is the trace map. In particular, for $x= A$ itself, we get:\[\begin{tikzcd}
	A & {\End(A)} \\
	{\HH_\C(A)} & {\HH_\C(\one)}
	\arrow[from=1-1, to=2-1]
	\arrow[from=2-1, to=2-2]
	\arrow[from=1-2, to=2-2]
	\arrow[from=1-1, to=1-2]
\end{tikzcd}\]
so that our trace pairing can be written as $A\otimes A\to A\to\HH_\C(A)\to \HH_\C(\one)\simeq \one$. The map $\HH_\C(A)\to \one$ is $S^1$-equivariant, and the map $A\otimes A\to A\to \HH_\C(A)$ is equivalent to the inclusion of the $1$-simplices of the cyclic bar construction. As the cyclic bar construction extends to a cyclic object, the inclusion of these $1$-simplices is canonically $C_2$-equivariant. 

Thus the composite $A\otimes A\to \HH_\C(A)\to\one$ is $C_2$-equivariant. 

This proof also easily generalizes to the $n$-fold trace pairing and its $C_n$-equivariance. Furthermore, from this perspective, the relation between the $C_n$-equivariances, as $n$-varies, is extremely clear: we simply have a morphism of cyclic objects from the cyclic bar construction of $A$ to $\one$ (the latter as a constant cyclic objects). Thus the $S^1$-equivariant structure on $\HH_\C(A)\to \one$ corresponds exactly to a highly coherent $C_n$-equivariant structure on the $n$-fold trace pairings of $A$, as $n$ varies. 

\section{On epimorphisms of algebras}\label{app:epi}
An earlier version of \Cref{item:epi} in \Cref{lm:general} was stated in terms of epimorphisms of algebras, under the assumption that $\C$ was presentably symmetric monoidal. We later realized, however, that we needed a different condition, namely, instead of requiring that $A\to B$ be an epimorphism of algebras, we needed the canonical map $B\otimes_A B\to B$ to be an equivalence. 

In the commutative case, these two conditions are equivalent because of the following lemma, and because pushouts of commutative algebras are computed as relative tensor products:
\begin{lm}
    Assume $D$ is an \category{} with pushouts, and let $f:x\to y$ be a map in $D$. It is an epimorphism if and only if the induced map $y\coprod_x y\to y$ is an equivalence. 
\end{lm}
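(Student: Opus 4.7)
The plan is to unpack the definition of epimorphism in terms of mapping spaces, then apply the Yoneda lemma together with the universal property of the pushout.

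Recall that $f: x \to y$ is an epimorphism in an \category{} precisely when, for every object $z \in D$, the induced map $f^*: \map_D(y,z) \to \map_D(x,z)$ is a monomorphism of spaces, i.e., $(-1)$-truncated. Equivalently, this is the condition that the diagonal
$$\map_D(y,z) \longrightarrow \map_D(y,z) \times_{\map_D(x,z)} \map_D(y,z)$$
be an equivalence for every $z$.

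The universal property of the pushout $y\coprod_x y$ identifies the target of the above diagonal with $\map_D(y\coprod_x y,z)$, and under this identification the diagonal map is itself identified with the map induced by precomposition with the codiagonal $\nabla: y\coprod_x y \to y$. So $f$ is an epimorphism if and only if $\nabla^*:\map_D(y,z)\to\map_D(y\coprod_x y,z)$ is an equivalence for every $z\in D$, and by the Yoneda lemma this is equivalent to $\nabla$ itself being an equivalence in $D$.

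There is no real obstacle here: the only point to verify carefully is the identification of the diagonal with the map induced by the codiagonal, which is built into the universal property of the pushout (the two structure maps $y\to y\coprod_x y$ factor the codiagonal as $\nabla\circ \iota_i = \id_y$, and the resulting two maps $\map(y,z)\to \map(y,z)$ obtained by precomposing with $\iota_1,\iota_2$ are both the identity, which is exactly the compatibility ensuring that the described natural equivalence sends the diagonal to $\nabla^*$).
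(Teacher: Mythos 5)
Your proof is correct. The paper actually states this lemma without proof (it is the standard unwinding of the definition of an epimorphism as a monomorphism in $D\op$), and your argument --- identifying $\map_D(y\coprod_x y,z)$ with $\map_D(y,z)\times_{\map_D(x,z)}\map_D(y,z)$ via the universal property, recognizing $\nabla^*$ as the diagonal because $\nabla\circ\iota_i=\id_y$, and concluding by Yoneda --- is precisely the argument the paper is implicitly relying on.
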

However, pushouts of algebras are more complicated. As a result, we do not know if these two conditions are equivalent. The goal of this appendix is to compare these two notions. We prove that the condition about tensor products always implies that the map is an epimorphism, and conversely, that if $\C$ is stably symmetric monoidal, $f$ being an epimorphism implies the condition about tensor products. So for most purposes, as we mainly care about stable \categories{} in this paper, these two notions are equivalent.  We however take this oppotunity to raise this question:
\begin{ques}
    Let $\C$ be a presentably symmetric monoidal \category{}, and let $f:A\to B$ be a morphism in $\Alg(\C)$. Suppose $f$ is an epimorphism - does it follow that the multiplication map $B\otimes_A B\to B$ is an equivalence ? 
\end{ques}
\begin{lm}\label{lm:pshoutslice}
    Let $\D$ be a \category{} with pushouts, and $d\in \D$. A morphism $f: x\to y$ in $\D_{d/}$ is an epimorphism if and only if the underlying morphism in $\D$ is one.
\end{lm}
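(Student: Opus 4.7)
The plan is to reduce the statement to the standard characterization of an epimorphism $f: x \to y$ in an \category{} with pushouts: namely, $f$ is an epimorphism if and only if the codiagonal map $y \sqcup_x y \to y$ is an equivalence. This characterization is available in both $\D$ and $\D_{d/}$ since the latter also has pushouts.

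The key observation is then that the forgetful functor $U: \D_{d/} \to \D$ both creates pushouts (indeed, all colimits, see \cite[Proposition 1.2.13.8]{HTT} or the argument that an initial object is freely added, so colimits in the undercategory are computed on underlying objects) and is conservative (equivalences in $\D_{d/}$ are precisely morphisms whose underlying map in $\D$ is an equivalence). In particular, given $f: x \to y$ in $\D_{d/}$, the pushout $y \sqcup_x y$ formed in $\D_{d/}$ has underlying object the pushout formed in $\D$, and the comparison map $y \sqcup_x y \to y$ in $\D_{d/}$ has underlying map the corresponding comparison in $\D$.

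It then follows formally: the map $y \sqcup_x y \to y$ is an equivalence in $\D_{d/}$ if and only if it is an equivalence in $\D$ (by conservativity of $U$ applied to this particular map, together with the fact that $U$ takes this codiagonal to the codiagonal in $\D$). The characterization of epimorphisms recalled above then yields the desired equivalence between $f$ being an epimorphism in $\D_{d/}$ and being one in $\D$.

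There is no real obstacle here; the statement is essentially a formal consequence of the fact that slice categories have colimits computed in the base, which is standard. The only mild point to be careful about is that the characterization of epimorphisms via the codiagonal being an equivalence is available in any \category{} with pushouts between the relevant objects, which is exactly the hypothesis.
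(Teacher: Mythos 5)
Your argument is correct and is essentially identical to the paper's proof: both reduce to the characterization of epimorphisms via the codiagonal $y\coprod_x y\to y$ being an equivalence, and both use that the forgetful functor $\D_{d/}\to\D$ preserves and reflects pushouts and equivalences. Nothing to add.
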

\begin{proof}
    Let $\E$ be a \category with pushouts. A mophism $f:x\to y$ is an epimorphism if and only if the induced codiagonal $y\coprod_x y\to y$ is an equivalence. 

    If $\D$ admits pushouts, then so does $\D_{d/}$, and they are preserved and reflected by the forgetful functor $\D_{d/}\to \D$. Applying the previous paragraph to $\E= \D_{d/}$ and $\D$ thus implies the desired statement.
\end{proof}
\begin{lm}\label{lm:monopres}
   Let $\C$ be a presentably symmetric monoidal \category, and $f:\D\to \E$ a morphism in $\Mod_\C(\PrL)$. Suppose it is a monomorphism. Denoting its right adjoint by $g$, the unit map $\id_\D\to gf$ is then a (pointwise) monomophism.

   In particular, if $\D$ is stable, it is a pointwise equivalence, and hence $f$ is fully faithful.
\end{lm}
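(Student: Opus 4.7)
The plan is to unpack the definition of monomorphism in terms of a pullback, translate it into a statement about mapping spaces in $\D$ and $\E$, and then apply the Yoneda lemma.

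I would start by noting that $f$ being a monomorphism in $\Mod_\C(\PrL)$ means that the diagonal $\Delta_f\colon \D\to \D\times_\E\D$ is an equivalence. The forgetful functor $\Mod_\C(\PrL)\to \PrL$ preserves limits (being a right adjoint to $\C\otimes -$), and so does the inclusion $\PrL\to\Cat_\infty$, so this diagonal is also an equivalence in $\Cat_\infty$. For any $d,d'\in\D$, computing mapping spaces in the pullback $\D\times_\E\D$ gives
$$\map_{\D\times_\E\D}(\Delta(d),\Delta(d'))\simeq \map_\D(d,d')\times_{\map_\E(fd,fd')}\map_\D(d,d'),$$
and under this identification the map induced by $\Delta_f$ is precisely the diagonal of $f_*\colon\map_\D(d,d')\to \map_\E(fd,fd')$. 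Since $\Delta_f$ is an equivalence, the diagonal of $f_*$ is too, which is exactly to say that $f_*$ is a monomorphism of spaces.

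I would then factor $f_*$ through the adjunction equivalence as $\map_\D(d,d')\xrightarrow{(\eta_{d'})_*}\map_\D(d,gfd')\xrightarrow{\simeq}\map_\E(fd,fd')$. Since $f_*$ is a mono and the second map is an equivalence, $(\eta_{d'})_*$ is a mono of spaces for every $d\in\D$, and the Yoneda lemma then gives that $\eta_{d'}\colon d'\to gfd'$ is a monomorphism in $\D$. This establishes the first claim.

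For the ``in particular'', I would use the fact that any monomorphism in a stable \category{} is an equivalence: given $\phi\colon x\to y$, the fiber sequence $\fib(\phi)\to x\to y$ pulls back along $\phi$ to a fiber sequence $\fib(\phi)\to x\times_y x\to x$, which is split by the diagonal, so $x\times_y x\simeq x\oplus \fib(\phi)$; the diagonal $x\to x\times_y x$ then corresponds to $(\id,0)$, which is an equivalence iff $\fib(\phi)=0$, iff $\phi$ is an equivalence. Applied to $\eta_d$, this shows the unit is a pointwise equivalence, hence $f$ is fully faithful. I do not anticipate any real obstacle; the only minor point worth checking is that monomorphisms in $\Mod_\C(\PrL)$ are detected in $\Cat_\infty$, which is routine limit-preservation.
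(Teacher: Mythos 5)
Your proof is correct and follows essentially the same route as the paper's: unpack the monomorphism condition as the diagonal $\D\to\D\times_\E\D$ being an equivalence, translate to mapping spaces, transport across the adjunction, and apply Yoneda; the only (cosmetic) difference is that for the stable case the paper invokes ``pullback squares are pushout squares'' on the square witnessing the mono, while you split the fiber sequence $\fib(\phi)\to x\times_y x\to x$ — both are standard one-line arguments for the same fact.
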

\begin{proof}
Since it is a monomorphism, $\D\to \D\times_\E\D$ is an equivalence. Looking at mapping spaces, it follows that $\map_\D(x,y)\to \map_\D(x,y)\times_{\map_\E(f(x),f(y))}\map_\D(x,y)$ is also an equivalence for all $x,y\in\D$ and hence, using the adjunction, that $$\map_\D(x,y)\to \map_\D(x,y)\times_{\map_\D(x,gf(y))}\map_\D(x,y)$$ is an equivalence for all $x,y\in\D$. 

By the Yoneda lemma, it follows that for all $y\in\D$, $y\to y\times_{gf(y)} y$ is an equivalence for all $y$, i.e. that $y\to gf(y)$ is a monomorphism, as was claimed.

The ``in particular'' follows from the fact that in a stable \category, any monomorphism is an equivalence: indeed, if $y\to z$ is a monomorphism, the pullback square \[\begin{tikzcd}
	y & y \\
	y & z
	\arrow[from=1-1, to=2-1]
	\arrow[from=1-1, to=1-2]
	\arrow[from=2-1, to=2-2]
	\arrow[from=1-2, to=2-2]
\end{tikzcd}\]
is also a pushout square, thus proving the claim. 
\end{proof}
The following is the main result of this appendix:
\begin{lm}\label{lm:episofrings}
    Let $\C$ be a presentably symmetric monoidal \category, and $f:A\to B$ a morphism in $\Alg(\C)$. The morphism $f$ is an epimorphism in $\Alg(\C)$ if and only if the induced functor $\LMod_A(\C)\to \LMod_B(\C)$ is an epimorphism in $\Mod_\C(\PrL)$. In particular, this is so if it is a localization, equivalently if $B\otimes_A B\to B$ is an equivalence.

    If $\C$ is stable, the converse is true. 
\end{lm}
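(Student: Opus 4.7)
The plan is to leverage the fully faithful functor $j \colon \Alg(\C) \to (\Mod_\C(\PrL))_{\C/}$, $R \mapsto (\LMod_R(\C), R)$, from \cite[Theorem 4.8.5.11]{HA}. Crucially, $j$ is a left adjoint, with right adjoint sending $(\M, m)$ to the endomorphism algebra $\End_\M(m) = \hom_\M(m, m)$; hence $j$ preserves pushouts, and being fully faithful it reflects equivalences. Combined with \Cref{lm:pshoutslice}, this yields a chain of equivalences: $f$ is an epimorphism in $\Alg(\C)$ iff the codiagonal $B \coprod_A B \to B$ is an equivalence, iff its image under $j$ is an equivalence in $(\Mod_\C(\PrL))_{\C/}$, iff the underlying codiagonal is an equivalence in $\Mod_\C(\PrL)$, iff $\LMod_A(\C) \to \LMod_B(\C)$ is an epimorphism there.

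For the ``in particular'' clause, recall that extension of scalars $F \colon \LMod_A \to \LMod_B$ is by definition a localization in $\Mod_\C(\PrL)$ iff its right adjoint $G$, restriction of scalars along $f$, is fully faithful, equivalently iff the counit $FG \to \id_{\LMod_B}$ is an equivalence. At $M \in \LMod_B$ the counit is the canonical map $B \otimes_A M \to M$; $\C$-linearity of both functors and the fact that $B$ generates $\LMod_B$ as a $\C$-linear cocomplete category reduce the equivalence of the counit to the case $M = B$, namely $B \otimes_A B \to B$ being an equivalence. A localization is always an epimorphism (its codiagonal is an equivalence by formal nonsense with the fully faithful right adjoint), which gives the stated implication.

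The main obstacle is the converse in the stable case. The plan is to dualize the argument of \Cref{lm:monopres}. Since both $F$ and $G$ are $\C$-linear, the equivalence $(\PrL)^{op} \simeq \PrR$ restricted to right-adjointable morphisms converts the hypothesis ``$F$ is an epimorphism in $\Mod_\C(\PrL)$'' into ``$G$ is a monomorphism in $\Mod_\C(\PrR)$'', i.e., the diagonal $\LMod_B \to \LMod_B \times_{\LMod_A} \LMod_B$ is an equivalence in $\Cat$ (since limits in $\PrR$ are computed on underlying $\infty$-categories). Applying $\map_{\LMod_B}(z, -)$ to this equivalence and using the adjunction $F \dashv G$ yields that, for all $z, y \in \LMod_B$, precomposition by the counit $FG(z) \to z$ induces a monomorphism of spaces $\map_{\LMod_B}(z, y) \to \map_{\LMod_B}(FG(z), y)$. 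By the (contravariant) Yoneda lemma, the counit is a pointwise epimorphism in $\LMod_B$. Since $\C$ is stable, so is $\LMod_B$, and in a stable \category{} every epimorphism is an equivalence (dually to the observation at the end of the proof of \Cref{lm:monopres} that monomorphisms there are equivalences). Hence the counit is an equivalence, $G$ is fully faithful, and $F$ is a localization, so that the three conditions coincide in the stable setting.
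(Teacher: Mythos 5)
Your proof is correct. The first half (the identification of epimorphisms of algebras with epimorphisms of module categories via $j$ and \Cref{lm:pshoutslice}, and the ``in particular'' via the counit of extension/restriction) is essentially identical to the paper's argument. Where you genuinely diverge is the stable converse. The paper first observes that $f$ being an epimorphism also makes $\RMod_A(\C)\to\RMod_B(\C)$ an epimorphism, then passes to the $\C$-linear dual using $\Fun^L_\C(\RMod_A(\C),\C)\simeq\LMod_A(\C)$ to turn this into the statement that restriction of scalars $\LMod_B(\C)\to\LMod_A(\C)$ is a \emph{monomorphism} in $\Mod_\C(\PrL)$, and then invokes \Cref{lm:monopres} to conclude that the unit of the restriction--coinduction adjunction is an equivalence, hence restriction is fully faithful. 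You instead stay with left modules and use the $(\PrL)\op\simeq\PrR$ duality to rewrite the pushout as a pullback of underlying $\infty$-categories along $G$, then rerun the mapping-space argument of \Cref{lm:monopres} in dual form: fully faithfulness of the diagonal gives that $\map_{\LMod_B}(z,y)\to\map_{\LMod_B}(FGz,y)$ is $(-1)$-truncated for all $y$, so the counit $FGz\to z$ is an epimorphism in the stable category $\LMod_B(\C)$ and hence an equivalence. Your route avoids both the passage through right modules and the appeal to dualizability of module categories in $\Mod_\C(\PrL)$ (\cite[Remark 4.8.4.8]{HA}), at the cost of directly manipulating the pushout-as-pullback description; the paper's route outsources the mapping-space computation to the already-proved \Cref{lm:monopres}. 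Two minor cosmetic points: all morphisms of $\PrL$ are right-adjointable, so the restriction you mention is vacuous, and the $\C$-linearity of $G$ plays no role since the pullback is computed on underlying $\infty$-categories; neither affects correctness.
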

\begin{proof}
    By \cite[Theorem 4.8.5.11]{HA}, the functor $\Alg(\C)\to \Mod_\C(\PrL)_{\C/}, A\mapsto (\LMod_A(\C),A)$ is fully faithful and colimit-preserving. By \Cref{lm:pshoutslice}, $f:A\to B$ is an epimorphism if and only if the induced functor $\LMod_A(\C)\to \LMod_B(\C)$ is an epimorphism in $\Mod_\C(\PrL)$. 

  Examining the co-unit of the extension-restriction adjunction $$\LMod_A(\C)\rightleftarrows\LMod_B(\C)$$
    we see that it is a localization if and only if the map $B\otimes_A B\to B$ is an equivalence. 

    Conversely, assume $\C$ is stable and $f$ is an epimophism. It follows by the same argument that $\RMod_A(\C)\to \RMod_B(\C)$ is an epimorphism. Dualizing, and using that $\Fun^L_\C(\RMod_A(\C),\C)\simeq \LMod_A(\C)$ \cite[Remak 4.8.4.8]{HA}, it follows that $\LMod_B(\C)\to \LMod_A(\C)$ is a monomorphism. As $\C$ is stable, we are in the stable situation of \Cref{lm:monopres}, and can thus conclude that it is fully faithful. It follows that $\LMod_A(\C)\to \LMod_B(\C)$ is a localization, and hence, that $B\otimes_A B\to B$ is an equivalence, as claimed. 
\end{proof}

\bibliographystyle{alpha}
\bibliography{Biblio.bib}

\textsc{Institut for Matematiske Fag, K\o benhavns Universitet, Danmark}

\textit{Email adress : }\texttt{maxime.ramzi@math.ku.dk}
\end{document}